\documentclass[11pt, reqno]{amsart}
\makeatletter
\usepackage{fullpage}
\usepackage{textcmds}  
\usepackage{amsmath, amssymb, amsfonts, amstext, verbatim, amsthm, mathrsfs, mathtools}
\usepackage[mathcal]{eucal}
\usepackage{microtype}
\usepackage{graphicx}
\usepackage[all]{xy}
\usepackage{stmaryrd}

\usepackage{tikz} 

\usepackage{enumitem}
\usepackage{xspace}
\usepackage{mathdots}

\usepackage[colorlinks=true,linkcolor=blue,citecolor=blue,urlcolor=blue,citebordercolor={0 0 1},urlbordercolor={0 0 1},linkbordercolor={0 0 1}]{hyperref} 
\usepackage[shortalphabetic]{amsrefs} 
\usepackage[nameinlink]{cleveref}



\def\makeCal#1{%
\expandafter\newcommand\csname c#1\endcsname{\mathcal{#1}}}
\def\makeBB#1{%
\expandafter\newcommand\csname b#1\endcsname{\mathbb{#1}}}
\def\makeFrak#1{%
\expandafter\newcommand\csname f#1\endcsname{\mathfrak{#1}}}

\count@=0
\loop
\advance\count@ 1
\edef\y{\@Alph\count@}%
\expandafter\makeCal\y
\expandafter\makeBB\y
\expandafter\makeFrak\y
\ifnum\count@<26
\repeat

\makeatletter
\def\namedlabel#1#2{\begingroup
    #2%
    \def\@currentlabel{#2}%
    \phantomsection\label{#1}\endgroup
}
\makeatother

\theoremstyle{plain}
\newtheorem{thm}{Theorem}[subsection]
\newtheorem{cor}{Corollary}[thm]
\newtheorem{lem}[thm]{Lemma}

\newtheorem{prop}[thm]{Proposition}

\newtheorem{conj}{Conjecture}

\theoremstyle{definition}
\newtheorem{rem}[thm]{Remark}
\newtheorem{hyp}[thm]{Hypotheses}

\newtheorem{defn}[thm]{Definition}
\newtheorem{notn}[thm]{Notation}
\newtheorem{const}[thm]{Construction}
\newtheorem{ex}[thm]{Example}

\newtheorem{thmx}{Theorem}



\DeclareMathOperator{\DCoh}{DCoh}
\DeclareMathOperator{\DSing}{DSing}
\DeclareMathOperator{\APerf}{D^{--}Coh}
\DeclareMathOperator{\AAPerf}{APerf}
\DeclareMathOperator{\Perf}{Perf}
\DeclareMathOperator{\cStab}{{\mathcal S}tab}

\DeclareMathOperator{\sAlg}{sAlg}
\DeclareMathOperator{\Aut}{Aut}

\DeclareMathOperator{\wt}{wt}
\DeclareMathOperator{\Pic}{Pic}
\newcommand{\cl}{{\rm cl}}
\newcommand{\dual}{\vee}
\DeclareMathOperator{\minwt}{minWt}
\DeclareMathOperator{\maxwt}{maxWt}

\DeclareMathOperator{\NS}{NS}
\newcommand{\CDGA}{CDGA\ }
\newcommand{\fg}{\mathfrak{g}}
\DeclareMathOperator{\RHom}{RHom}
\DeclareMathOperator{\Hom}{Hom}
\DeclareMathOperator{\detz}{\det\nolimits^{\bZ}}
\newcommand{\thickslash}{\mathbin{\!\!\pmb{\fatslash}}}

\newcommand{\id}{\mathop{{\rm id}}\nolimits}

\DeclareMathOperator{\GL}{GL}


\newcommand{\Gm}{\mathbb{G}_m}

\newcommand{\radj}[1]{\beta^{\geq #1}}
\newcommand{\ladj}[1]{\beta^{< #1}}
\newcommand{\sod}[1]{\langle #1 \rangle}
\newcommand{\itimes}{\overset{!}{\otimes}}

\DeclareMathOperator{\QC}{QC}
\DeclareMathOperator{\IC}{IndCoh}
\DeclareMathOperator{\Fun}{Fun}
\DeclareMathOperator{\Ho}{Ho}
\DeclareMathOperator{\PrL}{Pr^L}

\newcommand{\inner}[1]{\underline{#1}}
\DeclareMathOperator{\Coh}{Coh}
\DeclareMathOperator{\Map}{Map}
\newcommand{\heart}{\heartsuit}
\DeclareMathOperator{\colim}{colim}

\DeclareMathOperator{\rank}{rank}
\DeclareMathOperator{\cofib}{cofib}
\DeclareMathOperator{\fib}{fib}

\newcommand{\iMap}{\inner{\Map}}

\newcommand{\filt}[2][]{\iMap(\Theta^{#1}, {#2})}
\newcommand{\grad}[2][]{\iMap(B\Gm^{#1}, {#2})}
\DeclareMathOperator{\Filt}{Filt}
\DeclareMathOperator{\Grad}{Grad}
\DeclareMathOperator{\Flag}{Flag}
\DeclareMathOperator{\Spec}{Spec}
\DeclareMathOperator{\Sym}{Sym}
\DeclareMathOperator{\Tot}{Tot}
\DeclareMathOperator{\Rep}{Rep}
\DeclareMathOperator{\image}{image}
\DeclareMathOperator{\Lie}{Lie}
\DeclareMathOperator{\Stab}{Stab}
\DeclareMathOperator{\Crit}{Crit}

\DeclareMathOperator{\ev}{ev}
\newcommand{\us}{{\rm us}}
\DeclareMathOperator{\res}{res}
\DeclareMathOperator{\Mod}{-Mod}

\DeclareMathOperator{\Cech}{Cech}
\DeclareMathOperator{\Cpl}{Cpl}
\DeclareMathOperator{\End}{End}

\begin{document}
\title{Derived $\Theta$-stratifications and the $D$-equivalence conjecture}
\author{Daniel Halpern-Leistner}

\begin{abstract}
The theory of $\Theta$-stratifications generalizes a classical stratification of the moduli of vector bundles on a smooth curve, the Harder-Narasimhan-Shatz stratification, to any moduli problem that can be represented by an algebraic stack. Using derived algebraic geometry, we develop a structure theory, which is a refinement of the theory of local cohomology, for the derived category of quasi-coherent complexes on an algebraic stack equipped with a $\Theta$-stratification. We then apply this to the $D$-equivalence conjecture, which predicts that birationally equivalent Calabi-Yau manifolds have equivalent derived categories of coherent sheaves. We prove that any two projective Calabi-Yau manifolds that are birationally equivalent to a smooth moduli space of Gieseker semistable coherent sheaves on a $K3$ surface have equivalent derived categories. These are the first known cases of the $D$-equivalence conjecture for a birational equivalence class in dimension greater than three.
\end{abstract}

\maketitle

\setcounter{tocdepth}{2}
\tableofcontents

In \cite{bondal1995semiorthogonal}, Bondal and Orlov stated a conjecture which has become one of the motivating problems in a rapidly expanding field studying the relationship between birational geometry and derived categories of coherent sheaves. It was developed further by Kawamata in \cite{kawamata}.
\begin{conj}[$D$-equivalence]\label{conj:D_equivalence}
If $X$ and $Y$ are birationally equivalent projective Calabi-Yau (CY) manifolds, then there is an equivalence of bounded derived categories of coherent sheaves $\DCoh(X) \cong \DCoh(Y)$.
\end{conj}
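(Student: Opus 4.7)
The plan is to prove the case announced in the abstract — the $D$-equivalence conjecture for pairs of projective CY manifolds both birational to a smooth moduli space of Gieseker semistable sheaves on a K3 surface — since Conjecture~\ref{conj:D_equivalence} in its full generality remains out of reach. The global strategy is to connect any two such birational models by a path in the space of Bridgeland stability conditions on the K3 and to produce a derived equivalence across each elementary wall-crossing, using the structure theory for derived $\Theta$-stratifications announced in the abstract.

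First I would invoke the results of Bayer--Macrì: every smooth projective birational model of a fixed Gieseker moduli space $M$ on a K3 surface $S$ is itself a moduli space $M_\sigma$ of $\sigma$-semistable objects in $\DCoh(S)$ for some Bridgeland stability condition $\sigma$, and any two such stability conditions are connected by a path in $\Stab(S)$ that can be subdivided into finitely many elementary wall-crossings. This reduces the problem to constructing an equivalence $\DCoh(M_\sigma) \simeq \DCoh(M_{\sigma'})$ whenever $\sigma$ and $\sigma'$ lie in adjacent chambers separated by a single wall $W$.

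For each such wall, the central object is the derived moduli stack $\cM$ of $\sigma_0$-semistable objects for a generic $\sigma_0 \in W$. The Harder--Narasimhan filtration with respect to each nearby generic stability condition equips $\cM$ with a $\Theta$-stratification in the sense of the paper. The structure theorem promised in the abstract should then yield, on each side of the wall, a semiorthogonal decomposition of $\DCoh(\cM)$ indexed by the strata, whose open piece is identified with $\DCoh(M_\sigma)$ or $\DCoh(M_{\sigma'})$ respectively. Comparing the two decompositions via a grade-restriction/window mechanism, and using the CY hypothesis to pin down the window widths precisely, should then produce the required equivalence.

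The hardest step, as I see it, is exactly this last comparison: aligning the two semiorthogonal decompositions of $\DCoh(\cM)$ so that their open quotients match into a single equivalence, rather than merely a fully faithful embedding in one direction. In the smooth quasi-projective setting (variations of GIT, standard flops) the analogous window phenomenon is by now well understood, but two new features complicate matters here. First, the moduli stacks at the wall are typically singular, so one is forced to work with a genuine derived enhancement and with $\DCoh$ of a derived stack, which is presumably the motivation for the derived refinement of $\Theta$-stratification theory. Second, verifying that the CY condition forces the weight spectrum at each center of the stratification to be symmetric about a critical window, so that a generically fully faithful wall-crossing functor upgrades to an equivalence, will require a careful analysis of the cotangent complex of $\cM$ at the strictly semistable loci together with Serre duality on $M$.
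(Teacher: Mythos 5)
Your framing is correct that the conjecture itself is not proved; you correctly restrict attention to the special case of Theorem~\ref{T:A}. Your skeleton — reduce via Bayer--Macr\`i to Bridgeland wall-crossings, equip the derived moduli stack at a wall with $\Theta$-stratifications coming from the two adjacent chambers, and mediate with a window subcategory — matches the paper's outline at the level of steps (1) and (4). But the central step, producing an actual equivalence across the wall, is where your proposal diverges from what the paper does, and the divergence is where the real difficulty lies.

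You propose to compare the two semiorthogonal decompositions of $\DCoh(\cM)$ directly and align their open pieces, hoping that the CY condition forces a weight symmetry making a fully faithful window functor an equivalence. This is roughly what \Cref{cor:wall_crossing} gives for a single elementary wall-crossing with one pair of opposite strata and $c=0$, but a general Bridgeland wall has many strata whose order and geometry differ on the two sides, and it is not clear a priori how to match them up globally. The paper's actual mechanism is different: one defines an \emph{intrinsic} window subcategory $\fW_\cX(\delta) \subset \DCoh(\cM_\sigma^{\rm red}(v))$ (\Cref{D:magic_windows}) by a weight condition at \emph{closed points} (measured via regular graded points), not by reference to either stratification, and then proves that its restriction to $\DCoh(\cM^{\rm red}_{\sigma'}(v))$ is an equivalence for \emph{every} generic $\sigma'$ nearby (\Cref{T:magic_windows}). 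The containment $\fW_\cX(\delta) \subset \cG_\cX^\ell(\delta)$ and fully faithfulness of the restriction follow from the $\Theta$-stratification structure theorem (\Cref{thm:derived_Kirwan_surjectivity_quasi-smooth}), but essential surjectivity is a genuinely new ingredient. It is proved by reducing \'etale-locally over the good moduli space: the local structure theorem (\Cref{thm:local_model}), which uses the existence of good moduli spaces (\cite{AHLH}) plus the Luna slice theorem plus the \emph{self-dual cotangent complex} hypothesis, identifies $\cM$ locally as a derived Hamiltonian reduction $\mu^{-1}(0)/G$ for a weak co-moment map. One then transports the problem through the LG/CY correspondence (linear Koszul duality) to reduce to the combinatorial magic-windows theorem of \cite{halpern2016combinatorial} for quasi-symmetric representations, which requires the genericity conditions on $\ell$ and $\delta$ of \Cref{defn:generic}. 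Without this local reduction your proposal has no handle on essential surjectivity, and the CY condition by itself (as opposed to self-duality of $\bL_\cX$, which is what is actually used, and which also forces $\cX$ to be quasi-smooth) does not directly give you the weight symmetry you need at a general closed strictly semistable point.

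So: the key idea you are missing is that the proof does not align two stratification-dependent decompositions at the global level, but rather defines a stratification-independent window via fiberwise weight bounds and verifies its equivalence with $\DCoh$ of the semistable locus by descent to a local Hamiltonian-reduction model, where a prior combinatorial theorem applies. This is the content of \Cref{S:magic_windows}, and especially \Cref{P:magic_reduction}, \Cref{P:magic_windows_mf}, and \Cref{L:magic_windows_mf}.
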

This conjecture is motivated by homological mirror symmetry: the mirror manifolds of $X$ and $Y$ are expected to be deformation equivalent CY manifolds, and the mirror to this conjecture -- that deformation equivalent polarized CY manifolds have equivalent Fukaya categories --  follows immediately from the fact that any two such manifolds are symplectomorphic.

Although \Cref{conj:D_equivalence} has been established for three-dimensional CY manifolds \cite{bridgeland2002flops}, much less is known in dimension $>3$. There are simple examples of birational transformations, such as Mukai flops, for which a derived equivalence can be constructed from a Fourier-Mukai kernel with a succint geometric description \cite{namikawa}. In general, though, such flops are not sufficient to connect birationally equivalent CY manifolds. As a result, the conjecture is not known to hold for any birational equivalence class of projective CY manifolds in dimension $>3$, with one exception which we discuss below.

One particularly well-studied class of higher dimensional CY manifolds are hyperk\"{a}hler manifolds, and the central examples,\footnote{All known projective hyperk\"{a}hler manifolds are related to these moduli spaces either by deformation or resolution.} whose dimension can be arbitrarily large, are moduli spaces $M^H_S(v)$ of semistable coherent sheaves on a $K3$ surface $S$. More precisely, let $M^H_S(v)$ denote the moduli space of sheaves on $S$ whose characteristic classes are given by a primitive Mukai vector $v \in H^\ast_{\rm alg}(S;\bZ)$, and which are Gieseker semistable \cite{huybrechts2010geometry} with respect to a polarization $H \in {\rm NS}(S)$ that is generic with respect to $v$. The main goal of this paper is to prove of the following:
\begin{thmx}[\Cref{T:D_equivalence}] \label{T:A}
\Cref{conj:D_equivalence} holds for Calabi-Yau manifolds that are birationally equivalent to any moduli space $M^H_S(v)$ of semistable coherent sheaves on a $K3$ surface $S$.
\end{thmx}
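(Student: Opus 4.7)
The plan is to combine three ingredients: (i) the classification, due to Bayer--Macr\`i, of smooth projective birational models of $M^H_S(v)$ as moduli spaces of Bridgeland-semistable objects of $\DCoh(S)$; (ii) the interpretation of Bridgeland wall-crossing as a change of $\Theta$-stratification on the master moduli stack $\cM(v)$ of objects with Mukai vector $v$; and (iii) the derived structure theorem for $\Theta$-stratified stacks developed in the body of this paper, which produces a semiorthogonal decomposition of $\DCoh$ relative to a stratification together with a distinguished ``magic window'' piece. The idea is that each elementary wall-crossing induces a derived equivalence between the moduli spaces on either side, realized by a common window inside the derived category of the ambient stack, and these wall-crossing equivalences can be chained along any path in the stability manifold.

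The first step is to reduce to wall-crossing. By Bayer--Macr\`i, any smooth projective variety $X$ birational to $M^H_S(v)$ is isomorphic to some moduli space $M_\sigma(v)$ of $\sigma$-semistable objects of $\DCoh(S)$ with Mukai vector $v$, for some Bridgeland stability condition $\sigma$ in a distinguished connected component of $\Stab(\DCoh(S))$, and the Gieseker chamber is joined to any other chamber by a finite sequence of wall-crossings. It therefore suffices to establish an equivalence $\DCoh(M_\sigma(v)) \simeq \DCoh(M_{\sigma'}(v))$ whenever $\sigma,\sigma'$ lie in adjacent chambers across a single wall.

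To analyze a single wall, I would observe that the loci $\cM_\sigma,\cM_{\sigma'} \subset \cM(v)$ of $\sigma$- and $\sigma'$-semistable objects are open substacks of a common open substack $\cM_W \subset \cM(v)$, and that $\cM_W$ carries two $\Theta$-stratifications---one for each chamber---whose unique open strata are $\cM_\sigma$ and $\cM_{\sigma'}$. The closed strata in either stratification parametrize the same destabilizing filtrations, namely those whose Jordan--H\"older factors are strictly semistable on the wall, with only the ordering of factors permuted. Applying the derived structure theorem to each stratification gives two semiorthogonal decompositions of $\DCoh(\cM_W)$ whose open piece is $\DCoh(\cM_\sigma)$, resp.~$\DCoh(\cM_{\sigma'})$. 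The $K3$ hypothesis endows $\cM(v)$ with a $0$-shifted symplectic structure, which forces the weights of the cotangent complex along each destabilizing stratum to be symmetric under Serre duality; this balancing is precisely what is needed to choose a single window subcategory $\cW \subset \DCoh(\cM_W)$ that is simultaneously complementary to both collections of unstable pieces. Hence $\DCoh(\cM_\sigma) \simeq \cW \simeq \DCoh(\cM_{\sigma'})$, and descending along the good moduli space maps transports this to the desired equivalence $\DCoh(M_\sigma(v)) \simeq \DCoh(M_{\sigma'}(v))$.

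The principal technical obstacle is to establish the geometric hypotheses that let one apply the derived structure theorem to $\cM_W$: one must show that $\cM_W$ is, derived-locally near each destabilizing stratum, a quotient of an affine derived scheme by a reductive group; that the $\Theta$-stratifications by instability are compatible with these local models; and that the weight data on the shifted cotangent complex along each center of instability is governed by the Mukai pairing, so that the numerical CY / symplectic balancing condition holds uniformly across the wall. This amounts to a derived Luna-type slice theorem for moduli of complexes on a $K3$ surface, combined with the shifted symplectic geometry of $\cM(v)$, and is the point where the general $\Theta$-theory of the paper must be specialized to the geometry at hand.
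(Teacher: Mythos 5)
Your high-level strategy matches the paper's: reduce via Bayer--Macr\`i to Bridgeland wall-crossing, set up two $\Theta$-stratifications on a common ambient stack with open strata the two adjacent semistable loci, and try to identify a window subcategory that restricts isomorphically to both sides. The derived Luna-type local structure theorem you flag as the principal obstacle is indeed a crucial input (it appears as \Cref{thm:local_model}), and you are right that the self-duality $\bL_\cX \simeq \bL_\cX^\dual$ coming from the shifted symplectic structure is what makes the windows compatible. However, there is a genuine gap in the step where you assert that the ``numerical CY / symplectic balancing condition'' suffices to produce a common window.

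The balancing argument you sketch proves the result only for \emph{elementary} wall crossings, i.e., those governed by a single one-parameter subgroup $\lambda$ with the strata $\cS_\pm$ paired by inversion and with $\wt_\lambda(\det(\bL_\cX|_\cZ)) = 0$; this is exactly the content of \Cref{cor:wall_crossing} in the case $c=0$, which follows formally from self-duality plus \Cref{prop:DKS_perfect}. But Bridgeland walls for $M^H_S(v)$ are not elementary in general: a single wall typically produces a $\Theta$-stratification with many strata, indexed by a nontrivial poset of destabilizing filtrations, and the two stratifications on either side of the wall do not decompose into matched opposite pairs. In this situation nothing formal guarantees that a window for one stratification is a window for the other; the weights of the unstable loci interleave in a complicated way that the self-duality alone does not untangle. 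The actual argument must show that an \emph{intrinsically} defined window $\fW_\cX(\delta)$ (defined via weight bounds at all closed points, as in \Cref{D:magic_windows}, independently of any stratification) agrees with the stratification-dependent window $\cG^\ell_\cX(\delta)$ for every generic polarization $\ell$ near the wall --- this is \Cref{T:magic_windows}. Proving that equality requires two further inputs absent from your proposal: (i) the Koszul/LG-CY duality $\Psi^G_\mu$ identifying $\DCoh$ of the derived Hamiltonian reduction $\mu^{-1}(0)/G$ with a graded singularity category $\DSing(X\times\fg/G, W)$ of a smooth Landau--Ginzburg model (\Cref{T:koszul}), which transfers the problem to a \emph{smooth} GIT setting; and (ii) the combinatorial magic-windows theorem of \cite{halpern2016combinatorial} for quasi-symmetric representations, which is the result that actually identifies the two windows in the linear case (\Cref{L:magic_windows_mf}). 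Without these the local verification cannot be completed. As a smaller point, the object to work with is the reduced derived stack $\cM^{\rm red}_\sigma(v)$ rather than $\cM(v)$: one must kill both the central scaling $\Gm$ and a trivial two-step factor of the cotangent complex via the determinant map (\Cref{P:derived_reduction}) before the self-duality and genericity hypotheses of \Cref{T:magic_windows} can even be stated.
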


More generally, the notion of a Bridgeland stability condition \cite{bridgeland} on $\DCoh(S)$, which we recall in \Cref{S:Bridgeland_moduli}, allows one to define moduli spaces of semistable complexes, generalizing the moduli of semistable coherent sheaves. \Cref{T:D_equivalence} also applies to any CY manifold that is birationally equivalent to one of these moduli spaces. In the paper, we also treat complexes of twisted coherent sheaves, but for simplicity we focus on the non-twisted case in the introduction.

\begin{rem}
In fact, \Cref{T:A} arises from a more general theorem, \Cref{T:magic_windows}, that establishes derived equivalences between smooth varieties that are related by a ``variation of stability" on a derived stack with self-dual cotangent complex. Examples include variation of Bridgeland semistable moduli spaces for objects in any $2$-Calabi-Yau $dg$-category.
\end{rem}

Let us summarize the recent developments that are key ingredients in our approach, which also serves to summarize the proof of \Cref{T:A}:
\begin{enumerate}
\item \textit{MMP:} The minimal model program for CY manifolds that are birationally equivalent to $M^H_S(v)$ for some $K3$-surface $S$ \cite{bayer2014mmp}*{Thm.~1.2} (see \Cref{T:bayer_macri}) allows one to reduce to studying moduli spaces of Bridgeland semistable complexes of coherent sheaves on $S$;\\
\item \textit{Local model for flops:} A recent existence result for good moduli spaces of Bridgeland semistable complexes \cite{AHLH}*{Thm.~7.25} (see \Cref{thm:good_moduli}), combined with the Luna slice theorem for stacks with good moduli space \cite{alper2015luna}*{Thm.~2.9}, allows us to prove a new local structure theorem, \Cref{thm:local_model}, for the moduli stack $\cM_\sigma(v)$ of complexes of coherent sheaves on $S$ that are semistable with respect to a Bridgeland stability condition $\sigma$;\footnote{In fact, our local structure theorem applies to any derived stack with self-dual cotangent complex and whose underlying classical stack admits a good moduli space.}\\
\item \textit{Derived equivalences in the local case:} The local structure theorem allows us to model a variation of Bridgeland stability condition on $\DCoh(S)$ locally as a special kind of variation of GIT quotient, and we use the ``magic windows theorem'' of \cite{halpern2016combinatorial}*{Thm.~1.2} to construct derived equivalences for variations of GIT quotient of this form; and\\
\item \textit{Globalization:} The theory of $\Theta$-stratifications \cite{halpern2014structure} and its derived analog, which we introduce in this paper, allow us to establish a structure theorem for $\DCoh(\cM_\sigma(c))$, \Cref{thm:derived_Kirwan_surjectivity_quasi-smooth}, which we discuss below. Then in \Cref{T:magic_windows}, we use this structure theorem to ``globalize'' the derived equivalences constructed in step (3).\\
\end{enumerate}

It was observed previouly in \cite{addington2016moduli}*{Sect.~4} that for a $K3$ surface $S$ of Picard rank $1$ and degree $2g-2$, the Hilbert scheme of $g$-points ${\rm Hilb}^{g}(S)$ admits only one other birationally equivalent CY manifold, which is also a moduli space of sheaves on $S$. These two moduli spaces are related by a Mukai flop, and hence are derived equivalent. This small birational equivalence class was previously the only known example of the $D$-equivalence conjecture in dimension $>3$. \Cref{T:A} completes this line of reasoning by constructing derived equivalences for any flop that arises from variation of stability condition on $\DCoh(S)$.

At a high-level, our strategy for establishing the derived equivalence in step (3) and (4) is the same as that used to construct derived equivalences for certain smooth variations of GIT quotient, sometimes referred to as ``window categories.'' The mathematical physicists Hori, Herbst, and Page \cite{HHP} introduced the idea in the context of $2D$ gauged linear sigma models, and Ed Segal then formulated it in a mathematical context \cite{segal}. It has subsequently been developed by several authors \cites{halpern2015derived,BFK,DS1,DS2,favero_kelly,halpern2016combinatorial,Rennemo,ballard_wall,addington_donovan_segal}.

The main difference here is that instead of smooth global quotient stacks, we work with algebraic \emph{derived} stacks without a known global quotient presentation. Specifically, we study a ``reduced'' derived moduli stack of $\sigma$-semistable complexes $\cM^{\rm red}_\sigma(v)$ (introduced in \Cref{P:derived_reduction}). We identify a certain subcategory $\fW \subset \DCoh(\cM^{\rm red}_\sigma(v))$ consisting of all complexes satisfying a ``window condition'' (\Cref{D:magic_windows}), and the main theorem states that for any generic Bridgeland stability condition $\sigma'$ in a neighborhood of $\sigma$, the natural restriction functor induces an equivalence
\begin{equation} \label{E:magic_diagram}
\xymatrix@R=5pt{ \fW \ar[r]^-{\cong} \ar@{}[d]|-{\cap} & \DCoh(\cM^{\rm red}_{\sigma'}(v)) \\
\DCoh(\cM^{\rm red}_\sigma(v)) \ar[ur]_-{\res} & }.
\end{equation}
An immediate consequence is that all of the $\cM_{\sigma'}^{\rm red}(v)$ are derived equivalent for generic $\sigma'$ in this neighborhood of $\sigma$.

The main technical contribution of this paper, used in both step (3) and (4) above, is a general structure theorem, \Cref{T:derived_Kirwan_surjectivity}, for the derived category of an algebraic derived stack that is equipped with a certain kind of stratification, called a $\Theta$-stratification. \Cref{T:derived_Kirwan_surjectivity} is much more than what is strictly necessary to prove \Cref{T:A}, and we believe it is of broader interest and applicability, so we have attempted to develop this piece of the story in the greatest possible generality. It is essentially a modification of Grothendieck's theory of local cohomology for a very special kind of closed substack, which tends to arise in moduli theory and equivariant geometry. We discuss this in more detail in the next section.

\subsubsection*{Comparison with other approaches}

In the context of \Cref{T:A}, the flops between moduli spaces that we study can locally be modeled by flops of (algebraic) symplectic resolutions of a singular poisson variety $\Spec(A)$
\begin{equation} \label{E:local_flop}
\xymatrix@C=5pt{X \ar[dr] \ar@{-->}[rr] & & X' \ar[dl] \\ & \Spec(A) & }.
\end{equation}
Therefore, the closest antecedents to \Cref{T:A} are results establishing derived equivalences $\DCoh(X) \cong \DCoh(X')$ for flops of this kind.

We note that the local models we use are closely related to variations of hyperk\"{a}hler quotient. For the specific case of moduli spaces of complexes on a K3 surface they are analytically-locally modelled by flops of Nakajima quiver varieties. Several authors have used local models of this kind to study the birational geometry of these moduli spaces \cites{MR3783425,MR2221132,budur_zhang_2019,bandiera_manetti_meazzini_2021}.

Besides the method of window conditions, which this paper builds on, there are two other main approaches to constructing derived equivalences for flops of the kind \eqref{E:local_flop}:
\begin{enumerate}[label=(\roman*)]
\item Using quantization in characteristic $p$ \cites{BK,K} to construct tilting bundles on $X$ and $X'$ whose algebra of endomorphisms agree, and
\item Using categorical representations of Lie algebras \cite{CKL} to explicitly construct Fourier-Mukai kernels.
\end{enumerate}
There are challenges in extending both of these methods from the local to the global setting, where $\Spec(A)$ is replaced by a singular projective variety $Y$. In (i), there are cohomological obstructions both to the existence of the tilting bundles and to extending them from the local to the global setting.

Likewise, extending the categorical representations of Lie algebras in (ii) from the local to the global setting is challenging due to the explicit nature of the complexes involved and the higher coherence data needed to glue complexes. Recently, \cite{addington2020categorical} has made significant progress by establishing a categorical $\mathfrak{sl}_2$ action on the derived category of certain moduli spaces of sheaves, categorifying \cite{Nakajima2}. However, at the moment this method can only construct derived equivalences for a limited class of flops, called stratified Mukai flops.

By contrast, in our approach the objects are global to begin with. The general theory of derived categories of derived algebraic stacks provides the globally defined diagram of $\infty$-categories \eqref{E:magic_diagram}. The main insight of our proof of \Cref{T:A} is that is suffices to verify that the restriction functor $\fW \to \DCoh(\cM_{\sigma'}(v))$ is an equivalence of $\infty$-categories \'etale-locally over the good moduli space of $\cM^{\rm red}_\sigma(v)$.

\subsection*{Derived \texorpdfstring{$\Theta$}{Theta}-stratifications}

The notion of a $\Theta$-stratification of an algebraic stack $\cX$ was introduced in \cite{halpern2014structure} as a generalization of the Harder-Narasimhan stratification of the moduli stack of vector bundles on a smooth curve. It also generalizes the Hesselink-Kempf-Kirwan-Ness stratification of the unstable locus in geometric invariant theory.

The starting point in \cite{halpern2014structure} is the observation that if we consider $\Theta := \bA^1/\bG_m$, then a map $f : \Theta \to \cX$, i.e., an equivariant map $\bA^1 \to \cX$, is analogous to a filtration of the point $f(1) \in \cX$, with associated graded point $f(0) \in \cX$. We refer to such a map $f$ as a filtration in $\cX$. We can then consider the mapping stack
\[
\Filt(\cX) := \iMap(\Theta,\cX)
\]
that parameterizes algebraic families of filtrations in $\cX$, and refer to this as the stack of filtrations. Under favorable hypotheses, $\Filt(\cX)$ is also an algebraic stack (see \Cref{T:mapping_stack}). The assignment $(f : \Theta \to \cX) \mapsto f(1)$ defines an ``evaluation at $1$'' morphism of algebraic stacks
\[
\ev_1 : \Filt(\cX) \to \cX.
\]

A \emph{$\Theta$-stratum} in $\cX$ is a union of connected components $\cS \subset \Filt(\cX)$ such that $\ev_1 : \cS \to \cX$ is a closed immersion. Informally, a $\Theta$-stratum is a closed substack that parameterizes a point in $\cX$ along with a canonical filtration of that point. The mapping stack $\Filt(\cX)$ and this notion of a $\Theta$-stratum can be formulated in both the classical and derived context, but specifying a derived $\Theta$-stratum is equivalent to specifying a $\Theta$-stratum in the underlying classical stack, by \Cref{L:classical_stratum}.

A \emph{$\Theta$-stratification} of $\cX$ is an ascending union of open substacks $\cX_{\leq \alpha} \subset \cX$ indexed by $\alpha$ in a totally ordered set $I$, such that $\bigcup_{\alpha'<\alpha} \cX_{\leq \alpha'} \subset \cX_{\leq \alpha}$ is the complement of $\ev_1(\cS_\alpha)$ for some $\Theta$-stratum $\cS_\alpha \subset \Filt(\cX_{\leq \alpha})$ (see \Cref{D:theta_stratification} for a precise definition).

\begin{ex}[Derived Bia\l{}ynicki-Birula stratum] \label{E:stratum_example}
If $X = \Spec(A) / \bG_m$ for some simplicial commutative algebra $A$ over a field $k$, with $\bG_m$-action encoded by a $\bZ$-grading on $A$, then one may present $A$ as a level-wise polynomial simplicial commutative algebra $A_n = k[U_n]$, where $U_n$ is a $\bZ$-graded vector space. If $U_n^{>0}$ denotes the subspace spanned by positive degree generators, $I^n_+ := A_n \cdot U^{>0}_n$ is a simplicial ideal in $A$. The closed derived substack $\Spec(A/I_+) / \bG_m \hookrightarrow \Spec(A) / \bG_m$ can be given the structure of a $\Theta$-stratum (see \Cref{lem:filt_abelian_quotient}). In fact, we will see that all derived $\Theta$-strata are locally of this form (\Cref{T:local_structure_stratum}).
\end{ex}

\subsubsection*{A new perspective on $\Theta$-strata}

We will observe that the derived category of quasi-coherent complexes on any $\Theta$-stratum $\cS$, which we denote $\QC(\cS)$, has a canonical ``weight filtration,'' encoded by the following notion:

\begin{defn} \cite{baric}
A \emph{baric structure} on a stable $\infty$-category $\cC$ is a collection of semiorthogonal decompositions $\cC = \sod{\cC^{<w},\cC^{\geq w}}$ for each $w \in \bZ$, with $\cC^{<w} \subset \cC^{<w+1}$ and $\cC^{\geq w} \subset \cC^{\geq w-1}$. We let $\radj{w}$ and $\ladj{w}$ denote the right adjoint and left adjoint of the inclusions $\cC^{\geq w} \subset \cC$ and $\cC^{< w} \subset \cC$ respectively, and we call them the \emph{baric truncation functors}.
\end{defn}

For a $\Theta$-stratum $\cS$ that is a global quotient stack, we identified a baric structure on $\QC(\cS)$ previously in \cites{halpern2015derived} via explicit computation, but here we will give a different, intrinsic, explanation for this structure.

The stack $\Theta$ is a monoid in the homotopy category of (derived) algebraic stacks, i.e., the $1$-category whose objects are stacks and whose morphisms are $2$-isomorphism classes of $1$-morphisms of stacks. The multiplication map $\Theta \times \Theta \to \Theta$ is given by the map $\bA^1 \times \bA^1 \to \bA^1$ mapping $(t_1,t_2) \mapsto t_1 t_2$, which is equivariant for the group homomorphism $\Gm^2 \to \Gm$ given by the same formula. The identity point is the point $1 \in \Theta$. We refer to an action of $\Theta$ on a (derived) stack $\cS$ in the homotopy category of (derived) stacks as a \emph{weak action of $\Theta$ on $\cS$} (see \Cref{D:weak_theta_action}).

\begin{thmx}
A weak action of $\Theta$ on a derived stack $\cS$ induces a baric structure $\QC(\cS) = \langle \QC(\cS)^{<w},\QC(\cS)^{\geq w}\rangle$ whose truncation functors are defined geometrically (\Cref{P:baric_stratum}). If $\cX$ is an algebraic derived stack locally almost of finite presentation and with affine diagonal (over a fixed base stack), then equipping a closed substack $\cS \hookrightarrow \cX$ with the structure of a $\Theta$-stratum is equivalent to specifying a weak action of $\Theta$ on $\cS$ such that the relative cotangent complex $\bL_{\cS/\cX}$ lies in $\QC(\cS)^{\geq 1}$ (\Cref{P:theta_action_is_filt}).
\end{thmx}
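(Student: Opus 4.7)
The plan for the first part is to transport the canonical baric structure on $\QC(\Theta)$ along the weak action. View $\QC(\Theta)$ as $\bZ$-graded $k[x]$-modules (with $\deg x = 1$); it carries the semiorthogonal decomposition in which $\QC(\Theta)^{\geq w}$ consists of modules supported in weights $\geq w$, and $\radj{w}$ is the weight-$\geq w$ submodule. By base change, the analogous description gives a baric structure on $\QC(\Theta \times \cS)$. Given a weak action $a : \Theta \times \cS \to \cS$ with identity at $1 \in \Theta$, define
\[
\radj{w} F := i_1^\ast \radj{w}(a^\ast F), \qquad \ladj{w} F := i_1^\ast \ladj{w}(a^\ast F),
\]
where $i_1 : \cS \xrightarrow{\sim} \{1\} \times \cS \hookrightarrow \Theta \times \cS$. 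The identity axiom $a \circ i_1 \simeq \id$ produces natural maps $\radj{w} F \to F \to \ladj{w} F$, and one takes $\QC(\cS)^{\geq w}$ and $\QC(\cS)^{<w}$ to be their essential images. Semiorthogonality then transfers along $a^\ast$ from $\QC(\Theta \times \cS)$, using that $i_1$ has a section and $a \circ i_1 \simeq \id$.

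The main obstacle for Part~1 is idempotence of the truncations, which must be established using only the homotopy-level data of the weak action. The key input is the associativity $a \circ (m \times \id_\cS) \simeq a \circ (\id_\Theta \times a)$ in the homotopy category, which implies an equivalence $(\id_\Theta \times a)^\ast a^\ast F \simeq (m \times \id_\cS)^\ast a^\ast F$ in $\QC(\Theta \times \Theta \times \cS)$. Since $m^\ast$ on $\QC(\Theta)$ carries weight-$\geq w$ objects into weight-$\geq w$ objects of $\QC(\Theta \times \Theta)$ (multiplication in $\Theta$ preserves the grading), this shows $a^\ast \radj{w} F$ lies in $\QC(\Theta \times \cS)^{\geq w}$, whence $\radj{w} F \in \QC(\cS)^{\geq w}$ and $\radj{w} \radj{w} \simeq \radj{w}$. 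Crucially, membership in $\QC(\cS)^{\geq w}$ is a property rather than extra structure, so no higher coherence of the action is required.

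For the second claim, the forward direction uses that the monoid law $m : \Theta \times \Theta \to \Theta$ induces a weak action of $\Theta$ on $\Filt(\cX) = \iMap(\Theta, \cX)$ by precomposition, which restricts to $\cS$ since $\Theta$ is connected and $\cS$ is a union of connected components of $\Filt(\cX)$; the positivity $\bL_{\cS/\cX} \in \QC(\cS)^{\geq 1}$ will follow from a direct weight analysis of the cotangent complex of the mapping stack combined with the fact that $\cS$ is open in $\Filt(\cX)$. For the converse, given a weak $\Theta$-action on $\cS$ and a closed immersion $\iota : \cS \hookrightarrow \cX$, form the adjoint map $\tilde\phi : \cS \to \Filt(\cX)$ of $\iota \circ a : \Theta \times \cS \to \cX$, which satisfies $\ev_1 \circ \tilde\phi = \iota$. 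Since $\iota$ is a closed immersion that factors through $\tilde\phi$, the map $\tilde\phi$ is automatically a monomorphism, so it remains to prove $\tilde\phi$ is étale, i.e.\ $\bL_{\cS/\Filt(\cX)} = 0$. The cofiber sequence
\[
\tilde\phi^\ast \bL_{\Filt(\cX)/\cX} \to \bL_{\cS/\cX} \to \bL_{\cS/\Filt(\cX)}
\]
reduces this to showing that the first arrow is an equivalence.

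The main obstacle for Part~2 is identifying $\tilde\phi^\ast \bL_{\Filt(\cX)/\cX} \to \bL_{\cS/\cX}$ with the canonical map $\radj{1}\bL_{\cS/\cX} \to \bL_{\cS/\cX}$ furnished by Part~1. This will rest on the standard expression of the cotangent complex of $\iMap(\Theta, \cX)$ as a pushforward of $\ev^\ast \bL_\cX$ along $\Theta \times \Filt(\cX) \to \Filt(\cX)$, together with a careful analysis of how this pushforward interacts with the $\Theta$-grading after restricting along $\tilde\phi$; the net effect is that $\tilde\phi^\ast \bL_{\Filt(\cX)/\cX}$ realizes precisely the positive-weight part of $\bL_{\cS/\cX}$ in the baric structure from Part~1. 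Granted this identification, the hypothesis $\bL_{\cS/\cX} \in \QC(\cS)^{\geq 1}$ is exactly the statement that $\radj{1}\bL_{\cS/\cX} \to \bL_{\cS/\cX}$ is an equivalence, completing the equivalence of the two formulations of a $\Theta$-stratum structure.
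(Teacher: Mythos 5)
Part 1 follows the paper's approach: defining the truncations by pulling back the tautological baric structure on $\QC(\Theta\times\cS)$ through $a^\ast$ and restriction at $\{1\}\times\cS$, then using the identity and associativity axioms of the weak action to transport semiorthogonality and to prove idempotence. Your remark that membership in $\QC(\cS)^{\geq w}$ is a property rather than extra data cleanly explains why a weak (rather than coherent) $\Theta$-action suffices.

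There is a genuine gap in your converse direction of Part 2. You show $\tilde\phi : \cS \to \Filt(\cX)$ is a monomorphism and plan to prove it is \'etale, which would yield an open immersion. But a $\Theta$-stratum must be a \emph{union of connected components} of $\Filt(\cX)$, i.e.\ an open \emph{and closed} substack, and openness alone is not enough. Closedness of $\tilde\phi$ is exactly where the affine-diagonal hypothesis on $\cX$ enters: affine diagonal over the base forces $\ev_1 : \Filt(\cX) \to \cX$ to be separated, and a monomorphism over $\cX$ whose composite with the separated map $\ev_1$ equals the closed immersion $\iota$ is itself a closed immersion. Your argument never invokes this hypothesis. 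Once closedness is established, the passage from $\bL_{\cS/\Filt(\cX)} = 0$ to openness is also cleaner than abstract \'etaleness of a monomorphism (an almost finitely presented closed immersion with trivial relative cotangent complex is automatically an open and closed immersion). A secondary gap: your proposed identification of $\tilde\phi^\ast\bL_{\Filt(\cX)/\cX} \to \bL_{\cS/\cX}$ with $\radj{1}\bL_{\cS/\cX} \to \bL_{\cS/\cX}$ requires knowing that $\bL_\cS \in \QC(\cS)^{<1}$ for any stack with a weak $\Theta$-action. Without this fact, $\tilde\phi^\ast\bL_{\Filt(\cX)/\cX} \simeq \radj{1}(\iota^\ast\bL_\cX)[1]$ need not coincide with $\radj{1}(\bL_{\cS/\cX})$, and checking $\ladj{1}(D\tilde\phi)$ is an equivalence does not immediately force $D\tilde\phi$ to be one. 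You should formulate and prove this as a separate lemma before the main argument.
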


This baric structure is key to our main result, the structure theory for the derived category of a derived algebraic stack with a $\Theta$-stratum. The theorem below generalizes the main theorem of \cite{halpern2015derived} in several directions: it extends the context to derived stacks, it works over an arbitrary noetherian base (e.g., characteristic $p$), it applies to stacks which are not necessarily quotient stacks, and it removes the technical hypotheses $(A)$ and $(L)$ of \cite{halpern2015derived}, which were quite restrictive for non-smooth stacks and difficult to check in practice.

\begin{thmx}[\Cref{P:baric_decomp_supports},\Cref{T:derived_Kirwan_surjectivity}] \label{T:C}
Given an algebraic derived stack $\cX$ locally almost of finite presentation and with affine diagonal (over a fixed locally noetherian base stack), along with a $\Theta$-stratum $i : \cS \hookrightarrow \cX$, there is a unique baric structure on the derived category of quasi-coherent complexes,
\[
\QC(\cX) = \langle \QC(\cX)^{<w}, \QC_\cS(\cX)^{\geq w} \rangle,
\]
such that: i) complexes in the right factor are set-theoretically supported on $\cS$; ii) the baric truncation functors commute with filtered colimits and have locally bounded homological amplitude; and iii) $i_\ast : \QC(\cS) \to \QC(\cX)$ commutes with the baric truncation functors. The baric truncation functors induce a baric structure $\APerf_\cS(\cX) = \langle \APerf_\cS(\cX)^{<w}, \APerf_\cS(\cX)^{\geq w} \rangle$ as well, and for any $w \in \bZ$, these two subcategories form part of a semiorthogonal decomposition
\[
\APerf(\cX) = \langle \APerf_\cS(\cX)^{<w}, \cG^w, \APerf_\cS(\cX)^{\geq w} \rangle,
\]
such that the functor of restriction to the open complement $\cX \setminus \cS$ induces an equivalence
\[
\res : \cG^w \xrightarrow{\cong} \APerf(\cX \setminus \cS).
\]
\end{thmx}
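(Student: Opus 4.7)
The strategy is to build the baric structure on $\QC(\cX)$ étale-locally and verify it globalizes with the stated properties, and then to descend to $\APerf(\cX)$ and produce the middle piece $\cG^w$ by combining the baric decomposition with the local cohomology decomposition along $\cS$. The starting point is the local structure theorem for derived $\Theta$-strata (\Cref{T:local_structure_stratum}), which guarantees that étale-locally on $\cX$ the stratum $\cS \hookrightarrow \cX$ has the shape of \Cref{E:stratum_example}: $\Spec(A/I_+)/\Gm \hookrightarrow \Spec(A)/\Gm$ for a graded simplicial commutative algebra $A$, with $I_+$ generated by positive-weight elements. In this local model the baric structure on $\QC(\cS)$ coming from the weak $\Theta$-action is simply the weight decomposition of $\Gm$-representations.

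\textbf{Construction on $\QC(\cX)$.} I would define $\QC_\cS(\cX)^{\geq w}$ to be the smallest cocomplete stable subcategory of $\QC(\cX)$ containing $i_\ast \QC(\cS)^{\geq w}$, and $\QC(\cX)^{<w}$ to be its right orthogonal. To exhibit the semiorthogonal decomposition, one needs the right-adjoint truncation $\radj{w}$, which I would construct étale-locally as follows: if $I_+$ is generated by homogeneous elements $x_1, \ldots, x_n$ of positive weights, the stable Koszul complex $F \otimes_A \bigotimes_i \fib(A \to A[x_i^{-1}])$ computes $R\Gamma_\cS F$ and is naturally $\bZ$-graded, and $\radj{w} F$ is its subcomplex of weights $\geq w$. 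The key is that this construction is canonical: the refining weight grading is not arbitrary but is encoded in the weak $\Theta$-action on $\cS$ entering the definition of a $\Theta$-stratum via \Cref{P:theta_action_is_filt}, so the local constructions glue to a global functor. Property (i) is by construction; (iii) follows since $i_\ast$ intertwines the weight gradings; cocontinuity of $\radj{w}$ in (ii) is immediate. For the crux of (ii), the locally bounded homological amplitude, the hypotheses that $\cX$ is locally almost of finite presentation and $\bL_{\cS/\cX} \in \QC(\cS)^{\geq 1}$ together imply that $I_+$ is étale-locally generated by finitely many elements of positive weight, which bounds the length of the Koszul complex. Uniqueness follows because (i)--(iii) force the value of $\radj{w}$ on the generators $i_\ast \QC(\cS)^{\geq w}$, and cocontinuity determines it on all of $\QC(\cX)$.

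\textbf{Descent to $\APerf$ and identification of $\cG^w$.} Locally bounded amplitude of $\radj{w}$ implies it preserves $\APerf$; combined with property (i), it restricts to a baric truncation on $\APerf_\cS(\cX)$, yielding the stated baric structure there. For the three-piece SOD on $\APerf(\cX)$, I would combine the baric SOD on $\QC(\cX)$ with the local cohomology SOD $\QC(\cX) = \langle \QC_\cS(\cX), \QC(\cX\setminus \cS)\rangle$ (with $\QC(\cX\setminus\cS)$ embedded via $j_\ast$) to obtain a three-piece SOD
\[
\QC(\cX) = \langle \QC_\cS(\cX)^{<w},\, \QC(\cX\setminus\cS),\, \QC_\cS(\cX)^{\geq w}\rangle.
\]
I then define $\cG^w \subset \APerf(\cX)$ to be the subcategory of complexes which, viewed in $\QC(\cX)$, lie in the middle factor; equivalently, $\cG^w := \APerf_\cS(\cX)^{<w,\perp} \cap {}^\perp \APerf_\cS(\cX)^{\geq w}$ inside $\APerf(\cX)$. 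That the resulting three-subcategory decomposition is a genuine SOD on $\APerf(\cX)$ reduces to checking that the projection functors preserve almost-perfection, which follows from the locally bounded amplitude of $\radj{w}$ and $\ladj{w}$. Finally, the restriction $\res : \cG^w \to \APerf(\cX\setminus\cS)$ is essentially the composite of the embedding $\cG^w \hookrightarrow \QC(\cX)$ with $j^\ast$: it is fully faithful because $\cG^w$ lies in the $j_\ast$-essential image at the $\QC$-level, and essentially surjective because any $F' \in \APerf(\cX\setminus\cS)$ admits a lift $j_\ast F' \in \QC(\cX)$ whose window projection into $\cG^w$, built from the two baric truncations, lands in $\APerf$. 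The principal obstacle in the whole argument is Step~2: verifying that the locally-defined Koszul truncation glues to a global cocontinuous functor of locally bounded amplitude. This is precisely where the hypotheses --- locally almost of finite presentation, affine diagonal, and $\bL_{\cS/\cX} \in \QC(\cS)^{\geq 1}$ (i.e., that $\cS$ is a genuine $\Theta$-stratum, not merely a weak $\Theta$-action) --- are decisive, and where the intrinsic characterization of the baric structure on $\QC(\cS)$ via the weak $\Theta$-action must be reconciled with the explicit Koszul computation.
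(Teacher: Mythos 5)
Your overall strategy---reduce to the local model $\Spec(A)/\Gm$ via the local structure theorem and globalize by descent---is consistent with the paper's approach at a high level, and your identification $\cG^w = \APerf_\cS(\cX)^{<w,\perp} \cap {}^\perp\APerf_\cS(\cX)^{\geq w}$ is correct. But there are two genuine errors in how you propose to execute this.

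First, in the local model $X = \Spec(A)$ with $\cS = \Spec(A/I_+)/\Gm$, the "subcomplex of weights $\geq w$" of $F \otimes_A \bigotimes_i \fib(A \to A[x_i^{-1}])$ is not an $A$-module: the generators of $I_+$ have positive weight while $A/I_+$ involves generators of nonpositive weight, so multiplication by elements of $A$ shifts weights in both directions, and a weight window is not stable under the module structure. Your proposed $\radj{w}F$ is therefore not an object of $\QC(X/\Gm)$. A weight truncation only makes sense on $\QC(\cS)$ itself (where $A/I_+$ is nonpositively graded, this being exactly what $\bL_{\cS/\cX} \in \QC(\cS)^{\geq 1}$ buys you), and the paper resolves this by first building the baric decomposition on the stable subcategory $\cC$ generated by $i_\ast\QC(\cS)$ from the baric decomposition on $\QC(\cS)$ (\Cref{L:supports_truncation_simple}), and only then extending to all of $\QC_\cS(\cX)$ via filtered colimits and limits of towers through the formula with $\cO_{\cS^{(n)}}$ (\Cref{D:truncation_supports}); the Koszul complex reappears in the local analysis (\Cref{L:finiteness_radj}), but as an input to $\radj{w}_\cC$, not as the graded object being sliced by weight.

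Second, the three-term semiorthogonal decomposition you assert at the $\QC$ level with $\QC(\cX\setminus\cS)$ in the middle does not exist, and the inference that $\cG^w$ lies in the $j_\ast$-essential image is false. The local-cohomology decomposition is $\QC(\cX) = \langle j_\ast\QC(\cX\setminus\cS), \QC_\cS(\cX)\rangle$, with the open piece on the \emph{left}. Putting it in the middle would require both $\Hom(j_\ast G, F) = 0$ for $F \in \QC_\cS(\cX)^{<w}$ and $\Hom(F', j_\ast G) = 0$ for $F' \in \QC_\cS(\cX)^{\geq w}$, and only the second vanishing is automatic. Concretely, $\cO_\cX$ lies in $\cG^0$ (its $i^\ast$-restriction has weight $0$, and $i^{\QC,!}\cO_\cX$ has weights $\leq -1$ since $\bL_{\cS/\cX} \in \QC(\cS)^{\geq 1}$), but $R\Gamma_\cS\cO_\cX \neq 0$ as long as $\cS$ is nonempty, so $\cO_\cX$ is not in the $j_\ast$-essential image. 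The middle piece $\cG^w$ is a \emph{third} lift of $\APerf(\cX\setminus\cS)$, distinct from both $j_\ast$ and $j_!$, and the fully faithfulness of $\res : \cG^w \to \APerf(\cX\setminus\cS)$ is a genuine vanishing theorem---it is precisely the "quantization commutes with reduction" statement $\Hom(F, R\Gamma_\cS G) = 0$ for $F$ with $i^\ast F$ in weights $\geq w$ and $G \in \QC(\cX)^{<w}$ (\Cref{P:quantization_commutes_with_reduction}). Moreover, to assemble a three-term decomposition of $\APerf(\cX)$ at all, one needs the two-term decomposition $\APerf(X/\Gm) = \langle \APerf_\cS(X/\Gm)^{<w}, \APerf(X/\Gm)^{\geq w}\rangle$ in the local case (\Cref{L:local_sod}); this is a separate nontrivial construction---an explicit factorization of generators built from Koszul duals---that your outline omits entirely. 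The locally bounded amplitude of the baric truncation is not by itself enough to produce this second decomposition, since $\APerf(\cX)^{\geq w}$ is not a baric piece of $\QC_\cS(\cX)$.
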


\Cref{T:C} is just a summary of the full statements of \Cref{P:baric_decomp_supports} and \Cref{T:derived_Kirwan_surjectivity}, which are more precise and include much more, such as further decomposition of the semiorthogonal factors, and the interaction of the baric truncation functors with the $t$-structure. We also formulate an immediate extension to $\Theta$-stratifications with multiple strata, \Cref{T:derived_Kirwan_surjectivity_full}.

One consequence of this structure theory is a general version of the ``quantization commutes with reduction'' theorem for the geometric invariant theory quotient of a smooth variety \cite{quantization}.

\begin{prop}[\Cref{P:quantization_commutes_with_reduction}]
Under the hypotheses of \Cref{T:C}, let $F \in \APerf(\cX)$ be such that $i^\ast(F) \in \QC(\cS)^{\geq w}$, and let $G \in \QC(\cX)^{<w}$. Then the restriction map is an equivalence
\[
\RHom_\cX(F,G) \to \RHom_{\cX \setminus \cS}(F|_{\cX\setminus \cS},G|_{\cX \setminus \cS}).
\]
\end{prop}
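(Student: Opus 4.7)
The plan is to reinterpret the restriction map as the image on $\RHom$ of a canonical fiber sequence for $G$, and then reduce the vanishing of the fiber to the baric semiorthogonality on $\QC(\cS)$ via the $(i^\ast, i_\ast)$-adjunction.

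Let $j : \cX \setminus \cS \hookrightarrow \cX$ denote the open complement, and set $\Gamma_\cS(G) := \fib(G \to j_\ast j^\ast G)$. The $(j^\ast, j_\ast)$-adjunction identifies $\RHom_{\cX \setminus \cS}(j^\ast F, j^\ast G) \simeq \RHom_\cX(F, j_\ast j^\ast G)$, so the fiber of the restriction map is $\RHom_\cX(F, \Gamma_\cS(G))$, and the goal is to show this is $0$. First I would locate $\Gamma_\cS(G)$ in the baric structure of \Cref{T:C}: for any $N \in \QC(\cX \setminus \cS)$, $j_\ast N$ is right-orthogonal to anything supported on $\cS$, and in particular to $\QC_\cS(\cX)^{\geq w}$, which places $j_\ast N$ in the left factor $\QC(\cX)^{<w}$ of the semiorthogonal decomposition. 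Combined with the hypothesis $G \in \QC(\cX)^{<w}$, the fiber sequence $\Gamma_\cS(G) \to G \to j_\ast j^\ast G$ shows that $\Gamma_\cS(G) \in \QC_\cS(\cX) \cap \QC(\cX)^{<w}$.

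Next, I would exploit that $\QC_\cS(\cX)$ is generated under colimits by the essential image of $i_\ast$, a standard consequence of writing a complex set-theoretically supported on $\cS$ as a colimit along its nilpotent-thickening filtration. Combined with \Cref{T:C}(iii) — that $i_\ast$ commutes with the baric truncations — this identifies $\QC_\cS(\cX) \cap \QC(\cX)^{<w}$ as the colimit closure of $\{i_\ast N : N \in \QC(\cS)^{<w}\}$. Since $F$ is almost perfect, $\RHom_\cX(F,-)$ commutes with the colimits in question, so the vanishing reduces to checking $\RHom_\cX(F, i_\ast N) = 0$ for each such $N$. By adjunction, this $\RHom$ equals $\RHom_\cS(i^\ast F, N)$, which vanishes because $i^\ast F \in \QC(\cS)^{\geq w}$ and $N \in \QC(\cS)^{<w}$, by the baric semiorthogonality on $\QC(\cS)$ supplied by the weak $\Theta$-action on $\cS$ (the theorem immediately preceding \Cref{T:C}).

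The main obstacle is the colimit reduction in the second step: one must verify that the colimit generation of $\QC_\cS(\cX) \cap \QC(\cX)^{<w}$ by $i_\ast \QC(\cS)^{<w}$ holds in the generality of derived stacks with affine diagonal, and that the colimits can be arranged with a uniform lower bound so that $\RHom_\cX(F, -)$ preserves them for almost perfect $F$. The locally bounded homological amplitude of the baric truncations in \Cref{T:C} is precisely what should make this part of the argument go through; all other steps are formal manipulations of adjunctions and semiorthogonal decompositions.
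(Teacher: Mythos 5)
Your overall strategy is the same as the paper's: reduce to showing $\RHom_\cX(F, R\Gamma_\cS(G)) = 0$, locate $R\Gamma_\cS(G)$ in $\QC_\cS(\cX)^{<w}$, then use a colimit reduction together with the $(i^\ast, i_\ast)$-adjunction and the baric semiorthogonality on $\QC(\cS)$. Your first step — deducing $\Gamma_\cS(G) \in \QC_\cS(\cX)^{<w}$ from the observation that $j_\ast j^\ast G$ is right-orthogonal to $\QC_\cS(\cX)^{\geq w}$ — is a fine alternative to the paper's direct appeal to part (4) of \Cref{P:baric_decomp_supports}; both work.

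The gap, which you flag yourself, is the colimit reduction, and it is genuine as stated. The claim that $\QC_\cS(\cX)^{<w}$ is ``the colimit closure of $i_\ast\QC(\cS)^{<w}$'' is not correct: by \Cref{D:subcategories_qcoh}, this subcategory is the closure of $i_\ast\QC(\cS)^{<w}$ under extensions, filtered colimits, \emph{and} limits of towers with eventually constant truncations. The nilpotent-thickening colimit presentation of local cohomology you invoke ($R\Gamma_\cS(F) \simeq \colim_n \inner{\RHom}^\otimes(\cO_{\cS^{(n)}}, F)$) is only valid for bounded-above $F$; for unbounded $F$ there is an outer inverse limit over truncations (see \Cref{T:deformation_to_normal_cone}(4)), and $\QC_\cS(\cX)$ is not a colimit closure of $i_\ast\QC(\cS)$ in any sense compatible with $\RHom(F,-)$. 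The paper's proof handles this by a truncation argument: first use left completeness of the $t$-structure and the fact that $F$ is bounded below to write $\RHom(F,G) \simeq \varprojlim_p \RHom(F, \tau_{\leq p} G)$, noting via part (6) of \Cref{P:baric_decomp_supports} that $\tau_{\leq p}(G)$ stays in $\QC_\cS(\cX)^{<w}$; then express $\tau_{\leq p}(G)$ as a filtered colimit of compact objects $G_\alpha \in \DCoh_\cS(\cX)_{\leq p}$, apply $\tau_{\leq p}\ladj{w}(-)$ to replace them by objects of $\DCoh_\cS(\cX)^{<w}_{\leq p}$ without changing the colimit; and finally observe that such objects are finite extensions of $i_\ast E$ for $E \in \APerf(\cS)^{<w}$, so adjunction finishes it. Your ``uniform lower bound'' should read ``uniform upper bound'': it is for filtered colimits of uniformly right-bounded objects that $\RHom(F,-)$ commutes when $F$ is almost perfect, and this is exactly what the truncation step arranges.
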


We have applied this proposition elsewhere to prove a version of the Verlinde formula for the moduli of Higgs bundles on a curve \cite{halpern2016equivariant}.

Although \Cref{T:C} is very general, it only applies to $\APerf(\cX)$ and therefore is not strong enough for our application in \Cref{T:A}. So in \Cref{S:quasi-smooth}, we specialize to the context of algebraic derived stacks over a field of characteristic $0$, so that we can use the theory of ind-coherent sheaves and Grothendieck duality developed in \cites{GR1,GR2}. We establish two variants of the structure theorem under more specialized hypotheses. If the inclusion of the $\Theta$-stratum $\cS \hookrightarrow \cX$ is a regular closed immersion, then a version of the theorem, \Cref{prop:DKS_perfect}, holds for the category of perfect complexes $\Perf(\cX)$.

If $\cX$ is quasi-smooth, meaning locally almost finitely presented and with perfect cotangent complex with Tor-amplitude in $[-1,1]$ (\Cref{D:quasi_smooth}), then a version of the theorem, \Cref{thm:derived_Kirwan_surjectivity_quasi-smooth}, holds for $\DCoh(\cX)$ under an additional hypothesis on the obstruction spaces of points in the center of $\cS$. It is this last variant that we actually use in step (3) and (4) of the proof of \Cref{T:A}.

\subsection*{A comment on homotopical methods}

Our proof of \Cref{T:A} makes essential use of derived algebraic geometry, even though the statement only involves classical varieties. There are two main ways in which homotopical methods are essential to our arguments:\\

First, we make use of the very flexible theory of descent for $\infty$-categories in order to reduce ``global'' statements to local ones. We have already discussed the role that local models play in the proof of \Cref{T:A}. In addition, in \Cref{T:C} we use a local structure theorem for a derived $\Theta$-stratum, \Cref{T:local_structure_stratum}, to reduce the proof of several general claims to a direct verification in the simple case of \Cref{E:stratum_example}.\\

Second, the proof of \Cref{T:C} depends on two key facts about $\cS$: the relative cotangent complex of a $\Theta$-stratum, $\bL_{\cS/\cX}$, has strictly positive weights along $\cS$, and $\cS$ itself is ``contracted'' by a $\Gm$-equivariant action of the monoid $\bA^1$. As \Cref{P:theta_action_is_filt} shows, these criteria uniquely determine the structure of a derived $\Theta$-stratum on $\cS$. In the simple case of \Cref{E:stratum_example}, where $\cX = \Spec(A) / \Gm$ for a classical graded ring $A$ in characteristic $0$, this corresponds to finding a quotient algebra of $A$ that is non-positively graded, and that admits a semi-free resolution as a commutative differential graded $A$-algebra using homogeneous generators of positive weight only. It is not hard to find examples, such as \Cref{E:stratum_derived_structure}, where this is not possible without allowing $B$ to be a commutative differential graded algebra  with non-vanishing higher homology groups.

This observation, that in order to get the ``correct'' deformation theory on the stratum $\cS$, one must leave the world of classical stacks, was the starting point of this paper. As a result, even though the statement of \Cref{T:C} is new in the case where $\cX$ is a \emph{classical} stack -- in fact it is new for stacks of the form $\Spec(A)/\Gm$ as above -- the proof requires working with derived stacks.

\subsection*{Notes and notation}

\subsubsection*{Author's note}

The paper \cite{ballard_wall} serves as a proof-of-concept for using variation of stability to study the derived categories of moduli spaces directly, and it inspired this work. In order to study moduli of sheaves on a $K3$, we needed to build on that method in several ways: by extending the method from smooth stacks to more general derived stacks and to stacks without a known global quotient presentation, and by handling the more complicated wall crossings that arise in this setting.

This paper unifies, generalizes, and replaces two previous unpublished notes: the preprint \cite{halpern2015remarks}, which contains the results of \Cref{S:theta_strat} and \Cref{S:quasi-smooth} for the special case of local quotient stacks over a field of characteristic $0$, and the preprint \cite{halpern2017K3}, which has been publicly available on my website since February 2017. The latter contains a proof of \Cref{T:magic_windows}, on derived equivalences from variation of stability on a derived stack with self-dual cotangent complex, and sketches its application to the $D$-equivalence conjecture. Most of the results of this paper were also announced in \cite{halpern2015theta}. There are several simplifications and corrections in this final version compared to the preliminary versions \cites{halpern2015remarks,halpern2017K3}. Also, I have moved the discussion of the ``virtual non-abelian localization formula'' of \cite{halpern2015remarks}*{Sect.~5} to a separate short paper.

The final version of this paper uses quite a bit of the ``beyond geometric invariant theory'' program, which has grown in unexpected directions and whose development has ultimately delayed this paper. To those who have expressed interest over the years, I apologize for the long lag between the original announcements of these results and the release of this manuscript. I would also like to acknowledge several interesting related developments since the preliminary versions of this work were made public:

\begin{itemize}
\item Yukinobu Toda's theory of $d$-critical flips \cites{toda2019semiorthogonal,toda2019birational} also uses derived algebraic geometry to study how variants of the derived category of certain moduli spaces changes under wall-crossing. Although the approach is closely related to ours, the moduli stacks there are $d$-critical stacks (e.g., sheaves on a $3$-fold), whereas our main application is to quasi-smooth stacks (e.g., sheaves on a surface).

More recently, \cite{halpern2017K3} has inspired work by Koseki-Toda \cite{koseki2020derived} and Toda \cite{toda2020window}, which adapts the methods of this paper to study variation of stability on certain smooth stacks and quasi-smooth derived stacks that do not necessarily have self-dual cotangent complexes. Their application is to categorify wall-crossing formulas that arise in Donaldson-Thomas theory for local Calabi-Yau $3$-folds.\\

\item For a classical scheme $X$ with a $\Gm$-action such that $X/\Gm$ admits a good moduli space, Wai-kit Yeung has developed in \cites{Yeung1, Yeung2, Yeung3} a very nice approach to structure theorems for $\QC(X/\Gm)$ and $\APerf(X/\Gm)$ that makes use of non-commutative geometry rather than derived algebraic geometry. This approach also manages to remove some of the technical hypotheses of \cite{halpern2015derived}. It is expected that in the case of such $X/\Gm$, the semi-orthogonal decompositions obtained via this approach agree with those described in \cite{halpern2015remarks}, and it is possible that much of the general theory of \Cref{S:theta_strat} can be developed along these lines.\\
\end{itemize} 

I would like to thank many colleagues for helpful and encouraging conversations about the content of this paper: Nick Addington, Jarod Alper, Dima Arinkin, Matthew Ballard, Arend Bayer, Bhargav Bhatt, Ben Davison, Dennis Gaitsgory, Daniel Huybrechts, Dmitry Kubrak, Diletta Martinelli, Akhil Mathew, Davesh Maulik, Daniel Pomerleano, Georg Oberdieck, Andrei Okounkov, Alex Perry, David Rydh, Giulia Sacc\`a, Constantin Teleman, Chris Woodward, Yukinobu Toda, Wai-kit Yeung, Xiaolei Zhao, the participants of the Winter 2018/19 SFB seminar on this work at the Universit{\"a}t Bonn, and many people who I have forgotten to mention.

This work was supported by Columbia University, the Institute for Advanced Study, the Mathematical Sciences Research Institute, and Cornell University, as well as the NSF grants DMS-1303960, DMS-1601976, and DMS-1945478. \\

\subsubsection*{Notation}

The term \emph{derived stack} will denote a presheaf of $\infty$-groupoids that satisfies \'etale descent on the $\infty$-site of simplicial commutative algebras, $\sAlg_R^{\rm op}$, over a fixed base ring $R$. In \Cref{S:theta_strat}, the base ring is $\bZ$, and in \Cref{S:quasi-smooth} and \Cref{S:D_equivalence} the base ring is a field $k$ of characteristic $0$.

For any $A \in \sAlg_R$, $\QC(R):= A\Mod$ denotes the $\infty$-category of $dg$-modules over the normalized chain algebra of $A$. For any derived stack $\cX$, we define the $\infty$-category $\QC(\cX)$ of unbounded quasi-coherent complexes on $\cX$ via right Kan extension from the affine case, i.e., $\QC(\cX)$ is the limit under pullback of the $\infty$-categories $\QC(A)$ over all maps $\Spec(A) \to \cX$. $\Perf(\cX)$ and $\AAPerf(\cX)$ denote the full subcategories of $\QC(\cX)$ of perfect and almost perfect complexes \cite{HA}*{Defn.~7.2.5.10} respectively, where the latter is the analogous notion in derived algebraic geometry to pseudo-coherent complexes. For a closed substack $\cS \subset \cX$, we add a subscript, as in $\QC_\cS(\cX)$, to denote the full $\infty$-subcategory of complexes that are set-theoretically supported on $\cS$, i.e., those whose restriction to $\cX \setminus \cS$ vanishes. We typically denote objects of $\QC(\cX)$ in Roman font.

We use homological grading conventions throughout, so for the usual $t$-structure on $\QC(\cX)$, $\tau_{\leq n} F$ is an object that is homologically bounded above ($H_i(F) = 0$ for $i>n$) and $\tau_{\geq n} F$ is homologically bounded below. Subcategories defined by the $t$-structure will be denoted by subscripts, so $\QC(\cX)_{<\infty}$ and $\QC(\cX)_{>-\infty}$ are the full subcategories of complexes that are homologically bounded above and below respectively, and $\QC(\cX)_\heart$ is the category of quasi-coherent sheaves.

When $\cX$ is locally noetherian, meaning it admits a smooth surjection from a disjoint union of noetherian affine derived schemes, we let $\DCoh(\cX)$ (respectively $\APerf(\cX)$) denote the full $\infty$-subcategory of $\QC(\cX)$ consisting of complexes whose homology sheaves are coherent and bounded (respectively coherent and homologically bounded below). We have $\AAPerf(\cX)=\APerf(\cX)$ in this case. Despite the classical notation, these will typically not agree with the derived category of the abelian category of coherent sheaves $\Coh(\cX) := \DCoh(\cX)_\heart$ -- in fact $\Coh(\cX) \cong \Coh(\cX^{\rm cl})$, where the latter denotes the underlying classical stack of $\cX$.

For any morphism of algebraic stacks $f : \cX \to \cY$, there is a cocontinuous pullback functor $f^\ast : \QC(\cY) \to \QC(\cX)$, whose right adjoint we denote $f_\ast$ -- it exists by the adjoint functor theorem, because $\QC(\cX)$ is presentable for any algebraic stack. If $f$ is quasi-compact and quasi-separated and $f_\ast$ has universally bounded cohomological amplitude, then $f_\ast$ is cocontinuous, and the base change and projection formulas hold. Although it is certainly not the original treatment of quasi-coherent sheaves on derived stacks, we will use the same terminology as in \cite{halpern2014mapping}*{App.~A}, and we refer the reader there for further discussion.

We will also use some more exotic constructions: for a closed immersion $i : \cS \to \cX$, we let $i^{\QC,!}$ denote the right adjoint of $i_\ast$, and we let $\inner{\RHom}_\cX^\otimes(-,-)$ denote the inner Hom for the symmetric monoidal $\infty$-category $\QC(\cX)^\otimes$. We caution that the formation of $\inner{\RHom}_\cX^\otimes(F,G)$ and $i^{\QC,!}(G)$ are only smooth-local over $\cX$ if $F \in \APerf(\cX)$ and $G \in \QC(\cX)_{\leq d}$ for some $d$ \cite{preygel2011thomsebastiani}*{Lem.~A.1.1}.

Once we specialize to working over a field of characteristic $0$, in \Cref{S:quasi-smooth} and \Cref{S:D_equivalence}, we will use the theory of ind-coherent sheaves developed in \cites{gaitsgory2013ind,drinfeld2013some,GR1,GR2,arinkin2015singular}. In this case $\IC(\cX)$ denotes the $\infty$-category of Ind-coherent sheaves, and if $f : \cX \to \cY$ is a morphism of derived algebraic stacks, then $f^!$ denotes the shriek pullback $\IC(\cY) \to \IC(\cX)$.

When considering representations of the group $(\Gm)_R$, i.e., $\bZ$-graded $R$-modules, $R\langle w \rangle$ will denote the free graded $R$-module generated in weight $-w$.

Finally, a \emph{semi-orthogonal decomposition} of a stable $\infty$-category $\cC$, which we denote $\cC = \langle \cA, \cB \rangle$, consists of two full stable $\infty$-subcategories $\cA,\cB \subset \cC$ such that $H_0(\RHom(B,A)) = 0$ for any $A\in \cA$ and $B \in \cB$, and objects of $\cA$ and $\cB$ generate the homotopy category of $\cC$ as a triangulated category. Note that we do not assume that $\cA$ or $\cB$ is an admissible subcategory.


\section{Derived \texorpdfstring{$\Theta$}{Theta}-strata and local cohomology}
\label{S:theta_strat}

Throughout this section, we work over a fixed base, an algebraic locally Noetherian derived stack $\cB$, which we often suppress from our notation. All products are formed relative to $\cB$ unless otherwise stated. We develop a structure theory for the derived category of quasi-coherent complexes on a stack with a derived $\Theta$-stratum $\cS \hookrightarrow \cX$. The main result is \Cref{P:baric_decomp_supports}, which defines a functor $\radj{w}_{\cS}(-) : \QC(\cX) \to \QC(\cX)$ that can be regarded as the ``weight $\geq w$'' part of the usual local cohomology functor $R\Gamma_\cS(-)$.

\subsection{Baric structures and weak \texorpdfstring{$\Theta$}{Theta}-actions}

Note that $\Theta$ is a monoidal object in the $\infty$-category of $\cB$-stacks, where the product map $\mu: \Theta \times \Theta \to \Theta$ is induced by the map $\bA^2 \to \bA^1$ taking $(t_1,t_2) \to t_1 t_2$. This is equivariant with respect to the group homomorphism $\Gm^2 \to \Gm$ taking $(z_1,z_2) \mapsto z_1 z_2$. The identity section $\cB \to \Theta$ corresponds to the point $1$ in $\Theta$.

\begin{defn} \label{D:weak_theta_action}
A \emph{weak action} of the monoid $\Theta$ on a derived $\cB$-stack $\cS$ is an action of $\Theta$ on $\cS$ in the homotopy category of $\cB$-stacks. It consists of an action map $a : \Theta \times \cS \to \cS$ along with isomorphisms $a|_{\{1\} \times \cS} \simeq \id_\cS$ and $a \circ (\id_\Theta \times a) \simeq a \circ (\mu \times \id_\cS) : \Theta \times \Theta \times \cS \to \cS$.
\end{defn}

Recall that for any derived stack $\cX$, the stable $\infty$-category of quasi-coherent complexes on $B\Gm \times \cX$ splits as a direct sum $\QC(B\Gm \times \cX) \simeq \oplus_w \QC(\cX)^w$ of copies of $\QC(\cX)$. This splitting can be described geometrically by considering the projection $\pi : B\Gm \times \cX \to \cX$. The inclusion $\QC(\cX)^w \subset \QC(B\Gm \times \cX)$ is equivalent to the functor $\cO_{B\Gm}\langle -w \rangle \otimes \pi^\ast(-)$, and the projection $\QC(B\Gm \times \cX) \to \QC(\cX)^w$ is given by $\pi_\ast(\cO_{B\Gm}\langle w \rangle \otimes (-))$. We say that an object $E \in \QC(B\Gm \times \cX)$ is \emph{concentrated in weight $\geq w$ (respectively $<w$)} if under the direct sum decomposition $E = \oplus_i E_i$ we have $E_i = 0$ for $i<w$ (respectively $i \geq w$).

We shall give an analogous construction of a baric stucture on $\QC(\cS)$ for any stack with a weak $\Theta$-action. Consider the fiber sequence in $\QC(\Theta)$
\[
\cO_\Theta\langle w \rangle \xrightarrow{t^w} \cO_\Theta[t^{-1}] \to \cO_\Theta[t^{-1}] / \cO_\Theta \cdot t^w,
\]
where $t$ is the coordinate of weight $-1$ on $\bA^1$, and $\cO_\Theta[t^{-1}]$ corresponds to the graded $\cO_\cB[t]$-module $\cO_\cB[t^\pm]$. By abuse of notation, we will use the same notation to denote the pullback of this fiber sequence along the projection $\Theta \times \cS \to \Theta$ for any derived $\cB$-stack $\cS$.

\begin{prop} \label{P:baric_stratum}
A weak $\Theta$-action $a : \Theta \times \cS \to \cS$ on a derived $\cB$-stack $\cS$ induces a baric structure $\QC(\cS) = \langle \QC(\cS)^{\geq w}, \QC(\cS)^{<w} \rangle$. The subcategories $\QC(\cS)^{\geq w}$ and $\QC(\cS)^{< w}$ are the essential image of the baric truncation functors $\radj{w}$ and $\ladj{w}$ defined by the fiber sequence
\[
\xymatrix{ \pi_\ast (\cO_\Theta\langle w \rangle \otimes a^\ast(F)) \ar[r]^{t^w} \ar@{=}[d] & \pi_\ast(\cO_\Theta[t^{-1}] \otimes a^\ast(F)) \ar[r] \ar[d]^\simeq & \pi_\ast((\cO_\Theta[t^{-1}] / \cO_\Theta \cdot t^w) \otimes a^\ast(F)) \ar@{=}[d] \\ \radj{w}(F) \ar[r] & F \ar[r] & \ladj{w}(F) },
\]
where $\pi : \Theta \times \cS \to \cS$ is the projection, and the isomorphism $\pi_\ast(\cO_\Theta[t^{-1}] \otimes a^\ast(F)) \to F$ is obtained from the canonical isomorphism $\pi_\ast(\cO_\Theta[t^{-1}] \otimes a^\ast(F)) \simeq a^\ast(F) |_{\{1\} \times \cS}$ along with the given isomorphism $a|_{\{1\}\times \cS} \simeq \id_\cS$. Furthermore, we have:
\begin{enumerate}
\item The baric truncation functors commute with filtered colimits and are right $t$-exact;
\item The canonical map $\colim_{w \to -\infty} \radj{w}(F) \to F$ is an equivalence;
\item $\radj{w}$ and $\ladj{w}$ preserve $\Perf(\cS)$ and $\AAPerf(\cS)$, and in particular they induce baric structures on these subcategories as well;
\item This baric structure is natural with respect to $\Theta$-equivariant maps in the sense that if $\cS$ and $\cS'$ are derived stacks with weak $\Theta$-actions and $f : \cS' \to \cS$ is a morphism such that the diagram
\[
\xymatrix{\Theta \times \cS' \ar[r]^{a'} \ar[d]_{\id_\Theta \times f} & \cS' \ar[d]^f \\ \Theta \times \cS \ar[r]^a & \cS }
\]
is commutative up to isomorphism, then $f^\ast (\QC(\cS)^{\geq w}) \subset \QC(\cS')^{\geq w}$, $f^\ast(\QC(\cS)^{<w}) \subset \QC(\cS')^{<w}$, and $f_\ast(\QC(\cS')^{<w}) \subset \QC(\cS)^{<w}$;
\item $F \in \QC(\cS)$ lies in $\QC(\cS)^{\geq w}$ if and only if $a^\ast(F)|_{(\{0\}/\Gm) \times \cS} \in \QC(B\Gm \times \cS)$ is concentrated in weight $\geq w$, and if $F \in \AAPerf(\cS)$, then $F \in \AAPerf(\cS)^{<w}$ if and only if $a^\ast(F)|_{(\{0\}/\Gm) \times \cS} \in \QC(B\Gm \times \cS)$ is concentrated in weight $<w$;
\item The baric structure is multiplicative in the sense that $\QC(\cS)^{\geq v} \otimes \QC(\cS)^{\geq w} \subset \QC(\cS)^{\geq v+w}$ and $\AAPerf(\cS)^{< v} \otimes \AAPerf(\cS)^{< w} \subset \AAPerf(\cS)^{< v+w-1}$.
\end{enumerate}
\end{prop}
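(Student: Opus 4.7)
I define $\QC(\cS)^{\geq w}$ and $\QC(\cS)^{<w}$ as the essential images of the endofunctors $\radj{w}$ and $\ladj{w}$ constructed from the fiber sequence in the statement. The plan is then to (a) verify that this fiber sequence is well-defined, (b) show the two images form a baric structure via idempotence and semi-orthogonality, and (c) check properties (1)--(6) by unpacking the construction.

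\textbf{Well-posed fiber sequence.} The identification $\pi_\ast(\cO_\Theta[t^{-1}] \otimes a^\ast F) \simeq F$ rests on the observation that $\cO_\Theta[t^{-1}] \simeq j_\ast \cO_{\mathrm{pt}}$ for the open immersion $j : 1 \hookrightarrow \Theta$ (obtained by inverting $t$). Applying the projection formula along $\mathrm{id}_\cS \times j$ and invoking the unit axiom $a|_{\{1\} \times \cS} \simeq \mathrm{id}_\cS$ gives the identification. The rest of the fiber sequence is obtained by applying $\pi_\ast(- \otimes a^\ast F)$ to the fiber sequence $\cO_\Theta\langle w\rangle \to \cO_\Theta[t^{-1}] \to \cO_\Theta[t^{-1}]/\cO_\Theta \cdot t^w$ pulled back from $\Theta$.

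\textbf{Idempotence and orthogonality.} This is the technical heart. To see that $\QC(\cS)^{\geq w}$ and $\QC(\cS)^{<w}$ form a semi-orthogonal decomposition, it suffices to show that $\radj{w}$ (resp.\ $\ladj{w}$) is idempotent and that the composite $\radj{w} \circ \ladj{w}$ vanishes. The key input is the associativity of the weak $\Theta$-action, which gives an isomorphism $(\mathrm{id}_\Theta \times a)^\ast a^\ast F \simeq (\mu \times \mathrm{id}_\cS)^\ast a^\ast F$ in $\QC(\Theta^2 \times \cS)$. Combining this with the multiplicative identity $\mu^\ast \cO_\Theta\langle w\rangle \simeq \cO_\Theta\langle w\rangle \boxtimes \cO_\Theta\langle w\rangle$ (encoding that the $\Gm^2 \to \Gm$ homomorphism is multiplicative) and applying base change along the Cartesian square relating $\pi$ and $a$, the iterated expression for $\radj{w} \radj{w} F$ simplifies to $\radj{w} F$. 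The vanishing $\radj{w} \ladj{w} \simeq 0$ reduces analogously to the pointwise statement $\Hom_\Theta(\cO_\Theta\langle w\rangle, \cO_\Theta[t^{-1}]/\cO_\Theta \cdot t^w) = 0$, a direct check in graded $k[t]$-modules.

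\textbf{Properties (1)--(6) and main obstacle.} These follow by tracking definitions. For (1), right $t$-exactness and commutation with filtered colimits of the truncation functors follow because tensor with the line bundle $\cO_\Theta\langle w\rangle$ is $t$-exact, $a^\ast$ is $t$-exact, and $\pi_\ast$ is $t$-exact and cocontinuous (factoring through the affine pushforward $\Theta \times \cS \to B\Gm \times \cS$ followed by the exact weight-zero invariants functor on $B\Gm \times \cS \to \cS$). For (2), use $\colim_{w \to -\infty} \cO_\Theta\langle w\rangle \simeq \cO_\Theta[t^{-1}]$ in the fiber sequence. For (3), tensor with a line bundle and $a^\ast$ preserve $\Perf$ and $\AAPerf$, and $\pi_\ast$ preserves them by the above factorization, giving an induced baric structure on these subcategories. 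For (4), chase the hypothesized commutative diagram for $f$ through the truncation construction and apply flat base change. For (5), restrict the fiber sequence along the zero section $B\Gm \hookrightarrow \Theta$ and identify the resulting decomposition of $a^\ast F|_{B\Gm \times \cS}$ with its canonical weight decomposition. For (6), multiplicativity follows from $\cO_\Theta\langle v\rangle \otimes \cO_\Theta\langle w\rangle \simeq \cO_\Theta\langle v+w\rangle$; in the $\AAPerf$ case the quotient $\cO_\Theta[t^{-1}]/\cO_\Theta \cdot t^w$ is concentrated in weights $\geq 1 - w$, and the tensor product of two such objects sits in weights $\geq v + w - 2$, accounting for the shift. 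I expect the main obstacle to be the idempotence/orthogonality argument: one must manipulate base change carefully in the derived setting and ensure that the only-weakly-associative $\Theta$-action carries enough coherence for the truncation functors to satisfy the required identities at the level of functors, not just objects; once this is settled, the remaining checks are largely mechanical.
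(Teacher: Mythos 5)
The central difficulty you correctly identify — idempotence and orthogonality — is where your argument has a genuine gap. After applying base change for the square relating $\pi$ and $a$, invoking associativity of the weak action, and using $\mu^\ast\cO_\Theta\langle w\rangle \simeq \cO_\Theta\langle w\rangle \boxtimes \cO_\Theta\langle w\rangle$, the iterated expression becomes
\[
\radj{w}\radj{w}(F)\;\simeq\;(\pi\circ q)_\ast\,(\mu\times\id_\cS)^\ast\bigl(\cO_\Theta\langle w\rangle\otimes a^\ast F\bigr),
\]
where $q:\Theta\times\Theta\times\cS\to\Theta\times\cS$ is the relevant projection. This does \emph{not} immediately simplify to $\pi_\ast(\cO_\Theta\langle w\rangle\otimes a^\ast F)=\radj{w}(F)$: since $\pi\circ q=\pi\circ(\mu\times\id_\cS)$, the projection formula rewrites the left side as $\pi_\ast\bigl((\cO_\Theta\langle w\rangle\otimes a^\ast F)\otimes(\mu\times\id_\cS)_\ast\cO\bigr)$, and $\mu_\ast\cO_{\Theta\times\Theta}$ is a large (non-invertible) $\cO_\Theta$-algebra, so the equality depends on a genuine cancellation. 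Identifying the iterated pushforward with $\radj{w}(F)$ requires computing $(\mu\times\id)^\ast$ explicitly in Rees-module terms — that is precisely the content of the paper's Lemma \ref{L:tautological_baric_structure}, which establishes the baric decomposition on $\QC(\Theta\times\cS)$ for the tautological action — and your proposal neither carries out this computation nor cites a substitute for it. The same issue affects your claimed reduction of $\radj{w}\circ\ladj{w}\simeq 0$ to the graded-$k[t]$-module statement: that reduction again requires knowing the Rees description of $(\mu\times\id)^\ast a^\ast F$ before the pointwise check has any purchase.

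It is worth noting how the paper sidesteps a direct idempotence computation: after establishing the baric structure on $\QC(\Theta\times\cS)$ via the Rees equivalence (Lemma \ref{L:tautological_baric_structure}) and showing pullback along $\Theta$-equivariant maps commutes with baric truncation (Lemma \ref{L:theta_equivariant_map}), it observes that $\RHom_\cS(F,G)$ is a \emph{retract} of $\RHom_{\Theta\times\cS}(a^\ast F, a^\ast G)$ via the unit isomorphism $a|_{\{1\}\times\cS}\simeq\id_\cS$, so semiorthogonality on $\Theta\times\cS$ descends to $\cS$. Your direct route through idempotence is viable in principle, but would land on the same Rees-module calculation. Two smaller remarks: for property (4), the containment $f_\ast(\QC(\cS')^{<w})\subset\QC(\cS)^{<w}$ is an adjunction statement ($\RHom_\cS(F,f_\ast E)\simeq\RHom_{\cS'}(f^\ast F,E)$) rather than a base-change chase, so your sketch should address it separately; and for the ``only if'' direction of (5) in the $\AAPerf$ case you will need a Nakayama-style argument to pass from the fiber at $\{0\}/\Gm\times\cS$ back to the weight condition on $\cS$ — restricting the fiber sequence along the zero section alone does not give this.
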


The proof makes use of the classical Reese equivalence, which extends to our $\infty$-categorical context as follows: Let $\underline{\bZ}$ be the free $\infty$-category generated by objects $[w]$ for $w \in \bZ$ and a single arrow $[w+1] \to [w]$ for each $w$. $\underline{\bZ}$ is the nerve of $\bZ$ regarded as a partially ordered set. Then the Reese construction gives an equivalence $\QC(\Theta \times \cS) \simeq \Fun(\underline{\bZ},\QC(\cS))$. The equivalence takes a complex $E \in \QC(\Theta \times \cS)$ to the diagram $[\cdots \to \pi_\ast(\cO_\Theta\langle w+1 \rangle \otimes E) \to \pi_\ast(\cO_\Theta\langle w \rangle \otimes E) \to \cdots]$, and it takes a diagram $[\cdots \to E_{w+1} \to E_w \to \cdots]$ to the graded $\cO_\cS[t]$-module $\bigoplus_w E_w$ with $t$ acting by the given maps $E_{w+1} \to E_w$.

Note that by definition of the baric truncation functors in \Cref{P:baric_stratum}, for $F \in \QC(\cS)$ the object $a^\ast(F) \in \QC(\Theta \times \cS)$ corresponds to
\[
a^\ast(F) \simeq [\cdots \to \radj{w+1}(F) \to \radj{w}(F) \to \cdots]
\]
under the Reese equivalence.

\begin{defn}
If $\cX$ is a derived stack, the \emph{tautological weak action} of $\Theta$ on $\Theta \times \cS$ is given by the multiplication map $a := \mu \times \id_\cX : \Theta \times \Theta \times \cX \to \Theta \times \cX$ along with the identity isomorphism $\mu|_{\{1\} \times \Theta} \simeq \id_\Theta$.
\end{defn}

\begin{lem} \label{L:tautological_baric_structure}
For any derived $\cB$-stack $\cX$ and the tautological weak $\Theta$-action on $\Theta \times \cX$, the subcategories defined in \Cref{P:baric_stratum} define a baric structure on $\QC(\Theta \times \cX)$. Under the Reese equivalence, we have
\[
\radj{w}([\cdots \to F_{w+1} \to F_w \to F_{w-1} \to \cdots]) \simeq [\cdots F_{w+1} \to F_w \to F_w \to \cdots],
\]
where all arrows after the vertex $w$ have been replaced with the identity map, and
\begin{align*}
\QC(\Theta \times \cX)^{\geq w} &= \left\{ \left. [\cdots \to F_{n+1} \to F_n \to \cdots ] \right| F_{n+1} \to F_n \text{ is an isomorphism for } n < w \right\} \\
\QC(\Theta \times \cX)^{< w} &= \left\{ \left. [\cdots \to F_{n+1} \to F_n \to \cdots ] \right| F_n = 0 \text{ for } n\geq w \right\} 
\end{align*}
\end{lem}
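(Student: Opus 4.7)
The plan is to reduce the lemma to \Cref{P:baric_stratum} and then make the resulting truncation functors explicit via a direct computation under the Reese equivalence. First I will verify that the tautological action is a weak $\Theta$-action in the sense of \Cref{D:weak_theta_action}, which is immediate: $a = \mu \times \id_\cX$ is a strict monoid action, so the unit and associativity isomorphisms come for free from the monoid structure on $\Theta$. With this in place, \Cref{P:baric_stratum} supplies the baric structure and its truncation functors, so the only remaining content of the lemma is to translate the formulas there into Reese language.

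The main calculation I need is a Reese description of $a^\ast F = (\mu \times \id_\cX)^\ast F$. Writing $F$ as a graded $\cO_\cX[t]$-module $\bigoplus_w F_w$, the pullback is $F \otimes_{\cO_\cX[t]} \cO_\cX[t_1, t_2]$ with $t \mapsto t_1 t_2$. The ring map is flat (with basis $\{t_1^i : i \geq 0\} \cup \{t_2^j : j \geq 1\}$), so no derived correction is needed, and the $\Gm^2$-grading, induced by $(z_1, z_2) \mapsto z_1 z_2$, places $F_w \otimes 1$ in bidegree $(w,w)$. Using the defining relation $tm \otimes p = m \otimes t_1 t_2 p$, I will reduce each element of bidegree $(a,b)$ to a canonical representative of the form $m \otimes t_1^{u-a} t_2^{u-b}$ with $u = \max(a,b)$ and $m \in F_u$, yielding an identification $(a^\ast F)_{a,b} \simeq F_{\max(a,b)}$. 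A direct check on these representatives shows that $t_1 : (a^\ast F)_{a+1,b} \to (a^\ast F)_{a,b}$ is the identity when $b \geq a+1$ and the original Reese transition $F_{a+1} \to F_a$ when $a \geq b$, with symmetric formulas for $t_2$.

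Plugging into $\radj{w}(F) = \pi_\ast(\cO_\Theta\langle w\rangle \otimes a^\ast F)$, which extracts the first-coordinate weight-$w$ slice of $a^\ast F$ viewed as a graded module for the second $\Gm$, produces a Reese diagram whose value at $b$ is $F_{\max(w,b)}$: for $b \geq w$ this is $F_b$ with its original transition, and for $b \leq w$ it is the constant diagram on $F_w$ with identity transitions. That is the diagram in the statement, and $\ladj{w}(F)$ is then read off from the cofiber sequence. From here the subcategory descriptions follow formally: $G$ lies in $\QC(\Theta \times \cX)^{\geq w}$ iff $\radj{w}(G) \to G$ is an equivalence, which by the explicit formula means each individual transition $G_{k+1} \to G_k$ is an equivalence for $k < w$; and $G$ lies in $\QC(\Theta \times \cX)^{<w}$ iff $\radj{w}(G) = 0$ iff $G_n = 0$ for all $n \geq w$. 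The main technical obstacle is the clean bigraded calculation of $(\mu \times \id_\cX)^\ast F$ under Reese; once that is in hand, the rest of the lemma is a direct unwinding of \Cref{P:baric_stratum}.
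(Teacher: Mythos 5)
Your Reese-level computation is essentially correct and matches the paper's: the bigraded description $(a^\ast F)_{a,b} \simeq F_{\max(a,b)}$, the free $k[t]$-basis $\{t_1^i\}_{i\geq 0} \cup \{t_2^j\}_{j\geq 1}$ of $k[t_1,t_2]$, and the resulting row-$w$ formula for $\radj{w}$ all agree with what the paper works out (via the intermediate factorization $\Theta^2 \to \bA^1_t/\Gm^2 \to \Theta$, which is the same calculation packaged slightly differently). But there is a genuine logical flaw in the framing, and it is the whole point of the lemma.

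You write: ``With this in place, \Cref{P:baric_stratum} supplies the baric structure and its truncation functors, so the only remaining content of the lemma is to translate the formulas there into Reese language.'' This is circular. \Cref{L:tautological_baric_structure} is stated and proved \emph{before} the proof of \Cref{P:baric_stratum}, and the proof of \Cref{P:baric_stratum} explicitly cites \Cref{L:tautological_baric_structure} to establish the key semiorthogonality $\RHom_{\Theta \times \cS}(a^\ast(\radj{w}F), a^\ast(\ladj{w}G)) = 0$. The tautological action on $\Theta \times \cX$ is the base case from which the general proposition is bootstrapped via the retraction argument; you cannot invoke the general proposition to deliver the base case. In particular, the assertion that the subcategories form a \emph{baric structure}, i.e., that $\QC(\Theta\times\cX)^{\geq w}$ is left-orthogonal to $\QC(\Theta\times\cX)^{<w}$ and that the truncation cofiber sequences exhibit the semiorthogonal decomposition, must be verified directly here and cannot be inherited from \Cref{P:baric_stratum}.

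What is missing, then, is an independent verification of semiorthogonality from the explicit Reese formula you derived. The paper does this by exhibiting a cofiber sequence $[E'_\bullet] \to \radj{w}(E) \to \cO_\Theta\langle -w \rangle \otimes \pi^\ast(E_w)$ (where $[E'_\bullet]$ has nonzero entries only in weight $> w$) and checking that both outer terms are left-orthogonal to anything supported in weights $< w$ --- the first by weight considerations, the second by adjunction and the fact that $\pi_\ast(\cO\langle w\rangle \otimes \ladj{w}(G)) = 0$. Your explicit description of $\radj{w}$ is exactly what makes this check a finite calculation, so the gap is fixable, but as written the proposal assumes what it needs to prove.
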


\begin{proof}

Given $E \in \QC(\Theta \times \cX)$ corresponding to a diagram $[\cdots \to E_{w+1} \to E_w \to \cdots]$, we would like to identify $(\mu \times \id_\cX)^\ast(E) \in \QC(\Theta \times \Theta \times \cX)$, which by the Reese equivalence corresponds to a diagram indexed by $\underline{\bZ} \times \underline{\bZ}$. Using subscripts to name coordinate functions, we shall factor the multiplication map $\mu : \Theta^2 \to \Theta$ as
\[
\bA^1_{x_1} \times \bA^1_{x_2} / \Gm^2 \xrightarrow{\mu_2} \bA^1_t / \Gm^2 \xrightarrow{\mu_1} \bA^1_t / \Gm.
\]
$\mu_2 : t \mapsto x_1 x_2$ with $\bA^1_t$ regarded as a $\Gm^2$-scheme where $t$ has bidegree $(-1,-1)$. $\mu_1$ is the identity on $\bA^1_t$, which is equivariant with respect to the group homomorphism $(z_1,z_2) \mapsto z_1 z_2$.

Because $\cO_{\bA^1_t}$ is the free graded $\cO_\cB$-algebra on a single generator of degree $(-1,-1)$, we may identify $\QC(\bA^1_t / \Gm^2)$ with the category of bigraded objects $[F_{i,j} \in \QC(\cX)]$ along with a map $t : F_{i+1,j+1} \to F_{i,j}$ for all $i,j$. The pullback $\mu_1^\ast(E)$ corresponds to the diagram with $F_{i,j}=E_{i}$ if $i=j$ and $0$ otherwise, and the map $t$ is the given map $t: E_{i+1} \to E_i$ along the diagonal and $0$ otherwise.

The map $\mu_2$ is an affine morphism corresponding to a homomorphism of $\cO_\cB$-algebras $\cO_{\bA^1_t} \to \cO_{\bA_{x_1}^1 \times \bA_{x_2}^1}$ under which $\cO_{\bA_{x_1}^1 \times \bA_{x_2}^1}$ is a free $\cO_{\bA^1_t}$-module:
\begin{align*}
\cO_{\bA_{x_1}^1 \times \bA_{x_2}^1} &\simeq \bigoplus_{n > 0} \cO_{\bA_t^1} \cdot x_1^n \oplus \cO_{\bA_t^1} \oplus \bigoplus_{m>0} \cO_{\bA_t^1} \cdot x_2^{m} \\
&\simeq \bigoplus_{n > 0} \cO_{\bA_t^1} \langle n,0 \rangle \oplus \cO_{\bA_t^1} \langle 0,0 \rangle \oplus \bigoplus_{m>0} \cO_{\bA_t^1} \langle 0,m \rangle
\end{align*}
Multiplication by $x_1$ corresponds to the identity map $\cO_{\bA_t^1} \langle n,0\rangle \to \cO_{\bA^1_t} \langle n+1,0\rangle$ on the summands $\cO_{\bA_t^1} \langle n,0\rangle$ for $n \geq 0$, and it corresponds to the multiplication by $t$ map $\cO_{\bA_t^1} \langle 0 , m \rangle \to \cO_{\bA_t^1} \langle 0,m-1 \rangle$ on the summands $\cO_{\bA_t^1} \langle 0 , m \rangle$ for $m>0$. Likewise multiplication by $x_2$ corresponds to the multiplication by $t$ map $\cO_{\bA_t^1}\langle n,0 \rangle \to \cO_{\bA_t^1}\langle n-1,0 \rangle$ for $n>0$ and the identity map $\cO_{\bA_t^1}\langle 0,m \rangle \to \cO_{\bA_t^1} \langle 0,m+1 \rangle$ for $m \geq 0$.

From this we deduce that under the Reese equivalence $\QC(\Theta \times \Theta \times \cX) \simeq \Fun(\underline{\bZ} \times \underline{\bZ}, \QC(\cX))$ we have
\[
(\mu \times \id_\cX)^\ast(E) \simeq \left[ \vcenter{\xymatrix{
\ddots & \vdots & \vdots & \vdots & \iddots \\
\cdots \ar[r]^t & E_{w+1} \ar[r]^t \ar[u]^{\id} & E_w \ar[r]^t \ar[u]^{\id} & E_{w-1} \ar[r]^{\id} \ar[u]^{\id} & \cdots \\
\cdots \ar[r]^t & E_{w+1} \ar[r]^t \ar[u]^{\id} & E_w \ar[r]^{\id} \ar[u]^{\id} & E_{w} \ar[r]^{\id} \ar[u]^t & \cdots \\
\cdots \ar[r]^t & E_{w+1} \ar[r]^{\id} \ar[u]^{\id} & E_{w+1} \ar[r]^{\id} \ar[u]^t & E_{w+1} \ar[r]^{\id} \ar[u]^t & \cdots \\
\iddots & \vdots \ar[u]^{t} & \vdots \ar[u]^{t} & \vdots \ar[u]^{t} & \ddots
}} \right],
\]
where the objects shown in the diagram shown range from bidegree $(w-1,w+1)$ in the upper-left corner to bidegree $(w+1,w-1)$ in the lower-right corner. Then $\cO_{\Theta_{x_1}}\langle w \rangle \otimes (\mu \times \id_{\cX})^\ast(E)$ corresponds to the same diagram shifted up by $w$, and
\[
\radj{w}(E) := (\pi_{\Theta_{x_1}})_\ast \left( \cO_{\Theta_{x_1}}\langle w \rangle \otimes (\mu \times \id_{\cX})^\ast(E) \right) \in \QC(\Theta \times \cX)
\]
corresponds to the $0^{th}$ row of this shifted diagram, or equivalently taking row $w$ of the diagram representing $(\mu \times \id)^\ast(E)$. This is the first claim of the lemma.

Now given an object $E \simeq [E_\bullet] \in \QC(\Theta \times \cX)$, let $[E'_\bullet]$ be the diagram where $E'_{v+1} \to E'_{v}$ is the map $E_{v+1} \to E_v$ for $v > w$ and $E_v = 0$ for $v \leq w$. Then we have a cofiber sequence
\begin{equation} \label{E:cofib_seq_1}
[E'_\bullet] \to \radj{w}(E) \to \cO_\Theta \langle -w \rangle \otimes \pi^\ast (E_w),
\end{equation}
where $\pi : \Theta \times \cX \to \cX$ is the projection. For any $G \in \QC(\Theta \times \cX)$, $\ladj{w}(G) = \cofib( \radj{w}(G) \to G)$ corresponds to a diagram indexed by $\underline{\bZ}$ whose terms vanish in weight $\geq w$. It follows that $\pi_\ast(\cO\langle w \rangle \otimes \ladj{w}(G)) = 0$ and thus
\[
\RHom_{\Theta \times \cX}(\cO_\Theta \langle -w \rangle \otimes \pi^\ast (E_w),\ladj{w}(G)) \simeq \RHom_\cX(E_w,\pi_\ast(\cO_\Theta\langle w \rangle \otimes \ladj{w}(G)))= 0.
\]
We also have $\RHom_{\Theta \times \cX}([E'_\bullet],a^\ast(\ladj{w}(G))) = 0$ because the diagram $[E'_\bullet]$ only has non-zero entries in weight $> w$, but $\ladj{w}(G)$ only has non-zero entries in weight $<w$. It follows by applying $\RHom_{\Theta \times \cX}(-,\ladj{w}(G))$ to the fiber sequence \eqref{E:cofib_seq_1} that
\[
\RHom_{\Theta \times \cX} (\radj{w}(E),\ladj{w}(G)) = 0
\]
for any $E,G \in \QC(\Theta \times \cX)$.

It follows from this semiorthogonality and the existence of a cofiber sequence $\radj{w}(F) \to F \to \ladj{w}(F)$ that $\radj{w}$ and $\ladj{w}$ are the projection functors for a semiorthogonal decomposition. Now if $[\cdots \to F_{w+1} \to F_w \to \cdots]$ is a diagram, then under the equivalence $\radj{w}([F_\bullet]) \simeq [\cdots \to F_{w+1} \to F_w \to F_w \to \cdots]$ the canonical map $\radj{w}([F_\bullet]) \to [F_\bullet]$ is homotopic to the identity in weight $\geq w$ and homotopic to the given map $t^{w-n} \colon F_w \to F_n$ in weight $n < w$. It follows that if $t \colon F_{n} \to F_{n-1}$ is an isomorphism for all $n \leq w$, then $[F_\bullet] \in \QC(\Theta \times \cX)^{\geq w}$, and the converse is immediate. Then $[F_\bullet] \in \QC(\Theta \times \cX)^{<w}$ if and only if $\radj{w}([F_\bullet]) = 0$, which happens if and only if $F_n = 0$ for $n \geq w$. These descriptions of the semiorthogonal factors immediately imply $\QC(\Theta \times \cX)^{\geq w+1} \subset \QC(\Theta \times \cX)^{\geq w}$ and $\QC(\Theta \times \cX)^{<w} \subset \QC(\Theta \times \cX)^{<w+1}$, so we have our baric structure.
\end{proof}

\begin{lem} \label{L:theta_equivariant_map}
Let $\cS$ and $\cS'$ be two derived $\cB$-stacks with a weak $\Theta$-action, and let $f : \cS' \to \cS$ be a morphism which commutes with the $\Theta$-action up to isomorphism in the sense of part (4) of \Cref{P:baric_stratum}. Then there are natural isomorphisms $f^\ast(\radj{w}(F)) \simeq \radj{w}(f^\ast(F))$ and $f^\ast(\ladj{w}(F)) \simeq \ladj{w}(f^\ast(F))$ for $F \in \QC(\cS)$.
\end{lem}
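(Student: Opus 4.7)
The plan is to unpack the explicit formulas for $\radj{w}$ and $\ladj{w}$ given in \Cref{P:baric_stratum} and combine base change, symmetric monoidality of pullback, and the given commutativity of $\Theta$-actions. Write $\tilde f := \id_\Theta \times f : \Theta \times \cS' \to \Theta \times \cS$, and let $\pi, a$ (respectively $\pi', a'$) denote the projection and action maps on $\cS$ (respectively $\cS'$). The hypothesis on $f$ supplies, for each $F \in \QC(\cS)$, an isomorphism $\tilde f^\ast a^\ast F \simeq (a')^\ast f^\ast F$. The square relating $\pi, \pi', \tilde f, f$ is Cartesian, since $\tilde f = \id_\Theta \times f$.

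The first claimed isomorphism arises from the chain
\begin{align*}
f^\ast \radj{w}(F) &= f^\ast \pi_\ast\bigl(\cO_\Theta\langle w \rangle \otimes a^\ast F\bigr) \\
&\simeq \pi'_\ast \tilde f^\ast\bigl(\cO_\Theta\langle w \rangle \otimes a^\ast F\bigr) \\
&\simeq \pi'_\ast\bigl(\cO_\Theta\langle w \rangle \otimes \tilde f^\ast a^\ast F\bigr) \\
&\simeq \pi'_\ast\bigl(\cO_\Theta\langle w \rangle \otimes (a')^\ast f^\ast F\bigr) = \radj{w}(f^\ast F),
\end{align*}
where the second line is base change along the Cartesian square (applicable because $\pi$ is a base change of $\Theta \to \cB$, which is quasi-compact, quasi-separated, and has pushforward of bounded cohomological amplitude, since pushforward from $B\Gm$ picks out a weight component), the third line combines the symmetric monoidality of $\tilde f^\ast$ with the fact that $\cO_\Theta\langle w \rangle$ is pulled back from $\Theta$, and the fourth line uses the hypothesis. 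The statement for $\ladj{w}$ follows by exactly the same argument with the complex $\cO_\Theta[t^{-1}] / \cO_\Theta \cdot t^w$ in place of $\cO_\Theta\langle w \rangle$; this complex is again pulled back from $\Theta$, so all the steps go through unchanged. Alternatively, once the isomorphism for $\radj{w}$ is established, one can apply $f^\ast$ to the fiber sequence $\radj{w}(F) \to F \to \ladj{w}(F)$ of \Cref{P:baric_stratum} and compare with the corresponding fiber sequence for $f^\ast F$ to deduce the $\ladj{w}$ version formally.

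The main subtlety is not conceptual but bookkeeping: one must verify that the base change equivalence $f^\ast \pi_\ast \simeq \pi'_\ast \tilde f^\ast$ and the symmetric monoidality of $\tilde f^\ast$ are compatible with the coherence data defining the baric truncations from the fiber sequence in \Cref{P:baric_stratum}. None of this is delicate, as each ingredient is a basic consequence of the formalism of quasi-coherent complexes on derived stacks recalled in the Notation section, but care is required because the hypothesis provides an isomorphism of $\Theta$-actions only in the homotopy category of $\cB$-stacks; consequently the conclusion is an isomorphism of functors at the level of objects rather than a strict equality of functors.
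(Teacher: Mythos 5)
Your proof is correct and takes essentially the same route as the paper's: the paper invokes flat base change for $\pi'_\ast \circ f^\ast \simeq f^\ast \circ \pi_\ast$, uses the given commutativity to get $f^\ast \circ a^\ast \simeq (a')^\ast \circ f^\ast$, and then says these two facts plus the defining formulas from \Cref{P:baric_stratum} give the claim; you have simply unpacked the "defining formulas" step (symmetric monoidality of $\tilde f^\ast$ and the fact that $\cO_\Theta\langle w\rangle$ and $\cO_\Theta[t^{-1}]/\cO_\Theta\cdot t^w$ are pulled back from $\Theta$). One small imprecision worth correcting: in your parenthetical justification of base change you refer to "pushforward from $B\Gm$," but $\pi$ is the projection from $\Theta\times\cS$, not $B\Gm\times\cS$; under the Reese equivalence $\pi_\ast$ extracts the weight-zero term of the filtered diagram, so it is still $t$-exact and of bounded amplitude, and the base change argument goes through — but the cleanest one-line justification is simply that $\pi$ is flat, being the base change of $\Theta\to\cB$.
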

\begin{proof}
The flat base change formula for the base change of $\pi : \Theta \times \cS \to \cS$ along the map $\cS' \to \cS$ gives a canonical isomorphism $\pi'_\ast \circ f^\ast \simeq f^\ast \circ \pi_\ast$, where $\pi' : \Theta \times \cS' \to \cS'$ is the projection. The existence of an isomorphism $f^\ast \circ a^\ast \simeq (a')^\ast \circ f^\ast$, where $a' \colon \Theta \times \cS' \to \cS'$ is the action map, follows from the commutativity of the diagram in part (4) of \Cref{P:baric_stratum}. These two observations and the definition of $\radj{w}$ and $\ladj{w}$ in \Cref{P:baric_stratum} imply the claim.
\end{proof}

\begin{proof}[Proof of \Cref{P:baric_stratum}]
Let $i_{\{1\}} : \{1\} \times \cS \to \Theta \times \cS$ denote the inclusion. Under the equivalence $a \circ i_{\{1\}} \simeq \id_\cS$, which is part of the weak $\Theta$-action on $\cS$, the composition
\[
\RHom_\cS(F,G) \xrightarrow{a^\ast} \RHom_{\Theta \times \cS}(a^\ast(F),a^\ast(G)) \to \RHom_\cS (i_{\{1\}}^\ast a^\ast(F),i_{\{1\}}^\ast a^\ast(G)) \simeq \RHom_\cS(F,G)
\]
is homotopic to the identity. Thus $\RHom_{\cS}(F,G)$ is a retract of $\RHom_{\Theta \times \cS}(a^\ast(F),a^\ast(G))$. In particular, the former vanishes if the latter does.

The axioms of a weak $\Theta$-action imply that the map $a : \Theta \times \cS \to \cS$ is compatible with the tautological $\Theta$-action on $\Theta \times \cS$ and the given $\Theta$-action on $\cS$, hence \Cref{L:theta_equivariant_map} implies that $\radj{w}(a^\ast(F)) \simeq a^\ast(\radj{w}(F))$ and $\ladj{w}(a^\ast(F)) \simeq a^\ast(\ladj{w}(F))$, so \Cref{L:tautological_baric_structure} implies that $\RHom_{\Theta \times \cS}(a^\ast(\radj{w}(F)),a^\ast(\ladj{w}(G)))=0$ for all $F,G \in \QC(\cS)$, and thus $\RHom_{\cS}(\radj{w}(F),\ladj{w}(G)) = 0$ as well. This combined with the fiber sequence $\radj{w}(F) \to F \to \ladj{w}(F)$ for any $F$ implies that $\radj{w}$ and $\ladj{w}$ are the projection functors for a semiorthogonal decomposition.

This semiorthogonal decomposition implies that $F \in \QC(\cS)$ lies in $\QC(\cS)^{\geq w}$ if and only if $\RHom_{\cS}(F,G), \forall G \in \QC(\cS)^{<w}$. Therefore the facts that $a^\ast(\QC(\cS)^{<w}) \subset \QC(\Theta \times \cS)^{<w}$ and $\RHom_{\cS}(F,G)$ is a retract of $\RHom_{\Theta \times \cS}(a^\ast(F),a^\ast(G))$ imply that $F \in \QC(\cS)^{\geq w}$ if and only if $a^\ast(F) \in \QC(\Theta \times \cS)^{\geq w}$. The same argument implies that $F \in \QC(\cS)^{<w}$ if and only if $a^\ast(F) \in \QC(\Theta \times \cS)^{<w}$. It follows that $\QC(\cS)^{\geq w+1} \subset \QC(\cS)^{\geq w}$, because the same is true on $\Theta \times \cS$ by \Cref{L:tautological_baric_structure}. Thus we have established the baric structure on $\QC(\cS)$.

\medskip
\noindent{\textit{Claims (1), (2), and (3):}}
\medskip

All three functors $a^\ast(-)$, $\cO_\Theta \langle w \rangle \otimes (-)$, and $\pi_\ast(-)$ commute with filtered colimits, are right $t$-exact, and preserve $\Perf$ and $\AAPerf$, and it follows from the definition that the same holds for $\radj{w}$. The fiber sequence in the statement of the proposition implies that $\ladj{w}$ preserves $\Perf$ and $\AAPerf$, and the fact that $\cO_\Theta [t^{-1}] / \cO_\Theta \cdot t^w \otimes (-)$ commutes with filtered colimits and is right $t$-exact implies the same for $\ladj{w}$. Finally, the isomorphism $\cO_{\Theta}[t_{\pm}] \simeq \colim_{w \to -\infty} \cO_\Theta \langle w \rangle$, where $\cO_\Theta \langle w \rangle$ denotes the submodule $\cO_\Theta \cdot t^{w} \subset \cO_\Theta [t^\pm]$, shows that the canonical map $\colim_{w \to -\infty} \radj{w}(F) \to F$ is an isomorphism

\medskip
\noindent \textit{Claim (4):}
\medskip

The first two statements, that $f^\ast$ preserves the baric structure, follow from \Cref{L:theta_equivariant_map}. The third statement follows from the observation that if $E \in \QC(\cS')^{<w}$, then for any $F \in \QC(\cS)^{\geq w}$, $\RHom_\cS(F,f_\ast(E)) \simeq \RHom_{\cS'}(f^\ast(F), E) = 0$, because $f^\ast(F) \in \QC(\cS')^{\geq w}$.

\medskip
\noindent \textit{Claim (5):}
\medskip

Because $F \in \QC(\cS)^{\geq w}$ if and only if $a^\ast(F) \in \QC(\Theta \times \cS)^{\geq w}$, and likewise for $\QC(\cS)^{<w}$, it suffices to show that $E \in \QC(\Theta \times \cS)$ lies in $\QC(\Theta \times \cS)^{\geq w}$ if and only if $E|_{(\{0\}/\Gm) \times \cS}$ is concentrated in weight $\geq w$, and to show the same for $\QC(\cS)^{<w}$ when $E \in \AAPerf(\Theta \times \cS)$.

If $E \in \QC(\Theta \times \cS)$ corresponds to $[\cdots E_{w+1} \to E_w \to \cdots]$ under the Reese equivalence, then
\[
E|_{(\{0\}/\Gm) \times \cS} \simeq \bigoplus_{i \in \bZ} \cofib(E_{i+1} \to E_i).
\]
It follows from \Cref{L:tautological_baric_structure} that $E \in \QC(\Theta \times \cS)^{\geq w}$ if and only if $E|_{(\{0\}/\Gm) \times \cS}$ is concentrated in weight $\geq w$.

On the other hand $E \in \AAPerf(\Theta \times \cS)$ lies in $\AAPerf(\Theta \times \cS)^{<w}$ if and only if $\radj{w}(E)=0$. By Nakayama's lemma,\footnote{Nakayama's lemma implies that the support of a coherent sheaf is a closed substack of $\Theta \times \cS$. If $E$ were a non-zero almost perfect complex such that $E|_{(\{0\}/\Gm) \times \cS} = 0$, then the support of the first non-vanishing homology sheaf of $E$ would have to be an non-empty closed substack of $\Theta \times \cS$ which does not meet $\{0\}/\Gm \times \cS$. There is no such closed substack, so $E|_{(\{0\}/\Gm) \times \cS} =0$ implies $E=0$.} this is equivalent to
\[
0 = \radj{w}(E)|_{(\{0\}/\Gm) \times \cS} = \oplus_{i \geq w} \cofib(E_{i+1} \to E_i).
\]
It follows that $E \in\AAPerf(\Theta \times \cS)^{<w}$ if $a^\ast(F)|_{(\{0\}/\Gm) \times \cS}$ is concentrated in weight $<w$.

\medskip
\noindent \textit{Claim (6):}
\medskip

As discussed above, any $E \in \QC(B\Gm \times \cS)$ is canonically isomorphic to $\bigoplus_{w \in \bZ} \cO_{B\Gm}\langle -w \rangle \otimes \pi^\ast(E^w)$, where $\pi : B\Gm \times \cS \to \cS$ is the projection, and $E^w \in \QC(\cS)$. It follows from this observation that if $E, F \in \QC(B\Gm \times \cS)$ is such that $E$ is concentrated in weight $\geq w$ (resp. $<w$) and $F$ is concentrated in weight $\geq v$ (resp. $<v$), then $E \otimes F$ is concentrated in weight $\geq v+w$ (resp. $<v+w-1$). This observation combined with claim (5) and the fact that $a^\ast(-)$ and $(-)|_{(\{0\}/\Gm) \times \cS}$ are symmetric monoidal functors implies the claim.
\end{proof}

\begin{rem}
Note that the monoidal structure on $\Theta$ equips the $\infty$-category $\QC(\Theta)$ with the structure of a co-monoidal object in the homotopy category of stable presentable $\infty$-categories. Then the proof of \Cref{P:baric_stratum} can be adapted to show that for any stable presentable $\infty$-category $\cC$, such as $\cC = \QC(\cS)$ for an algebraic derived stack $\cS$, a co-action of $\QC(\Theta)$ on $\cC$ in the homotopy category of stable presentable $\infty$-categories induces a baric structure on $\cC$ whose truncation functors are given by the formula of \autoref{P:baric_stratum}. We speculate that a baric decomposition on $\cC$ satisfying suitable hypotheses is equivalent to a coaction of $\QC(\Theta)$ on $\cC$ in $\Ho(\PrL_{St})$.
\end{rem}

\subsection{The stack of filtered objects and derived \texorpdfstring{$\Theta$}{Theta}-strata}

First let us recall the following

\begin{thm}[\cite{halpern2014mapping}*{Thm.~5.1.1, Rem.~5.1.3, Rem.~5.1.4}] \label{T:mapping_stack}
Let $\cX$ be an algebraic derived stack locally almost of finite presentation over a derived algebraic base stack $\cB$, and assume points of $\cX$ have affine automorphism groups relative to $\cB$. Then $\iMap_\cB(\Theta,\cX)$ and $\iMap_\cB(B\Gm,\cX)$ are algebraic derived stacks locally almost of finite presentation with affine automorphism groups relative to $\cB$. If $\cX$ has quasi-affine or affine diagonal over $\cB$, then so does $\iMap_\cB(\Theta,\cX)$.
\end{thm}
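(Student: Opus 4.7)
The plan is to apply the derived Artin--Lurie representability criterion to $\cM := \iMap_\cB(\Theta,\cX)$ (and analogously to $\iMap_\cB(B\Gm,\cX)$), whose $A$-points for $A \in \sAlg_\cB$ are $\cB$-morphisms $\Theta_A \to \cX$. The purely formal axioms of the criterion --- \'etale descent, nilcompleteness, infinitesimal cohesiveness, and compatibility with filtered colimits of rings --- pass immediately from $\cX$ to $\cM$, since $\Map_\cB(\Theta,-)$ is a right adjoint and $\Theta$ is a compact object in derived stacks, so that mapping out of it preserves filtered colimits in the target.

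The first substantive input is the cotangent complex. At a point $f \colon \Theta_A \to \cX$ I would construct
\[
\bL_{\cM/\cB, f} \simeq (p_A)_\ast\bigl(f^\ast \bL_{\cX/\cB}\bigr), \qquad p_A \colon \Theta_A \to \Spec A,
\]
and must verify that the right-hand side lies in $\AAPerf(A)$. Since $p_A$ is not proper this is nontrivial, and it is exactly where the affine-automorphism hypothesis is used: at any $B\Gm$-point of $\cX$ the stabilizer is affine, so $\bL_{\cX/\cB}$ restricted there is a $\Gm$-representation with weights bounded below in each homological degree. Derived pushforward along $p_A$ of a $\Gm$-equivariant almost perfect $A[t]$-module with this kind of boundedness then lies in $\AAPerf(A)$, which simultaneously produces the cotangent complex and shows that $\cM$ is locally almost of finite presentation.

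The main obstacle is algebraicity of the classical truncation $\cM^{\rm cl}$. I would first handle $\cN := \iMap_\cB(B\Gm,\cX)^{\rm cl}$ --- the ``graded locus'' --- via the known algebraicity of $\Gm$-fixed-point stacks on algebraic stacks with affine stabilizers. Then I would analyze evaluation at $0$, $\mathrm{ev}_0 \colon \cM^{\rm cl} \to \cN$: by the Rees construction, a classical map $\Theta_A \to \cX$ extending a given graded point is a compatible $\bZ$-indexed filtration, and deformations of such filtrations are classified by sections of the positive-weight summands of $\bL_{\cX/\cB}$ at the graded point. This presents $\mathrm{ev}_0$ locally over $\cN$ as a filtered colimit of iterated smooth affine bundles, yielding a smooth atlas for $\cM^{\rm cl}$. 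Affine automorphism groups for $\cM$ are then formal, because the inertia of $\cM$ is $\iMap_\cB(\Theta, I_{\cX/\cB})$ with $I_{\cX/\cB} \to \cX$ affine by hypothesis.

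For the diagonal claim, the fiber of $\Delta_{\cM/\cB}$ over a pair of $A$-points $(f_1,f_2)$ of $\cM \times_\cB \cM$ is identified with $\iMap_\cB(\Theta_A, \cY)$, where $\cY := \cX \times_{\cX \times_\cB \cX, (f_1,f_2)} \Spec A$. When $\Delta_{\cX/\cB}$ is affine (resp.\ quasi-affine), $\cY$ is affine (resp.\ quasi-affine) over $\Spec A$, and the same bounded-weights pushforward analysis as above shows that $\iMap_\cB(\Theta_A, \cY)$ is affine (resp.\ quasi-affine) over $\Spec A$, as required.
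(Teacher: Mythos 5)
This statement is cited from \cite{halpern2014mapping}*{Thm.~5.1.1, Rem.~5.1.3, Rem.~5.1.4}; the present paper recalls it without proof, so there is no internal argument to compare yours against directly. Reviewing your proposal on its own merits, however, there is a substantive error and two gaps.

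The substantive error is in your cotangent-complex formula. You write $\bL_{\cM/\cB,f} \simeq (p_A)_\ast\bigl(f^\ast\bL_{\cX/\cB}\bigr)$, but the correct formula uses the \emph{left} adjoint of $p_A^\ast$, not the right adjoint. Indeed, a square-zero extension $\tilde A \to A$ by $M$ gives rise to the square-zero extension $\Theta_{\tilde A} \to \Theta_A$ by $p_A^\ast M$, and lifts of $f$ are classified by maps $f^\ast\bL_{\cX/\cB} \to p_A^\ast M[1]$ in $\QC(\Theta_A)$. So the deformation functor at $f$ is $M \mapsto \Hom_A\bigl((p_A)_+(f^\ast\bL_{\cX/\cB}), M[1]\bigr)$ where $(p_A)_+ \dashv p_A^\ast$, and therefore $\bL_{\cM/\cB,f} \simeq (p_A)_+(f^\ast\bL_{\cX/\cB})$. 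This is exactly the formula the paper quotes in \eqref{E:cotangent_mapping_stack}, and the left adjoint $\pi_+$ for $\pi\colon\Theta\times\cS\to\cS$ is computed explicitly in \Cref{L:left_adjoint_pullback}: $\pi_+(F) = \pi_\ast\bigl((\cO_\Theta[t^{-1}]/\cO_\Theta\cdot t)\otimes F\bigr)$, which differs from $\pi_\ast(F)$ by a twist. Since your finiteness argument (that the result lies in $\AAPerf(A)$ given the affine-stabilizer hypothesis) is applied to the wrong functor, this step needs to be redone: with $\pi_+$ the relevant weight bound is on the \emph{weight $\leq 0$} part, not what you describe.

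Two further gaps: (i) the claim that ``$\Theta$ is a compact object in derived stacks'' is false, so the commuting-with-filtered-colimits axiom cannot be dispatched by appealing to compactness of the source --- one needs to use that $\Theta$ is quasi-compact with affine diagonal and that $\cX$ is locally almost of finite presentation, which is a real argument rather than a formal one; and (ii) presenting $\ev_0\colon\cM^{\rm cl}\to\cN$ ``locally as a filtered colimit of iterated smooth affine bundles'' does not by itself give a smooth atlas, since a filtered colimit of smooth morphisms need not be smooth --- the source must be constructed directly, e.g.\ via the explicit charts from a local quotient presentation as in the cited reference, or via the cohomological-properness / formal GAGA machinery that \cite{halpern2014mapping} actually uses for general mapping stacks out of $\Theta$.
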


As in \cite{halpern2014structure} we will denote $\Grad(\cX) = \grad{\cX}$ and $\Filt(\cX) = \filt{\cX}$, and refer to these as the stacks of graded and filtered objects in $\cX$, respectively. We will study several canonical maps:
\begin{equation} \label{E:canonical_maps}
\xymatrix{ \Grad(\cX) \ar@/^/[r]^{\sigma} & \Filt(\cX) \ar@/^/[l]^{\ev_0}  \ar[r]^{\ev_1} & \cX }
\end{equation}
$\ev_1$ is the restriction of the tautological map $\ev : \Theta \times \filt{\cX} \to \cX$ to $\{1\} \times \filt{\cX}$, and $\ev_0$ is the map classified by the restriction of $\ev$ to $(\{0\} / \Gm) \times \filt{\cX}$. Finally, $\sigma$ is induced by the projection $\Theta \to B\Gm$, and it is classified by the composition
\[
\Theta \times \grad{\cX} \to B\Gm \times \grad{\cX} \xrightarrow{\ev} \cX.
\]
There is a canonical isomorphism $\ev_0 \circ \sigma \simeq \id_{\grad{\cX}}$.

\begin{defn}
Let $\cX$ be a derived algebraic stack locally almost of finite presentation and with affine automorphism groups relative to $\cB$. A derived $\Theta$-stratum in $\cX$ is union of connected components $\cS \subset \Filt(\cX)$ such that $\ev_1|_{\cS} : \cS \to \cX$ is a closed immersion.
\end{defn}

The underlying classical stack $\iMap(\Theta,\cX)^{\rm{cl}}$ is by definition the restriction of the functor $\filt{\cX}$ to the full sub $\infty$-category of discrete simplicial commutative algebras. This agrees with the functor-of-points definition of the classical mapping stack, i.e.
\[
\iMap(\Theta,\cX)^{\rm{cl}} \simeq \iMap^{\rm{cl}}(\Theta,\cX^{\rm{cl}}).
\]
We studied the latter object in detail in \cite{halpern2014structure}*{Sect.~1}, and many of the results established there for the underlying classical stack immediately imply the corresponding result for the derived mapping stack. For instance, we have:

\begin{lem} \label{L:classical_stratum}
If $\cX$ is a derived stack satisfying the hypotheses of \Cref{T:mapping_stack}, then the map that takes $\cS \subset \Filt(\cX)$ to $\cS^{\rm cl} \subset \iMap^{\rm cl}(\Theta,\cX^{\rm cl})$ induces a bijection between derived $\Theta$-strata in $\cX$ and classical $\Theta$-strata in $\cX^{\rm cl}$.
\end{lem}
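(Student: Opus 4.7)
The map $\cS \mapsto \cS^{\rm cl}$ is well-defined and injective. Well-defined: the classical truncation of a derived closed immersion is a classical closed immersion, so $\cS^{\rm cl}$ is a union of connected components of $\Filt(\cX)^{\rm cl}$ with $\ev_1^{\rm cl}|_{\cS^{\rm cl}}$ a classical closed immersion. Injective: using the identification $\Filt(\cX)^{\rm cl} \simeq \iMap^{\rm cl}(\Theta,\cX^{\rm cl})$ recalled just before the statement, together with the fact that unions of connected components of an algebraic derived stack are in natural bijection with unions of connected components of its classical truncation, two distinct unions of connected components of $\Filt(\cX)$ yield distinct classical truncations. The substance of the lemma is therefore surjectivity: given a classical $\Theta$-stratum $\cS^{\rm cl} \subset \iMap^{\rm cl}(\Theta,\cX^{\rm cl})$, the corresponding union of connected components $\cS \subset \Filt(\cX)$ satisfies the property that $\ev_1|_\cS : \cS \to \cX$ is a derived closed immersion.

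The plan is to verify this via the functor of points. Note that $\cS$ is an open derived substack of $\Filt(\cX)$, which is an algebraic derived stack locally almost of finite presentation by \Cref{T:mapping_stack}; combined with the classical closed immersion hypothesis, what remains to check is a derived statement, namely that for every derived affine $T$ and morphism $T \to \cX$, the derived fiber product $\cS \times_\cX T$ is a derived closed subscheme of $T$. A $T'$-point of this fiber product (for $T' \to T$) is a filtration $F : \Theta \times T' \to \cX$ whose restriction to $\{1\} \times T'$ equals the composition $T' \to T \to \cX$ and whose classical truncation $F^{\rm cl}$ lies in $\cS^{\rm cl}$ as a subset of $\iMap^{\rm cl}(\Theta,\cX^{\rm cl})$. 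The classical closed immersion $\ev_1^{\rm cl}|_{\cS^{\rm cl}}$ determines $F^{\rm cl}$ uniquely from the classical map $(T')^{\rm cl} \to \cX^{\rm cl}$, and the task is to propagate this existence and uniqueness to the derived level.

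I would carry out the propagation by Postnikov induction on $T'$: write $(T')^{\rm cl} = T'_0 \hookrightarrow T'_1 \hookrightarrow \cdots$ as a tower of square-zero extensions, and inductively extend the filtration across each stage. At each step the space of derived extensions of the filtration $F_n$ to $F_{n+1}$ compatible with the constraint $F_{n+1}|_{\{1\} \times T'_{n+1}} = f|_{T'_{n+1}}$ is a torsor over a derivation space built from the cotangent complex $\bL_\cX$ and the module defining the square-zero extension; the openness of $\{1\} \hookrightarrow \Theta$, together with the classical closed immersion hypothesis, forces both the obstruction class and the derivation space to vanish, yielding a unique derived extension at each stage. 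The hardest step will be this deformation-theoretic verification, where one must carefully analyze the natural map from the pushout $\Theta \times T'_n \sqcup_{\{1\} \times T'_n} T'_{n+1} \to \Theta \times T'_{n+1}$ and match its obstruction theory against the classical closed immersion structure of $\cS^{\rm cl}$ inside $\cX^{\rm cl}$. Once completed, this shows that $\cS \times_\cX T$ is a derived closed subscheme of $T$ whose classical truncation is $\cS^{\rm cl} \times_{\cX^{\rm cl}} T^{\rm cl}$, hence $\ev_1|_\cS$ is a derived closed immersion, and the bijection is established.
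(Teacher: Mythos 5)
Your setup is right: the bijection between unions of connected components of $\Filt(\cX)$ and of $\iMap^{\rm cl}(\Theta,\cX^{\rm cl})$ is correct, and surjectivity of the map $\cS \mapsto \cS^{\rm cl}$ is indeed where the content lies. But the Postnikov-induction argument you sketch for surjectivity has a genuine flaw. You claim that at each square-zero extension $T'_n \hookrightarrow T'_{n+1}$ ``the openness of $\{1\} \hookrightarrow \Theta$, together with the classical closed immersion hypothesis, forces both the obstruction class and the derivation space to vanish, yielding a unique derived extension at each stage.'' Establishing unique lifts at every Postnikov stage would show that $\ev_1|_\cS : \cS \to \cX$ is a \emph{monomorphism} of derived stacks, which is strictly stronger than being a closed immersion and is false in general: closed immersions of derived schemes are not monomorphisms, as the derived self-intersection of a point $\Spec(k) \hookrightarrow \bA^1$ gives $\Spec(k) \times_{\bA^1} \Spec(k) \simeq \Spec(k[\epsilon[1]])$, not $\Spec(k)$. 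There is no mechanism that makes the derivation space vanish; the openness of $\{1\} \subset \Theta$ controls the compatibility constraint but does not kill deformations of the map $\Theta \times T' \to \cX$ relative to that constraint.

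The right observation, and the one the paper uses, is that the property of being a closed immersion is detected on classical truncations: a morphism of derived stacks $f : \cS \to \cX$ is a closed immersion if and only if $f^{\rm cl} : \cS^{\rm cl} \to \cX^{\rm cl}$ is (the paper cites DAG IX, Thm.~4.4). This holds because a closed immersion is defined by a condition on the underlying map of topological spaces and surjectivity of $\cO_\cX \to f_\ast \cO_\cS$ on $\pi_0$, both of which only see $\pi_0$ of the structure sheaves. Once this is in hand, the lemma is immediate from the bijection on connected components that you already established; no deformation theory is needed. Your approach would, if it worked, amount to re-deriving this general fact, but as stated it proves too much and therefore proves nothing.
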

\begin{proof}
This follows from the observation that an algebraic derived stack has the same topological space of points as its underlying classical stack, along with the observation that a map of derived stacks $\cS \to \cX$ is a closed immersion if and only if the induced map on underlying classical stacks $\cS^{\rm cl} \to \cX^{\rm cl}$ is a closed immersion \cite{DAGIX}*{Theorem 4.4}.

\end{proof}

\subsubsection{Weak $\Theta$-action on the stack of filtered objects}

For any derived stack $\cX$, pre-composition $(t',f(t)) \mapsto f(t' \cdot t)$ defines a weak action of $\Theta$ on the stack $\filt{\cX}$. More formally:

\begin{prop} \label{P:theta_action_strata}
For any derived stack $\cX$, $\Filt(\cX) = \iMap(\Theta,\cX)$ admits a canonical weak $\Theta$-action whose action map $a : \Theta \times \Filt(\cX) \to \Filt(\cX)$ is classified by the composition
\begin{equation} \label{E:theta_action_filt}
\Theta \times \Theta \times \Filt(\cX) \xrightarrow{\mu \times \id_{\Filt(\cX)}} \Theta \times \Filt(\cX) \xrightarrow{\ev} \cX,
\end{equation}
and this weak $\Theta$-action restricts to any open and closed substack $\cS \subset \Filt(\cX)$.
\end{prop}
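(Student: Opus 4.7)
The strategy is to obtain the weak $\Theta$-action on $\Filt(\cX) = \iMap(\Theta,\cX)$ by applying the contravariant mapping-stack functor $\iMap(-,\cX)$ to the monoid structure on $\Theta$ described in the preamble to this subsection. By the adjunction defining internal mapping stacks, the data of a morphism $\Theta \times \Filt(\cX) \to \Filt(\cX)$ is equivalent to the data of a morphism $\Theta \times \Theta \times \Filt(\cX) \to \cX$, and I take $a$ to be the one classified by the composition displayed in \eqref{E:theta_action_filt}.

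For the unit isomorphism $a|_{\{1\}\times \Filt(\cX)} \simeq \id_{\Filt(\cX)}$, I restrict the classifying morphism along $\{1\} \times \Theta \times \Filt(\cX) \hookrightarrow \Theta \times \Theta \times \Filt(\cX)$. Using the unit axiom $\mu\circ(1 \times \id_\Theta) \simeq \id_\Theta$ of the monoid $\Theta$, this restriction is identified with $\ev : \Theta \times \Filt(\cX) \to \cX$, which by definition classifies $\id_{\Filt(\cX)}$. For the associativity constraint, both compositions $a\circ(\id_\Theta \times a)$ and $a\circ(\mu\times \id_{\Filt(\cX)})$ are morphisms $\Theta^{3} \times \Filt(\cX) \to \Filt(\cX)$; unwinding the adjunction, they are classified by $\ev\circ\bigl((\mu\circ(\id_\Theta\times\mu))\times \id_{\Filt(\cX)}\bigr)$ and $\ev\circ\bigl((\mu\circ(\mu\times\id_\Theta))\times \id_{\Filt(\cX)}\bigr)$ respectively, so the associativity isomorphism of $\mu$ supplies the required isomorphism.

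For the restriction claim, let $\cS \subset \Filt(\cX)$ be a union of connected components. It suffices to show that $a^{-1}(\cS) = \Theta \times \cS$ as substacks of $\Theta \times \Filt(\cX)$. Since $\cS$ is open and closed, $a^{-1}(\cS)$ is open and closed as well. Because $\Theta$ is connected, every open-closed substack of $\Theta \times \Filt(\cX)$ has the form $\Theta \times \cT$ for a unique open-closed $\cT \subset \Filt(\cX)$. Intersecting with $\{1\} \times \Filt(\cX)$ and using the unit isomorphism already established gives $\{1\} \times \cT = \{1\} \times \cS$, so $\cT = \cS$ and $a$ restricts to a map $\Theta \times \cS \to \cS$. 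The unit and associativity isomorphisms then restrict automatically since they have the same source and target after restriction.

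The whole argument is essentially formal, so there is no serious obstacle. The one point to be careful about is that we are only constructing a \emph{weak} action: all compatibilities live in the homotopy category of $\cB$-stacks and are inherited directly from the corresponding isomorphisms built into the monoid structure on $\Theta$, with no higher coherences to verify.
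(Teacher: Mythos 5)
Your proof is correct and follows the same route as the paper: the weak action is obtained from the monoid structure on $\Theta$ via the mapping-stack adjunction, the unit and associativity isomorphisms come directly from the corresponding structure of $\mu$, and the restriction to an open-closed substack uses the density of $\{1\}$ in $\Theta$. Two cosmetic slips worth fixing: the unit isomorphism $a|_{\{1\}\times\Filt(\cX)}\simeq\id$ comes from restricting the classifying map along $\Theta\times\{1\}\times\Filt(\cX)$ (the second $\Theta$ is the action variable, the first is the evaluation variable of $\iMap(\Theta,-)$), not $\{1\}\times\Theta\times\Filt(\cX)$, though commutativity of $\mu$ hides the difference; and the two compositions being compared for associativity have source $\Theta^2\times\Filt(\cX)$, not $\Theta^3\times\Filt(\cX)$.
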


\begin{proof}
For the construction of a weak $\Theta$-action on $\Filt(\cX)$, note that $a|_{\{1\} \times \Filt(\cX)} : \Filt(\cX) \to \Filt(\cX)$ is classified by the map $\ev \circ (\mu \times \id_{\Filt(\cX)}) : \Theta \times \{1\} \times \Filt(\cX) \to \cX$, which is canonically isomorphic to the map $\ev$. Hence we have our equivalence $a|_{\{1\} \times \Filt(\cX)} \simeq \id_{\Filt(\cX)}$. Similarly, $a \circ a : \Theta_1 \times \Theta_2 \times \Filt(\cX) \to \Filt(\cX)$ is classified by the map
\[
\Theta \times \Theta_1 \times \Theta_2 \times \Filt(\cX) \xrightarrow{\mu \times \id_{\Theta_2} \times \id_{\Filt(\cX)}} \Theta \times \Theta_2 \times \Filt(\cX) \xrightarrow{\mu \times \id_{\Filt(\cX)}} \Theta \times \Filt(\cX) \xrightarrow{\ev} \cX,
\]
so the isomorphism $a \circ a \simeq a \circ \mu$ follows from the associativity law for the monoidal structure on $\Theta$.

Finally, let $\cS \subset \Filt(\cX)$ be an open and closed substack. Because $a^{-1}(\cS) \subset \Theta \times \Filt(\cX)$ is a closed substack which contains the open substack $\{1\} \times \cS$, its underlying set of points must contain that of $\Theta \times \cS$. Then we must have $\Theta \times \cS \subset a^{-1}(\cS)$, because inclusions of open substacks are determined by underlying sets of points. One can check that the structure of the weak $\Theta$-action on $\Filt(\cX)$ induce associativity and identity isomorphisms for the resulting map $a : \Theta \times \cS \to \cS$.
\end{proof}

\subsubsection{Induced $\Theta$-strata}

The following notion will play an important role beginning in \Cref{P:baric_decomp_supports} below.

\begin{defn} \label{D:induced_stratum}
Let $p : \cX' \to \cX$ be a morphism of algebraic derived stacks that satisfy the hypotheses of \Cref{T:mapping_stack}. We say that a $\Theta$-stratum $\cS \subset \Filt(\cX)$ \emph{induces} a $\Theta$-stratum in $\cX'$ if its preimage $\cS' \subset \Filt(\cX')$ under the canonical map $\Filt(\cX') \to \Filt(\cX)$ is a $\Theta$-stratum, and $\ev_1(|\cS'|) = p^{-1}(\ev_1(|\cS|))$ as subsets of $|\cX'|$.
\end{defn}

\begin{lem} \label{L:induced_stratum_base_change}
Let $\cX$ be a derived algebraic stack locally almost of finite presentation and with affine automorphisms over a base stack, and let $\cS \subset \Filt(\cX)$ be a $\Theta$-stratum. Given two morphisms $\cX' \to \cX$ and $\cX'' \to \cX$ such that $\cS$ induces $\Theta$-strata in $\cX'$ and $\cX''$, $\cS$ also induces a $\Theta$-stratum in the fiber product $\cX' \times_\cX \cX'' \to \cX$.
\end{lem}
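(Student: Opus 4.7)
The plan is to exploit that $\iMap(\Theta, -)$ preserves limits (being a right adjoint to $\Theta \times -$) to identify the preimage $\cS''' := \Filt(\cX' \times_\cX \cX'') \times_{\Filt(\cX)} \cS$ with $\cS' \times_\cS \cS''$, then factor the candidate closed immersion as a composite of two base changes of the given closed immersions $\ev_1 \colon \cS' \hookrightarrow \cX'$ and $\ev_1 \colon \cS'' \hookrightarrow \cX''$. The set-theoretic image condition will then follow by a short point-by-point check. From the canonical equivalence $\Filt(\cX' \times_\cX \cX'') \simeq \Filt(\cX') \times_{\Filt(\cX)} \Filt(\cX'')$, the preimage $\cS'''$ of $\cS$ is manifestly $\cS' \times_\cS \cS''$, and is open and closed in $\Filt(\cX' \times_\cX \cX'')$ since $\cS$ is open and closed in $\Filt(\cX)$.

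The main point is that $\ev_1|_{\cS'''} \colon \cS''' \to \cX' \times_\cX \cX''$ is a closed immersion. The key observation is that $\ev_1 \colon \cS \to \cX$ is a closed immersion, hence a monomorphism; combined with the naturality of $\ev_1$ that makes the composite $\cS' \to \cS \to \cX$ agree with $\cS' \to \cX' \to \cX$ (and analogously for $\cS''$), this yields a canonical equivalence $\cS' \times_\cS \cS'' \simeq \cS' \times_\cX \cS''$. The map in question then factors as
\[
\cS' \times_\cX \cS'' \longrightarrow \cX' \times_\cX \cS'' \longrightarrow \cX' \times_\cX \cX'',
\]
where the first arrow is the base change of $\ev_1 \colon \cS' \to \cX'$ along $\cX' \times_\cX \cS'' \to \cX'$, and the second is the base change of $\ev_1 \colon \cS'' \to \cX''$ along $\cX' \times_\cX \cX'' \to \cX''$. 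Each is a closed immersion, hence so is the composite.

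Finally, I would verify $\ev_1(|\cS'''|) = q^{-1}(\ev_1(|\cS|))$ for the canonical map $q \colon \cX' \times_\cX \cX'' \to \cX$. A point of $\cX' \times_\cX \cX''$ is a pair $(x', x'')$ with $p'(x') = p''(x'')$, and the factorization above shows $(x', x'') \in \ev_1(|\cS'''|)$ iff $x' \in \ev_1(|\cS'|)$ and $x'' \in \ev_1(|\cS''|)$. By hypothesis, these conditions translate to $p'(x') \in \ev_1(|\cS|)$ and $p''(x'') \in \ev_1(|\cS|)$; since $p'(x') = p''(x'') = q(x', x'')$, both reduce to the single condition $q(x', x'') \in \ev_1(|\cS|)$, as desired. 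The only mildly subtle point is the fiber product identification $\cS' \times_\cS \cS'' \simeq \cS' \times_\cX \cS''$, which is standard $\infty$-categorical nonsense for monomorphisms; I foresee no substantive obstacle.
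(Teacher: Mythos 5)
Your proof is correct. It takes the same basic route as the paper's --- both hinge on the equivalence $\Filt(\cX' \times_\cX \cX'') \simeq \Filt(\cX') \times_{\Filt(\cX)} \Filt(\cX'')$ and then perform a fiber-product manipulation --- but the two arguments package the key fiber-product step differently. The paper first repackages the hypothesis ``$\cS$ induces a $\Theta$-stratum in $\cX'$'' as the single condition that the canonical map $\cS' \to \cS \times_\cX \cX'$ is a \emph{surjective} closed immersion (this bundles both the closed-immersion and the set-theoretic-image conditions into one), then takes the fiber product of these two surjective closed immersions over $\cS$ to land in $\cS \times_\cX (\cX' \times_\cX \cX'')$, and reads off the conclusion immediately. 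You instead keep the two parts of the definition separate: you use the monomorphism property of $\cS \hookrightarrow \cX$ to identify $\cS' \times_\cS \cS'' \simeq \cS' \times_\cX \cS''$, factor $\ev_1$ through the intermediate $\cX' \times_\cX \cS''$ as a composite of two base-changed closed immersions, and then verify the set-theoretic condition by an explicit point-level computation. Both proofs are short; the paper's reformulation is a touch slicker because it handles the image condition for free, while yours is more hands-on and arguably more self-explanatory. One small remark: your argument that $\cS' \times_\cS \cS'' \simeq \cS' \times_\cX \cS''$ uses only that $\cS \to \cX$ is a monomorphism, and the naturality square for $\ev_1 \colon \Filt(-) \to \id$; both are fine, but it is worth noting that the same naturality is also what guarantees that the composite you build really is $\ev_1|_{\cS'''}$, so it is doing slightly more work than you explicitly credit it for.
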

\begin{proof}
Let $\cS' \subset \Filt(\cX')$ and $\cS'' \subset \Filt(\cX'')$ denote the preimage of $\cS \subset \Filt(\cX)$. Then the condition that $\cS$ induces a $\Theta$-stratum in each stack $\cX'$ and $\cX''$ is equivalent to the condition that the canonical morphisms $\cS' \to \cS \times_\cX \cX'$ and $\cS'' \to \cS \times_\cX \cX''$ are surjective closed immersions. It follows that $\cS' \times_\cS \cS'' \to \cS \times_{\cX} (\cX' \times_\cX \cX'')$ is a surjective closed immersion. The claim now follows from the fact that $\iMap(\Theta,-)$ commutes with fiber products, so $\cS' \times_\cS \cS''$ can be identified with the preimage in $\Filt(\cX' \times_\cX \cX'')$ of $\cS$.
\end{proof}

\begin{lem} \label{L:induced_stratum_smooth}
Let $\cX$ be a derived algebraic stack locally almost of finite presentation and with affine automorphisms over a base stack, and let $\cS \subset \Filt(\cX)$ be a $\Theta$-stratum. If $p  : \cX' \to \cX$ is a smooth representable morphism that induces a $\Theta$-stratum $\cS' \subset \Filt(\cX')$, then the canonical map is an isomorphism
\[
\cS' \xrightarrow{\cong} \cX' \times_\cX \cS.
\]
\end{lem}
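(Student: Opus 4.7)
The plan is to identify the canonical morphism $\phi \colon \cS' \to \cX' \times_\cX \cS$ as a surjective closed immersion with vanishing relative cotangent complex, and hence as an isomorphism. The surjective closed immersion property will follow by repeating the argument from the proof of \Cref{L:induced_stratum_base_change}: by hypothesis the composite $\cS' \hookrightarrow \Filt(\cX') \xrightarrow{\ev_1} \cX'$ is a closed immersion whose underlying set equals $p^{-1}(\ev_1(|\cS|)) = |\cX' \times_\cX \cS|$, and its factorization through the closed immersion $\cX' \times_\cX \cS \hookrightarrow \cX'$ is therefore forced to be a surjective closed immersion.

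Next, I would observe that $\phi$ is the base change along the open-closed immersion $\cX' \times_\cX \cS \hookrightarrow \Filt(\cX) \times_\cX \cX'$ of the canonical morphism $\tilde\phi \colon \Filt(\cX') \to \Filt(\cX) \times_\cX \cX'$ built from $\Filt(p)$ and $\ev_1$, so it suffices to check that $\bL_{\tilde\phi}$ vanishes at every geometric point $\tilde f \in \cS'$. Using the factorization $\Filt(\cX') \xrightarrow{\tilde\phi} \Filt(\cX) \times_\cX \cX' \xrightarrow{\pi_2} \cX'$, flat base change to identify $\bL_{\pi_2} \simeq \pi_1^\ast \bL_{\Filt(\cX)/\cX}$, and the standard formula $\bT_{\Filt(\cY)/\cY}|_g \simeq \fib(\Gamma(\Theta, g^\ast \bT_\cY) \to \bT_\cY|_{g(1)})$ for the relative tangent of $\ev_1$, I would derive at each $\tilde f \colon \Theta \to \cX'$ in $\cS'$ (with $f = p\tilde f$) the identification
\[
\bT_{\tilde\phi}\big|_{\tilde f} \;\simeq\; \fib\bigl(\bT_{\Filt(\cX')/\cX'}|_{\tilde f} \to \bT_{\Filt(\cX)/\cX}|_{f}\bigr) \;\simeq\; \Gamma_{B\Gm}\bigl(\Theta, \tilde f^\ast \bT_{\cX'/\cX}\bigr),
\]
where $\Gamma_{B\Gm}(\Theta, -) := \fib\bigl(\Gamma(\Theta, -) \to (-)|_{\{1\}}\bigr)$ denotes local cohomology along the closed point $B\Gm \subset \Theta$, and the second equivalence comes from the fiber sequence $\bT_{\cX'/\cX} \to \bT_{\cX'} \to p^\ast \bT_\cX$ associated with the smoothness of $p$, together with the compatibility of $\Gamma_{B\Gm}$ with fibers.

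The hard part will be verifying that this local cohomology object actually vanishes. Since $p$ is smooth, $\tilde f^\ast \bT_{\cX'/\cX}$ is a $\Gm$-equivariant locally free sheaf on $\bA^1$ and splits as $\bigoplus_i \cO_\Theta\langle n_i \rangle$; a direct inspection gives $\Gamma_{B\Gm}(\Theta, \cO_\Theta\langle n\rangle) = 0$ for $n \geq 0$ (the restriction map $\Gamma(\Theta, \cO_\Theta\langle n\rangle) \to \cO_\Theta\langle n\rangle|_{\{1\}}$ is an isomorphism in that range) and equals $R[-1]$ for $n < 0$, so the vanishing reduces to the claim that all $\Gm$-weights of $\tilde f^\ast \bT_{\cX'/\cX}|_{B\Gm}$ are $\leq 0$. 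I would establish this non-positivity by applying the same local cohomology computation to the relative tangents of $\ev_1 \colon \Filt(\cX') \to \cX'$ and $\ev_1 \colon \Filt(\cX) \to \cX$: the closed-immersion conditions on $\cS' \hookrightarrow \cX'$ and $\cS \hookrightarrow \cX$ (equivalently, $\bL_{\cS'/\cX'} \in \QC(\cS')^{\geq 1}$ and $\bL_{\cS/\cX} \in \QC(\cS)^{\geq 1}$, as characterized in \Cref{P:theta_action_is_filt}) force the weights of $\tilde f^\ast \bT_{\cX'}|_{B\Gm}$ and $f^\ast \bT_\cX|_{B\Gm}$ to be non-positive, and the cofiber sequence $f^\ast \bT_\cX \to \tilde f^\ast \bT_{\cX'} \to \tilde f^\ast \bT_{\cX'/\cX}$ then transports this bound to $\tilde f^\ast \bT_{\cX'/\cX}|_{B\Gm}$.

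Finally, a closed immersion of derived stacks that are locally almost of finite presentation and whose relative cotangent complex vanishes is formally étale and hence étale; a surjective étale closed immersion is an isomorphism, which completes the argument.
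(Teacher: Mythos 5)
Your overall strategy — compute $\bL_\phi$ (or its dual) at geometric points and show it vanishes by a weight argument — is a reasonable alternative to the paper's route, and you correctly identify the final obstruction as a local cohomology group on $\Theta$. But the key step where you establish the required weight bound has a genuine gap.

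You assert that the $\Theta$-stratum conditions $\bL_{\cS/\cX} \in \QC(\cS)^{\geq 1}$ and $\bL_{\cS'/\cX'} \in \QC(\cS')^{\geq 1}$ force the weights of $f^\ast\bT_\cX|_{B\Gm}$ and $\tilde f^\ast\bT_{\cX'}|_{B\Gm}$ to be non-positive. This is false: those conditions only control the \emph{normal} piece of the tangent, not the full tangent. At a graded point in the center $\cZ$, the fiber of $\bT_\cX$ splits (over $B\Gm$) as $\bT_\cS|_\cZ \oplus \bT_{\cS/\cX}|_\cZ[1]$. By \Cref{L:cotangent_complex_theta_action} the first summand has weights $\geq 0$, and by \Cref{L:relative_cotangent_complex} the second has weights $\leq -1$, so $\bT_\cX|_{B\Gm}$ has weights of both signs whenever the stratum is nontrivial. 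A concrete counterexample: $\cX = \bA^2/\Gm$ with coordinates of weights $\pm 1$, $\cS$ the tautological stratum; at the fixed point $\bT_\cX|_{B\Gm}$ has nonzero components in weights $-1$, $0$, and $+1$. So the cofiber-sequence step transporting a bound from $\bT_\cX$, $\bT_{\cX'}$ to $\bT_{\cX'/\cX}$ has nothing to transport. (There is also a sign issue worth flagging: with the paper's convention that $\cO_\Theta\langle n\rangle$ has weight $-n$, one finds $\Gamma_{B\Gm}(\Theta, \cO_\Theta\langle n\rangle) = 0$ for $n \leq 0$, i.e., you need the weights of $\bT_{\cX'/\cX}|_{B\Gm}$ to be \emph{non-negative}, not non-positive; and the cofiber sequence of tangent complexes runs $\bT_{\cX'/\cX} \to \bT_{\cX'} \to p^\ast\bT_\cX$, not the order you wrote. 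But the decisive problem is the unsupported weight bound.)

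The weight statement you actually need — that $\ev_1^\ast\bL_{\cX'/\cX}|_{\cS'}$ has weights $\leq 0$, equivalently $\radj{1}(\ev_1^\ast\bL_{\cX'/\cX}|_{\cS'}) = 0$ — is \emph{equivalent} to the lemma's conclusion and genuinely uses the surjectivity hypothesis $\ev_1(|\cS'|) = p^{-1}(\ev_1(|\cS|))$; it cannot be extracted from the $\Theta$-stratum conditions alone. The paper instead first invokes \Cref{C:stratum_smooth} to conclude that $\cS' \to \cS$ is smooth (so $\bL_{\cS'/\cS} = \ladj{1}(\ev_1^\ast\bL_{\cX'/\cX})$ is a locally free sheaf). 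Then $\phi$ is a surjective closed immersion between stacks smooth and representable over $\cS$, hence a regular immersion, so $\bL_\phi$ is a locally free sheaf in homological degree $1$; surjectivity forces the relative dimensions to agree, so $\bL_\phi$ has rank $0$ and therefore vanishes. If you want to keep your direct-computation approach, you should likewise feed in \Cref{C:stratum_smooth} plus a rank count from surjectivity to conclude $\radj{1}(\ev_1^\ast\bL_{\cX'/\cX}|_{\cS'}) = 0$, rather than trying to derive a weight bound on the full tangent complexes.
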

\begin{proof}
By \Cref{C:stratum_smooth} below, whose proof is independent of this lemma, $\cS'$ is smooth over $\cS$. Thus the canonical map $\phi : \cS' \to \cX' \times_\cX \cS$ is a surjective closed immersion of algebraic derived stacks which are smooth and representable over $\cS$. It follows that the relative cotangent complex $\bL_\phi$ is perfect, and passing to fibers over $\cS$, one can see that it is $0$, i.e., the morphism $\phi$ is \'etale. An \'etale surjective closed immersion is an isomorphism.
\end{proof}

\subsection{The cotangent complex of the stack of filtered objects}

For any morphism of finite Tor-amplitude between algebraic derived stacks $\pi : \cY \to \cB$ such that $\pi^\ast : \APerf(\cB) \to \APerf(\cY)$ admits a left adjoint $\pi_+$ and the same holds for the base change of $\pi$ along any morphism $\Spec(R) \to \cB$, the mapping stack $\iMap_\cB(\cY,\cX)$ has a cotangent complex for any derived stack $\cX$ which has a cotangent complex \cite{halpern2014mapping}*{Prop.~5.1.10}. If $\ev : \cY \times_\cB \iMap_\cB(\cY,\cX) \to \cX$ is the tautological evaluation map, then
\begin{equation} \label{E:cotangent_mapping_stack}
\bL_{\iMap_\cB(\cY,\cX)/\cB} \simeq \pi_+(\ev^\ast(\bL_{\cX/\cB})) \in \APerf(\iMap_\cB(\cY,\cX)).
\end{equation}

\begin{lem} \label{L:left_adjoint_pullback}
For any derived stack $\cX$ the functor of pullback along the projection $\pi: \Theta \times \cX \to \cX$ admits a left adjoint $\pi_+ : \QC(\Theta \times \cX) \to \QC(\cX)$ given by $\pi_+(F) := \pi_\ast( (\cO_\Theta[t^{-1}] / \cO_\Theta \cdot t) \otimes F)$.
\end{lem}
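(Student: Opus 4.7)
The plan is a direct computation via the Rees equivalence $\QC(\Theta \times \cX) \simeq \Fun(\underline{\bZ}, \QC(\cX))$ already invoked in the proof of \Cref{L:tautological_baric_structure}. Under this equivalence, $F \in \QC(\Theta \times \cX)$ corresponds to a diagram $[\cdots \to F_{w+1} \xrightarrow{t} F_w \to \cdots]$ indexed by $\underline{\bZ}$, and $\pi^\ast G$ corresponds to the diagram $[\cdots \to G \xrightarrow{\id} G \xrightarrow{\id} G \to 0 \to 0 \to \cdots]$ with $G$ at weights $w \leq 0$, zero at weights $w \geq 1$, identity transitions among the $G$'s, and the zero transition $0 \to G$ at the boundary $w = 1 \to w = 0$.

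First, observe that $\pi_+ F := \pi_\ast(K \otimes F)$ with $K := \cO_\Theta[t^{-1}]/\cO_\Theta \cdot t$ preserves colimits: since $\Theta$ is cohomologically affine, $\pi_\ast$ identifies with the exact ``weight zero'' summand functor on the Rees picture, and tensoring with the fixed object $K$ also preserves colimits.

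Next I would compute $\pi_+ F$ in the Rees picture using the cofiber sequence $\cO_\Theta\langle 1 \rangle \hookrightarrow \cO_\Theta[t^{-1}] \twoheadrightarrow K$ (whose first term is the submodule $\cO_\Theta \cdot t$). Tensoring with $F$ and applying $\pi_\ast$, i.e.\ taking weight zero parts, yields a cofiber sequence
\[
F_1 \to (F[t^{-1}])_0 \to \pi_+ F,
\]
and a direct inspection of the localization identifies $(F[t^{-1}])_0$ with the filtered colimit $\colim_{n \geq 0} F_{-n}$ along the transitions $t \colon F_{-n} \to F_{-n-1}$. In parallel, unwinding the definition of natural transformation in the Rees picture shows that $\Map_{\Theta \times \cX}(F, \pi^\ast G)$ is the space of compatible families $\{\alpha_w \colon F_w \to G\}_{w \leq 0}$ with $\alpha_{w+1} = \alpha_w \circ t$ for $w \leq -1$ and $\alpha_0 \circ t = 0$, which repackages as $\Map(\cofib(F_1 \to \colim_{n \geq 0} F_{-n}), G)$. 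Comparing this with the cofiber description of $\pi_+ F$ gives a bi-natural equivalence $\Map(\pi_+ F, G) \simeq \Map(F, \pi^\ast G)$, and hence the adjunction $\pi_+ \dashv \pi^\ast$.

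The main obstacle is careful bookkeeping with the Rees equivalence: one must correctly compute $(F[t^{-1}])_0$ as the filtered colimit above (using the presentation $\cO_\Theta[t^{-1}] = \colim_n \cO_\Theta\langle -n\rangle$), and analogously repackage the space of natural transformations into the stable $\infty$-categorical fiber of a tower of ``lifts''. Once these identifications are in place, the proof reduces to the formal matching of two cofiber descriptions.
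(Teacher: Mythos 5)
Your proposal is correct and follows essentially the same route as the paper: work in the Rees picture, identify $\pi^\ast G$ as the constant diagram in weights $\leq 0$ vanishing in weights $\geq 1$, write $\pi_\ast(K\otimes F)$ as $\cofib\bigl(F_1 \to \colim_{n\to-\infty}F_n\bigr)$ via the cofiber sequence $\cO_\Theta\langle1\rangle \to \cO_\Theta[t^{-1}] \to K$, and match that with the mapping space $\Map(F,\pi^\ast G)$. The only cosmetic difference is that you compute $\Map(F,\pi^\ast G)$ by direct inspection of the Rees diagram, whereas the paper first observes $\pi^\ast G \in \QC(\Theta\times\cX)^{<1}$ and factors the computation through the baric truncation $\ladj{1}(F)$; the aside about $\pi_+$ preserving colimits is true but not actually used once you compute both sides explicitly.
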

\begin{proof}
Under the Reese equivalence $\pi^\ast(G) = [\cdots\to 0 \to 0 \to G \to G \to G\to \cdots]$ where all maps $G \to G$ are the identity and $G$ first appears in weight $0$. The explicit description of the baric structure on $\QC(\Theta \times \cX)$ in \Cref{L:tautological_baric_structure} shows that $\pi^\ast(G) \in \QC(\Theta \times \cX)^{<1}$, and so any map $F \to G$ factors uniquely through $F \to \ladj{1}(F)$. Note however that $\QC(\Theta \times \cX)^{<1}$ is equivalent to the $\infty$-category of diagrams indexed by integers $\leq 0$, and $\pi^\ast(G)$ is the constant object in this diagram category, so if $F \simeq [ \cdots \to F_{n+1} \to F_n \to \cdots ]$, then
\[
\ladj{1}(F) \simeq [\cdots \to 0 \to \cofib(F_1 \to F_0) \to \cofib(F_1\to F_{-1}) \to \cdots]
\]
and we have
\[
\RHom_{\Theta \times \cX}(F,\pi^\ast(G)) \simeq \RHom_\cX(\colim\limits_{n \to -\infty}(\cofib(F_1 \to F_n)), G).
\]
This means that
\begin{align*}
\pi_+(F) &\simeq \colim\limits_{n \to -\infty}(\cofib(F_1 \to F_n)) \\
&\simeq \cofib(F_1 \to \colim\limits_{n \to -\infty}(F_n)) \\
&\simeq \cofib \left(\pi_\ast(\cO_\Theta \langle1\rangle \otimes F) \xrightarrow{t} \pi_\ast(\cO_\Theta[t^{-1}] \otimes F) \right),
\end{align*}
and the claim follows.
\end{proof}

\begin{lem} \label{L:relative_cotangent_complex}
Let $\cX$ be a derived stack that admits a cotangent complex. Then there is an equivalence of canonical fiber sequences $\QC(\Filt(\cX))$
\begin{equation} \label{E:equivalence_fiber_sequences}
\xymatrix{ \radj{1} (\ev_1^\ast \bL_\cX) \ar[r] \ar[d]^\simeq & \ev_1^\ast \bL_\cX \ar[r] \ar@{=}[d] & \ladj{1}( \ev_1^\ast \bL_\cX) \ar[d]^\simeq \\ \bL_{\Filt(\cX)/\cX}[-1] \ar[r] & \ev_1^\ast \bL_\cX \ar[r] & \bL_{\Filt{\cX}} }
\end{equation}
In particular, $\bL_{\Filt(\cX) / \cX} \in \QC(\Filt(\cX))^{\geq 1}$.
\end{lem}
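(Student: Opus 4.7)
My goal is to identify the fiber sequence in the statement with the canonical baric truncation fiber sequence $\radj{1}(\ev_1^\ast \bL_\cX) \to \ev_1^\ast \bL_\cX \to \ladj{1}(\ev_1^\ast \bL_\cX)$ produced by the weak $\Theta$-action of \Cref{P:theta_action_strata} and \Cref{P:baric_stratum}. The central computation is to identify $\bL_{\Filt(\cX)/\cB}$ with $\ladj{1}(\ev_1^\ast \bL_\cX)$; everything else is formal.

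\textbf{Step 1 (rewrite the evaluation map).} The action map $a : \Theta \times \Filt(\cX) \to \Filt(\cX)$ defined in \Cref{P:theta_action_strata} is classified by $\ev \circ (\mu \times \id)$. Restricting to $\{1\} \times \Theta \times \Filt(\cX)$ and using $\mu|_{\{1\} \times \Theta} \simeq \id_\Theta$, one obtains a canonical equivalence $\ev \simeq \ev_1 \circ a$ of maps $\Theta \times \Filt(\cX) \to \cX$. Hence $\ev^\ast \bL_{\cX/\cB} \simeq a^\ast \ev_1^\ast \bL_{\cX/\cB}$.

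\textbf{Step 2 (apply the mapping stack formula).} The formula \eqref{E:cotangent_mapping_stack} (with $\cY = \Theta$, for which $\pi_+$ exists by \Cref{L:left_adjoint_pullback}) combined with Step 1 gives
\[
\bL_{\Filt(\cX)/\cB} \simeq \pi_+\bigl(a^\ast\ev_1^\ast \bL_{\cX/\cB}\bigr).
\]
Setting $F := \ev_1^\ast \bL_{\cX/\cB}$ and using the explicit formula for $\pi_+$ from \Cref{L:left_adjoint_pullback},
\[
\pi_+(a^\ast F) \simeq \cofib\Bigl(\pi_\ast(\cO_\Theta\langle 1\rangle \otimes a^\ast F) \xrightarrow{\;t\;} \pi_\ast(\cO_\Theta[t^{-1}] \otimes a^\ast F)\Bigr).
\]
By the defining fiber sequence for the baric truncation functors in \Cref{P:baric_stratum}, the source is $\radj{1}(F)$, the target is canonically $F$ (via $a|_{\{1\}\times\Filt(\cX)} \simeq \id$), and the cofiber is $\ladj{1}(F)$. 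Thus we obtain a natural equivalence $\bL_{\Filt(\cX)/\cB} \simeq \ladj{1}(\ev_1^\ast \bL_{\cX/\cB})$, and under this equivalence the connecting map $\ev_1^\ast \bL_{\cX/\cB} \simeq \pi_\ast(\cO_\Theta[t^{-1}] \otimes a^\ast F) \to \pi_+(a^\ast F)$ coincides with the canonical truncation $F \to \ladj{1} F$.

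\textbf{Step 3 (glue with the transitivity sequence).} The composition $\Filt(\cX) \xrightarrow{\ev_1} \cX \to \cB$ yields a cofiber sequence
\[
\ev_1^\ast \bL_{\cX/\cB} \to \bL_{\Filt(\cX)/\cB} \to \bL_{\Filt(\cX)/\cX}.
\]
I need to verify that the first map agrees, under the equivalence of Step 2, with the canonical baric truncation map $F \to \ladj{1} F$; this is the only nontrivial compatibility in the proof. Both maps are characterized by the universal property of $\bL_{\Filt(\cX)/\cB}$ as $\pi_+ \ev^\ast \bL_{\cX/\cB}$: the transitivity map corresponds by the $(\pi_+ \dashv \pi^\ast)$-adjunction to the map $\ev^\ast \bL_{\cX/\cB} \to \pi^\ast \ev_1^\ast \bL_{\cX/\cB}$ obtained by differentiating the identity $\ev \simeq \ev_1 \circ a$ and using that $a$ restricts to the identity at $1 \in \Theta$, while the baric truncation map $F \to \ladj{1} F = \pi_+(a^\ast F)$ corresponds by the same adjunction to $a^\ast F \to \pi^\ast F$, which is the unit-of-adjunction map associated with the section $i_1 : \Filt(\cX) \hookrightarrow \Theta \times \Filt(\cX)$. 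Under the identification $\ev^\ast \bL_{\cX/\cB} \simeq a^\ast \ev_1^\ast \bL_{\cX/\cB}$ of Step 1, these two maps agree. Granting this, comparing the transitivity sequence with the baric fiber sequence $\radj{1} F \to F \to \ladj{1} F$ produces a canonical equivalence $\bL_{\Filt(\cX)/\cX}[-1] \simeq \radj{1}(\ev_1^\ast \bL_{\cX/\cB})$, and hence the commutative diagram \eqref{E:equivalence_fiber_sequences}.

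\textbf{Step 4 (final assertion).} Since $\radj{1}(\ev_1^\ast \bL_{\cX/\cB}) \in \QC(\Filt(\cX))^{\geq 1}$ by construction and $\QC(\Filt(\cX))^{\geq 1}$ is closed under the shift functor (as it arises from a stable semiorthogonal decomposition), the equivalence $\bL_{\Filt(\cX)/\cX} \simeq \radj{1}(\ev_1^\ast \bL_{\cX/\cB})[1]$ immediately yields $\bL_{\Filt(\cX)/\cX} \in \QC(\Filt(\cX))^{\geq 1}$.

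The main obstacle is the compatibility check in Step 3 tracing the transitivity map through the adjunction $\pi_+ \dashv \pi^\ast$; the rest of the argument is an essentially formal combination of the mapping-stack formula \eqref{E:cotangent_mapping_stack}, the explicit description of $\pi_+$ in \Cref{L:left_adjoint_pullback}, and the formula for the baric truncations in \Cref{P:baric_stratum}.
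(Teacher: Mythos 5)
Your overall strategy is identical to the paper's: rewrite $\ev \simeq \ev_1 \circ a$, apply \eqref{E:cotangent_mapping_stack} together with \Cref{L:left_adjoint_pullback} to obtain $\bL_{\Filt(\cX)} \simeq \ladj{1}(\ev_1^\ast \bL_\cX)$, and then check that the transitivity map $D\ev_1$ agrees with the canonical baric truncation map. The gap is in Step~3, which you yourself flag as the key point. You claim that $D\ev_1 : \ev_1^\ast\bL_\cX \to \bL_{\Filt(\cX)} = \pi_+(\ev^\ast\bL_\cX)$ ``corresponds by the $(\pi_+ \dashv \pi^\ast)$-adjunction'' to a map $\ev^\ast\bL_\cX \to \pi^\ast \ev_1^\ast\bL_\cX$, but that adjunction classifies maps \emph{out of} $\pi_+(-)$, not maps \emph{into} it; adjointing a map $\ev^\ast\bL_\cX \to \pi^\ast \ev_1^\ast\bL_\cX$ would instead produce a map $\pi_+(\ev^\ast\bL_\cX) \to \ev_1^\ast\bL_\cX$, in the opposite direction. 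Likewise, the phrase ``unit-of-adjunction map associated with the section $i_1$'' does not correctly describe the baric truncation map $F \to \ladj{1}F$ in the form you state it; the unit of $i_1^\ast \dashv (i_1)_\ast$ lands in $(i_1)_\ast F$, not $\pi^\ast F$, and the unit of $\pi_+ \dashv \pi^\ast$ lands in $\pi^\ast\ladj{1}F$, not $\pi^\ast F$.

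The correct argument (which the paper gives) is close to what you intend but requires a more explicit computation. One first notes, via \eqref{E:cotangent_mapping_stack}, that $D\ev : \ev^\ast\bL_\cX \to \bL_{\Theta\times\Filt(\cX)} \cong \bL_\Theta \oplus \pi^\ast\pi_+\ev^\ast\bL_\cX$ is zero on the first summand and equals the unit of $\pi_+ \dashv \pi^\ast$ on the second. Then $D\ev_1$ is obtained by restricting this unit along the section $i_1 : \{1\}\times\Filt(\cX) \hookrightarrow \Theta\times\Filt(\cX)$. Finally, one uses the construction of $\pi_+$ in \Cref{L:left_adjoint_pullback} to identify the restriction $F|_{\{1\}\times\Filt(\cX)} \to \pi^\ast(\pi_+(F))|_{\{1\}\times\Filt(\cX)} \cong \pi_+(F)$ explicitly with the map $\pi_\ast(\cO_\Theta[t^{-1}]\otimes F) \to \pi_\ast((\cO_\Theta[t^{-1}]/\cO_\Theta\cdot t)\otimes F)$; applying this with $F = \ev^\ast\bL_\cX = a^\ast\ev_1^\ast\bL_\cX$ and invoking the defining fiber sequence in \Cref{P:baric_stratum} completes the comparison. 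Your Steps~1, 2, and 4 are fine as written.
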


\begin{proof}
Note that composing the action map $a : \Theta \times \Filt(\cX) \to \Filt(\cX)$ with $\ev_1 : \Filt(\cX) \to \cX$ corresponds to restricting the map $ \Theta \times \Theta \times \Filt(\cX) \to \cX$ from \eqref{E:theta_action_filt} classified by $a$ to the substack $\{1 \} \times \Theta \times \Filt(\cX)$. We therefore have a canonical equivalence $\ev_1 \circ a \simeq \ev$ as maps $\Theta \times \Filt(\cX) \to \cX$. Using this observation and \Cref{L:left_adjoint_pullback} gives
\begin{align*}
\ladj{1}(\ev_1^\ast(\bL_\cX)) &:= \pi_\ast \left((\cO_\Theta[t^{-1}] / \cO_\Theta \cdot t) \otimes a^\ast(\ev_1^\ast(\bL_\cX))\right) \\
&\simeq \pi_\ast\left((\cO_\Theta[t^{-1}] / \cO_\Theta \cdot t) \otimes \ev^\ast(\bL_\cX)\right) \simeq \bL_{\Filt(\cX)}.
\end{align*}
To establish the lemma it suffices to show that under this isomorphism the canonical map $\ev_1^\ast(\bL_\cX) \to \ladj{1}(\ev_1^\ast(\bL_\cX))$ is homotopic to the canonical map $D\ev_1 : \ev_1^\ast(\bL_\cX) \to \bL_{\Filt(\cX)}$.

Under the equivalence \eqref{E:cotangent_mapping_stack}, the canonical morphism on $\Theta \times \Filt(\cX)$
\[
D\ev : \ev^\ast(\bL_\cX) \to \bL_{\Theta \times \Filt(\cX)} \cong \bL_\Theta \oplus \pi^\ast(\pi_+(\ev^\ast(\bL_\cX)))
\]
is the $0$ on the first summand and the unit of adjunction $\ev^\ast(\bL_\cX) \to \pi^\ast(\pi_+(\ev^\ast(\bL_\cX)))$ on the second summand. $D\ev_1$ is obtained from $D\ev$ by restricting to $\{1\} \times \Filt(\cX)$ and projecting $\bL_{\Theta \times\Filt(\cX)}$ onto the second summand $\pi^\ast(\pi_+(\ev^\ast(\bL_\cX)))|_{\{1\} \times \Filt(\cX)} \cong \pi_+(\ev^\ast(\cX))$. Under the construction of $\pi_+$ in \Cref{L:left_adjoint_pullback}, for any $F \in \APerf(\Theta \times \Filt(\cX))$ one can identify the canonical map $F|_{\{1\} \times \Filt(\cX)} \to \pi^\ast(\pi_+(F))|_{\{1\} \times \Filt(\cX)} \cong \pi_+(F)$ with the map
\[
\pi_\ast(\cO_\Theta[t^{-1}] \otimes F) \to \pi_\ast((\cO_\Theta[t^{-1}]/\cO_\Theta \cdot t) \otimes F).
\]
$D \ev_1$ is obtained by applying this to $F = \ev^\ast(\bL_\cX)$, which establishes the claim.

\end{proof}

\begin{cor} \label{C:stratum_smooth}
Let $\cX' \to \cX$ be a smooth morphism of algebraic derived stacks that are locally almost of finite presentation and have affine automorphism groups relative to the base stack. Then the canonical map $\Filt(\cX') \to \Filt(\cX)$ is smooth as well. In particular, if $\cX$ is smooth over a classical base, then $\Filt(\cX)$ is smooth as well, and hence classical.
\end{cor}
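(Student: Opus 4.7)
The plan is to identify the relative cotangent complex $\bL_{\Filt(\cX')/\Filt(\cX)}$ and show it is a locally free sheaf in degree $0$. Theorem~\ref{T:mapping_stack} guarantees that $\Filt(\cX')$ and $\Filt(\cX)$ are locally almost of finite presentation over $\cB$, so the induced map $q \colon \Filt(\cX') \to \Filt(\cX)$ is as well, and smoothness will follow.

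The first step is to establish, by an argument parallel to the proof of Lemma~\ref{L:relative_cotangent_complex}, the identification
\[
\bL_{\Filt(\cX')/\Filt(\cX)} \simeq \pi_+\!\left(\ev^\ast \bL_{\cX'/\cX}\right),
\]
where $\ev \colon \Theta \times \Filt(\cX') \to \cX'$ is the tautological evaluation and $\pi_+$ is the left adjoint of Lemma~\ref{L:left_adjoint_pullback}. This derivation combines formula~\eqref{E:cotangent_mapping_stack} applied to both $\Filt(\cX')$ and $\Filt(\cX)$ over $\cB$, the cotangent triangles of $\Filt(\cX') \to \Filt(\cX) \to \cB$ and $\cX' \to \cX \to \cB$, the identity $\ev \circ (\id_\Theta \times q) \simeq p \circ \ev$ (where $p$ denotes the given smooth morphism), and base change for $\pi_+$ along $q$ (which holds because $\pi_+$ is a left adjoint and the relevant square $\Theta \times \Filt(\cX') \to \Theta \times \Filt(\cX)$ is cartesian over $\Filt(\cX') \to \Filt(\cX)$).

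Since $p$ is smooth, $\bL_{\cX'/\cX}$ is locally free in degree $0$, so $\ev^\ast \bL_{\cX'/\cX}$ is a $\Theta$-equivariant vector bundle on $\Theta \times \Filt(\cX')$. Under the Reese equivalence of Lemma~\ref{L:tautological_baric_structure}, on each connected component of $\Filt(\cX')$ (where the $\Gm$-weights of the fibers are locally constant) it corresponds to a diagram $[\cdots \hookrightarrow E_{w+1} \hookrightarrow E_w \hookrightarrow \cdots]$ of sub-bundle inclusions, vanishing for $w \gg 0$ and stabilizing to $\ev_1^\ast \bL_{\cX'/\cX}$ for $w \ll 0$. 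The explicit cofiber description of $\pi_+$ from Lemma~\ref{L:left_adjoint_pullback} then yields
\[
\bL_{\Filt(\cX')/\Filt(\cX)} \simeq \cofib\!\left(E_1 \hookrightarrow \ev_1^\ast \bL_{\cX'/\cX}\right),
\]
the locally free ``weight $\leq 0$'' quotient, in degree $0$. This establishes smoothness of $q$.

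For the ``in particular'' claim, the same analysis applied directly to $\cX \to \cB$ identifies $\bL_{\Filt(\cX)/\cB} \simeq \pi_+(\ev^\ast \bL_{\cX/\cB})$ as a locally free sheaf in degree $0$, so $\Filt(\cX) \to \cB$ is smooth; a smooth morphism into a classical base has classical source, so $\Filt(\cX)$ is classical. The main technical obstacle throughout is verifying that $\pi_+$ of a locally free sheaf is again locally free in degree $0$ --- equivalently, that the map $E_1 \hookrightarrow \ev_1^\ast \bL_{\cX'/\cX}$ is a sub-bundle inclusion with locally free cokernel --- which one checks fiberwise using the description of $\Theta$-equivariant vector bundles as graded free $\cO[t]$-modules with a prescribed multiset of weights.
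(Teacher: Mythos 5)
Your overall strategy coincides with the paper's: both identify $\bL_{\Filt(\cX')/\Filt(\cX)}$ with $\ladj{1}(\ev_1^\ast \bL_{\cX'/\cX}) \simeq \pi_+(\ev^\ast \bL_{\cX'/\cX})$ (using \Cref{L:relative_cotangent_complex} together with the compatibility of baric truncation with pullback from part (4) of \Cref{P:baric_stratum}), and both then check that this complex has the Tor-amplitude required for smoothness. The explicit Reese-diagram computation you carry out is in substance the same calculation the paper performs at graded points.

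There is, however, a genuine gap: you treat $\bL_{\cX'/\cX}$ (and later $\bL_{\cX/\cB}$) as a locally free sheaf concentrated in degree $0$, which holds only when the smooth morphism is \emph{representable}. The corollary is stated for arbitrary smooth morphisms of algebraic derived stacks with affine automorphism groups, so $\bL_{\cX'/\cX}$ is in general only a perfect complex with homology spread over two degrees (derivations plus the dual Lie algebra of relative automorphisms). This matters especially for the ``in particular'' clause: for $\cX = BG$ over a classical base, $\bL_{\cX/\cB} \cong \fg^\vee[-1]$ is not in degree $0$, so your claim that $\pi_+(\ev^\ast \bL_{\cX/\cB})$ is ``a locally free sheaf in degree $0$'' is false, and your verification via the Reese chain of sub-bundle inclusions does not directly apply. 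The paper sidesteps this by deducing perfectness of $\ladj{1}(\ev_1^\ast\bL_{\cX'/\cX})$ from part (3) of \Cref{P:baric_stratum} and then bounding fiber homology by specializing to split filtrations $\sigma(\ev_0(f))$ — at such a graded point, $\ladj{1}$ restricted along $B(\Gm)_k \to \Filt(\cX')$ is a direct-summand projection on $\QC(B(\Gm)_k)$, hence tautologically preserves any homological amplitude, with no need for a sub-bundle analysis or a degree-$0$ assumption. Your argument can be repaired along these lines (apply the same Reese reasoning degree by degree, or simply observe that $\ladj{1}$ is exact at graded points), but as written the step fails in the non-representable case.
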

\begin{proof}
By \Cref{L:relative_cotangent_complex}, and using the fact from \Cref{P:baric_stratum} part (4) that pullback along $\Filt(\cX') \to \Filt(\cX)$ commutes with baric truncation, it suffices to show that $\ladj{1}(\ev_1^\ast(\bL_{\cX'/\cX}))$ is perfect of Tor-amplitude in $[0,1]$. $\ev_1^\ast(\bL_{\cX'/\cX})$ is perfect of Tor-amplitude in $[0,1]$ by hypothesis, and thus $\ladj{1}(\ev_1^\ast(\bL_{\cX'/\cX}))$ is perfect by \Cref{P:baric_stratum} part (3), so it suffices to show that its fibers have homology supported in degrees $[0,1]$.

Any filtration $f : \Theta_k \to \cX'$ specializes, as a point in $\Filt(\cX')$, to its associated split filtration $\sigma(\ev_0(f)) : \Theta_k \to \cX'$, so it suffices to bound the fiber homology at points in the image of $\sigma : \Grad(\cX') \to \Filt(\cX')$. Any such point has a canonical $(\Gm)_{k}$-action, and the resulting map $B(\Gm)_k \to \Filt(\cX')$ is equivariant with respect to the weak $\Theta$-action on each stack. Therefore, by \Cref{P:baric_stratum} part (4), the claim on fiber homology follows from the observation that if $E \in \QC((\Gm)_{k})$ has homology in degree $0$ and $1$ only, then the same is true for $\ladj{1}(E)$.
\end{proof}

\subsection{An intrinsic characterization of a \texorpdfstring{$\Theta$}{Theta}-stratum}

Say that $i : \cS \hookrightarrow \cX$ is the closed substack underlying a $\Theta$-stratum, i.e. we have specified an open and closed immersion $\cS \subset \Filt(\cX)$ under which $i$ is identified with $\ev_1$. \Cref{P:theta_action_strata} shows that $\cS$ carries a canonical weak $\Theta$-action from its embedding in $\Filt(\cX)$, and \Cref{L:relative_cotangent_complex} shows that $\bL_{\cS/\cX} \in \QC(\cS)^{\geq 1}$ with respect to the baric structure defined by this canonical weak $\Theta$-action. We establish a converse to these results under suitable hypotheses on $\cX$. It shows, in particular, that specifying the structure of a $\Theta$-stratum on a closed substack is equivalent to specifying a suitable weak $\Theta$-action.

\begin{prop} \label{P:theta_action_is_filt}
Let $\cX$ be a derived algebraic stack locally almost of finite presentation and with affine diagonal relative to a base stack $\cB$,\footnote{All that is required for the proof is that $\Filt(\cX)$ is algebraic and $\ev_1 : \Filt(\cX) \to \cX$ is separated.} and let $i : \cS \to \cX$ be a closed immersion. Then for any weak $\Theta$-action on $\cS$, the action map $a : \Theta \times \cS \to \cS$ composed with $i$ is classified by a morphism $$\phi : \cS \to \iMap(\Theta,\cX)$$ which is a closed immersion and $\Theta$-equivariant for the weak $\Theta$-action on $\Filt(\cX)$ described in \Cref{P:theta_action_strata}. If $\bL_{\cS/\cX} \in \QC(\cS)^{\geq 1}$, then $\phi$ is an open immersion as well and hence identifies $\cS$ with a $\Theta$-stratum in $\cX$.
\end{prop}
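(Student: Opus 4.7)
The strategy divides into two main stages: (i) construction of $\phi$ along with its $\Theta$-equivariance and the closed immersion property, which hold unconditionally; and (ii) upgrading $\phi$ to an open immersion under the cotangent hypothesis, by analyzing $\bL_{\cS/\Filt(\cX)}$.

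For stage (i), define $\phi \colon \cS \to \Filt(\cX)$ by the mapping-stack adjunction to classify $i \circ a \colon \Theta \times \cS \to \cX$. Both $\phi \circ a$ and $a_\Filt \circ (\id_\Theta \times \phi)$, where $a_\Filt$ is the action from \Cref{P:theta_action_strata}, correspond under adjunction to the morphism $(t_1,t_2,s) \mapsto i(a(\mu(t_1,t_2),s))$, and the required equivalence between these two follows from the associativity axiom of the weak $\Theta$-action on $\cS$, establishing $\Theta$-equivariance. The identity axiom gives $\ev_1 \circ \phi \simeq i$, so $\phi$ factors as
\[
\phi \colon \cS \xrightarrow{(\id_\cS, \phi)} \cS \times_\cX \Filt(\cX) \xrightarrow{\pi_2} \Filt(\cX);
\]
here $\pi_2$ is a closed immersion as the base change of $i$, and $(\id_\cS, \phi)$ is a section of the base change $\pi_1$ of $\ev_1$. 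Because $\ev_1$ is separated (a consequence of \Cref{T:mapping_stack} combined with the affine diagonal hypothesis on $\cX$), its base change $\pi_1$ is separated, and a section of a separated morphism is a closed immersion. Thus $\phi$ is a closed immersion.

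For stage (ii), under $\bL_{\cS/\cX} \in \QC(\cS)^{\geq 1}$, it suffices to show $\bL_{\cS/\Filt(\cX)} = 0$, so that the closed immersion $\phi$ becomes \'etale, hence an open immersion. \Cref{L:relative_cotangent_complex} identifies $\bL_{\Filt(\cX)/\cX} \simeq \radj{1}(\ev_1^* \bL_\cX)[1]$, and the $\Theta$-equivariance of $\phi$ allows $\phi^*$ to commute with $\radj{1}$ by \Cref{L:theta_equivariant_map}, giving $\phi^* \bL_{\Filt(\cX)/\cX} \simeq \radj{1}(i^* \bL_\cX)[1]$. The octahedral axiom applied to the cotangent triangles for $\cS \xrightarrow{\phi} \Filt(\cX) \xrightarrow{\ev_1} \cX$ (whose composite is $i$) then yields a fiber sequence
\[
\radj{1}(i^* \bL_\cX)[1] \to \bL_{\cS/\cX} \to \bL_{\cS/\Filt(\cX)},
\]
and the task reduces to showing that the first map is an equivalence.

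The main obstacle is to exhibit this equivalence. The hypothesis $\bL_{\cS/\cX} \in \QC^{\geq 1}$ implies $\bL_{\cS/\cX}[-1] \in \QC^{\geq 1}$, so the connecting map $\bL_{\cS/\cX}[-1] \to i^* \bL_\cX$ of the cotangent triangle for $i$ factors uniquely through $\radj{1}(i^* \bL_\cX)$ by the universal property of baric truncation, producing after a shift a candidate inverse $\bL_{\cS/\cX} \to \radj{1}(i^* \bL_\cX)[1]$. Verifying these two maps are mutually inverse amounts to showing that the cotangent triangle $i^*\bL_\cX \to \bL_\cS \to \bL_{\cS/\cX}$ is canonically identified with the baric fiber sequence $\radj{1}(i^*\bL_\cX) \to i^*\bL_\cX \to \ladj{1}(i^*\bL_\cX)$, which by uniqueness of baric truncations reduces to the auxiliary claim $\bL_\cS \in \QC(\cS)^{<1}$. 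This last claim I expect to follow from applying stage (i) to $\cX = \cS$ itself — valid because $\cS$ inherits an affine diagonal from $\cX$ as the base change of $\Delta_{\cX/\cB}$ — producing a $\Theta$-equivariant closed immersion $s \colon \cS \to \Filt(\cS)$ with $\ev_1 \circ s = \id_\cS$. The section formula $\bL_{\cS/\Filt(\cS)} \simeq s^* \bL_{\Filt(\cS)/\cS}[1]$ combined with \Cref{L:relative_cotangent_complex} applied to $\cX = \cS$ and the $\Theta$-equivariance of $s$ gives $\bL_{\cS/\Filt(\cS)} \simeq \radj{1}(\bL_\cS)[2]$, and the closed immersion bound on the left-hand side together with weight and degree properties of $\bL_\cS$ for a derived $1$-Artin stack equipped with a weak $\Theta$-action should force $\radj{1}(\bL_\cS) = 0$.
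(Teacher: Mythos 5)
Your stage (i) is correct, and the factorization
\[
\cS \xrightarrow{(\id_\cS,\phi)} \cS\times_\cX \Filt(\cX) \xrightarrow{\pi_2} \Filt(\cX)
\]
is a clean route to the closed immersion property (the paper reduces to the same separatedness input, $\ev_1$ separated, but works locally over $\cX$). The logical skeleton of stage (ii) is also sound: granting both $\bL_{\cS/\cX}[-1]\in\QC(\cS)^{\geq 1}$ and the auxiliary claim $\bL_\cS\in\QC(\cS)^{<1}$, the rotated cotangent triangle for $i$ must coincide with the baric fiber sequence of $i^\ast\bL_\cX$ by uniqueness of semiorthogonal decompositions, giving $\bL_{\cS/\cX}[-1]\simeq\radj{1}(i^\ast\bL_\cX)\simeq\phi^\ast\bL_{\Filt(\cX)/\cX}[-1]$ and hence $\bL_{\cS/\Filt(\cX)}=0$, which is what you want.

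The gap is in your justification of the auxiliary claim $\bL_\cS\in\QC(\cS)^{<1}$. You correctly compute $\bL_{\cS/\Filt(\cS)}\simeq\radj{1}(\bL_\cS)[2]$ from the section formula plus \Cref{L:relative_cotangent_complex} and the $\Theta$-equivariance of $s$, and you observe that $\bL_{\cS/\Filt(\cS)}$ is $1$-connective since $s$ is a closed immersion. But this only says $\radj{1}(\bL_\cS)$ is $(-1)$-connective, which is vacuous: $\bL_\cS$ is itself $(-1)$-connective for an algebraic stack and $\radj{1}$ is right $t$-exact, so the $(-1)$-connectivity of $\radj{1}(\bL_\cS)$ was already known. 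There is no upper degree bound available on $\bL_\cS$ in this generality, so you cannot squeeze $\radj{1}(\bL_\cS)$ to zero by degree considerations. The argument that does work is a retraction argument, which you have almost set up: from $\ev_1\circ s\simeq\id_\cS$ one gets $Ds\circ s^\ast(D\ev_1)\simeq\id_{\bL_\cS}$, so $\bL_\cS$ is a retract of $s^\ast\bL_{\Filt(\cS)}$. But $\bL_{\Filt(\cS)}\in\QC(\Filt(\cS))^{<1}$ by \Cref{L:relative_cotangent_complex}, and $s$ is $\Theta$-equivariant, so $s^\ast\bL_{\Filt(\cS)}\in\QC(\cS)^{<1}$ by part (4) of \Cref{P:baric_stratum}. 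Since the factor $\QC(\cS)^{<1}$ of a semiorthogonal decomposition is closed under retracts, $\bL_\cS\in\QC(\cS)^{<1}$, and in particular $\radj{1}(\bL_\cS)=0$. Replace the degree-bound heuristic with this retraction step and your proof closes.
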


\begin{proof}

\noindent \textit{Step 1: $f$ is $\Theta$-equivariant.}
\medskip

The composition $a_{\Filt(\cX)} \circ (\id \times \phi) \colon \Theta \times \cS \to \Theta \times \Filt(\cX) \to \Filt(\cX)$ classifies the map $i \circ a_{\cS} \circ (\mu \times \id_{\cS}) : \Theta \times \Theta \times \cS \to \cX$, whereas the composition $\phi \circ a_{\cS} : \Theta \times \cS \to \Filt(\cX)$ classifies the map $i \circ a_{\cS} \circ (\id_\Theta \times a_{\cS}) : \Theta \times \Theta \times \cS \to \cX$. These two maps are isomorphic by the definition of a weak $\Theta$-action.

\medskip
\noindent \textit{Step 2: $\phi$ is a closed immersion.}
\medskip

Note that the isomorphism $a|_{\{1\} \times \cS} \simeq \id_{\cS}$ gives an isomorphism $\ev_1 \circ \phi \simeq i$. It follows that for any simplicial commutative ring $R$ the map $\phi_R : \cS(R) \to \Filt(\cX)(R)$ is injective on $\pi_0$ and an isomorpism on $\pi_1$ for any base point in $\cS(R)$, because the same holds after composition with the map $\ev_1 : \Filt(\cX) \to \cX$, which is representable \cite{halpern2014structure}*{Lem.~1.1.13}. This implies that the induced map of underlying classical stacks $\phi^{\rm cl} \colon \cS^{\rm{cl}} \to \Filt(\cX)^{\rm{cl}}$ is a finitely presented representable monomorphism, and it suffices to show that this is a closed immersion.

This can be checked locally over $\cX$, so for any morphism $\xi : \Spec(R) \to \cX$ we can consider $\Flag(\xi) := \Filt(\cX) \times_{\cX} \Spec(R)$, and we have a commutative diagram
\[
\xymatrix{\cS_R \ar[rr]^{\phi_R} \ar[dr]^{i_R} & & \Flag(\xi) \ar[dl]^{(\ev_1)_R} \\ & \Spec(R) & },
\]
in which $i_R$ is a closed immersion. $(\ev_1)_R$ is separated because $\cX$ has affine diagonal over the base \cite{halpern2014structure}*{Prop.~1.5.3}, so $\phi_R$ is a closed immersion as well.

\medskip
\noindent \textit{Step 3: $f$ is an open immersion if $\bL_{\cS/\cX} \in \QC(\cS)^{\geq 1}$.}
\medskip

It suffices to show that $D\phi : \phi^\ast(\bL_{\Filt(\cX)}) \to \bL_\cS$ is an isomorphism. First observe the following:

\begin{lem}\label{L:cotangent_complex_theta_action}
Let $\cS$ be a derived stack which admits a cotangent complex. If $\cS$ has a weak $\Theta$-action, then $\bL_{\cS} \in \QC(\cS)^{<1}$ for the baric structure of \Cref{P:baric_stratum}.
\end{lem}
\begin{proof}
For any map $Y \to \Filt(\cS)$ classifying a map $f : \Theta \times Y \to \cS$, we have $\bL_{\Filt(\cS)}|_{Y} \simeq \pi_+(f^\ast(\bL_\cS))$ by \eqref{E:cotangent_mapping_stack}. Applying this to the map $\phi : \cS \to \Filt(\cS)$ associated to the action map $a : \Theta \times \cS \to \cS$, we have
\[
\phi^\ast (\bL_{\Filt(\cS)}) \simeq \pi_+(a^\ast(\bL_\cS)) \simeq \ladj{0}(\bL_\cS)
\]
by \Cref{L:left_adjoint_pullback}. The fact that $\ev_1 \circ \phi \simeq \id_{\cS}$ implies that $\bL_\cS$ is a retract of $\phi^\ast (\bL_{\Filt(\cS)})$. But $\phi$ is $\Theta$-equivariant, and $\bL_{\Filt(\cS)} \in \QC(\Filt(\cS))^{<1}$ by \Cref{L:relative_cotangent_complex}, hence $\bL_{\cS} \in \QC(\cS)^{<1}$ by part (4) of \Cref{P:baric_stratum}.
\end{proof}

To complete the proof of \Cref{P:theta_action_is_filt}, note that the isomorphism $\ev_1 \circ \phi \simeq i$ implies that the composition
\[
i^\ast(\bL_\cX) \simeq \phi^\ast \ev_1^\ast(\bL_\cX) \xrightarrow{\phi^\ast(D\ev_1)} \phi^\ast(\bL_{\Filt(\cX)}) \xrightarrow{D\phi} \bL_\cS
\]
is isomorphic to the canonical map $Di : i^\ast (\bL_\cX) \to \bL_\cS$. If we assume that $\bL_{\cS/\cX} \in \QC(\cS)^{\geq 1}$, then $Di$ becomes an isomorphism after applying the functor $\ladj{1}(-)$. It follows that the composition above becomes an isomorphism after applying $\ladj{1}(-)$.

\Cref{L:relative_cotangent_complex} and part (4) of \Cref{P:baric_stratum} imply that $\phi^\ast(D\ev_1)$ is isomorphic to the canonical map $\phi^\ast \ev_1^\ast (\bL_\cS) \to\ladj{1}(\phi^\ast \ev_1^\ast(\bL_\cS))$, and in particular $\ladj{1}(\phi^\ast(D\ev_1))$ is an isomorphism. It follows that $\ladj{1}(D\phi):\ladj{1}(\phi^\ast(\bL_{\Filt(\cX)})) \to \ladj{1}(\bL_\cS)$ is an isomorphism. But as we have already observed, $\phi^\ast(\bL_{\Filt(\cX)}) \in \QC(\cS)^{<1}$ already, and $\bL_\cS \in \QC(\cS)^{<1}$ by \Cref{L:cotangent_complex_theta_action}. It follows that $D\phi$ is an isomorphism.

\end{proof}

In the case where $i : \cS \to \cX$ is the identity, \Cref{P:theta_action_is_filt} states that any weak $\Theta$-action on $\cS$ comes from a $\Theta$-equivariant open and closed embedding $\cS \subset \Filt(\cS)$. Although we have not introduced a ``homotopy coherent'' notion of a $\Theta$-action, the $\Theta$-action on $\Filt(\cS)$ would certainly qualify as homotopy coherent for any sensible definition. Therefore our result shows that there is no difference between a homotopy coherent $\Theta$-action and the notion introduced in \Cref{D:weak_theta_action}.

Furthermore, the data of a weak $\Theta$-action on $\cS$ is ``discrete'' in the following sense:

\begin{cor} \label{C:intrinsic_theta_action}
If $\cS$ is an algebraic derived stack locally of finite presentation and with affine diagonal over a base stack, then specifying a weak $\Theta$-action on $\cS$ is equivalent to specifying an open and closed substack $\cY \subset \Filt(\cS)$ such that $\ev_1 : \cY \to \cS$ is an isomorphism.
\end{cor}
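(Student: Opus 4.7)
The plan is to apply \Cref{P:theta_action_is_filt} in the special case $\cX = \cS$ with $i = \id_\cS$. Since $\bL_{\cS/\cS} = 0$, the hypothesis $\bL_{\cS/\cX} \in \QC(\cS)^{\geq 1}$ is automatic, and the proposition will produce, from any weak $\Theta$-action on $\cS$, a $\Theta$-equivariant open and closed immersion $\phi \colon \cS \hookrightarrow \Filt(\cS)$ with $\ev_1 \circ \phi \simeq \id_\cS$. Setting $\cY := \phi(\cS)$ gives the assignment in one direction.

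For the reverse direction, given an open and closed substack $\cY \subset \Filt(\cS)$ with $\ev_1|_\cY \colon \cY \xrightarrow{\cong} \cS$, the last clause of \Cref{P:theta_action_strata} endows $\cY$ with the weak $\Theta$-action restricted from $\Filt(\cS)$, and this action can be transported along the isomorphism $\ev_1|_\cY$ to give a weak $\Theta$-action on $\cS$.

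The remaining task is to verify that these two constructions are mutually inverse. Going one way, the $\Theta$-equivariance of $\phi$ established in Step~1 of the proof of \Cref{P:theta_action_is_filt} says precisely that the action on $\cY = \phi(\cS)$ inherited from $\Filt(\cS)$ pulls back under $\phi$ to the given $a$; since $\phi \colon \cS \to \cY$ has inverse $\ev_1|_\cY$, transporting back via $\ev_1|_\cY$ recovers $a$. Going the other way, unwinding the functor-of-points description $\Filt(\cS) = \iMap(\Theta,\cS)$, the classifying map $\phi$ of the transported action sends $s \in \cS(R)$ to the filtration whose value at $t$ is $\ev_1(a_\cY(t,(\ev_1|_\cY)^{-1}(s))) = (\ev_1|_\cY)^{-1}(s)(t)$, so $\phi = (\ev_1|_\cY)^{-1}$ as a map into $\Filt(\cS)$, giving $\phi(\cS) = \cY$.

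\Cref{P:theta_action_is_filt} does essentially all the work, so I do not expect any serious technical obstacle here -- only careful bookkeeping with the functor-of-points description of $\Filt(\cS)$ to confirm the two round-trips.
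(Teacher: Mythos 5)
Your proposal is correct and follows the same route as the paper: apply \Cref{P:theta_action_is_filt} to $i = \id_\cS$ (so $\bL_{\cS/\cS} = 0$ trivially lies in $\QC(\cS)^{\geq 1}$) to get $\phi$, and conversely restrict the canonical weak $\Theta$-action on $\Filt(\cS)$ from \Cref{P:theta_action_strata} to $\cY$ and transport along $\ev_1|_\cY$. The paper's proof is more terse on the round-trip verification --- it simply asserts that $\phi$ is the inverse of $\ev_1|_\cY$ --- whereas you spell out the two checks, which is fine; the key input you are implicitly using for the second round-trip is the canonical equivalence $\ev_1 \circ a \simeq \ev$ from the proof of \Cref{L:relative_cotangent_complex}, which identifies the classifying map of the transported action with $(\ev_1|_\cY)^{-1}$.
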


\begin{proof}
Applying \Cref{P:theta_action_is_filt} to the identity map $i : \cS \to \cS$, we see that $\ev_1$ is an isomorphism between $\phi(\cS) \subset \Filt(\cS)$ and $\cS$. Conversely if $\cY \subset \Filt(\cS)$ is an open an closed substack such that $\ev_1$ induces an isomorphism $\ev_1 : \cY \simeq \cS$, then the canonical weak $\Theta$-action on $\Filt(\cS)$ induces a weak $\Theta$-action on $\cY$ and hence on $\cS$. In this case the embedding $\phi: \cS \to \Filt(\cS)$ constructed in \Cref{P:theta_action_is_filt} is the inverse of the isomorphism $\ev_1$.
\end{proof}

\subsection{The center of a derived \texorpdfstring{$\Theta$}{Theta}-stratum}

Recall the stack of graded objects in $\cX$, $\Grad(\cX) := \grad{\cX}$, along with the canonical morphism $\sigma : \Grad(\cX) \to \Filt(\cX)$ associated to the projection $\Theta \to B\Gm$ from \eqref{E:canonical_maps}. $\sigma$ is the map which takes a graded object to the corresponding split filtration in $\cX$.

\begin{defn} \label{D:center}
If $\cX$ is an algebraic stack locally almost of finite presentation and with affine stabilizers over $\cB$, and $\cS \subset \Filt(\cX) \to \cX$ is a $\Theta$-stratum, the \emph{center} of $\cS$ is the open and closed substack $\cZ := \sigma^{-1}(\cS) \subset \grad{\cX}$.
\end{defn}

By construction the morphism $\sigma : \Grad(\cX) \to \Filt(\cX)$ restricts to a morphism $\cZ \to \cS$, and the projection $\ev_0 : \Filt(\cX) \to \Grad(\cX)$ restricts to a morphism $\ev_0 : \cS \to \cZ$ as well. We will also see that the center is associated intrinsically to the weak $\Theta$-action on $\cS$:

\begin{lem}
If $\cX$ has affine diagonal and $\cS \subset \Filt(\cX) \to \cX$ is a $\Theta$-stratum, and $\phi : \cS \subset \Filt(\cS)$ is the open and closed embedding of \Cref{C:intrinsic_theta_action} induced by the canonical weak $\Theta$-action on $\cS$, then the center of $\cS$ is canonically isomorphic to $\cZ \cong \sigma^{-1}(\phi(\cS)) \subset \Grad(\cS)$.
\end{lem}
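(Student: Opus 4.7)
The plan is to reduce the claim to two ingredients: (i) the natural square
\[
\xymatrix{ \Grad(\cS) \ar[r]^{\sigma} \ar[d]_{\Grad(i)} & \Filt(\cS) \ar[d]^{\Filt(i)} \\ \Grad(\cX) \ar[r]^{\sigma} & \Filt(\cX) }
\]
is cartesian, and (ii) the composition $\Filt(i) \circ \phi : \cS \to \Filt(\cX)$ agrees with the given open-and-closed embedding of the stratum $\cS \hookrightarrow \Filt(\cX)$.

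To establish (i), I would use the fact that the projection $\pi : \Theta \to B\Gm$ is split by the inclusion $B\Gm \cong \{0\}/\Gm \hookrightarrow \Theta$, which gives the retraction $\ev_0 \circ \sigma \simeq \id$ on both $\Grad(\cS)$ and $\Grad(\cX)$. A point of the fiber product $\Grad(\cX) \times_{\Filt(\cX)} \Filt(\cS)$ is a triple $(g, h, \iota)$ consisting of a graded object $g : B\Gm \to \cX$, a filtration $h : \Theta \to \cS$, and an equivalence $\iota : g \circ \pi \simeq i \circ h$. Sending such a triple to $\ev_0(h) : B\Gm \to \cS$ defines a candidate inverse to the natural map $\Grad(\cS) \to \Grad(\cX) \times_{\Filt(\cX)} \Filt(\cS)$; because $i$ is a closed immersion, hence a monomorphism of derived stacks, the equivalence $\iota$ determines $h$ as $\ev_0(h) \circ \pi$ and $g$ as $i \circ \ev_0(h)$, giving a genuine inverse.

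For (ii), I would unwind what each side classifies as a map $\Theta \times \cS \to \cX$. By the construction in the proof of \Cref{P:theta_action_is_filt}, $\phi$ is classified by the action map $a_\cS : \Theta \times \cS \to \cS$ of the canonical weak $\Theta$-action on $\cS$, so $\Filt(i) \circ \phi$ is classified by $i \circ a_\cS$. The stratum embedding $\cS \hookrightarrow \Filt(\cX)$, being an open-and-closed inclusion, is classified by the restriction of the universal evaluation $\ev : \Theta \times \Filt(\cX) \to \cX$ to $\Theta \times \cS$. Since the weak $\Theta$-action on $\cS$ is inherited by restriction from that on $\Filt(\cX)$, and \Cref{P:theta_action_strata} identifies the action map on $\Filt(\cX)$ as the one classified by $\ev \circ (\mu \times \id)$ (which implies $\ev_1 \circ a_{\Filt(\cX)} \simeq \ev$), restricting gives $i \circ a_\cS \simeq \ev|_{\Theta \times \cS}$.

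With (i) and (ii) in hand, the identity in (ii) combined with the fact that both $\phi$ and the stratum inclusion are open-and-closed embeddings with domain $\cS$ implies that $\Filt(i)$ restricts to an isomorphism $\phi(\cS) \xrightarrow{\simeq} \cS \subset \Filt(\cX)$. Base change along the cartesian square of (i) then identifies the preimage $\sigma^{-1}(\phi(\cS)) \subset \Grad(\cS)$ with $\sigma^{-1}(\cS) = \cZ \subset \Grad(\cX)$, yielding the desired canonical isomorphism. The main technical point is the cartesianness in (i), but the retraction $\ev_0 \circ \sigma \simeq \id$ together with the monomorphism property of $i$ makes the verification essentially formal.
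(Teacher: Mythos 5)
Your proof is correct and follows essentially the same route as the paper's: the key inputs are the factorization of the stratum inclusion $\cS \subset \Filt(\cX)$ through $\Filt(\cS) \hookrightarrow \Filt(\cX)$ via $\phi$ (your step (ii)), and the compatibility of $\sigma$ with the closed embedding $\Grad(\cS) \hookrightarrow \Grad(\cX)$ (your step (i)), for which the paper cites \cite{halpern2014structure}*{Cor.~1.1.8} while you verify the cartesianness of the comparison square directly using the retraction $\ev_0 \circ \sigma \simeq \id$ and the monomorphism property of $i$.
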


\begin{proof}

Even though $\cZ$ is defined as an open and closed substack of $\Grad(\cX)$, the embedding $\cZ \subset \Grad(\cX)$ factors through the closed embedding $\Grad(\cS) \hookrightarrow \Grad(\cX)$ \cite{halpern2014structure}*{Cor.~1.1.8}, so we can canonically regard $\cZ$ as an open and closed substack of $\Grad(\cS)$. The claim follows from the fact that the embedding $\cS \subset \Filt(\cX)$ is the composition of the canonical embedding $\cS \subset \Filt(\cS)$ associated to the weak $\Theta$-action with the closed embedding $\Filt(\cS) \hookrightarrow \Filt(\cX)$.

\end{proof}

The stack $B\Gm$ is also a monoidal object in the homotopy category of the $\infty$-category of stacks. As in \Cref{P:theta_action_strata}, the stack of graded objects $\Grad(\cX) := \grad{\cX}$ admits a canonical weak action of the monoid $B\Gm$, where the action map $B\Gm \times \Grad(\cX) \to \Grad(\cX)$ classifies the composition
\[
B\Gm \times B\Gm \times \Grad(\cX) \xrightarrow{\mu \times \id_{\Grad(\cX)}} B\Gm \times \Grad(\cX) \xrightarrow{\ev} \cX.
\]
This weak action of $B\Gm$ restricts to a weak action on any open and closed substack $\cZ \subset \Grad(\cX)$.

The projection map $\Theta=\bA^1 /\Gm \to B\Gm$ is a map of monoids, and thus we can equip $\Grad(\cX)$ with a weak action of $\Theta$, where the action map is the composition $\Theta \times \Grad(\cX) \to B\Gm \times \Grad(\cX) \to \Grad(\cX)$.

\begin{lem} \label{L:center_grading}
If $\cZ$ is a derived stack with a weak $B \Gm$-action $a : B\Gm \times \cZ \to \cZ$, then there is a canonical direct sum decomposition $\QC(\cZ) = \bigoplus_{w \in \bZ} \QC(\cZ)^w$, where the projection onto $\QC(\cZ)^w$ is given by $F \mapsto F_w := \pi_\ast(\cO_{B\Gm}\langle w \rangle \otimes a^\ast(F))$. The baric structure corresponding to the weak $\Theta$-action induced by the projection $\Theta \to B\Gm$ has $\QC(\cZ)^{\geq w} = \bigoplus_{n \geq w} \QC(\cZ)^w$ and $\QC(\cZ)^{<w} = \bigoplus_{n < w} \QC(\cZ)^w$.
\end{lem}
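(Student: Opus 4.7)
The plan is to mirror the proof of \Cref{P:baric_stratum}, exploiting the fact that $\QC(B\Gm \times \cZ)$ already splits as an honest direct sum $\bigoplus_w \QC(\cZ)^w$ rather than carrying only a baric filtration; this upgrades the baric decomposition on $\cZ$ to a genuine direct sum decomposition.

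First, I would establish the direct sum decomposition of $\QC(\cZ)$. Recall the canonical splitting $\QC(B\Gm \times \cZ) = \bigoplus_w \QC(\cZ)^w$, with projection onto the $w^{\text{th}}$ factor given by $E \mapsto \pi_\ast(\cO_{B\Gm}\langle w\rangle \otimes E)$ and inclusion given by $F \mapsto \cO_{B\Gm}\langle -w\rangle \otimes \pi^\ast(F)$. Applied to $E = a^\ast(F)$, this defines $F_w$ so that $a^\ast(F) \simeq \bigoplus_w \cO_{B\Gm}\langle -w\rangle \otimes \pi^\ast(F_w)$. Let $e \colon \cB \to B\Gm$ denote the identity of the monoid $B\Gm$; this is the atlas presentation of $B\Gm$, obtained as the image of $1 \in \Theta$ under the map of monoids $\Theta \to B\Gm$. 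The identity axiom of the weak $B\Gm$-action yields an isomorphism $a \circ (e \times \id_\cZ) \simeq \id_\cZ$, and since $e^\ast(\cO_{B\Gm}\langle -w\rangle) \simeq \cO_\cB$ for every $w$ (the atlas cover trivializes every $\Gm$-equivariant line bundle), pulling back the decomposition of $a^\ast(F)$ along $e \times \id_\cZ$ gives a canonical equivalence $F \simeq \bigoplus_w F_w$. A retract argument identical to the one in the proof of \Cref{P:baric_stratum}, comparing $\RHom_\cZ(F,G)$ with $\RHom_{B\Gm \times \cZ}(a^\ast F, a^\ast G)$, then upgrades this to an honest direct sum decomposition $\QC(\cZ) = \bigoplus_w \QC(\cZ)^w$, since the target $\RHom$ manifestly splits according to weight.

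For the statement about the baric structure, the induced weak $\Theta$-action on $\cZ$ factors as $\Theta \times \cZ \xrightarrow{p \times \id_\cZ} B\Gm \times \cZ \xrightarrow{a} \cZ$, so $a_\Theta^\ast(F) \simeq \bigoplus_w \cO_\Theta\langle -w\rangle \otimes \pi_\Theta^\ast(F_w)$ using $p^\ast(\cO_{B\Gm}\langle -w\rangle) \simeq \cO_\Theta\langle -w\rangle$. Restricting to $(\{0\}/\Gm) \times \cZ = B\Gm \times \cZ$ recovers $\bigoplus_w \cO_{B\Gm}\langle -w\rangle \otimes \pi^\ast(F_w)$, with each $F_w$ sitting in weight $w$. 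Part (5) of \Cref{P:baric_stratum} then gives $F \in \QC(\cZ)^{\geq w}$ if and only if $F_n = 0$ for all $n < w$, i.e.\ $\QC(\cZ)^{\geq w} = \bigoplus_{n \geq w} \QC(\cZ)^n$; the complementary identification $\QC(\cZ)^{<w} = \bigoplus_{n<w} \QC(\cZ)^n$ is then forced by the direct sum decomposition of $\QC(\cZ)$ and the semiorthogonality of the baric structure.

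The only delicate point, which plays the role of the ``main obstacle,'' is correctly identifying the monoidal identity of $B\Gm$ with the atlas cover $e \colon \cB \to B\Gm$ rather than some other map from the terminal stack, so that the triviality of $e^\ast(\cO_{B\Gm}\langle -w\rangle)$ is available to collapse the weight decomposition of $a^\ast(F)$ to a decomposition of $F$ itself; this is where the $B\Gm$-case genuinely differs from the $\Theta$-case and is what produces a direct sum rather than merely a baric filtration.
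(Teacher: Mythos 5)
Your approach is valid and is in fact acknowledged by the paper (``One could imitate the proof of \Cref{P:baric_stratum}, but instead we will use that result to give a short argument''), but it is genuinely different from what the paper does. The paper invokes \Cref{P:baric_stratum} as a black box \emph{twice}: once with the projection $p\colon \Theta \to B\Gm$, which yields the explicit formulas $\radj{w}(F) = \bigoplus_{n\geq w} F_n$ and $\ladj{w}(F) = \bigoplus_{n<w} F_n$ and, via part~(2) of \Cref{P:baric_stratum}, the isomorphism $\bigoplus_w F_w \xrightarrow{\sim} F$; and once with the inverted projection $p^-\colon \Theta \to B\Gm$, which yields the complementary baric structure. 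Combining the two semiorthogonalities gives $\RHom(F_w, G_v)=0$ for all $v\neq w$ without redoing any work. Your route instead re-runs the proof of \Cref{P:baric_stratum} at the level of $B\Gm$; the identity-section argument is a clean and direct way to obtain $F\simeq \bigoplus_w F_w$, arguably more transparent than the paper's appeal to a colimit. Both are correct; the paper's is shorter because it recycles an existing result.

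There is, however, one glossed-over point in your semiorthogonality step. You write that the retract $\RHom_\cZ(F,G) \hookrightarrow \RHom_{B\Gm\times\cZ}(a^\ast F, a^\ast G)$ suffices ``since the target $\RHom$ manifestly splits according to weight.'' But that splitting of the target only gives you the vanishing $\RHom_\cZ(F_w,G_v)=0$ for $v\neq w$ if you already know that $a^\ast(F_w)$ is concentrated in weight $w$ inside $\QC(B\Gm\times\cZ)$, i.e.\ that $a^\ast(F_w)\simeq \cO_{B\Gm}\langle -w\rangle\otimes\pi^\ast(F_w)$, equivalently that $(F_w)_v\simeq 0$ for $v\neq w$. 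This is the idempotency of the projectors $F\mapsto F_w$, and it does \emph{not} follow from $\id\simeq \bigoplus_w P_w$ alone: it requires the associativity axiom $a\circ(\id_{B\Gm}\times a)\simeq a\circ(\mu\times\id_\cZ)$ of the weak $B\Gm$-action, exactly as the corresponding step in the proof of \Cref{P:baric_stratum} requires \Cref{L:theta_equivariant_map}. Pulling back the decomposition of $a^\ast F$ along both sides of the associativity square and comparing bigraded pieces gives $(F_w)_v\simeq \delta_{vw}F_w$, after which your retract argument closes the circle. With that step made explicit, your proof is complete.
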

\begin{proof}
One could immitate the proof of \Cref{P:baric_stratum}, but instead we will use that result to give a short argument. Let $p : \Theta \to B\Gm$ be the projection, which leads to an action map via the composition
\[
\xymatrix{\Theta \times \cZ \ar[r]^-p & B\Gm \times \cZ \ar[r]^-a & \cZ}
\]
Then the baric truncation functors of \Cref{P:baric_stratum} are given by
\begin{align*}
\radj{w}(F) &=\pi_\ast((p_\ast \cO_\Theta) \otimes \cO_{B\Gm}\langle w \rangle \otimes a^\ast(F)) = \oplus_{n \geq w} F_w, \text{ and} \\
\ladj{w}(F) &=\pi_\ast( p_\ast(\cO_\Theta[t^{-1}] / \cO_\Theta) \otimes \cO_{B\Gm}\langle w \rangle \otimes a^\ast(F)) =  \oplus_{n<w} F_w.
\end{align*}
Note in particular that by part (2) of \Cref{P:baric_stratum} the canonical map $\bigoplus_{w \in \bZ} F_w \to F$ is an isomorphism.

If we let $p^- : \Theta \to B \Gm$ denote the projection map followed by the automorphism of $B\Gm$ given by the group homomorphism $z \mapsto z^{-1}$, then one finds that the resulting baric decomposition has $\radj{w}(F) = \bigoplus_{n<-w+1} F_w$ and $\ladj{w}(F) = \bigoplus_{n \geq -w+1} F_w$. Using this one can show that $\RHom(F_w,G_v)=0$ for any $F,G \in \QC(\cZ)$ and any $v \neq w$. This implies that $\QC(\cZ)$ is a direct sum of the categories $\QC(\cZ)^w$, which are defined as the essential image of the projection functors $F \mapsto F_w$.

\end{proof}

\begin{lem} \label{L:aperf_baric_criterion}
Let $\cS$ be a $\Theta$-stratum with center $\cZ$. A complex $F \in \APerf(\cS)$ lies in $\APerf(\cS)^{\geq w}$ (respectively $\APerf(\cS)^{<w}$) if and only if $\sigma^\ast(F) \in \APerf(\cZ)^{\geq w}$ (respectively $\sigma^\ast(F) \in \APerf(\cZ)^{<w}$).
\end{lem}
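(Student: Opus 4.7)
The plan is to reduce the assertion about $\cS$ to the corresponding assertion on $\cZ$, where the baric structure has the transparent form described in \Cref{L:center_grading}.

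First, I would observe that $\sigma : \cZ \to \cS$ is $\Theta$-equivariant for the $\Theta$-action on $\cS$ of \Cref{P:theta_action_strata} and the $\Theta$-action on $\cZ$ pulled back from its $B\Gm$-action along the monoid homomorphism $\Theta \to B\Gm$. This is essentially formal from the fact that $\sigma$ is induced on mapping stacks by the monoid map $\Theta \to B\Gm$. Combined with part (4) of \Cref{P:baric_stratum} and the fact that $\sigma^\ast$ preserves $\APerf$, this immediately yields the ``only if'' direction for both $\geq w$ and $<w$.

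For the ``if'' direction, the plan is to invoke part (5) of \Cref{P:baric_stratum}, which characterizes $\QC(\cS)^{\geq w}$ (and $\AAPerf(\cS)^{<w}$ for objects of $\AAPerf(\cS)$) in terms of the weight decomposition of $a^\ast(F)|_{(\{0\}/\Gm) \times \cS}$. Writing $a_0 : B\Gm \times \cS \to \cS$ for the restriction of the action map, the crucial step will be to establish the factorization
\[
a_0 \simeq \sigma \circ a_{\cZ} \circ (\id_{B\Gm} \times \ev_0),
\]
where $a_\cZ : B\Gm \times \cZ \to \cZ$ is the weak $B\Gm$-action on the center. This is the one place the geometry genuinely enters: it follows because the restriction of $\mu : \Theta \times \Theta \to \Theta$ to $\Theta \times B\Gm$ factors through $B\Gm \hookrightarrow \Theta$ (since $t \cdot 0 = 0$), so unwinding the classifying map defining $a_0$ through this factorization, together with the universal property of $\Grad(\cX) = \iMap(B\Gm,\cX)$, produces exactly the composition above.

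Once the factorization is in hand, the rest is immediate. By \Cref{L:center_grading}, the hypothesis $\sigma^\ast(F) \in \APerf(\cZ)^{\geq w}$ is equivalent to $a_\cZ^\ast \sigma^\ast(F) \in \QC(B\Gm \times \cZ)$ being concentrated in weights $\geq w$. Pullback along $\id_{B\Gm} \times \ev_0$ manifestly respects the weight decomposition on $B\Gm \times (-)$, so
\[
a_0^\ast(F) \simeq (\id_{B\Gm} \times \ev_0)^\ast \, a_\cZ^\ast \sigma^\ast(F)
\]
is also concentrated in weights $\geq w$, and part (5) of \Cref{P:baric_stratum} concludes $F \in \QC(\cS)^{\geq w}$. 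An identical argument using the $\AAPerf$-variant of part (5) handles the $<w$ case. The main obstacle I anticipate is the careful bookkeeping needed to verify the factorization of $a_0$; beyond that, everything reduces to formal applications of results already established.
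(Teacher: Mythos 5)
Your proposal is correct and uses essentially the same approach as the paper. The commutative diagram relating $a_0$, $\sigma$, $a_\cZ$, and $\ev_0$ that you derive from the fact that $\mu|_{\Theta \times B\Gm}$ factors through $B\Gm$ is exactly the key identity the paper writes down, and the appeal to \Cref{L:center_grading} and part (5) of \Cref{P:baric_stratum} matches; the only cosmetic difference is that the paper proves both implications via the single iff chain (using that $\sigma$ is a section of $\ev_0$ to invert pullback along $\id_{B\Gm}\times\ev_0$), whereas you handle the ``only if'' direction separately via part (4).
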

\begin{proof}
Both $\ev_0 : \Filt(\cX) \to \Grad(\cX)$ and $\sigma : \Grad(\cX) \to \Filt(\cX)$ are $\Theta$-equivariant, so they preserve the baric structure by \Cref{P:baric_stratum} part (4). For the weak $\Theta$-action on $\Filt(\cX)$ defined in \Cref{P:theta_action_strata} and for the corresponding weak action of $B\Gm$ on $\Grad(\cX)$, we have a commutative diagram
\[
\xymatrix{B\Gm \times \Filt(\cX) \ar[r]^{\id_{B\Gm} \times \ev_0} \ar[d] & B\Gm \times \Grad(\cX) \ar[r]^a & \Grad(\cX) \ar[d]^\sigma \\ \Theta \times \Filt(\cX) \ar[rr]^a & & \Filt(\cX) }.
\]
It follows that for $F \in \APerf(\cS)$, $a^\ast(F)|_{B\Gm \times \cS}$ is concentrated in weight $\geq w$ (respectively $<w$) if and only if $(\id_{B\Gm} \times \ev_0)^\ast a^\ast \sigma^\ast(F)$ is concentrated in weight $\geq w$ (respectively $<w$). Because the map $\ev_0 : \Filt(\cX) \to \Grad(\cX)$ has a section $\sigma$, this happens if and only if $a^\ast(\sigma^\ast(F))$ has weights concentrated in the corresponding range. By definition of the graded structure on $\QC(\cZ)$, this is equivalent to the condition that $\sigma^\ast(F) \in \QC(\cZ)^{\geq w}$ (respectively $\sigma^\ast(F) \in \QC(\cZ)^{<w}$). The claim then follows from the characterization of $\APerf(\cS)^{<w}$ and $\APerf(\cS)^{\geq w}$ in \Cref{P:baric_stratum} part (5).
\end{proof}

We can also describe the cotangent complex of the center:

\begin{lem} \label{L:center_cotangent}
Let $\cX$ be a derived stack which admits a cotangent complex, and consider the canonical map $\sigma : \Grad(\cX) \to \Filt(\cX)$. Then there is a canonical equivalence of fiber sequences in $\QC(\Grad(\cX))$
\[
\xymatrix{\ladj{0}(\sigma^\ast \bL_{\Filt(\cX)}) \ar[r] \ar[d] & \sigma^\ast \bL_{\Filt(\cX)} \ar[r] \ar@{=}[d] & \radj{0}(\sigma^\ast \bL_{\Filt(\cX)}) \ar[r] \ar[d] & \\ \bL_{\Grad(\cX)/\Filt(\cX)}[-1] \ar[r] & \sigma^\ast \bL_{\Filt(\cX)} \ar[r] & \bL_{\Grad(\cX)} \ar[r] & }
\]
\end{lem}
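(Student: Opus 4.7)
The strategy mirrors the structure of \Cref{L:relative_cotangent_complex}. The plan is to compute $\sigma^\ast \bL_{\Filt(\cX)}$ explicitly via the mapping-stack cotangent formula \eqref{E:cotangent_mapping_stack} combined with flat base change, and then identify both summands of its baric decomposition with the terms of the canonical fiber sequence.

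By construction $\sigma$ is classified by the composition $\ev^B \circ (p \times \id_{\Grad(\cX)}) : \Theta \times \Grad(\cX) \to B\Gm \times \Grad(\cX) \to \cX$, so the tautological evaluation $\ev^\Theta$ on $\Theta \times \Filt(\cX)$ satisfies $\ev^\Theta \circ (\id_\Theta \times \sigma) \simeq \ev^B \circ (p \times \id)$. Applying flat base change for $\pi^\Theta_+$ along the Cartesian square associated with $\sigma$ (valid because \Cref{L:left_adjoint_pullback} expresses $\pi^\Theta_+$ as a cofiber of pushforwards of tensors with flat sheaves, each of which satisfies flat base change), combined with the formula of that lemma and the projection formula for the affine morphism $p \times \id$, gives
\[
\sigma^\ast \bL_{\Filt(\cX)} \simeq \pi^B_\ast\bigl((p \times \id)_\ast(\cO_\Theta[t^{-1}]/\cO_\Theta \cdot t) \otimes \ev^{B,\ast}\bL_\cX\bigr).
\]
A direct computation on graded vector spaces identifies $(p \times \id)_\ast(\cO_\Theta[t^{-1}]/\cO_\Theta \cdot t) \simeq \bigoplus_{n \geq 0} \cO_{B\Gm}\langle -n\rangle$, corresponding to the weight decomposition of the graded module $k[t^{-1}]$ spanned by $1, t^{-1}, t^{-2}, \ldots$.

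Decomposing $\ev^{B,\ast}\bL_\cX \simeq \bigoplus_w \cO_{B\Gm}\langle -w\rangle \otimes \pi^{B,\ast}(E_w)$ into its $B\Gm$-weight components and using that $\pi^B_+ \simeq \pi^B_\ast$ is the weight-$0$ projection, the formula \eqref{E:cotangent_mapping_stack} gives $\bL_{\Grad(\cX)} \simeq E_0$, and substituting into the computation above produces $\sigma^\ast \bL_{\Filt(\cX)} \simeq \bigoplus_{w \leq 0} E_w$. Since the baric structure on $\Grad(\cX)$ from \Cref{L:center_grading} coincides with the weight-decomposition baric structure, its truncation functors are projections onto weight ranges, so one reads off $\radj{0}(\sigma^\ast \bL_{\Filt(\cX)}) \simeq E_0 \simeq \bL_{\Grad(\cX)}$ and $\ladj{0}(\sigma^\ast \bL_{\Filt(\cX)}) \simeq \bigoplus_{w < 0} E_w$.

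It remains to match the two fiber sequences as claimed. The retraction $\ev_0 \circ \sigma \simeq \id_{\Grad(\cX)}$ gives $D\sigma \circ \sigma^\ast D\ev_0 \simeq \id$, splitting the canonical cofiber sequence $\bL_{\Grad(\cX)/\Filt(\cX)}[-1] \to \sigma^\ast \bL_{\Filt(\cX)} \xrightarrow{D\sigma} \bL_{\Grad(\cX)}$ and realizing $\bL_{\Grad(\cX)} \simeq E_0$ as a direct summand lying in weight $0$. By the orthogonality of distinct graded components from \Cref{L:center_grading}, the complementary summand $\bL_{\Grad(\cX)/\Filt(\cX)}[-1]$ is forced to lie in the orthogonal weights $<0$, and uniqueness of the baric decomposition then supplies the equivalence of fiber sequences. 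The only delicate step in the argument is the flat base change for $\pi^\Theta_+$; once that is available, everything else is a weight-by-weight computation.
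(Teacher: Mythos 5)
Your strategy --- computing $\sigma^\ast \bL_{\Filt(\cX)}$ directly via flat base change for $\pi_+$ and the affine projection formula --- is a genuinely different route from the paper's, and the computation itself is correct: you do get $\sigma^\ast \bL_{\Filt(\cX)} \simeq \bigoplus_{w \leq 0} E_w$ and $\bL_{\Grad(\cX)} \simeq E_0$. The paper instead works ``from the outside'': it uses $\ev_1 \circ \sigma \simeq u$ and $u \circ a \simeq \ev$ to pin down the map $D\sigma$ directly, combined with the observation that $\radj{0}$ is simultaneously a left adjoint (because the $B\Gm$-action is invertible). What your computation buys is an explicit formula; what it does not deliver is control over the \emph{maps} in the two fiber sequences.

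Two gaps need closing, and the second is the substantive one. First, you decompose $\ev^{B,\ast}\bL_\cX = \bigoplus_w \cO_{B\Gm}\langle -w\rangle \otimes \pi^{B,\ast}(E_w)$ using only the $B\Gm$-factor weight; to invoke orthogonality from \Cref{L:center_grading} you must know that each $E_w$ actually lies in the \emph{intrinsic} weight-$w$ piece $\QC(\Grad(\cX))^w$. This requires the identity $u \circ a \simeq \ev^B$, since then $\ev^{B,\ast}\bL_\cX = a^\ast(u^\ast \bL_\cX)$, and the $B\Gm$-factor decomposition of $a^\ast(-)$ is by definition the intrinsic decomposition; you never state or use this identity. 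Second, and more seriously, the retraction $\ev_0 \circ \sigma \simeq \id$ only shows that $\bL_{\Grad(\cX)}$ is a \emph{split retract} of the weight-$0$ summand $E_0$; it does not rule out $\ker\bigl(D\sigma|_{E_0}\bigr) \neq 0$, in which case $\bL_{\Grad(\cX)/\Filt(\cX)}[-1]$ would carry a nonzero weight-$0$ piece and the asserted ``forced to lie in weights $<0$'' would fail. You can close this either by a Nakayama argument --- a split-monic endomorphism of an almost perfect complex is an equivalence, so the inclusion $\bL_{\Grad(\cX)} \hookrightarrow E_0$ between abstractly isomorphic a.p. complexes is an equivalence --- or, as the paper does, by using $\ev_1 \circ \sigma \simeq u$ to compute $D\sigma$ and observe it restricts to an isomorphism on $E_0$. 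As written the ``uniqueness of the baric decomposition'' step cannot carry this weight, because you have not yet verified that the split complement is in the correct baric half.
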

\begin{proof}
The pullback along the projection map $\pi^\ast : \QC(\Grad(\cX)) \to \QC(B\Gm \times \Grad(\cX))$ has a left adjoint $\pi_+$ which can be identified with $\pi_\ast$. From the general formula \eqref{E:cotangent_mapping_stack} for the cotangent complex of a mapping stack $\bL_{\iMap(B\Gm,\cX)} \simeq \pi_\ast(\ev^\ast(\bL_\cX))$. If $u : \Grad(\cX) \to \cX$ is the forgetful morphism, then
\[
u \circ a \cong \ev : B\Gm \times \Grad(\cX) \to \cX,
\]
so $\bL_{\Grad(\cX)}$ is the projection of $u^\ast(\bL_\cX)$ onto the weight $0$ piece of the direct sum decomposition of $\Grad(\cX)$ defined in \Cref{L:center_grading}.

Note that by considering the opposite of the canonical weak $B\Gm$-action on $\Grad(\cX)$ we see that $\radj{0}(-)$ is also a \emph{left} adjoint of the inclusion $\QC(\Grad(\cX))^{\geq 0} \subset \QC(\Grad(\cX))$, so the canonical map $D\sigma : \sigma^\ast(\bL_{\Filt(\cX)}) \to \bL_{\Grad(\cX)}$ factors uniquely through a morphism
\begin{equation} \label{E:factorization_cotangent_grad}
\radj{0}(\sigma^\ast(\bL_{\Filt(\cX)})) \to \bL_{\Grad(\cX)}.
\end{equation}
We note that $\ev_1 \circ \sigma \cong u : \Grad(\cX) \to \cX$, and this implies that $D\sigma \circ \sigma^\ast(D\ev_1) \cong Du : u^\ast(\bL_\cX) \to \bL_{\Grad(\cX)}$. When we restrict to the direct summand of weight $0$, $\sigma^\ast(D\ev_1)$ and $Du$ are isomorphisms, and it follows that so is $D\sigma$. \Cref{L:relative_cotangent_complex} implies $\sigma^\ast(\bL_{\Filt(\cX)}) \in \QC(\Grad(\cX))^{<1}$, and therefore \eqref{E:factorization_cotangent_grad} is an isomorphism.

\end{proof}

\begin{lem} \label{L:center_pullback}
Let $\cX$ be an algebraic derived stack locally almost of finite presentation and with affine diagonal over a noetherian base stack, let $\cS \hookrightarrow \cX$ be a $\Theta$-stratum with center $\cZ$, and let $\APerf(\cS)^w := \APerf(\cS)^{\geq w} \cap \APerf(\cS)^{<w+1}$. Then the pullback functor $\ev_0^\ast : \QC(\cZ) \to \QC(\cS)$ induces an equivalence $\QC(\cZ)^w \simeq \QC(\cS)^w$, whose inverse is given by $\radj{w}((\ev_0)_\ast(-))$.
\end{lem}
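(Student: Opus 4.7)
\emph{Plan.} Both maps $\ev_0 \colon \cS \to \cZ$ and $\sigma \colon \cZ \to \cS$ are $\Theta$-equivariant, where $\cZ$ inherits its weak $\Theta$-action from its $B\Gm$-action via the projection $\Theta \to B\Gm$. By part (4) of \Cref{P:baric_stratum}, the pullbacks $\ev_0^\ast$ and $\sigma^\ast$ therefore preserve both the $\QC(-)^{\geq w}$ and $\QC(-)^{<w}$ factors of the baric structure, so each restricts to a functor between the weight-$w$ subcategories $\QC(-)^w := \QC(-)^{\geq w} \cap \QC(-)^{<w+1}$. From $\ev_0 \circ \sigma \simeq \id_\cZ$ we immediately get $\sigma^\ast \ev_0^\ast \simeq \id$ on $\QC(\cZ)^w$.

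The core of the argument is to show essential surjectivity, i.e.\ that $\ev_0^\ast \sigma^\ast F \simeq F$ for every $F \in \QC(\cS)^w$. I plan to read this off of the Reese description of $a^\ast F \in \QC(\Theta \times \cS)$, where $a \colon \Theta \times \cS \to \cS$ is the action map. The discussion following \Cref{P:baric_stratum} identifies $a^\ast F$ with the diagram of baric truncations $[\cdots \to \radj{n+1} F \to \radj{n} F \to \cdots]$, and for $F \in \QC(\cS)^w$ this diagram collapses to the free module $\cO_\Theta \langle -w \rangle \otimes \pi^\ast F$ with $\pi \colon \Theta \times \cS \to \cS$ the projection. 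Restricting along $i_0 \colon (\{0\}/\Gm) \times \cS \hookrightarrow \Theta \times \cS$, the monoid axioms for $\Theta$ yield a factorization
\[
a \circ i_0 \; \simeq \; \sigma \circ a_\cZ \circ (\id \times \ev_0) \colon B\Gm \times \cS \to \cS,
\]
where $a_\cZ \colon B\Gm \times \cZ \to \cZ$ is the $B\Gm$-action. Using that $\sigma^\ast F \in \QC(\cZ)^w$ has pure $B\Gm$-weight $w$ (so that $a_\cZ^\ast \sigma^\ast F \simeq \cO_{B\Gm}\langle -w\rangle \otimes \pi_\cZ^\ast \sigma^\ast F$), one obtains $(a \circ i_0)^\ast F \simeq \cO_{B\Gm}\langle -w\rangle \otimes r^\ast \ev_0^\ast \sigma^\ast F$, with $r \colon B\Gm \times \cS \to \cS$ the projection. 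Comparing this with the Reese formula $\cO_{B\Gm}\langle -w\rangle \otimes r^\ast F$ and using that $r^\ast$ is fully faithful onto the weight-$0$ summand of the splitting $\QC(B\Gm \times \cS) = \bigoplus_v \QC(\cS)\langle -v\rangle$ yields the desired equivalence $\ev_0^\ast \sigma^\ast F \simeq F$.

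Combining the two steps, $\ev_0^\ast$ and $\sigma^\ast$ are mutually quasi-inverse equivalences between $\QC(\cZ)^w$ and $\QC(\cS)^w$. To conclude that the inverse can be written as $\radj{w}((\ev_0)_\ast(-))$, I will invoke uniqueness of right adjoints: the adjunction $\ev_0^\ast \dashv (\ev_0)_\ast$ restricts to an adjunction $\ev_0^\ast \dashv \radj{w}(\ev_0)_\ast$ on the weight-$w$ subcategories, since for $G \in \QC(\cZ)^{\geq w}$ the right-adjoint defining property of $\radj{w}$ gives $\Hom_\cZ(G, (\ev_0)_\ast F) \simeq \Hom_\cZ(G, \radj{w}(\ev_0)_\ast F)$. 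An equivalence is automatically both left and right adjoint to its inverse, so $\sigma^\ast$ is a right adjoint to $\ev_0^\ast$, and uniqueness of right adjoints forces $\sigma^\ast \simeq \radj{w}(\ev_0)_\ast$ on $\QC(\cS)^w$. The main obstacle will be verifying the factorization $a \circ i_0 \simeq \sigma \circ a_\cZ \circ (\id \times \ev_0)$ at the level of homotopy coherent data and carrying out the two parallel computations of $(a \circ i_0)^\ast F$ carefully enough to extract the weight-$w$ identification.
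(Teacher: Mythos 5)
Your plan is correct, and it takes a genuinely different route from the paper's. The paper's proof applies \Cref{T:local_structure_stratum} to $\cS$ itself to get a smooth affine cover $\Spec(A_0)/\Gm \to \cS$ with $A_0$ non-positively graded, then propagates the claim down a \v Cech nerve using smooth descent and \Cref{L:theta_equivariant_map}, and finally verifies the unit and counit of the adjunction $\ev_0^\ast \dashv \radj{w}(\ev_0)_\ast$ on the compact generators $\cO_\cS\langle -w\rangle$, $\cO_\cZ\langle -w\rangle$ in that local model. Your approach instead stays entirely global: the identification of $a^\ast F$ with $\cO_\Theta\langle -w\rangle \otimes \pi^\ast F$ for $F \in \QC(\cS)^w$ follows cleanly from \Cref{L:tautological_baric_structure} (the objects $E \in \QC(\Theta \times \cS)^{\geq w} \cap \QC(\Theta\times\cS)^{<w+1}$ are precisely those of the form $\cO_\Theta\langle -w\rangle \otimes \pi^\ast E_w$, and $\Theta$-equivariance of $a$ places $a^\ast F$ in that intersection), and the factorization $a \circ i_0 \simeq \sigma \circ a_\cZ \circ (\id \times \ev_0)$ is exactly the commutative square already recorded and used in \Cref{L:aperf_baric_criterion}. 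Comparing the two expansions of $(a\circ i_0)^\ast F$ and using that $\cO_{B\Gm}\langle -w\rangle \otimes r^\ast(-)$ is fully faithful gives the natural equivalence $\ev_0^\ast\sigma^\ast \simeq \id$, which together with $\sigma^\ast\ev_0^\ast \simeq \id$ makes $\ev_0^\ast$ an equivalence with inverse $\sigma^\ast$. The uniqueness-of-right-adjoints step at the end is also fine, provided you include the easy check that $\radj{w}(\ev_0)_\ast$ actually lands in $\QC(\cZ)^w$ (not merely $\QC(\cZ)^{\geq w}$): this follows because $(\ev_0)_\ast$ preserves $\QC(-)^{<w+1}$ by \Cref{P:baric_stratum} part (4) and $\radj{w}$ carries $\QC(\cZ)^{<w+1} = \bigoplus_{v \leq w}\QC(\cZ)^v$ into $\QC(\cZ)^w$.

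The trade-off is that the paper's argument is concrete and self-contained once the local structure theorem is in hand, while yours buys a shorter, coordinate-free proof and notably does not need \Cref{T:local_structure_stratum} at all — contrary to the paper's remark that the lemma ``relies on'' it. The main thing to nail down carefully, as you already flag, is the homotopy-coherent verification of the factorization $a \circ i_0 \simeq \sigma \circ a_\cZ \circ (\id\times\ev_0)$; but since the paper asserts this same square in \Cref{L:aperf_baric_criterion} without proof, you are not incurring any new debt by invoking it.
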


We will postpone the proof of this last lemma until \Cref{S:local_structure}, because it relies on the local structure theorem \Cref{T:local_structure_stratum}. It is not used until \Cref{S:structure_theorem}.


\subsection{Example: Filtered objects in a quotient stack}

Fix a ground field $k$, and let $G$ be an affine algebraic group acting on an affine derived scheme $\Spec(A)$. Let $\lambda : \Gm \to G$ be a one parameter subgroup, and let $P_\lambda$ be the associated block upper triangular group, i.e. the subgroup of $g \in G$ such that $\lim_{t\to 0} \lambda(t) g \lambda(t)^{-1}$ exists. Then by \cite{halpern2014structure}*{Thm.~1.4.8}, the stack $\Spec(\pi_0(A)/I_+) / P_\lambda$ is a union of connected components of $\Filt(\Spec(\pi_0(A))/G)$, and if $G$ has a split maximal torus then all connected components arise in this way for a unique conjugacy class of $\lambda$.

Now choose a quasi-isomorphism of simplicial commutative algebras $A_\bullet \simeq A'_\bullet$ where each $A'_n = k[U_n]$ is a polynomial ring generated by some $G$-representation $U_n$. We let $U_n^{<1}$ be the quotient of $U_n$ by the subspace of positive $\lambda$-weight $U_n^{\geq 1}$. We have $k[U_n^{<1}] = k[U_n] / (I_+)_n$, where $(I_+)_n := k[U_n] \cdot U^{\geq 1}$. These quotient rings form a simplical commutative algebra $B_\bullet = k[U_\bullet^{<1}] = k[U_\bullet] / (I_+)_\bullet$ which inherits an action of $P_\lambda$.

\begin{lem} \label{L:filt_quotient_stack}
There is a canonical map $\phi : \Spec(B_\bullet) / P_\lambda \to \Filt(\Spec(A_\bullet)/G)$ which is an open and closed immersion, and if $G$ is split then every connected component lies in a substack of this form for a unique conjugacy class of $\lambda$.
\end{lem}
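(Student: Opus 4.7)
The plan is to construct $\phi$ via the universal property $\Filt = \iMap(\Theta,-)$, show that $\phi$ is $\Theta$-equivariant and étale, and then deduce the open-and-closed embedding from the classical analogue \cite{halpern2014structure}*{Thm.~1.4.8}.

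First I construct $\phi$. The map $a\colon \bA^1 \times \Spec B_\bullet \to \Spec B_\bullet$, $(t,x)\mapsto \lambda(t)\cdot x$, is well defined because $B_\bullet = \Sym(U^{<1}_\bullet)$ has non-positive $\lambda$-weights on its generators, so the associated scheme has non-negative weights and admits the $\bA^1$-contraction. The composite $\bA^1 \times \Spec B_\bullet \xrightarrow{a} \Spec B_\bullet \hookrightarrow \Spec A_\bullet \to \Spec A_\bullet/G$ is $\Gm$-equivariant (since $\lambda(\Gm)\subset G$) and $P_\lambda$-invariant, because for $p \in P_\lambda$ and $t \in \bA^1$ we have $p' := \lambda(t)p\lambda(t)^{-1} \in P_\lambda$ (including the limit at $t=0$, by the very definition of $P_\lambda$), so $[\lambda(t)px] = [p'\lambda(t)x] = [\lambda(t)x]$ in $\Spec A_\bullet/G$. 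Thus it descends to $\ev\colon \Theta \times Y \to \Spec A_\bullet/G$ with $Y := \Spec B_\bullet/P_\lambda$, and $\phi$ is its adjunct. The same $P_\lambda$-invariance argument descends $a$ itself to a weak $\Theta$-action $a_Y$ on $Y$, with $\ev = i\circ a_Y$, where $i\colon Y \to \Spec A_\bullet/G$ is the obvious map; from this and $\lambda(t_0 t_1) = \lambda(t_0)\lambda(t_1)$ one reads off that $\phi$ is $\Theta$-equivariant for $a_Y$ and the canonical $\Theta$-action of \Cref{P:theta_action_strata} on $\Filt(\Spec A_\bullet/G)$.

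The main step is to show $\bL_\phi = 0$. By part (4) of \Cref{P:baric_stratum} and \Cref{L:relative_cotangent_complex}, one obtains $\phi^* \bL_{\Filt(\Spec A_\bullet/G)} \simeq \ladj{1}(i^*\bL_{\Spec A_\bullet/G})$ with $D\phi$ identified as $\ladj{1}(Di)$. Factoring $i = j \circ i'$ with $i'\colon Y \hookrightarrow \Spec A_\bullet/P_\lambda$ a closed immersion and $j\colon \Spec A_\bullet/P_\lambda \to \Spec A_\bullet/G$ smooth, the transitivity fiber sequence $(i')^* \bL_j \to \bL_i \to \bL_{i'}$ has both outer terms in $\QC(Y)^{\geq 1}$: the cotangent $\bL_j$ is (by base change from $BP_\lambda \to BG$) the sheaf associated to $(\fg/\mathfrak{p}_\lambda)^\vee$, whose $\lambda$-weights are $\geq 1$ because $\mathfrak{p}_\lambda = \bigoplus_{w\geq 0} \fg_w$; and the pushout identification $B_\bullet \simeq A_\bullet \otimes_{\Sym(U^{\geq 1}_\bullet)} k$ yields $\bL_{i'} \simeq U^{\geq 1}_\bullet \otimes_k \cO_Y[1]$, also of weight $\geq 1$. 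Hence $\bL_i \in \QC(Y)^{\geq 1}$, so $\ladj{1}(\bL_i) = 0$; applying $\ladj{1}$ to the fiber sequence $i^* \bL_{\Spec A_\bullet/G} \to \bL_Y \to \bL_i$ and using $\bL_Y \in \QC(Y)^{<1}$ from \Cref{L:cotangent_complex_theta_action} gives the equivalence $\ladj{1}(i^*\bL_{\Spec A_\bullet/G}) \xrightarrow{\simeq} \bL_Y$, so $\phi$ is (formally) étale.

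Finally, via $\iMap(\Theta,-)^{\rm cl} \simeq \iMap^{\rm cl}(\Theta,-^{\rm cl})$, the classical truncation $\phi^{\rm cl}$ is identified with the map of \cite{halpern2014structure}*{Thm.~1.4.8}, which is an open and closed immersion. An étale morphism (locally almost of finite presentation) whose classical truncation is open and closed is itself open and closed, since the classical image determines a clopen substack of $\Filt(\Spec A_\bullet/G)$ and the induced étale map from $Y$ to that substack is an isomorphism on classical truncations, hence an equivalence. The final assertion, that when $G$ is split every connected component arises for a unique conjugacy class of $\lambda$, follows from the analogous statement in \cite{halpern2014structure}*{Thm.~1.4.8}, since connected components of a derived algebraic stack correspond bijectively with those of its classical truncation. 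The most delicate point is the weight identification of $\bL_{i'}$ in the derived setting, for which the level-wise polynomial model of $A_\bullet$ is essential.
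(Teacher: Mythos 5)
The proposal is correct, and it follows the paper's overall strategy: construct $\phi$ as the adjunct of an explicit map $\Theta \times (\Spec B_\bullet/P_\lambda) \to \Spec A_\bullet/G$, reduce the open-and-closed statement to showing $\phi$ is \'etale, and feed in the classical result of \cite{halpern2014structure}*{Thm.~1.4.8}. The differences are two. First, your construction of $\phi$ is described more informally than the paper's $\Gm \times P_\lambda$-equivariant $G$-bundle presentation, but the underlying data are the same, and the descent you sketch (using $\lambda(t)p\lambda(t)^{-1}\in P_\lambda$ and the contraction on $\Spec B_\bullet$) is exactly what the explicit cocycle encodes. Second, and more substantively, for the \'etale-ness step the paper writes out the fibers of $D\psi$ and $\phi^\ast(D\ev_1)$ at $\Gm$-fixed points as ``the weight-$\leq 0$ subcomplex'' and compares them directly, whereas you work abstractly with the baric truncation: using \Cref{L:relative_cotangent_complex} and $\Theta$-equivariance (\Cref{P:baric_stratum}(4)) to identify $\phi^\ast\bL_{\Filt}$ with $\ladj{1}(i^\ast\bL_\cX)$ and $D\phi$ with $\ladj{1}(Di)$, then proving $\bL_i\in\QC(Y)^{\geq 1}$ by factoring $i$ through $\Spec A_\bullet/P_\lambda$ and computing $\bL_j$ and $\bL_{i'}$ separately, and finally invoking \Cref{L:cotangent_complex_theta_action}. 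This is essentially re-deriving Step~3 of \Cref{P:theta_action_is_filt} (which you could also simply cite, since $\ev_1\circ\phi$ need not be a closed immersion here, so you can't apply that proposition wholesale, but its proof of ``$\bL_{\cS/\cX}\in\QC^{\geq 1}$ implies $D\phi$ iso'' applies verbatim). Both routes reduce, via Nakayama / \Cref{P:baric_stratum}(5), to the same $\lambda$-weight bound at fixed points, and both buy you the same thing; yours is a bit more systematic in reusing the baric machinery from the preceding subsection, while the paper's is a self-contained fiber computation.
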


To construct this map, we will construct a $\Gm \times P_\lambda$-equivariant $G$-bundle on $\bA^1 \times \Spec(B_\bullet)$ along with a $G$-equivariant map to $\Spec(A_\bullet)$, following \cite{halpern2014structure}*{Sect.~1.4.3}. For each $n$, we equip $\Spec(k[t] \otimes k[U_n^{<1}] \otimes k[G])$ with a $\Gm \times P_\lambda$-action by
\[
(t,p) \cdot (z,x,g) = (tx, p\cdot x, \lambda(tz) p \lambda(tz)^{-1} g),
\]
which commutes with the action of $G$ by right multiplication. In addition, the map $\Spec(k[t] \otimes k[U_n^{<1}] \otimes k[G]) \to \Spec(k[U_n])$ defined by
\[
(z,x,g) \mapsto g^{-1} \lambda(z) \cdot x
\]
is $G$ equivariant and $\Gm \times P_\lambda$ invariant. This equivariant structure and these level-wise maps are compatible with the face and degeneracy maps in $B_\bullet$ and $A_\bullet$, so we have a map of simplicial commutative algebra $A_\bullet \to k[t] \otimes B_\bullet \otimes k[G]$ which is $G$-equivariant and invariant for the $\Gm \times P_\lambda$ action on the target. Our canonical map $\phi : \Spec(B_\bullet)/P_\lambda \to \Filt(\Spec(A_\bullet) / G)$ by definition classifies the resulting map
\[
\bA^1 \times \Spec(B_\bullet) / \Gm \times P_\lambda \simeq \bA^1 \times \Spec(B_\bullet) \times G / \Gm \times P_\lambda \times G \to \Spec(A_\bullet) / G.
\]

\begin{proof}[Proof of \Cref{L:filt_quotient_stack}]
$\pi_0(B_\bullet) \simeq \pi_0(A) / I_+$, and the canonical map above restricts to the canonical map $\Spec(\pi_0(A)/I_+)/P_\lambda \to \Filt(\Spec(\pi_0(A))/G)$ which is shown to be an open and closed immersion in \cite{halpern2014structure}*{Theorem 1.4.8}. It follows that the canonical map in the derived setting is a closed immersion, and it suffices to show that $\phi$ induces an isomorphism $\bL_{\Spec(B_\bullet)/P_\lambda} \to \phi^\ast \bL_{\Filt(\Spec(A_\bullet)/G)}$. Because both complexes are almost perfect, it suffices by Nakayama's lemma to verify that the fiber of this map is an isomorphism at every $\lambda(\Gm)$-fixed point $p \in \Spec(\pi_0(B_\bullet))$.

Consider the map $\psi : \Spec(B_\bullet)/P_\lambda \to \Spec(A_\bullet / G)$ induced by the $(P_\lambda \to G)$-equivariant map $\Spec(B_\bullet) \to \Spec(A_\bullet)$. The isomorphism $\psi \simeq \ev_1 \circ \phi$ gives a factorization
\[
\xymatrix{ \psi^\ast \bL_{\Spec(A_\bullet)/G} \ar[rr]^{D\psi} \ar[dr]_{\phi^\ast(D\ev_1)} & & \bL_{\Spec(B_\bullet)/P_\lambda} \\ & \phi^\ast \bL_{\Filt(\Spec(A_\bullet)/G)} \ar[ur]_{D\phi} & }.
\]
From the explicit description of $A_\bullet$ and $B_\bullet$ as simplicial polynomial algebras, $\psi^\ast ( \bL_{\Spec(A_\bullet)/G}) \otimes k(p)$ has the form $\cdots \to U_2\otimes k(p) \to U_1 \otimes k(p) \to U_0 \otimes k(p) \to (\fg)^{\dual} \otimes k(p)$, and the map $D\psi$ is precisely the inclusion of the subcomplex of weight $\leq 0$, $\cdots \to U_2^{<1} \to U_1^{<1} \to U_0^{<1} \to (\mathfrak{p}_\lambda)^\dual$. The same is true for the map $D\ev_1$ by \Cref{L:relative_cotangent_complex}, so it follows that $D \phi$ is an isomorphism.
\end{proof}

\begin{rem}
A similar argument deduces that $\Grad(\Spec(A_\bullet)/G)$ is a disjoint union of stacks of the form $\Spec(A_\bullet / (I_+)_\bullet + (I_-)_\bullet) / L_\lambda$, where $L_\lambda$ is the centralizer of $\lambda$, using the corresponding classical statement in \cite{halpern2014structure}*{Thm.~1.4.8}.
\end{rem}

More generally, let $R_\bullet$ be a simplicial commutative algebra, and let $A_\bullet$ is a simplicial commutative $R_\bullet$ algebra with a $(\Gm^n)_{R_\bullet}$-action, i.e., $A_\bullet$ is a commutative algbera in $\bZ^n$-graded simplicial $R_\bullet$-modules. As above, we choose a quasi-isomorphism with a semi-free $R_\bullet$-algebra $A_\bullet \cong R_\bullet[U_\bullet]$. For any cocharacter $\lambda : \Gm \to \Gm^n$ we can define $B_\bullet = R_\bullet [U_\bullet^{<1}] = R_\bullet[U_\bullet] / (I_+)_\bullet$. Note that $P_\lambda = \Gm^n$ because the group is abelian. In this case the map $\phi$ constructed above has a simpler description:

For $f \in U_n$ let $\sum_{w \in \bZ} f_w$ be the weight decomposition of the projection of $f$ under the action of $\lambda(\Gm)$. Then the homomorphism $R_n[U_n] \to R_n[t] \otimes R_n[U_n^{<0}]$ taking $f \mapsto \sum_{w \leq 0} t^{-w} f_w$ is compatible with the simplicial structure and equivariant for the group homomorphism $(\lambda, \id_{\Gm^n}) : \Gm \times \Gm^n \to \Gm^n$. It thus defines a map $\Theta \times \Spec(B_\bullet)/\Gm^n \to \Spec(A_\bullet)/\Gm^n$, which is classified by a map
\begin{equation} \label{E:abelian_comparison}
\phi_\lambda : \Spec(B_\bullet) / \Gm^n \to \Filt(\Spec(A_\bullet)/\Gm^n).
\end{equation}

In addition if we define $C_\bullet := R_\bullet[U_\bullet^0] = R_\bullet[U_\bullet] / ((I_+)_\bullet + (I_-)_\bullet)$, then the canonical closed embedding $\Spec(C_\bullet) \to \Spec(A_\bullet)$ is equivariant for the group homomorphism $(\lambda,\id_{\Gm^n}) : \Gm \times \Gm^n \to \Gm^n$, where the first factor of $\Gm$ acts trivially on $C_\bullet$. We can thus regard this inclusion as a morphism $(B\Gm) \times \Spec(C_\bullet)/\Gm^n \to \Spec(A_\bullet)/\Gm^n$, which is classified by a morphism
\begin{equation} \label{E:abelian_comparison_2}
\sigma_\lambda : \Spec(C_\bullet) / \Gm^n \to \Grad(\Spec(A_\bullet)/\Gm^n).
\end{equation}

\begin{lem} \label{lem:filt_abelian_quotient}
The map \eqref{E:abelian_comparison} is an open and closed immersion, and the image of $\phi_\lambda$ covers $\Filt(\Spec(A_\bullet)/\Gm^n)$ as $\lambda$ ranges over all cocharacters of $\Gm^n$. Likewise the map \eqref{E:abelian_comparison_2} is an open and closed immersion, and the image of $\sigma_\lambda$ covers $\Grad(\Spec(A_\bullet)/\Gm^n)$ as $\lambda$ ranges over all cocharacters of $\Gm^n$.
\end{lem}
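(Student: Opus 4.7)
The plan is to deduce both claims directly from \Cref{L:filt_quotient_stack} together with the remark immediately following it, exploiting the fact that for the abelian group $G = \Gm^n$ we have $P_\lambda = L_\lambda = \Gm^n$ for every cocharacter $\lambda \colon \Gm \to \Gm^n$. So the previous lemma already asserts that the canonical maps
\[
\Spec(B_\bullet)/\Gm^n \to \Filt(\Spec(A_\bullet)/\Gm^n) \quad \text{and} \quad \Spec(C_\bullet)/\Gm^n \to \Grad(\Spec(A_\bullet)/\Gm^n)
\]
constructed there are open and closed immersions; the only substantive thing to check is that these canonical maps coincide with the maps $\phi_\lambda$ and $\sigma_\lambda$ defined by the explicit weight-decomposition formulae of \eqref{E:abelian_comparison} and \eqref{E:abelian_comparison_2}.

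For $\phi_\lambda$, I would unwind the construction preceding \Cref{L:filt_quotient_stack}. It produces a $\Gm \times P_\lambda$-equivariant $G$-bundle with underlying scheme $\bA^1 \times \Spec(B_\bullet) \times G$ and tautological map $(z,x,g) \mapsto g^{-1}\lambda(z)\cdot x$ to $\Spec(A_\bullet)$. In the abelian case conjugation by $\lambda(tz)$ on $p \in P_\lambda = \Gm^n$ is trivial, so the $G$-bundle is simply the product bundle with diagonal $\Gm \times \Gm^n$-action. After trivializing by the identity section $g = e$, the classifying map becomes the algebra homomorphism $A_\bullet \to R_\bullet[t] \otimes B_\bullet$ sending a weight-$w$ element $f_w$ to $t^{-w} f_w$ if $w \leq 0$ and to $0$ if $w \geq 1$ (the latter because we are reducing modulo $(I_+)_\bullet$, equivalently restricting to $\bA^1 \times \Spec(B_\bullet) \subset \bA^1 \times \Spec(A_\bullet)$). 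This is exactly the formula used to define $\phi_\lambda$, so the two maps agree, and $\phi_\lambda$ is therefore an open and closed immersion by \Cref{L:filt_quotient_stack}. The covering statement is immediate: \Cref{L:filt_quotient_stack} says for split $G$ every connected component of $\Filt(\Spec(A_\bullet)/G)$ lies in the image for a unique conjugacy class of $\lambda$, and in the abelian case every conjugacy class is a singleton.

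For $\sigma_\lambda$, the argument is strictly parallel using the remark after \Cref{L:filt_quotient_stack}, which asserts the analogous description of $\Grad(\Spec(A_\bullet)/G)$ as a disjoint union of substacks of the form $\Spec(A_\bullet/((I_+)_\bullet + (I_-)_\bullet))/L_\lambda$. In the abelian case $L_\lambda = \Gm^n$, and the canonical map constructed by the same recipe (replacing $\Theta$ by $B\Gm$, i.e., setting $t = 0$) is equivariant for $(\lambda, \id_{\Gm^n}) \colon \Gm \times \Gm^n \to \Gm^n$ with the first factor acting trivially on $C_\bullet = R_\bullet[U_\bullet^0]$. This is precisely the map classifying $\sigma_\lambda$, so the same quotation of the remark gives the open and closed immersion, and the covering statement follows from uniqueness of conjugacy classes in the abelian split torus.

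The only potential obstacle is the bookkeeping needed to match the ad hoc formula $f \mapsto \sum_{w \leq 0} t^{-w} f_w$ with the more intrinsic equivariant-bundle construction in \Cref{L:filt_quotient_stack}, but this is straightforward because the abelian hypothesis collapses the conjugation term $\lambda(tz) p \lambda(tz)^{-1}$ that provides the only non-obvious piece of that construction.
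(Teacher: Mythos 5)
Your reduction to \Cref{L:filt_quotient_stack} and the remark after it does not go through, for a reason hidden in the setup rather than in the formulas. \Cref{L:filt_quotient_stack} is stated over a fixed ground field $k$: ``Fix a ground field $k$, and let $G$ be an affine algebraic group acting on an affine derived scheme $\Spec(A)$.'' The paragraph introducing \eqref{E:abelian_comparison}, however, explicitly relaxes this (``More generally, let $R_\bullet$ be a simplicial commutative algebra, and let $A_\bullet$ \dots''): $A_\bullet$ is a simplicial commutative algebra over an arbitrary simplicial base $R_\bullet$ with a $(\Gm^n)_{R_\bullet}$-action. This extra generality is not cosmetic --- it is used later in the paper (e.g. in \Cref{T:local_structure_stratum}, over a noetherian base) --- so you cannot treat the abelian lemma as a specialization of \Cref{L:filt_quotient_stack}. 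Your bookkeeping identifying $\phi_\lambda$ with the bundle construction is fine when both are defined, but the hypothesis under which the cited lemma holds simply does not cover the setting of the statement you are proving.

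There is a second, lesser issue with your argument for $\sigma_\lambda$: the statement you invoke about $\Grad(\Spec(A_\bullet)/G)$ is only a remark (``A similar argument deduces\ldots'') sketched after \Cref{L:filt_quotient_stack}, not a proved lemma, and it is again in the field-based setting. Quoting it gives you nothing to stand on.

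The paper's own proof does not reduce to the earlier lemma; it repeats its \emph{structure} in the more general setting. Concretely: first establish that $\phi_\lambda$ (resp.\ $\sigma_\lambda$) is a closed immersion on underlying classical stacks by citing the appropriate classical statement of \cite{halpern2014structure} adapted to quotients by tori over a general base --- note this is a \emph{different} theorem from the one (Thm.\ 1.4.8) used inside the proof of \Cref{L:filt_quotient_stack}. Then upgrade the classical closed immersion to an open and closed immersion of derived stacks by comparing cotangent complexes via Nakayama, exactly as in the proof of \Cref{L:filt_quotient_stack}; for the $\Filt$ statement this uses \Cref{L:relative_cotangent_complex}, and for the $\Grad$ statement it additionally requires \Cref{L:center_cotangent}, which identifies $\bL_{\Grad(\cX)}$ with the weight-$0$ part $\radj{0}(\sigma^\ast\bL_{\Filt(\cX)})$. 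Your proposal never touches the cotangent complex computation, which is the substantive derived-geometric content here. To repair the argument you would need to replace the reduction by this two-step verification carried out directly in the setting over $R_\bullet$.
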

\begin{proof}
The argument is the same as the proof of \Cref{L:filt_quotient_stack}: one uses the computation of the cotangent complex in \Cref{L:relative_cotangent_complex} and \Cref{L:center_cotangent}, combined with the analogous statement for the underlying classical stack \cite{halpern2014structure}*{Thm.~1.47}.
\end{proof}

\begin{ex} \label{E:stratum_derived_structure}
Even if the stack $\cX$ is classical (i.e. $\pi_n(\cO_\cX)=0$ for $n>0$), the stack $\Filt(\cX)$ need not be classical. For instance, consider a $\Gm$-action with on $\bC[x_1,\ldots,x_n,y_1,\ldots,y_m]$ in which each $x_i$ has positive weight and each $y_j$ has negative weight. Let $X = \Spec(\bC[x_1,\ldots,x_n,y_1,\ldots,y_m] / (f))$ where $f$ is a non-zero polynomial in the ideal $(x_1,\ldots,x_n)$ that is homogeneous of weight $d \in \bZ$. The classical attracting locus for the tautological one parameter subgroup $\lambda(t)=t$ is $Y^{\rm{cl}} = \Spec(\bC[x_i,y_j] / (f,x_i)) = \Spec (\bC[y_j])$, which is an affine space with a contracting action of $\Gm$. The cotangent complex of $X/\Gm$ is the three term complex given by $\cO_X \cdot df \to \bigoplus_i \cO_X \cdot dx_i \oplus \bigoplus_j \cO_X \cdot dy_j \to \cO_X$, where the second map takes $dx_i \mapsto a_i x_i$ where $a_i$ is the weight of $x_i$, and likewise for each $dy_j$. If $f$ has weight $d<0$, then the cotangent complex of the derived $\Theta$-stratum $Y/\Gm$ is
\[
\bL_{Y/\Gm} = [ \cO_Y \cdot df \to \bigoplus_j \cO_Y \cdot dy_j \to \cO_Y ]
\]
by \Cref{L:relative_cotangent_complex}, where the first differential is $0$ because $f \in (x_1,\ldots,x_n)$. This does not agree with the cotangent complex of the underlying classical $\Theta$-stratum, which is missing the summand of $\cO_Y \cdot df$ in homological degree $1$.
\end{ex}


\subsection{Quasi-coherent complexes supported on a \texorpdfstring{$\Theta$}{Theta}-stratum}

\begin{defn} \label{D:subcategories_qcoh}
Let $i : \cS \hookrightarrow \cX$ be a $\Theta$-stratum. We will let $\QC_\cS(\cX)^{\geq w}, \QC_\cS(\cX)^{<w} \subset \QC(\cX)$ denote the smallest full stable subcategories which contain the essential image $i_\ast(\QC(\cS)^{\geq w})$ and $i_\ast(\QC(\cS)^{<w})$ respectively and are closed under extensions, filtered colimits, and limits of towers $\cdots \to F_2 \to F_1 \to F_0$ for which $\tau_{\leq k}(F_i)$ is eventually constant for any $k$. We let $\QC(\cX)^{<w} \in \QC(\cX)$ denote the full subcategory of complexes $F$ for which $R\Gamma_\cS(F) \in \QC_\cS(\cX)^{<w}$.
\end{defn}

\begin{prop} \label{P:baric_decomp_supports}
Let $\cX$ be an algebraic stack locally almost of finite presentation and with affine automorphism groups over a locally noetherian algebraic base stack, and let $i : \cS \hookrightarrow \cX$ be a $\Theta$-stratum. Then we have a baric decomposition
\[
\QC(\cX) = \langle \QC(\cX)^{<w}, \QC_\cS(\cX)^{\geq w} \rangle,
\]
whose truncation functors we denote $\radj{w}_\cS$ and $\ladj{w}$ (See \Cref{D:truncation_supports}). This baric decomposition satisfies the following properties, and is uniquely characterized by (1), (2), (3), and the fact that all objects of $\QC_\cS(\cX)^{\geq w}$ are set-theoretically supported on $\cS$:
\begin{enumerate}
\item $\radj{w}_\cS$ and $\ladj{w}$ commute with filtered colimits;
\item $i_\ast : \QC(\cS) \to \QC_\cS(\cX)$ intertwines $\radj{w}_\cS$ and $\ladj{w}$ with the baric truncation functors on $\QC(\cS)$ induced by \Cref{P:baric_stratum};
\item $\radj{w}_\cS$ and $\ladj{w}$ are locally uniformly bounded below in homological amplitude, i.e., for any map $p : \Spec(A) \to \cX$, there is some $d \leq 0$ such that $p^\ast \circ \radj{w}$ and $p^\ast \circ \ladj{w}$ map $\QC(\cX)_{\geq \ast}$ to $\QC(A)_{\geq \ast+d}$;
\item $\radj{w}_\cS$ and $\ladj{w}$ commute with the functor $R\Gamma_\cS(-)$ and therefore preserve $\QC_\cS(\cX)$. They also preserve the subcategory $\APerf_\cS(\cX)$, and they are right $t$-exact on $\QC_\cS(\cX)$.
\item If $p : \cX' \to \cX$ is a morphism such that $\cS$ induces a $\Theta$-stratum $\cS' \hookrightarrow \cX'$ (\Cref{D:induced_stratum}) and the canonical map $\cS' \to \cS \times_\cX \cX'$ is an isomorphism, then $p^\ast : \QC(\cX) \to \QC(\cX')$ canonically commutes with the baric truncation functors.
\item If $\cX$ has affine diagonal over the base, then a complex $F \in \QC(\cX)$ lies in $\QC(\cX)^{<w}$ if and only if $H_n(R\Gamma_\cS(F)) \in \QC(\cX)^{<w}$ for all $n$.
\end{enumerate}
\end{prop}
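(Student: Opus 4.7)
The plan is to construct the baric truncation functors $\radj{w}_\cS$ and $\ladj{w}$ by combining the baric truncation on $\QC(\cS)$ from \Cref{P:baric_stratum} with the local cohomology functor $R\Gamma_\cS$. On a complex in the essential image $i_\ast \QC(\cS)$ the functor $\radj{w}_\cS$ should be the pushforward of the baric truncation on $\cS$, as prescribed by property (2); the nontrivial task is to extend this to all of $\QC_\cS(\cX)$, whose objects are set-theoretically but not scheme-theoretically supported on $\cS$, and then to all of $\QC(\cX)$ via $R\Gamma_\cS$, in a way that commutes with filtered colimits and has controlled amplitude.

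I would carry this out by first reducing to a local model. By the local structure theorem \Cref{T:local_structure_stratum} (referenced in the introduction) and smooth descent, it suffices to treat the quotient presentation of \Cref{L:filt_quotient_stack}, and a further flat base change along $G/P_\lambda$ reduces to the abelian local model of \Cref{lem:filt_abelian_quotient}: $\cX = \Spec(A)/\Gm^n$ with $\cS = \Spec(A/I_+)/\Gm^n$, where $I_+$ is generated by $\lambda$-positive elements. In this setting a complex $F \in \QC(\cX)$ is a $\bZ^n$-graded simplicial $A$-module and $R\Gamma_\cS(F)$ can be computed as $\colim_n \inner{\RHom}(\cO_\cX / I_+^n, F)$, which inherits the $\lambda$-grading levelwise. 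I would then define $\radj{w}_\cS(F)$ to be the summand of $R\Gamma_\cS(F)$ of $\lambda$-weight $\geq w$ and let $\ladj{w}(F)$ be the cofiber of the canonical map $\radj{w}_\cS(F) \to F$; semiorthogonality then follows because $R\Gamma_\cS$ annihilates restrictions to the complement and the weight decomposition is already semiorthogonal.

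Verification of properties (1), (2), (4), (5), and (6) in this local model is then essentially formal from the explicit formulas: the local cohomology on a Noetherian affine commutes with filtered colimits, $i_\ast$ respects weight decomposition in the sense of \Cref{L:center_grading}, $R\Gamma_\cS$ is idempotent on $\QC_\cS(\cX)$, and $\APerf_\cS(\cX)$ is preserved because $I_+$ is finitely generated so the infinitesimal neighborhoods have finite Tor-dimension over $\cS$. The interesting local point is the locally uniformly bounded homological amplitude (3), which follows because the finite generation of $I_+$ gives the local cohomology a cohomological amplitude bounded by the number of generators, and the weight-summand projection introduces no further shift.

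To globalize and establish uniqueness simultaneously, I would argue that any baric structure on $\QC(\cX)$ satisfying (1), (2), (3), and the set-theoretic support condition must coincide with the one just constructed: property (2) determines the truncation on $i_\ast \QC(\cS)$; the closure of any baric structure under extensions, together with (1), forces agreement on all filtered colimits of such objects; and (3) combined with standard Mittag-Leffler reasoning forces agreement on the tower limits appearing in \Cref{D:subcategories_qcoh}. The orthogonal factor $\QC(\cX)^{<w}$ is then determined by semiorthogonality. This uniqueness means the local constructions automatically descend to a canonical global baric structure. The main obstacle I expect is controlling the homological amplitude carefully enough both to establish (3) globally and to ensure that the tower-limit closure condition in \Cref{D:subcategories_qcoh} interacts correctly with baric truncation; the same uniform bound that controls how $R\Gamma_\cS$ expands homological degree must be shown to propagate through the passage from the affine local model to a general base.
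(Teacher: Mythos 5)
There is a genuine gap in the key construction step. You propose, after reducing to the local model $\cX = \Spec(A)/\Gm^n$, $\cS = \Spec(A/I_+)/\Gm^n$, to define $\radj{w}_\cS(F)$ as ``the summand of $R\Gamma_\cS(F)$ of $\lambda$-weight $\geq w$.'' This is not a well-defined object of $\QC(\cX)$: the $\lambda$-weight decomposition of the underlying graded vector space of $R\Gamma_\cS(F)$ is not a decomposition of $\cO_\cX$-modules, because $A$ contains elements of nonzero $\lambda$-weight (positive weights generating $I_+$, and in general negative ones too), so multiplication by $A$ does not preserve the weight-$\geq w$ subspace. Concretely, for $A = k[x,y]$ with $\lambda$-weights $(1,-1)$ and $\cS = \Spec(k[y])/\Gm$, taking $F = A\langle -d\rangle$ with $d\gg 0$, the weight-$\geq 0$ part of $H_\ast(R\Gamma_\cS F)$ is a finite-dimensional graded vector space that is manifestly not closed under multiplication by $y$; the actual $\radj{0}_\cS(F)$ is a nontrivial $A$-module with weights ranging down to $-\infty$, and it is only the \emph{quotient} $\ladj{0}(F)$ whose weights are all $<0$. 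So the weight-summand formula does not yield the truncation functor, and consequently the ``canonical map $\radj{w}_\cS(F)\to F$'' you invoke does not exist in $\QC(\cX)$; the semiorthogonality argument that relies on it therefore does not get off the ground.

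The subtlety you are skipping is precisely why the paper introduces the auxiliary category $\cC$ (the stable envelope of $i_\ast\QC(\cS)$) and defines $\radj{w}_\cS$ via the formula $\lim_d\colim_n \radj{w}_\cC(\inner{\RHom}^\otimes_\cX(\cO_{\cS^{(n)}}, \tau_{\leq d}F))$, where $\radj{w}_\cC$ is the truncation on $\cC$ coming formally from the baric structure on $\QC(\cS)$ and property (2). Each infinitesimal neighborhood $\cO_{\cS^{(n)}}$ lands you in $\cC$, where the truncation genuinely respects the module structure (because it is $i_\ast$ of a truncation on $\cS$); one then has to check, which is the real work, that this tower stabilizes in each homological degree and that the ensuing functor has the stated exactness and amplitude properties. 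Note also that even on $\cS$ itself the baric truncation is not a weight summand---for a non-positively graded $B=A/I_+$ the weight-$\geq w$ part of a $B$-module is not a $B$-submodule---so the analogous issue already arises on the stratum, contrary to your appeal to ``$i_\ast$ respects weight decomposition in the sense of \Cref{L:center_grading}'' (that lemma concerns the center $\cZ$, where the action is through $B\Gm$ and weights do split, not the full stratum $\cS$). Your uniqueness-and-descent scaffolding is sound in outline and close to how the paper establishes part (6) and uniqueness, but it cannot rescue a local construction that does not produce an $\cO_\cX$-module.
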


\begin{rem} \label{R:left_category}
If $F \in \QC(\cX)_{<\infty}$ and $\cdots \to E_1 \to E_0$ is a tower whose truncations are eventually constant, then $\Map(\lim_i E_i, F) \cong \Map(E_k,F)$ for $k \gg 0$. It follows that for $F \in \QC(\cX)_{<\infty}$,  $F \in \QC(\cX)^{<w}$ if and only if it is right orthogonal to $i_\ast(\QC(\cS)^{\geq w})$, which is equivalent to $i^{\QC,!}(F) \in \QC(\cS)^{<w}$. We will see in \Cref{S:ev_coconn} that if $\cX$ is eventually coconnective, this condition completely characterizes $\QC(\cX)^{<w}$.
\end{rem}

In the context of the proposition above, we first make some more elementary observations.

\begin{lem} \label{L:semi_orthogonal_pushforwards}
Let $i : \cS \hookrightarrow \cX$ be a $\Theta$-stratum with $\cX$ quasi-compact, and let $F \in \QC(\cS)^{\geq w}$ and $G \in \QC(\cS)^{<w}$. Then $\RHom_\cX(i_\ast(F),i_\ast(G)) = 0$.
\end{lem}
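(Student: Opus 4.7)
The plan is to use adjunction to transfer the semi-orthogonality question from $\cX$ down to $\cS$. By the $(i^\ast, i_\ast)$-adjunction for the closed immersion $i$,
\[
\RHom_\cX(i_\ast F, i_\ast G) \simeq \RHom_\cS(i^\ast i_\ast F, G).
\]
Since the baric decomposition $\QC(\cS) = \langle \QC(\cS)^{\geq w}, \QC(\cS)^{<w} \rangle$ from \Cref{P:baric_stratum} is semi-orthogonal and $G \in \QC(\cS)^{<w}$, it will suffice to show that $i^\ast i_\ast F \in \QC(\cS)^{\geq w}$ whenever $F \in \QC(\cS)^{\geq w}$.

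To obtain this, I would exploit the derived self-intersection structure of $i \colon \cS \hookrightarrow \cX$. The comonad $i^\ast i_\ast$ on $\QC(\cS)$ admits a canonical convergent filtration (the derived analog of the $\fI$-adic filtration by the ideal of $\cS$) whose associated graded pieces on $F$ take the form $F \otimes \Sym^n(\bL_{\cS/\cX}[1])$ for $n \geq 0$ (with the appropriate divided-power variant in positive characteristic), the $n=0$ term being $F$ itself. By \Cref{L:relative_cotangent_complex}, $\bL_{\cS/\cX} \in \QC(\cS)^{\geq 1}$, so the multiplicativity of the baric structure (part (6) of \Cref{P:baric_stratum}) places $\Sym^n(\bL_{\cS/\cX}[1])$ in $\QC(\cS)^{\geq n}$; tensoring with $F$ keeps each graded piece in $\QC(\cS)^{\geq w+n} \subseteq \QC(\cS)^{\geq w}$. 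Closure of $\QC(\cS)^{\geq w}$ under extensions and the relevant towers then yields $i^\ast i_\ast F \in \QC(\cS)^{\geq w}$.

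The main obstacle is rigorously producing this filtration and controlling its convergence for an arbitrary (possibly non-regular, non-quasi-smooth) closed immersion of derived stacks. A cleaner route is a local verification combining the weak $\Theta$-action on $\cS$ with the explicit models of \Cref{L:filt_quotient_stack} (or its abelian refinement \Cref{lem:filt_abelian_quotient}): after smooth-locally presenting $\cX$ as a quotient stack, one identifies $i$ with a map of the form $\Spec(A_\bullet / I_+)/P_\lambda \hookrightarrow \Spec(A_\bullet)/G$, where $I_+$ is generated by homogeneous elements of strictly positive $\lambda$-weight. In this presentation $i_\ast F$ admits an explicit semi-free resolution whose generators have $\lambda$-weight $\geq w$, while $i_\ast G$ is built from generators of $\lambda$-weight $< w$, so that the vanishing of $\RHom_\cX(i_\ast F, i_\ast G)$ reduces to an immediate weight-count. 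Flat descent then globalizes the vanishing to $\cX$.
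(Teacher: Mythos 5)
Your first paragraph is precisely the paper's argument: reduce by adjunction to showing $i^\ast i_\ast F \in \QC(\cS)^{\geq w}$, filter $i^\ast i_\ast F$ by the tower with graded pieces $\Sym^n(\bL_{\cS/\cX}) \otimes F$ (which the paper constructs in full generality, with no regularity hypotheses, in \Cref{C:filtration_push_pull} via derived deformation to the normal cone), and conclude from $\bL_{\cS/\cX} \in \QC(\cS)^{\geq 1}$ together with multiplicativity of the baric structure. The convergence point you flag is exactly where the paper is careful: it first reduces to $F$ homologically bounded below and then uses that $\radj{w}$, being right $t$-exact, commutes with limits of towers whose truncations are eventually constant, so the local-model route of your second paragraph is unnecessary here (and would awkwardly front-load the local structure theorem, which the paper establishes only afterwards).
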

\begin{proof}
It is equivalent to show that $\RHom_\cS(i^\ast(i_\ast(F)),G)=0$. Because $F = \colim_{d \to -\infty} \radj{w}(\tau_{\geq d}(F))$ and $\radj{w}$ is right $t$-exact, it suffices to assume that $F \in \QC(\cS)_{>-\infty}$. \Cref{C:filtration_push_pull} implies that $i^\ast(i_\ast(F))$ is the limit of a tower $\to \cdots \to F_{n+1} \to F_n \to \cdots \to F_0$ such that $\forall d, \tau_{\leq d}(F_n)$ is eventually constant in $n$, and $\fib(F_{n} \to F_{n-1}) \cong \Sym^n(\bL_{\cS/\cX}) \otimes F$. The latter lies in $\QC(\cS)^{\geq w}$ by \Cref{L:relative_cotangent_complex} and \Cref{P:baric_stratum} part (6), so we have $F_n \in \QC(\cS)^{\geq w}$ as well. Finally, because $\radj{w}$ is right $t$-exact, it commutes with limits of towers whose truncations are eventually constant. It follows that $i^\ast(i_\ast(F)) \cong \lim_n F_n$ lies in $\QC(\cS)^{\geq w}$, which establishes the claim.
\end{proof}

\begin{defn} \label{D:cat_C}
Under the hypotheses of \Cref{P:baric_decomp_supports} and assuming $\cX$ is quasi-compact, let $\cC \subset \QC(\cX)$ denote the smallest full stable subcategory containing the essential image $i_\ast(\QC(\cS))$.
\end{defn}

\begin{lem} \label{L:prop_category_C}
For any $F \in \cC$ and $G \in \QC(\cX)$, the complexes $\tau_{\leq d}(F)$, $\tau_{\geq d}(F)$, $F \otimes G$, and $\inner{\RHom}^\otimes_{\cX}(F,G)$ all lie in $\cC$, where the last denotes the inner Hom in the symmetric monoidal $\infty$-category $\QC(\cX)^\otimes$. In particular, $\cC$ inherits a $t$-structure from $\QC(\cX)$.
\end{lem}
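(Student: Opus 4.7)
The plan is to identify $\cC$ explicitly as the full essential image $i_\ast(\QC(\cS))$, after which all four closure properties reduce to routine six-functor formalism for the closed immersion $i$.

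First I would argue that $i_\ast(\QC(\cS)) \subset \QC(\cX)$ is itself already a full stable subcategory closed under shifts, and hence that $\cC = i_\ast(\QC(\cS))$ by minimality. The key point is that $i \colon \cS \hookrightarrow \cX$ is a closed immersion, hence affine, and so $i_\ast$ has (universally) bounded cohomological amplitude. By the conventions recalled in the notation section, this forces $i_\ast$ to be cocontinuous, and in particular it commutes with finite colimits; as a right adjoint, it also commutes with finite limits. Combined with the fact that $i_\ast$ is fully faithful, this shows that a (co)fiber sequence between objects of $i_\ast(\QC(\cS))$ stays in $i_\ast(\QC(\cS))$, so this essential image is a stable subcategory containing itself, forcing the identification $\cC = i_\ast(\QC(\cS))$.

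Granting this identification, each of the four claims follows by writing $F = i_\ast(E)$ and applying a standard formula. For truncations, $t$-exactness of $i_\ast$ (which holds for any affine morphism, since restriction of scalars along $A \to B$ does not alter the underlying simplicial structure) gives $\tau_{\leq d}(i_\ast E) \simeq i_\ast(\tau_{\leq d} E)$, and similarly for $\tau_{\geq d}$. For the tensor product, the projection formula for the closed immersion $i$ gives $i_\ast(E) \otimes G \simeq i_\ast(E \otimes i^\ast G) \in \cC$. For the inner Hom, testing against arbitrary $H \in \QC(\cX)$ and chaining the adjunctions $i^\ast \dashv i_\ast \dashv i^{\QC,!}$ with the projection formula yields a natural equivalence
\[
\inner{\RHom}^\otimes_\cX(i_\ast E, G) \simeq i_\ast\, \inner{\RHom}^\otimes_\cS(E, i^{\QC,!} G),
\]
which visibly lies in $i_\ast(\QC(\cS)) = \cC$. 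Finally, the induced $t$-structure on $\cC$ is automatic once truncation functors are known to preserve $\cC$, since the fiber sequences $\tau_{\geq d+1} F \to F \to \tau_{\leq d} F$ and the semi-orthogonality of $\QC(\cX)_{\geq d+1}$ and $\QC(\cX)_{\leq d}$ restrict directly.

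There is no serious obstacle; the only substantive input is the identification $\cC = i_\ast(\QC(\cS))$, which in the derived setting rests on verifying that $i_\ast$ remains cocontinuous on unbounded complexes. This is exactly where the affineness of the closed immersion enters, through the bounded-amplitude criterion. Every subsequent step is a formal consequence of the six-functor adjunctions and the $t$-exactness of pushforward along an affine map.
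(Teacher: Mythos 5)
Your proposal hinges on the identification $\cC = i_\ast(\QC(\cS))$, which you derive from the claim that $i_\ast$ is fully faithful. This is the key error: for a closed immersion of \emph{derived} stacks, the derived pushforward $i_\ast$ is not fully faithful. Full faithfulness of the right adjoint $i_\ast$ would require the counit $i^\ast i_\ast \to \id$ to be an equivalence, but $i^\ast i_\ast(E)$ carries a nontrivial filtration with associated graded $\Sym(\bL_{\cS/\cX}) \otimes E$ (this is exactly the content of \Cref{C:filtration_push_pull} in the paper). Concretely, take $\cX = \Spec(k[x])$, $\cS = \Spec(k)$, $E = k$: then $\RHom_{\QC(\cX)}(i_\ast k, i_\ast k) \simeq k \oplus k[-1]$ while $\RHom_{\QC(\cS)}(k,k) \simeq k$, so $i_\ast$ fails to be full on $\pi_{-1}$ of mapping spectra. (You may be recalling the classical fact that $i_\ast$ is fully faithful on abelian categories of quasi-coherent \emph{sheaves}; this does not persist at the derived level.)

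The consequence is that the essential image $i_\ast(\QC(\cS))$ is genuinely not closed under cofibers: the map $k[-1] \to k$ in $\QC(\Spec k[x])$ classified by the extension class of $k[x]/x^2$ does not lift along $i_\ast$, and its cofiber $k[x]/x^2$ is not annihilated by $x$, hence not in the essential image of $i_\ast$. So $\cC$ is strictly larger than $i_\ast(\QC(\cS))$, and your reduction collapses. The paper's proof instead filters $\cC$ as $\cC_0 \subset \cC_1 \subset \cdots$ with $\cC_0 = i_\ast(\QC(\cS))$ and $\cC_{i+1}$ the cofibers of maps in $\cC_i$, and then runs an induction; the closure of $\cC$ under truncation requires the additional input that $i_\ast(\QC(\cS)_\heart) \subset \QC(\cX)_\heart$ is closed under subobjects and quotients in order to control the kernel and cokernel terms appearing in $\tau_{\leq d}(\cofib(A \to B))$. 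Your computations of $i_\ast E \otimes G$ and $\inner{\RHom}^\otimes_\cX(i_\ast E, G)$ via the projection formula and the $i_\ast \dashv i^{\QC,!}$ adjunction are correct, and indeed appear in the paper's proof, but only as the base case of the induction, not as the whole argument.
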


\begin{proof}
Any $F \in \cC$ can be constructed as a sequence of extensions of objects in $i_\ast(\QC(\cS))$. More precisely, we let $\cC_0 := i_\ast(\QC(\cS))$ and let $\cC_i \subset \cC$ be the full sub category consisting of objects that are cofibers of a morphism in $\cC_{i-1}$ for any $i>0$. Then each $\cC_i$ is closed under shifts, $\cC_0 \subset \cC_1 \subset \cdots$, and $\cC = \bigcup_i \cC_i$.

First we prove by induction on $i$ that for $F \in \cC_i$, $\tau_{\leq d}(F)$ and $\tau_{\geq d}(F)$ lie in $\cC$ for any $d$, and thus $\cC$ is closed under truncation. Because $i_\ast$ is exact, $\cC_0$ is preserved by the functors $\tau_{\leq d}(-)$ and $\tau_{\geq d}(-)$. For any map $A \to B$ between objects in a stable $\infty$-category with a left and right complete $t$-structure, there is a fiber sequence
\[
\cofib(\tau_{\geq d}(A) \to \tau_{\geq d}(B)) \to \tau_{\geq d} \cofib(A\to B) \to Q[d],
\]
where $Q$ is the cokernel of the map $H_d(B) \to H_d(\cofib(A\to B))$ in the heart of the $t$-structure. Dually, we have a fiber sequence
\[
K[d] \to \tau_{\leq d} \fib(A\to B) \to \fib(\tau_{\leq d}(A) \to \tau_{\leq d}(B)),
\]
where $K = \ker(H_d(\fib(A\to B)) \to H_d(A))$. The subcategory $i_\ast(\QC(\cS)_\heart) \subset \QC(\cX)_\heart$ is closed under subobjects and quotients (this reduces to the statement for underlying classical stacks, which can be checked locally), so $Q$ and $K$ lie in $i_\ast(\QC(\cS)_\heart)$ as well. This shows that under the inductive hypothesis, if $A,B \in \cC_{i-1}$, then $\tau_{\leq d}(\cofib(A\to B))$ and $\tau_{\geq d}(\cofib(A \to B))$ lie in $\cC$, which proves the claim.

By a similar inductive argument, the claim that $F \otimes G$ and $\inner{\RHom}_X^\otimes(F,G)$ lie in $\cC$ can be reduced to the case where $F = i_\ast(E)$ for some $E \in \QC(\cS)$. The claim then follows from the base change formula $i_\ast(E) \otimes G \cong i_\ast(E \otimes i^\ast(G))$ and the formula $\inner{\RHom}_{\cX}^\otimes(i_\ast(E),G) \cong i_\ast(\inner{\RHom}_\cS^\otimes(E,i^{\QC,!}(G)))$, where $i^{\QC,!}$ denotes the right adjoint to $i_\ast$.
\end{proof}

\begin{lem} \label{L:supports_truncation_simple}
There is a unique semiorthogonal decomposition $\cC = \langle \cC^{<w},\cC^{\geq w}\rangle$ such that the truncation functors $\radj{w}_\cC$ and $\ladj{w}_\cC$ are right $t$-exact, $i_\ast(\QC(\cS)^{<w}) \subset \cC^{<w}$, and $i_\ast(\QC(\cS)^{\geq w}) \subset \cC^{\geq w}$.
\end{lem}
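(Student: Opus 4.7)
The plan is to construct both the subcategories and their truncation functors simultaneously by induction along the exhaustive filtration $\cC_0 \subset \cC_1 \subset \cdots \subset \cC$ from the proof of \Cref{L:prop_category_C}, where $\cC_0 := i_\ast(\QC(\cS))$ and $\cC_i$ is the full subcategory of cofibers of morphisms in $\cC_{i-1}$. I will take $\cC^{\geq w}$ and $\cC^{<w}$ to be the smallest stable subcategories of $\cC$ containing $i_\ast(\QC(\cS)^{\geq w})$ and $i_\ast(\QC(\cS)^{<w})$ respectively, so that uniqueness comes for free: any other semiorthogonal decomposition satisfying the hypotheses has pieces that must contain these minimal subcategories, and a containment of semiorthogonal decompositions is forced to be an equality by the triangles that will be constructed. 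Semiorthogonality $\RHom_\cX(A,B) = 0$ for $A \in \cC^{\geq w}$ and $B \in \cC^{<w}$ will follow immediately from \Cref{L:semi_orthogonal_pushforwards}, because this vanishing already holds on the generating pairs $(i_\ast E, i_\ast F)$ and the class of pairs with vanishing $\RHom$ is closed under fibers and cofibers in either variable.

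For the existence of truncation triangles, the base case $F = i_\ast(E) \in \cC_0$ is handled by taking $i_\ast$ of the baric fiber sequence $\radj{w} E \to E \to \ladj{w} E$ of \Cref{P:baric_stratum}, whose outer terms lie in $\cC^{\geq w}$ and $\cC^{<w}$ by definition. For the inductive step, given a morphism $A \to B$ in $\cC_{i-1}$ equipped with its truncation triangles, semiorthogonality makes the composition $\radj{w}_\cC A \to A \to B \to \ladj{w}_\cC B$ nullhomotopic, producing a map of fiber sequences whose cofiber is a fiber sequence for $\cofib(A \to B)$ with outer terms in $\cC^{\geq w}$ and $\cC^{<w}$ respectively (these subcategories being stable, hence closed under cofibers). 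Right $t$-exactness of the two functors is inherited from the corresponding property on $\QC(\cS)$ via \Cref{P:baric_stratum}(1) and the $t$-exactness of $i_\ast$: the property holds on $\cC_0$, and since the class of right $t$-exact functors is closed under cofibers of natural transformations, the same inductive construction propagates it to all of $\cC$.

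The one place requiring genuine care — and the main potential obstacle — is showing that the inductively defined truncation triangles are canonical and functorial rather than merely existing. The resolution is that semiorthogonality, once in hand, already guarantees that any fiber sequence $G \to F \to H$ with $G \in \cC^{\geq w}$ and $H \in \cC^{<w}$ is unique up to unique equivalence, so the triangle associated to $F = \cofib(A \to B)$ is determined by $F$ alone, independent of its presentation as a cofiber. This simultaneously provides the functoriality of $\radj{w}_\cC$ and $\ladj{w}_\cC$, completing both the construction of the semiorthogonal decomposition and the verification of uniqueness.
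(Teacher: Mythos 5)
Your proposal follows the paper's approach essentially step for step: you take $\cC^{\geq w}$ and $\cC^{<w}$ to be the smallest stable subcategories containing $i_\ast(\QC(\cS)^{\geq w})$ and $i_\ast(\QC(\cS)^{<w})$, derive semiorthogonality from \Cref{L:semi_orthogonal_pushforwards}, and propagate the truncation triangles from $\cC_0 = i_\ast(\QC(\cS))$ to all of $\cC$ by the induction along $\cC_0 \subset \cC_1 \subset \cdots$. This matches the paper's (terse) argument, and your uniqueness discussion, which the paper leaves implicit, is correct: minimality forces any competing SOD to contain $(\cC^{<w},\cC^{\geq w})$, and two nested SODs of the same category coincide.

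The one place you should be careful is the right $t$-exactness claim. Your argument is that the property holds on $\cC_0$ and then ``propagates'' because right $t$-exact functors are closed under cofibers of natural transformations. But the induction along $\cC_i$ is a filtration by iterated cofibers that has \emph{no interaction with the $t$-structure}: a connective object of $\cC_i$ is a cofiber of a morphism $A \to B$ in $\cC_{i-1}$, and nothing forces $A$ and $B$ to be connective, so the inductive hypothesis gives you no control over $\radj{w}_\cC A$ and $\radj{w}_\cC B$. The phrase ``cofibers of natural transformations'' suggests you are cofibering at the level of functors, but $\radj{w}_\cC$ is not constructed that way; it is constructed object-by-object from a presentation of each object that ignores its homological position. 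To justify right $t$-exactness one really needs a separate argument — for instance showing that $\cC^{\geq w}$ and $\cC^{<w}$ are each closed under the truncation functors $\tau_{\geq d}$, $\tau_{\leq d}$ of the ambient $t$-structure (along the lines of the closure argument in \Cref{L:prop_category_C}), which is itself subtler than it looks because it depends on the derived (not just underlying classical) structure on $\cS$. To be fair, the paper's own proof of this lemma does not spell out right $t$-exactness either, so this is a genuine detail both you and the paper leave to the reader; just be aware that the inductive argument as you have phrased it does not close that gap.
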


\begin{proof}
This is essentially a formal consequence of \Cref{L:semi_orthogonal_pushforwards}, which implies that if $\cC^{\geq w}$ and $\cC^{<w}$ are the smallest full stable subcategories containing $i_\ast(\QC(\cS)^{\geq w})$ and $i_\ast(\QC(\cS)^{<w})$ respectively, then $\cC^{\geq w}$ is left semiorthogonal to $\cC^{<w}$. The sequences $i_\ast(\radj{w}(F)) \to i_\ast(F) \to i_\ast(\ladj{w}(F))$ give the canonical fiber sequences for any $F \in i_\ast(\QC(\cS))$, and the functoriality of this factorization under cones and shifts implies the existence of such fiber sequences for any object of $\cC$.
\end{proof}

Assuming $\cX$ is quasi-compact, we recall from \Cref{T:deformation_to_normal_cone} the canonical presentation of the local cohomology functor
\[
R\Gamma_\cS(F) \cong \lim{}_d \colim_n \inner{\RHom}_\cX^\otimes(\cO_{\cS^{(n)}},\tau_{\leq d}(F)),
\]
where $\cO_{\cS^{(n)}} \in \APerf_\cS(\cX)$ is the structure sheaf of the $n^{th}$ infinitesimal neighborhood of $\cS \hookrightarrow \cX$. Note that $\cO_{\cS^{(n)}}$ has a finite filtration whose associated graded complexes are $i_\ast \Sym^n(\bL_{\cS/\cX}[-1])$, so the complexes $\inner{\RHom}_\cX^\otimes(\cO_{\cS^{(n)}},\tau_{\leq d}(F))$ lie in $\cC$ by \Cref{L:prop_category_C}.

\begin{defn} \label{D:truncation_supports}
We \emph{define} the baric truncation functors $\radj{w}_\cS$ and $\ladj{w}$ explicitly when $\cX$ is quasi-compact by the formulas
\begin{align} \label{E:def_truncation_supports}
\radj{w}_\cS(F) &= \lim {}_d \left( \colim_n \left(\radj{w}_{\cC}\left(\inner{\RHom}_\cX^\otimes(\cO_{\cS^{(n)}}, \tau_{\leq d}F) \right) \right) \right), \text{ and}\\
\ladj{w}(F) &= \cofib(\radj{w}_\cS(F) \to F),
\end{align}
making use of \Cref{L:supports_truncation_simple} to define the baric truncation functor $\radj{w}_\cC$ on $\cC$.
\end{defn}

\begin{lem} \label{L:construct_trunc1}
$\radj{w}_\cC \inner{\RHom}_\cX^\otimes(\cO_{\cS^{(n)}}, \tau_{\leq d}(-)) : \QC(\cX) \to \QC(\cX)$ commutes with filtered colimits.
\end{lem}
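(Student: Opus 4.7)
The strategy is to reduce to the case where $\cO_{\cS^{(n)}}$ is replaced by a single pushforward $i_\ast E$ with $E\in \APerf(\cS)$, by exploiting the finite filtration on $\cO_{\cS^{(n)}}$ recalled just before \Cref{D:truncation_supports}, whose associated graded pieces are $i_\ast\Sym^k(\bL_{\cS/\cX}[-1])$ for $0\leq k\leq n-1$. Applying $\inner{\RHom}_\cX^\otimes(-,\tau_{\leq d}F)$ to this filtration, and then $\radj{w}_\cC$ (which is exact, being a projection in a semiorthogonal decomposition of stable $\infty$-categories), produces a finite filtration of the output with graded pieces of the form $\radj{w}_\cC\inner{\RHom}_\cX^\otimes(i_\ast E,\tau_{\leq d}F)$. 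Because finite colimits commute with filtered colimits, it suffices to establish the claim on each graded piece.

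Three simpler facts then combine to yield the conclusion. First, the truncation $\tau_{\leq d}$ on $\QC(\cX)$ commutes with filtered colimits, since filtered colimits in $\QC(\cX)$ are $t$-exact. Second, for each fixed $d$ the functor $\inner{\RHom}_\cX^\otimes(i_\ast E,-)\colon \QC(\cX)_{\leq d}\to \QC(\cX)$ preserves filtered colimits: this is the standard observation that inner Hom from an almost perfect complex into a uniformly bounded-above target preserves filtered colimits, since in any fixed homological range $i_\ast E$ can be replaced by a perfect approximation, from which $\RHom$ commutes with all colimits. Third, by the adjoint formula
\[
\inner{\RHom}_\cX^\otimes(i_\ast E,G) \simeq i_\ast \inner{\RHom}_\cS^\otimes(E, i^{\QC,!}G),
\]
the output lies in the fully faithful image $i_\ast\QC(\cS) \subset \QC(\cX)$. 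By the uniqueness clause of \Cref{L:supports_truncation_simple}, the restriction of $\radj{w}_\cC$ to this subcategory is canonically identified with $i_\ast\circ\radj{w}$, where $\radj{w}$ is the baric truncation on $\QC(\cS)$ from \Cref{P:baric_stratum}.

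Assembling these observations, a graded piece of the composite equals $i_\ast\radj{w}$ applied to an object of $\QC(\cS)$ that varies in $F$ in a manner that already preserves filtered colimits (by the first two facts, together with the fact that $i_\ast$ is cocontinuous and fully faithful, so filtered colimits of objects of $i_\ast\QC(\cS)$ computed in $\QC(\cX)$ remain in $i_\ast\QC(\cS)$ and agree with the corresponding filtered colimits in $\QC(\cS)$). Since $\radj{w}$ preserves filtered colimits by part (1) of \Cref{P:baric_stratum}, and $i_\ast$ is cocontinuous as it admits the right adjoint $i^{\QC,!}$, the composite on each graded piece preserves filtered colimits, and hence so does the original composite.

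The most delicate point, and the reason for passing to the graded pieces rather than computing $\radj{w}_\cC$ directly on $\inner{\RHom}_\cX^\otimes(\cO_{\cS^{(n)}},\tau_{\leq d}F)$ via the adjoint formula, is that $i^{\QC,!}$ itself is not known to preserve filtered colimits on $\QC(\cX)_{\leq d}$. The reduction to graded pieces sidesteps this issue because the baric truncation $\radj{w}_\cC$ has the transparent form $i_\ast\radj{w}$ on $i_\ast\QC(\cS)$, and the relevant colimit preservation only needs to be verified for $\inner{\RHom}_\cX^\otimes(i_\ast E,-)$ itself, not for $i^{\QC,!}$ separately.
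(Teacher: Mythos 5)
Your proof is correct, and it is close in spirit to the paper's argument: both reduce via the finite filtration of $\cO_{\cS^{(n)}}$ to the case of a single pushforward $i_\ast E$ with $E\in\APerf(\cS)$, then exploit the identity $\radj{w}_\cC\circ i_\ast \cong i_\ast\circ\radj{w}$ from \Cref{L:supports_truncation_simple} together with cocontinuity of $\radj{w}$ and $i_\ast$. The organizational difference is in how the two proofs establish that the ``middle'' functor $\QC(\cX)_{\leq d}\to\QC(\cS)$ preserves filtered colimits. The paper factors it as $\inner{\RHom}_\cS^\otimes(E, i^{\QC,!}(-))$ and proves colimit preservation for each factor separately, using that $\QC(\cX)_{\leq d}$ and $\QC(\cS)_{\leq d}$ are compactly generated by $\DCoh(-)_{\leq d}$ and that $i_\ast$ preserves compact objects, so its right adjoint $i^{\QC,!}$ is cocontinuous on the truncated categories. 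You instead treat $\inner{\RHom}_\cX^\otimes(i_\ast E, -)\colon\QC(\cX)_{\leq d}\to\QC(\cX)$ as a single almost-perfect inner Hom (perfect approximation argument) and descend through the fully faithful, cocontinuous $i_\ast$. Since the adjoint formula identifies these two descriptions of the same functor, the two arguments are establishing the same colimit-preservation fact by slightly different means; both are valid.

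One remark: your final paragraph claims that ``$i^{\QC,!}$ itself is not known to preserve filtered colimits on $\QC(\cX)_{\leq d}$,'' but in fact it is known and is exactly what the paper proves by compact generation. Your argument routes around this fact, so this misconception costs you nothing, but the phrasing suggests the paper faces a difficulty it does not: the compact-generation argument is short, and passing to the graded pieces is done in both proofs for the cleaner reason that it reduces $\cO_{\cS^{(n)}}$ to objects of the form $i_\ast E$.
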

\begin{proof}
Because $\cO_{\cS^{(n)}}$ is a finite sequence of extensions of objects of the form $i_\ast(E)$ for $E \in \QC(\cS)$, it suffices to show that
\begin{align*}
\radj{w}_\cC \inner{\RHom}_\cX^\otimes(i_\ast(E),\tau_{\leq d}(-)) &\cong \radj{w}_\cC i_\ast(\inner{\RHom}_\cS^\otimes(E,i^{\QC,!}(\tau_{\leq d}(-)))) \\
&\cong i_\ast(\radj{w} \inner{\RHom}_\cS^\otimes(E,i^{\QC,!}(\tau_{\leq d}(-))))
\end{align*}
commutes with filtered colimits. Because $i_\ast : \QC(\cS) \to \QC(\cX)$ is $t$-exact, its right adjoint $i^{\QC,!}$ is left $t$-exact, and $i^{\QC,!} : \QC(\cX)_{\leq d} \to \QC(\cS)_{\leq d}$ is right adjoint to $i_\ast : \QC(\cS)_{\leq d} \to \QC(\cX)_{\leq d}$. Both categories are compactly generated by $\DCoh(-)_{\leq d}$ \cite{halpern2014mapping}*{Thm.~A.2.1}, and thus $i^{\QC,!}$ commutes with filtered colimits in $\QC(\cX)_{\leq d}$ because $i_\ast$ preserves compact objects. Likewise the fact that $E \in \APerf(\cX)$ implies that $\inner{\RHom}_\cX^\otimes(E,-)$ commutes with filtered colimits in $\QC(\cS)_{\leq d}$, and the lemma follows from this and the fact that $i_\ast$ and $\radj{w}$ commute with filtered colimits in $\QC(\cS)$ (the latter is part (1) of \Cref{P:baric_stratum}).
\end{proof}

\begin{lem} \label{L:construct_trunc2}
The functors $\radj{w}_\cS$ and $\ladj{w}$ of \Cref{D:truncation_supports} are canonically isomorphic to the truncation functors $\radj{w}_\cC$ and $\ladj{w}_\cC$ of \Cref{L:supports_truncation_simple} when restricted to $\cC$.
\end{lem}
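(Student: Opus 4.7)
The plan is to exhibit a natural transformation $\theta_F \colon \radj{w}_\cS(F) \to \radj{w}_\cC(F)$ for $F \in \cC$, reduce the claim that it is an equivalence to a generating family of $\cC$ by a stability argument, and then verify that case by direct computation using the projection formula together with the local-cohomology presentation of $\QC_\cS(\cX)$-supported objects.

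To construct $\theta$, the canonical maps $\cO_\cX \to \cO_{\cS^{(n)}}$ and $\tau_{\leq d} F \to F$ combine to a natural morphism $\inner{\RHom}_\cX^\otimes(\cO_{\cS^{(n)}}, \tau_{\leq d} F) \to F$ whose source lies in $\cC$ by \Cref{L:prop_category_C}. Applying the exact functor $\radj{w}_\cC$ on $\cC$ and then taking $\lim_d \colim_n$, the source becomes $\radj{w}_\cS(F)$ by \Cref{D:truncation_supports}, while the target collapses to $\radj{w}_\cC F$: since $\radj{w}_\cC$ is right $t$-exact it commutes with $\tau_{\leq d}$, so $\lim_d \radj{w}_\cC \tau_{\leq d} F \simeq \lim_d \tau_{\leq d} \radj{w}_\cC F \simeq \radj{w}_\cC F$ by left completeness of the $t$-structure on $\QC(\cX)$. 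Because both $\radj{w}_\cS$ and $\radj{w}_\cC$ are exact, the subcategory of $\cC$ on which $\theta_F$ is an equivalence is stable, and since $\cC$ is generated by $i_\ast(\QC(\cS))$ as a stable subcategory it suffices to treat $F = i_\ast E$ for $E \in \QC(\cS)$.

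For $F = i_\ast E$, the verification combines the following ingredients: the $t$-exactness of $i_\ast$; the projection formula $\inner{\RHom}^\otimes_\cX(\cO_{\cS^{(n)}}, i_\ast G) \simeq i_\ast \inner{\RHom}^\otimes_\cS(i^\ast \cO_{\cS^{(n)}}, G)$; the identification $\radj{w}_\cC \circ i_\ast \simeq i_\ast \circ \radj{w}$, which follows from the uniqueness of the semi-orthogonal decomposition of \Cref{L:supports_truncation_simple} because $i_\ast$ carries $\QC(\cS)^{\geq w}$ and $\QC(\cS)^{<w}$ into $\cC^{\geq w}$ and $\cC^{<w}$ respectively; the cocontinuity of $i_\ast$ and of $\radj{w}$ (the latter by \Cref{P:baric_stratum}(1)); and the identification $\colim_n \inner{\RHom}^\otimes_\cS(i^\ast \cO_{\cS^{(n)}}, \tau_{\leq d} E) \simeq \tau_{\leq d} E$, obtained by applying \Cref{T:deformation_to_normal_cone} to the bounded-above, $\cS$-supported object $i_\ast \tau_{\leq d} E$ together with the full faithfulness of $i_\ast$. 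Substituting these into the formula of \Cref{D:truncation_supports} and invoking left completeness once more yields
\[
\radj{w}_\cS(i_\ast E) \simeq \lim_d i_\ast \tau_{\leq d} \radj{w}(E) \simeq i_\ast \radj{w}(E) \simeq \radj{w}_\cC(i_\ast E),
\]
and the analogous statement for $\ladj{w}$ follows by passing to cofibers in the defining sequences $\radj{w}_\cS \to \mathrm{id} \to \ladj{w}$ and $\radj{w}_\cC \to \mathrm{id}_\cC \to \ladj{w}_\cC$.

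The main technical point is to ensure that the equivalence produced by this chain of identifications coincides with the natural transformation $\theta_{i_\ast E}$ constructed in the first step. This is a naturality check rather than substantive new input: each canonical identification above is natural in $n$, $d$, and $E$, so tracking these naturalities through $\lim_d \colim_n$ shows that the composite identification agrees on the nose with the comparison map $\theta_{i_\ast E}$.
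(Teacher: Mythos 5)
Your proof is correct and follows essentially the same approach as the paper: construct the comparison map by precomposing with $\cO_\cX \to \cO_{\cS^{(n)}}$ and $\tau_{\leq d}(-) \to (-)$, reduce by exactness to $F = i_\ast(E)$, and conclude using the projection formula, the identification $\radj{w}_\cC \circ i_\ast \simeq i_\ast \circ \radj{w}$, cocontinuity, and the $R\Gamma_\cS$-presentation from \Cref{T:deformation_to_normal_cone}. One small imprecision: $\radj{w}_\cC$ being right $t$-exact does not literally mean it commutes with $\tau_{\leq d}$; what holds (and what you actually need) is that $\fib(\tau_{\leq d+1}F \to \tau_{\leq d}F)$ lies in $\cC_{\geq d+1}$, so the truncations of the inverse system $\radj{w}_\cC(\tau_{\leq d}F)$ are eventually constant and the limit agrees with $\radj{w}_\cC(F)$ by left completeness.
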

\begin{proof}
Precomposing with the map $\cO_\cX \to \cO_{\cS^{(n)}}$ defines a natural transformation of functors $\colim_n \radj{w}_\cC \inner{\RHom}_\cX^\otimes(\cO_{\cS^{(n)}},-) \to \radj{w}_\cC(-)$, and the right exactness of $\radj{w}_\cC$ from \Cref{L:supports_truncation_simple} allows us to pre-compose with $\tau_{\leq d}(-)$ and pass to the limit to define a natural transformation of functors on $\cC$
\[
\radj{w}_\cS(-) \to \varprojlim {}_d \left( \radj{w}_\cC(\tau_{\leq d}(-)) \right) \cong \radj{w}_\cC(-),
\]
where we have used the right $t$-exactness of $\radj{w}_\cC$ to deduce that the inverse limit on the right agrees with $\radj{w}_\cC(-)$ after truncation at any degree.

To show that this is an isomorphism, it suffices to show that for any $E \in \QC(\cS)_{<\infty}$, the natural transformation
\[
\colim_n \radj{w}_\cC \inner{\RHom}_\cX^\otimes(\cO_{\cS^{(n)}},i_\ast(E)) \to \radj{w}_\cC(i_\ast(E))
\]
is an isomorphism. By adjunction, the isomorphism $\radj{w}_\cC(i_\ast(-)) \cong i_\ast(\radj{w}(-))$, and the fact that $\radj{w}$ commutes with filtered colimits in $\QC(\cS)$, it suffices to show that
\[
\colim_n \inner{\RHom}_\cS^\otimes(i^\ast (\cO_{\cS^{(n)}}),E) \to E
\]
is an isomorphism. It suffices to verify this map is an isomorphism after pushing forward to $\cX$ once more, in which case it follows from the formula for $R\Gamma_\cS(i_\ast(E))$ in \Cref{T:deformation_to_normal_cone} and the fact that $R\Gamma_\cS(i_\ast(E)) \cong i_\ast(E)$.

\end{proof}

\begin{proof}[Proof of \Cref{P:baric_decomp_supports}]

We will assume that $\cX$ is quasi-compact. To deduce the general case from the quasi-compact case, one can use \cite{AHLH}*{Lem.~6.8} to choose a Zariski-cover of $\cX$ by quasi-compact open substacks $\cU \subset \cX$ such that $\cS$ induces a $\Theta$-stratum in $\cU$. Then, we use the theory of $\infty$-categorical descent -- we discuss this in more detail in the proof of \Cref{T:derived_Kirwan_surjectivity} below (see \Cref{L:sod_limits}) -- to deduce the statements for $\cS \hookrightarrow \cX$ from the corresponding statements for $\cS \cap \cU \hookrightarrow \cU$. In addition, we will postpone the proof of part (6) until the next section, because it relies on the local structure theorem \Cref{T:local_structure_stratum}.

Note that if $F \in \QC(\cX)_{<\infty}$, then the inverse limit in \eqref{E:def_truncation_supports} is eventually constant and thus is given by the simpler formula
\[
\radj{w}_\cS(F) = \colim_n(\radj{w}_\cC(\inner{\RHom}_\cX^\otimes(\cO_{\cS^{(n)}},F))).
\]
We first observe that the resulting functor $\radj{w}_\cS : \QC(\cX)_{<\infty} \to \QC(\cX)$ satisfies parts (1) through (4) of the proposition, and from the defining cofiber sequence for $\ladj{w}$ it suffices to verify the claims for $\radj{w}_\cS$ only, which we do in order:
\begin{enumerate}
\item \Cref{L:construct_trunc1} implies that $\radj{w}_\cS$ commutes with filtered colimits in $\QC(\cX)_{\leq k}$ for any $k$, which is a modified version of (1).
\item This follows from \Cref{L:construct_trunc2} and the definition of $\radj{w}_\cC$ in \Cref{L:supports_truncation_simple}. \item This is an immediate consequence of part (4), and the fact that there is some $d \leq 0$ such that for $F \in \QC(\cX)_{[0,k]}$, $R\Gamma_\cS(F) \in \QC_\cS(\cX)_{[d,k]}$.
\item It is clear from the formula that $\radj{w}_\cS(R\Gamma_\cS(F)) \to \radj{w}_\cS(F)$ is an isomorphism, and $\radj{w}_\cS(F) \in \QC_\cS(\cX)$ for any $F \in \QC(\cX)$, which implies the first part of (4). To show that $\radj{w}_\cS$ is right $t$-exact on $\QC_\cS(\cX)_{<\infty}$, we observe that any complex in $F \in \QC_\cS(\cX)_{<\infty}$ is a uniformly homologically bounded filtered colimit of objects in $\DCoh_\cS(\cX) \subset \cC$, so the right $t$-exactness follows from the right $t$-exactness of $\radj{w}_\cC$ in \Cref{L:supports_truncation_simple} and \Cref{L:construct_trunc2}. Finally, to see that $\radj{w}_\cS$ maps $\DCoh_\cS(\cX)$ to $\APerf_\cS(\cX)$, it suffices, by iterated extension, to show this for objects of the form $i_\ast(E)$ for $E \in \Coh(\cS)$, so the claim follows from (2) and part (3) of \Cref{P:baric_stratum}.
\end{enumerate}

We now use the fact that $\QC(\cX)$ is the left $t$-completion of $\QC(\cX)_{<\infty}$. It follows from this that for any stable $\infty$-category $\cC$ with a left complete $t$-structure and any functor $F : \QC(\cX)_{<\infty} \to \cC$ that is right exact up to a shift and commutes with uniformly homologically bounded above filtered colimits extends uniquely to a cocontinuous functor $F : \QC(\cX) \to \cC$ that is right $t$-exact up to a shift, given by the formula $\lim_d F(\tau_{\leq d}(-))$. We have verified that the functor $\colim_n \radj{w}_\cC \inner{\RHom}_\cX^\otimes(\cO_{\cS^{(n)}},-)$ is right $t$-exact up to a shift and commutes with uniformly homologically bounded filtered colimits, and the formula for $\radj{w}_\cS(-)$ in \Cref{D:truncation_supports} is precisely the unique extension of this functor from $\QC(\cX)_{<\infty}$ to $\QC(\cX)$. The properties of the functor $\radj{w}_\cS$ in parts (1) through (4) are straightforward consequences of this observation and the corresponding claims for the functor on $\QC(\cX)_{<\infty}$.

To establish the semi-orthogonal decomposition, it suffices to show that the essential image of $\radj{w}_\cS$ and $\ladj{w}$ are semi-orthogonal. One can then use the properties of $\radj{w}_\cS$ that we have already established to verify that $\QC_\cS(\cX)^{\geq w}$ and $\QC(\cX)^{<w}$ are the essential image of $\radj{w}_\cS$ and $\ladj{w}$ respectively.  We will verify the following two things for any $E \in \QC(\cX)$:
\begin{enumerate}[label=\roman*)]
\item the canonical map $\radj{w}_\cS (E) \to E$ becomes an isomorphism after applying $\radj{w}_\cS(-)$; and
\item for $F \cong \radj{w}_\cS(E)$, the canonical map $\radj{w}_\cS(F) \to F$ is an isomorphism.
\end{enumerate}
This suffices so show the semi-orthogonality of the essential images of $\radj{w}_\cS$ and $\ladj{w}$ because (i) implies that $\radj{w}_\cS(\ladj{w}(E)) = 0$, and then (ii) implies that any map $F \to G$ with $F$ in the essential image of $\radj{w}_\cS$ and $G$ in the essential image of $\ladj{w}$ must factor through $\radj{w}(G) = 0$.

To prove claim (i) of the previous paragraph, we observe that the canonical map $\radj{w}_\cS(E) \to E$ factors
\[
\xymatrix@C=5pt{\lim_d \colim_n \radj{w}_\cC \inner{\RHom}^\otimes_\cX(\cO_{\cS^{(n)}},\tau_{\leq d} E) \ar[rr] \ar[rd] & & \lim_d \tau_{\leq d}(E) \\
& \lim_d \colim_n \inner{\RHom}^\otimes_\cX(\cO_{\cS^{(n)}},\tau_{\leq d} E) \cong R\Gamma_\cS(E) \ar[ur] & },
\]
where the first map is induced from the canonical map on each factor arising from the semi-orthogonal decomposition of $\cC$. Observing that $\radj{w}_\cS(-)$ commutes with both the limit, because it is right $t$-exact up to a shift, and the colimit, the fact that the first map becomes an isomorphism after applying $\radj{w}_\cS(-)$ follows from \Cref{L:construct_trunc2}. The fact that the second map becomes an isomorphism follows from part (4) above.

To prove claim (ii), we observe that the essential image of $\radj{w}_\cS$ is generated by $i_\ast(\QC(\cS)^{\geq w})$ under filtered colimits and limits of systems whose truncations are eventually constant. Therefore, because $\radj{w}_\cS$ preserves filtered colimits and is right exact up to a shift, it suffices to prove the claim for these objects, which follows from \Cref{L:supports_truncation_simple}.

\medskip
\noindent \textit{Proof of uniqueness of the baric structure:}
\medskip

Any semiorthogonal decomposition $\QC(\cX) = \langle \cB, \cA \rangle$ with $\cA \subset \QC_\cS(\cX)$ induces a semiorthogonal decomposition $\QC_\cS(\cX) = \langle \QC_\cS(\cX) \cap \cB, \cA \rangle$. Conversely the fact that $\QC_\cS(\cX) \subset \QC(\cX)$ admits a right adjoint $R\Gamma_\cS$ implies that any semiorthogonal decomposition $\QC_\cS(\cX) = \langle \cB, \cA \rangle$ induces a semiorthogonal decomposition $\QC(\cX) = \langle \cA^{\perp},\cA \rangle$. So the baric decomposition of $\QC(\cX)$ is uniquely determined by its restriction to $\QC_\cS(\cX)$. We have already observed in \Cref{L:supports_truncation_simple} that part (2) of the proposition uniquely determines the baric structure on $\cC \subset \QC_\cS(\cX)$, and the hypothesis that $\radj{w}$ commutes with filtered colimits and has uniformly bounded homological amplitude forces it to be given by the formula of \Cref{D:truncation_supports}.

\medskip
\noindent \textit{Proof of (5):}
\medskip

First note that pullback along $\cS' \to \cS$ commutes with the baric truncation functors by part (4) of \Cref{P:baric_stratum}. The hypothesis that $\cS' \cong \cS \times_\cX \cX'$ allows us to apply the base change formula to deduce that
\[
p^\ast(i_\ast(\QC(\cS)^{\geq w})) = i'_\ast((p')^\ast(\QC(\cS)^{\geq w})) \subset \QC_{\cS'}(\cX')^{\geq w},
\]
and likewise for $\QC(\cS)^{<w}$, where $p' : \cS' \to \cS$ and $i' : \cS' \to \cX$ are the base change of $p$ and $i$ respectively. Because $\QC_\cS(\cX)^{\geq w}$ is generated by $i_\ast(\QC(\cS)^{\geq w})$ under extensions, filtered colimits, and limits of towers with eventually constant truncations, and $p^\ast$ preserves these constructions, it follows that $p^\ast(\QC_\cS(\cX)^{\geq w}) \subset \QC_{\cS'}(\cX')^{\geq w}$. The same argument shows that $p^\ast(\QC_\cS(\cX)^{<w}) \subset \QC_{\cS'}(\cX')^{<w}$. 

This shows that for any $F \in \QC_\cS(\cX)$, the pullback of the fiber sequence $\radj{w}(F) \to F \to \ladj{w}(F)$ is the canonical fiber sequence coming from the baric structure on $\QC_{\cS'}(\cX')$. In particular $p^\ast \circ \radj{w}_\cS \cong \radj{w}_{\cS'} \circ p^\ast$ as functors $\QC_\cS(\cX) \to \QC_{\cS'}(\cX')$. For a general $F \in \QC(\cX)$ we have a canonical isomorphism $\radj{w}_\cS(F) \simeq \radj{w}_\cS(R\Gamma_\cS(F))$, and the same is true for $p^\ast(F)$, so in fact this shows that $p^\ast \circ \radj{w}_\cS \cong \radj{w}_{\cS'} \circ p^\ast$ as functors $\QC(\cX) \to \QC(\cX')$ as well. This in turn implies that $p^\ast \circ \ladj{w} \cong \ladj{w} \circ p^\ast$.

\end{proof}

\subsubsection{Simplifications in the eventually coconnective case}
\label{S:ev_coconn}

If $\cX$ is eventually coconnective, i.e., $\cO_\cX \in \QC(\cX)_{\leq d}$ for some $d$, then there is an alternative formula for $\radj{w}_\cS$ that is simpler than the one in \Cref{D:truncation_supports}. In this case it is possible to find a directed system $P_0 \to P_1 \to \cdots$ of objects in the category $\cC$ of \Cref{D:cat_C} with $\colim_{n \geq 0} P_n \simeq R\Gamma_\cS(\cO_\cX)$. For instance, one can use the canonical complexes $P_n = \inner{\RHom}_\cX^{\otimes}(\cO_{\cS^{(n)}},\cO_\cX)$. Alternatively, because $R\Gamma_\cS(\cO_\cX) \in \QC(\cX)_{\leq d}$ as well, one can write $R\Gamma_\cS(\cO_\cX)$ as a filtered colimit of complexes $P_n \in \DCoh_\cS(\cX)$.

Given such a choice of directed system $\{P_n\}$, it follows a posteriori from the properties of $\radj{w}_\cS$ and \Cref{L:prop_category_C} that for any $F \in \QC(\cX)$,
\begin{equation} \label{E:alternate_radj}
\radj{w}_\cS (F) \cong \radj{w}_\cS(R\Gamma_\cS(\cO_\cX) \otimes F) \cong \colim_n \radj{w}_\cC( P_n \otimes F ).
\end{equation}
In fact, if we were willing to restrict to the case where $\cX$ was eventually coconnective, we could have taken this formula as our definition and derived all of the properties from it.

One consequence of the formula \eqref{E:alternate_radj} is that $\QC_\cS(\cX)^{\geq w}$ is already generated by $i_\ast(\QC(\cS)^{\geq w})$ under extensions and filtered colimits -- it is not necessary to add limits of any towers in \Cref{D:subcategories_qcoh}. It follows that we have an alternative characterization
\[
\QC(\cX)^{<w} = \{F \in \QC(\cX)| i^{\QC,!}(F) \in \QC(\cS)^{<w}\}.
\]


\section{The main theorem: general case}

In this section, we refine the local cohomology theory of the previous section to give a semi-orthogonal decomposition of the category of almost perfect complexes $\APerf(\cX)$ for a stack with a derived $\Theta$-stratification. We continue to work over a fixed base, an algebraic locally Noetherian derived stack $\cB$.

In the previous section, we used purely ``intrinsic'' methods wherever possible, the stronger results of this section will use a local structure theorem, \Cref{T:local_structure_stratum}, to reduce certain claims to explicit calculations in the case of a quotient stack.

\subsection{The local structure theorem, and vanishing of local cohomology}
\label{S:local_structure}

To build on \Cref{P:baric_decomp_supports} and to complete the proof of part (6), we will use a result on the local-quotient structure of a $\Theta$-stratum.

\begin{thm} \label{T:local_structure_stratum}
Let $\cX$ be an algebraic derived stack almost of finite presentation and with affine diagonal over a noetherian derived algebraic space, and let $\ev_1 : \cS \subset \Filt(\cX) \to \cX$ be a $\Theta$-stratum. Then there is an affine derived scheme $X = \Spec(A)$ with a $\Gm$-action and a smooth surjective affine morphism $p : X/\Gm \to \cX$ that induces (\Cref{D:induced_stratum}) the tautological $\Theta$-stratum, i.e., such that
\[
\cS' := \Filt(p)^{-1}(\cS) \subset \Filt(X/\Gm) \to X/\Gm
\]
can be identified with the closed immersion $\Spec(A/I_+)/\Gm \to \Spec(A)/\Gm$ of \eqref{E:abelian_comparison}, and $\ev_1(|\cS'|) = p^{-1}(\ev_1(|\cS|)) \subset |X/\Gm|$.

\end{thm}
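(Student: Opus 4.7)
The plan is to build the cover $p \colon X/\Gm \to \cX$ as a disjoint union of local charts: a smooth affine cover of the open complement $\cU := \cX \setminus \ev_1(|\cS|)$ by an affine derived scheme equipped with the trivial $\Gm$-action (so that $I_+ = 0$ there and that piece contributes nothing to $\cS'$), together with carefully chosen local charts around each point of $\ev_1(|\cS|)$ that realize the nonempty part of $\cS'$.

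To construct the chart near a given $s \in |\cS|$, I would first pass to the center. The weak $\Theta$-action on $\cS$ (\Cref{P:theta_action_strata}) specializes $s$ to a point $z \in |\cZ|$, and $z$ corresponds geometrically to a pair $(\ev_1(s), \lambda)$, where $\lambda \colon \Gm_\kappa \to \Aut(\ev_1(s))$ is a cocharacter of the derived automorphism group of the image point in $\cX$ (with $\kappa$ the residue field at $z$). Then I would apply a derived generalization of the Alper-Hall-Rydh local structure theorem for stacks (cf.~\cite{AHLH}) to produce a smooth affine morphism $p_z \colon [\Spec(A)/\Gm] \to \cX$ through $z$, where the $\Gm$-action on $A$ is the one induced by the cocharacter $\lambda$.

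With such a chart in hand, the identification of $\cS'$ with the tautological stratum should follow formally. By \Cref{lem:filt_abelian_quotient}, $\Filt([\Spec(A)/\Gm])$ decomposes as a disjoint union of closed substacks indexed by cocharacters $\lambda \in \bZ$ of $\Gm$, with the component corresponding to the tautological cocharacter being $\Spec(A/I_+)/\Gm$, so $\cS' := \Filt(p_z)^{-1}(\cS)$ is a union of such components. Since the $\Gm$-action on $A$ was chosen so that the tautological cocharacter matches the one picking out $\cS$ at $z$, no other cocharacter should give a filtration landing in $\cS$ near $z$; after shrinking $A$ to a $\Gm$-invariant affine open if necessary to enforce the subset condition of \Cref{D:induced_stratum}, I obtain $\cS' = \Spec(A/I_+)/\Gm$. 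Taking the disjoint union of such $p_z$ over a collection of centers covering $|\cS|$, together with the cover of the complement, then yields the desired smooth surjective affine $p$.

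The main obstacle is the construction of the local chart $[\Spec(A)/\Gm] \to \cX$ itself: one needs a chart that is simultaneously smooth, affine, and equipped with the correct $\Gm$-action induced by the prescribed $\lambda$, working in the derived (not merely classical) setting. The key inputs will be the noetherian and affine-diagonal hypotheses on $\cX$ together with a derived Luna slice theorem, supplemented by the rigidity of weak $\Theta$-actions established in \Cref{P:theta_action_is_filt} and \Cref{C:intrinsic_theta_action}, which should ensure that the derived thickening of a suitable classical slice is essentially unique. Deformation-theoretically, one expects to relate the local structure of $\cX$ at $z$ to the weight-space decomposition of $\bL_\cX|_z$ under $\lambda$, and the positivity condition $\bL_{\cS/\cX} \in \QC(\cS)^{\geq 1}$ from \Cref{L:relative_cotangent_complex} should then force the derived slice to be of the expected form.
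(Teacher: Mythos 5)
Your proposal has two genuine gaps, one small and one large.

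The small one is a concrete error in the chart over $\cU := \cX \setminus \ev_1(|\cS|)$. You propose to put the \emph{trivial} $\Gm$-action on an affine cover of $\cU$, claiming that $I_+ = 0$ there and hence ``that piece contributes nothing to $\cS'$.'' But $I_+ = 0$ gives $\Spec(A/I_+)/\Gm = \Spec(A)/\Gm$, i.e.\ the tautological stratum is the \emph{entire} chart, whereas $\cS'$ must be empty over $\cU$ (since $\cS$ lives over $\ev_1(|\cS|)$). These are inconsistent, so the trivial $\Gm$-action cannot satisfy the conclusion of the theorem on the complement. (It is fixable --- for instance, $\Spec(B) \times \Gm$ with $\Gm$ acting freely on the second factor has $I_+$ containing a unit, hence $\Spec(A/I_+) = \emptyset$ --- but what you wrote is wrong.)

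The large one is that the derived structure is never actually handled, and you acknowledge as much by calling the construction of the derived chart ``the main obstacle.'' There is no ``derived generalization of the Alper--Hall--Rydh local structure theorem'' you can simply cite here; producing it is essentially the content of the theorem. The paper's proof proceeds quite differently: it invokes the \emph{classical} result \cite{AHLH}*{Lem.~6.11} once, to get a single global smooth surjective affine $\Gm$-chart for $\cX^{\rm cl} = \tau_{\leq 0}\cX$ that induces the tautological stratum (no gluing of local pieces needed), and then lifts this chart inductively along the Postnikov tower $\tau_{\leq n}\cX \hookrightarrow \tau_{\leq n+1}\cX$. The inductive step is a concrete deformation-theoretic claim: a smooth morphism $\pi : \cY \to \cX$ with $\cY$ cohomologically affine extends along any square-zero extension $\cX \hookrightarrow \cX'$ by some $M$, because the obstruction lives in $\Hom_\cY(\bL_{\cY/\cX}[-1], \pi^\ast M[1])$, which vanishes since $\bL_{\cY/\cX}$ is a locally free sheaf (smoothness) and $\cY$ is cohomologically affine. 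Your appeal to the positivity condition $\bL_{\cS/\cX} \in \QC(\cS)^{\geq 1}$ and the rigidity of weak $\Theta$-actions does not replace this; those inputs are used elsewhere, but the lifting mechanism is the crucial ingredient your proposal is missing.
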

\begin{proof}The analogous claim in the classical context was established in \cite{AHLH}*{Lem.~6.11}. In the derived context, it suffices, by \Cref{L:classical_stratum}, to find a smooth surjective morphism $\Spec(A)/\Gm \to \cX$ whose base change along the closed immersion $\cX^{\rm cl} \hookrightarrow \cX$ induces the tautological stratum in $\Spec(\pi_0(A)/\bG_m)$. For each $n$, we will construct a smooth morphism $X_n / \Gm \to \tau_{\leq n} \cX$ satisfying this condition, where $\tau_{\leq n} \cX$ is the $n^{th}$ truncation of $\cX$, along with $(n+1)$-connective closed embeddings $X_n / \Gm \hookrightarrow X_{n+1}/\Gm$ which induce isomorphisms
\[
X_n / \Gm \cong (X_{n+1} / \Gm) \times_{\tau_{\leq n+1} \cX} \tau_{\leq n}\cX.
\]
It follows that the map $X/\Gm := \colim_n X_n / \Gm \to \cX$ will satisfy the conditions of the theorem. Note also that the stated properties of the algebraic derived stack $X_n/\Gm$ imply that it is cohomologically affine, i.e., quasi-coherent sheaves have vanishing higher cohomology (and thus so do connective quasi-coherent complexes). The base case $\tau_{\leq 0} \cX = \cX^{\rm cl}$ is \cite{AHLH}*{Lem.~6.11}. The stacks $\cX_n$ can now be constructed inductively using the following, which is a generalization of the argument of \cite{HAGII}*{Lem.~C.0.11}:

\medskip
\noindent \textit{Claim: If $\pi : \cY \to \cX$ is a smooth morphism of algebraic stacks with $\cY$ cohomologically affine, and $\cX \hookrightarrow \cX'$ is a derived square-zero extension by $M \in \APerf(\cX)_{\leq 0}$, then there is a square-zero extension $\cY \hookrightarrow \cY'$ over $\cX'$ by $\pi^\ast(M)$ such that $\cY \to \cY' \times_{\cX'} \cX$ is an isomorphism.}
\medskip

The square-zero extension is classified by a morphism $\eta : \bL_\cX \to M[1]$, and the claim amounts to verifying that $\pi^\ast(\eta) : \pi^\ast \bL_\cX \to \pi^\ast(M)[1]$ factors through $D\pi : \pi^\ast \bL_\cX \to \bL_\cY$. From the exact triangle $\pi^\ast \bL_\cX \to \bL_\cY \to \bL_{\cY/\cX}$ we have an exact sequence of groups
\[
\Hom_\cY(\bL_\cY,\pi^\ast(M)[1]) \to \Hom_\cY(\pi^\ast(\bL_\cX),\pi^\ast(M)[1]) \to \Hom_\cY(\bL_{\cY/\cX}[-1],\pi^\ast(M)[1]).
\]
The third term in this sequence vanishes because $\bL_{\cY/\cX}$ is a locally free sheaf, and $\cY$ is cohomologically affine. The choice of lift of $\pi^\ast(\eta) \in \Hom_\cY(\pi^\ast(\bL_\cX),\pi^\ast(M)[1])$ to $\Hom_\cY(\bL_\cY,\pi^\ast(M)[1])$ classifies a square-zero extension $\cY \hookrightarrow \cY'$ over $\cX'$. The pullback of the canonical fiber sequence $M \to \cO_{\cX'} \to \cO_\cX$ is the corresponding fiber sequence $\pi^\ast(M) \to \cO_{\cY'} \to \cO_\cY$, which implies that $\cY \cong \cY' \times_{\cX'} \cX$.
\end{proof}

\begin{rem}
\Cref{T:local_structure_stratum} roughly says that any $\Theta$-stratum in a finite type algebraic stack with affine diagonal is a ``stacky BB-stratum'' in the sense of \cite{ballard_wall}.
\end{rem}

For the remainder of this section, we fix a derived algebraic stack $\cX$ almost of finite presentation and with affine diagonal over a noetherian base algebraic space, and fix a $\Theta$-stratum $\cS \hookrightarrow \cX$. We consider an affine derived scheme $X = \Spec(A)$ with $\Gm$-action which admits a smooth surjective map 
\[
p : X / \Gm \to \cX
\]
as in \Cref{T:local_structure_stratum}. Our first applications of \Cref{T:local_structure_stratum} will be to complete the proofs of two earlier claims for which we could not find an ``intrinsic'' argument.

\begin{proof}[Proof of part (6) of \Cref{P:baric_decomp_supports}]
We may replace the condition that $F \in \QC(\cX)^{<w}$ with the equivalent condition that $\radj{w}_\cS(F) = 0$. Because $\radj{w}_\cS(F) \simeq \radj{w}_\cS(R\Gamma_\cS(F))$, it suffices to consider only $F \in \QC_\cS(\cX)$, and the claim is that $F \in \QC_\cS(\cX)^{<w}$ if and only if $H_n(F) \in \QC_\cS(\cX)^{<w}$ for all $n$. Both conditions are equivalent to the corresponding condition after applying $p^\ast$ by \Cref{P:baric_decomp_supports} part (5), so it suffices to prove the claim when $\cX = X/\Gm$. Note that an object lies in $\QC(B\Gm)^{<w}$ if and only if its homology sheaves do, and the pushforward along the map $\pi : X/\Gm \to B\Gm$ is $t$-exact, so the statement of the lemma immediately follows from the stronger claim:

\medskip
\noindent \textit{Claim: A complex $F \in \QC_{S/\Gm} (X/\Gm)$ lies in $\QC(X/\Gm)^{<w}$ if and only if $\pi_\ast(F) \in \QC(B\Gm)^{<w}$.} \label{C:claim_criterion}
\medskip

First assume $\pi_\ast(F) \in \QC(B\Gm)^{<w}$. Then $\pi_\ast(\tau_{\leq d}(F)) \in \QC(B\Gm)^{<w}$ as well, and it suffices to show that $\tau_{\leq d} F \in \QC(X/\Gm)^{<w}$ for all $d$, so we may assume $F \in \QC(X/\Gm)_{\leq d}$. In this case it suffices to show that $F$ is right orthogonal to $i_\ast(\QC(S/\Gm)^{\geq w})$ (see \Cref{R:left_category}).

The category $\QC(S/\Gm)^{\geq w}$ is compactly generated by the twists of the structure sheaf $\cO_{S/\Gm} \langle n \rangle$ by a character $n$ of $\Gm$ with $n \leq w$ (so that the fiber weight along the center $X^{\Gm}$ is $\geq w$). It therefore suffices to show that
\[
\RHom_{X/\Gm}(i_\ast(\cO_{S/\Gm}\langle n \rangle), F) = 0
\]
for $n \leq w$. The complex $\cO_{S/\Gm} \langle n \rangle$ has an explicit Koszul resolution whose terms are direct sums of objects of the form $\cO_{X/\Gm} \langle m \rangle$ with $m \leq n$. It therefore suffices to show that
\[
0 = \RHom_{X/\Gm}(\cO_{X/\Gm}\langle n \rangle , F) \simeq \RHom_{X/\Gm}(\pi^\ast(\cO_{B\Gm}\langle n \rangle), F)
\]
for $n \leq w$. By adjunction this is equivalent to $\pi_\ast(F) \in \QC(B\Gm)^{<w}$.

Conversely, assume that $F \in \QC_\cS(\cX)^{<w}$. Then $F$ lies in the smallest stable subcategory of $\QC_\cS(\cX)$ containing $i_\ast(\QC(\cS)^{<w})$ and closed under extensions, filtered colimits, and limits of towers whose truncations are eventually constant. It therefore suffices to show that for $E \in \QC(\cS)^{<w}$, $\pi_\ast(E) \in \QC(B\Gm)^{<w}$. This follows from part (4) of \Cref{P:baric_stratum} and the fact that $\pi : S/\Gm \to B\Gm$ is equivariant for the natural weak $\Theta$-actions on both stacks.
\end{proof}

\begin{proof}[Proof of \Cref{L:center_pullback}]

Part (4) of \Cref{P:baric_stratum} implies that $\ev_0^\ast$ maps $\QC(\cZ)^{w}$ to $\QC(\cS)^{w}$ and $\radj{w}((\ev_0)_\ast(-))$ maps $\QC(\cS)^w$ to $\QC(\cZ)^w$. Because both $\radj{w}(-)$ and $(\ev_0)_\ast(-)$ are right adjoints, one can check that $\radj{w}((\ev_0)_\ast(-))$ is a right adjoint to $\ev_0^\ast : \QC(\cZ)^w \to \QC(\cS)^w$. So it suffices to show that the counit and unit of adjunction,
\[
F \to \radj{w}((\ev_0)_\ast(\ev_0^\ast(F))) \text{ and } \ev_0^\ast(\radj{w}((\ev_0)_\ast(G))) \to G,
\]
are isomorphisms for any $F \in \QC(\cZ)^w$ and $G \in \QC(\cS)^w$.

We now apply \Cref{T:local_structure_stratum} to the special situation where $\cS = \cX$, which provides a smooth surjective map $\cS_0 := \Spec(A_0) / \Gm \to \cS$ such that $A$ is a simplicial commutative algebra with $\Gm$-action encoded by a \emph{non-positive} grading, and such that the lift $\cS \subset \Filt(\cS)$ encoding the weak $\Theta$-action pulls back to the tautological $\Theta$-stratum in $\cS_0$.

Consider the Cech nerve of the map $\cS_0 \to \cS$, which is a simplicial derived stack whose $n^{th}$ level has the form $\cS_n = \Spec(A_n)/\Gm$. Furthermore, \Cref{L:induced_stratum_base_change} implies that each of these stacks is identical to its tautological $\Theta$-stratum, so each $A_n$ is non-positively graded. Note also that $\iMap(B\Gm,-)$ commutes with fiber products, which implies that if $\cZ_0 = \Spec(A^{\Gm})/\Gm$ is the center of $\cS_0$, then the Cech nerve of the smooth surjective map $\cZ_0 \to \cZ$ is level-wise identified with the center of each $\Theta$-stratum $\cZ_n \to \cS_n$. By \Cref{P:baric_stratum}, the baric truncation functors commute with pullback along $\Theta$-equivariant maps, so using the base change formula and smooth descent it suffices to very that the unit and counit of adjunction above are isomorphisms on each level.

In the case where $\cS = \Spec(A)/\Gm$ for a non-positively graded simplicial commutative algebra $A$ and $\cZ = \Spec(A/I_-)/\Gm$, $\QC(\cZ)^w \cong \QC(A/I_-)$ is compactly generated by $\cO_\cZ \langle -w \rangle$. Likewise $\QC(\cS)$ is compactly generated by objects of the form $\cO_{\cS}\langle a \rangle$ for $a \in \bZ$. It follows from Nakayama's lemma and part (4) of \Cref{P:baric_stratum} that $\radj{w}(\ladj{w+1}(\cO_\cS\langle a \rangle)) = 0$ for any $a \neq -w$, so $\cO_\cS\langle -w \rangle$ generates $\QC(\cS)^w$.

We have therefore reduced the claim to showing that the canonical maps are isomorphisms
\[
A \otimes_{A/I_-} \radj{w}(A\langle -w \rangle) \to A \langle -w \rangle \text{ and } (A/I_-)\langle -w \rangle  \to \radj{w}(A\langle -w \rangle),
\]
where in both cases $\radj{w}$ denotes the baric truncation of the underlying object in $\QC(\cZ)$. If $A = \bigoplus_{d \leq 0} A^d$ is the weight decomposition of $A$, then the map $A^0 \to A / I_-$ is a weak-equivalence of simplicial commutative algebras by construction. This implies that the two maps above are isomorphisms.

\end{proof}

Finally, we establish a result on local cohomology generalizing \cite{quantization}*{Prop.~2.6}.

\begin{prop} [Quantization commutes with reduction] \label{P:quantization_commutes_with_reduction}
Let $\cX$ be an algebraic derived stack that is locally almost finitely presented with affine diagonal over a locally noetherian base stack, and let $\cS \hookrightarrow \cX$ be a $\Theta$-stratum. If $G \in \QC(\cX)^{<w}$ and $F \in \APerf(\cX)$ is such that $i^\ast(F) \in \QC(\cS)^{\geq w}$, then the restriction map
$$\RHom_\cX(F,G) \to \RHom_{\cX^{\rm ss}}(F|_{\cX^{\rm ss}},G|_{\cX^{\rm ss}})$$
is an equivalence.
\end{prop}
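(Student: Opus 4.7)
\textit{Plan.} Let $j: \cX\setminus \ev_1(\cS) \hookrightarrow \cX$ denote the open complement, and recall the canonical fiber sequence $R\Gamma_\cS(G) \to G \to j_\ast j^\ast G$. Applying $\RHom_\cX(F,-)$ identifies the fiber of the restriction map in the statement with $\RHom_\cX(F, R\Gamma_\cS G)$, so the proposition reduces to the vanishing of this last complex. By the definition of $\QC(\cX)^{<w}$ (\Cref{D:subcategories_qcoh}) and the hypothesis on $G$, the object $H := R\Gamma_\cS G$ lies in $\QC_\cS(\cX)^{<w}$.

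The plan is now to use the deformation-to-the-normal-cone description of local cohomology (\Cref{T:deformation_to_normal_cone}): since $H \in \QC_\cS(\cX)$, we have $H \simeq R\Gamma_\cS H \simeq \lim_d \colim_n \inner{\RHom}_\cX^{\otimes}(\cO_{\cS^{(n)}}, \tau_{\leq d} H)$, where $\cS^{(n)} \hookrightarrow \cX$ denotes the $n$-th infinitesimal neighbourhood. Together with the tensor–hom adjunction and the interchange of $\RHom_\cX(F,-)$ with the limit (automatic) and the colimit (discussed below), this would reduce the proposition to the assertion
\[
\RHom_\cX(F \otimes \cO_{\cS^{(n)}},\; \tau_{\leq d}H) = 0 \qquad \text{for all } n,d.
\]

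The structure sheaf $\cO_{\cS^{(n)}}$ admits a finite filtration whose graded pieces are $i_\ast \Sym^m(\bL_{\cS/\cX}[-1])$ for $m = 0, \ldots, n$; tensoring with $F$ and applying the projection formula yields a finite filtration of $F \otimes \cO_{\cS^{(n)}}$ with graded pieces $i_\ast\!\bigl(i^\ast F \otimes \Sym^m(\bL_{\cS/\cX}[-1])\bigr)$. By \Cref{L:relative_cotangent_complex}, $\bL_{\cS/\cX} \in \QC(\cS)^{\geq 1}$, so $\Sym^m(\bL_{\cS/\cX}[-1]) \in \QC(\cS)^{\geq 0}$ by the multiplicativity of the baric structure (\Cref{P:baric_stratum}(6)); combined with $i^\ast F \in \QC(\cS)^{\geq w}$ this places each graded piece in $i_\ast(\QC(\cS)^{\geq w})$ and hence in $\QC_\cS(\cX)^{\geq w}$. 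Iterated extensions stay in that subcategory, so $F \otimes \cO_{\cS^{(n)}} \in \QC_\cS(\cX)^{\geq w}$. On the other hand, by part (6) of \Cref{P:baric_decomp_supports}, membership in $\QC(\cX)^{<w}$ is detected on homology sheaves of $R\Gamma_\cS$, from which one reads off that $\tau_{\leq d} H \in \QC_\cS(\cX)^{<w} \subset \QC(\cX)^{<w}$. The desired vanishing is then the semi-orthogonality $\RHom(\QC_\cS(\cX)^{\geq w}, \QC(\cX)^{<w}) = 0$ of the baric decomposition of \Cref{P:baric_decomp_supports}.

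The main obstacle is the commutation of $\RHom_\cX(F,-)$ with the colimit over $n$, since the individual terms $\inner{\RHom}_\cX^\otimes(\cO_{\cS^{(n)}}, \tau_{\leq d} H)$ need not be uniformly bounded above in $n$. The plan here is to exploit that the colimit itself equals $R\Gamma_\cS(\tau_{\leq d}H)$, which has locally bounded cohomological amplitude (P.baric_decomp_supports(3)) and in particular lies in $\QC(\cX)_{\leq d}$ (up to a uniform shift). Applying $\tau_{\leq d}$, which preserves filtered colimits, produces a presentation of $\tau_{\leq d}H$ as a filtered colimit of uniformly $\QC_{\leq d}$-bounded objects; since $\tau_{\leq d}F \in \DCoh(\cX)_{\leq d}$ is compact in $\QC(\cX)_{\leq d}$ and $\RHom_\cX(F,E) \simeq \RHom_\cX(\tau_{\leq d}F, E)$ for $E \in \QC(\cX)_{\leq d}$, the commutation goes through and the individual vanishings assemble to $\RHom_\cX(F, H) = 0$, completing the proof.
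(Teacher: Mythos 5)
Your proof takes a genuinely different route from the paper's. The paper reduces to showing $\RHom_\cX(F, R\Gamma_\cS(G)) = 0$, replaces $R\Gamma_\cS(G)$ by $\ladj{w}(R\Gamma_\cS(G)) \in \QC_\cS(\cX)^{<w}$ using \Cref{P:baric_decomp_supports} part (4), homologically truncates using part (6), presents the result as a filtered colimit of objects of $\DCoh_\cS(\cX)_{\leq p}^{<w}$ via compact generation of $\QC_\cS(\cX)_{\leq p}$, and finally reduces by iterated extensions to the adjunction identity $\RHom_\cX(F, i_\ast E) \cong \RHom_\cS(i^\ast F, E) = 0$ for $E \in \APerf(\cS)^{<w}$. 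You instead unwind the deformation-to-the-normal-cone formula of \Cref{T:deformation_to_normal_cone}, convert the inner Hom into a tensor by adjunction, and use the filtration of $\cO_{\cS^{(n)}}$, \Cref{L:relative_cotangent_complex}, and the multiplicativity of the baric structure (\Cref{P:baric_stratum} part (6)) to place $F \otimes \cO_{\cS^{(n)}}$ in $\QC_\cS(\cX)^{\geq w}$, then invoke the semiorthogonality of the baric decomposition of \Cref{P:baric_decomp_supports}. The two proofs are built from the same ingredients; yours trades the paper's terminal adjunction computation for an argument that makes the positivity $\bL_{\cS/\cX} \in \QC(\cS)^{\geq 1}$ more visible, at the cost of the lim/colim bookkeeping of the normal-cone formula. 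Both are legitimate.

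However, your justification for passing $\RHom_\cX(F, -)$ through the filtered colimit over $n$ has an error: it is not true that $\RHom_\cX(F, E) \simeq \RHom_\cX(\tau_{\leq d} F, E)$ for $E \in \QC(\cX)_{\leq d}$. From the fiber sequence $\RHom_\cX(\tau_{\leq d}F, E) \to \RHom_\cX(F, E) \to \RHom_\cX(\tau_{\geq d+1}F, E)$, the rightmost term has vanishing $H_i$ only for $i \geq 0$, so the comparison map is an isomorphism in non-negative degrees but not in general, since $\tau_{\geq d+1} F[-i]$ drops back into $\QC(\cX)_{\leq d}$ for $i > 0$ and can pair nontrivially with $E$. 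The commutation you want is nevertheless true for $F \in \APerf(\cX)$ mapping into a uniformly bounded-above filtered system --- it is the same fact the paper invokes when it passes $\RHom(F,-)$ through $\colim_\alpha G_\alpha$ --- but the correct justification goes through perfect approximation rather than truncation: choose a perfect $P_N$ with a map $P_N \to F$ whose cofiber is $N$-connective; then for $E$ bounded above by $d$ and any fixed $i$, $H_i \RHom_\cX(F, E) \cong H_i \RHom_\cX(P_N, E)$ once $N$ is large enough, and $P_N$ is compact in $\QC(\cX)$, so commutes with the colimit. Applying this degreewise gives the commutation. With this repair your argument is complete.
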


\begin{proof}

The claim is equivalent to the claim that $\RHom(F,R\Gamma_\cS(G)) = 0$. Part (4) of \Cref{P:baric_decomp_supports} implies that $R\Gamma_\cS(G) \simeq R\Gamma_\cS(\ladj{w}(G)) \simeq \ladj{w}(R\Gamma_\cS(G))$. So it suffices to show that $\RHom(F,G) = 0$ for $G \in \QC_\cS(\cX)^{<w}$. Then part (6) of \Cref{P:baric_decomp_supports} implies that $\tau_{\leq p}(G) \in \QC_\cS(\cX)^{<w}$, and
\[
\RHom(F,G) \simeq \varprojlim_p \RHom(F,\tau_{\leq p}(G)),
\]
so it suffices to consider $G \in \QC_\cS(\cX)^{<w}_{\leq p}$. The $\infty$-category $\QC_\cS(\cX)_{\leq p}$ is compactly generated by $\DCoh_\cS(\cX)_{\leq p}$, so we may express $\tau_{\leq p}(G)$ as a filtered colimit
\[
\tau_{\leq p}(G) \simeq \colim_{\alpha} G_\alpha,
\]
with $G_\alpha \in \DCoh_\cS(\cX)_{\leq p}$. The functor $\tau_{\leq p} (\ladj{w}(-))$ commutes with filtered colimits and does not affect the left-hand-side, so we have
\[
\tau_{\leq p}(G) \simeq \colim_{\alpha} \tau_{\leq p} (\ladj{w}(G_\alpha)),
\]
where $\tau_{\leq p} (\ladj{w}(G_\alpha)) \in \DCoh_\cS(\cX)_{\leq p}^{<w}$ by part (4) of \Cref{P:baric_decomp_supports} and part (6) of \Cref{P:baric_decomp_supports}.

Because $F \in \APerf(\cX)$, $\RHom(F,-)$ commutes with this filtered colimit, and it therefore suffices to show that $\RHom_\cX(F,G_\alpha) = 0$ for $G_\alpha \in \DCoh(\cX)_{\leq p}^{<w}$. Such a complex $G_\alpha$ can be constructed as a finite sequence of extensions of objects of the form $i_\ast(E)$ for $E \in \APerf(\cS)^{<w}$, so it therefore suffices to show that $\RHom_\cX(F,i_\ast(E))=0$ for such $E$. This follows by adjunction and the condition that $i^\ast(F) \in \APerf(\cS)^{\geq w}$.

\end{proof}

\subsection{Structure theorem for \texorpdfstring{$\APerf$}{D-Coh}}
\label{S:structure_theorem}

Let $\cX$ be an algebraic derived stack locally almost of finite presentation and with affine diagonal over a locally noetherian algebraic base stack $\cB$, and consider a derived $\Theta$-stratum $i : \cS \subset \Filt(\cX) \hookrightarrow \cX$. Let $\pi : \cS \to \cZ$ denote the projection onto the center of the stratum. In mapping stack notation, $i = \ev_1$ and $\pi = \ev_0$.

\begin{defn} \label{D:subcategories_aperf}
We will consider several subcategories of $\APerf(\cX)$ indexed by an integer $w \in \bZ$. For any full stable subcategory $\cA \subset \APerf(\cX)$, we denote the following full stable subcategories of $\cA$:
\begin{align*}
\cA^{\geq w} &:= \left\{ F \in \cA \left| i^\ast(F) \in \APerf(\cS)^{\geq w} \right. \right\} \\
\cA^{<w} &:= \left\{ F \in \cA \left| i^{\QC,!}(F) \in \QC(\cS)^{<w} \right. \right\} \\
\cA^{w} &:= \cA^{\geq w} \cap \cA^{<w+1}
\end{align*}
Note that $\cA^{\geq w+1} \subset \cA^{\geq w}$ and $\cA^{<w} \subset \cA^{<w+1}$ by definition. We also introduce special notation for the following full $\infty$-subcategory of ``grade restricted'' complexes
\[
\cG_\cS^w = \APerf(\cX)^{\geq w} \cap \APerf(\cX)^{<w} \subset \APerf(\cX).
\]
\end{defn}

Our main structure theorem for $\APerf(\cX)$ is the following:

\begin{thm}\label{T:derived_Kirwan_surjectivity}
Let $\cX$ and $\cS$ be as above, and let $w \in \bZ$. The subcategories of $\APerf(\cX)$ of \Cref{D:subcategories_aperf} define a semiorthogonal decomposition
\begin{equation} \label{E:main_SOD}
\APerf(\cX) = \left \langle \lefteqn{\overbrace{\phantom{\APerf_\cS(\cX)^{<w}, \cG_\cS^w}}^{\APerf(\cX)^{<w}}} \APerf_\cS(\cX)^{<w}, \lefteqn{\underbrace{\phantom{\cG_\cS^w, \APerf_\cS(\cX)^{\geq w}}}_{\APerf(\cX)^{\geq w}}} \cG_\cS^w, \APerf_\cS(\cX)^{\geq w} \right\rangle.
\end{equation}
The projection functors $\ladj{w}$ and $\radj{w}_\cS$ onto the respective semiorthogonal factors $\APerf(\cX)^{<w}$ and $\APerf_\cS(\cX)^{\geq w}$ are right $t$-exact on $\APerf_\cS(\cX)$, and they have uniformly bounded below homological amplitude on $\APerf(\cX)$ if $\cX$ is quasi-compact. The projection functors $\ladj{w}_\cS$ and $\radj{w}$ onto the respective semiorthogonal factors $\APerf_\cS(\cX)^{<w}$ and $\APerf(\cX)^{\geq w}$ are right $t$-exact. The semiorthogonal decomposition \eqref{E:main_SOD} has the following properties:
\begin{enumerate}
\item The restriction functor induces an equivalence of $\infty$-categories $\cG_\cS^w \to \APerf(\cX \setminus \cS)$. \\
\item $\APerf_\cS(\cX)^{<w}$ and $\APerf_\cS(\cX)^{\geq w}$ generate $\APerf_\cS(\cX)$, giving a semiorthogonal decomposition
\begin{equation} \label{E:supports_SOD}
\APerf_\cS(\cX) = \sod{\APerf_\cS(\cX)^{<w}, \APerf_\cS(\cX)^{\geq w}}.
\end{equation}
\item The functor $i_\ast \pi^\ast : \APerf(\cZ)^w \to \APerf(\cX)$ is fully faithful with essential image $\APerf_\cS(\cX)^{w}$.\\
\item If we let $\cG_\cS^{[u,w)}:= \APerf(\cX)^{<w} \cap \APerf(\cX)^{\geq u}$, then for any $u \leq v \leq w$ there is a semiorthogonal decomposition
\begin{equation} \label{E:window_SOD}
\cG_\cS^{[u,w)} = \left \langle \APerf_\cS(\cX)^{u}, \ldots, \APerf_\cS(\cX)^{v-1}, \cG_\cS^v, \APerf_\cS(\cX)^{v}, \ldots, \APerf_\cS(\cX)^{w-1}\right\rangle.
\end{equation}
\item If $\cX$ is quasi-compact, then for any $F \in \APerf(\cX)$ and any $p \in \bZ$, both maps
\[
\tau_{\leq p}(F) \to \tau_{\leq p}(\ladj{w}(F)) \quad \text{and} \quad \tau_{\leq p}(\radj{-w}(F)) \to \tau_{\leq p}(F)
\]
are isomorphisms for $w \gg 0$. It follows that the canonical maps induce isomorphisms for any $F \in \APerf(\cX)$:
\[
F \xrightarrow{\simeq} \lim \limits_{w\to \infty} \ladj{w}(F) \quad \text{ and } \quad \colim \limits_{w\to -\infty} \radj{w}(F) \xrightarrow{\simeq} F.
\]
\item If $p : \cX' \to \cX$ is a morphism such that $\cS$ induces a $\Theta$-stratum $\cS' \hookrightarrow \cX'$ (\Cref{D:induced_stratum}) and the canonical map $\cS' \to \cS \times_\cX \cX'$ is an isomorphism, then $p^\ast : \APerf(\cX) \to \APerf(\cX')$ preserves all of the subcategories in the semiorthogonal decomposition \eqref{E:main_SOD} and thus canonically commutes with the projection functors.
\end{enumerate}
\end{thm}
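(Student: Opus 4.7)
The plan is to deduce the three-term semi-orthogonal decomposition \eqref{E:main_SOD} by refining the coarse baric decomposition $\QC(\cX) = \sod{\QC(\cX)^{<w}, \QC_\cS(\cX)^{\geq w}}$ of \Cref{P:baric_decomp_supports}. The middle ``grade-restricted'' piece $\cG_\cS^w$ sits inside both the coarse left factor $\APerf(\cX)^{<w}$ and the coarse right factor $\APerf(\cX)^{\geq w}$, and should be identified with $\APerf(\cX \setminus \cS)$ via restriction. I would first verify the three pairwise semi-orthogonality statements. The central vanishing $\RHom(\APerf_\cS(\cX)^{\geq w}, \APerf_\cS(\cX)^{<w})=0$ reduces via the $(i_\ast, i^{\QC,!})$-adjunction to the baric semi-orthogonality in $\QC(\cS)$ from \Cref{P:baric_stratum}. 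The two vanishings involving $\cG_\cS^w$ follow directly from \Cref{P:quantization_commutes_with_reduction}: for $F \in \cG_\cS^w$ one has $i^\ast F \in \APerf(\cS)^{\geq w}$, and for $G \in \APerf_\cS(\cX)^{<w} \subset \QC(\cX)^{<w}$ the restriction $G|_{\cX \setminus \cS}$ vanishes, whence $\RHom_\cX(F, G) \simeq \RHom_{\cX \setminus \cS}(F|, G|) = 0$; the vanishing with the other outer factor is the symmetric instance of the same result.

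For generation, given $F \in \APerf(\cX)$, I would start from the fiber sequence $\radj{w}_\cS F \to F \to \ladj{w} F$ of \Cref{P:baric_decomp_supports}, with $\radj{w}_\cS F \in \QC_\cS(\cX)^{\geq w}$ and $\ladj{w} F \in \QC(\cX)^{<w}$. The first technical issue is that both pieces should lie in $\APerf$, not merely in $\QC$. Here I would invoke the local structure theorem \Cref{T:local_structure_stratum} to reduce to $\cX = \Spec(A)/\Gm$ with $\cS = \Spec(A/I_+)/\Gm$ as in \Cref{lem:filt_abelian_quotient}, where the truncation functors admit the explicit Reese/Koszul description of \Cref{L:tautological_baric_structure} and are visibly seen to preserve almost-perfect complexes; globalization is then a consequence of $\infty$-categorical descent along a smooth cover, which is compatible with baric truncation by \Cref{P:baric_decomp_supports}(5) and \Cref{L:induced_stratum_smooth}. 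With $\ladj{w} F \in \APerf(\cX)^{<w}$ in hand, one separates its $\cS$-supported part $\ladj{w}_\cS(\ladj{w}F) \in \APerf_\cS(\cX)^{<w}$ from the remaining piece in $\cG_\cS^w$ using the restriction-extension decomposition on the open complement; together with $\radj{w}_\cS F$ this produces the full three-term decomposition of $F$. The right $t$-exactness and homological boundedness claims for the projection functors are inherited from the corresponding statements of \Cref{P:baric_decomp_supports}, combined with \Cref{L:aperf_baric_criterion} to characterize the two-sided baric intersection with $\APerf$.

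I expect the main obstacle to lie in essential surjectivity of the equivalence in (1). Full faithfulness of $\cG_\cS^w \to \APerf(\cX\setminus\cS)$ is immediate from \Cref{P:quantization_commutes_with_reduction} applied to $F, G \in \cG_\cS^w$. For essential surjectivity, assuming $\cX$ quasi-compact, given $F' \in \APerf(\cX\setminus\cS)$ I would first extend $F'$ to some $\tilde F \in \APerf(\cX)$ using the standard extension result for almost perfect complexes on open substacks of noetherian stacks, and then project $\tilde F$ onto the middle factor of the three-term decomposition constructed above; since the outer two factors are set-theoretically supported on $\cS$, the projection does not affect restriction to $\cX \setminus \cS$. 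The non-quasi-compact case reduces to this via $\infty$-categorical descent along a cover by quasi-compact opens over which $\cS$ induces a stratum, as in the proof of \Cref{P:baric_decomp_supports}. Properties (2)--(6) then follow essentially formally: (2) is the restriction of \eqref{E:main_SOD} to $\APerf_\cS(\cX)$ (using that $R\Gamma_\cS$ is right adjoint to the inclusion and intertwines the baric truncations); (3) uses the equivalence $\APerf(\cZ)^w \simeq \APerf(\cS)^w$ of \Cref{L:center_pullback} combined with the full faithfulness of $i_\ast$ on $\cS$-supported objects; (4) follows by iterating \eqref{E:main_SOD} over the values $u \leq v < w$; (5) reduces to the fact that for $F \in \APerf(\cX)$ the truncations of $i^\ast F$ and $i^{\QC,!} F$ in $\QC(\cS)$ have weights in a bounded range along the center $\cZ$, so the baric truncation functors stabilize on $\tau_{\leq p} F$ for $|w|$ sufficiently large; and (6) is inherited from the compatibility of baric truncation with pullback in \Cref{P:baric_decomp_supports}(5) and \Cref{P:baric_stratum}(4).
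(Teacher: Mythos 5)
Your overall strategy---reduce to the local case via the local structure theorem and $\infty$-categorical descent, then refine the coarse baric decomposition of \Cref{P:baric_decomp_supports} into a three-term decomposition---is the right skeleton, and your identification of the pairwise semi-orthogonality inputs (the baric semi-orthogonality in $\QC(\cS)$ for the two $\cS$-supported outer factors, and \Cref{P:quantization_commutes_with_reduction} for the vanishings involving $\cG_\cS^w$) is correct. But the generation step has a genuine gap, and it is a circular one.

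You propose to establish generation by starting from $\radj{w}_\cS F \to F \to \ladj{w}F$ and then ``separating'' $\ladj{w}F$ into a $\cG_\cS^w$-piece and an $\APerf_\cS(\cX)^{<w}$-piece ``using the restriction-extension decomposition on the open complement.'' Making that separation precise requires a semiorthogonal decomposition $\APerf(\cX)^{<w} = \langle \APerf_\cS(\cX)^{<w}, \cG_\cS^w\rangle$, which is equivalent to knowing a \emph{second} coarse decomposition $\APerf(\cX) = \langle \APerf_\cS(\cX)^{<w}, \APerf(\cX)^{\geq w}\rangle$, dual to the one coming from \Cref{P:baric_decomp_supports}. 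Nothing in \Cref{P:baric_decomp_supports} gives you this: the baric decomposition there has $\QC_\cS(\cX)^{\geq w}$ on the \emph{right} and $\QC(\cX)^{<w}$ on the \emph{left}, and the functor that realizes the separation in the other direction is not one of the baric truncation functors you already have. And you cannot instead separate via restriction to $\cX\setminus\cS$ and lifting back into $\cG_\cS^w$, because lifting requires the essential surjectivity of $\cG_\cS^w\to\APerf(\cX\setminus\cS)$ --- which is claim (1), which you only propose to prove \emph{after} the three-term decomposition is in hand. This is a loop.

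In the paper, the missing ingredient is \Cref{L:local_sod}: in the local model $X/\Gm$, one constructs for every connective $F$ a fiber sequence $F' \to F \to F''$ with $F' \in \APerf(X/\Gm)^{\geq w}$ and $F'' \in \APerf_\cS(X/\Gm)^{<w}$, both connective, by an explicit Koszul-complex argument --- starting with $F = \cO_X\langle n\rangle$, taking the dual Koszul complex $K_X(x_1^m,\dots,x_k^m)^\dual\langle n\rangle$ for $m\gg 0$, and then extending to general $F$ by filtered colimits. This is not ``visibly seen'' and is not the same computation as showing $\radj{w}_\cS$ preserves $\APerf$ (which is itself a non-trivial colimit-stabilization estimate, \Cref{L:finiteness_radj}, not an immediate consequence of the Reese description). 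Your outline needs this second coarse SOD as a separate lemma, proved independently of claim (1), and only then can the three-term SOD and the equivalence of (1) be deduced. A smaller issue: your extension of an arbitrary $F'\in\APerf(\cX\setminus\cS)$ to $\APerf(\cX)$ ``by the standard extension result'' is only standard for $\DCoh$; for unbounded $\APerf$ one must take an inverse limit of lifts of $\tau_{\leq p}F'$ and use the uniform homological amplitude bound on $\ladj{w}$ to see the limit converges.
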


First note the possible conflict introduced by the differing definitions of $\APerf_\cS(\cX)^{\geq w}$ in \Cref{D:subcategories_aperf} and $\QC_\cS(\cX)^{\geq w}$ \Cref{D:subcategories_qcoh}. This is resolved by the following:

\begin{lem} \label{L:aperf_vs_qcoh_weights}
A complex $F \in \APerf_\cS(\cX)$ lies in $\QC_\cS(\cX)^{\geq w}$ if and only if $i^\ast(F) \in \QC(\cS)^{\geq w}$.
\end{lem}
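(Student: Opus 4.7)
The proof splits into the two directions. For the forward direction, $\QC_\cS(\cX)^{\geq w}$ is generated from $i_\ast(\QC(\cS)^{\geq w})$ under extensions, filtered colimits, and limits of towers with eventually constant truncations (\Cref{D:subcategories_qcoh}). Since $i^\ast$ preserves all of these operations, it suffices to check that $i^\ast i_\ast E \in \QC(\cS)^{\geq w}$ for $E \in \QC(\cS)^{\geq w}$. Following the tower argument used in the proof of \Cref{L:semi_orthogonal_pushforwards}, $i^\ast i_\ast E$ is the limit of a tower whose fibers are $\Sym^n(\bL_{\cS/\cX}) \otimes E$; since $\bL_{\cS/\cX} \in \QC(\cS)^{\geq 1}$ by \Cref{L:relative_cotangent_complex} and the baric structure is multiplicative (\Cref{P:baric_stratum} part (6)), each such fiber lies in $\QC(\cS)^{\geq w+n}$, so $i^\ast i_\ast E \in \QC(\cS)^{\geq w}$ by closure under extensions and eventually-constant inverse limits.

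For the reverse direction, given $F \in \APerf_\cS(\cX)$ with $i^\ast F \in \QC(\cS)^{\geq w}$, the plan is to apply $i^\ast$ to the baric fiber sequence
\[
\radj{w}_\cS F \to F \to \ladj{w} F
\]
and deduce that $i^\ast \ladj{w} F = 0$. The main step is to show $i^\ast \ladj{w} F \in \QC(\cS)^{<w}$. This is a local claim: using \Cref{T:local_structure_stratum}, I pass to a smooth surjective cover $p : X/\Gm \to \cX$ with $X = \Spec(A)$, under which $\cS$ pulls back to $\Spec(A/I_+)/\Gm$. By part (5) of \Cref{P:baric_decomp_supports} and part (4) of \Cref{P:baric_stratum}, $p^\ast$ commutes with both baric structures, so the claim reduces to the local model. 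There, $\ladj{w} F \in \QC_\cS(\cX)^{<w}$ is (by the proof of part (6) of \Cref{P:baric_decomp_supports}) a graded $A$-module concentrated in weights $<w$, and derived tensor with the non-positively graded $A/I_+$ preserves this weight bound. Since $\ladj{w} F \in \APerf_\cS(\cX)$ by part (4) of \Cref{P:baric_decomp_supports}, $i^\ast \ladj{w} F$ lies in $\APerf(\cS)$, so \Cref{L:aperf_baric_criterion} promotes the weight bound to membership in $\APerf(\cS)^{<w} \subseteq \QC(\cS)^{<w}$.

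Applying the exact right-adjoint functor $\radj{w}$ on $\QC(\cS)$ to the fiber sequence $i^\ast \radj{w}_\cS F \to i^\ast F \to i^\ast \ladj{w} F$ then annihilates the right-hand term and is the identity on the middle term (since $i^\ast F \in \QC(\cS)^{\geq w}$), forcing $i^\ast \ladj{w} F = 0$. A Nakayama-type argument on the lowest non-vanishing homology sheaf of $\ladj{w} F$---a coherent sheaf set-theoretically supported on $\cS$ whose restriction to $\cS$ vanishes---then yields $\ladj{w} F = 0$, so $F \simeq \radj{w}_\cS F \in \QC_\cS(\cX)^{\geq w}$. The main technical obstacle is the reduction to the local model, since tracking how $i^\ast$ and the baric truncation functors interact with smooth pullback requires leaning on precisely the compatibility statements in parts (4) and (5) of the two propositions cited above.
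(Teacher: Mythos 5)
Your forward direction is correct and fills in the easy half that the paper treats as implicit: \Cref{D:subcategories_qcoh}, closure of the generating operations under $i^\ast$, and the $\Sym^n(\bL_{\cS/\cX})$-tower argument combined with multiplicativity of the baric structure do the job.

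The reverse direction, however, has a genuine gap. The intermediate claim that $i^\ast\ladj{w}F \in \QC(\cS)^{<w}$ --- justified by ``derived tensor with the non-positively graded $A/I_+$ preserves this weight bound'' --- is \emph{false}. The quotient ring $A/I_+$ is non-positively graded as an algebra, but its Koszul resolution as an $A$-module has free generators coming from $U^{>0}$, i.e.\ twists $A\langle -d\rangle$ with $d>0$, which are generated in \emph{positive} weight. Derived tensoring therefore shifts weights \emph{up}, not down. Concretely, take $A=k[x]$ with $x$ of weight $1$, so $\cS = B\Gm$, and take $F = i_\ast(\cO_\cS) = \cO_\cS$ with $w=1$: then $\ladj{1}F = F$ has weight $0<1$, but
\[
i^\ast\ladj{1}F = \cO_\cS \otimes_A^{L} \cO_\cS \cong \cO_\cS \oplus \cO_\cS\langle -1\rangle[1],
\]
which has a non-vanishing weight-$1$ summand and hence does \emph{not} lie in $\QC(\cS)^{<1}$. (This is consistent with the lemma: here $i^\ast F \notin \QC(\cS)^{\geq 1}$, so the hypothesis of the backward implication fails.) The phenomenon is exactly the dual of the one you invoke in the forward direction: since $\bL_{\cS/\cX}\in\QC(\cS)^{\geq 1}$, the self-intersection $i^\ast i_\ast$ preserves the lower bound $\geq w$ but destroys the upper bound $<w$. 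There is also a secondary logical slip in the sentence ``applying $\radj{w}$ \ldots\ forcing $i^\ast\ladj{w}F=0$'': knowing $\radj{w}(i^\ast\ladj{w}F)=0$ merely restates $i^\ast\ladj{w}F\in\QC(\cS)^{<w}$; to conclude vanishing you would additionally need $i^\ast\ladj{w}F\in\QC(\cS)^{\geq w}$, which would come from the cofiber sequence together with the forward direction applied to $\radj{w}_\cS F$, but you never say this.

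The paper's proof takes a different and shorter route that avoids the false claim altogether: $F\in\APerf_\cS(\cX)^{\geq w}$ iff $F$ is left orthogonal to all $\ladj{w}_\cS(G)$, $G\in\APerf_\cS(\cX)$; using the uniformly bounded-below amplitude of $\ladj{w}_\cS$ this reduces to $G\in\DCoh_\cS(\cX)$, for which $\ladj{w}_\cS(G)$ is a finite extension of objects $i_\ast(\ladj{w}(E))$, $E\in\DCoh(\cS)$; then $\RHom_\cX(F, i_\ast\ladj{w}(E)) \cong \RHom_\cS(i^\ast F, \ladj{w}(E)) = 0$ by adjunction and the hypothesis $i^\ast F\in\QC(\cS)^{\geq w}$. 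If you want to keep a Nakayama-style argument, you should instead show $i^\ast\ladj{w}F\in\QC(\cS)^{\geq w}$ (via the forward direction and the cofiber sequence) and then argue that a nonzero object of $\APerf_\cS(\cX)^{<w}$ cannot have $i^\ast$ lying in $\QC(\cS)^{\geq w}$ --- but that last step is essentially the orthogonality argument in disguise, so you would end up recapitulating the paper's proof.
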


\begin{proof}
We can use \cite{AHLH}*{Lem.~6.8} to cover $\cX$ by Zariski open quasi-compact substacks $\cU \subset \cX$ in which $\cS$ induces a $\Theta$-stratum, and it suffices to verify the claim for each $\cS \cap \cU \hookrightarrow \cU$, so we may assume $\cX$ is quasi-compact.

Because $\radj{w}_\cS$ and $\ladj{w}_\cS$ induce a baric decomposition of $\APerf_\cS(\cX)$, we know that $F$ lies in $\APerf_\cS(\cX)^{\geq w}$ if and only if it is left orthogonal to every object of the form $\ladj{w}_\cS(G)$ for some $G \in \APerf_\cS(\cX)$. The fact that $\ladj{w}_\cS$ has uniformly bounded below homological amplitude implies that the canonical map $\ladj{w}_\cS(G) \to \varprojlim_p \ladj{w}_\cS(\tau_{\leq p}(G))$ is an equivalence, so it suffices to check left orthogonality for $G \in \DCoh_\cS(\cX)$. For any such $G$, $\ladj{w}_\cS(G)$ can be constructed as a finite sequence of extensions of objects of the form $\ladj{w}_\cS(i_\ast(E)) \cong i_\ast(\ladj{w}(E))$ for some $E \in \DCoh(\cS)$, so if $i^\ast(F) \in \QC(\cS)^{\geq w}$ then $\RHom(F,\ladj{w}_\cS(G)) = 0$, and the claim follows.
\end{proof}

We will prove \Cref{T:derived_Kirwan_surjectivity} after some preliminary observations.

\subsubsection{Key statements in the local setting} \label{sect:koszul}
Fix a noetherian simplicial commutative base ring $R$, and let $X$ be an affine derived scheme almost of finite presentation over $R$ with a $(\Gm)_R$-action, and let $S \subset X$ be the $\Theta$-stratum induced by the tautological cocharacter of $\Gm$, as in \Cref{lem:filt_abelian_quotient}. We will make use of an alternative explicit description of $R\Gamma_S(\cO_X)$. Choose a morphism of graded simplicial commutative rings $R[x_1,\ldots,x_k] \to \cO_X$, where $x_i$ has degree $d_i>0$ for $i=1,\ldots,k$, and $x_1,\ldots,x_k$ map to homogeneous generators of the ideal $I_+ \subset \pi_0(\cO_X)$ generated by positive weight elements. In particular, $x_1,\ldots,x_k$ cut out $S$ set-theoretically.

We have the Koszul complex
\begin{equation} \label{E:koszul_complex}
K_{X}(x_1,\ldots,x_k) = (\cO_{X} \xrightarrow{x_1} \cO_{X}\langle d_1 \rangle) \otimes \cdots \otimes (\cO_{X} \xrightarrow{x_k} \cO_{X} \langle d_k \rangle),
\end{equation}
which is the pullback of the linear dual of the standard Koszul complex resolving $\cO_{\{0\}} \in \QC(\bA^k)$ under the map $X \to \bA^k$ defined above. Furthermore, we have canonical maps $K_X(x_1^n,\ldots,x_k^n) \to K_X(x_1^{n+1},\ldots,x_k^{n+1})$, and because $S$ is the set-theoretic preimage of $\{0\} \in \bA^k$, we have
\begin{equation} \label{E:local_cohomology_koszul}
R\Gamma_{S} (\cO_{X}) \simeq \colim_{n\geq 1} K_{X}(x^n_1,\ldots,x^n_k).
\end{equation}
Because $\radj{w}_\cS(F) = \radj{w}_\cS(R\Gamma_S (\cO_X) \otimes F)$ and $\radj{w}_\cS$ commutes with filtered colimits, we have
\begin{equation} \label{E:local_radj}
\radj{w}_\cS(F) = \colim_{n} \radj{w}_\cS(K_X(x_1^n,\ldots,x_k^n) \otimes F).
\end{equation}

\begin{lem} \label{L:finiteness_radj}
For any $F \in \APerf(X/\Gm)$ and any $w \in \bZ$, each homology sheaf in the colimit \eqref{E:local_radj} is eventually constant for $n\gg 0$. It follows that $\radj{w}_\cS(F) \in \APerf_\cS(X/\Gm)^{\geq w}$.
\end{lem}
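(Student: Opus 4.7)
The plan is to pass to the center $\cS$, where the Koszul complex simplifies dramatically. First, I observe that $G_n := K_X(x_1^n,\ldots,x_k^n) \otimes F$ lies in $\APerf_\cS(X/\Gm)$ because the Koszul complex is perfect with homology set-theoretically supported on $S = V(x_1,\ldots,x_k)$, and tensoring with $F \in \APerf$ preserves both of these properties. By part~(4) of \Cref{P:baric_decomp_supports}, $\radj{w}_\cS(G_n) \in \APerf_\cS(X/\Gm)^{\geq w}$.

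The key step is to show that the cofibers $C_n := \cofib(G_n \to G_{n+1})$ satisfy $\radj{w}_\cS(C_n) \simeq 0$ on each homology sheaf for $n$ sufficiently large. The restriction $i^*K_X(x_1^n,\ldots,x_k^n)$ has trivial differentials (because $x_i|_\cS = 0$) and splits canonically as
\[
i^*K_X(x_1^n,\ldots,x_k^n) \simeq \bigoplus_{T \subseteq \{1,\ldots,k\}} \cO_\cS\bigl\langle \textstyle\sum_{i \notin T} nd_i\bigr\rangle[|T|],
\]
so the weight-$m$ piece of $H_p(i^*G_n)$ decomposes as $\bigoplus_T H_{p-|T|}(i^*F)^{(m + \sum_{i \notin T} nd_i)}$. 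The local structure theorem \Cref{T:local_structure_stratum} presents $\cO_\cS$ as the (derived) quotient of $A$ by the ideal of positive-weight generators, hence non-positively graded; consequently every coherent $\cO_\cS$-module has weights bounded above. In particular each $H_q(i^*F)$ has weights $\leq B_q$ for some finite $B_q$, so for fixed $m \geq w$ and $p$, the contribution of any $T$ with $|T|<k$ vanishes as soon as $n \cdot \min_i d_i > B_{p-|T|} - w$. Only the $T = \{1,\ldots,k\}$ summand, which is independent of $n$, then persists.

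Combining this with \Cref{L:aperf_vs_qcoh_weights} and Nakayama's lemma applied to the almost perfect complex $\radj{w}_\cS(C_n) \in \APerf_\cS^{\geq w}$ (whose vanishing is detected after pullback to $\cS$) yields the desired stabilization. The colimit $\radj{w}_\cS(F) = \colim_n \radj{w}_\cS(G_n)$ then has coherent, homologically bounded-below homology sheaves, and therefore lies in $\APerf_\cS(X/\Gm)^{\geq w}$. The main technical subtlety is that $i^*$ does not a priori commute with $\radj{w}_\cS$, so one must argue via the fiber sequence $\radj{w}_\cS(C_n) \to C_n \to \ladj{w}(C_n)$ and the baric semi-orthogonality on $\QC(\cS)$ to conclude that vanishing of the weight-$\geq w$ part of $i^*C_n$ forces $i^*\radj{w}_\cS(C_n) \simeq 0$ and hence $\radj{w}_\cS(C_n) \simeq 0$.
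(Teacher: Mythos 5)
Your overall strategy is different from the paper's, and it contains a genuine gap in the final step. The paper's proof reduces to a linear model: it chooses a $\Gm$-equivariant closed embedding $X \hookrightarrow \bA^N_R$ (with the extra generators of non-positive weight), pushes $F$ forward, and then verifies the weight condition on $\bA^N_R/\Gm$ using $(i')^{\QC,!}$, which is explicitly computable there because $\cS' \hookrightarrow \bA^N_R/\Gm$ is a Koszul-regular closed immersion. You instead restrict to $\cS$ via $i^*$ and try to read the answer off the weights of $i^*G_n$.

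The problem is that $i^*$ is the wrong functor to test membership in the \emph{left} factor $\QC_\cS(\cX)^{<w}$ of the baric decomposition. \Cref{L:aperf_vs_qcoh_weights} characterizes $\QC_\cS(\cX)^{\geq w}$ via $i^*$, but the paper's characterization of the complementary condition $\QC(\cX)^{<w}$ (see \Cref{R:left_category}, and the main claim in the proof of part (6) of \Cref{P:baric_decomp_supports}) uses $i^{\QC,!}$, the \emph{right} adjoint of $i_\ast$, or equivalently $\pi_\ast$ to $B\Gm$. These criteria genuinely differ from the one you use: it is easy to check (e.g.\ $\cX=\bA^1/\Gm$, $\cS=B\Gm$) that $i^*(\APerf_\cS(\cX)^{<w}) \not\subset \QC(\cS)^{<w}$, since $i^*i_*(E)$ for $E$ of weight $<w$ has graded pieces $E\otimes\Sym^j(\bL_{\cS/\cX}[-1])$ with arbitrarily high weights. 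Consequently, your semi-orthogonality argument does not close: in the fiber sequence $i^*\radj{w}_\cS(C_n) \to i^*C_n \to i^*\ladj{w}(C_n)$, semi-orthogonality on $\QC(\cS)$ only tells you the first map is zero, which identifies $i^*\ladj{w}(C_n)$ with $i^*C_n \oplus i^*\radj{w}_\cS(C_n)[1]$; it does \emph{not} force $i^*\radj{w}_\cS(C_n)$ to vanish unless one separately knows that $i^*\ladj{w}(C_n) \in \QC(\cS)^{<w}$, which is exactly what is not available. You flag this as "the main technical subtlety," but the tools you cite (\Cref{L:aperf_vs_qcoh_weights}, Nakayama, baric semi-orthogonality on $\QC(\cS)$) do not resolve it. The paper's embedding into $\bA^N_R/\Gm$ is precisely what makes $i^{\QC,!}$ computable and the argument go through; working entirely on $\cS$, one would need a separate lemma comparing the weights of $i^*$ and $i^{\QC,!}$ (or $\pi_*$), which you neither state nor prove. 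There is also a minor notational slip in your splitting of $i^*K_X(x_1^n,\ldots,x_k^n)$: the twist is indexed over $i\notin T$ while the shift is indexed by $|T|$, which is inconsistent (both should run over the same index set); this does not affect the logic but suggests the computation should be rechecked.
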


\begin{proof}

Consider the complexes
\[
C_n:= \cofib(K_{X}(x^n_1,\ldots,x^n_k) \to K_{X}(x^{n+1}_1,\ldots,x^{n+1}_k)).
\]
We must show that for any $p$, $\radj{w}_\cS(C_n \otimes F) \in \QC(X/\Gm)_{\geq p}$ for $n \gg 0$. By the right $t$-exactness of $\radj{w}_\cS$ on $\QC_\cS(X/\Gm)$ (part (4) of \Cref{P:baric_decomp_supports}), suffices to show that
\[
\tau_{\leq p}(C_n \otimes F) \in \QC(X/\Gm)^{<w} \text{ for } n \gg 0.
\]
Because $\tau_{\leq p}(C_n \otimes F)$ is set theoretically supported on $S/\Gm$, the main claim in the proof of part (6) of \Cref{P:baric_decomp_supports} shows that lying in $\QC_\cS(X/\Gm)^{<w}$ is equivalent to $R\Gamma(X,\tau_{\leq p}(C_n \otimes F)) \in \QC(B\Gm)^{<w}$.

We now extend the map of graded simplicial commutative algebras $R[x_1,\ldots,x_k] \to \cO_X$, which we used to define the Koszul complex above, to a map $R[x_1,\ldots,x_N] \to \cO_X$ which induces a surjection on $\pi_0(\cO_X)$, where $x_{k+1},\ldots,x_N$ are homogeneous of non-positive degree. This defines a $\Gm$-equivariant embedding $i : X \hookrightarrow \bA_R^N$ such that $S$ is the set-theoretic preimage of the $\Theta$-stratum $\cS' := (\bA_R^N)_+/\Gm \subset \bA_R^N/\Gm$, and each $C_n$ is the restriction of the analogous complex on $\bA_R^N/\Gm$, which we also denote $C_n$. It therefore suffices by the projection formula to show that $R\Gamma(\bA^N_R,\tau_{\leq p} (i_\ast(F) \otimes C_n)) \in \QC(B\Gm)^{<w}$, i.e., we have reduced the claim to the special case where $X=\bA^N_R$ with a linear $\Gm$-action.

For any $F \in \APerf(\bA^N_R/\Gm)$, one may find a perfect complex $P$ with a map to $F$ such that $\fib(P \to F)$ is arbitrarily connective. So by part (3) of \Cref{P:baric_decomp_supports} it suffices to consider only $F \in \Perf(\bA^N_R/\Gm)$. Because $\Perf(\bA^N_R/\Gm)$ is split-generated by $\cO_{\bA^N_R} \langle d \rangle$ for all $d \in \bZ$, it suffices to prove the claim for these objects, i.e., to show that $\tau_{\leq p}(C_n \langle d \rangle) \in \QC(\bA^N_R/\Gm)^{<w}$ for $n \gg 0$. $\QC_{\cS'}(\bA^N_R / \Gm)^{<w}$ is closed under homological truncation by part (6) of \Cref{P:baric_decomp_supports}, so it suffices to show that $C_n \langle d \rangle \in \QC_{\cS'}(\bA^N_R / \Gm)^{<w}$ for $n\gg 0$.

We can now complete the proof by an explicit computation: $C_n \langle d \rangle$ lies in the stable subcategory generated by $\cO_{\bA_R^N}\langle q \rangle$ for $q \geq n \min(d_1,\ldots, d_k) + d$. If $i' : \cS' \to \bA^N_R / \Gm$ denotes the inclusion, then
\[
(i')^{\QC,!}(\cO_{\bA^N_R}\langle q \rangle) \simeq \cO_{\cS'} \langle q - d_1 - \cdots - d_k \rangle [-k],
\]
so $(i')^{\QC,!}(C_n \langle d \rangle) \in \QC(\cS')^{<w}$ as long as $d_1+\cdots + d_k -d-n \min(d_1,\ldots,d_k) < w$. This is satisfied for $n\gg0$.

The first claim of the lemma implies that $\radj{w}_\cS(F) \in \APerf(\cX)$, because it implies that for any $p$, $\tau_{\leq p}(\radj{w}_\cS(F)) \cong \tau_{\leq p}(\radj{w}_\cS(K_{X}(x^n_1,\ldots,x^n_k) \otimes F))$ for some $n\gg0$. $K_{X}(x^n_1,\ldots,x^n_k)$ is supported on $S$, so part (4) of \Cref{P:baric_decomp_supports} implies $\radj{w}_\cS(K_{X}(x^n_1,\ldots,x^n_k) \otimes F) \in \APerf(\cX)$.

\end{proof}

\begin{lem} \label{L:characterize_sod}
For any derived $\Theta$-stratum $\cS \hookrightarrow \cX$, $F \in \APerf(\cX)$ is left orthogonal to $\APerf_\cS(\cX)^{<w}$ if and only if $F \in \APerf(\cX)^{\geq w}$.
\end{lem}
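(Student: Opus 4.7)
The plan is to prove both implications directly, exploiting the baric structure on $\QC(\cS)$ from \Cref{P:baric_stratum} and the local cohomology machinery of \Cref{P:baric_decomp_supports}.

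For the ``if'' direction, suppose $F \in \APerf(\cX)^{\geq w}$, so that $i^\ast F \in \APerf(\cS)^{\geq w}$, and let $G \in \APerf_\cS(\cX)^{<w}$ be arbitrary. The key observation is that $G$ is set-theoretically supported on $\cS$, so $G|_{\cX \setminus \cS} = 0$, while unpacking the definitions identifies $G$ as an object of $\QC(\cX)^{<w}$. Then \Cref{P:quantization_commutes_with_reduction} applies directly to give
\[
\RHom_\cX(F,G) \xrightarrow{\simeq} \RHom_{\cX \setminus \cS}\bigl(F|_{\cX \setminus \cS}, 0\bigr) = 0.
\]

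For the converse, assume $F \in \APerf(\cX)$ is left orthogonal to $\APerf_\cS(\cX)^{<w}$, and test $F$ against the single object $G := i_\ast \ladj{w}(i^\ast F)$ in order to force $E := \ladj{w}(i^\ast F)$ to vanish. By part (3) of \Cref{P:baric_stratum}, $E \in \APerf(\cS)^{<w}$, and hence $G \in \APerf_\cS(\cX)$. To verify that $G \in \APerf_\cS(\cX)^{<w}$, the $i_\ast \dashv i^{\QC,!}$ adjunction reduces the task to showing $\RHom_\cX(i_\ast H, G) = 0$ for every $H \in \QC(\cS)^{\geq w}$, which is exactly the content of \Cref{L:semi_orthogonal_pushforwards}. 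Granted this, the assumed vanishing $\RHom_\cX(F, G) \simeq \RHom_\cS(i^\ast F, E) = 0$ kills the unit morphism $\eta \colon i^\ast F \to \ladj{w}(i^\ast F) = E$; but since $\ladj{w}$ is a localization onto $\QC(\cS)^{<w}$, $\ladj{w}(\eta)$ is the identity $\id_E$, so $\id_E$ is nullhomotopic, forcing $E = 0$ and hence $i^\ast F = \radj{w}(i^\ast F) \in \APerf(\cS)^{\geq w}$.

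The main obstacle is reconciling the $i^{\QC,!}$-based definition of $\APerf_\cS(\cX)^{<w}$ in \Cref{D:subcategories_aperf} with the baric description produced by \Cref{P:baric_decomp_supports}: in both directions, one must know that the subcategory being tested against really agrees with the one dictated by \Cref{D:subcategories_aperf}. This reduces cleanly to \Cref{L:semi_orthogonal_pushforwards} together with the observation that, because $G \in \APerf_\cS(\cX)$ is homologically bounded below, $\Map(-, G)$ converts the filtered colimits and eventually-constant tower limits used to generate $\QC_\cS(\cX)^{\geq w}$ into the expected limits of abelian groups, so the $i_\ast(\QC(\cS)^{\geq w})$-orthogonality tested by the $i^{\QC,!}$-condition is enough to detect orthogonality against all of $\QC_\cS(\cX)^{\geq w}$.
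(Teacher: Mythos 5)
Your argument takes a genuinely different route from the paper's. The paper handles both implications at once: it reduces left orthogonality against $\APerf_\cS(\cX)^{<w}$ to orthogonality against $\DCoh_\cS(\cX)^{<w}$ (using closure under truncation and left-completeness), expresses such objects as finite extensions of $i_\ast(E)$ with $E \in \APerf(\cS)^{<w}$, and finishes by the $i^\ast \dashv i_\ast$ adjunction. You instead treat the two directions separately, using \Cref{P:quantization_commutes_with_reduction} for the forward implication and the well-chosen test object $i_\ast\ladj{w}(i^\ast F)$ for the converse. That converse argument is correct and pleasantly self-contained: the observation that $\ladj{w}$ of the localization unit is the identity on $\ladj{w}(i^\ast F)$ cleanly forces vanishing.

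The final paragraph, however, contains a real gap, and it is one you yourself flag as the main obstacle. To apply \Cref{P:quantization_commutes_with_reduction} in the ``if'' direction you need $\APerf_\cS(\cX)^{<w} \subset \QC(\cX)^{<w}$, i.e.\ that right orthogonality of $G$ to $i_\ast(\QC(\cS)^{\geq w})$ propagates to right orthogonality to all of $\QC_\cS(\cX)^{\geq w}$. Closure under extensions and filtered colimits is unproblematic, but your claim that boundedness \emph{below} of $G$ handles the limits of towers $\cdots \to E_1 \to E_0$ with eventually constant truncations is backwards. For such a tower, $\fib(\lim E_n \to E_k)$ becomes highly \emph{connective} as $k$ grows, so $\RHom(\fib(\lim E_n \to E_k), G)$ becomes highly coconnective only when $G$ is bounded \emph{above}; that is exactly the argument of \Cref{R:left_category}, and it fails for a general almost perfect $G$, which need not be bounded above. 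Boundedness below of $G$ plays no role in this comparison. The inclusion $\APerf_\cS(\cX)^{<w} \subset \QC(\cX)^{<w}$ is in fact true, but the route is different: since $\radj{w}_\cS$ preserves $\APerf_\cS(\cX)$ by part (4) of \Cref{P:baric_decomp_supports}, one forms $\radj{w}_\cS G \in \APerf_\cS(\cX) \cap \QC_\cS(\cX)^{\geq w}$, uses \Cref{L:aperf_vs_qcoh_weights} to see $i^\ast(\radj{w}_\cS G) \in \QC(\cS)^{\geq w}$, and then shows $\radj{w}_\cS G = 0$ (for instance by invoking closure of $\QC(\cX)^{<w}$ under homological truncation, part (6) of \Cref{P:baric_decomp_supports}, to reduce to the bounded-above case where \Cref{R:left_category} does apply). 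Without some such repair the forward implication is incomplete as written.
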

\begin{proof}
Again it suffices to assume $\cX$ is quasi-compact. By part (6) of \Cref{P:baric_decomp_supports} and the left-completeness of the $t$-structure on $\APerf_\cS(\cX)$, $F$ is left orthogonal to $\APerf_\cS(\cX)^{<w}$ if and only if it is left orthogonal to objects of $\DCoh_\cS(\cX)^{<w}$. Every such object is a finite extension of objects of the form $i_\ast(E)$ for $E \in \APerf(\cS)^{<w}$. The claim follows by adjunction.
\end{proof}

\begin{lem} \label{L:local_sod}
In the setting above, where $X$ is an affine derived $R$-scheme with a $(\Gm)_R$-action, and $i : \cS = S / \Gm \hookrightarrow X / \Gm$ the tautological $\Theta$-stratum, we have a semiorthogonal decomposition for any $w \in \bZ$ 
\[
\APerf(X/\Gm) = \langle \APerf_\cS(X/\Gm)^{<w}, \APerf(X/\Gm)^{\geq w} \rangle,
\]
and the projection functors are right $t$-exact.
\end{lem}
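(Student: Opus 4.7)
Semi-orthogonality of the pair $(\APerf(X/\Gm)^{\geq w}, \APerf_\cS(X/\Gm)^{<w})$ is immediate from \Cref{L:characterize_sod}, which identifies $\APerf(X/\Gm)^{\geq w}$ as the left orthogonal to $\APerf_\cS(X/\Gm)^{<w}$ inside $\APerf(X/\Gm)$. The substance of the lemma is to produce, for each $F \in \APerf(X/\Gm)$, a fiber sequence $B \to F \to A$ with $B \in \APerf(X/\Gm)^{\geq w}$ and $A \in \APerf_\cS(X/\Gm)^{<w}$, together with right $t$-exactness of the resulting projection functors.

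My plan is to construct the map $F \to A$ by composing the adjunction unit with a pushforward of the baric truncation on the stratum: take
\[
F \longrightarrow i_\ast i^\ast F \longrightarrow i_\ast \bigl(\ladj{w'}(i^\ast F)\bigr) =: A,
\]
where $\ladj{w'}$ is the baric truncation on $\APerf(\cS)$ from \Cref{P:baric_stratum}, and $w'$ is a specific shift of $w$ determined by the weights $d_1,\ldots,d_k > 0$ of the generators $x_1,\ldots,x_k$ of $I_+$. The target lies in $\APerf_\cS(X/\Gm)^{<w}$ for appropriate $w'$ because $i^{\QC,!}\circ i_\ast$ raises weights by a bounded, non-negative amount controlled by the conormal directions; this uses $\bL_{\cS/\cX} \in \QC(\cS)^{\geq 1}$ from \Cref{L:relative_cotangent_complex} together with the multiplicativity of the baric decomposition in \Cref{P:baric_stratum} part (6).

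The main content is then to verify that the fiber $B$ lies in $\APerf(X/\Gm)^{\geq w}$, equivalently $i^\ast B \in \APerf(\cS)^{\geq w}$. This reduces, via the Koszul resolution of $i_\ast \cO_\cS$ available in the local setting, to an explicit computation: $i^\ast i_\ast G$ is built from $G$ by iterated extensions whose weight contributions come from $\Lambda^\bullet(\bL_{\cS/\cX}[-1])$ and are therefore all non-negative, so the induced map on $i^\ast$-pullbacks is compatible with the chosen truncation in weight $\geq w'$, producing $i^\ast B$ in the required range. The hardest step is to pin down $w'$ so that both the containment of the target in $\APerf_\cS^{<w}$ and the containment of the fiber in $\APerf^{\geq w}$ hold simultaneously; I expect this may force iterating the basic construction, analogous to the $\colim_n$ that appears in \Cref{L:finiteness_radj}, with convergence ensured by the explicit weight bounds from the Koszul computation. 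Right $t$-exactness of the two projection functors then follows from the right $t$-exactness of $\ladj{w'}$ (\Cref{P:baric_stratum} part (1)), of $i_\ast$ and $i^\ast$, and of $\radj{w}_\cS$ (\Cref{P:baric_decomp_supports} part (4)), together with the fact that the construction is built only out of these operations.
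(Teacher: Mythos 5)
Your first step—semiorthogonality of the pair via \Cref{L:characterize_sod}, reducing the lemma to the existence of a factorization—is exactly what the paper does. The construction of the factorization is where the proposal diverges and where there is a genuine gap.

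You propose $A = i_\ast(\ladj{w'}(i^\ast F))$ and $B = \fib(F\to A)$, and you want $i^\ast B \in \QC(\cS)^{\geq w}$. Unwinding via the octahedron and \Cref{C:filtration_push_pull}: $i^\ast B$ is built from $\radj{w'}(i^\ast F)$ (which is fine) and from $\fib\bigl(i^\ast i_\ast G \to G\bigr)$ where $G := \ladj{w'}(i^\ast F)$, and the latter is assembled from the pieces $\Sym^n(\bL_{\cS/\cX})\otimes G$ for $n\geq 1$. The phrase ``the weight contributions \ldots are all non-negative'' is true of $\Sym^n(\bL_{\cS/\cX})\in\QC(\cS)^{\geq n}$ but does not give the bound you need, because the multiplicativity in \Cref{P:baric_stratum} part (6) controls $\QC^{\geq v}\otimes\QC^{\geq u}$, not $\QC^{\geq v}\otimes\QC^{<u}$. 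For a general $F\in\APerf(X/\Gm)$, the complex $G\in\APerf(\cS)^{<w'}$ has no lower weight bound (an almost perfect complex on $\cS$ can have center-weights tending to $-\infty$ as homological degree increases), so $\Sym^n(\bL_{\cS/\cX})\otimes G$ can have arbitrarily low weight, and $i^\ast B$ need not lie in $\QC(\cS)^{\geq w}$. Your fallback of iterating the construction raises the ``offending'' weights by at least $1$ per step, so it terminates only when the weights of $i^\ast F$ are bounded below — which again fails for general $F$, so the iteration as stated does not converge. A separate issue: the claim that $i^{\QC,!}\circ i_\ast$ ``raises weights by a bounded amount controlled by the conormal'' implicitly assumes $i$ is a regular closed immersion (so that $i^{\QC,!}$ is a twist of $i^\ast$ by an invertible sheaf), which the tautological derived $\Theta$-stratum need not be. In fact you do not need that claim: $i_\ast(\ladj{w}(E)) = \ladj{w}(i_\ast E)\in\QC_\cS(X/\Gm)^{<w}$ follows directly from part (2) of \Cref{P:baric_decomp_supports}, and that is what the paper uses to place $F''$ on the correct side.

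The paper avoids the unboundedness problem by not running the construction on an arbitrary $F$ at all. It first handles the generator $F = \cO_X\langle n\rangle$: here one tensors with a single dualized Koszul complex $K_X(x_1^m,\ldots,x_k^m)^\dual\langle n\rangle$ with $m$ chosen so that $m\min(d_i)\geq w+n$, so the fiber of $\cO_X\langle n\rangle \to K_X(\cdots)^\dual\langle n\rangle$ restricts to $\QC(\cS)^{\geq w}$ by an explicit Koszul weight count — in one shot, no iteration. Then $F'' := \ladj{w}\bigl(K_X(\cdots)^\dual\langle n\rangle\bigr)$ and $F' := \fib(\cO_X\langle n\rangle \to F'')$ give the factorization, with the weight-bound hypothesis on $i^\ast F'$ following from \Cref{P:baric_decomp_supports} and the support of $K_X(\cdots)^\dual$ on $\cS$. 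General connective $F$ is then reached by writing $F$ as a filtered colimit of finite extensions of these generators and invoking closure of the admissible class under extensions, cofibers, and colimits. To repair your argument you would need to (i) restrict to the generators $\cO_X\langle n\rangle$ so that $i^\ast F$ has a single, hence bounded, center-weight, making your iteration terminate, and (ii) add the extension/colimit step to pass from generators to general connective almost perfect complexes. At that point your iteration would be a reformulation of the paper's Koszul construction rather than a shortcut.
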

\begin{proof}
After \Cref{L:characterize_sod}, it suffices to show that every connective complex $F \in \APerf(X/\Gm)_{\geq 0}$ admits a fiber sequence $F' \to F \to F''$ with $F'' \in \APerf_{\cS}(X/\Gm)^{<w}$ and $i^\ast(F') \in\QC(\cS)^{\geq w}$ and with both $F',F''$ connective.

First consider $F = \cO_X \langle n \rangle$, and choose an $m$ such that $m \min(d_1,\ldots,d_k) \geq w + n$. Then the Koszul complex $K_X(x_1^m,\ldots,x_k^m)$ of \eqref{E:koszul_complex} is a perfect complex set theoretically supported on $S$, and there is a canonical map $K_X(x_1^m,\ldots,x_n^m)\langle n\rangle \to \cO_X\langle n \rangle$ whose fiber can be constructed as a sequence of extension of $\cO_X\langle d \rangle [p]$ with $d - n \geq w$ and $p<0$. We now let
\[
C = \fib(\cO_X\langle n \rangle \to K_X(x^m_1,\ldots,x^m_k)^\dual \langle n \rangle).
\]
Then $C$ is connective, $C|_{\cS} \in \QC(\cS)^{\geq w}$ by \Cref{L:aperf_baric_criterion}, and $K_X(x_1^m,\ldots,x_k^m)^\dual \langle n  \rangle$ is connective and set theoretically supported on $\cS$. We now define $F'' := \ladj{w}(K_X(x^m_1,\ldots,x^m_k)^\dual \langle n \rangle)$ and $F' = \fib(\cO_X \langle n \rangle \to F'')$. Then $F''$ lies in $\QC_{\cS}(X/\Gm)^{<w}$ and is connective by part (4) of \Cref{P:baric_decomp_supports}. Likewise $F'$ is naturally an extension
\[
C \to F' \to \radj{w}_\cS(K_X(x^m_1,\ldots,x^m_k)^\dual \langle n \rangle)
\]
where both $C$ and $\radj{w}_\cS(K_X(x^m_1,\ldots,x^m_k)^\dual \langle n \rangle)$ lie in $\APerf(X/\Gm)^{\geq w}$ and are connective (again by part (4) of \Cref{P:baric_decomp_supports}). It follows that $F' \in \APerf(X/\Gm)^{\geq w}$ and is connective.

Now let $F \in \APerf(X/\Gm)$ be an arbitrary connective complex. Express $F$ as a filtered colimit $F = \colim P_\alpha$ where $P_\alpha$ is a connective perfect complex constructed as a sequences of extensions of the objects $\cO_X \langle n \rangle [p]$ for $p \geq 0$, and $\tau_{\leq p}(P_\alpha)$ is eventually constant for any $p$. If $\cA \subset \APerf(X/\Gm)_{\geq 0}$ denotes the smallest full $\infty$-subcategory consisting of $F$ which admit a factorization of the desired form, then $\cA$ is closed under suspension and closed under extensions and cofibers by the semiorthogonality of $\APerf(X/\Gm)^{\geq w}$ and $\APerf_{\cS}(X/\Gm)^{<w}$. In particular $P_\alpha \in \cA$ for all $\alpha$, and if we let $P_\alpha' \to P_\alpha \to P''_\alpha$ denote the resulting factorization for each $\alpha$, then $\{P'_\alpha\}$ and $\{P''_\alpha\}$ is a filtered system of connective perfect complexes which is eventually constant after applying $\tau_{\leq p}(-)$ for any $p$. It follows that $\colim P'_\alpha \to \colim P_\alpha \to \colim P''_\alpha$ is a fiber sequence of connective complexes with $\colim P'_\alpha \in \APerf(X/\Gm)^{\geq w}$ and $\colim P''_\alpha \in \APerf_\cS(X/\Gm)^{<w}$.
\end{proof}

\subsubsection{The proof of \Cref{T:derived_Kirwan_surjectivity}}

\begin{lem} \label{L:sod_limits}
Let $\cC_i$ be a diagram of stable $\infty$-categories and exact functors between them indexed by a category $I$. Assume that each $\cC_i$ has a semiorthogonal decomposition $\cC_i = \sod{\cA_i,\cB_i}$ such that for any morphism $f:i \to j$ the functor $f_\ast : \cC_i \to \cC_j$ maps $\cA_i$ to $\cA_j$ and maps $\cB_i$ to $\cB_j$. Then $\cC := \varprojlim_{i\in I} \cC_i$ admits a semiorthogonal decomposition $\cC = \sod{\cA,\cB}$, where $\cA = \varprojlim \cA_i$ and $\cB = \varprojlim \cB_i$.
\end{lem}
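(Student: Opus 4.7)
The plan is to verify the three defining features of a semiorthogonal decomposition for $\cC = \sod{\cA, \cB}$, where $\cA := \varprojlim_{i \in I} \cA_i$ and $\cB := \varprojlim_{i \in I} \cB_i$ are computed in the $\infty$-category of stable $\infty$-categories. By hypothesis the inclusions $\cA_i \hookrightarrow \cC_i$ and $\cB_i \hookrightarrow \cC_i$ are compatible with the structure functors $f_\ast$ for $f : i \to j$, so they assemble into morphisms of $I$-diagrams and induce functors $\cA \to \cC$ and $\cB \to \cC$ upon passing to limits; these are fully faithful because fully faithful functors are closed under limits.

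Next I would establish semiorthogonality. For $A = (A_i) \in \cA$ and $B = (B_i) \in \cB$, mapping spaces in the limit are computed term-by-term, so
\[
\Map_\cC(B, A) \simeq \varprojlim_{i \in I} \Map_{\cC_i}(B_i, A_i) \simeq \varprojlim_{i \in I} \ast \simeq \ast,
\]
using the semiorthogonality in each $\cC_i$.

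The key step is to produce, for every $C = (C_i) \in \cC$, a fiber sequence $A \to C \to B$ with $A \in \cA$ and $B \in \cB$. For each $i$ let $\radj{\cA_i} : \cC_i \to \cA_i$ and $\ladj{\cB_i} : \cC_i \to \cB_i$ be the right and left projection functors of the given semiorthogonal decomposition, and let $\radj{\cA_i}(C_i) \to C_i \to \ladj{\cB_i}(C_i)$ be the canonical fiber sequence. I claim these projection functors commute with the structure maps $f_\ast$. Indeed, for any $f : i \to j$, applying $f_\ast$ to the canonical fiber sequence for $C_i$ yields a fiber sequence in $\cC_j$ whose outer terms lie in $\cA_j$ and $\cB_j$ respectively, by the preservation hypothesis; by uniqueness of semiorthogonal factorizations in $\cC_j$ applied to $f_\ast(C_i)$, this must agree with the canonical fiber sequence $\radj{\cA_j}(f_\ast C_i) \to f_\ast(C_i) \to \ladj{\cB_j}(f_\ast C_i)$, yielding canonical equivalences $f_\ast \radj{\cA_i} \simeq \radj{\cA_j} f_\ast$ and $f_\ast \ladj{\cB_i} \simeq \ladj{\cB_j} f_\ast$. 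Hence $\radj{\cA_i}$ and $\ladj{\cB_i}$ assemble into functors $\cC \to \cA$ and $\cC \to \cB$, and the fiber sequences $\radj{\cA_i}(C_i) \to C_i \to \ladj{\cB_i}(C_i)$ assemble into a fiber sequence in $\cC$ (since fiber sequences in a limit of stable $\infty$-categories are detected level-wise).

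The main technical point is the compatibility of the projection functors with the structure morphisms $f_\ast$; everything else is formal. This compatibility is where the hypothesis that $f_\ast$ preserves \emph{both} semiorthogonal factors is essential—preserving only one factor would not suffice to identify $f_\ast$ applied to the canonical factorization with the canonical factorization of $f_\ast(C_i)$, since the uniqueness argument for semiorthogonal factorizations requires the outer two terms to lie in $\cA_j$ and $\cB_j$.
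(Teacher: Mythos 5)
Your argument has the right geometric picture and correctly identifies both why semiorthogonality is cheap (mapping spaces in a limit are computed term-wise) and why the preservation hypothesis on $f_\ast$ is essential (it is what gives the compatibility of the canonical factorizations). The gap is in the sentence ``Hence $\radj{\cA_i}$ and $\ladj{\cB_i}$ assemble into functors $\cC \to \cA$ and $\cC \to \cB$, and the fiber sequences $\ldots$ assemble into a fiber sequence in $\cC$.'' You have established, for each single arrow $f : i \to j$, an equivalence $f_\ast \radj{\cA_i} \simeq \radj{\cA_j} f_\ast$ via uniqueness of the factorization, but in an $\infty$-categorical limit this does not by itself produce a map of $I$-diagrams: one needs these equivalences to be part of coherent data (compatible over $2$-simplices, $3$-simplices, etc.\ in $I$). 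The same applies to the natural transformations $\radj{\cA_i} \to \id_{\cC_i} \to \ladj{\cB_i}$ needed to assemble the fiber sequence. What you would need to cite to close this gap is a result on passing adjoints (or, dually, a coreflective localization functor) to a limit of $\infty$-categories when the relevant Beck--Chevalley transformations are equivalences; what you verified by uniqueness of factorization is precisely the Beck--Chevalley condition, so the statement you want is true, but it is a theorem and not a formal consequence of what you wrote.

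The paper circumvents the coherence issue entirely by replacing each $\cC_i$ with the full subcategory $\tilde{\cC}_i \subset \Fun(\Delta^1\times\Delta^1, \cC_i)$ of cartesian squares encoding a factorization $B_i \to C_i \to A_i$ with $A_i \in \cA_i$, $B_i \in \cB_i$. The preservation hypothesis makes $\{\tilde{\cC}_i\}$ a genuine $I$-diagram of $\infty$-categories, and the evaluation functor $\tilde{\cC}_i \to \cC_i$ at the vertex $(0,1)$ is a categorical equivalence by the uniqueness of the semiorthogonal factorization. All the functors and natural transformations needed ($\iota_{\cA}, \pi_{\cA}, \iota_{\cB}, \pi_{\cB}$, the unit and counit) are then induced by \emph{fixed maps of simplicial sets} between $\Delta^1$ and $\Delta^1\times\Delta^1$, which automatically assemble into maps of $I$-diagrams with no coherence issues, and the unit/counit identities are checked level-wise. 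This is what makes the paper's proof rigorous where the informal ``hence assemble'' is a genuine missing step.
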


\begin{proof}
For each $i$, let $\tilde{\cC}_i \subset \Fun(\Delta^1 \times \Delta^1, \cC_i)$ be the full sub-$\infty$-category consisting of commutative diagrams
\begin{equation} \label{E:triangle_square}
\xymatrix{B \ar[r] \ar[d] & C \ar[d] \\ 0 \ar[r] & A}
\end{equation}
that are cartesian and such that $B \in \cB_i \subset \cC_i$ and $A \in \cA_i \subset \cC_i$. The hypothesis that for any $f : i \to j$, $f_\ast$ is exact and maps $\cA_i$ to $\cA_j$ and $\cB_i$ to $\cB_j$ implies that $\Fun(\Delta^1 \times \Delta^1, f_\ast)$ maps the subcategory $\tilde{\cC}_i$ to $\tilde{\cC}_j$. Thus we have a diagram of $\infty$-categories taking $i \mapsto \tilde{\cC}_i$ along with a map of diagrams $\{\tilde{\cC}_i\}_{i \in I} \to \{\cC_i\}_{i \in I}$, where the functor $\tilde{\cC}_i \to \cC_i$ is restriction along the inclusion of the vertex $\{0\} \times \{1\} \in \Delta^1 \times \Delta^1$, i.e., the functor taking the square \eqref{E:triangle_square} to $C$. The fact that we have a semiorthogonal decomposition $\cC_i = \langle \cA_i, \cB_i \rangle$ implies that each functor of $\infty$-categories $\tilde{\cC}_i \to \cC_i$ is a categorical equivalence, and hence
\[
\varprojlim_{i \in I} \tilde{\cC}_i \xrightarrow{\cong} \varprojlim_{i \in I} \cC_i.
\]

Similarly, we can replace $\cA_i$ and $\cB_i$ with equivalent sub-$\infty$-categories of $\Fun(\Delta^1,\cC_i)$ of the form $0 \to A$ and $B \to 0$ respectively. We then have explicit maps of diagrams
\[
\xymatrix{ \{\tilde{\cA}_i\}_{i \in I} \ar@/^/[r]^-{\iota_\cA} & \{\tilde{\cC}_i \}_{i \in I} \ar@/^/[r]^{\pi_\cB} \ar@/^/[l]^{\pi_\cA} & \{\tilde{\cB}_i\}_{i \in I} \ar@/^/[l]^-{\iota_\cB}},
\]
where the functors out of $\tilde{\cC}_i$ take a square \eqref{E:triangle_square} to $(B \to 0) \in \tilde{\cB}_i$ and $(0 \to A) \in \tilde{\cA}_i$, and the functors into $\tilde{\cC}_i$ are given by
\[
\iota_\cB : B \mapsto \begin{array}{c}\xymatrix@1{B \ar[r]^1 \ar[d] & B \ar[d] \\ 0 \ar[r] & 0}\end{array} \text{ and }  \iota_\cA : A \mapsto \begin{array}{c}\xymatrix@1{0 \ar[r] \ar[d] & A \ar[d]^1 \\ 0 \ar[r] & A}\end{array}.
\]
More precisely, all four functors arise from maps between the simplicial sets $\Delta^1 \times \Delta^1$ and $\Delta^1$. For instance, the composition $\iota_\cB \circ \pi_\cB$ is induced by the map of posets $[1] \times [1] \to [1] \times [1]$ taking $(i,j) \mapsto (0,j)$. Furthermore, under the isomorphism
\[
\Fun(\Delta^1 \times \Delta^1 \times \Delta^1, \cC_i) \cong \Fun(\Delta^1, \Fun(\Delta^1 \times \Delta^1,\cC_i)),
\]
pulling back along the map $\Delta^1 \times \Delta^1 \times \Delta^1 \to \Delta^1 \times \Delta^1$ induced by the map of posets $[1] \times [1] \times [1] \to [1] \times [1]$ that takes $(0,i,j) \mapsto (0,0,j)$ and $(1,i,j) \mapsto (1,i,j)$ gives a natural transformation $\eta_\cB : \iota_\cB \circ \pi_\cB \to \id_{\tilde{\cC}_i}$ of functors between diagram categories. In a similar way, one constructs natural transformations $\epsilon_\cB : \id_{\tilde{\cB}_i} \to \pi_\cB \circ \iota_\cB$, and for each individual $i\in I$, it is clear that these correspond to the counit and unit of an adjunction $\iota_\cB \dashv \pi_\cB$. Therefore, after passing to the limit $\eta_\cB$ and $\iota_\cB$ will again satisfy the unit/counit identities for an adjunction $\varprojlim(\iota_\cB) \dashv \varprojlim(\pi_\cB)$. A similar argument constructs an adjunction $\varprojlim(\pi_\cA) \dashv \varprojlim(\iota_\cA)$.

From here it is straightforward to see that the functors above define a semiorthogonal decomposition
\[
\varprojlim \nolimits_{i \in I} \tilde{\cC}_i = \langle \varprojlim \nolimits_{i \in I} \tilde{\cA}_i, \varprojlim \nolimits_{i \in I} \tilde{\cB}_i \rangle.
\]
The facts that the unit $\epsilon_\cB : \id_{\tilde{\cB}_i} \to \pi_\cB \circ \iota_\cB$ and counit $\eta_\cA : \pi_\cA \circ \iota_\cA \to \id_{\tilde{\cA}_i}$ are categorical equivalences, as well as the facts that $\pi_\cA \circ \iota_\cB \sim 0$ and $\pi_\cB \circ \iota_\cA \sim 0$, can be checked for each $i \in I$ individually and therefore pass to the limit. The fact that the essential image of $\varprojlim(\iota_\cA)$ and $\varprojlim(\iota_\cB)$ are semiorthogonal follows from the formula
\[
\RHom_{\varprojlim \tilde{\cC}_i} (\{F_i\},\{G_i\}) \simeq \varprojlim_i \RHom_{\tilde{\cC}_i}(F_i,G_i),
\]
and the semiorthogonality of $\tilde{\cB}_i \dashv \tilde{\cA}_i$ in each $\tilde{\cC}_i$.
\end{proof}

Finally we note the following, which establishes claim (6):

\begin{lem} \label{L:pullback_preservation}
If $p : \cX' \to \cX$ is a morphism such that $\cS$ induces a $\Theta$-stratum $\cS' \hookrightarrow \cX'$ for which the canonical map $\cS' \to \cS \times_\cX \cX'$ is an isomorphism, then $p^\ast : \APerf(\cX) \to \APerf(\cX')$ preserves all of the subcategories of $\APerf$ appearing in \eqref{E:main_SOD}.
\end{lem}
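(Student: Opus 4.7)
The proof will check preservation for each of the five subcategories $\APerf_\cS(\cX)^{<w}, \cG_\cS^w, \APerf_\cS(\cX)^{\geq w}, \APerf(\cX)^{<w}, \APerf(\cX)^{\geq w}$ in turn, relying on three already-established tools: (a) the tautological base-change identity $(i')^\ast p^\ast \simeq (p')^\ast i^\ast$ for the Cartesian square formed by the hypothesis $\cS' \simeq \cS \times_\cX \cX'$, where $p' : \cS' \to \cS$ is the pulled back map; (b) the fact that $p'$ is $\Theta$-equivariant for the canonical weak $\Theta$-actions of \Cref{P:theta_action_strata} (because $p'$ is the restriction of $\Filt(p) : \Filt(\cX') \to \Filt(\cX)$, which is $\Theta$-equivariant by construction), so that by part (4) of \Cref{P:baric_stratum}, $(p')^\ast$ preserves $\APerf(\cS)^{\geq w}$ and $\APerf(\cS)^{<w}$; and (c) part (5) of \Cref{P:baric_decomp_supports}, which asserts that $p^\ast$ canonically commutes with the functors $\radj{w}_\cS$ and $\ladj{w}$ on $\QC$, so that it preserves $\QC_\cS(\cX)^{\geq w}$ and $\QC(\cX)^{<w}$.

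\textbf{The $\geq w$ cases.} For $F \in \APerf(\cX)^{\geq w}$, i.e., $i^\ast F \in \APerf(\cS)^{\geq w}$, apply (a) to get $(i')^\ast p^\ast F \simeq (p')^\ast i^\ast F$, which lies in $\APerf(\cS')^{\geq w}$ by (b); hence $p^\ast F \in \APerf(\cX')^{\geq w}$. For $F \in \APerf_\cS(\cX)^{\geq w}$, invoke \Cref{L:aperf_vs_qcoh_weights} to identify this with $\APerf(\cX) \cap \QC_\cS(\cX)^{\geq w}$; since $\APerf$ is preserved by $p^\ast$ trivially and $\QC_\cS(\cX)^{\geq w}$ is preserved by (c), the conclusion follows.

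\textbf{The $<w$ cases.} For $\APerf_\cS(\cX)^{<w}$, the analog of \Cref{L:aperf_vs_qcoh_weights} (proved by the same right-orthogonality argument, using that $\APerf_\cS(\cX)$ inherits its baric structure from $\QC_\cS(\cX)$) identifies it with $\APerf(\cX) \cap \QC_\cS(\cX)^{<w}$, which is preserved by $p^\ast$ via (c). For the more subtle case of $\APerf(\cX)^{<w}$ (defined by $i^{\QC,!} F \in \QC(\cS)^{<w}$, where base change for $i^{\QC,!}$ is not directly available), the plan is to first prove the identification $\APerf(\cX)^{<w} = \APerf(\cX) \cap \QC(\cX)^{<w}$. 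Given this identification, preservation under $p^\ast$ follows immediately from (c). Finally, $\cG_\cS^w = \APerf(\cX)^{\geq w} \cap \APerf(\cX)^{<w}$ is preserved as an intersection of preserved subcategories.

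\textbf{Main obstacle.} The key technical point is the identification $\APerf(\cX)^{<w} = \APerf(\cX) \cap \QC(\cX)^{<w}$, which bridges the definition in terms of $i^{\QC,!}$ (used in \Cref{D:subcategories_aperf}) and the $\QC$-level condition $R\Gamma_\cS F \in \QC_\cS(\cX)^{<w}$ (from \Cref{D:subcategories_qcoh}). For $F$ bounded above, \Cref{R:left_category} gives exactly this equivalence; for $F \in \APerf(\cX)$, which is merely bounded below, the plan is to apply that remark to each truncation $\tau_{\leq d} F$ and then pass to the left $t$-complete limit $F \simeq \lim_d \tau_{\leq d} F$, using that $i^{\QC,!}$ is left $t$-exact, that $\QC(\cS)^{<w}$ is closed under the relevant homological truncations by part (6) of \Cref{P:baric_decomp_supports}, and that $\radj{w}_\cS$ has uniformly bounded amplitude so that vanishing after truncation forces vanishing on the nose. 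Once this identification is in hand, the entire lemma reduces to a direct appeal to tools (a), (b), (c) as outlined above.
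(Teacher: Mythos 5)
Your overall strategy mirrors the paper's proof: the cartesian square gives $(i')^\ast p^\ast \simeq (p')^\ast i^\ast$, the map $p' \colon \cS' \to \cS$ is $\Theta$-equivariant so part~(4) of \Cref{P:baric_stratum} applies, and part~(5) of \Cref{P:baric_decomp_supports} shows $p^\ast$ preserves $\QC(\cX)^{<w}$ and $\QC_\cS(\cX)^{\geq w}$. The $\geq w$ cases are handled the same way the paper does. You go beyond the paper's (very terse) argument by explicitly isolating the comparison between the $i^{\QC,!}$-definition of $\APerf(\cX)^{<w}$ from \Cref{D:subcategories_aperf} and the $\QC$-level condition of \Cref{D:subcategories_qcoh}, and you rightly identify this as the main obstacle; the paper leaves that bridge entirely implicit.

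However, the argument you propose for that obstacle has a gap. To apply \Cref{R:left_category} to a given $\tau_{\leq d} F$ you need to know $i^{\QC,!}(\tau_{\leq d} F) \in \QC(\cS)^{<w}$, but this does not follow from $i^{\QC,!}(F) \in \QC(\cS)^{<w}$. The fiber of the transition map $i^{\QC,!}(\tau_{\leq d+1}F) \to i^{\QC,!}(\tau_{\leq d}F)$ is $i^{\QC,!}(H_{d+1}(F))[d+1]$, and since $i_\ast\cO_\cS$ is only almost perfect (not perfect, unless $i$ is a regular closed immersion), $i^{\QC,!}$ applied to a discrete coherent sheaf need not be homologically bounded; so the tower $\{i^{\QC,!}(\tau_{\leq d}F)\}_d$ need not have eventually constant truncations, and one cannot read off the baric type of the individual terms from that of their limit $i^{\QC,!}(F)$. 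Left $t$-exactness of $i^{\QC,!}$ plus the homology criterion of part~(6) of \Cref{P:baric_decomp_supports} do not, by themselves, close this. Note also that the claimed ``analog of \Cref{L:aperf_vs_qcoh_weights}'' for the $<w$ case is not a straightforward mirror: that lemma uses \emph{left} orthogonality (which passes through the limits $\varprojlim_p \ladj{w}_\cS(\tau_{\leq p} G)$ appearing there), whereas the $<w$ condition would require \emph{right} orthogonality, which does not pass through those limits. A route that sidesteps the identification entirely, and which reflects the only use of this lemma in the paper (via \Cref{L:cech_stratum}, where every $p$ is smooth and hence of finite Tor-amplitude), is to pull back the $i^{\QC,!}$ condition directly: base change $(i')^{\QC,!} p^\ast \simeq (p')^\ast i^{\QC,!}$ holds as in \Cref{L:pullback_qcshriek} when $p$ has finite Tor-amplitude, after which $\Theta$-equivariance of $p'$ finishes the $<w$ case.
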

\begin{proof}
$p^\ast$ automatically preserves the subcategories $\APerf(-)$ and $\APerf_\cS(-)$. By \Cref{P:baric_decomp_supports} part (5) the pullback functor $p^\ast$ maps $\QC(\cX)^{<w}$ to $\QC(\cX')^{<w}$. The morphism $\cS' \to \cS$ is $\Theta$-equivariant, so \Cref{P:baric_stratum} part (4) implies that $p^\ast$ maps $\APerf(\cX)^{\geq w}$ to $\APerf(\cX')^{\geq w}$.
\end{proof}

Let $p : \cX' \to \cX$ be a smooth representable morphism of algebraic derived stacks that are locally almost of finite presentation and with affine automorphism groups over a noetherian base stack, and let $\cS \hookrightarrow \cX$ be a $\Theta$-stratum that induces a $\Theta$-stratum $\cS' \hookrightarrow \cX'$. Then the Cech nerve $\cX'_\bullet$ of $p$ is the simplicial stack with $\cX'_n := \cX'^{\times_\cX^{n+1}} = \cX' \times_{\cX} \cX' \times_{\cX} \cdots \times_{\cX} \cX'$, and by \Cref{L:induced_stratum_base_change}, $\cS$ induces a $\Theta$-stratum $\cS'_n \hookrightarrow \cX'_n$ for every $n$.

\begin{lem} \label{L:cech_stratum}
If in the situation above, the semiorthogonal decomposition \eqref{E:main_SOD} and the claims of \Cref{T:derived_Kirwan_surjectivity} about the homological amplitude of the truncation functors are known for $\APerf(\cX'_n)$ for all $n\geq 0$, then they hold for $\APerf(\cX)$ as well.
\end{lem}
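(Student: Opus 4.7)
The plan is to descend the semiorthogonal decomposition from the Cech nerve $\cX'_\bullet$ of $p$ to $\cX$ using \Cref{L:sod_limits}. By smooth descent for almost perfect complexes, the canonical pullback functor gives an equivalence $\APerf(\cX) \xrightarrow{\sim} \lim_{[n] \in \Delta} \APerf(\cX'_n)$, where every structure morphism is the pullback $f^\ast$ along a face or degeneracy map $f \colon \cX'_m \to \cX'_n$ of the Cech nerve. First I would verify that each such $f^\ast$ preserves all five subcategories appearing in \eqref{E:main_SOD}: each $f$ is smooth and representable, $\cS'_m$ is a $\Theta$-stratum in $\cX'_m$ by \Cref{L:induced_stratum_base_change}, and so \Cref{L:induced_stratum_smooth} furnishes the isomorphism $\cS'_m \xrightarrow{\sim} \cS'_n \times_{\cX'_n} \cX'_m$ required by \Cref{L:pullback_preservation}.

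Applying \Cref{L:sod_limits} iteratively -- viewing the three-term decomposition as two nested two-term ones -- to the cosimplicial system $[n] \mapsto \APerf(\cX'_n)$ then yields a three-term semiorthogonal decomposition
\[
\APerf(\cX) = \bigl\langle \lim_n \APerf_{\cS'_n}(\cX'_n)^{<w}, \; \lim_n \cG^w_{\cS'_n}, \; \lim_n \APerf_{\cS'_n}(\cX'_n)^{\geq w} \bigr\rangle.
\]
The substantive remaining task is to identify each of the three limit subcategories with its counterpart on $\cX$ under the descent equivalence. The key point is that every defining condition in \Cref{D:subcategories_aperf} is smooth-local on $\cX$: set-theoretic support on $\cS$ is preserved and reflected by the faithfully flat pullback $p^\ast$ because $p^{-1}(|\cS|) = |\cS'|$; the condition $i^\ast F \in \APerf(\cS)^{\geq w}$ is detected after smooth surjective pullback to $\cS'$ because the baric truncation functors on $\QC(\cS)$ commute with smooth pullback by part (4) of \Cref{P:baric_stratum} and because $p^\ast$ is conservative on almost perfect complexes; and the condition $F \in \APerf(\cX)^{<w} = \APerf(\cX) \cap \QC(\cX)^{<w}$ is smooth-local by part (5) of \Cref{P:baric_decomp_supports}.

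Finally, the homological amplitude claims transfer from each $\APerf(\cX'_n)$ to $\APerf(\cX)$ because the projection functors on the limit are compatible with those on each level via smooth pullback, and because smooth morphisms are $t$-exact. The main obstacle I anticipate is the previous step -- in particular, matching the subcategory $\APerf(\cX)^{<w}$, whose definition uses the right adjoint $i^{\QC,!}$ that does not \emph{a priori} commute with smooth pullback, with the corresponding limit subcategory. This difficulty is resolved by passing to the equivalent characterization $\APerf(\cX)^{<w} = \APerf(\cX) \cap \QC(\cX)^{<w}$, where the relevant baric truncation functor $\ladj{w}$ on $\QC(\cX)$ is known to commute with smooth pullback by part (5) of \Cref{P:baric_decomp_supports}.
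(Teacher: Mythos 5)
Your proof takes essentially the same approach as the paper's: apply smooth descent $\APerf(\cX) \cong \Tot\{\APerf(\cX'_\bullet)\}$, use \Cref{L:induced_stratum_base_change} and \Cref{L:induced_stratum_smooth} to put each face/degeneracy map in a position where \Cref{L:pullback_preservation} applies, and then invoke \Cref{L:sod_limits}. You devote more attention to the step that the paper's proof compresses into the phrase ``by construction'' --- namely, that the limit subcategories produced by \Cref{L:sod_limits} coincide with the subcategories of \Cref{D:subcategories_aperf} formed for $\cS \hookrightarrow \cX$ --- and your observation that the difficulty concentrates in the $\APerf(\cX)^{<w}$ factor, because $i^{\QC,!}$ is not \emph{a priori} smooth-local, is accurate. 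Your resolution, passing to the characterization via $\QC(\cX)^{<w}$ and part~(5) of \Cref{P:baric_decomp_supports}, is exactly what the paper itself uses implicitly in the proof of \Cref{L:pullback_preservation}, so you have made an implicit step of the paper explicit rather than diverged from it. The remaining points (iterating \Cref{L:sod_limits} for the three-term decomposition; checking homological amplitude bounds locally using that $p^\ast$ commutes with the truncation functors and is $t$-exact) match the paper's reasoning.
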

\begin{proof}
Because $\Filt(-)$ is functorial and commutes with limits, $\Filt(\cX'_\bullet)$ is a simplicial stack, and it is canonically identified with the Cech nerve of the map $\Filt(\cX') \to \Filt(\cX)$, which is smooth by \Cref{C:stratum_smooth}. By definition $\cS'_n \subset \Filt(\cX'_n)$ is the preimage of $\cS \subset \Filt(\cX)$ under the augmentation map $\Filt(\cX'_n) \to \Filt(\cX)$, and it follows that $\cS'_\bullet$ is a simplicial stack for which all maps are weakly $\Theta$-equivariant. \Cref{L:induced_stratum_smooth} implies that the canonical map $\cS'_n \cong \cS \times_{\cX} \cX'_n$ is an isomorphism for every $n$.

It follows formally that for any map $[m] \to [n]$, the inclusion $\cS'_n \hookrightarrow \cX'_n$ is the $\Theta$-stratum induced by $\cS'_m \hookrightarrow \cX'_m$, and the canonical map $\cS'_n \to \cS'_m \times_{\cX'_m} \cX'_n$ is an isomorphism. We now apply \Cref{L:pullback_preservation} to conclude that the pullback $\APerf(\cX'_m) \to \APerf(\cX'_n)$ preserves all of the categories in the semiorthogonal decomposition \eqref{E:main_SOD}. It follows from \Cref{L:sod_limits} that the limit of the cosimplicial $\infty$-category
\[
\APerf(\cX) \cong \Tot\{\APerf(\cX'_\bullet)\}
\]
has a semiorthogonal decomposition whose factors are, by construction, the subcategories of $\APerf(\cX)$ appearing in \eqref{E:main_SOD}. All of the claims about the homological amplitude of the truncation functors can be checked locally, and $p^\ast$ commutes with the truncation functors by \Cref{L:pullback_preservation}, so it suffices to verify them on $\cX'$.
\end{proof}

We now make the following successive simplifications to proving \Cref{T:derived_Kirwan_surjectivity}:
\begin{enumerate}
\item By \Cref{L:sod_limits} and faithfully flat descent for $\APerf(\cX)$, it suffices to establish the semiorthogonal decomposition after base change to a smooth cover of the base stack $\cB$, so we may assume $\cB = \Spec(R)$.
\item By \cite{AHLH}*{Lem.~6.8} one can find a Zariski cover of $\cX$ by a union of quasi-compact open substacks $\cU$ such that $\cS$ induces a $\Theta$-stratum in $\cU$. By \Cref{L:cech_stratum} it suffices to prove the existence of the semiorthogonal decomposition and the homological amplitude bounds for each $\cU$. We may therefore assume $\cX$ is quasi-compact.
\item Assuming $\cX$ is quasi-compact, we can choose a smooth affine surjective map $u_0 : X_0 / \Gm \to \cX$ as in \Cref{T:local_structure_stratum}, where $X_0$ is an affine derived $\Gm$-scheme, and $\cS$ induces the tautological $\Theta$-stratum in $X_0 / \Gm$. In fact every level of the Cech nerve has this form, so by \Cref{L:cech_stratum} it suffices to prove the first part of the theorem for the tautological $\Theta$-stratum in $X/\Gm$ for an affine derived $\Gm$-scheme $X$.
\end{enumerate}

In the local case $X/\Gm$, \Cref{L:finiteness_radj} implies that the truncation functors $\radj{w}_\cS$ and $\ladj{w}$ of the semiorthogonal decomposition from \Cref{P:baric_decomp_supports} preserve the subcategory $\APerf(\cX)$, which establishes a semiorthogonal decomposition
\[
\APerf(X/\Gm) = \langle \APerf(X/\Gm)^{<w},\APerf_\cS(X/\Gm)^{\geq w} \rangle.
\]
The bounds on homological amplitude of the truncation functors follow from parts (3) and (4) of \Cref{P:baric_decomp_supports}. \Cref{L:local_sod} then establishes a semiorthogonal decomposition
\[
\APerf(X/\Gm) = \langle \APerf_\cS(X/\Gm)^{<w},\APerf(X/\Gm)^{\geq w} \rangle,
\]
whose truncation functors are right $t$-exact. This establishes the first part of \Cref{T:derived_Kirwan_surjectivity}.

\medskip
\noindent \textit{Proof of (1):}
\medskip

The restriction functor $\cG_\cS^w \to \APerf(\cX \setminus \cS)$ is fully faithful by \Cref{P:quantization_commutes_with_reduction}, so it suffices to show essential surjectivity. Any $F \in \Coh(\cX \setminus \cS)$ admits a lift to an object $\tilde{F} \in \Coh(\cX)$. Then $\radj{w}(\ladj{w}(\tilde{F})) \in \cG_\cS^w$ is also a lift, and it lies in $\APerf(\cX)_{\geq -d}$, where $d$ is the homological amplitude of the truncation functor $\ladj{w}$. By induction, any $F \in \DCoh(\cX)_{\geq q}$ is the restriction of a unique object in $\cG_\cS^w \cap \APerf(\cX)_{\geq q-d}$. Finally, consider a general $F \in \APerf(\cX)_{\geq q}$. Fully faithfulness implies that the inverse system $\{ \tau_{\leq p} (F)\}_p$ lifts to an inverse system $\{\tilde{F}_p\}$ in $\cG_\cS^w$, and furthermore the fiber of the map $\tilde{F}_{p+1} \to \tilde{F}_p$ is $(p-d)$-connective for all $p$. It follows that $\varprojlim_{p \to \infty} \tilde{F}_p$ is an object of $\cG_\cS^w$ lifting $F$.

\medskip
\noindent \textit{Proof of (2):}
\medskip

This follows from part (4) of \Cref{P:baric_decomp_supports}, which provides for any $F \in \APerf_\cS(\cX)$ a fiber sequence $\radj{w}_\cS(F) \to F \to \ladj{w}(F)$ with $\ladj{w}(F) \in \APerf_\cS(\cX)^{<w}$ and $\radj{w}_\cS(F) \in \APerf_\cS(\cX)^{\geq w}$ by \Cref{L:aperf_vs_qcoh_weights}.

\medskip
\noindent \textit{Proof of (3):}
\medskip

We have seen in \Cref{L:center_pullback} that $\pi^\ast : \APerf(\cZ)^{w} \to \APerf(\cS)^{w}$ is an equivalence. To show that $i_\ast : \APerf(\cS)^{w} \to \APerf(\cX)$ is fully faithful, it suffices by adjunction to show that the canonical map
\[
\RHom_\cS(F,G) \to \RHom_\cS(i^\ast (i_\ast(F)),G)
\]
is an isomorphism. As we argue in the proof of \Cref{L:semi_orthogonal_pushforwards}, $\cofib(i^\ast(i_\ast(F)) \to F)$ has a nonnegative filtration whose associated graded is $\Sym^{>0}(\bL_{\cS/\cX}) \otimes F$, and it therefore lies in $\QC(\cS)^{\geq w+1}$. Because $G \in \QC(\cS)^{<w+1}$ by hypothesis, $\RHom_\cS(\cofib(i^\ast(i_\ast(F)) \to F), G)=0$, and the claim follows.

\medskip
\noindent \textit{Proof of (4):}
\medskip

This is a consequence of applying (2) at different values of $w$ to arrive at refined semiorthogonal decompositions for $\APerf_\cS(\cX)^{\geq v}$ and $\APerf_\cS(\cX)^{<v}$, and then combining these with \eqref{E:main_SOD}. We leave the details to the reader.

\medskip
\noindent \textit{Proof of (5):}
\medskip

Using the main semiorthogonal decomposition for $\APerf(\cX)$ at a fixed weight $w_0$, we see that the projection of $\radj{w}(F)$ onto $\APerf(\cX)^{\geq w_0}$ is independent of $w$ for $w\leq w_0$, so it suffices to prove the first claim for $F \in \APerf_\cS(\cX)^{<w_0}$. By the same reasoning it suffices to prove the second claim for $F \in \APerf_\cS(\cX)^{\geq w_0}$. We may therefore assume that $F \in \APerf_\cS(\cX)$.

Because $\radj{w}$ and $\ladj{w}$ are right $t$-exact on $\APerf_\cS(\cX)$, we can replace $F$ with $\tau_{\leq p}(F) \in \DCoh_\cS(\cX)$. If the claim holds for two complexes, then it holds for any extension of those complexes, so we may reduce to the case where $F = i_\ast(E)$ for some $E \in \DCoh(\cS)_{\heart}$. Because $i_\ast$ is $t$-exact and commutes with $\radj{w}$ and $\ladj{w}$, it suffices to show the corresponding claim for the baric truncation functors on $\APerf(\cS)$.

The baric truncation functors commute with restriction to the center $\sigma^\ast \colon \APerf(\cS) \to \APerf(\cZ)$ by part (4) of \Cref{P:baric_stratum}, and by Nakayama's lemma the homological degree of the lowest nonvanishing homology sheaf of $F \in \APerf(\cS)$ will agree with that of $\sigma^\ast(F)$. It follows that to show that $\radj{-w}(F) \to F$ is $p$-connected for $w \gg 0$, it suffices to show that $\radj{-w}(\sigma^\ast(F)) \to \sigma^\ast(F)$ is $p$-connected  for $w \gg 0$. This is manifest, because the baric truncation functors on $\cZ$ are $t$-exact. The same argument shows that $F \to \ladj{w}(F)$ is $p$-connected for $w \gg 0$.

\qed

\subsection{Extensions to multiple strata}
\label{sect:multiple_strata}

Now let $\cX$ be a derived algebraic stack locally almost of finite presentation and with affine diagonal over a locally noetherian algebraic base stack $\cB$.
\begin{defn} \label{D:theta_stratification} \cite{halpern2014structure}*{Defn.~2.0.1.2}
A \emph{$\Theta$-stratification} of $\cX$ indexed by a totally ordered set $I$ with minimal element $0 \in I$ consists of: 1) a collection of open substacks $\cX_{\leq \alpha} \subset \cX$ for $\alpha \in I$ such that $\cX_{\leq \alpha} \subset \cX_{\leq \alpha'}$ when $\alpha < \alpha'$; 2) a $\Theta$-stratum $\cS_\alpha \subset \Filt(\cX_{\leq \alpha})$ for all $\alpha \in I$ such that $\cX_{\leq \alpha} \setminus \ev_1(|\cS_\alpha|) = \bigcup_{\alpha'<\alpha} \cX_{\leq \alpha'}$; and 3) for every $x \in |\cX|$, there is a minimal $\alpha \in I$ such that $x \in \cX_{\leq \alpha}$. We define the \emph{semistable locus} to be $\cX^{\rm ss} := \cX_{\leq 0} \subset \cX$.
\end{defn}
%

Given a $\Theta$-stratification, we will let $i_\alpha := \ev_1 |_{\cS_{\alpha}} : \cS_\alpha \subset \Filt(\cX_{\leq \alpha}) \to \cX_{\leq \alpha}$ denote the corresponding closed immersion. We will sometimes regard $\cS_{\alpha}$ as a locally closed substack of $\cX$, and write $\cX = \cX^{{\rm ss}} \bigcup_{\alpha>0} \cS_\alpha$. 

Note that $\cX$ need not be quasi-compact, and the stratification need not be finite. As in the case of a single $\Theta$-stratum, \Cref{D:theta_stratification} makes sense as written when $\cX$ is a classical stack and $\Filt(\cX)$ is interpreted as the classical mapping stack, and the restriction of $\cS$ to $\Filt(\cX^{\rm cl})^{\rm cl}$ induces a bijection between derived $\Theta$-stratifications and $\Theta$-stratifications of the underlying classical stack $\cX^{\rm cl}$.


\begin{ex}[Bia\l{}ynicki-Birula stratification]
The simplest example is when $\cX = X /\Gm$ for some projective $\Gm$-scheme $X$ with equivariant ample line bundle $\cO_X(1)$. Let $Y_i \hookrightarrow X$ denote the locally closed subscheme of points $x$ such that $\lim_{t \to 0} t \cdot x$ lies on the open and closed subscheme $Z_i \subset X^{\Gm}$ on which $\cO_X(1)$ has weight $i$. Then $\bigcup_{j > i} Y_j$ is closed for any $i$, $Y_i / \Gm \hookrightarrow (X \setminus \bigcup_{j>i} Y_j)/\Gm$ equipped with a canonical derived structure is a $\Theta$-stratum, and these define a $\Theta$-stratification of $X/\Gm$ with $\cX^{\rm ss} = \emptyset$.
\end{ex}

\begin{defn} \label{D:categories_multiple_strata}
Let $\bS = \{(\cX_{\leq \alpha},\cS_\alpha)\}_{\alpha \in I}$ be a $\Theta$-stratification of $\cX$. Choose $w_\alpha \in \bZ$ for each $\alpha \in I$. We define
\begin{gather*}
\APerf(\cX)^{\geq w} := \left\{ F \in \APerf(\cX) \left| \forall \alpha, i_\alpha^\ast (F|_{\cX_{\leq \alpha}}) \in \APerf(\cS_\alpha)^{\geq w_\alpha} \right. \right\}, \text{ and} \\
\APerf(\cX)^{<w} := \left\{ F \in \APerf(\cX) \left| \forall \alpha, i_\alpha^{\QC,!}(F|_{\cX_{\leq \alpha}}) \in \QC(\cS_\alpha)^{< w_\alpha} \right. \right\}.
\end{gather*}
We also define $\cG_\bS^w := \APerf(\cX)^{\geq w} \cap \APerf(\cX)^{<w}$, $\APerf_{\cX^{\us}}(\cX)^{\geq w} := \APerf_{\cX^{\us}}(\cX) \cap \APerf(\cX)^{\geq w}$, and $\APerf_{\cX^{\us}}(\cX)^{< w} := \APerf_{\cX^{\us}}(\cX) \cap \APerf(\cX)^{< w}$.
\end{defn}
In other words, each of these categories consist of complexes whose restriction to each open substack $\cX_{\leq \alpha}$ lies in the corresponding category of \Cref{D:subcategories_aperf} with respect to the $\Theta$-stratum $\cS_\alpha \hookrightarrow \cX_{\leq \alpha}$.

\begin{thm} \label{T:derived_Kirwan_surjectivity_full}
Let $\cX$ be an algebraic derived stack locally a.f.p. and with affine diagonal over a locally noetherian base stack, and let $\bS = \{(\cX_{\leq \alpha},\cS_\alpha)\}_{\alpha \in I}$ be a $\Theta$-stratification of $\cX$. Then for any choice $w = \{w_\alpha \in \bZ\}_{\alpha \in I}$ we have a semiorthogonal decomposition
\begin{equation} \label{E:main_SOD_full}
\APerf(\cX) = \left \langle \lefteqn{\overbrace{\phantom{\APerf_{\cX^{\us}}(\cX)^{<w}, \cG_\bS^w}}^{\APerf(\cX)^{<w}}} \APerf_{\cX^{\us}}(\cX)^{<w}, \lefteqn{\underbrace{\phantom{\cG_\bS^w, \APerf_{\cX^{\us}}(\cX)^{\geq w}}}_{\APerf(\cX)^{\geq w}}} \cG_\bS^w, \APerf_{\cX^{\us}}(\cX)^{\geq w} \right\rangle.
\end{equation}
This semiorthogonal decomposition has the following properties:
\begin{enumerate}
\item The restriction functor $\cG_\bS^{w} \to \APerf(\cX^{\rm ss})$ is an equivalence.
\item We have a semiorthogonal decomposition
\[
\APerf_{\cX^{\us}}(\cX) = \langle \APerf_{\cX^{\us}}(\cX)^{<w}, \APerf_{\cX^{\us}}(\cX)^{\geq w} \rangle.
\]
\end{enumerate}
\end{thm}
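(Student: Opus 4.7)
The plan is to argue by transfinite induction on the index set $I$, establishing the statement at each stage $\alpha$ for the induced $\Theta$-stratification on the open substack $\cX_{\leq\alpha}$ with strata indexed by $\{\beta\leq\alpha\}$; the global theorem then follows from a passage to the limit over all of $I$. The base case $\alpha=0$ is trivial since $\cX_{\leq 0}=\cX^{\rm ss}$ carries no strata, so that $\cG_\bS^w|_{\cX^{\rm ss}}=\APerf(\cX^{\rm ss})$ and both ``supports'' factors vanish.

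For the inductive step, the first move is to compute $\APerf(\cX_{<\alpha})$ where $\cX_{<\alpha}:=\bigcup_{\alpha'<\alpha}\cX_{\leq\alpha'}$. Since this is a filtered union of opens, Zariski descent presents $\APerf(\cX_{<\alpha})$ as the limit of the $\APerf(\cX_{\leq\alpha'})$ over the Cech nerve of the cover. Each term of the nerve inherits a compatible $\Theta$-stratification, pullback along each open immersion preserves every subcategory appearing in the SOD by part (6) of \Cref{T:derived_Kirwan_surjectivity}, and \Cref{L:sod_limits} assembles the cosimplicial diagram of SODs supplied by the inductive hypothesis into a three-piece SOD on $\APerf(\cX_{<\alpha})$. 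I would then apply \Cref{T:derived_Kirwan_surjectivity} to the single $\Theta$-stratum $\cS_\alpha\hookrightarrow\cX_{\leq\alpha}$ at weight $w_\alpha$, giving
\[
\APerf(\cX_{\leq\alpha})=\langle \APerf_{\cS_\alpha}(\cX_{\leq\alpha})^{<w_\alpha},\,\cG_{\cS_\alpha}^{w_\alpha},\,\APerf_{\cS_\alpha}(\cX_{\leq\alpha})^{\geq w_\alpha}\rangle
\]
together with the equivalence $\cG_{\cS_\alpha}^{w_\alpha}\xrightarrow{\sim}\APerf(\cX_{<\alpha})$ furnished by the restriction functor. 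Substituting the inductive three-piece SOD into the middle factor yields a five-piece refinement, and I would combine the two ``$<w$'' factors into $\APerf_{\cX^{\us}_{\leq\alpha}}(\cX_{\leq\alpha})^{<w}$ and the two ``$\geq w$'' factors into $\APerf_{\cX^{\us}_{\leq\alpha}}(\cX_{\leq\alpha})^{\geq w}$, using the single-stratum supports SOD \eqref{E:supports_SOD} applied to $\cS_\alpha$ to verify the required internal semiorthogonality. The equivalence $\cG_\bS^w\xrightarrow{\sim}\APerf(\cX^{\rm ss})$ is transported through this manipulation, as both the level-$\alpha$ equivalence and the inductive equivalence are restriction functors. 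Claim (2) on the unstable locus is obtained in parallel: combine \eqref{E:supports_SOD} at $\cS_\alpha$ with the inductive SOD of $\APerf_{\cX^{\us}_{<\alpha}}(\cX_{<\alpha})$ via the set-theoretic decomposition $\cX^{\us}_{\leq\alpha}=\cX^{\us}_{<\alpha}\sqcup\cS_\alpha$.

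The main obstacle I anticipate is verifying, at each inductive step, the semiorthogonalities between pieces coming from different strata. Concretely, a complex supported on $\cS_\alpha$ and satisfying the weight-$\geq w_\alpha$ condition must be left-orthogonal to every complex restricted-and-extended from $\cX_{<\alpha}$ and satisfying the inductively defined ``$<w$'' condition, and one must check that the defining conditions of \Cref{D:categories_multiple_strata} behave compatibly under extension across the open/closed decomposition $\cX_{<\alpha}\subset\cX_{\leq\alpha}\supset\cS_\alpha$. Once one verifies that the $i_\beta^\ast$ and $i_\beta^{\QC,!}$ criteria for $\beta<\alpha$ visible on $\cX_{<\alpha}$ patch correctly with the $i_\alpha^\ast$ and $i_\alpha^{\QC,!}$ criteria at level $\alpha$, the semiorthogonality reduces to the single-stratum case of \Cref{T:derived_Kirwan_surjectivity} together with the elementary observation that complexes supported on $\cS_\alpha$ restrict to zero on $\cX_{<\alpha}$ and conversely. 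With this compatibility in place, assembling the global SOD over $\cX$ from the levels $\{\cX_{\leq\alpha}\}_{\alpha\in I}$ is one more application of Zariski descent and \Cref{L:sod_limits}.
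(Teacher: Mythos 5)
Your proposal is correct and takes essentially the same approach as the paper: transfinite induction on $\alpha$, computing $\APerf(\cX_{<\alpha})$ as the inverse limit over the totally ordered subset $\{\beta<\alpha\}$, applying \Cref{L:sod_limits} to the inductive hypothesis, applying \Cref{T:derived_Kirwan_surjectivity} at the outer stratum $\cS_\alpha$, and splicing the two decompositions via the equivalence $\cG_{\cS_\alpha}^{w_\alpha}\cong\APerf(\cX_{<\alpha})$. One small remark: the semiorthogonality you flag as the ``main obstacle'' is automatic once you substitute a semiorthogonal decomposition of the middle factor into a three-term decomposition (this is a formal property of SODs, requiring no new mapping-space computation); the only thing that genuinely needs unraveling is the identification of the combined factors with the categories of \Cref{D:categories_multiple_strata}, for which \Cref{T:derived_Kirwan_surjectivity} part (2) and the inductive hypothesis suffice.
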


\begin{rem}
\Cref{T:derived_Kirwan_surjectivity_full} follows from the main theorem, \Cref{T:derived_Kirwan_surjectivity}, via a straightforward inductive argument. Parts (3)-(6) of that theorem immediately lead to analogous elaborations in the context of multiple strata, but for brevity we have omitted these details.
\end{rem}

\begin{proof}[Proof of \autoref{T:derived_Kirwan_surjectivity_full}]
Note that the $\Theta$-stratification on $\cX$ induces a $\Theta$-stratification on each open substack $\cX_{\leq \alpha}$, where the strata are $\cS_\beta$ for $\beta \leq \alpha$. We prove that the statement holds for $\cX_{\leq \alpha}$ for all $\alpha \in I \cup \{\infty\}$ using transfinite induction. For the minimal element $\cX_{\leq 0} = \cX^{\rm ss}$ there is nothing to prove.

Now assume that the statement holds for all $\cX_{\leq \beta}$ with $\beta < \alpha$, and consider the open substack
\[
\cX_{<\alpha} := \bigcup_{\beta < \alpha} \cX_{\leq \beta} = \cX_{\leq \alpha} \setminus \cS_\alpha.
\]
The $\Theta$-stratification of $\cX$ induces a $\Theta$-stratification of $\cX_{<\alpha}$, which we denote $\bS_{<\alpha} \subset \Filt(\cX_{<\alpha})$, and where $\cX_{<\alpha}^{us} := \bigcup_{\beta < \alpha} |\cS_\alpha|$. The inductive hypothesis combined with \Cref{L:sod_limits} gives a semiorthogonal decomposition
\begin{align*}
\APerf(\cX_{< \alpha}) &= \varprojlim_{\beta < \alpha} \APerf(\cX_{\leq \beta}) \\
&\cong \left \langle \APerf_{\cX_{<\alpha}^{\us}}(\cX_{<\alpha})^{<w}, \cG_{\bS_{<\alpha}}^w, \APerf_{\cX_{<\alpha}^{\us}}(\cX_{<\alpha})^{\geq w} \right \rangle.
\end{align*}
In addition, \Cref{T:derived_Kirwan_surjectivity} applied to the $\Theta$-stratum $\cS_\alpha \hookrightarrow \cX_{\leq \alpha}$ gives a semi-orthogonal decomposition
\begin{equation} \label{E:sod_outer_stratum}
\APerf(\cX_{\leq \alpha}) = \langle \APerf_{\cS_\alpha}(\cX_{\leq \alpha})^{<w_\alpha}, \cG_{\cS_{\alpha}}^{w_\alpha},\APerf_{\cS_\alpha}(\cX_{\leq \alpha})^{\geq w_\alpha}\rangle.
\end{equation}
Under the equivalence $\cG_{\cS_\alpha}^{w_\alpha} \cong \APerf(\cX_{<\alpha})$ given by restriction to $\cX_{<\alpha}$, we can combine \eqref{E:sod_outer_stratum} with the previous semiorthogonal decomposition to obtain a five-term semiorthogonal decomposition of $\APerf(\cX_{\leq \alpha})$.

Unraveling the definitions, the two left terms generate $\APerf_{\cX_{\leq \alpha}^{\us}}(\cX_{\leq \alpha})^{<w}$, the right two terms generate $\APerf_{\cX_{\leq \alpha}^{\us}}(\cX_{\leq \alpha})^{\geq w}$, and all of these categories together generate $\APerf_{\cX_{\leq \alpha}^{\us}}(\cX_{\leq \alpha})$ by \Cref{T:derived_Kirwan_surjectivity} part (2) and the inductive hypothesis. The middle term is precisely $\cG_{\bS_{\leq \alpha}}^w$, and restriction induces an equivalence $\cG_{\bS_{\leq \alpha}}^w \cong \APerf(\cX_{\leq 0})$.

\end{proof}

\subsection{Structure theorem for \texorpdfstring{$\Perf$}{Perf}}

We now prove a version of \Cref{T:derived_Kirwan_surjectivity} for the $\infty$-category $\Perf(\cX)$ under an additional hypotheses. One can regard $\Perf(-)$ as a cohomology theory valued in $\infty$-categories, and in this sense the theorem is a categorification of Kirwan surjectivity \cite{MR766741}.

\begin{defn}
A closed immersion of derived algebraic stacks $i : \cS \hookrightarrow \cX$ is a regular embedding if it is locally a.f.p. and $\bL_{\cS/\cX}[-1] \in \QC(\cS)$ is locally free, i.e., locally isomorphic to $\cO_\cS^{n}$ for some $n$.
\end{defn}

If $i$ is a regular embedding, then $i_\ast(\cO_\cS) \in \Perf(\cX)$, which is also equivalent to $i$ having locally finite Tor-amplitude, $i_\ast$ preserving perfect complexes, or to $i$ being locally eventually coconnective (see \cite{gaitsgory2013ind}*{Lem.~3.6.3}). 

If $\cX$ is a locally a.f.p. algebraic derived stack with affine automorphism groups and $i : \cS \hookrightarrow \cX$ is a $\Theta$-stratum with center $\cZ \to \cS$, then \Cref{L:relative_cotangent_complex} implies that $i$ is a regular embedding if and only if the positive weight summand $(\bL_\cX|_{\cZ})^{>0} \subset \QC(\cZ)$ is locally free. This in turn is equivalent to the condition that for any finite type point $\xi : \Spec(k') \to \cX$ and homomorphism $\lambda : (\Gm)_{k'} \to \Aut_\cX(\xi)$ that define a point of $\cZ \subset \Grad(\cX)$, the summand of $H_i(\xi^\ast(\bL_\cX))$ with positive $\lambda$-weights vanishes for $i > 0$. We recall the notation $\Perf(\cX)^{\geq w}$, $\Perf(\cX)^{<w}$, etc., from \Cref{D:subcategories_aperf}.


\begin{prop} \label{prop:DKS_perfect}
Let $\cX$ be an algebraic derived stack locally a.f.p. and with affine diagonal over a locally noetherian algebraic derived stack $\cB$, and let $i : \cS \hookrightarrow \cX$ be a $\Theta$-stratum with center $\sigma : \cZ \to \cS$. If $i$ is a regular embedding, then for any $w \in \bZ$ the intersection of $\Perf(\cX) \subset \APerf(\cX)$ with the semiorthogonal factors in \eqref{E:main_SOD}, \eqref{E:supports_SOD}, and \eqref{E:window_SOD} in \Cref{T:derived_Kirwan_surjectivity} define semiorthogonal decompositions of the corresponding subcategories of $\Perf(\cX)$. Furthermore: 
\begin{enumerate}
\item If we let $\eta$ be the weight of the line bundle $\det(\radj{1}(\bL_{\cX}|_\cZ))$, then
\begin{equation} \label{E:perf_factors}
\begin{array}{rl}
\Perf(\cX)^{\geq w} &= \left\{ F \in \Perf(\cX) | \sigma^\ast i^\ast(F) \in \Perf(\cZ)^{\geq w} \right \}, and \\
\Perf(\cX)^{< w} &= \left\{ F \in \Perf(\cX) | \sigma^\ast i^\ast(F) \in \Perf(\cZ)^{< w+\eta} \right \}.
\end{array}
\end{equation}
\item If we denote $\cG^w_{\cS,{\rm perf}} := \cG_\cS^w \cap \Perf(\cX) = \Perf(\cX)^{\geq w} \cap \Perf(\cX)^{<w}$, then restriction to the semistable locus $\cX^{\rm ss} := \cX \setminus \cS$ induces an equivalence $\cG_{\cS,{\rm perf}}^w \cong \Perf(\cX^{\rm ss})$.
\item If $\cX$ is quasi-compact,\footnote{The quasi-compactness is necessary to guarantee that every perfect complex lies in the subcategory generated by finitely many of the semiorthogonal factors.} then we have an infinite semiorthogonal decomposition
\[
\Perf(\cX) = \left \langle \lefteqn{\overbrace{\phantom{\ldots, \Perf_\cS(\cX)^{w-2}, \Perf_\cS(\cX)^{w-1}, \cG_{\cS,{\rm perf}}^w}}^{\Perf(\cX)^{<w}}} \ldots, \Perf_\cS(\cX)^{w-2}, \Perf_\cS(\cX)^{w-1}, \lefteqn{\underbrace{\phantom{\cG_{\cS,{\rm perf}}^w, \Perf_\cS(\cX)^{w}, \Perf_\cS(\cX)^{w+1}, \ldots}}_{\Perf(\cX)^{\geq w}}} \cG_{\cS,{\rm perf}}^w, \Perf_\cS(\cX)^{w}, \Perf_\cS(\cX)^{w+1}, \ldots \right\rangle.
\]
\item If $\pi = \ev_0 : \cS \to \cZ$ is the projection, then $i_\ast \pi^\ast : \Perf(\cZ)^{w} \to \Perf_\cS (\cX)^{w}$ is an equivalence.
\end{enumerate}
\end{prop}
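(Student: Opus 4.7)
The plan is to first establish characterization (1), which makes the subcategories of $\Perf(\cX)$ accessible, and then use it to show that the existing SODs on $\APerf(\cX)$ from \Cref{T:derived_Kirwan_surjectivity} restrict to $\Perf(\cX)$. The technical core is verifying that the baric truncation functors preserve $\Perf(\cX) \subset \APerf(\cX)$ when $i$ is regular.

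To establish (1), I would use that regularity of $i$ gives $N^\vee := \bL_{\cS/\cX}[-1]$ locally free of some rank $k$, so that $i$ has finite Tor-amplitude and $i^{\QC,!}(F) \simeq i^\ast(F) \otimes \det(N^\vee)^{-1}[-k]$ for any $F \in \Perf(\cX)$, by standard relative duality together with the fact that $i_\ast(\cO_\cS) \in \Perf(\cX)$. By \Cref{L:relative_cotangent_complex}, $N^\vee|_\cZ \simeq \radj{1}(\bL_\cX|_\cZ)$, and hence $\det(N^\vee)$ restricts to $\cZ$ in weight $\eta$. Combining with \Cref{L:aperf_baric_criterion}, for $F \in \Perf(\cX)$: $F \in \APerf(\cX)^{\geq w}$ iff $\sigma^\ast i^\ast(F) \in \Perf(\cZ)^{\geq w}$; and $F \in \APerf(\cX)^{<w}$, i.e.\ $i^{\QC,!}(F) \in \QC(\cS)^{<w}$, is equivalent via the twist by $\det(N^\vee)^{-1}[-k]$ and the corresponding $-\eta$ weight shift to $\sigma^\ast i^\ast(F) \in \Perf(\cZ)^{<w+\eta}$. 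This establishes the formulas \eqref{E:perf_factors}.

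For the main assertion, I would show that the truncation functors of \Cref{T:derived_Kirwan_surjectivity} preserve $\Perf(\cX)$. Since perfectness is étale-local and the truncation functors commute with smooth pullback that induces the stratum by part (6) of that theorem, \Cref{T:local_structure_stratum} reduces us to the local model $\cX = \Spec(A)/\Gm$ with $\cS$ the standard $\Theta$-stratum; regularity of $i$ forces the generating sequence of $I_+$ to be a regular sequence $x_1,\ldots,x_k$ of positive weights. In this local setting, $\cO_{\cS^{(n)}}$ is perfect on $\cX$ for every $n$, $i_\ast : \Perf(\cS) \to \Perf_\cS(\cX)$ has image thickly generating $\Perf_\cS(\cX)$, and the baric truncation on $\APerf_\cS(\cX)$ commutes with $i_\ast$ (\Cref{P:baric_decomp_supports} part (2)) and preserves $\Perf(\cS)$ (\Cref{P:baric_stratum} part (3)), hence preserves $\Perf_\cS(\cX)$. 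To get the full SOD on $\Perf(\cX)$, I would adapt step 3 of the proof of \Cref{L:local_sod}: for each generator $\cO_\cX\langle n\rangle$, the Koszul complex $K_X(x_1^m,\ldots,x_k^m)^\dual\langle n\rangle$ is perfect and set-theoretically supported on $\cS$, so its baric truncation $\ladj{w}$ lands in $\Perf_\cS(\cX)^{<w}$ by the previous observation, providing an explicit fiber sequence $F' \to \cO_\cX\langle n\rangle \to F''$ with $F' \in \Perf(\cX)^{\geq w}$ and $F'' \in \Perf_\cS(\cX)^{<w}$. The class of complexes admitting such a factorization is closed under shifts, extensions, and retracts, so it contains all of $\Perf(\cX)$.

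The hardest part is this last local argument: in contrast to \Cref{L:local_sod}, where the extension from generators to all of $\APerf(\cX)$ used filtered colimits, for $\Perf$ we must exploit that the Koszul complexes themselves are perfect in the regular-embedding case and carry out the semi-orthogonal factorization entirely within $\Perf$. Once this is in hand, the remaining claims follow directly: (2) is immediate from part (1) of \Cref{T:derived_Kirwan_surjectivity} combined with the fact that restriction to an open preserves and reflects $\Perf$; (3) follows because $\sigma^\ast i^\ast(F)$ on the graded stack $\cZ$ has bounded weight decomposition for any $F \in \Perf(\cX)$ and quasi-compact $\cX$, so only finitely many $\Perf_\cS(\cX)^{w}$ contribute, and the infinite SOD is a formal consequence of the finite SODs \eqref{E:window_SOD} at each threshold; and (4) follows from part (3) of \Cref{T:derived_Kirwan_surjectivity}, noting that $\pi = \ev_0$ is smooth (so $\pi^\ast$ preserves $\Perf$) and $i_\ast$ preserves $\Perf$ by regularity.
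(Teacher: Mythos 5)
Your overall approach mirrors the paper's, and your handling of (1) and (3) is sound. But there are two genuine gaps. First, your local Koszul argument only establishes the two-term SOD $\Perf(\cX) = \langle \Perf_\cS(\cX)^{<w}, \Perf(\cX)^{\geq w}\rangle$, i.e.\ that $\ladj{w}_\cS$ preserves $\Perf(\cX)$. You never show that $\radj{w}_\cS$ preserves all of $\Perf(\cX)$ rather than just $\Perf_\cS(\cX)$, and this is needed to restrict the full three-term SOD \eqref{E:main_SOD}. The fix is the dual Koszul argument, which is exactly what the paper does in addition to yours: for $F\in\Perf(X/\Gm)$ and $m\gg 0$, the cofiber of $K_X(x_1^m,\ldots,x_k^m)\otimes F\to F$ has $\cZ$-weights tending to $-\infty$, hence lies in $\Perf(\cX)^{<w}$ by \eqref{E:perf_factors}, so $\radj{w}_\cS(F)\cong\radj{w}_\cS(K_X(x_1^m,\ldots,x_k^m)\otimes F)$ is perfect because the latter complex lies in $\Perf_\cS(\cX)$ and you have already shown $\radj{w}_\cS$ preserves $\Perf_\cS(\cX)$.

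Second, and more seriously, your argument for (2) invokes the claim that restriction to an open substack \emph{reflects} perfectness, which is false in general: an almost perfect complex on $\cX$ whose restriction to $\cX^{\rm ss}$ is perfect (for instance, vanishes) need not itself be perfect — this is precisely the content that must be proved. The correct argument, which the paper gives, is Thomason--Trobaugh style: in the local model, the essential image of $\Perf(X/\Gm)\to\Perf(X^{\rm ss}/\Gm)$ generates the target up to retracts; $\cG^w_{\cS,{\rm perf}}$ has the same essential image since $F$ and $\radj{w}(\ladj{w}(F))$ restrict to the same object; and since $\cG^w_{\cS,{\rm perf}}$ is idempotent complete and the restriction functor is already fully faithful from the $\APerf$ statement, every retract of an object in the image again lies in the image. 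For (4), your reasoning only gives that $i_\ast\pi^\ast$ is well-defined and fully faithful on $\Perf$; you still need to check that the inverse $\ladj{w+1}(\sigma^\ast i^\ast(-))$ preserves $\Perf$, which it does because $i^\ast$, $\sigma^\ast$, and the graded truncation on $\cZ$ all do. (As an aside, $\pi$ need not be smooth even when $i$ is a regular embedding — regularity constrains only the positive-weight part of $\bL_\cX|_\cZ$, while $\bL_\pi$ is controlled by the negative-weight part — but your conclusion that $\pi^\ast$ preserves $\Perf$ holds anyway, since pullback along any morphism preserves dualizable objects.)
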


We prove this proposition at the end of the subsection, after establishing some preliminary results.

\subsubsection{General properties of $i^{\QC,!}$}

\begin{lem} \label{L:pullback_rhom}
Let $f : \cY' \to \cY$ be a morphism of finite Tor-amplitude between algebraic derived stacks. Then for any $E \in \AAPerf(\cY)$ and $F \in \QC(\cY)_{<\infty}$, the pullback of the evaluation map $f^\ast(E) \otimes f^\ast(\inner{\RHom}_{\cY}^\otimes(E,F)) \to f^\ast(F)$ classifies a canonical isomorphism
\begin{equation} \label{E:pullback_rhom}
f^\ast(\inner{\RHom}_{\cY}^\otimes(E,F)) \to \inner{\RHom}_{\cY'}^\otimes(f^\ast E, f^\ast F).
\end{equation}
\end{lem}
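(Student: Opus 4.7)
The canonical map in \eqref{E:pullback_rhom} is produced by adjunction from the pullback of the evaluation morphism, so the content is to show it is an equivalence. My plan is to proceed by dévissage on $E$, after first observing that the claim is smooth-local on both $\cY$ and $\cY'$. Indeed, formation of $f^\ast$, of $\inner{\RHom}_\cY^\otimes(E,F)$ for $E \in \AAPerf(\cY)$ and $F$ bounded above, and of $\inner{\RHom}_{\cY'}^\otimes(f^\ast E, f^\ast F)$, are all compatible with smooth base change (this is one of the invocations of \cite{preygel2011thomsebastiani}*{Lem.~A.1.1} already mentioned in the notation section). Thus we may reduce to the case where $\cY = \Spec A$ and $\cY' = \Spec A'$ are affine, and $A \to A'$ has finite Tor-amplitude.

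Let $\cE \subseteq \AAPerf(\cY)$ denote the full subcategory of objects $E$ for which \eqref{E:pullback_rhom} is an equivalence for every $F \in \QC(\cY)_{<\infty}$. Both sides of the comparison map are exact contravariant functors of $E$, so $\cE$ is closed under fiber sequences, shifts, and retracts. The structure sheaf $\cO_\cY$ lies in $\cE$ tautologically, and hence so does every perfect complex. Moreover, for a perfect complex $P$ one has the explicit identification $\inner{\RHom}_\cY^\otimes(P,F) \simeq P^{\vee} \otimes F$, and the comparison map is manifestly an equivalence since $f^\ast$ is symmetric monoidal and $(f^\ast P)^\vee \simeq f^\ast(P^{\vee})$.

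To extend from perfect to almost perfect complexes, I would use the defining property of $\AAPerf$: for every integer $n$, there exists a perfect complex $P_n$ together with a map $P_n \to E$ whose fiber lies in $\QC(\cY)_{\geq n}$ (see \cite{HA}*{Defn.~7.2.5.10}). Fix $F \in \QC(\cY)_{\leq k}$. Then $\inner{\RHom}_\cY^\otimes(-,F)$ sends $\QC(\cY)_{\geq n}$ to $\QC(\cY)_{\leq k-n}$, and because $f$ has finite Tor-amplitude, say bounded by $d$, pullback $f^\ast$ increases homological degrees by at most $d$. The analogous bound holds for $\inner{\RHom}_{\cY'}^\otimes(f^\ast(-), f^\ast F)$. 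Consequently, in any fixed range of homological degrees, the comparison map \eqref{E:pullback_rhom} for $E$ agrees with the comparison map for $P_n$ once $n$ is sufficiently negative, and the latter is an isomorphism by the previous paragraph. Since both sides of \eqref{E:pullback_rhom} are left-complete in their $t$-structures, passing to the inverse limit of truncations $\tau_{\geq -m}$ shows the full map is an equivalence.

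The main technical point is verifying that the $t$-amplitude estimates on both sides of \eqref{E:pullback_rhom} are uniform enough in the approximating index $n$ to justify the dévissage; once one carefully tracks the amplitude bounds coming from the Tor-amplitude of $f$, the bounded-above hypothesis on $F$, and the connectivity of $\mathrm{fib}(P_n \to E)$, the rest of the argument is formal. No further obstacles are expected.
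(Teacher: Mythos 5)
Your proof is correct and follows essentially the same strategy as the paper's: reduce to the affine case by descent, handle perfect complexes directly, approximate an almost perfect $E$ by perfects $P_n$ with highly connective fiber, use the Tor-amplitude of $f$ together with the boundedness of $F$ to show the two comparison maps agree in each bounded range of homological degrees, and conclude by passing to the limit of truncations. (Two small slips: you want $n$ \emph{large}, not negative, so that $\fib(P_n \to E)$ is highly connective; and the final step uses left-\emph{separatedness} of the $t$-structure on $\QC(\Spec B)$ rather than left-completeness, though of course the latter implies the former.)
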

\begin{proof}
The proof of \cite{preygel2011thomsebastiani}*{Lem.~A.1.1}, which is stated for fppf morphisms, works essentially verbatim. First one uses faithfully flat descent to reduce to the case where $\cY' = \Spec(B)$ and $\cY = \Spec(A)$. In this case one can write $E$ as a filtered colimit $E = \colim_{k} P_k$ indexed by $k \in \bN$, where $P_k$ is perfect and $\tau_{\leq n} P_k$ is eventually constant in $k$ for any $n$. Then $\inner{\RHom}_A^\otimes(P_k,F) \cong P_k^\dual \otimes F$, and one can identify the canonical homomorphism \eqref{E:pullback_rhom} with the canonical homomorphism
\[
B \otimes_A \left( \varprojlim_k P_k^\dual \otimes_A F \right) \to \varprojlim_k B \otimes_A (P_k^\dual \otimes_A F).
\]
By shifting $F$ we may assume $F \in \QC(\cY)_{\leq 0}$. If $E \in \QC(\cY)_{\geq n}$, then for any $E' \in \QC(\cY)_{>-n}$ the mapping space $\Map(E',\inner{\RHom}_A^\otimes(E,F)) \cong \Map(E' \otimes E, F)$ is contractible, so $\inner{\RHom}_A^\otimes(E,F) \in \QC(\cY)_{\leq -n}$. It follows that
\[
\tau_{\geq -n} (P_k^\dual \otimes F) \cong \tau_{\geq -n} \inner{\RHom}_A^\otimes(\tau_{\leq n} P_k, F)
\]
is eventually constant in $k$ for any fixed $n$. The claim of the lemma now follows from the general observation that because the formation of $\bN^{\rm op}$-indexed limits of $A$-modules has bounded homological amplitude, tensoring with an $A$-module of finite Tor-amplitude commutes with the formation of the limit of any $\bN^{\rm op}$-indexed inverse system of $A$-modules whose homology is eventually constant in every degree.
\end{proof}

Now consider a closed immersion of derived algebraic stacks $i : \cS \hookrightarrow \cX$. Using the projection formula, one can construct a natural isomorphism $\inner{\RHom}^\otimes_\cX(i_\ast(\cO_\cS),E) \cong i_\ast(i^{\QC,!}(E))$ for $E \in \QC(\cX)$, and under this isomorphism the map $\inner{\RHom}^\otimes_\cX(i_\ast(\cO_\cS),E) \to E$ induced by composition with $\cO_\cX \to i_\ast(\cO_\cS)$ naturally homotopic to the counit of adjunction.

\begin{lem} \label{L:pullback_qcshriek}
Consider a cartesian diagram of algebraic derived stacks
\[
\xymatrix{ \cS' \ar[r]^{i'} \ar[d]^{\pi'} & \cX' \ar[d]^\pi \\ \cS \ar[r]^i & \cX },
\]
such that $i$ is an almost finitely presented closed immersion and $\pi$ is of finite Tor-amplitude. Then the composition of natural maps
\[
\xymatrix{ (\pi')^\ast i^{\QC,!}(E) \ar[r]_-{\rm{unit}} & (i')^{\QC,!} i'_\ast (\pi')^\ast i^{\QC,!}(E) \ar[r]^{\cong}_-{\rm{base\ change}} & (i')^{\QC,!} \pi^\ast i_\ast i^{\QC,!}(E) \ar[r]_-{(i')^{\QC,!} \pi^\ast(\rm{counit})} & (i')^{\QC,!} \pi^\ast (E) }
\]
is an isomorphism for any $E \in \QC(\cX)$.
\end{lem}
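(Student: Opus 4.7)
The strategy is to reduce this to Lemma~\ref{L:pullback_rhom} via the identification $i_\ast i^{\QC,!}(E) \cong \inner{\RHom}^\otimes_\cX(i_\ast \cO_\cS, E)$, which follows from adjunction and the projection formula (and under which the counit $i_\ast i^{\QC,!}(E) \to E$ corresponds to precomposition with $\cO_\cX \to i_\ast \cO_\cS$). Since $i$ is an a.f.p.\ closed immersion, $i_\ast \cO_\cS \in \AAPerf(\cX)$, so this puts us squarely in the setting where Lemma~\ref{L:pullback_rhom} applies.

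First I would handle the case $E \in \QC(\cX)_{<\infty}$. Applying $\pi^\ast$ to the above identification and invoking Lemma~\ref{L:pullback_rhom} together with the base-change isomorphism $\pi^\ast i_\ast \cO_\cS \cong i'_\ast \cO_{\cS'}$ yields a chain
\[
i'_\ast (\pi')^\ast i^{\QC,!}(E) \cong \pi^\ast i_\ast i^{\QC,!}(E) \cong \pi^\ast \inner{\RHom}^\otimes_\cX(i_\ast \cO_\cS, E) \cong \inner{\RHom}^\otimes_{\cX'}(i'_\ast \cO_{\cS'}, \pi^\ast E) \cong i'_\ast (i')^{\QC,!} \pi^\ast (E),
\]
where the first isomorphism is base change for the closed immersion $i$. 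Since $i'_\ast$ is fully faithful, this descends to a canonical isomorphism $(\pi')^\ast i^{\QC,!}(E) \xrightarrow{\cong} (i')^{\QC,!} \pi^\ast(E)$. The main bookkeeping task is then to verify that this isomorphism is homotopic to the composition described in the statement. This is a diagram chase: the isomorphism in Lemma~\ref{L:pullback_rhom} is by construction characterized by compatibility with the evaluation pairing, and both the counit of $(i_\ast, i^{\QC,!})$ and the unit of $((i')^\ast, i'_\ast)$ (after the $i'_\ast$-$i'^{\QC,!}$ recasting) are built from the unit $\cO \to i_\ast \cO_\cS$ together with evaluation, so the two routes coincide naturally.

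To pass from bounded to unbounded $E$, I would write $E \simeq \varprojlim_d \tau_{\leq d} E$. The right adjoint $i^{\QC,!}$ preserves this limit automatically. The pullback $\pi^\ast$, hence $(\pi')^\ast$, has finite Tor-amplitude by hypothesis, and $(i')^{\QC,!}$ has bounded (in fact left) $t$-amplitude as the right adjoint to the $t$-exact functor $i'_\ast$; all of these functors therefore commute with limits of towers whose truncations are eventually constant in every fixed degree. Thus the bounded case propagates and gives the general statement.

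The part I expect to require the most care is the naturality verification in the second paragraph, i.e.\ showing that the isomorphism produced by this three-step manipulation literally agrees with the composition of unit, base change, and counit pullback written in the lemma; the rest is a formal combination of base change, the projection formula, and Lemma~\ref{L:pullback_rhom}.
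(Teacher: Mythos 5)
Your argument has a genuine gap at the step ``Since $i'_\ast$ is fully faithful, this descends to a canonical isomorphism $(\pi')^\ast i^{\QC,!}(E) \xrightarrow{\cong} (i')^{\QC,!} \pi^\ast(E)$.'' Pushforward along a closed immersion is \emph{not} fully faithful on derived $\infty$-categories of quasi-coherent complexes: the counit $(i')^\ast i'_\ast \to \id$ fails to be an isomorphism in general (already for $\Spec k \hookrightarrow \Spec k[x]$, where $i^\ast i_\ast(k) = k\otimes^L_{k[x]} k$ has nonzero $\pi_1$). So a chain of isomorphisms $i'_\ast A \cong i'_\ast B$ in $\QC(\cX')$ does not by itself produce a map $A \to B$ in $\QC(\cS')$, and there is no ``descent'' to appeal to. What you do have is \emph{conservativity} of $i'_\ast$, which proves an isomorphism only after you already have a candidate morphism in hand --- namely the composition in the statement --- and have verified that $i'_\ast$ applied to that composition agrees with your chain of isomorphisms. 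This means the ``bookkeeping task'' you defer to the end is not a tidying-up step; it carries the entire weight of the proof.

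The paper resolves this by a cleaner device: by Barr--Beck--Lurie, $\QC(\cS) \cong \cA\Mod(\QC(\cX))$ with $\cA := i_\ast\cO_\cS$, and similarly for $\cS'$, so that $i^{\QC,!}(E) \cong \inner{\RHom}^\otimes_\cX(\cA,E)$ is naturally an $\cA$-module via the $\End(\cA)^{\rm op}$-action, and the isomorphism of \Cref{L:pullback_rhom} is naturally a morphism of such modules. Under the Barr--Beck identification, compatibility with module structures is the same as the isomorphism living in $\QC(\cS')$, so nothing needs to be ``descended'' afterward. Once you replace full faithfulness with conservativity --- or, better, carry the module structures along as the paper does --- your reduction to \Cref{L:pullback_rhom} via $i_\ast i^{\QC,!}(E) \cong \inner{\RHom}^\otimes_\cX(i_\ast\cO_\cS,E)$, base change for closed immersions, and a truncation argument for unbounded $E$ becomes essentially the paper's own route. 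A minor secondary caution: your claim that $(i')^{\QC,!}$ has bounded homological amplitude uses only that it is left $t$-exact, which bounds it above but not below; since $i$ is only assumed a.f.p. (not finite Tor-amplitude), some extra care is needed in the passage-to-limit step to ensure the tower $i^{\QC,!}(\tau_{\leq d}E)$ stabilizes in each homological degree.
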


\begin{proof}
Consider the commutative algebra objects $\cA = i_\ast(\cO_\cS)$ in $\QC(\cX)$ and $\cA' = i'_\ast(\cO_\cS')$ in $\QC(\cX')$. By the base change formula we have $\pi^\ast(\cA) \cong \cA'$, and by Barr-Beck-Lurie $i_\ast$ induces an equivalence $\QC(\cS) \cong \cA\Mod(\QC(\cX))$, and likewise for $\cS'$. By hypothesis the underlying quasi-coherent complex of $\cA$ is almost perfect, and likewise for $\cA'$.

Under these identifications, $i_\ast$ and $i'_\ast$ are the forgetful functors $\cA\Mod(\QC(\cX)) \to \QC(\cX)$ and $\cA'\Mod(\QC(\cX')) \to \QC(\cX')$. The right adjoint of these functors on $E \in \QC(\cX)$ is given by
\[
i^{\QC,!}(E) = \inner{\RHom}^\otimes_{\cX}(\cA,E),
\]
which we regard as an $\cA$-module object via the canonical action of the associative algebra object $\End(\cA)^{\rm op}$ in $\QC(\cX)$ along with the canonical map of associative algebra objects $\cA \to \End(\cA)^{\rm op}$. The analogous formula holds for $(i')^{\QC,!}(-)$. \Cref{L:pullback_rhom} gives an isomorphism
\begin{equation} \label{E:pullback_rhom_2}
\pi^\ast(\inner{\RHom}^\otimes_\cX(\cA,E)) \cong \inner{\RHom}^\otimes_{\cX'}(\pi^\ast(\cA),\pi^{\ast}(E))
\end{equation}
that is compatible with the module structure under the canonical map of associative algebra objects
\[
\pi^\ast(\End(\cA)^{\rm op}) \to \End(\cA')^{\rm op}.\footnote{\Cref{L:pullback_rhom} implies this is actually an isomorphism if $\cA \in \QC(\cX)_{<\infty}$.}
\]
It follows that the map \eqref{E:pullback_rhom_2} is an isomorphism of $\pi^\ast(\cA) \cong \cA'$ modules under the map of associative algebra objects $\pi^\ast(\cA) \to \pi^\ast(\End(\cA)^{\rm op})$.
\end{proof}

For any $E \in \QC(\cX)$, we can tensor with the counit of adjunction $i_\ast(i^{\QC,!}(\cO_\cX)) \to \cO_\cX$ with $E$ and apply the projection formula to obtain a natural map $i_\ast(i^\ast(E) \otimes i^{\QC,!}(\cO_\cX) ) \to E$. By adjunction, this defines a natural transformation of functors $\QC(\cX) \to \QC(\cS)$
\begin{equation} \label{E:regular_shriek_pullback_qc}
i^\ast(-) \otimes i^{\QC,!}(\cO_\cX) \to i^{\QC,!}(-).
\end{equation}

The following was establishing in \cite{arinkin2015singular}*{Cor.~2.2.7} over a field of characteristic $0$.

\begin{lem}\label{L:regular_closed_immersion_general}
If $i : \cS \hookrightarrow \cX$ is a closed immersion of finite Tor-amplitude between algebraic derived stacks, the natural transformation \eqref{E:regular_shriek_pullback_qc} is an isomorphism, and furthermore if $i$ is a regular closed immersion, then $i^{\QC,!}(\cO_\cX) \cong \det(\bL_{\cS/\cX}[-1])[\rank(\bL_{\cS/\cX})]$.
\end{lem}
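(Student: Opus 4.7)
The plan is to prove both statements by reducing to a local Koszul computation, using the finite Tor-amplitude hypothesis to make $i_\ast\cO_\cS$ perfect as an $\cO_\cX$-module.

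For the first statement, both sides commute with flat base change: for the right-hand side this is \Cref{L:pullback_qcshriek}, and the left-hand side uses that $i^\ast$ and $\otimes$ commute with flat pullback. We reduce to $\cX = \Spec(A)$ affine, so that $\cA := i_\ast\cO_\cS$ is a perfect $A$-module. Under the Barr--Beck--Lurie identification $i_\ast : \QC(\cS) \xrightarrow{\sim} \cA\text{-Mod}(\QC(\cX))$ used in the proof of \Cref{L:pullback_qcshriek}, the right adjoint $i^{\QC,!}$ is computed by $E \mapsto \inner{\RHom}^\otimes_\cX(\cA, E)$ with its canonical $\cA$-action. Perfection of $\cA$ yields the natural isomorphism
\[
\inner{\RHom}^\otimes_\cX(\cA, E) \;\simeq\; \inner{\RHom}^\otimes_\cX(\cA, \cO_\cX) \otimes_\cX E \;=\; i_\ast i^{\QC,!}(\cO_\cX) \otimes_\cX E,
\]
which is compatible with the $\cA$-action on each side. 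The projection formula rewrites the right-hand side as $i_\ast\bigl(i^{\QC,!}(\cO_\cX) \otimes_\cS i^\ast E\bigr)$, and since $i_\ast$ is fully faithful we get $i^{\QC,!}(E) \simeq i^\ast E \otimes i^{\QC,!}(\cO_\cX)$. To confirm this agrees with the canonical natural transformation of \eqref{E:regular_shriek_pullback_qc}, one unwinds its construction---counit of $(i_\ast, i^{\QC,!})$, tensor with $E$, projection formula, adjunction---and checks it coincides with the explicit isomorphism above; this is essentially a diagram chase.

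For the second statement, the question is again local on $\cX$, so we reduce to $\cX = \Spec(A)$ with $\cS = \Spec(A/I)$ for $I = (f_1, \ldots, f_n)$ a regular sequence. The Koszul complex $K = \Lambda^\bullet_A V$ with $V = A^n$ and differential $e_i \mapsto f_i$ is a perfect resolution of $A/I$. Computing $K^\vee = \inner{\RHom}^\otimes_A(K, A)$ level-wise via the perfect pairing $\Lambda^k V \otimes \Lambda^{n-k} V \to \det V$ produces a quasi-isomorphism
\[
K^\vee \;\simeq\; K \otimes_A \det(V)^{\otimes \pm 1} [-n].
\]
Since $(f_i)$ is regular, the canonical map $V|_\cS \to I/I^2$ sending $e_i \mapsto \bar f_i$ is an isomorphism, and $I/I^2 = \bL_{\cS/\cX}[-1]$. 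This yields an identification of $i^{\QC,!}(\cO_\cX)$ with the claimed shift of the determinant line bundle, where one uses $\rank(\bL_{\cS/\cX}) = -n$ in the K-theoretic sense matching the homological conventions of the paper, and the standard sign conventions for $\det$ on perfect complexes to fix the orientation.

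The main technical obstacle is ensuring the Koszul trivialization glues to a canonical global isomorphism independent of the chosen regular sequence. Both $i^{\QC,!}(\cO_\cX)$ and $\det(\bL_{\cS/\cX}[-1])$ are intrinsic to $i$, so it suffices to check naturality under a change of generators: a second regular sequence $(f'_i)$ in $I$ related to $(f_i)$ by an element of $\GL_n(A)$ induces a compatible automorphism of $V$ whose effect on $\det V$ matches the corresponding automorphism on $\det(I/I^2)$ obtained by restriction to $\cS$. Alternatively, one can reduce the global claim to the universal regular embedding---the zero section of a rank-$n$ vector bundle over $\cS$---where the Koszul resolution is intrinsically defined by the tautological section, and then transport the identification to the general case via naturality of $i^{\QC,!}$ under smooth base change together with the first part of the lemma.
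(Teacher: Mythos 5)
Your argument for the first statement is essentially the paper's approach (Barr--Beck--Lurie, perfection of $i_\ast\cO_\cS$, affine reduction), but it skips a subtlety: the flat-local behavior of $i^{\QC,!}$ via \Cref{L:pullback_qcshriek} is only available for $E$ bounded above, so one must first use the observation that $i^{\QC,!}(\cO_\cX)\in\QC(\cX)_{\geq m}$ to reduce the general statement to the case $E\in\QC(\cX)_{\leq d}$ via the tower $E\cong\varprojlim_d\tau_{\leq d}E$. This is exactly what the paper does, so the gap is minor and fixable along the lines you already have in mind.

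The second statement is where your route genuinely diverges, and where there is a real gap. Your Koszul computation correctly identifies the line bundle \emph{Zariski-locally} on $\cX$, once you have chosen elements $f_1,\ldots,f_n$ (equivalently, a lift to $\bA^n$ rather than $\bA^n/\GL_n$) presenting $\cS$ as a derived zero locus. But this local identification does not automatically glue: two line bundles that are Zariski-locally isomorphic need not be globally isomorphic, and the isomorphism you construct depends on the choice of presentation. You anticipate this, but neither of your proposed fixes closes the gap as stated. Checking naturality under $\GL_n$-changes of generators addresses compatibility of presentations \emph{over a fixed chart}, but the transition data between different charts is not in general implemented by a $\GL_n$-change of generators on an overlap; you would in effect have to descend an isomorphism from the total space of the frame bundle of the conormal sheaf, which is nontrivial. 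Your alternative --- ``reduce to the zero section of a vector bundle and transport via naturality under smooth base change'' --- does not apply directly, because the given regular embedding $\cS\hookrightarrow\cX$ is not a smooth base change of a zero section; it is only a deformation of one. The tool that makes this transport precise is exactly what the paper uses: the $\Gm$-equivariant deformation to the normal cone $\bA^1\times\cS\hookrightarrow\cD_{\cS/\cX}\to\bA^1\times\cX$ of Khan--Rydh, together with the fact that restriction $\Pic(\Theta\times\cS)\to\Pic(B\Gm\times\cS)$ is an equivalence of $\infty$-groupoids. This pins $j^{\QC,!}(\cO_{\cD_{\cS/\cX}})$ down by its value at $t=0$ (the zero section, where your universal-example/Koszul computation applies intrinsically) and then yields the answer at $t=1$ canonically. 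Without a deformation argument of this kind, your proof establishes only a local isomorphism, not the global formula.
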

\begin{proof}
As discussed above, $\inner{\RHom}^\otimes_\cX(i_\ast(\cO_\cS),E) \cong i_\ast(i^{\QC,!}(E))$ for $E \in \QC(\cX)$. If $i : \cS \to \cX$ has finite Tor-amplitude, then $i_\ast(\cO_\cS) \in \QC(\cX)$ is a perfect complex, so this gives a natural isomorphism
\[
i_\ast(i^{\QC,!}(E)) \cong (i_\ast \cO_\cS)^\dual \otimes E,
\]
where $(i_\ast \cO_\cS)^\dual$ denotes the linear dual of $i_\ast \cO_\cS \in \Perf(\cX)$. This formula implies that $i^{\QC,!}(-)$ commutes with filtered colimits, and that $i^{\QC,!}(\cO_\cX) \in \QC(\cX)_{\geq m}$ for some $m$.

The fact that $i^{\QC,!}(\cO_\cX)$ is homologically bounded below implies that both sides of \eqref{E:regular_shriek_pullback_qc} commute with the limit $E \cong \varprojlim_n \tau_{\leq n}(E)$ for any $E \in \QC(\cX)$. In particular, it suffices to verify that \eqref{E:regular_shriek_pullback_qc} is an isomorphism for $E \in \QC(\cX)_{\leq d}$ for all $d$. By \Cref{L:pullback_qcshriek}, the formation of $i^{\QC,!}(E)$ is smooth local on $\cX$ when $E \in \QC(\cX)_{\leq d}$, so it further suffices to prove that \eqref{E:regular_shriek_pullback_qc} is an isomorphism when $\cX = \Spec(A)$ is affine. Note that \eqref{E:regular_shriek_pullback_qc} is an isomorphism when $E=\cO_\cX$, so because both sides commute with filtered colimits and $\cO_\cX$ generates $\QC(\cX)$ under filtered colimits when $\cX$ is affine, we conclude that \eqref{E:regular_shriek_pullback_qc} is an isomorphism in this case.

Next we show that if $i$ is a regular closed immersion, then $i^{\QC,!}(\cO_\cX)$ is a shift of an invertible sheaf. The claim is smooth local by \Cref{L:pullback_qcshriek}, so we may assume $\cX = \Spec(A)$ is affine. In a Zariski open affine neighborhood of any point $x \in \cS$, one can construct \cite[Prop.~2.3.8]{KhanRydh} a cartesian diagram
\[
\xymatrix{ \cS \ar@{^{(}->}[r]^i \ar[d]^{\phi'} & \Spec(A) \ar[d]^\phi \\
\{0\}/\GL_n \ar@{^{(}->}[r]^{i'} & \bA^n / \GL_n }.
\]
The map $\phi$ has finite Tor-amplitude because $\bA^n/\GL_n$ is regular. \Cref{L:pullback_qcshriek} then shows that $i^{\QC,!}(\cO_\cX) \cong (\phi')^\ast ((i')^{\QC,!}(\cO_{\bA^n/\GL_n}))$, so it suffices to prove the claim for the example of the inclusion $i : \{0\} / \GL_n \hookrightarrow \bA^n / \GL_n$. One can make use of the cosection $p : \bA^n / \GL_n \to B\GL_n$ to compute
\[
i^{\QC,!}(\cO_{\bA^n}) \cong p_\ast(i_\ast(i^{\QC,!}(\cO_{\bA^n}))) \cong R\Gamma(\bA^n, \inner{\RHom}^\otimes_{\bA^n}(\cO_{\{0\}},\cO_{\bA^n})).
\]
Using the Koszul resolution of $\cO_{\{0\}} \in \QC(\bA^n/\GL_n)$, one computes $i^{\QC,!}(\cO_{\bA^n}) \cong \det[-n] \in \QC(B\GL_n)$, where $\det$ is the determinant of the standard representation.


We now prove the formula $i^{\QC,!}(\cO_\cX) \cong \det(\bL_{\cS/\cX}[-1])[\rank(\bL_{\cS/\cX})]$ when $i$ is a regular closed immersion using the method of \cite{GR2}*{Thm.~II.9.7.2.2}, which can be simplified in our context by using the more geometric construction of deformation to the normal cone in \cite{KhanRydh}. By \cite[Thm.~4.1.13]{KhanRydh} we can factor the closed immersion $\id_{\bA^1} \times i : \bA^1 \times \cS \to \bA^1\times \cX$ over $\bA^1$ as
\[
\xymatrix{\bA^1 \times \cS \ar[r]^{j} & \cD_{\cS/\cX} \ar[r]^\pi & \bA^1 \times \cX },
\]
with the following properties: 1) $j$ is a regular closed immersion; 2) $\pi$ is an isomorphism after base change to $\bA^1 \setminus \{0\}$; and 3) after base change to $\{0 \} \in \bA^1$, $(\cD_{\cS/\cX})_0 \cong \Spec_\cS(\Sym_\cS(\bL_{\cS/\cX}[-1]))$ and $j_0$ is isomorphic to the zero section. Futhermore, it is not stated explicitly, but is implicit in the construction that this family over $\bA^1$ is $\bG_m$-equivariant, so we can replace $\bA^1$ with $\Theta$ above.

We have already shown that $L := j^{\QC,!}(\cO_{\cD_{\cS/\cX}})$ is a shift of an invertible sheaf on $\Theta \times \cS$. If $p : \Theta \times \cS \to B\bG_m \times \cS$ is the map induced by the equivariant projection $\bA^1 \to \{0\}$, then the pullback functor induces an equivalence of $\infty$-groupoids $\Pic(B\bG_m \times \cS) \to \Pic(\Theta \times \cS)$ for any derived stack $\cS$, and the inverse functor is given by restriction along the inclusion $B\bG_m \times \cS \hookrightarrow \Theta \times \cS$. This claim is local on $\cS$, and it can be checked easily in the affine case. Thus if $L_0 := L|_{B\bG_m \times \cS}$, one has a canonical isomorphism $p^\ast(L_0) \to L$ in $\QC(\Theta \times \cS)$.

The inclusion $\{0\}/\bG_m \to \bA^1/\bG_m$ has finite Tor-amplitude, so \Cref{L:pullback_qcshriek} implies that
\[
L_0 \cong j_0^{\QC,!}(\Spec_\cS(\Sym_\cS(\bL_{\cS/\cX}[-1]))).
\]
Because $j_0$ is the inclusion of the zero section of a locally free sheaf of rank $n$ on $\cS$, it is the base change of the universal example $\{0\}/\GL_n \hookrightarrow \bA^n / \GL_n$, where $n = -\rank(\bL_{\cS/\cX})$. It follows from the explicit computation of this example above that
\[
i^{\QC,!}(\cO_\cX) \cong (p^\ast L_0)|_{\{1\}\times \cS} \cong L_0 \cong \det(\bL_{\cS/\cX}[-1])[\rank \bL_{\cS/\cX}],
\]
which completes the proof.
\end{proof}

\subsubsection{Proof of \Cref{prop:DKS_perfect}}


First we show that the alternate description of the categories in \eqref{E:perf_factors} holds. In light of \Cref{D:subcategories_aperf} and \Cref{L:aperf_baric_criterion}, it suffices to show that for $F \in \APerf(\cX)$, $i^{\QC,!}(F) \in \QC(\cS)^{<w}$ if and only if $i^\ast(F) \in \APerf(\cS)^{<w+\eta}$. This follows immediately from the canonical equivalence of \Cref{L:regular_closed_immersion_general} and \Cref{L:relative_cotangent_complex}, which identifies the weight of $\det(\bL_{\cS/\cX})$ with the weight of $\det(\radj{1}(\bL_\cX|_{\cZ}[1]))$, which is $-\eta$.

In order to prove that the semiorthogonal decompositions \eqref{E:main_SOD}, \eqref{E:supports_SOD}, and \eqref{E:window_SOD} induce semiorthogonal decompositions of $\Perf(\cX)$, $\Perf_\cS(\cX)$ and $\Perf(\cX)^{<w} \cap \Perf(\cX)^{\geq u}$ respectively, it suffices to show that the functors $\radj{w}_\cS$ and $\ladj{w}_\cS$ on $\APerf(\cX)$ preserve $\Perf(\cX)$. As in the proof of \Cref{T:derived_Kirwan_surjectivity}, we can use \Cref{L:induced_stratum_base_change}, \Cref{L:sod_limits}, and \Cref{T:local_structure_stratum} to reduce first to the case where $\cX$ is quasi-compact, and then to the case where $\cX = X/\Gm$ for a noetherian affine derived $\Gm$-scheme $X$ with the tautological $\Theta$-stratum $S / \Gm \hookrightarrow X/\Gm$.

Because $i$ is a regular embedding, $i_\ast$ maps $\Perf(\cS)$ to $\Perf(\cX)$. It follows from \Cref{P:baric_decomp_supports} part (2) and \Cref{P:baric_stratum} part (3) that $\radj{w}_{\cS}$ and $\ladj{w}$ preserve the essential image $i_\ast(\Perf(\cS))$. Because $i_\ast(\Perf(\cS))$ generates $\Perf_\cS(\cX)$ under shifts, cones, and retracts, $\radj{w}_\cS$ and $\ladj{w}$ preserve $\Perf_\cS(\cX)$.

Now consider $F \in \Perf(\cX)$, where $\cX = X/\Gm$, and let $K_{X}(x_1^n,\ldots,x_k^n)$ be the Koszul complexes of \eqref{E:koszul_complex} in \Cref{sect:koszul}. If we define $C_n := \cofib(K_X(x_1^n,\ldots,x_k^n) \to \cO_X)$, then as $n \to \infty$ the all of the weights of $C_n|_Z$ tend to $-\infty$. It follows from the description of $\Perf(X/\Gm)^{<w}$ in \eqref{E:perf_factors} that $C_n \otimes F \in \Perf(X/\Gm)^{<w}$ for $n \gg 0$, and thus the canonical homomorphism $K_X(x_1^n,\ldots,x_k^n) \to \cO_X$ induces an equivalence
\[
\radj{w}_\cS(F \otimes K_X(x_1^n,\ldots,x_k^n)) \cong \radj{w}_\cS(F).
\]
The same argument implies that the dual morphism $\cO_X \to K_X(x_1^n,\ldots,x_k^n)^\dual$ induces an equivalence
\[
\ladj{w}_\cS(F) \cong \ladj{w}_\cS(F \otimes K_X(x_1^n,\ldots,x_k^n)^\dual),
\]
for $n \gg 0$. It follows that $\radj{w}_\cS(F)$ and $\ladj{w}_\cS(F)$ lie in $\Perf(X/\Gm)$, and thus we have our semiorthogonal decomposition
\[
\Perf(\cX) = \langle \Perf_\cS(\cX)^{\geq w}, \cG^w_{\cS,{\rm perf}}, \Perf_\cS(\cX)^{<w} \rangle.
\]

\medskip
\noindent{\textit{Proof of (2):}}
\medskip

It suffices again to assume $\cX = X/\Gm$. We know that $\cG_{\cS,{\rm perf}}^w \to \Perf(X^{\rm ss} / \Gm)$ is fully faithful, so it suffices to show it is essentially surjective. The image of the restriction functor $\Perf(X/\Gm) \to \Perf(X^{\rm ss} / \Gm)$ generates the latter up to retracts. The fact that any $F \in \Perf(X/\Gm)$ has the same restriction to $X^{\rm ss}$ as $\radj{w}(\ladj{w}(F)) \in \cG^w_{\cS,{\rm perf}}$ implies that the essential image of $\cG_{\cS,{\rm perf}}^w$ also generates $\Perf(X^{\rm ss}/\Gm)$ up to retracts. $\cG_{\cS,{\rm perf}}^w$ is idempotent complete by its definition, so the fully faithfulness guarantees that any retract of an object in the essential image of $\cG_{\cS,{\rm perf}}^w \to \Perf(X^{\rm ss} /\Gm)$ also lies in the essential image.

\medskip
\noindent{\textit{Proof of (3):}}
\medskip

It follows from the fact that $\radj{w}_\cS$ and $\ladj{w}_\cS$ preserve $\Perf_\cS(\cX)$ and part (4) of \Cref{T:derived_Kirwan_surjectivity}, that for any $u \leq v \leq w$ we have a semiorthogonal decomposition
\[
\cG_{\cS,{\rm perf}}^{[u,w)} = \langle \Perf_\cS(\cX)^u,\ldots,\Perf_\cS(\cX)^{v-1},\cG_{\cS,{\rm perf}}^v,\Perf_\cS(\cX)^v,\ldots,\Perf_\cS(\cX)^{w-1} \rangle.
\]
So the infinite semiorthogonal decomposition of $\Perf(\cX)$ follows from \eqref{E:perf_factors}, which implies that $\Perf(\cX) = \bigcup_{u,w} \cG_{\cS,{\rm perf}}^{[u,w)}$.

\medskip
\noindent{\textit{Proof of (4):}}
\medskip

This follows from \Cref{T:derived_Kirwan_surjectivity} part (3), and the fact that both $i_\ast \pi^\ast(-)$ and the inverse functor $\ladj{w+1}(\sigma^\ast i^\ast(-))$ preserve perfect complexes.

\section{The main theorem: quasi-smooth case} \label{S:quasi-smooth}

For the rest of the paper, we work over a field $k$ of characteristic $0$. This allows us to make use of the theory of Ind-coherent sheaves on derived stacks developed in \cites{gaitsgory2013ind, drinfeld2013some, GR1, GR2}. For an algebraic derived stack almost of finite type and with affine automorphism groups over $k$, the category $\IC(\cX)$ is the ind-completion of the $\infty$-category $\DCoh(\cX)$ of bounded quasi-coherent complexes with coherent cohomology groups \cite{drinfeld2013some}*{Thm.~0.4.5}.

\subsection{Quasi-smooth derived stacks}

\begin{defn} \label{D:quasi_smooth}
A morphim of algebraic derived stacks $\cX \to \cY$ is \emph{quasi-smooth} if it is locally almost of finite presentation and the relative cotangent complex $\bL_{\cX/\cY}$ is perfect and has Tor-amplitude in $[-1,1]$. For a $k$-stack $\cX$, we say that $\cX$ is quasi-smooth if $\cX \to \Spec(k)$ is quasi-smooth.
\end{defn}

\begin{ex}
If $\cX_0,\cX_1 \to \cY$ are maps between smooth stacks over a base $\cB$, then the derived fiber product $\cX' := \cX_0 \times_{\cY} \cX_1$ is quasi-smooth over $\cB$.

A special case is the derived zero locus of a section $s$ of a locally free sheaf $V$ on a smooth stack $\cX$, which is by definition the derived intersection of $s$ with the zero section of $\Tot_\cX(V)$. If $\cU_\bullet$ denotes the simplicial $\cO_\cX$-module associated under the Dold-Kan correspondence to the two-term complex $\cV^\dual \to \cO_\cX$ defined by the section $s$, then the derived $0$ locus of $s$ is the relative $\Spec$ over $\cX$ of the free simplicial commutative $\cO_\cX$-algebra $\Sym_{\cO_\cX}(\cU_\bullet)$, whose $n^{th}$ level is just $\Sym_{\cO_\cX}(\cU_n)$.
\end{ex}

\begin{ex}
If $S$ is a smooth surface, then $\cX = \inner{\Coh}(S)$ is a quasi-smooth derived stack. By \cite{toen_vaquie}*{Cor.~3.17} the fiber of the cotangent complex $\bL_\cX$ at a point $[E] \in \inner{\Coh}(S)$ is
\[
\bL_{\cX,[E]} \simeq \RHom_S(E,E[1])^\dual \simeq \RHom_S(E,E)^\dual [-1].
\]
$\RHom$ between coherent sheaves has homology in positive cohomological degree only, so this combined with Serre duality $\RHom_S(E,E)^\dual \simeq \RHom(E,E\otimes \omega_S[2])$ implies that $\bL_{\cX,[E]}$ has homology in degree $-1,0,$ and $1$. Because $\bL_\cX \in \APerf(\inner{\Coh}(S))$, this implies that $\bL_{\cX}$ is perfect with Tor-amplitude in $[-1,1]$, hence $\inner{\Coh}(S)$ is quasi-smooth.
\end{ex}

One key observation about quasi-smooth stacks is the following:

\begin{lem} \label{lem:finite_tor_amplitude}
Let $\cX$ be an algebraic derived stack locally a.f.p. and with affine automorphism groups over a base algebraic derived $k$-stack $\cB$, and let $i : \cS \hookrightarrow \cX$ be a $\Theta$-stratum with center $\cZ$. Then if $\cX$ is quasi-smooth over $\cB$ or $k$, then so are $\cS$ and $\cZ$, and the projection $\pi : \cS \to \cZ$ is quasi-smooth as well.
\end{lem}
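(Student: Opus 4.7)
The plan is to establish quasi-smoothness of $\cS$, $\cZ$, and $\pi : \cS \to \cZ$ by combining the explicit formulas for their cotangent complexes from \Cref{L:relative_cotangent_complex} and \Cref{L:center_cotangent} with the local structure theorem \Cref{T:local_structure_stratum}.

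First I would handle $\cS$. From \Cref{L:relative_cotangent_complex}, $\bL_{\cS/\cB} \simeq \ladj{1}(i^\ast \bL_{\cX/\cB})$. Since $\bL_{\cX/\cB}$ is perfect of Tor-amplitude in $[-1,1]$, so is its pullback $i^\ast \bL_{\cX/\cB}$, and the baric truncation $\ladj{1}$ preserves the subcategory of perfect complexes by \Cref{P:baric_stratum}(3). What remains is to show $\ladj{1}$ preserves Tor-amplitude $[-1,1]$, which is the key technical point. The identical strategy applied to the formula $\bL_{\cZ/\cB} \simeq \radj{0}(\sigma^\ast \bL_{\cS/\cB})$ then yields quasi-smoothness of $\cZ$.

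For the projection $\pi$, I would use the cotangent fiber sequence $\pi^\ast \bL_{\cZ/\cB} \to \bL_{\cS/\cB} \to \bL_{\cS/\cZ}$, which immediately shows $\bL_{\cS/\cZ}$ is perfect. For Tor-amplitude, the first move is to apply $\sigma^\ast$: the identity $\pi \circ \sigma \simeq \id_\cZ$ combined with the cotangent sequence gives $\sigma^\ast \bL_{\cS/\cZ} \simeq \bL_{\cZ/\cS}[-1]$, which by \Cref{L:center_cotangent} coincides with $\ladj{0}(\sigma^\ast \bL_{\cS/\cB})$, a direct summand of a complex of Tor-amplitude $[-1, 1]$. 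So the Tor-amplitude bound holds on $\sigma(\cZ) \subset \cS$.

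The main obstacle is extending this Tor-amplitude bound from $\sigma(\cZ)$ to all of $\cS$, and similarly verifying that the baric truncations preserve Tor-amplitude above. My plan is to resolve both via the local structure theorem, since quasi-smoothness is smooth-local. By \Cref{T:local_structure_stratum} together with \Cref{L:induced_stratum_base_change}, it suffices to treat the tautological $\Theta$-stratum in $\cX = \Spec(A)/\Gm$ for an affine derived $\Gm$-scheme. Choosing a $\Gm$-equivariant semi-free presentation $A \simeq R[U_\bullet]$, \Cref{lem:filt_abelian_quotient} yields explicit descriptions $\cS \simeq \Spec(R[U_\bullet^{\leq 0}])/\Gm$ and $\cZ \simeq \Spec(R[U_\bullet^{0}])/\Gm$, and a canonical decomposition $B \simeq C \otimes_R R[U_\bullet^{<0}]$ presenting $\cS$ as a semi-free algebra over $\cZ$. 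Quasi-smoothness of $\cX$ forces the normalized chain complex $N(U_\bullet)$ to have homology in degrees $[0,1]$, and the direct summands $N(U_\bullet^{\leq 0})$, $N(U_\bullet^{0})$, $N(U_\bullet^{<0})$ inherit this property. Then $\bL_{\cS/\cB}$, $\bL_{\cZ/\cB}$, and $\bL_{\cS/\cZ} \simeq B \otimes_R N(U_\bullet^{<0})$ are all perfect with Tor-amplitude contained in $[-1,1]$, completing the argument.
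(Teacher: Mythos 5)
Your proposal takes a genuinely different route from the paper, but the key step has a gap. You and the paper both begin from the formula $\bL_{\cS/\cB} \simeq \ladj{1}(i^\ast \bL_{\cX/\cB})$ of \Cref{L:relative_cotangent_complex}, and you correctly identify the crux: showing that the baric truncation $\ladj{1}$ sends a perfect complex of Tor-amplitude $[-1,1]$ to another such. You propose to settle this by reducing via \Cref{T:local_structure_stratum} to the tautological stratum $\Spec(R[U_\bullet^{\leq 0}])/\Gm \hookrightarrow \Spec(R[U_\bullet])/\Gm$ and computing directly.

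The gap is in the sentence \emph{``Quasi-smoothness of $\cX$ forces the normalized chain complex $N(U_\bullet)$ to have homology in degrees $[0,1]$, and the direct summands $N(U_\bullet^{\leq 0})$, $N(U_\bullet^{0})$, $N(U_\bullet^{<0})$ inherit this property.''} First, quasi-smoothness is a condition on the \emph{fibers} of $\bL_{A/R}$, with the differential given pointwise by a Jacobian; it does not constrain the graded vector space of generators of an arbitrary $\Gm$-equivariant semi-free presentation (one can always add an acyclic pair of generators in high degree). To get generators concentrated in degrees $0,1$ one needs a minimal presentation, which only exists Zariski-locally (cf.\ \Cref{lem:quasi_smooth}, stated over a field and with a closed point fixed) and which you do not invoke. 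Second and more seriously, even with such a presentation, the weight decomposition of $\bL_{A/R}|_{\cS}$ is a decomposition of $\Gm$-equivariant $B$-modules; the \emph{fiber} at a point $p \in \Spec(B)$ that is not $\Gm$-fixed has no weight decomposition, because the Jacobian differential at $p$ mixes weights. So you cannot conclude that $N(U_\bullet^{<0}) \otimes k(p)$ inherits the homology range of $N(U_\bullet) \otimes k(p)$ for $p \notin \cZ$. Your local reduction therefore only settles the Tor-amplitude bound over $\cZ$, which you already had intrinsically from your observation about $\sigma(\cZ)$.

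What the paper does instead is observe that restricted to the center $\cZ$ the cotangent complex genuinely decomposes as a \emph{direct sum} by weight (via \Cref{L:center_grading}), so $\bL_{\cS/\cB}|_\cZ \simeq (\bL_{\cX/\cB}|_\cZ)^{\leq 0}$ and $\bL_{\cZ/\cB} \simeq (\bL_{\cX/\cB}|_\cZ)^0$ are honest direct summands of a perfect complex of Tor-amplitude $[-1,1]$, and the bound is automatic there. It then extends from $\cZ$ to all of $\cS$ by a closed-locus/specialization argument: for an almost perfect complex on a locally noetherian stack the locus where $H_n(- \otimes k(x)) \neq 0$ is closed, and every point of $\cS$ specializes into the image of $\sigma : \cZ \to \cS$ (via the weak $\Theta$-action), so a bound on fiber homology along $\cZ$ forces the same bound everywhere. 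This specialization step is exactly the missing ingredient in your argument, and once you have it there is no need for \Cref{T:local_structure_stratum} at all; the proof becomes purely intrinsic and considerably shorter. The same mechanism handles $\pi$ via $\sigma^\ast \bL_{\cS/\cZ} \simeq \bL_{\cZ/\cS}[-1] \simeq (\bL_{\cX/\cB}|_\cZ)^{<0}$, again a direct summand.
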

\begin{proof}
Working relative to $\cB$, \Cref{L:relative_cotangent_complex} and \Cref{P:baric_stratum} part (4) imply
\[
\bL_{\cS/\cB}|_\cZ \simeq \ladj{1}(\bL_{\cX/\cB}|_{\cS})|_{\cZ} \simeq (\bL_{\cX/\cB}|_\cZ)^{\leq 0}
\]
is perfect and concentrated in degrees $-1$,$0$, and $1$. Because every point in $\cS$ specializes to one in the image of $\sigma : \cZ \to \cS$, $\bL_{\cS/\cB}$ is perfect with fiber homology concentrated in those degrees as well. It follows from \Cref{L:center_cotangent} that $\bL_{\cZ/\cB} \cong (\bL_{\cX/\cB}|_{\cZ})^0$ is perfect with fiber homology concentrated in degree $-1,0,$ and $1$, and thus $\cZ$ is quasi-smooth as well. The pullback of $\bL_{\cB / \Spec(k)}$ to $\cX$ has weight $0$, so the previous formulas remain true with $\cB$ replaced by $\Spec(k)$, and thus $\cS$ and $\cZ$ are quasi-smooth over $\Spec(k)$ when $\cX$ is.

Finally, considering the fiber sequence of cotangent complexes coming from the composition $\cZ \xrightarrow{\sigma} \cS \xrightarrow{\pi} \cZ$, which is isomorphic to the identity, we have an isomorphism $\sigma^\ast \bL_{(\pi : \cS \to \cZ)} \cong \bL_{\cZ/\cS}[-1] \cong (\bL_{\cX}|_\cZ)^{<0}$, so this is a perfect complex with fiber homology in degree $-1,0,$ and $1$ as well.
\end{proof}

\subsubsection{Serre duality for quasi-smooth stacks}

We recall the key constructions of Serre duality in the derived setting \cite{drinfeld2013some}*{Sect.~4.4}. For any algebraic derived stack $\cX$ which is a.f.p. and has affine automorphism groups over $\Spec(k)$, there is a canonical complex $\omega_\cX := \pi^!(k) \in \IC(\cX)$, where $\pi$ is the projection to $\Spec(k)$ and $\pi^! : \IC(\Spec k) \to \IC(\cX)$ is the shriek pullback functor on ind-coherent sheaves. The inner Hom for the $\QC(\cX)^\otimes$-module category $\IC(\cX)$ defines a functor
\[
\bD_\cX(\bullet) := \inner{\RHom}_{\cX}^\otimes(\bullet,\omega_\cX) : \IC(\cX) \to \QC(\cX)^{\rm op}
\]
that induces an equivalence $\DCoh(\cX) \to \DCoh(\cX)^{\rm op}$. Ind-coherent sheaves have a canonical symmetric monoidal structure given by
\[
F \itimes G := \Delta^!(F \boxtimes G),
\]
where $\Delta : \cX \to \cX \times \cX$ is the diagonal. $\omega_\cX$ is uniquely characterized as the being the unit for this monoidal $\infty$-category \cite{GR1}*{Sect.~I.5.4.3.4} \cite{HA}*{Cor.~5.4.4.7}.

When $\cX$ is eventually coconnective, $\omega_\cX \in \DCoh(\cX) \subset \IC(\cX)$, and we regard $\omega_\cX$ as an element of $\QC(\cX)$ under the canonical isomorphism $\Psi_\cX : \IC(\cX)_{<\infty} \to \QC(\cX)_{<\infty}$. Furthermore, if $\cX$ is derived Gorenstein, which is the case when $\cX$ is quasi-smooth \cite{arinkin2015singular}*{Cor.~2.2.7}, then $\omega_\cX$ is an invertible complex.


\subsubsection{Computing the canonical complex}

For a smooth separated $dg$-scheme $X$, one has a canonical isomorphism \cite{GR2}*{Prop.~II.9.7.3.4}
\[
\omega_X := (X \to \Spec(k))^!(k) \cong \det(\bL_\cX)[\rank \bL_X],
\]
where the right hand side is the ``graded determinant'' (see \Cref{A:grothendieck}) regarded as an Ind-coherent sheaf via the embeddings $\Perf(X) \subset \DCoh(X) \subset \IC(X)$. For any quasi-smooth stack that admits a locally closed embedding into $\bP^n/G$, one can deduce the same formula using Grothendieck's formula \cite{GR2}*{Cor.~II.9.7.3.2} and an explicit computation on $\bP^n/G$, and we expect it to hold for any quasi-smooth algebraic derived stack.

To our knowledge, however, a general result of this form does not appear in the literature, even for smooth stacks, and we were unable to find a simple general argument. In private communication, Dima Arinkin has sketched a proof using deformation to the normal cone, but we will content ourselves here with a weaker claim that covers our applications.

\begin{prop} \label{P:canonical_complex_classical}
Let $f : \cX \to \cY$ be a morphism between quasi-smooth locally a.f.p. algebraic derived $k$-stacks, and let $i : \cX^{\rm cl} \hookrightarrow \cX$ be the inclusion of the underlying classical stack. Then regarding $\omega_{\cX/\cY} := f^{!}(\cO_\cY) \in \IC(\cX)_{<\infty}$ as a quasi-coherent complex, there is a canonical isomorphism
\[
i^\ast(\omega_{\cX/\cY})\cong i^\ast(\det(\bL_{\cX/\cY})[\rank \bL_{\cX/\cY}]).
\]
\end{prop}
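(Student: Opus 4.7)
The plan is to combine two pieces: a known formula for shriek pullback along smooth morphisms and \Cref{L:regular_closed_immersion_general} for regular closed immersions, by factoring the quasi-smooth morphism into a regular closed immersion followed by a smooth morphism. Both sides of the claimed isomorphism are locally defined in the Zariski topology on $\cX$, so after choosing a smooth cover of $\cY$ and a Zariski cover of $\cX$, we may assume that $f: \cX \to \cY$ admits a factorization
\[
\cX \xrightarrow{i} \cE \xrightarrow{p} \cY,
\]
where $p$ is smooth (for instance, $\cE$ can be taken to be a trivial vector bundle over $\cY$) and $i$ is a regular closed immersion. Such a factorization exists Zariski-locally because a quasi-smooth morphism is Zariski-locally the derived zero locus of a section of a vector bundle on a smooth $\cY$-stack: the fiber $H_1(\bL_{\cX/\cY})^\vee$ at a point of $\cX$ is locally free, and lifting a generating set to functions on a smooth $\cY$-stack containing $\cX$ presents $\cX$ as the derived vanishing locus inside it.

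Given this factorization, functoriality of shriek pullback gives $\omega_{\cX/\cY} \cong i^! p^!(\cO_\cY)$. For the smooth morphism $p$, the standard formula $p^!(\cO_\cY) \cong \det(\bL_{p})[\rank \bL_p]$ holds, which on classical stacks recovers the ordinary relative dualizing line bundle. For the regular closed immersion $i$, \Cref{L:regular_closed_immersion_general} identifies $i^{\QC,!}(-) \simeq i^\ast(-) \otimes \det(\bL_{\cX/\cE}[-1])[\rank \bL_{\cX/\cE}]$, and since the complexes involved are bounded, this agrees with $i^!$ for ind-coherent sheaves via $\Psi$. Combining, we obtain
\[
\omega_{\cX/\cY} \cong i^\ast\!\det(\bL_{p})[\rank \bL_p] \otimes \det(\bL_{\cX/\cE}[-1])[\rank \bL_{\cX/\cE}].
\]
On the other hand, the fiber sequence $i^\ast \bL_{p} \to \bL_{\cX/\cY} \to \bL_{\cX/\cE}$ of perfect complexes, together with the multiplicativity of the graded determinant $\detz$ on fiber sequences, yields
\[
\detz(\bL_{\cX/\cY}) \cong i^\ast\!\detz(\bL_{p}) \otimes \detz(\bL_{\cX/\cE}),
\]
and a direct verification identifies the right-hand side with the previous expression for $\omega_{\cX/\cY}$, providing an isomorphism with $\det(\bL_{\cX/\cY})[\rank \bL_{\cX/\cY}]$.

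The main obstacle is not the local construction itself but rather showing that the resulting identification glues: two different local factorizations $(\cE, p, i)$ and $(\cE', p', i')$ of $f$ may produce local isomorphisms that a priori differ by automorphisms of a line bundle shifted by a fixed integer. On a derived stack the automorphism $\infty$-group of a shifted line bundle has higher homotopy, so gluing a canonical isomorphism across an overlap requires tracking higher coherence data. This is precisely where restricting to the underlying classical stack $i: \cX^{\rm cl} \hookrightarrow \cX$ rescues the argument: on $\cX^{\rm cl}$ the Picard $\infty$-groupoid of shifted line bundles reduces to an ordinary Picard groupoid, so the local isomorphisms glue uniquely once one verifies (by a direct comparison on pairwise overlaps, using that any two factorizations are dominated by a common refinement via base change) that their restrictions to $\cX^{\rm cl}$ agree on double intersections. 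I would carry out this last step by noting that the comparison between two factorizations reduces, after passing to a common refinement, to applying the same two formulas in a situation where one factor is smooth of rank zero, where both formulas collapse to the identity on $i^\ast\!\det(\bL_{\cX/\cY})[\rank\bL_{\cX/\cY}]$.
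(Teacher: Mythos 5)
Your overall strategy mirrors the paper's: factor $f$ locally as a regular closed immersion into something smooth over $\cY$, apply the known shriek-pullback formulas for those two classes of morphisms, and glue the resulting local isomorphisms after restricting to the classical truncation $\cX^{\rm cl}$, where the Picard groupoid is $1$-truncated. The paper does exactly this, except that it first reduces to the absolute case $\cY=\Spec(k)$ and then works smooth-locally on $\cX$ with affine embeddings $S \hookrightarrow \bA^n$. So the plan is sound.

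The genuine gap is in your handling of the coherence of the local isomorphisms, which you yourself identify as ``the main obstacle'' but then dismiss too quickly. You assert that comparing two factorizations reduces, after passing to the common refinement $\cE_1 \times_\cY \cE_2$, to ``applying the same two formulas in a situation where one factor is smooth of rank zero, where both formulas collapse to the identity.'' That is not what happens. Refining the factorization $\cX \hookrightarrow \cE_1 \to \cY$ to $\cX \hookrightarrow \cE_1 \times_\cY \cE_2 \to \cE_1 \to \cY$ inserts a smooth factor of rank $\rank \cE_2$, which is generally nonzero, and one has to verify that the isomorphism built from the longer factorization is homotopic to the one built from the shorter one. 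This requires proving nontrivial compatibilities: that $\phi_{g \circ f}$ is homotopic to $\phi_f \otimes f^\ast(\phi_g)$ for compositions where $f$ is a regular closed immersion and $g$ is smooth, and that the $\phi$'s are compatible with base change. These are precisely the statements of Lemmas \ref{L:grothendieck_formula_1}, \ref{L:grothendieck_formula_2}, \ref{L:grothendieck_formula_3}, and the paper devotes several pages to establishing them (using the deformation to the normal cone to reduce to the universal example $B\GL_n \hookrightarrow \bA^n/\GL_n$, etc.). Without them, all you have is an isomorphism defined up to an unspecified automorphism on each chart, and there is no a priori reason the automorphisms cancel on overlaps, even classically — restricting to $\cX^{\rm cl}$ makes the coherence data discrete, so equality up to homotopy becomes equality, but you still have to produce the homotopy in the first place.

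Two smaller issues: (i) you invoke a Zariski cover of $\cX$ to produce the factorization, but for $\cX$ an algebraic stack a Zariski-open is still a stack and need not embed into a smooth $\cY$-stack; you want a \emph{smooth} cover by affine derived schemes, as the paper uses. (ii) Note that \Cref{L:regular_closed_immersion_general} gives you the existence of an isomorphism $i^{\QC,!}(\cO_\cX) \cong \det(\bL_{\cS/\cX}[-1])[\rank \bL_{\cS/\cX}]$, but by itself it does not package the canonicity and functoriality you need for descent — the version you want is \Cref{L:grothendieck_formula_1}, which records the compatibility with composition and base change explicitly. Swapping in the weaker lemma without noting this loses the control you need at the gluing step.
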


The proof is given in \Cref{A:grothendieck}, but we note the following two useful corollaries.

\begin{cor} \label{C:canonical_weight}
Let $\cX$ be a quasi-smooth locally a.f.p. algebraic derived $k$-stack. For any field extension $k \subset k'$, any $p : \Spec(k') \to \in \cX$, and any homomorphism $(\Gm)_{k'} \to \Aut_\cX(p)$, the fibers $p^\ast \omega_\cX$ and $p^\ast (\det(\bL_\cX)[\rank \bL_\cX])$ have the same weight with respect to the induced $(\Gm)_{k'}$-action on each.
\end{cor}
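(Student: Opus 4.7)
The proof will be a direct application of \Cref{P:canonical_complex_classical}, exploiting the fact that the test data $(p,\lambda)$ is encoded by a classical stack. First I would reformulate the input: giving a $k'$-point $p : \Spec(k') \to \cX$ together with a homomorphism $\lambda : (\Gm)_{k'} \to \Aut_\cX(p)$ is the same as giving a morphism $g : B(\Gm)_{k'} \to \cX$, and for any $F \in \QC(\cX)$ the $(\Gm)_{k'}$-weight of $p^\ast(F)$ (when this makes sense) is by definition the weight of $g^\ast(F) \in \QC(B(\Gm)_{k'})$, viewed as a graded $k'$-module.

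The key observation is that $B(\Gm)_{k'}$ is a classical stack: for any classical affine test scheme $T$, the product $B(\Gm)_{k'} \times T$ remains classical, so the universal property of the inclusion $i : \cX^{\rm cl} \hookrightarrow \cX$ of the underlying classical stack forces $g$ to factor canonically as $g = i \circ \bar g$ for some $\bar g : B(\Gm)_{k'} \to \cX^{\rm cl}$. Now I would apply \Cref{P:canonical_complex_classical} to the structure morphism $f : \cX \to \Spec(k)$, which is quasi-smooth by hypothesis, obtaining the canonical isomorphism
\[
i^\ast(\omega_\cX) \;\cong\; i^\ast(\det(\bL_\cX)[\rank \bL_\cX]) \qquad \text{in } \QC(\cX^{\rm cl}).
\]
Pulling back along $\bar g$ immediately yields $g^\ast(\omega_\cX) \cong g^\ast(\det(\bL_\cX)[\rank \bL_\cX])$ in $\QC(B(\Gm)_{k'})$.

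To conclude, I would note that because $\cX$ is quasi-smooth it is derived Gorenstein, so $\omega_\cX$ is an invertible object of $\IC(\cX)_{<\infty} \simeq \QC(\cX)_{<\infty}$; likewise $\det(\bL_\cX)[\rank \bL_\cX]$ is the shift of a line bundle. Therefore both sides of the above isomorphism are shifts of one-dimensional $(\Gm)_{k'}$-representations, and the existence of an isomorphism between them forces their weights (and degrees) to coincide. There is no real obstacle in this argument once \Cref{P:canonical_complex_classical} is in hand --- the only point worth double-checking is the factorization $g = i \circ \bar g$, which is automatic from $B\Gm$ being $0$-truncated.
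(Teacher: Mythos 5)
Your proof is correct and is exactly the paper's argument, just spelled out in more detail: the paper's proof is the one-line observation that the result "follows from \Cref{P:canonical_complex_classical} and the fact that the map $(B\Gm)_{k'} \to \cX$ factors through $\cX^{\rm cl}$," which is precisely your factorization $g = i \circ \bar g$ followed by pullback of the isomorphism $i^\ast(\omega_\cX) \cong i^\ast(\det(\bL_\cX)[\rank \bL_\cX])$.
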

\begin{proof}
This follows from \Cref{P:canonical_complex_classical} and the fact that the map $(B\Gm)_{k'} \to \cX$ factors through $\cX^{\rm cl}$.
\end{proof}

\begin{cor} \label{C:canonical_class}
Let $\cX$ be a quasi-smooth a.f.p. algebraic derived $k$-stack. Then in the Grothendieck $K$-group $K_0(\DCoh(\cX))$,
\[
[\omega_\cX] = (-1)^{\rank \bL_\cX} [\det(\bL_\cX)].
\]
\end{cor}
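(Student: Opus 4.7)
The plan is to reduce the identity in $K_0(\DCoh(\cX))$ to the classical comparison provided by \Cref{P:canonical_complex_classical} via a standard $K$-theoretic d\'evissage. The first step is to observe that, because $\cX$ is locally Noetherian and $\DCoh(\cX)$ has a bounded $t$-structure whose heart is $\Coh(\cX) = \Coh(\cX^{\rm cl})$, every object $F \in \DCoh(\cX)$ admits a finite Postnikov filtration, yielding
\[
[F] \;=\; \sum_{n \in \bZ} (-1)^n [H_n(F)] \quad \text{in } K_0(\DCoh(\cX)),
\]
and the inclusion $\Coh(\cX^{\rm cl}) \hookrightarrow \DCoh(\cX)$ induces an isomorphism $K_0(\Coh(\cX^{\rm cl})) \xrightarrow{\sim} K_0(\DCoh(\cX))$.

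I would then focus on the invertible complex
\[
M \;:=\; \omega_\cX \otimes_{\cO_\cX} \det(\bL_\cX)^{-1}[-\rank \bL_\cX] \;\in\; \DCoh(\cX),
\]
and aim to show $[M] = [\cO_\cX]$. By \Cref{P:canonical_complex_classical}, $i^{\ast} M \cong \cO_{\cX^{\rm cl}}$ canonically. Since $M$ is an invertible complex, it is locally isomorphic to $\cO_\cX[d]$ for a locally constant $d \in \bZ$; the hypothesis that $i^{\ast} M$ is trivial in homological degree $0$ forces $d = 0$ and identifies the classical line bundle underlying $M$ with $\cO_{\cX^{\rm cl}}$. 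This allows me to identify $H_n(M) \cong \pi_n(\cO_\cX)|_{\cX^{\rm cl}} = H_n(\cO_\cX)$ as coherent sheaves for every $n$, and summing with signs gives
\[
[M] \;=\; \sum_n (-1)^n [H_n(M)] \;=\; \sum_n (-1)^n [H_n(\cO_\cX)] \;=\; [\cO_\cX].
\]

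Finally, multiplying by the invertible class $[\det(\bL_\cX)[\rank \bL_\cX]] = (-1)^{\rank \bL_\cX}[\det(\bL_\cX)]$ via the natural $K_0(\Perf(\cX))$-module structure on $K_0(\DCoh(\cX))$ yields
\[
[\omega_\cX] \;=\; [M]\cdot [\det(\bL_\cX)[\rank\bL_\cX]] \;=\; (-1)^{\rank \bL_\cX}[\det(\bL_\cX)],
\]
as desired. The only step requiring genuine care is the identification $H_n(M)\cong H_n(\cO_\cX)$ as coherent sheaves \emph{globally} (rather than just locally): this is the assertion that an invertible complex with trivialized classical reduction has the same homology sheaves as the structure sheaf, and can be verified on a smooth affine atlas where the trivialization of $i^{\ast} M$ patches with the local triviality of $M$ to produce global isomorphisms. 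Alternatively, one can bypass the module formalism and directly compare $H_n(\omega_\cX)$ with $H_n(\det(\bL_\cX)[\rank\bL_\cX])$ term-by-term, since the two invertible complexes share the same classical truncation and the same shift $d=\rank\bL_\cX$ by \Cref{P:canonical_complex_classical}.
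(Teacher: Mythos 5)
Your proof is correct and takes essentially the same route as the paper's, repackaged by forming the invertible ratio $M = \omega_\cX \otimes \det(\bL_\cX)^{-1}[-\rank\bL_\cX]$ rather than comparing the two Euler characteristics directly. The step you flag as needing care is dispatched cleanly by flatness and the projection formula, exactly as in the paper: since $M$ is a flat line bundle (the shift vanishes by \Cref{P:canonical_complex_classical}), one has $H_n(M) \cong H_n(\cO_\cX) \otimes_{\cO_\cX} M \cong i_\ast\bigl(H_n(\cO_\cX) \otimes_{\cO_{\cX^{\rm cl}}} i^\ast M\bigr) \cong H_n(\cO_\cX)$ globally, with no patching over an atlas required.
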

\begin{proof}
Note that $[E] = \sum (-1)^n [H_n(E)]$, where the sum is finite because $\cX$ is quasi-compact. We have an isomorphism of $\cO_{\cX}$-modules $H_n(E) \cong i_\ast(H_n(E))$, where the latter denotes the canonical $\cO_{\cX^{\rm cl}}$-module structure on $H_n(E)$ and $i : \cX^{\rm cl} \to \cX$ is the inclusion. When $E$ is locally free, hence flat, we have
\[
H_n(E) \cong H_n(\cO_\cX) \otimes E \cong i_\ast(H_n(\cO_\cX) \otimes_{\cO_{\cX^{\rm cl}}} i^\ast(E)),
\]
where the last isomorphism is the projection formula. The claim now follows from \Cref{P:canonical_complex_classical}.
\end{proof}

\subsection{A reformulation of the grade-restriction rules}
\label{S:alternative_grr_condition}

Given an extension $k'/k$ we regard a $\bZ$-graded vector space $U$ as a $(B\bG_m)_{k'}$-representation. We let $k'[U[1]]$ denote the free simplicial commutative graded $k'$-algebra on the simplicial graded vector space that corresponds, under the Dold-Kan correspondence, to the single-term complex of graded vector spaces $U[1]$.

\begin{defn} \label{D:regularization}
A \emph{regular graded point} of a quasi-smooth algebraic derived $k$ stack $\cX$ consists of a $(\Gm)_{k'}$-representation $U$ and a map
\begin{equation} \label{E:regular_graded_point}
f_{\rm reg} : \Spec(k'[U[1]]) / (\Gm)_{k'} \to \cX,
\end{equation}
such that if $0 \in \Spec(k'[U[1]])$ is the unique $k'$ point and $x = f_{\rm reg}(0) \in \cX(k')$, then fiber over $0$ of the map of cotangent complexes induced by $f_{\rm reg}$
\begin{equation}\label{E:regular_graded_point_extension}
\bL_{\cX,x} \to U[1] \cong \bL_{\Spec(k'[U[1]])}|_0
\end{equation}
induces an isomorphism $H_1(\bL_{\cX,x}) \cong U$.
\end{defn}

The map \eqref{E:regular_graded_point} is a square-zero extension of the underlying graded point $f : (B\Gm)_{k'} \to \cX$, and is thus determined up to isomorphism by the element \eqref{E:regular_graded_point_extension} in $\Hom_{(B\Gm)_{k'}}(f^\ast(\bL_\cX),U[1])$.

\begin{const}
Given a graded point $f : (B\Gm)_{k'} \to \cX$ one can construct a \emph{regularization}, a regular graded point $f_{\rm reg} : \Spec(k'[U[1]]) / (\Gm)_{k'} \to \cX$ whose underlying graded point is $f$. $f_{\rm reg}$ is the square-zero extension associated to a choice of $\Gm$-equivariant splitting of the canonical map $H_1(\bL_{\cX,x})[1] \to \bL_{\cX,x}$. $f_{\rm reg}$ is unique up to $2$-isomorphism and composition with an automorphism of $U$.
\end{const}

Given a regular graded point $f_{\rm reg}$, the relative cotangent complex of the induced map $\Spec(k'[U[1]]) \to \cX$ is $\tau_{\geq 0}(\bL_{\cX,x}) [1]$. Because this is concentrated in homological degree $0$ and $1$, $f_{\rm reg}$ is a quasi-smooth morphism. It follows that $f_{\rm reg}$ is of finite Tor-amplitude, so the pullback functor $f_{\rm reg}^\ast$ maps $\DCoh(\cX)$ to $\DCoh(\Spec(k'[U[1]])/\Gm)$ \cite{arinkin2015singular}*{Cor.~2.2.4}. For concreteness, we identify the latter category with $\DCoh^{\Gm}(k'[U[1]])$,\footnote{Here and elsewhere, we use a group in the superscript to denote the category of complexes of equivariant sheaves.} the derived $dg$-category of graded $k'[U[1]]$-modules with bounded finite dimensional cohomology.

Any $F \in \DCoh^{\Gm}(k'[U[1]])$ can be canonically constructed by a finite sequence of extensions from its homology modules. Because $U$ is in homological degree 1, it acts trivially on $H_\ast(F)$, so each homology module is a direct sum of graded modules of the form $k'\langle n \rangle$, i.e., the one dimensional module with weight $-n \in \bZ$.

\begin{defn}
If $U$ is a $\Gm$-representation, for $F \in \DCoh^{\Gm}(k'[U[1]])$, we define $\minwt(F)$ and $\maxwt(F)$ to be respectively the smallest and largest $\Gm$ weight appearing in the $\Gm$-representation $H_\ast(F)$.
\end{defn}

\begin{lem} \label{L:weight_duality}
For any $(\Gm)_{k'}$-representation $U$ and any $F \in \DCoh^{\Gm}(k'[U[1]])$, we have
\[
\minwt(\bD_{k'[U[1]]}(F)) = - \maxwt(F).
\]
If $U$ has nonnegative weights, and $F_0 = F \otimes_{k'[U[1]]} k' \in \APerf((B\Gm)_{k'})$, then we have $\minwt(F_0) = \minwt(F) > -\infty$, where the former denotes the smallest non-vanishing weight space in $H_\ast(F_0)$.
\end{lem}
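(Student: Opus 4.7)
The plan is to compute the Verdier dualizing complex $\omega_A$ of $A := k'[U[1]]$, use the Frobenius structure of $A$ to analyze how $\bD_A$ acts on simple modules, and combine this with a Nakayama-type argument for Part 2.

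First I would identify $\omega_A$. Since $\Spec(A)$ is quasi-smooth with $\bL_A \simeq U[1] \otimes_{k'} A$, it is derived Gorenstein, so $\omega_A$ is an invertible $A$-module, necessarily of the form $A\langle c\rangle[d]$. Applying \Cref{P:canonical_complex_classical} at the unique classical point gives $\omega_A|_0 \simeq \det(U[1])[-\dim U] \simeq k'\langle\|U\|\rangle$ in homological degree $0$, where $\|U\|$ denotes the sum of the $\Gm$-weights of $U$; this forces $\omega_A \simeq A\langle\|U\|\rangle$.

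For Part 1, I would use that every $F \in \DCoh^\Gm(A)$ admits a finite filtration with shifted simple subquotients $k'\langle n_i\rangle[p_i]$ (since $A$ is connective with $H_0(A) = k'$), so that both $\minwt(F)$ and $\maxwt(F)$ depend only on the multiset $\{-n_i\}$ of weights appearing in $H_\ast(F)$. As $\bD_A$ is an exact contravariant equivalence, it suffices to compute $\bD_A(k'\langle w\rangle)$ on a single simple. As a graded $k'$-algebra, $A$ is isomorphic to the exterior algebra $\Lambda^\bullet U$ (with $\Lambda^p U$ in homological degree $p$), which is symmetric Frobenius and hence self-injective; self-injectivity yields $\RHom_A(k',A) \simeq \mathrm{socle}(A) \simeq k'\langle -\|U\|\rangle[\dim U]$, and combining with the formula for $\omega_A$ produces $\bD_A(k'\langle w\rangle) \simeq k'\langle -w\rangle[\dim U]$, a simple of weight $w = -\maxwt(k'\langle w\rangle)$. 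The general identity $\minwt(\bD_A F) = -\maxwt(F)$ then follows by tracking weights through the composition series.

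For Part 2, assume $U$ has nonnegative $\Gm$-weights. The inequality $\minwt(F_0) \geq \minwt(F) > -\infty$ is immediate from the cofiber sequence $\mathfrak{m}\otimes_A F \to F \to F_0$, the fact that $\mathfrak{m} = A_{>0}$ has nonnegative weights, and the finiteness of the weight set of $F \in \DCoh^\Gm(A)$. For the reverse inequality, I would decompose $U = U^0 \oplus U^{>0}$ into the weight-zero and strictly positive parts, obtaining $A \simeq A'\otimes_{k'} A''$ with $A' = k'[U^0[1]]$ and $A'' = k'[U^{>0}[1]]$, and write $F_0 \simeq (F\otimes_{A''}^L k')\otimes_{A'}^L k'$. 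Since $\mathfrak{m}_{A''}$ has strictly positive weights, an explicit computation of the weight-$w_0$ part (using the fibre sequence $F^{>w_0} \to F \to F^{w_0}$ of $A''$-modules, with $w_0 := \minwt(F)$) shows $(F\otimes_{A''}^L k')^{w_0} \simeq F^{w_0}$, which is a nonzero object of $\DCoh(A')$; derived Nakayama for the local simplicial commutative ring $A'$ then produces a nonzero weight-$w_0$ class in $F_0$. The main obstacle will be the careful bookkeeping of graded and homological shifts in identifying $\omega_A$ and $\RHom_A(k',A)$ from \Cref{P:canonical_complex_classical} together with the Frobenius structure of $\Lambda^\bullet U$ in characteristic zero; once this is settled, the Nakayama step in Part 2 is standard.
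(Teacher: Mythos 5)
Your proposal tracks the paper's argument quite closely in spirit, so the comments below are mostly about bookkeeping and one genuine gap.

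For Part~1, the route is essentially the paper's: compute $\bD_A$ on the simples $k'\langle n\rangle$ and propagate through a composition series. However, your identification of $\omega_A$ has a homological-shift error. Applying \Cref{P:canonical_complex_classical} gives $i^\ast\omega_A \cong i^\ast(\det(\bL_A)[\rank\bL_A])$; since $\bL_A \simeq A\otimes U[1]$ one has $\det(\bL_A) = \det(U)^\dual$ (a line in degree $0$) and $\rank\bL_A = -\dim U$, so $i^\ast\omega_A \cong k'\langle\|U\|\rangle[-\dim U]$, not $k'\langle\|U\|\rangle$ in degree $0$. Correspondingly $\omega_A \cong A\langle\|U\|\rangle[-\dim U]$, and combined with $\RHom_A(k',A)\cong k'\langle-\|U\|\rangle[\dim U]$ (the socle) the two shifts cancel, giving $\bD_A(k'\langle w\rangle) \cong k'\langle -w\rangle$ with no homological shift, which is what the paper asserts. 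Your extra $[\dim U]$ is an artifact of the $\omega_A$ error. Since the lemma only concerns weights this does not invalidate Part~1, but the shifts do matter elsewhere in the paper (e.g.\ in \Cref{L:bounded_weights}), so it is worth getting right. Also note that the invocation of ``self-injectivity'' of $\Lambda^\bullet U$ as a classical Frobenius algebra is only heuristically correct here: what you are really using is the Gorenstein property of the quasi-smooth dg-algebra $A$, which is what forces $\RHom_A(k',A)$ to be one-dimensional; the classical self-injectivity of the underlying graded ring is a different category of modules.

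For Part~2 the hard direction via the tensor decomposition $A = A'\otimes_{k'}A''$ and derived Nakayama over $A'$ is a perfectly valid alternative to the paper's minimal-model argument, and arguably makes the Nakayama step more visible. But there is a real gap: the ``fibre sequence $F^{>w_0}\to F\to F^{w_0}$'' implicitly assumes $F$ has no weight components below $w_0$; without that, the cofiber of $F^{>w_0}\hookrightarrow F$ is $F^{\leq w_0}$, not $F^{w_0}$, and $(F\otimes^L_{A''}k')^{w_0}$ picks up contributions $F^{w_0-m}\otimes\Sym^m(U^{>0}[2])$ from negative-weight chains. You need to first replace $F$ by the sub-$A$-module $F^{\geq w_0}$: this is a legitimate replacement because $U$ has nonnegative weights (so $F^{\geq w_0}$ is indeed closed under the $A$-action), and the inclusion $F^{\geq w_0}\hookrightarrow F$ is a quasi-isomorphism since the quotient $F^{<w_0}$ has $H_\ast(F^{<w_0}) = H_\ast(F)^{<w_0} = 0$ by the definition of $w_0 = \minwt(F)$. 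This truncation step is exactly what the paper's minimal semi-free model $F\simeq(A\otimes V,d)$ with $d\otimes_A k'=0$ accomplishes in one stroke. Finally, your argument for the easy inequality $\minwt(F_0)\geq\minwt(F)$ is not a proof as written: the long exact sequence from $\mathfrak{m}\otimes^L_A F\to F\to F_0$ gives $H_i(F_0)^w\cong H_{i-1}(\mathfrak{m}\otimes^L_A F)^w$ for $w<\minwt(F)$, and bounding the weights of $\mathfrak{m}\otimes^L_A F$ below is precisely as hard as the original claim (the naive attempt is circular since $\mathfrak{m}\otimes^L_A F = \mathrm{fib}(F\to F_0)$). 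The clean route, which the paper uses, is the explicit nonnegative-weight semi-free resolution $k'\simeq A[U[2];\, d=\id_U]$, giving $F_0\simeq F\otimes_{k'}\Sym(U[2])$ whose weights are visibly $\geq\minwt(F)$ once $F$ has been truncated as above.
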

\begin{proof}
For the first equality, observe that $\minwt(F)$ and $\maxwt(F)$ are the lower and upper bounds of the smallest interval $[a,b]$ such that $F$ lies in the subcategory of $\DCoh^{\Gm}(k'[U[1]])$ generated under shifts and cones by $k\langle -n \rangle$ for $n \in [a,b]$. Because
\[
\bD_{k'[U[1]]}(k'\langle n \rangle) = \RHom_{k'[U[1]]}(k' \langle n \rangle,\omega_{k'[U[1]]}) \cong k'\langle -n \rangle
\]
the corresponding interval for $\bD_{k'[U[1]]}(F)$ is $[-a,-b]$.

For the second equality, note that $\minwt(F) \leq \minwt(F_0)$ as an immediate consequence of the fact that $k' \cong k'[U[2],U[1]; d = \id_U]$ is a semi-free presentation of $k'$ with generators of nonnegative weight only. The inequality $\minwt(F_0) \leq \minwt(F)$ follows from considering a quasi-isomorphism $F \cong (k'[U[1]] \otimes_k V, d)$ with a graded semi-free $k'[U[1]]$-module, where $V$ is a vector space that is graded both homologically and as a $\Gm$-representation, and such that $d \otimes_{k'[U[1]]} k' = 0$. The construction of such a quasi-isomorphism is the same as the usual construction of a minimal presentation of a complex of graded modules over the polynomial ring $k[U]$. If $H_\ast(F)$ is non-zero in weight $w$ and $U$ has nonnegative weights, then $F_0 \cong V$ must be non-vanishing in some weight $\leq w$, and hence $\minwt(F_0) \leq w$.
\end{proof}

\begin{lem} \label{L:bounded_weights}
Let $\cX$ be a quasi-smooth algebraic derived $k$-stack, let $f : (B\Gm)_{k'} \to \cX$ be a finite type graded point such that $H_1(f^\ast(\bL_\cX))$ has nonnegative weights, and let $f_{\rm reg} : \Spec(k'[U[1]])/\Gm \to \cX$ be a regularization of $f$. For any $F \in \DCoh(\cX)$, the weights of $f^\ast(F) \in \APerf((B\Gm)_{k'})$ are bounded below, and
\begin{align*}
\minwt(f^\ast(F)) &= \minwt(f_{\rm reg}^\ast(F)) \\
\minwt(f^\ast(\bD_\cX(F))) &= \wt(\det(\tau_{\leq 0}(f^\ast \bL_\cX))) - \maxwt(f_{\rm reg}^\ast(F))
\end{align*}
\end{lem}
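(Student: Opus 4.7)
Let $\cY := \Spec(k'[U[1]])/(\Gm)_{k'}$ and let $s : (B\Gm)_{k'} \to \cY$ be the inclusion of the unique closed point, so that $f \simeq f_{\rm reg}\circ s$ and $s^\ast$ corresponds to $M \mapsto M \otimes_{k'[U[1]]} k'$ on graded $k'[U[1]]$-modules. The map $f_{\rm reg}$ is quasi-smooth: its relative cotangent complex at $0$ is $\tau_{\geq 0}(\bL_{\cX,x})[1]$, which has Tor-amplitude in $[-1,1]$. In particular $f_{\rm reg}$ has finite Tor-amplitude, so $f_{\rm reg}^\ast$ carries $\DCoh(\cX)$ into $\DCoh(\cY) \simeq \DCoh^{\Gm}(k'[U[1]])$. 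Applying the second part of \Cref{L:weight_duality} to $F_{\rm reg} := f_{\rm reg}^\ast(F)$---this is where the hypothesis that $U$ has nonnegative weights enters---yields both the bounded-below statement and the identity $\minwt(f^\ast F) = \minwt(s^\ast F_{\rm reg}) = \minwt(F_{\rm reg}) = \minwt(f_{\rm reg}^\ast F)$.

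For the second identity, first apply this to $\bD_\cX(F) \in \DCoh(\cX)$ to reduce to computing $\minwt(f_{\rm reg}^\ast \bD_\cX F)$. Since $\cX$ is Gorenstein (quasi-smooth stacks are, by \cite{arinkin2015singular}), $\omega_\cX$ is a shift of an invertible complex, so \Cref{L:pullback_rhom} gives a canonical equivalence $f_{\rm reg}^\ast \bD_\cX F \cong \inner{\RHom}_\cY^\otimes(f_{\rm reg}^\ast F, f_{\rm reg}^\ast \omega_\cX)$. Because $f_{\rm reg}$ is itself quasi-smooth, $\omega_{\cY/\cX} := f_{\rm reg}^!(\cO_\cX)$ is a shift of a line bundle, and the Gorenstein base-change identity $f_{\rm reg}^!(-) \cong f_{\rm reg}^\ast(-) \otimes \omega_{\cY/\cX}$ combined with $f_{\rm reg}^!(\omega_\cX) \cong \omega_\cY$ gives $f_{\rm reg}^\ast \omega_\cX \cong \omega_\cY \otimes \omega_{\cY/\cX}^{-1}$, and hence $f_{\rm reg}^\ast \bD_\cX F \cong \bD_\cY(f_{\rm reg}^\ast F) \otimes \omega_{\cY/\cX}^{-1}$. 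Invoking the first equality of \Cref{L:weight_duality} then yields $\minwt(f^\ast \bD_\cX F) = -\maxwt(f_{\rm reg}^\ast F) - \wt(\omega_{\cY/\cX})$. To finish, the fiber sequence $f_{\rm reg}^\ast \bL_\cX \to \bL_\cY \to \bL_{\cY/\cX}$ restricted to $0$, together with $\bL_\cY|_0 \cong U[1]$ and the defining property of a regularization that $H_1(f^\ast \bL_\cX) \to U$ is an isomorphism, produces via the long exact sequence the identification $\bL_{\cY/\cX}|_0 \cong \tau_{\leq 0}(f^\ast \bL_\cX)[1]$; by \Cref{P:canonical_complex_classical} applied to the quasi-smooth morphism $f_{\rm reg}$, the weight of $\omega_{\cY/\cX}$ equals that of $\det(\bL_{\cY/\cX})[\rank \bL_{\cY/\cX}]$, and using $\det(E[1]) \cong \det(E)^{-1}$ this weight equals $-\wt(\det \tau_{\leq 0}(f^\ast \bL_\cX))$, giving the stated formula.

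The main obstacle is the Gorenstein base-change identity $f_{\rm reg}^!(-) \cong f_{\rm reg}^\ast(-) \otimes \omega_{\cY/\cX}$: the closed-immersion version is \Cref{L:regular_closed_immersion_general}, but for the general quasi-smooth morphism $f_{\rm reg}$ one must quote the $!$-pullback formalism of \cite{GR2} for Gorenstein morphisms between laft derived stacks, rather than something established directly in this paper; correspondingly, one should check that $\omega_{\cY/\cX}$ lies in $\DCoh(\cY)_{<\infty}$ so that \Cref{L:pullback_rhom} applies when extracting its tensor factor through the inner Hom.
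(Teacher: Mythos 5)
Your proof is correct and broadly parallel to the paper's, but it makes one genuinely different choice that is worth flagging. The first equality, the reduction to the $0$-fiber via $s : (B\Gm)_{k'} \to \Spec(k'[U[1]])/\Gm$, and the passage through the second part of \Cref{L:weight_duality} all match the paper exactly. For the second equality the paper and you both need the relation $f_{\rm reg}^!(F) \cong f_{\rm reg}^\ast(F) \otimes \omega_{\cY/\cX}$ together with an identification of the weight of $\omega_{\cY/\cX}$ with $-\wt(\det\tau_{\leq 0}(f^\ast\bL_\cX))$. The paper packages these two facts into a single ``Grothendieck formula'' $f_{\rm reg}^!(F) \cong f_{\rm reg}^\ast(F) \otimes \det(\bL_{f_{\rm reg}})[\rank\bL_{f_{\rm reg}}]$, and proves it by first reducing to representable $f$, then building a smooth cover $X/\Gm \to \cX$ (via a variant of \Cref{T:local_structure_stratum}) through which $f_{\rm reg}$ factors as a quasi-smooth map of affine $\Gm$-schemes followed by a smooth morphism, and invoking the cases of regular closed immersion and smooth morphism from GR2. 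You instead separate the two ingredients: you quote the Gorenstein base-change identity $f_{\rm reg}^! \cong f_{\rm reg}^\ast(-)\otimes\omega_{\cY/\cX}$ from GR2 abstractly, and then obtain the weight of $\omega_{\cY/\cX}$ from \Cref{P:canonical_complex_classical}, which is strictly sufficient because weights are detected on the classical stack. This avoids the local lifting argument and the initial reduction to representable $f$ (which is indeed unnecessary for your route, since \Cref{L:weight_duality} and \Cref{P:canonical_complex_classical} make no representability demands). You have correctly flagged that the Gorenstein base-change for the general quasi-smooth $f_{\rm reg}$ is the external input; the paper makes the analogous appeal to GR2 one layer down, so neither approach is more self-contained on this point. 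Two very minor imprecisions in your write-up: the identity $\bL_\cY|_0 \cong U[1]$ holds for $\cY^+ := \Spec(k'[U[1]])$ rather than for $\cY := \Spec(k'[U[1]])/\Gm$, whose fiber also carries the $\Gm$-direction $k'[-1]$; likewise $\bL_{\cY/\cX}|_0$ differs from $\tau_{\leq 0}(f^\ast\bL_\cX)[1]$ by a weight-zero trivial summand in degree $0$ coming from the $\Gm$-torsor. Since this summand has weight $0$, it does not affect the determinant weight, so the conclusion stands; but the precise identification $\bL_{f_{\rm reg}}|_0 \cong \tau_{\leq 0}(f^\ast\bL_\cX)[1]$ should be stated for the map from the affine scheme $\cY^+$, as the paper does. (Note that the paper's display in \Cref{S:alternative_grr_condition} writing $\tau_{\geq 0}$ is a typo for $\tau_{\leq 0}$, as the ensuing sentence about concentration in degrees $0$ and $1$ and the later use inside the proof both show; your $\tau_{\leq 0}$ is the correct one.)
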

\begin{proof}
If $(\Gm)_{k'} \to \Aut_\cX(f(0))$ is trivial, then all of the claims are tautological. Otherwise, $f$ factors through the graded point $f' : (B\Gm)_{k'} \to \cX$ associated to the image of this homomorphism $(\Gm)_{k'} \subset \Aut_\cX(f(0))$. Pullback along the map $(B\Gm)_{k'} \to (B\Gm)_{k'}$ has the effect of scaling weights by a non-zero constant, so it suffices to prove the claims for $f'$, i.e., we may assume that $f$ is a representable morphism.

\Cref{L:weight_duality} and the fact that $f_{\rm reg}$ has finite Tor-amplitude immediately imply the first equality, and in particular that the weights of $f^\ast(F)$ are bounded below. The isomorphism
\[
\bD_{k'[U[1]]}(f_{\rm reg}^\ast(\bD_\cX(F))) \cong f_{\rm reg}^!(F)
\]
of \cite{drinfeld2013some}*{Prop.~4.4.11} combined with \Cref{L:weight_duality} implies that the second equality of this lemma is equivalent to
\begin{equation} \label{E:maxwt_double_dual}
\maxwt(f_{\rm reg}^!(F)) = \maxwt(f_{\rm reg}^\ast(F)) - \wt(\det(\tau_{\leq 0} (f^\ast(\bL_\cX)))).
\end{equation}
We claim that this inequality follows from the formula
\begin{equation} \label{E:small_grothendieck_formula}
f_{\rm reg}^!(F) \cong f_{\rm reg}^\ast(F) \otimes \det(\bL_{f_{\rm reg}}) [\rank(\bL_{f_{\rm reg}})].
\end{equation}
Indeed, tensoring by the determinant of the relative cotangent complex $\det(\bL_{f_{\rm reg}})$ has the effect of shifting the homology weights of $f^\ast(F)$ by the weight of the fiber $\det(\bL_{f_{\rm reg}}|_0)$, and we have already observed that $\bL_{f_{\rm reg}}|_0 \cong \tau_{\leq 0}(\bL_{\cX,x})[1]$.

The Grothendieck formula \eqref{E:small_grothendieck_formula} has been established for representable smooth morphisms and regular closed immersions in \cite{GR2}*{Sect.~II.9.7.2}. If two quasi-smooth morphisms satsify \eqref{E:small_grothendieck_formula}, then their composition does as well, and using this one can show that the formula holds for any quasi-smooth map of affine $\Gm$-schemes. Using a variant of \Cref{T:local_structure_stratum}, whose proof uses the same inductive deformation theory argument but takes the classical result \cite{halpern2014structure}*{Lem.~4.0.4.6} as the base case, one may lift the graded point $f$ along a smooth cover of the form $X/\Gm \to \cX$ where $X$ is an affine quasi-smooth $\Gm$-scheme. Because this cover is smooth one can also lift $f_{\rm reg}$ to a map $\Spec(k'[U[1]])/\Gm \to X/\Gm$. Thus $f_{\rm reg}$ can be factored as the composition of map induced by a quasi-smooth equivariant map of affine $\Gm$-schemes followed by a smooth morphism, so the formula \eqref{E:small_grothendieck_formula} follows.
\end{proof}

\subsubsection{Grade restriction rules}

We now use the notion of regular graded points to describe when $F \in \DCoh(\cX)$ lies in $\APerf(\cX)^{\geq w}$ or $\APerf(\cX)^{<w}$. We refer to these conditions as \emph{grade restriction rules}

\begin{prop} \label{P:alternate_characterization}
Let $\cX$ be an algebraic derived stack locally a.f.p. and with affine diagonal over a locally a.f.p. algebraic derived $k$-stack $\cB$. Assume that $\cX$ is quasi-smooth over $k$. Let $i : \cS \subset \Filt(\cX) \to \cX$ be a $\Theta$-stratum with center $\sigma : \cZ \to \cS$, and assume that
\begin{itemize}
\item[\namedlabel{eqn:obstruction_weights}{$(\dagger)$}] for every finite type point of $\cZ$, which classifies a finite type point $\xi$ of $\cX$ along with a homomorphism $\Gm \to \Aut_\cX(\xi)$, the weights of $H_1(\xi^\ast \bL_{\cX})$ are $\geq 0$ with respect to this $\Gm$-action.
\end{itemize}
Let $a$ denote the weight of the invertible sheaf $\det(\ladj{1}(\bL_{\cX}|_\cZ))$. Then for $F \in \DCoh(\cX)$, the following are equivalent:
\begin{enumerate}
\item $F$ lies in $\APerf(\cX)^{\geq w}$ \hfill (respectively $F$ lies in $\APerf(\cX)^{<w}$);
\item $\sigma^\ast i^\ast(F) \in \APerf(\cZ)^{\geq w}$ \hfill (respectively $\sigma^\ast i^\ast(\bD_{\cX}(F)) \in \APerf(\cZ)^{\geq  a+1-w})$
\item For every finite type point of $\cZ$, which classifies a map $f : (B\bG_m)_{k'} \to \cX$, the weights of $H_\ast(f_{\rm reg}^\ast(F))$ are $\geq w$ \hfill (respectively $< w+\wt(\det((\tau_{\leq 0}(f^\ast(\bL_\cX))^{>0}))$).
\end{enumerate}
\end{prop}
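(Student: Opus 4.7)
The plan is to prove (1) $\Leftrightarrow$ (2) $\Leftrightarrow$ (3) separately for the $\geq w$ and $<w$ cases. The $\geq w$ half is more or less immediate from the general theory of Section~\ref{S:theta_strat}; the main work is in the $<w$ half, which requires Serre duality on the quasi-smooth stack $\cX$.

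For the $\geq w$ case, (1) $\Leftrightarrow$ (2) follows from Definition~\ref{D:subcategories_aperf} (defining $\APerf(\cX)^{\geq w}$ via $i^*$) combined with \Cref{L:aperf_baric_criterion} (reducing weight conditions on $\cS$ to $\cZ$). The equivalence (2) $\Leftrightarrow$ (3) then follows from Nakayama's lemma, which reduces membership in $\APerf(\cZ)^{\geq w}$ to a pointwise check at each finite-type graded point $f : (B\Gm)_{k'} \to \cZ$, together with the first identity of \Cref{L:bounded_weights} --- which uses assumption $(\dagger)$ --- giving $\minwt(f^*F) = \minwt(f_{\rm reg}^*F)$, converting the classical pullback to the regularized one.

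For the $<w$ case, I handle (2) $\Leftrightarrow$ (3) by the same pointwise strategy, now using the second identity of \Cref{L:bounded_weights}: the condition $\minwt(f^*\bD_\cX F) \geq a_f + 1 - w$, where $a_f$ denotes the pointwise value of $a$, rearranges to an upper bound on $\maxwt(f_{\rm reg}^*F)$. The key arithmetic identity $\wt(\det(\tau_{\leq 0}(f^*\bL_\cX))) = a_f + \wt(\det((\tau_{\leq 0}(f^*\bL_\cX))^{>0}))$ holds under $(\dagger)$ because $H_1(f^*\bL_\cX)$ having only nonnegative weights means the decomposition of $\tau_{\leq 0}(f^*\bL_\cX)$ into weight-nonpositive and weight-positive parts cleanly partitions its determinant.

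Finally, (1) $\Leftrightarrow$ (2) in the $<w$ case is the substantive obstacle. The approach is to show that $F \in \APerf(\cX)^{<w}$ iff $\bD_\cX F \in \APerf(\cX)^{\geq a+1-w}$, after which (2) follows from the already-established (1) $\Leftrightarrow$ (2) for $\geq w$ applied to $\bD_\cX F$. Since $\cX$ is quasi-smooth, $\omega_\cX$ is an invertible shifted line bundle (\Cref{P:canonical_complex_classical}), so $\bD_\cX$ is an auto-equivalence of $\DCoh(\cX)$. The duality at the level of baric structures can be checked fiberwise: at each graded point $f$ of $\cZ$, both conditions translate via the pointwise formulas of \Cref{L:bounded_weights} to the same upper bound on $\maxwt(f_{\rm reg}^*F)$. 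The delicate step is reconciling the intrinsic $i^{\QC,!}$-definition of $\APerf(\cX)^{<w}$ with this fiberwise picture --- this uses that $\bL_{\cS/\cX}$ is perfect (being a weight summand of $\bL_\cX|_\cS$) so $i$ has finite Tor-amplitude, and then \Cref{L:regular_closed_immersion_general} yields $i^{\QC,!}F \simeq i^*F \otimes i^{\QC,!}\cO_\cX$ with $i^{\QC,!}\cO_\cX$ a shifted invertible whose weight at $\cZ$ supplies exactly the shift $a$ relating the $i^*$- and $i^{\QC,!}$-baric conditions.
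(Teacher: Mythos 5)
Your handling of the $\geq w$ half and of the equivalence $(2) \Leftrightarrow (3)$ in the $<w$ half follow the same path as the paper, and the arithmetic identity $\wt(\det(\tau_{\leq 0}(f^\ast\bL_\cX))) = a + \wt(\det((\tau_{\leq 0}(f^\ast\bL_\cX))^{>0}))$ under $(\dagger)$ is correct. The gap is in $(1) \Leftrightarrow (2)$ for $<w$. You claim that because $\bL_{\cS/\cX}$ is perfect, $i$ has finite Tor-amplitude, and you then apply \Cref{L:regular_closed_immersion_general} to write $i^{\QC,!}F \simeq i^\ast F \otimes i^{\QC,!}\cO_\cX$ with $i^{\QC,!}\cO_\cX$ a shifted line bundle. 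This inference fails: perfection of $\bL_{\cS/\cX}$ is automatic for any a.f.p.\ closed immersion between quasi-smooth stacks, whereas finite Tor-amplitude of $i$ is the strictly stronger condition $i_\ast\cO_\cS \in \Perf(\cX)$, i.e.\ that $i$ be a regular closed immersion, which holds iff $\bL_{\cS/\cX}[-1] \cong \radj{1}(\bL_\cX|_\cS)$ is concentrated in degree $0$. Under $(\dagger)$ alone the positive-weight part of $H_1(\bL_\cX|_\cZ)$ may be nonzero, and it contributes an $H_1$ to $\bL_{\cS/\cX}[-1]$. For instance with $\cX = \Spec(k[x,\epsilon;\, d\epsilon = 0])/\Gm$, $x$ in homological degree $0$ and weight $1$, $\epsilon$ in homological degree $1$ and weight $2$: $(\dagger)$ holds (the obstruction space has weight $2 \geq 0$), $\cS = B\Gm$, yet $i_\ast\cO_\cS \cong k$ is not perfect over $k[x]\otimes\Lambda[\epsilon]$, so \Cref{L:regular_closed_immersion_general} does not apply.

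The paper's proof of \Cref{L:duality_stratum} sidesteps this by applying the finite-Tor-amplitude argument to $\sigma : \cZ \to \cS$ rather than to $i$. Under $(\dagger)$ the map $\sigma$ \emph{is} quasi-smooth (\Cref{L:sigma_quasi_smooth}: $\bL_{\cZ/\cS} \simeq (\sigma^\ast\bL_\cX)^{<0}[1]$, and $(\dagger)$ forces $H_1$ of the negative-weight part to vanish), so \Cref{L:pullback_rhom} applies to $\sigma$ and gives $\sigma^\ast\bD_\cS(F) \simeq \inner{\RHom}^\otimes_\cZ(\sigma^\ast F, \sigma^\ast\omega_\cS)$. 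From this and \Cref{L:aperf_baric_criterion} one deduces that $\bD_\cS$ exchanges $\DCoh(\cS)^{\geq w}$ and $\DCoh(\cS)^{<a+1-w}$, and the passage from $\cS$ to $\cX$ is then done by adjunction and the projection formula, using that $\APerf(\cS)^{<a+1-w}$ is closed under homological truncation (part $(6)$ of \Cref{P:baric_decomp_supports}), so one never needs an explicit description of $i^{\QC,!}\cO_\cX$. Replacing your $i$-based step with this $\sigma$-based computation repairs the proof.
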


\begin{proof}
As in the proof of \Cref{T:derived_Kirwan_surjectivity}, we can use \Cref{L:induced_stratum_base_change} and \Cref{L:sod_limits} to reduce to the case where $\cX$ is quasi-compact and $\cB = \Spec(R)$ for some a.f.p. simplicial commutative $k$-algebra $R$. For any $T \to \cB = \Spec(R)$, we can regard a morphism $\Theta_T \to \cX$ over $\cB$ as a morphism over $\Spec(k)$, and this defines a canonical $\Theta_k$-equivariant morphism of derived $k$-stacks
\[
\Filt_\cB(\cX) := \iMap_\cB(\Theta_\cB,\cX) \to \Filt_k(\cX) := \iMap(\Theta_k, \cX).
\]
One can deduce that this morphism is an isomorphism from the fact that the map of underlying classical stacks is an isomorphism, which is \cite{halpern2014structure}*{Cor.~1.3.16}, combined with the computation of the cotangent complex of $\Filt_\cB(\cX)$ and $\Filt_k(\cX)$ in \Cref{L:relative_cotangent_complex}. It follows that it suffices to prove the theorem in the case where $R = k$. So, for the remainder of the proof we will assume:

\noindent
\begin{center}
\parbox[t]{.9\textwidth}{\emph{$\cX$ is an algebraic derived stack a.f.p., quasi-smooth, and with affine diagonal over $\Spec(k)$, $\cS \subset \Filt_k(\cX)$ is a $\Theta$-stratum with center $\cZ$, and the hypothesis \ref{eqn:obstruction_weights} holds.}}
\end{center}

\begin{lem} \label{L:sigma_quasi_smooth}
$\sigma : \cZ \to \cS$ is relatively quasi-smooth and thus has finite Tor-amplitude.
\end{lem}
\begin{proof}
\Cref{L:center_cotangent} and \Cref{L:relative_cotangent_complex} imply that $\bL_{\cZ/\cS} \simeq (\sigma^\ast \bL_\cS)^{<0}[1] \simeq (\sigma^\ast \bL_\cX)^{<0}[1]$, and \ref{eqn:obstruction_weights} implies this is perfect with fiber homology in degrees $1,0,-1$.
\end{proof}

\begin{lem} \label{L:duality_stratum}
$\DCoh(\cX)^{< w} = \bD_\cX(\DCoh(\cX)^{\geq a+1-w})$, where $a$ is the weight of $\omega_\cS|_\cZ$.
\end{lem}

\begin{proof}
For any $F \in \DCoh(\cS)$, \Cref{L:sigma_quasi_smooth} and \Cref{L:pullback_rhom} give a canonical isomorphism \[\sigma^\ast \bD_\cS(F) \simeq \inner{\RHom}_{\cZ}^\otimes(\sigma^\ast F, \sigma^\ast \omega_\cS).\] Because $\sigma^\ast(\omega_\cS)$ is an invertible complex concentrated in weight $a$, it follows from the characterization of \Cref{L:aperf_baric_criterion} that $\bD_\cS$ exchanges $\DCoh(\cS)^{\geq w}$ and $\DCoh(\cS)^{<a+1-w}$.

More generally, for $F \in \DCoh(\cX)$, the defining adjunction and the projection formula imply that for any $E \in \DCoh(\cS)$
\[
\RHom_\cX(E,i^{\QC,!}(\bD_\cX(F))) \cong \RHom_\cS(i^\ast(F),\bD_\cS(E)).
\]
We have already observed that $\bD_\cS$ interchanges $\DCoh(\cS)^{\geq a+1-w}$ and $\DCoh(\cS)^{<w}$, so to complete the proof, it suffices to show that $i^\ast(F) \in \APerf(\cS)^{\geq a+1-w}$ if and only if $\RHom_\cS(i^\ast(F),E) = 0$ for all $E \in \DCoh(\cS)^{<a+1-w}$. This follows from the fact that $\APerf(\cS)^{<a+1-w}$ is closed under homological truncation, which is established in part (6) of \Cref{P:baric_decomp_supports}.

\end{proof}

The fact that $F \in \DCoh(\cX)$ lies in $\DCoh(\cX)^{\geq w}$ if and only if $\sigma^\ast i^\ast(F) \in \APerf(\cZ)$ follows from \Cref{D:subcategories_aperf} and \Cref{L:aperf_baric_criterion}. This is equivalent to (3) by Nakayama's lemma and the first equality in \Cref{L:bounded_weights}.

The characterization of $\DCoh(\cX)^{\geq w}$ and \Cref{L:duality_stratum} imply that $F \in \DCoh(\cX)$ lies in $\APerf(\cX)^{<w}$ if and only if $\bD_\cX(F) \in \APerf(\cX)^{\geq a+1-w}$, where $a$ is the weight of $\omega_\cS|_\cZ$, which has the same weight as $\det(\ladj{1}(\bL_{\cX}|_{\cZ})) [ \rank(\ladj{1}(\bL_{\cX}|_{\cZ}))]$ by \Cref{C:canonical_weight} and \Cref{L:relative_cotangent_complex}. The equivalence of (2) and (3) in this case follows from Nakayama's lemma and the second equality in \Cref{L:bounded_weights}.

\end{proof}


\subsection{Structure theorem for \texorpdfstring{$\DCoh$}{DCoh}}

We now state our main theorem in the quasi-smooth case.

\begin{thm} \label{thm:derived_Kirwan_surjectivity_quasi-smooth}
Let $\cX$ be an algebraic derived stack locally a.f.p. and with affine diagonal over a locally a.f.p. algebraic derived $k$-stack $\cB$. Assume that $\cX$ is quasi-smooth over $k$. Let $i : \cS \subset \Filt(\cX) \to \cX$ be a $\Theta$-stratum with center $\sigma : \cZ \to \cS$ that satisfies condition \ref{eqn:obstruction_weights} of \Cref{P:alternate_characterization}.

Then the intersection of the semiorthogonal factors in \eqref{E:main_SOD} and \eqref{E:window_SOD} of \Cref{T:derived_Kirwan_surjectivity} with $\DCoh(\cX) \subset \APerf(\cX)$ induce semiorthogonal decompositions of $\DCoh(\cX)$ and $\cG_{\cS,{\rm coh}}^{[u,w)} := \cG_{\cS}^{[u,w)} \cap \DCoh(\cX)$ respectively. Furthermore:

\begin{enumerate}
\item The restriction functor $\cG_{\cS,{\rm coh}}^w \to \DCoh(\cX \setminus \cS)$ is an equivalence;
\item If $\cX$ is quasi-compact, then for any $w \in \bZ$ we have an infinite semiorthogonal decomposition\footnote{This is an infinite semiorthogonal decomposition in the usual sense, so that every $F \in \DCoh(\cX)$ lies in a subcategory generated by finitely many semiorthogonal factors.}
\begin{equation} \label{eqn:main_SOD_quasi_smooth}
\DCoh(\cX) = \langle \lefteqn{\overbrace{\phantom{\ldots, \DCoh_\cS(\cX)^{w-1}, \cG_{\cS,{\rm coh}}^w}}^{\DCoh(\cX)^{<w}}} \underbrace{\ldots, \DCoh_\cS(\cX)^{w-1}}_{\DCoh_\cS(\cX)^{<w}},\lefteqn{\underbrace{\phantom{\cG_{\cS,{\rm coh}}^w,\DCoh_\cS(\cX)^{w},\DCoh_\cS(\cX)^{w+1},\ldots}}_{\DCoh(\cX)^{\geq w}}} \cG_{\cS,{\rm coh}}^w, \overbrace{\DCoh_\cS(\cX)^{w},\DCoh_\cS(\cX)^{w+1},\ldots}^{\DCoh_\cS(\cX)^{\geq w}} \rangle,
\end{equation}
\item $i_\ast \pi^\ast : \DCoh(\cZ)^{w} \to \DCoh_\cS (\cX)^{w}$ is an equivalence.
\end{enumerate}
\end{thm}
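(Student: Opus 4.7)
The plan is to deduce the theorem from \Cref{T:derived_Kirwan_surjectivity} by establishing that the baric truncation functors $\radj{w}_\cS$ and $\ladj{w}$ on $\APerf(\cX)$ restrict to endofunctors of $\DCoh(\cX) \subset \APerf(\cX)$. Once this is shown, intersecting each semi-orthogonal factor of \eqref{E:main_SOD} and \eqref{E:window_SOD} with $\DCoh(\cX)$ immediately produces the claimed semi-orthogonal decompositions of $\DCoh(\cX)$ and $\cG_{\cS,{\rm coh}}^{[u,w)}$.

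To verify the $\DCoh$-preservation, I would first reduce to the local model $\cX = X/\Gm$ of \Cref{T:local_structure_stratum}, using the Zariski cover of \cite{AHLH}*{Lem.~6.8} for the non-quasi-compact case together with part (6) of \Cref{T:derived_Kirwan_surjectivity} and smooth descent for $\DCoh$. In the local model, use the formula
\[
\radj{w}_\cS(F) \simeq \colim_n \radj{w}_\cS(K_X(x_1^n,\ldots,x_k^n) \otimes F)
\]
from \eqref{E:local_radj}. The key observation is that although the $\Gm$-weights in the Koszul complexes $K_X(x_1^n,\ldots,x_k^n)$ shift with $n$, their homological amplitudes remain fixed, so the tensor products $K_X(x_1^n,\ldots,x_k^n) \otimes F \in \DCoh_\cS(X/\Gm)$ lie within a homological range that is uniform in $n$. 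Combining this with the right $t$-exactness of $\radj{w}_\cS$ on $\APerf_\cS$ and the uniform below-amplitude bound of \Cref{P:baric_decomp_supports}(3) places all of the $\radj{w}_\cS(K_X(x_1^n,\ldots,x_k^n) \otimes F)$ in a single bounded range $\QC_{[c,b]}$. The eventual constancy of each homology sheaf in the colimit (\Cref{L:finiteness_radj}) then forces $\radj{w}_\cS(F)$ to have finitely many non-vanishing coherent homology sheaves, so $\radj{w}_\cS(F) \in \DCoh(\cX)$. The fiber sequence $\radj{w}_\cS(F) \to F \to \ladj{w}(F)$ yields $\ladj{w}(F) \in \DCoh(\cX)$ as well.

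Granted the $\DCoh$-level SOD, the three numbered statements follow easily. For (1), given $G \in \DCoh(\cX \setminus \cS)$, lift to any $\tilde G \in \DCoh(\cX)$ and form $\radj{w}(\ladj{w}(\tilde G)) \in \cG_{\cS,{\rm coh}}^w$, which restricts to $G$ by \Cref{T:derived_Kirwan_surjectivity}(1). For (3), the $\APerf$-level equivalence $i_\ast \pi^\ast : \APerf(\cZ)^w \to \APerf_\cS(\cX)^w$ from \Cref{T:derived_Kirwan_surjectivity}(3) restricts to $\DCoh$: the functors $\pi^\ast$, $i^\ast$, and $\sigma^\ast$ all preserve $\DCoh$ because $\pi$, $i$, and $\sigma$ are quasi-smooth morphisms (\Cref{lem:finite_tor_amplitude} and \Cref{L:sigma_quasi_smooth}; apply \cite{arinkin2015singular}*{Cor.~2.2.4}), while $i_\ast$ is $t$-exact. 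For (2), it suffices to show that every $F \in \DCoh(\cX)$ lies in some window $\cG_\cS^{[u,w)}$: by \Cref{P:alternate_characterization}, $F \in \APerf(\cX)^{\geq u}$ iff $\sigma^\ast i^\ast F \in \APerf(\cZ)^{\geq u}$, which holds for $u \ll 0$ since $\sigma^\ast i^\ast F \in \DCoh(\cZ)$ has weights bounded below by \Cref{L:bounded_weights}; applying the dual statement via \Cref{L:duality_stratum} to $\bD_\cX F \in \DCoh(\cX)$ gives $F \in \APerf(\cX)^{<w}$ for $w \gg 0$. Combining with the refined decomposition \Cref{T:derived_Kirwan_surjectivity}(4) produces \eqref{eqn:main_SOD_quasi_smooth}.

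The principal obstacle is the $\DCoh$-preservation step in the local model, where one must carefully exploit the fact that the Koszul weights shift with $n$ while the homological amplitudes remain bounded, so that the uniform amplitude bound on $\radj{w}_\cS$ combines with the eventual-constancy of \Cref{L:finiteness_radj} to force the colimit to have bounded coherent cohomology.
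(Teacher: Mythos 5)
The main step of your proposal---showing that $\radj{w}_\cS$ and $\ladj{w}_\cS$ preserve $\DCoh(\cX)$ in the local model $X/\Gm$---has a gap in the crucial upper bound. You assert that ``right $t$-exactness of $\radj{w}_\cS$ on $\APerf_\cS$'' together with the uniform below-amplitude bound of \Cref{P:baric_decomp_supports}(3) places all $\radj{w}_\cS(K_X(x_1^n,\ldots,x_k^n) \otimes F)$ in a single bounded range $\QC_{[c,b]}$. But in the paper's homological convention, right $t$-exactness means preserving the connective part, i.e., it is a \emph{lower} bound on homological amplitude; the below-amplitude bound of \Cref{P:baric_decomp_supports}(3) is also a lower bound. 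Neither provides an \emph{upper} bound. Concretely, from the fiber sequence $\radj{w}_\cS(G) \to G \to \ladj{w}(G)$ with $G \in \QC_{[a,b]}$, nothing you cite controls the homology of $\radj{w}_\cS(G)$ in degrees above $b$, and since $i : \cS \hookrightarrow \cX$ need not be a regular closed immersion under the hypothesis \ref{eqn:obstruction_weights} (the obstruction space can have strictly positive weights), $R\Gamma_\cS$ does not have bounded amplitude. The eventual constancy of each homology sheaf from \Cref{L:finiteness_radj} is only term-by-term and does not rule out the bound escaping to infinity as $n$ grows.

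The missing ingredient is precisely \Cref{lem:bounded_SOD}, which says that any $F \in \DCoh(\cX)$ already lies in $\DCoh(\cX)^{\geq v} \cap \DCoh(\cX)^{<v'}$ for some $v,v'$. Combined with the observation that $i^\ast(C_n^\dual) \in \Perf(S/\Gm)^{\geq nD}$ with $D = \min(d_i) > 0$ (so that the weights of $C_n$ and $C_n^\dual$ drift to $\mp\infty$), this shows that the maps $\radj{w}_\cS(K_n \otimes F) \to \radj{w}_\cS(F)$ and $\ladj{w}_\cS(F) \to \ladj{w}_\cS(K_n^\dual \otimes F)$ are \emph{isomorphisms} for $n \gg 0$, not merely homology-sheaf-wise eventually constant. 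The boundedness of $\radj{w}_\cS(F)$ then follows because $K_n \otimes F \in \DCoh_\cS(X/\Gm)$ and the baric decomposition on $\DCoh_\cS(X/\Gm)$ is bounded (\Cref{L:quasi_smooth_baric_decomp}). You do in effect reprove \Cref{lem:bounded_SOD} later when handling item~(2) of the theorem, so the tools are in your hands---you just need to deploy this bounded-weight fact at the $\DCoh$-preservation step rather than after it. The remainder of your argument for parts (1), (2), and (3) is fine.
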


Before proving this theorem in the next section, we discuss some examples and applications.

\begin{ex}
Consider the derived cotangent stack of a smooth global quotient stack $\cX = T^\ast (X/G)$ over a field of characteristic $0$. For simplicity, we work with commutative differential graded algebras instead of simplicial commutative rings. If we define
\[
Y:=\Spec_X(\Sym(\cO_X \otimes \fg \to T_X)),
\]
where $T_X$ is in degree $0$, and $\cO_X \otimes \fg \to T_X$ is the two-term complex defined by the derivative of the $G$-action on $X$, then $\cX \simeq Y/G$. For any point $y \in Y$, whose image under the projection $Y \to X$ we denote by $x \in X$, we have
\[
\bL_\cX|_y \simeq \left[\fg_{k(y)} \to T_{X,x} \oplus \Omega^1_{X,x} \to \fg_{k(y)}\right].
\]
This complex is self-dual, and in particular $H_1(\bL_\cX|_y)$ is the lie algebra of $\Stab_{G}(y)$. If $\cX$ has a $\Theta$-stratification and $y$ corresponds to an unstable point, then we will automatically have $\Stab_G(y) \subset P_\lambda$, where $\lambda$ is the one-parameter subgroup associated to the stratum containing $y$, and $P_\lambda$ is the corresponding parabolic subgroup (see \Cref{L:filt_quotient_stack}). By construction the lie algebra of $P_\lambda$ has nonnegative weights with respect to $\lambda$, and hence so does the lie algebra of $\Stab_G(y)$, so the hypotheses \ref{eqn:obstruction_weights} is satisfied.
\end{ex}

Extending the example above, we observe:
\begin{lem} \label{L:verify_quasi-smooth_hypotheses}
If $\cX$ is an algebraic derived $k$-stack with affine automorphism groups for which $\bL_\cX \simeq (\bL_{\cX})^\dual$, then $\cX$ is quasi-smooth and any $\Theta$-stratum in $\cX$ satisfies the hypothesis \ref{eqn:obstruction_weights}, and hence the conclusions of \Cref{thm:derived_Kirwan_surjectivity_quasi-smooth}.
\end{lem}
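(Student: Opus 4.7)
The plan is to verify the two conclusions separately: quasi-smoothness of $\cX$, and that any $\Theta$-stratum $\cS \hookrightarrow \cX$ satisfies condition $(\dagger)$.

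For quasi-smoothness, I would first observe that $\bL_\cX$ is almost perfect (since $\cX$ is locally almost of finite presentation), and that the hypothesis $\bL_\cX \simeq \bL_\cX^\dual$ forces $\bL_\cX$ to be dualizable, hence perfect. The affine-automorphism-group assumption then gives $\bL_\cX \in \QC(\cX)_{\leq 1}$ in the homological convention. Applying the identity $H_i(F^\dual) \simeq H_{-i}(F)^\dual$ for perfect $F$, self-duality confines $\bL_\cX$ to homological degrees $[-1,1]$. To upgrade this to Tor-amplitude in $[-1,1]$, I would pass to a smooth affine cover: for a perfect complex over a local ring, the Tor-amplitude coincides with the homological range (via minimal resolutions), and both Tor-amplitude and homological range can be checked smooth-locally.

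For $(\dagger)$, let $\xi$ be a finite-type point of the center $\cZ$ with cocharacter $\lambda \colon \Gm \to \Aut_\cX(\xi)$, and decompose $\xi^\ast \bL_\cX = \bigoplus_w E_w$ into $\lambda$-weight pieces. Self-duality of $\xi^\ast \bL_\cX$ as a $\Gm$-representation yields $E_{-w} \simeq E_w^\dual$. The condition $(\dagger)$, i.e.\ that the weights of $H_1(\xi^\ast \bL_\cX)$ are all $\geq 0$, is the vanishing $H_1(E_{<0}) = 0$, which by self-duality is equivalent to $H_{-1}(E_{>0}) = 0$. To identify $E_{>0}$, I would take the weight decomposition of the fiber sequence $i^\ast \bL_\cX \to \bL_\cS \to \bL_{\cS/\cX}$ at $\xi$: by \Cref{L:cotangent_complex_theta_action} the middle term has weights $\leq 0$ on $\cZ$, and by \Cref{L:relative_cotangent_complex} the cofiber $\bL_{\cS/\cX}$ has weights $\geq 1$ on $\cZ$. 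The weight-$>0$ part of the fiber sequence therefore reads $E_{>0} \to 0 \to \xi^\ast \bL_{\cS/\cX}$, and hence $E_{>0} \simeq \xi^\ast \bL_{\cS/\cX}[-1]$. The desired vanishing then becomes $H_0(\xi^\ast \bL_{\cS/\cX}) = 0$, which follows from the basic fact that the cotangent complex of a closed immersion is concentrated in homological degrees $\geq 1$ (the map is surjective on $\pi_0$, so the surjection $\Omega^1_{\pi_0(A)} \otimes \pi_0(B) \twoheadrightarrow \Omega^1_{\pi_0(B)}$ forces $H_0(\bL_{\cS/\cX})$ to vanish).

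The main obstacle is not any individual step but rather the careful bookkeeping of weights, homological degrees, and their interaction with duality; once the identification $E_{-w} \simeq E_w^\dual$ is pinned down, the payoff of self-duality is precisely the translation of the desired weight-nonnegativity on $H_1$ into a structural statement about a closed immersion, which holds essentially by definition. A minor secondary subtlety is the passage from a homological bound on $\bL_\cX$ to a Tor-amplitude bound, which is resolved by the smooth-local reduction sketched above.
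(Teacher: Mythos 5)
Your proof of condition \ref{eqn:obstruction_weights} is correct, and it takes a genuinely different route from the paper's, though the two are closely related. The paper observes that since $\cS\hookrightarrow\cX$ is a monomorphism, $\Aut_\cS(x) \simeq \Aut_\cX(x)$, hence $H_{-1}(\bL_{\cS,x}) \simeq H_{-1}(\bL_{\cX,x})$; combined with the baric identification $\bL_{\cS,x} \simeq (\bL_{\cX,x})^{\leq 0}$ from \Cref{L:relative_cotangent_complex}, this forces $H_{-1}(\bL_{\cX,x})$ to have only nonpositive weights, and self-duality then gives \ref{eqn:obstruction_weights}. You instead extract the intermediate fact from the structural statement that $\bL_{\cS/\cX}$ is concentrated in homological degrees $\geq 1$ (being the cotangent complex of a closed immersion), which yields $H_{-1}(E_{>0}) = H_0(\xi^\ast\bL_{\cS/\cX}) = 0$ and then dualizes. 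These are the ``two ends'' of the same long exact sequence for the fiber sequence $i^\ast\bL_\cX \to \bL_\cS \to \bL_{\cS/\cX}$, and both are legitimate. Your version has the modest advantage of not invoking the isomorphism of automorphism groups, at the cost of needing to be careful that the vanishing of $H_0(\bL_{\cS/\cX})$ survives the passage to derived fibers at $\xi$ (which it does, since $\bL_{\cS/\cX} \in \QC(\cS)_{\geq 1}$ implies $\xi^\ast\bL_{\cS/\cX} \in \QC_{\geq 1}$ by the Tor spectral sequence).

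Your quasi-smoothness argument, however, has two technical mis-steps, both fixable. First, the identity $H_i(F^\dual) \simeq H_{-i}(F)^\dual$ does \emph{not} hold for a general perfect complex over a ring: take $F = (A \xrightarrow{f} A)$ for a non-unit non-zero-divisor $f$, concentrated in degrees $[0,1]$; then $H_1(F) = 0$ but $H_{-1}(F^\dual) \cong A/f \neq 0$. Second, the claim that ``Tor-amplitude coincides with homological range over a local ring'' is also false --- the same example has homological range $\{0\}$ but Tor-amplitude $[0,1]$ (the range of the minimal free resolution, not of the homology). The correct version of your idea, which is what the paper does in one sentence, is to pass to fibers from the start: at any finite-type point $x$, the isomorphism $\bL_\cX \simeq \bL_\cX^\dual$ pulls back to $\bL_{\cX,x} \simeq (\bL_{\cX,x})^\dual$ as a complex of $k(x)$-vector spaces, and \emph{there} the duality flips homological degrees. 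Since $\bL_\cX$ is almost perfect (hence the fiber is bounded below) and the affine-automorphisms hypothesis confines the fiber to degrees $\geq -1$, self-duality over the field confines it to degrees $\leq 1$, so the fiber lies in $[-1,1]$ at every point, which is precisely the Tor-amplitude condition. No smooth-local reduction is needed beyond this.
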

\begin{proof}
The isomorphism $\bL_\cX \simeq (\bL_\cX)^\dual$ implies that at any point $x \in |\cX|$, the fiber $\bL_{\cX,x}$ is $1$-truncated, so $\cX$ is quasi-smooth. Furthemore, the isomorphism implies that
\[
H_1(\bL_{\cX,x}) \simeq H_{-1}(\bL_{\cX,x}) \simeq \Lie(\Aut_\cX(x))^\ast.
\]
Now let $\cS \subset \Filt(\cX)$ be a $\Theta$-stratum, and let $x \in \cX(k')$ and $\lambda : \Gm \to \Aut_\cX(x)$ correspond to a point in the center of $\cS$. The fact that $\cS \to \cX$ is a closed immersion implies that $\Aut_{\cS}(x) \simeq \Aut_\cX(x)$ as group schemes, and hence $H_{-1}(\bL_{\cS,x}) \simeq H_{-1}(\bL_{\cX,x})$. It follows from these observations and \Cref{L:relative_cotangent_complex} that $H_1(\bL_{\cX,x})$ has nonnegative weights with respect to $\lambda$.
\end{proof}

\begin{ex}
\Cref{L:verify_quasi-smooth_hypotheses} implies that \Cref{thm:derived_Kirwan_surjectivity_quasi-smooth} applies to any $\Theta$-stratification of a $0$-shifted derived algebraic symplectic stack in the sense of \cite{pantev2013shifted}. Simple examples of $0$-shifted derived symplectic algebraic stacks arise as the derived Marsden-Weinstein quotient of an algebraic symplectic variety by a hamiltonian action of a reductive group \cite{pecharich2012derived}. Another important class of $0$-shifted derived symplectic stacks are the moduli stacks $\cX = \inner{\Coh}(S)$ of coherent sheaves on a $K3$ surface $S$ \cite{pantev2013shifted}. The equivalence $\bL_\cX = (\bL_\cX)^\dual$ comes from the Serre duality equivalence $\RHom_S(E,E[1]) \simeq \RHom_S(E[1],E[2])^\dual \simeq \RHom_S(E,E[1])^\dual$ for $E \in \Coh(S)$, or more precisely from a version of this equivalence in families.
\end{ex}

\begin{ex}[Hypothesis \ref{eqn:obstruction_weights} is necessary] We observe that the semiorthogonal decomposition of \Cref{T:derived_Kirwan_surjectivity} can fail to preserve $\DCoh(\cX)$ for quasi-smooth $\cX$. Consider a linear $\Gm$-action on $\bA^n$ having positive and negative weights, and choose $\lambda(t) = t$ so that the unstable subspace $\bA^n_+$ is the subspace defined by the vanishing of the coordinate functions of positive weight. Let $X \subset \bA^n$ be the hypersurface defined by a non-zero homogeneous polynomial $f$ of weight $-d<0$, and assume that $f(0)=0$. Then the tautological $\Theta$-stratum in $X/\Gm$ is $S / \Gm$, where $S := X \cap \bA^n_+$.

Assume that we can choose an $F \in \DCoh(X^{\rm ss}/\Gm)$ that fails to be perfect in any neighborhood of $S \subset X$ -- such a complex exists as long as the closure of the singular locus of $X^{\rm ss}$ meets $S$. It follows that for any extension of $F$ to $\DCoh(X/\Gm)$, the derived restriction to $\{0\}$ must be homologically unbounded, because $\{0\}$ lies in the orbit closure of every point of $S$. \Cref{T:derived_Kirwan_surjectivity} implies there is a unique extension of $F$ to a complex $\tilde{F} \in \cG_\cS^w \subset \APerf(X/\Gm)$. If $\tilde{F} \in \DCoh(X/\Gm)$, then it would admit a presentation as a right-bounded complex of free graded $\cO_X$-modules $F_\bullet$ which was eventually $2$-periodic up to a shift by a character of $\Gm$. Specifically, in high homological degree $F_\bullet$ is the restriction to $X$ of a graded matrix factorization \cite{segal} on $\bA^n$, so $F_{n+2} = F_{n}(d)$ for $n \gg 0$. This implies that the weights of $F_n|_{\{0\}}$ tend to $-\infty$ as $n \to \infty$, which contradicts the fact that $\tilde{F}|_{\{0\}}$ has weight $\geq w$ and is homologically unbounded. It follows that $\tilde{F} \notin \DCoh(\cX)$.
\end{ex}

For our first application of \Cref{thm:derived_Kirwan_surjectivity_quasi-smooth}, consider a derived algebraic stack $\cX$ containing two $\Theta$-strata $\cS_\pm$ such that $\cZ_+$ and $\cZ_-$ are identified via the involution of $\Grad(\cX)$ induced by the inversion homomorphism $\Gm \to \Gm$, i.e., $\cZ_+$ and $\cZ_-$ classify the same points $x \in \cX$, but with the opposite cocharacters $\lambda_\pm : \Gm \to \Aut_\cX(x)$. We say that the two stacks $\cX^{\rm ss}_\pm := \cX \setminus \cS_{\pm}$ differ by an \emph{elementary wall crossing}, generalizing \cite{BFK} and \cite{halpern2015derived}.

\begin{cor} \label{cor:wall_crossing}
Let $\cX$ be a derived algebraic stack which has affine diagonal and is a.f.p. over a locally a.f.p. algebraic $k$-stack $\cB$, and let $\cS_\pm \subset \Filt(\cX)$ be two $\Theta$-strata definining an elementary wall crossing. Assume that $\cX$ is quasi-smooth and for every finite type point in $\cZ_+$ with underlying point $x \in \cX$, $H_1(\bL_{\cX,x})$ has $\lambda_+$-weight $0$. Let $c$ be the $\lambda_+$-weight of $\det (\bL_\cX|_\cZ)$, and let $\cX^{\rm ss}_{\pm} := \cX \setminus \cS_{\pm}$.
\begin{enumerate}
\item If $c = 0$, then $\DCoh(\cX^{ss}_+) \simeq \DCoh(\cX^{ss}_-)$.
\item If $c>0$, then there is a fully faithful embedding $\DCoh(\cX^{ss}_+) \subset \DCoh(\cX^{ss}_-)$.
\item If $c<0$, then there is a fully faithful embedding $\DCoh(\cX^{ss}_-) \subset \DCoh(\cX^{ss}_+)$.
\end{enumerate}
The same holds with $\DCoh(-)$ replaced by $\Perf(-)$.
\end{cor}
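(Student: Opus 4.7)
The plan is to deduce the corollary from the window-category structure provided by \Cref{thm:derived_Kirwan_surjectivity_quasi-smooth} applied to each $\Theta$-stratum. First I would verify the obstruction-weight hypothesis \ref{eqn:obstruction_weights} for both $\cS_+$ and $\cS_-$: the assumption that $H_1(\bL_{\cX,x})$ has $\lambda_+$-weight $0$ at every finite-type point of $\cZ_+$ immediately implies the same with respect to $\lambda_-=\lambda_+^{-1}$, so the weights are non-negative on both sides. The theorem then yields, for every choice of integers $w_+,w_-$, restriction equivalences
\[
\cG_{\cS_+,{\rm coh}}^{w_+}\xrightarrow{\cong}\DCoh(\cX^{\rm ss}_+),\qquad \cG_{\cS_-,{\rm coh}}^{w_-}\xrightarrow{\cong}\DCoh(\cX^{\rm ss}_-),
\]
so the problem reduces to comparing the two windows as subcategories of $\DCoh(\cX)$.

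Next, I would translate each window into an explicit grade-restriction rule via \Cref{P:alternate_characterization}(3). Since $H_1(\bL_\cX|_\cZ)$ has $\lambda_+$-weight $0$, the cutoff $\wt\det((\tau_{\leq 0}f^\ast\bL_\cX)^{>0})$ reduces to the length
\[
a_\pm := \wt_{\lambda_\pm}\det\!\bigl((\bL_\cX|_\cZ)^{>0}_{\lambda_\pm}\bigr),
\]
and membership in $\cG_{\cS_\pm}^{w_\pm}$ is equivalent to the $\lambda_\pm$-weights of $f_{\rm reg}^\ast F$ at every regular graded point of $\cZ_\pm$ lying in an integer interval of length $a_\pm$. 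Using the identification $\cZ_+\cong\cZ_-$ and negating weights, both conditions become constraints on $\lambda_+$-weights in intervals of lengths $a_+$ and $a_-$ respectively. A direct computation on the three homology pieces of $\bL_\cX|_\cZ$, in which $H_1$ contributes trivially by hypothesis, then gives $c=\pm(a_+-a_-)$ up to the graded-determinant sign convention of \Cref{A:grothendieck}.

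The three cases of the corollary now reduce to nesting these two integer intervals. When $c=0$ the intervals have equal length, so one can choose $w_+,w_-$ that align them, forcing the equality $\cG_{\cS_+}^{w_+}=\cG_{\cS_-}^{w_-}$ as subcategories of $\DCoh(\cX)$ and hence the equivalence $\DCoh(\cX^{\rm ss}_+)\simeq\DCoh(\cX^{\rm ss}_-)$. When $c\neq 0$ the shorter interval fits strictly inside the longer one, yielding an inclusion $\cG_{\cS_\pm}^{w_\pm}\subset\cG_{\cS_\mp}^{w_\mp}$ whose composition with the restriction equivalence on the outer side produces the desired fully faithful embedding; full faithfulness is an instance of \Cref{P:quantization_commutes_with_reduction} applied to the outer stratum, because objects of the inner window automatically satisfy the outer grade-restriction rule. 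The $\Perf$ version follows by the same argument using the analogous semiorthogonal decomposition from \Cref{prop:DKS_perfect}. The main obstacle will be the bookkeeping in the identity $c=\pm(a_+-a_-)$, which requires carefully tracking the homological shifts in the computation of $\wt\det\bL_\cX|_\cZ$ and the sign change from passing between $\lambda_+$ and $\lambda_-$, so that the direction of the embedding matches the sign of $c$ as stated.
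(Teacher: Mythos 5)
Your overall strategy — realize both $\cG_{\cS_\pm,\rm coh}^{w_\pm}$ as weight-interval conditions and compare the interval lengths — is the same idea the paper uses, but your implementation misses the decisive simplifying observation. The paper notes that the hypothesis that $H_1(\bL_{\cX,x})$ has $\lambda_+$-weight $0$ at every point of $\cZ_+$ implies $\bL_{\cS_\pm/\cX}[-1]$ is concentrated in homological degree $0$, i.e., that $\cS_\pm\hookrightarrow\cX$ are \emph{regular closed immersions}. This does three things at once: it puts you in the setting of \Cref{prop:DKS_perfect} (needed anyway for the $\Perf$ statement), it lets \Cref{L:regular_closed_immersion_general} describe the window directly as
\[
\cG_{\cS_\pm,\rm coh}^{w} = \bigl\{F \mid \sigma_\pm^\ast i_\pm^\ast(F)\ \text{has }\lambda_\pm\text{-weights in } [w,w+\eta_\pm)\bigr\},\quad \eta_\pm=\wt\det\bigl(\radj{0}(\bL_\cX|_{\cZ_\pm})\bigr),
\]
and it reduces the comparison to nesting two half-open intervals. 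Your route through \Cref{P:alternate_characterization}(3), using regular graded points $f_{\rm reg}$ and the cutoff $\wt\det((\tau_{\leq 0}f^\ast\bL_\cX)^{>0})$, is substantially more indirect: the regularization construction and the $\tau_{\leq 0}$ truncation introduce exactly the bookkeeping you then flag as the ``main obstacle.''

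More importantly, the step you declare to be ``the main obstacle'' — identifying $c$ with $\pm(a_+-a_-)$ and pinning down the direction of the embedding — is not a peripheral bookkeeping issue; it is the whole quantitative content of the corollary, and a proof that leaves it unresolved has not actually proved the statement. The paper handles it in one line once the regular-embedding observation is in hand: since $\lambda_- = \lambda_+^{-1}$, one has $\eta_- = -\wt_{\lambda_+}\det(\radj{-\infty}^{\leq 0}(\bL_\cX|_{\cZ_+}))$, and the weight-zero summand (where $H_1$ lives) contributes nothing to the determinant weight, so $\eta_+-\eta_-=c$. Without the regular-embedding reformulation this computation is genuinely murkier, which is why the shortcut matters. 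Also, your appeal to \Cref{P:quantization_commutes_with_reduction} for full faithfulness is unnecessary once the windows are compared as subcategories of $\DCoh(\cX)$: the embedding is a composite of a fully faithful inclusion of subcategories with the restriction equivalences, so nothing beyond \Cref{thm:derived_Kirwan_surjectivity_quasi-smooth} (or \Cref{prop:DKS_perfect}) is needed.
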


\begin{rem}
As in the classical version of this statement, the semiorthogonal complement to the embeddings in (2) and (3) has a further semiorthogonal decomposition into a number of categories of the form $\DCoh(\cZ)^{w}$.
\end{rem}

\begin{proof}
The key observation is that because $H_1(\bL_{\cX,x})$ has $\lambda_+$ weight $0$ at every finite type point of $\cZ_+$ (and hence likewise for $\lambda_-$ and $\cZ_-$), then not only do $\cS_+$ and $\cS_-$ both satisfy the hypotheses of \Cref{thm:derived_Kirwan_surjectivity_quasi-smooth}, but they also satisfy the hypotheses of \Cref{prop:DKS_perfect}. So $i_\pm : \cS_{\pm} \hookrightarrow \cX$ are both regular embeddings. It follows from \Cref{L:regular_closed_immersion_general} that we have
\[
\cG_{\cS_\pm,\rm{coh}}^{w} = \left\{ F \in \DCoh(\cX) | \sigma_\pm^\ast i_\pm^\ast (F) \text{ has }\lambda_\pm \text{-weights in } [w,w+\eta_\pm) \right\},
\]
where $\eta_\pm$ is the weight of $\det(\radj{0}(\bL_{\cX}|_{\cZ_\pm}))$. It follows from the relationship of $\cZ_+$ and $\cZ_-$ that one can always choose shift parameters $w_\pm$ such that either $\cG_{\cS_-,{\rm coh}}^{w_-} \subset \cG_{\cS_+,{\rm coh}}^{w_+}$, $\cG_{\cS_+,{\rm coh}}^{w_+} = \cG_{\cS_-,{\rm coh}}^{w_-}$, or $\cG_{\cS_+,{\rm coh}}^{w_+} \subset \cG_{\cS_-,{\rm coh}}^{w_-}$ depending on whether $\eta_+ - \eta_-$ is positive, zero, or negative respectively. Finally, $\eta_+ - \eta_-$ is the  $\lambda_+$-weight of $\det(\bL_\cX|_{\cZ_+})$. The same argument applies verbatim with $\Perf(-)$ instead of $\DCoh(-)$, using \Cref{prop:DKS_perfect}.
\end{proof}

\subsubsection{Proof of \Cref{thm:derived_Kirwan_surjectivity_quasi-smooth}} 

\begin{lem} \label{L:quasi_smooth_baric_decomp}
The baric truncation functors of \Cref{P:baric_stratum} and \Cref{P:baric_decomp_supports} part (4) induce bounded baric decompositions of $\DCoh(\cS)$ and $\DCoh_\cS(\cX)$, respectively.
\end{lem}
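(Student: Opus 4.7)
The plan is to handle $\DCoh(\cS)$ first and then bootstrap to $\DCoh_\cS(\cX)$ via devissage. Throughout I would reduce to the quasi-compact case using the Zariski cover from \cite{AHLH}*{Lem.~6.8}, and further reduce to the local model via \Cref{T:local_structure_stratum}, using smooth descent and the compatibility of the baric truncations with smooth pullback (\Cref{P:baric_stratum} part (4)); this allows working with $\cS = \Spec(A)/\Gm$ for $A$ a simplicial non-positively graded commutative ring.

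For $\DCoh(\cS)$, the baric truncations already preserve $\APerf(\cS)$ by \Cref{P:baric_stratum} part (3) and commute with $\sigma^\ast : \QC(\cS) \to \QC(\cZ)$ for $\sigma$ the inclusion of the center, since $\sigma$ is $\Theta$-equivariant. By \Cref{lem:finite_tor_amplitude}, $\sigma$ is quasi-smooth and so $\sigma^\ast$ preserves $\DCoh$, while on $\cZ$ \Cref{L:center_grading} identifies the baric decomposition with the direct sum decomposition by weight, which is $t$-exact and gives a bounded decomposition on $\DCoh(\cZ)$: in the local model $\cZ = \Spec(A^0)\times B\Gm$ a coherent sheaf corresponds to a finitely generated graded $A^0$-module and so is supported in finitely many weights. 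Everything then reduces to showing that $\sigma^\ast$ reflects the subcategory $\DCoh \subset \APerf(\cS)$, for then $\radj{w} F \in \APerf(\cS)$ combined with $\sigma^\ast(\radj{w} F) \simeq \radj{w}(\sigma^\ast F) \in \DCoh(\cZ)$ forces $\radj{w} F \in \DCoh(\cS)$, and likewise for $\ladj{w}$ via the cofiber sequence $\radj{w} F \to F \to \ladj{w} F$. This reflection property is checked in the local model: $\sigma$ corresponds to the surjection $A \to A^0 = A/I_-$, and quasi-smoothness forces $I_-$ to be locally generated by finitely many homogeneous negative-weight elements, so $A^0$ admits a finite Koszul resolution $K^\bullet$ by free graded $A$-modules. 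Consequently $\sigma^\ast E \simeq E \otimes_A K^\bullet$ is a finite iterated extension of shifts of $E$, which is homologically bounded below exactly when $E$ is.

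For $\DCoh_\cS(\cX)$, I would use the deformation to the normal cone (\Cref{T:deformation_to_normal_cone}): each structure sheaf $\cO_{\cS^{(n)}}$ is a finite iterated extension of objects $i_\ast \Sym^k(\bL_{\cS/\cX}[-1])$ with $0 \leq k \leq n$. Any $F \in \DCoh_\cS(\cX)$ reduces, via its (finite) truncation filtration, to a single coherent cohomology sheaf supported on $\cS$, which is annihilated by some power of the ideal of $\cS^{\rm cl}$ and hence is a module over $\cO_{\cS^{(n)}}$ for $n \gg 0$. Tensoring the filtration of $\cO_{\cS^{(n)}}$ with such a sheaf presents it as a finite iterated extension of objects $i_\ast(G_k)$ with $G_k \in \DCoh(\cS)$. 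Since $\radj{w}_\cS$ and $\ladj{w}$ are exact (as projection functors of a semi-orthogonal decomposition) and intertwine with $i_\ast$ via the baric truncations on $\cS$ (\Cref{P:baric_decomp_supports} part (2)), the desired preservation and bounded-decomposition statements on $\DCoh_\cS(\cX)$ follow from those on $\DCoh(\cS)$ already established.

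The main obstacle is the reflection property of $\sigma^\ast$, which is the only step requiring non-formal input. It depends crucially on the quasi-smoothness of $\cX$, since without it the Koszul resolution of $A^0$ over $A$ could fail to be finite and both the preservation of $\DCoh$ by the truncations and the boundedness of the decomposition would break. The devissage for $\DCoh_\cS(\cX)$ is then a formal consequence.
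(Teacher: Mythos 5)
Your overall reduction is correct, but it takes a different route from the paper and the key step is incomplete as written. You reduce everything to a \emph{reflection} claim: if $E \in \APerf(\cS)$ and $\sigma^\ast E \in \DCoh(\cZ)$, then $E \in \DCoh(\cS)$. The paper instead avoids proving a reflection statement altogether. It observes that $\sigma^\ast F \in \DCoh(\cZ)$ has finitely many nonzero weight components; by Nakayama this forces the baric decomposition of $F$ to be finite; and each weight-$w$ piece of $F$ is $\pi^\ast$ of the corresponding weight-$w$ piece of $\sigma^\ast F$ via the equivalence $\APerf(\cS)^w \cong \APerf(\cZ)^w$ (\Cref{L:center_pullback}). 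Since $\pi : \cS \to \cZ$ is quasi-smooth (\Cref{lem:finite_tor_amplitude}), $\pi^\ast$ has finite Tor-amplitude and preserves $\DCoh$, so each piece lies in $\DCoh(\cS)$. This is a purely ``positive'' preservation statement, needing only that both $\sigma^\ast$ and $\pi^\ast$ send $\DCoh$ to $\DCoh$. It also requires no local-model reduction or descent for the $\DCoh(\cS)$ half of the lemma, which makes it noticeably shorter than your version.

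The reflection lemma is true under the standing hypothesis $(\dagger)$ (indeed, $(\dagger)$ together with the fact that $\bL_{\cZ/\cS}$ is $1$-connective for a closed immersion forces $\sigma$ to be a \emph{regular} closed immersion, not merely quasi-smooth, so the finite Koszul resolution you invoke does exist). But your proof of it has two problems. First, a direction error: to upgrade $\APerf$ to $\DCoh$ the missing condition is that the homology is bounded \emph{above} in homological degree, not below; $\APerf$ already carries the ``bounded below'' constraint. Second, and more seriously, the sentence ``$\sigma^\ast E \simeq E \otimes_A K^\bullet$ is a finite iterated extension of shifts of $E$, which is homologically bounded [above] exactly when $E$ is'' does not follow from the iterated-extension structure alone. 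The forward implication (``$E$ bounded $\Rightarrow$ $E\otimes K^\bullet$ bounded'') is clear, but the converse is exactly what you need and is false for a general finite complex of free modules: the differentials in the associated spectral sequence $E_1^{p,q} = K_p \otimes H_q(E) \Rightarrow H_{p+q}(\sigma^\ast E)$ could in principle cancel all the high-degree homology. What rescues the argument here is an extra Nakayama-type input that you have omitted: after restricting to a point $z \in \cZ$, the Koszul differentials vanish (since the $x_i \in I_-$ die on $\cZ$), so $(\sigma^\ast E)_z \cong \bigoplus_p (E_z \otimes \bigwedge^p V^{<0})[p]$ and in particular $E_z$ is a \emph{direct summand} of $(\sigma^\ast E)_z$; combined with the fact that every point of $\cS^{\rm cl}$ specializes into $\cZ^{\rm cl}$ and semicontinuity of fiber homology for almost perfect complexes, this shows $E_s$ is bounded for all $s$, hence $E \in \DCoh(\cS)$. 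Without this (or an equivalent Nakayama argument), the step is a genuine gap. For the $\DCoh_\cS(\cX)$ half, your deformation-to-the-normal-cone devissage is correct but longer than necessary; the paper just uses that $\DCoh_\cS(\cX)$ is generated by $i_\ast \DCoh(\cS)$ under shifts and cones.
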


\begin{proof}
Because $\sigma$ has finite Tor-amplitude, $\sigma^\ast F \in \DCoh(\cZ)$ for any $F \in \DCoh(\cS)$. Thus $\sigma^\ast F$ decomposes as a direct sum of objects in $\DCoh(\cZ)^w$ for finitely many $w$. By Nakayama's lemma and the fact that $\sigma^\ast$ is compatible with the baric truncation functors on $\APerf(\cS)$ and $\APerf(\cZ)$, it follows that $F$ decomposes under the baric decomposition of $\APerf(\cS)$ as an iterated extension of objects in $\APerf(\cS)^w$ for finitely many $w$. Furthermore these objects lie in $\DCoh(\cS)^w$, because $\sigma^\ast : \APerf(\cS)^w \to \APerf(\cZ)^w$ is an equivalence with inverse given by $\pi^\ast$. It follows that the baric truncation functors preserve $\DCoh(\cS)$. The boundedness of the baric decomposition of $\DCoh(\cS)$ follows again from Nakayama's lemma, the compatibility of $\sigma^\ast$ with the baric decomposition on $\DCoh(\cZ)$, and the boundedness of the latter.

Once one has the baric decomposition of $\DCoh(\cS)$, the fact that $\radj{w}$ and $\ladj{w}$ preserve $\DCoh_\cS(\cX)$ follows from the fact that $\DCoh_\cS(\cX)$ is generated by $i_\ast \DCoh(\cS)$ under shifts and cones, and the boundedness of the baric decomposition of $\DCoh_\cS(\cX)$ follows likewise.
\end{proof}

\begin{lem} \label{lem:bounded_SOD}
$\DCoh (\cX) = \bigcup_{v,w} \left( \DCoh(\cX)^{<w} \cap \DCoh(\cX)^{\geq v} \right)$.
\end{lem}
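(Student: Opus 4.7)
The plan is to use the intrinsic characterization of $\DCoh(\cX)^{\geq v}$ and $\DCoh(\cX)^{<w}$ in \Cref{P:alternate_characterization} to reduce the statement to the bounded baric decomposition on $\DCoh(\cS)$ provided by \Cref{L:quasi_smooth_baric_decomp}. More precisely, given $F \in \DCoh(\cX)$, it suffices to produce integers $v_1, v_2$ such that $i^\ast F \in \DCoh(\cS)^{\geq v_1}$ and $i^\ast \bD_\cX(F) \in \DCoh(\cS)^{\geq v_2}$; applying $\sigma^\ast$ and invoking the equivalence $(1) \Leftrightarrow (2)$ of \Cref{P:alternate_characterization} on each, one obtains $F \in \DCoh(\cX)^{\geq v_1}$ and $F \in \DCoh(\cX)^{<a+1-v_2}$, and the lemma follows by taking $v = v_1$ and $w = a+1-v_2$.

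First I would verify that both $i^\ast F$ and $i^\ast \bD_\cX(F)$ genuinely lie in $\DCoh(\cS)$. The closed immersion $i : \cS \hookrightarrow \cX$ is quasi-smooth by \Cref{lem:finite_tor_amplitude}, hence has finite Tor-amplitude, so $i^\ast$ preserves $\DCoh$. Moreover, $\bD_\cX$ restricts to an involution of $\DCoh(\cX)$ because $\cX$ is quasi-smooth, so $\omega_\cX$ is an invertible complex by the Gorenstein property (\cite{arinkin2015singular}*{Cor.~2.2.7}); thus $\bD_\cX(F) \in \DCoh(\cX)$ as well. This puts both $i^\ast F$ and $i^\ast \bD_\cX(F)$ in $\DCoh(\cS)$, where the baric decomposition is available.

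The main substantive input is then \Cref{L:quasi_smooth_baric_decomp}, which asserts that the baric decomposition of $\DCoh(\cS)$ is bounded, producing the desired integers $v_1$ and $v_2$. The only real obstacle is thus packaged into \Cref{L:quasi_smooth_baric_decomp}: the boundedness of the baric structure on $\DCoh(\cS)$ is detected, via $\sigma$-compatibility of baric truncations and Nakayama's lemma, by the corresponding boundedness on $\DCoh(\cZ)$, which in turn uses the direct-sum decomposition of \Cref{L:center_grading} and the fact that coherent sheaves on the (quasi-compact) quasi-smooth center split into finitely many weight summands. Once that lemma is granted, the argument above is a direct translation through the intrinsic characterizations of \Cref{P:alternate_characterization}.
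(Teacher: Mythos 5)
Your argument has a real gap at the step where you claim $i^\ast F \in \DCoh(\cS)$. You assert that the closed immersion $i : \cS \hookrightarrow \cX$ is quasi-smooth, citing \Cref{lem:finite_tor_amplitude}, but that lemma only says that $\cS$ and $\cZ$ are quasi-smooth \emph{over $k$} and that $\pi : \cS \to \cZ$ is quasi-smooth; it says nothing about the morphism $i$ itself. In fact $i$ is \emph{not} of finite Tor-amplitude in general: by \Cref{L:relative_cotangent_complex}, $\bL_{\cS/\cX} \cong \radj{1}(i^\ast\bL_\cX)[1]$, which under hypothesis \ref{eqn:obstruction_weights} has homology in degrees $1$ and $2$ (the degree-$2$ part coming from $H_1(\bL_\cX)^{>0}$), so $i$ fails to be a regular closed immersion unless that summand vanishes — the stronger hypothesis that characterizes the $\Perf$-version of the theorem, \Cref{prop:DKS_perfect}. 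Consequently $i^\ast F$ lands only in $\APerf(\cS)$, not $\DCoh(\cS)$, and \Cref{L:quasi_smooth_baric_decomp} — which asserts boundedness of the baric decomposition on $\DCoh(\cS)$, not on $\APerf(\cS)$ — cannot be applied directly. The baric decomposition on $\APerf(\cS)$ is genuinely unbounded.

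The paper circumvents exactly this issue. After using \Cref{L:duality_stratum} to reduce to showing $\sigma^\ast i^\ast(F)$ has bounded-below weights, it passes to a local chart $X/\Gm$ via \Cref{T:local_structure_stratum} and tensors $F$ with a Koszul complex $K = K_X(x_1,\ldots,x_k)$ that is perfect and set-theoretically supported on $\cS$. Then $K \otimes F \in \DCoh_\cS(\cX)$ (perfect tensor bounded coherent is bounded coherent, and the support is forced), so \Cref{L:quasi_smooth_baric_decomp} applies to $K\otimes F$, and the bound for $\sigma^\ast i^\ast(F)$ is extracted from the observation that $\cO_\cZ$ is the weight-$0$ direct summand of $\sigma^\ast i^\ast(K)$. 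You would need some such intermediary — reducing to the local case and replacing $i^\ast F$ by a genuinely $\DCoh$ object supported on $\cS$ — to make your plan go through.
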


\begin{proof}
In light of \Cref{L:duality_stratum}, it suffices to show that for any $F \in \DCoh(\cX)$, $\sigma^\ast i^\ast(F) \in \APerf(\cZ)^{\geq w}$ for some $w \in \bZ$. By \Cref{T:local_structure_stratum} we can find a smooth morphism $p : X/\Gm \to \cX$, where $X$ is a quasi-smooth affine derived $k$-scheme with a $\Gm$-action, such that $p^{-1}(\cS)$ is the tautological stratum $S_+/\Gm \subset X/\Gm$ and $S_+/\Gm \to \cS$ is surjective. Because $p$ is smooth, the relative cotangent complex is locally free in homological degree $0$, so the cofiber sequence for the relative cotangent complex implies that for any $x \in X/\Gm$, the canonical map is an isomorphism
\[
H_1(\bL_{\cX,p(x)}) \xrightarrow{\cong} H_1(\bL_{X/\Gm,x}).
\]
Hence $X/\Gm$ satisfies the hypothesis \ref{eqn:obstruction_weights} as well, and it suffices to prove the claim for the stack $X/\Gm$.

We choose $x_1,\ldots, x_k \in H_0(\cO_X)$ which are homogeneous of positive weight with respect to the $\Gm$-action and which cut out $S_+$ set-theoretically, and we let $K := K_X(x_1,\ldots,x_k) \in \Perf(X/\Gm)$ be a Koszul complex as constructed in \eqref{E:koszul_complex} (see \Cref{sect:koszul}). \Cref{L:quasi_smooth_baric_decomp} implies that $K \otimes F \in \DCoh_\cS (\cX)^{\geq w}$ for some $w$, and so the weights of $\sigma^\ast i^\ast (K \otimes F) \simeq \sigma^\ast i^\ast(K) \otimes \sigma^\ast i^\ast F$ are bounded below. At the same time, $K$ admits a map $K \to \cO_X$ such that $\cofib(K \to \cO_X)|_Z$ lies in $\QC(Z/\Gm)^{<0}$, and it follows that $\cO_Z$ is the weight $0$ direct summand of $K|_{Z}$. In particular $\sigma^\ast i^\ast(F)$ is a direct summand of $\sigma^\ast i^\ast(K \otimes F)$, whose weights are bounded below.
\end{proof}

\begin{proof} [Completing the proof of \Cref{thm:derived_Kirwan_surjectivity_quasi-smooth}]
As in the proof of \Cref{P:alternate_characterization}, we may reduce to the situation where $\cB = \Spec(k)$, so we will assume this throughout.

As in the proof of \Cref{T:derived_Kirwan_surjectivity}, we may use \Cref{T:local_structure_stratum} to construct a smooth morphism $p : X/\Gm \to \cX$, where $X$ is an affine derived $k$-scheme with a $\Gm$-action, and $\cS \hookrightarrow \cX$ induces the tautological $\Theta$-stratum $S/\Gm \hookrightarrow X/\Gm$, i.e., $S \subset X$ is the derived subscheme contracted by the tautological one-parameter subgroup. Note that because $p$ is smooth, $X$ is quasi-smooth, and $X/\Gm$ satisfies \ref{eqn:obstruction_weights} (See the proof of \Cref{lem:bounded_SOD}).

To show that the semiorthogonal decompositions of $\APerf(\cX)$ in \eqref{E:main_SOD} and \eqref{E:window_SOD} induce semiorthogonal decompositions of $\DCoh(\cX)$ and $\cG_{\cS,{\rm coh}}^{[u,w)}$ respectively, it suffices to show that $\radj{w}_\cS$ and $\ladj{w}_\cS$ preserve $\DCoh(\cX)$. By \Cref{T:derived_Kirwan_surjectivity} part (6) it suffices to show this for the stack $X/\Gm$.

Let $x_1,\ldots,x_k \in H_0(\cO_X)$ be elements which cut out $S$, where $x_i$ is homogeneous of degree $d_i>0$, and let $K_n = K_X(x_1^n,\ldots,x_k^n)$ be the Koszul complex defined in \eqref{E:koszul_complex}. If we let $C_n = \cofib(K_n \to \cO_X)$ and define $D := \min(d_1,\ldots,d_k)$, then $i^\ast(C_n^\dual) \in \Perf(S/\Gm)^{\geq n D}$. For any $F \in \DCoh(X/\Gm)$, \Cref{lem:bounded_SOD} implies $F \in \DCoh(X/\Gm)^{\geq v}$ for some $v$, so \Cref{P:baric_stratum} part (6) implies that $C_n^\dual \otimes F \in \DCoh(X/\Gm)^{\geq v+nD}$. It follows that for $n \gg 0$, $\ladj{w}_\cS(C^\dual_n \otimes F) \cong 0$, and hence
\[
\ladj{w}_\cS(F) \xrightarrow{\cong} \ladj{w}_\cS(K_n^\dual \otimes F)
\]
is an isomorphism. $K_n^\dual \otimes F \in \DCoh_{S}(X/\Gm)$, and hence $\ladj{w}_\cS(F) \in \DCoh(X/\Gm)$ by \Cref{L:quasi_smooth_baric_decomp}.

We show that $\radj{w}_\cS(F) \in \DCoh(X/\Gm)$ similarly: Because $C_n$ is dualizable we have $i^{\QC,!}(C_n \otimes F) \cong i^\ast(C_n) \otimes i^{\QC,!}(F)$. If $i^{\QC,!}(F) \in \QC(S/\Gm)^{<v}$, then \Cref{P:baric_stratum} part (6) implies that  $i^{\QC,!}(C_n \otimes F) \in \QC(S/\Gm)^{<v-nD}$, because for any $G \in \QC(S/\Gm)^{\geq v-nD}$, $G \otimes i^\ast(C_n^\dual) \in \QC(S/\Gm)^{\geq v}$, and thus
\[
\RHom_{S/\Gm}(G,i^\ast(C_n) \otimes i^{\QC,!}(F)) \cong \RHom_{S/\Gm}(G \otimes i^\ast(C_n^\dual), i^{\QC,!}(F)) = 0.
\]
It follows from \Cref{lem:bounded_SOD} that for any $F \in \DCoh(X/\Gm)$ and any $w$, $\radj{w}_{S/\Gm}(C_n \otimes F) =0$ for $n\gg 0$ and hence
\[
\radj{w}_{S/\Gm}(K_n \otimes F) \to \radj{w}_{S/\Gm}(F)
\]
is an isomorphism for $n \gg 0$. \Cref{L:quasi_smooth_baric_decomp} then implies that $\radj{w}_{S/\Gm}(F) \in \DCoh(X/\Gm)$.

\medskip
\noindent \textit{Proof of (1):}
\medskip

Fully faithfulness follows from part (1) of \Cref{T:derived_Kirwan_surjectivity}, so it suffices to show that $\cG_{\cS,{\rm coh}}^w \to \DCoh(\cX \setminus \cS)$ is essentially surjective. $\DCoh(\cX) \to \DCoh(\cX \setminus \cS)$ is essentially surjective, and for any $F \in \DCoh(\cX)$, $\radj{w}(\ladj{w}(F)) \in \cG^w_{\cS,{\rm coh}}$ has the same restriction to $\cX \setminus \cS$ as $F$.

\medskip
\noindent \textit{Proof of (2):}
\medskip

The infinite semiorthogonal decomposition \eqref{eqn:main_SOD_quasi_smooth} follows from the semiorthogonal decomposition of $\cG_{\cS,{\rm coh}}^{[u,w)} := \DCoh(\cX)^{<w} \cap \DCoh(\cX)^{\geq u}$ induced by \eqref{E:window_SOD}, along with \Cref{lem:bounded_SOD}.

\medskip
\noindent \textit{Proof of (3):}
\medskip

By \Cref{T:derived_Kirwan_surjectivity} part (3) we have equivalences $i_\ast \pi^\ast : \APerf(\cZ)^{w} \to\APerf_\cS(\cX)^w$. $F \in \APerf(\cZ)^w$ has bounded homology if and only if $\pi^\ast(F) \in \APerf(\cS)^w$ does, because $\pi^\ast$ has finite Tor-amplitude by \Cref{lem:finite_tor_amplitude} and its inverse $\sigma^\ast$ has finite Tor-amplitude by \Cref{L:sigma_quasi_smooth}. Also $G := \pi^\ast(F) \in \APerf(\cS)$ has bounded homology if and only if $i_\ast(G)$ does.

\end{proof}

\section{Derived equivalences from variation of good moduli space}
\label{S:D_equivalence}

In this section, $k$ will denote a fixed ground field of characteristic $0$.

\subsection{Variation of good moduli space} \label{S:variation}

The key to our applications \Cref{T:derived_Kirwan_surjectivity} will be an intrinsic version of the theory of variation of geometric invariant theory (GIT) quotient \cite{dolgachev_hu} for stacks that admit a good moduli space in the sense of \cite{alper2013good}. We will recall the main result here. It is described in detail in \cite{halpern2014structure}*{Sect.~5.3.1}.

First we recall the notion of $\Theta$-stability from \cite{halpern2014structure} on a stack $\cX$ -- it is essentially an intrinsic version of the Hilbert-Mumford criterion. Let $L$ be an invertible sheaf on $\cX$. For any filtration $f : \Theta_{k'} \to \cX$, we can make use of the canonical identification $H^\ast(\Theta;\bQ) \cong \bQ[q]$, with $q \in H^2(\Theta;\bQ)$ the first Chern class of the unique invertible sheaf whose fiber weight at $0$ is $1$, to define a number $\frac{1}{q} f^\ast(c_1(L)) \in \bQ$.\footnote{Here our stack is locally finite type over $k$, and we are using operational Chow cohomology. However, you can avoid using cohomology by declaring $f^\ast(c_1(L))$ to be the weight of the fiber of the invertible sheaf $f^\ast(L)$ at $0$.}

\begin{defn} \label{D:theta_semistable}
A point $p \in \cX(k')$ over an algebraically closed field is \emph{unstable} if there is a filtration $f : \Theta_{k'} \to \cX$ with $f(1) \cong p$ and $q^{-1} f^\ast(c_1(L))<0$, and $p$ is \emph{semistable} otherwise.\footnote{We are using sheaves, not bundles. If this leaves any ambiguity as to sign conventions, $f$ is destabilizing if the line bundle $f^\ast(L)$ has no non-trivial sections.}
\end{defn}

In order to define generalized Harder-Narasimhan filtrations, we will need additional information. Let $b \in H^4(\cX;\bQ)$ be a cohomology class. We assume $b$ is \emph{positive definite} in the sense that $q^{-2} f^\ast(b) > 0$ for any non-trivial filtration $f : \Theta_{k'} \to \cX$, and we define
\[
|\!|f|\!|_b := \sqrt{q^{-2} f^\ast(b)} \in \bR
\]
for any filtration. In the case of a global quotient $X/G$, one can construct a positive definite class $b$ from a Weyl-group-invariant rational positive definite inner product on the space of cocharacters of the maximal torus on $T$. 

We now define the numerical invariant
\[
\mu(f) = -\frac{f^\ast(c_1(L))}{|\!|f|\!|_b} \in \bR.
\]
Note that $\mu(f)$ is locally constant in algebraic families of filtrations. When $\cX$ is the quotient of a quasi-projective scheme by the linearized action of a reductive group, this formula for $\mu$ recovers the normalized Hilbert-Mumford numerical invariant in GIT \cite{dolgachev_hu}.

More generally, one can replace the positive definite class $b \in  H^4(\cX;\bQ)$ with a norm on graded points, as defined in \cite{halpern2014structure}. This consists of an assignment to any non-degenerate morphism $\gamma : B(\Gm^n)_{k'} \to \cX$, meaning the map on automorphism groups has finite kernel, a norm $|\!|-|\!|_\gamma$ on $\bR^{n}$ that is locally constant in algebraic families and compatible with extension of field and restriction to sub-tori. If every norm $|\!|-|\!|_\gamma$ is the square root of a positive definite rational quadratic form, we say that this is a \emph{rational quadratic norm on graded points of $\cX$}.

Given a positive definite $b \in H^4(\cX;\bQ)$, we define the norm associated to any non-degenerate $\gamma : B(\Gm^n)_{k'} \to \cX$ to be $|\!|-|\!|_{\gamma} = \sqrt{\gamma^\ast(b)}$, where we have canonically identified $H^4(B(\Gm^n)_{k'};\bQ)$ with the space of rational quadratic forms on $\bQ^n$, the space of rational cocharacters of $\Gm^n$. We can then write the general form of our numerical invariant as
\[
\mu(f) = -\frac{q^{-1} f^\ast(c_1(L))}{|\!|1|\!|_{\ev_0(f)}},
\]
where the denominator refers to the norm on $\bR$ assigned to the associated graded point $\ev_0(f) : B(\Gm)_{k'} \to \cX$.

\begin{defn}
We say that $\mu$ defines a $\Theta$-stratification on $\cX$ if there is a $\Theta$-stratification whose strata $\cS \subset \Filt(\cX)$ are ordered by the values of $\mu$, and such that each $f\in \cS$ with $f(1) \cong p$ is the unique maximizer of $\mu$ (up to composition with a ramified covering $(-)^n : \Theta \to \Theta$) subject to the constraint $f(1) \cong p$. We call $f \in \cS$ the \emph{Harder-Narasimhan}(HN) filtration of $f(1)$.
\end{defn}

\begin{thm} \label{T:intrinsic_GIT}
Let $\cX$ be a classical algebraic stack locally of finite type and with affine diagonal over $k$, and let $\cX \to Y$ be a good moduli space. Then any class $\ell \in \NS(\cX)_\bQ$ and rational quadratic norm on graded points of $\cX$ (e.g., the norm coming from a positive definite $b \in H^4(\cX;\bQ)$) induces a $\Theta$-stratification
\[
\cX = \cX^{\rm{ss}}(\ell) \cup \cS_0 \cup \cdots \cup \cS_N.
\]
$\cX^{\rm{ss}}(\ell)$ admits a good moduli space $\cX^{\rm ss}(\ell) \to Y'$, and $Y'$ is projective over $Y$. The formation of this stratification commutes with base change along a map of algebraic spaces $Z \to Y$.
\end{thm}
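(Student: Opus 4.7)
The plan is to deduce the result from the general machinery of numerical invariants on $\Theta$-stratifications combined with the recent existence results for good moduli spaces cited in the paper. First I would pin down the numerical invariant: with the given data, set
\[
\mu(f) = -\frac{q^{-1} f^\ast(c_1(\ell))}{\|1\|_{\ev_0(f)}}
\]
for any non-trivial filtration $f : \Theta_{k'} \to \cX$. The basic formal properties are routine: $\mu$ is locally constant in algebraic families of filtrations, invariant under the ramified covers $(-)^n : \Theta \to \Theta$, and extension of scalars. The one non-formal property, which is the engine of the whole argument, is \emph{strict convexity} of $\mu$ along concatenations of filtrations: given two filtrations of a common associated graded point, corresponding to two rays in the cocharacter lattice of some sub-torus of an automorphism group, the function $\mu$ extended to a concave homogeneous-degree-$0$ function on this lattice is uniquely maximized thanks to the Cauchy--Schwarz inequality for the positive-definite quadratic form $\|-\|_\gamma$.

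Second, I would use the good moduli space hypothesis to establish existence and uniqueness of HN filtrations. For a point $p \in \cX(k')$, filtrations $f : \Theta_{k'} \to \cX$ with $f(1) \cong p$ are controlled by the good moduli space: the associated graded $f(0)$ must lie in the unique closed orbit in the fiber of $\cX \to Y$ over the image of $p$, and (by the Hilbert--Mumford criterion for good moduli spaces, i.e.\ $\Theta$-reductivity) the space of such filtrations of bounded numerical complexity is representable by a quasi-compact algebraic stack. Existence of a maximizer of $\mu$ then follows from upper semicontinuity in algebraic families, and uniqueness (up to composition with $(-)^n$) follows from strict convexity. Organizing the points of $\cX$ by the numerical invariant of their HN filtration gives the ascending chain of open substacks $\cX_{\leq \alpha}$, and then by \Cref{L:classical_stratum} each $\cS_\alpha \subset \Filt(\cX_{\leq \alpha})$ arising as the closure of the HN locus of fixed numerical type is the required $\Theta$-stratum.

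Third, for the good moduli space of $\cX^{\rm ss}(\ell)$ and its projectivity over $Y$, I would apply the existence theorem of \cite{AHLH}: one checks that $\cX^{\rm ss}(\ell)$ inherits $\Theta$-reductivity and $S$-completeness from $\cX$, because every filtration or $S$-pair in $\cX^{\rm ss}(\ell)$ that would obstruct these properties would automatically induce, via flipping, a destabilizing filtration, contradicting semistability. The class $\ell$ restricted to $\cX^{\rm ss}(\ell)$ has non-negative pairing with every filtration, and in fact weight $0$ at every closed point (again by the flipping argument), hence descends to a $\bQ$-line bundle $L'$ on $Y'$. To see $L'$ is relatively ample over $Y$, one checks that the tautological map $\cX^{\rm ss}(\ell) \to Y$ is universally closed (all limits exist in $\cX^{\rm ss}(\ell)$ by $\Theta$-reductivity plus non-emptiness of the semistable locus over each point of $Y$), so $Y' \to Y$ is proper, and then $L'$ separates closed points in fibers over $Y$ because outside the semistable locus $\mu > 0$, which translates to positivity of $\ell$ on appropriate test filtrations and hence to ampleness on fibers. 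Base change compatibility along $Z \to Y$ is then formal, since the formation of filtrations, of $\mu$, and of good moduli spaces all commute with flat (and in fact arbitrary) base change along the good moduli space map.

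The main obstacle I expect is verifying that the construction of the HN filtration, and in particular the boundedness of the space of destabilizing filtrations, genuinely only depends on $\ell$ and the norm and not on auxiliary choices, and that it glues over the base $Y$. In the classical GIT context this is transparent from the Hilbert--Mumford criterion, but in the intrinsic setting one must exploit the local structure of stacks admitting good moduli spaces (the Luna slice theorem of \cite{alper2015luna}) to reduce the existence and uniqueness of the HN filtration, as well as the ampleness of $L'$ on $Y'$, to an explicit calculation on an \'etale neighborhood of the form $\Spec(A)/G \to \cX$ with $G$ reductive, where the standard GIT picture takes over.
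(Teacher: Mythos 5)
Your proposal lands on the same ultimate reduction as the paper, but gets there in a roundabout way. The paper's entire proof is: because $Y$ is an algebraic space, any map $\Theta_{k'} \to \cX \to Y$ factors uniquely through a single point $y \in Y(k')$, so the numerical invariant, semistability, and HN filtrations of a point $x$ lying over $y$ are completely determined by the fiber $\cX_y$; and by \Cref{T:GMS} this fiber is \'etale-locally (and in fact literally, after base change) of the form $\Spec(A)/G$ for $G$ linearly reductive, where the statement is classical GIT. You eventually identify this as the ``main obstacle'' in your final paragraph, but you frame it as a worry rather than as the central mechanism, and you first propose to establish the HN stratification and the good moduli space by general machinery --- strict convexity of $\mu$ from Cauchy--Schwarz, upper semicontinuity, boundedness of destabilizing filtrations, $\Theta$-reductivity and $S$-completeness via flipping, followed by the AHLH existence theorem. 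That machinery is real (and is developed at length in the cited reference \cite{halpern2014structure}*{Sect.~5.3.1}), but the paper's point is precisely that you do not need to re-derive it here: the localization observation reduces everything at once to classical GIT on $\Spec(A)/G$, including existence of the HN stratification, existence and projectivity of the good moduli space, and base-change compatibility. Two smaller issues: your citation of \Cref{L:classical_stratum} to produce the strata is misplaced --- that lemma merely identifies derived and classical $\Theta$-strata and has nothing to do with constructing the HN stratification; and a step like ``all limits exist in $\cX^{\rm ss}(\ell)$ by $\Theta$-reductivity plus non-emptiness of the semistable locus'' needs justification, since semistable loci are not generally $\Theta$-reductive (it is rather the $\Theta$-stratification together with $S$-completeness and properness over $Y$ of the stratification that does the work), which again is cleanest to check on the local model.
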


The key to the proof of \Cref{T:intrinsic_GIT} is the final claim, that the construction is local on $Y$. The proof is an elementary consequence of the \'etale slice theorem for stacks established in \cites{alper2015luna, ahr2}, one consequence of which is the following: 
\begin{thm} \label{T:GMS}
Let $\cX \to Y$ be a good moduli space, where $\cX$ is a finite type $k$ stack with affine diagonal. Then there is an \'{e}tale surjection $\Spec(R) \to Y$ such that the base change of $\cX$ to $\Spec(R)$ is a quotient of an affine scheme by a linearly reductive $k'$-group for some \'etale extension $k'/k$.
\end{thm}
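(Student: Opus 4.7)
The plan is to invoke the étale slice theorem for stacks admitting a good moduli space, due to Alper--Hall--Rydh \cite{ahr2} (generalizing \cite{alper2015luna}), which is the main input. First, I would recall two foundational facts: since $\cX \to Y$ is a good moduli space, the stabilizer $\Aut_\cX(x)$ of every closed point $x \in \cX$ is linearly reductive (a theorem of Alper), and the map induces a bijection between closed points of $\cX$ and points of $Y$. Let $y \in Y$ be a point and $x \in \cX$ the unique closed point lying over it, with residue field $k(x)$ some finite extension of the residue field at $y$.

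Next, for each such closed point $x$, the étale slice theorem provides an affine \emph{étale neighborhood} $[\Spec(A_x)/G_x] \to \cX$ of $x$, where $G_x$ is a linearly reductive group scheme over some étale extension $k'_x/k$, together with a cartesian diagram
\[
\xymatrix{
[\Spec(A_x)/G_x] \ar[r] \ar[d] & \cX \ar[d] \\
\Spec(A_x^{G_x}) \ar[r] & Y
}
\]
in which the bottom map is étale and its image contains $y$. The fact that this diagram is cartesian and the bottom map is étale is exactly the ``good moduli space preserving'' refinement of Luna's theorem established in \cite{ahr2}.

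Finally, since $\cX$ is of finite type over $k$, its good moduli space $Y$ is also of finite type and hence quasi-compact. Cover $Y$ by finitely many images of such étale neighborhoods $\{U_i = \Spec(R_i) \to Y\}_{i=1}^n$, with corresponding quotient presentations $[\Spec(A_i)/G_i] \to \cX \times_Y U_i$. Set $R := R_1 \times \cdots \times R_n$ and $k' := k'_1 \times \cdots \times k'_n$; then $\Spec(R) \to Y$ is an étale surjection, and its base change to $\cX$ is the disjoint union $\bigsqcup_i [\Spec(A_i)/G_i]$, which is the quotient of the affine scheme $\bigsqcup_i \Spec(A_i)$ by the linearly reductive $k'$-group scheme $\bigsqcup_i G_i$ (interpreting a $k'$-group with $k'$ a finite product of fields as a group scheme over $\Spec(k')$, whose components over different factors may be distinct).

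The hard part is entirely contained in the AHR étale slice theorem --- once that is invoked, the remaining argument is a straightforward compactness and packaging step, using that a disjoint union of linearly reductive group schemes over a product of fields is itself a linearly reductive group scheme over that product.
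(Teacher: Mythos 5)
Your proposal is correct and takes essentially the same approach as the paper, which gives no separate argument for this theorem beyond citing the \'etale slice theorem of \cite{alper2015luna} and \cite{ahr2} (see the sentence immediately preceding the theorem and \Cref{R:GMS_stronger}). Your packaging step --- covering the quasi-compact good moduli space $Y$ by finitely many strongly \'etale quotient charts and taking disjoint unions, with $k'$ interpreted as the finite \'etale $k$-algebra $\prod_i k'_i$ --- is the intended reading, and you correctly flag the only subtlety (that ``$k'$-group'' must be read over the product of fields, or alternatively resolved by passing to a common Galois closure of the $k'_i$).
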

\begin{rem} \label{R:GMS_stronger}
In fact, the theorem is more constructive. If $x \in \cX(k')$ represents a closed point and $G = \Aut_\cX(x)$ is its linearly reductive stabilizer, then there is an \'etale map $\Spec(A) / G \to \cX$ taking $w \in \Spec(A)^G$ to $x$ such that $\Spec(A^G) \to Y$ is also \'etale, and $\Spec(A)/G \cong \Spec(A^G) \times_Y \cX$. (The last two conditions taken together are sometimes called ``strongly \'etale.'')
\end{rem}

To see how \Cref{T:GMS} implies \Cref{T:intrinsic_GIT}, we observe that because $Y$ is an algebraic space, any map $\Theta_{k'} \to \cX \to Y$ factors uniquely through a point $y : \Spec(k') \to Y$. This implies the value of the numerical invariant, and hence the notion of semistability and HN filtration of a point $x \in \cX(k')$ lying over $y$, is completely determined by the fiber $\cX_y$, which is a quotient of an affine scheme by a linearly reductive $k$ group. This allows one to reduce \Cref{T:intrinsic_GIT} to the case where $\cX = \Spec(A)/G$ for a linearly reductive $k$-group $G$, which is well-known.



\subsection{Local structure of stacks with self-dual cotangent complex}

Recall from \Cref{T:GMS} that any stack that admits a good moduli space is \'etale locally a quotient of an affine scheme by a reductive group. We will develop a more refined local model for stacks of Bridgeland semistable complexes on a $K3$-surface.

We consider an action of an algebraic group $G$ on a smooth affine $k$-scheme $\Spec(R)$. Let $\fg$ be the Lie algebra of $G$, and let $a : \fg \to T_R$ be the infinitesimal derivative of the $G$-action on $R$.
\begin{defn} \label{D:comoment_map}
A \emph{weak co-moment map} is a $G$-equivariant $k$-linear map $\mu : \fg \to R$ along with $G$-equivariant isomorphisms $\phi_0 : \Omega_R^1 \simeq T_R$ and $\phi_1 : R \otimes \fg \simeq R\otimes \fg$ such that the diagram
\[ \xymatrix{R \otimes \fg \ar[r]^{d\mu} \ar[d]^{\phi_1} & \Omega_R^1 \ar[d]^{\phi_0} \\ R \otimes \fg \ar[r]^{a} & T_R } \]
commutes after restricting to $(R / R \cdot \mu(\fg))^{\rm red}$.
\end{defn}

\begin{rem}
When $\phi_0$ is induced by an algebraic symplectic form, and $\phi_1$ is the identity, the map $\mu$ is called the co-moment map and is uniquely defined on each connected component of $\Spec(R)$ up to an action of the additive group $(\fg^\ast)^G$. Thus our notion is a weaker version of this more common concept.
\end{rem}

Given a weak co-moment map, one can form a commutative DGA $R[\fg[1];d \xi = \mu(\xi)]$, by which we mean the free graded-commutative $R$-algebra generated by the vector space $\fg$, where the differential on $\fg$ is given by $\mu$. This defines an affine derived scheme which also has a $G$-action. Note that $H_0(R[\fg[1];d \xi = \mu(\xi)]) \cong R/ R \cdot \mu(\fg)$.

\begin{thm} \label{thm:local_model}
Let $\cX$ be a derived algebraic stack such that $\bL_\cX \simeq \bL_\cX^\dual$, and let $\cX^{\cl} \to Y$ be a good moduli space. Let $x \in \cX(k)$ be a closed point, and let $G = \Aut_\cX(x)$ be its stabilizer, which is automatically linearly reductive.

Then there is a smooth affine $G$-scheme $\Spec(R)$, a $G$-fixed $k$-point $\tilde{x} \in \Spec(R)$, a weak co-moment map $\mu : \fg \to R$, and a map taking $\tilde{x}$ to $x$,
\[
\cX' := \Spec(R[\fg[1]; d \xi = \mu(\xi)]) / G \to \cX,
\]
that is \emph{strongly \'etale} in the sense that is \'etale and if $U = \Spec((R / R \cdot \mu(\fg))^G)$, then $(\cX')^{cl} \simeq \cX^{cl} \times_Y U$.\footnote{This is a derived interpretation of the notion of a strongly \'etale morphism of classical stacks.}
\end{thm}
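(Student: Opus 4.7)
The plan is to apply the classical Luna slice theorem to $\cX^{\rm cl}$ and then promote the resulting presentation to a derived one by using the self-duality of $\bL_\cX$ to construct the required weak co-moment map.

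First I would apply \Cref{R:GMS_stronger} to $\cX^{\rm cl} \to Y$ at $x$ to obtain a strongly \'etale morphism $f_0 \colon \Spec(A_0)/G \to \cX^{\rm cl}$ with a point $\tilde x \in \Spec(A_0)^G$ mapping to $x$. Let $V := T_x \cX^{\rm cl}$, which is a $G$-representation, and use the linear reductivity of $G$ to lift a $G$-equivariant surjection $k[V] \twoheadrightarrow A_0$ to a $G$-equivariant closed immersion $\Spec(A_0) \hookrightarrow \Spec(R_0)$, where $R_0$ is a $G$-invariant affine open neighborhood of the origin in $V$ corresponding to $\tilde x$. The ideal $I \subset R_0$ of this immersion satisfies $I \subseteq \mathfrak{m}_{\tilde x}^2$ since $V$ is the Zariski tangent. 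Because $I \subseteq \mathfrak{m}_{\tilde x}^2$, the conormal sequence gives $H_1(\bL_{\Spec(A_0)/G, \tilde x}) \cong I/\mathfrak{m}_{\tilde x} I$. On the other hand, the self-duality isomorphism $\bL_{\cX, x} \simeq \bL_{\cX, x}^\dual$ combined with $H_{-1}(\bL_{\cX, x}) = \fg^\dual$ provides an isomorphism $H_1(\bL_{\cX, x}) \cong \fg$, and transport via $f_0^\ast$ yields $\fg \cong I/\mathfrak{m}_{\tilde x} I$ as $G$-representations. By linear reductivity this lifts to a $G$-equivariant map $\mu \colon \fg \to I \subseteq R_0$. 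Set $\cX' := \Spec(R_0[\fg[1]; d\xi = \mu(\xi)])/G$; this is quasi-smooth, and after possibly shrinking $R_0$ one has $(\cX')^{\rm cl} = \Spec(A_0)/G$.

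Next I would construct the strongly \'etale morphism $\cX' \to \cX$ lifting $f_0$ by climbing the Postnikov tower $\tau_{\leq n} \cX$ inductively, as in the proof of \Cref{T:local_structure_stratum}. At each stage the obstruction to extending the morphism from $\tau_{\leq n}\cX$ to $\tau_{\leq n+1}\cX$ vanishes because $\Spec(R_0)/G$ is cohomologically affine. The construction of $\mu$ ensures that the induced map on cotangent fibers at $\tilde x$ realizes the abstract isomorphism $\bL_{\cX, x} \simeq \bL_{\cX', \tilde x}$: on $H_0$ it is the \'etale isomorphism $df_0|_{\tilde x}$, on $H_{-1}$ it is the identity on $\fg^\dual$, and on $H_1$ it is the identity by our choice of $\mu$. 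Hence the map is \'etale at $\tilde x$ and, by semi-continuity of the cotangent complex, in an open neighborhood. The isomorphisms $\phi_0$ and $\phi_1$ of \Cref{D:comoment_map} are obtained by pulling back the self-duality isomorphism $\bL_\cX \cong \bL_\cX^\dual$ along $\cX' \to \cX$ and projecting to the appropriate summands of $\bL_{\cX'}|_{\Spec(R_0)}$; the commutativity of the square on $(R_0/R_0 \cdot \mu(\fg))^{\rm red}$ then reduces to the statement that this pulled-back self-duality exchanges the Koszul differential $d\mu$ with the infinitesimal action $a$, which holds on the classical zero locus by construction.

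The main obstacle is the coordinated construction of $\mu$ and the \'etale morphism $\cX' \to \cX$. Unlike in \Cref{T:local_structure_stratum}, where the derived structure of the cover is free as long as its classical part is correct, here we insist that the cover take the very specific Koszul form $R_0[\fg[1]; d\xi = \mu(\xi)]$, and that $\mu$ satisfies the additional commutativity of \Cref{D:comoment_map}. The inductive lifts through the Postnikov tower must therefore be chosen to be compatible with the pulled-back self-duality on $\bL_\cX$, and the choice of $\mu$ in the first step constrains all subsequent lifts. In essence the content of the theorem is that any derived stack with self-dual cotangent complex which admits a good moduli space is \'etale-locally a derived Hamiltonian reduction, and verifying this requires using the self-duality to match not only the individual homology groups of $\bL_\cX$ at $x$ but the entire Koszul structure on the formal neighborhood.
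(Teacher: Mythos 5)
Your broad strategy---classical Luna slice plus a derived refinement using the self-duality---is the right one, and your last paragraph correctly diagnoses the essential difficulty. But the proposal does not resolve it, and the order of operations you choose makes it unresolvable without substantial additional work that is missing. Here are the two concrete gaps.

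First, the paper proceeds in the opposite order and this is not a cosmetic choice. It first promotes $\cX$ itself to a quotient presentation $\Spec(A)/G$ with $\pi_0(A)$ the Luna slice (\Cref{lem:local_model}), then puts $A$ into complete-intersection form $k[U_0,U_1;d]$ (\Cref{lem:quasi_smooth}), and only \emph{then} splits off $\fg \subset U_1$ using the self-duality and reads off $R$ and $\mu$ from the given differential $d$. This guarantees by construction that $\Spec(R[\fg[1];d\xi=\mu(\xi)])/G \simeq \cX$ locally. You instead pick $\mu$ first, as an arbitrary $G$-equivariant lift of the $H_1$-isomorphism $\fg \cong I/\mathfrak{m}_{\tilde x}I$, build $\cX'$, and then try to construct an \'etale map $\cX' \to \cX$ by Postnikov lifting. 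But a choice of $\mu$ lifting the $H_1$-isomorphism at the single point $\tilde x$ pins down $\pi_0$ and the fiber of $H_1(\bL)$ at $\tilde x$, and nothing else; the actual defining relations $d: U_1 \to k[U_0]$ that encode the derived structure of $\cX$ are genuinely more data. Two non-quasi-isomorphic quasi-smooth CDGAs over the same smooth base can have the same classical part and the same $H_1(\bL)$ at a point. Your cohomological-affineness argument lets you extend a map through the Postnikov tower, but it does not show the extension lands in $\cX$ \'etale-locally, because you have pre-committed the source to a specific derived structure that $\cX$ need not have.

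Second, the weak co-moment verification is asserted rather than proved. You write that the commutativity of the square in \Cref{D:comoment_map} on the reduced zero locus ``holds on the classical zero locus by construction.'' It does not: your construction of $\mu$ says nothing about $d\mu$ away from $\tilde x$, and the weak co-moment condition is a statement over the whole reduced zero locus, not at a single point (where it degenerates since $\mu(\fg) \subseteq \mathfrak{m}_{\tilde x}^2$). The paper's \Cref{lem:comoment} closes this gap by a genuine argument: it represents the self-duality $\psi$ as a homomorphism of $dg$-$A$-modules between explicit semi-free models of $\bL_{\Spec(A)/G}$ and its dual, and extracts $\phi_0, \phi_1$ from the components $\psi_0, \psi_{-1}$; the desired commutativity is then the statement that $\psi$ commutes with the differentials, which is automatic for a map of $dg$-modules. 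This argument depends on having the honest complete-intersection presentation $A = k[U_0,U_1;d]$ of $\cX$ in hand, which again is why the paper extracts $\mu$ from $\cX$ rather than constructing $\cX'$ first.
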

\begin{rem}
Every closed point in $\cX$ is representable by some $k'$-point for a finite \'etale extension $k'/k$. Applying the theorem to $\cX_{k'}$ gives a local \'etale chart $\Spec(R[\fg[1];d\xi=\mu(\xi)])/G \to \cX$ for a linearly reductive $k'$-group $G$ and a smooth $G$-equivariant $k'$-algebra $R$, and one can use this to constuct an \'etale cover of $\cX$ by stacks of this form.
\end{rem}

As a basic input to the proof of this theorem, we observe:

\begin{lem} \label{lem:local_model}
Let $\cX$ be a derived stack such that $\cX^{\cl} \simeq \Spec(R) / G$ for some ring $R$ and reductive group $G$. Then $\cX \simeq \Spec(A) / G$ for some connective \CDGA $A$ with a $G$-equivariant isomorphism $\pi_0(A) \simeq R$.
\end{lem}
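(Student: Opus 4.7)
The plan is to construct the desired CDGA $A$ by realizing $\cX$ as a quotient stack $U/G$ for an affine derived scheme $U$, the universal $G$-torsor over $\cX$.

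First, I would lift the canonical map $\cX^{\cl} \simeq \Spec(R)/G \to BG$, which classifies the $G$-torsor $\Spec(R) \to \Spec(R)/G$, to a morphism $f : \cX \to BG$. Since $G$ is reductive and $R$ carries a $G$-action, $\cX^{\cl}$ is cohomologically affine, and this cohomological affineness is inherited by $\cX$ (which is obtained from $\cX^{\cl}$ by Postnikov extensions with quasi-coherent kernels). The existence of the lift can be deduced by a Postnikov induction analogous to the one in the proof of \Cref{T:local_structure_stratum}: given a lift $f_n : \tau_{\leq n}\cX \to BG$, extensions to $\tau_{\leq n+1}\cX$ form a torsor over a mapping space whose homotopy groups are of the form $H^\ast(\tau_{\leq n}\cX, \fg \otimes \pi_{n+1}\cO_\cX)$ (using $\bL_{BG} \simeq \fg^{\dual}[-1]$), and the relevant higher cohomology vanishes by cohomological affineness.

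Having produced the map $f$, I would form the pullback $U := \cX \times_{BG} \Spec(k)$. By construction $U \to \cX$ is a $G$-torsor, so $\cX \simeq U/G$, and at the classical level $U^{\cl} \simeq \Spec(R)$. The next task is to show $U$ is an affine derived scheme. Since $U^{\cl}$ is affine, each Postnikov truncation $\tau_{\leq n}U$ is a square-zero extension of $\tau_{\leq n-1}U$ by a shifted quasi-coherent sheaf on $U^{\cl}$; such extensions preserve affineness. Setting $A := \Gamma(U, \cO_U)$, the higher cohomology of each $\pi_m\cO_U$ on $U^{\cl}$ vanishes, so $\pi_m A \simeq \Gamma(U^{\cl}, \pi_m\cO_U)$ as $R$-modules, and the canonical map $\Spec(A) \to U$ is an isomorphism on every Postnikov stage, hence an equivalence. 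The $G$-action on $U$ then endows $A$ with a $G$-equivariant connective CDGA structure, the isomorphism $\pi_0(A) \cong R$ is $G$-equivariant by construction, and we obtain $\cX \simeq \Spec(A)/G$.

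The hard part of this argument is the lifting step: producing the map $\cX \to BG$ extending $\cX^{\cl} \to BG$. In principle this is the standard fact that for $G$ smooth affine, $\iMap(\cY, BG) \simeq \iMap(\cY^{\cl}, BG)$ on cohomologically affine derived stacks $\cY$, but verifying the vanishing of all the obstructions along the Postnikov tower requires care with the relevant deformation theory. Once the lift is in hand, the remaining steps reduce to standard manipulations: pullback of stacks, preservation of affineness under square-zero extensions of affine schemes, and the observation that $G$-equivariant structures transfer naturally across these constructions.
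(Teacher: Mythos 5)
Your proof is correct, and the underlying obstruction theory is the same as the paper's, but the two arguments are organized around different pictures. The paper builds the $G$-equivariant CDGA $A$ directly by Postnikov induction: at each stage it compares the deformation theory of the stack with that of the affine chart, via the map $\bL_\cX \to \bL_{\cX/BG}$, whose fiber is $\cO_\cX \otimes \fg[-1]$, and uses cohomological affineness to show this map induces an isomorphism on $\Hom(-,M[1])$ — so that the square-zero extension of $\cX$ is \emph{uniquely} induced by a $G$-equivariant square-zero extension of $A$. Your proof instead lifts the classifying map $\cX^{\rm cl} \to BG$ along the Postnikov tower of $\cX$, then pulls back the universal torsor and checks affineness stage by stage. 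Since a presentation $\cX \simeq \Spec(A)/G$ is precisely a map $\cX \to BG$ whose fiber is affine, the two constructions are two sides of the same coin, and the cohomology group that must vanish for the lift to extend ($H^{\geq 1}(\cX^{\rm cl}, \fg_{\rm ad}\otimes\pi_{n+1}\cO_\cX)$) is the same one the paper kills in its long exact sequence. Two small points of precision worth noting in your write-up: first, for the lifting step what you need to vanish is the \emph{obstruction class}, which lives in $\Ext^1(f^\ast\bL_{BG}, M) \cong \Ext^2_{\cX^{\rm cl}}(\cO\otimes\fg_{\rm ad}^\dual, M)$ for the relevant (shifted) coherent $M$; the torsor structure on the space of extensions is a separate statement and is not what controls existence. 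Second, you do not actually need cohomological affineness of the full derived stack $\cX$, only of $\cX^{\rm cl}$, because all the sheaves appearing in the obstruction groups are pushforwards from the underlying classical stack. One small advantage of the paper's formulation is that it gives, in addition to existence, uniqueness of the equivariant presentation at each stage essentially for free (the relevant map being an \emph{isomorphism}, not just surjective on $\pi_0$); for the statement of the lemma only existence is needed, so your argument suffices.
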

\begin{proof}
By realizing $\cX$ as the colimit of its truncations, it suffices to show that if $\cX \to \cX'$ is a square-zero extension and $\cX \simeq \Spec(A) / G$ for some $G$-equivariant \CDGA $A$, then $\cX' \simeq \Spec(A')/G$ for some square-zero extension of $G$-equivariant \CDGA's $A' \to A$.

This amounts to a deformation theory problem: showing that square-zero extensions of the stack $\cX = \Spec(A)/G$ by a coherent sheaf $M$ over $\cX^{\rm cl} = \Spec(\pi_0(A))/G$ correspond bijectively to $G$-equivariant square zero extensions of $\Spec(A)$ by this same coherent sheaf. The first are classified by the group
\[
\Hom_{\cX^{\rm cl}}(\bL_{\cX}|_{\cX^{\rm cl}},M[1]),
\]
while the second are classified by the group
\[
\Hom_{\cX^{\rm cl}}(\bL_{\cX/BG}|_{\cX^{\rm cl}},M[1]) \cong \Hom_{\pi_0(A)}(\bL_{A},M[1])^{G}.
\]
They are related by taking a square-zero extension over $BG$ and forgetting the map to $BG$, which corresponds to the homomorphism on Ext groups induced by the canonical map $\bL_{\cX} \to \bL_{\cX/BG}$, whose fiber is $\cO_{\cX} \otimes \fg[-1]$. The fact that this homomorphism is an isomorphism follows from the long exact cohomology sequence and the fact that coherent sheaves on $\Spec(\pi_0(A))/G$ have vanishing higher cohomology.
\end{proof}

The next useful observation is that Zariski locally over $\Spec(\pi_0(A)^G)$, one can describe certain derived stacks as a \emph{derived complete intersection} \cite{arinkin2015singular}. By this we mean Spec of a $G$-equivariant CDGA of the form $k[U_0,U_1;d]$, which denotes the free CDGA generated by a finite dimensional representation $U_0$ of $G$ in homological degree $0$ and a finite dimensional representation $U_1$ in homological degree $1$ with a $G$-equivariant differential defined by the $G$-equivariant linear map $d : U_1 \to k[U_0]$.

\begin{lem} \label{lem:quasi_smooth}
Let $G$ be a linearly reductive group, and let $A$ be a connective $G$-equivariant CDGA over $k$ that is quasi-smooth. Let $x \in \Spec(\pi_0(A))$ be a closed $k$-point with $\Aut(x) = G$. Then there is an $f \in \pi_0(A)^G$ which does not vanish at $x$ and a $G$-equivariant quasi-isomorphism with a derived complete intersection
\[
A_f \simeq k[U_0,U_1; d],
\]
where $A_f$ is the localization of $A$, and the maximal ideal $\mathfrak{m}_x \subset \pi_0(A)$ is generated by $U_0$.
\end{lem}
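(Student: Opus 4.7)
The strategy is to build a minimal $G$-equivariant semi-free CDGA model of $A$ near $x$ with generators only in degrees $0$ (the $G$-representation $U_0$) and $1$ (the $G$-representation $U_1$); quasi-smoothness of $A$ precludes the need for higher-degree generators. For the degree-$0$ generators, I would observe that $\mathfrak{m}_x/\mathfrak{m}_x^2$ is a finite-dimensional $G$-representation which, by linear reductivity of $G$, admits a $G$-equivariant splitting $U_0 \hookrightarrow \mathfrak{m}_x$. This gives a $G$-equivariant CDGA map $k[U_0] \to A$. After inverting a $G$-invariant $f_1 \in \pi_0(A)^G$ with $f_1(x) \neq 0$ --- which exists because the coherent $G$-equivariant cokernel of $k[U_0] \to \pi_0(A)$ vanishes at $x$ and linear reductivity produces a $G$-invariant element of its annihilator --- I may assume $k[U_0] \twoheadrightarrow \pi_0(A_{f_1})$ with $G$-equivariant kernel $I$.

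Next I would identify $U_1$ and construct the comparison map. The cofiber triangle $A \otimes_{k[U_0]} \bL_{k[U_0]/k} \to \bL_{A/k} \to \bL_{A/k[U_0]}$, the isomorphism on $H_0$ at $x$, and the quasi-smoothness bound on $\bL_{A/k}$ (perfect, Tor-amplitude in $[0,1]$) force $\bL_{A/k[U_0]}|_x$ to be concentrated in homological degree $1$ with value $U_1 := H_1(\bL_{A/k}|_x)$, a finite-dimensional $G$-representation. Nakayama applied to the almost perfect complex $\bL_{A/k[U_0]}$, plus a further $G$-invariant localization, then lets me assume $\bL_{A/k[U_0]} \simeq A \otimes U_1[1]$ $G$-equivariantly. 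Using linear reductivity, I would lift $U_1$ to $G$-equivariant cycles $\tilde U_1 \subset A_1$ whose classes recover the identity on $U_1 = H_1(\bL_{A/k}|_x)$, and simultaneously choose a $G$-equivariant $d: U_1 \to k[U_0]$ lifting the boundary $\partial \tilde U_1 \subset I$; this defines a $G$-equivariant CDGA map $B := k[U_0, U_1; d] \to A$.

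Finally, I would verify that $B \to A$ is a quasi-isomorphism after one last $G$-invariant localization. By minimality of the choice of $U_1$ together with Nakayama, after further localization $d(U_1)$ generates $I$, so $\pi_0(B) = k[U_0]/(d(U_1)) \cong \pi_0(A_f)$; and by construction the induced map $\bL_B|_x \to \bL_A|_x$ is an isomorphism in both degrees $0$ and $1$, so $\bL_{A/B}|_x = 0$. Since $\bL_{A/B}$ is almost perfect, Nakayama plus one final $G$-invariant localization yields $\bL_{A/B} = 0$, making $B \to A$ derived étale; combined with the $\pi_0$-isomorphism this gives the desired quasi-isomorphism $k[U_0, U_1; d] \simeq A_f$. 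The main obstacle lies in the middle paragraph: the lifts of $U_1$ and $d$ must be carried out entirely equivariantly while keeping track of all the $G$-invariant functions being inverted, so that the resulting semi-free presentation genuinely models $A$ in a $G$-stable Zariski neighborhood of $x$. This relies throughout on linear reductivity of $G$ (to split $G$-module surjections and to produce $G$-invariant functions from the annihilators of coherent sheaves vanishing at $x$) and on quasi-smoothness (to ensure that the minimal resolution truncates cleanly after adjoining $U_1$).
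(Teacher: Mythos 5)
Your overall strategy matches the paper's (build a two-term $G$-equivariant semi-free model by reading off $U_1$ from the quasi-smooth cotangent complex, then use Nakayama and the orbit structure for a final $G$-invariant localization), but the justification of the very first step does not hold as stated. You fix $U_0 = \mathfrak{m}_x/\mathfrak{m}_x^2$, choose a $G$-equivariant splitting $U_0 \hookrightarrow \mathfrak{m}_x$, and assert that the cokernel of $k[U_0] \to \pi_0(A)$ is a coherent module vanishing at $x$, so that a $G$-invariant localization makes the map surjective. That claim is false for a general splitting. Take $G$ trivial, $\pi_0(A)=k[t]$, $x=0$, and the splitting $\bar t \mapsto t+t^2$. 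Then $k[s]\to k[t]$, $s\mapsto t+t^2$, has cokernel isomorphic to $k[s]\cdot t$ as a $k[s]$-module (using $t^2=s-t$ one sees $k[t]=k[s]\oplus k[s]\cdot t$), which is free of rank one and in particular does \emph{not} vanish at $s=0$. Moreover $t$ satisfies $t^2+t-s=0$ and so generates a quadratic extension of $k(s)$; hence no localization of $k[s]$ surjects onto any localization of $k[t]$ containing $x$. So the element $f_1$ you want does not exist for this splitting, and your argument gives no reason why a good splitting is available. The paper avoids the problem entirely by \emph{not} insisting $U_0$ be minimal: it starts from any $G$-equivariant surjection $k[U_0]\twoheadrightarrow\pi_0(A)$ with $U_0$ mapping into $\mathfrak{m}_x$. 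Since the lemma only asks that $U_0$ generate $\mathfrak{m}_x$, a redundant $U_0$ is perfectly acceptable, and with this adjustment the rest of your proof carries through (with $U_1 := H_1(\bL_{A/k[U_0]}|_x)$, which may then be larger than $H_1(\bL_{A,x})$).

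Two smaller imprecisions are worth noting but are easily repaired. What you call ``cycles'' $\tilde U_1\subset A_1$ are not cycles — they are chains whose boundaries $\partial\tilde U_1$ must be nonzero and lie in $I$ — and for the sentence ``lift $U_1$ to $\tilde U_1\subset A_1$ whose classes recover the identity on $H_1(\bL_{A,x})$'' to be meaningful you should first pass to a $G$-equivariant semi-free model of $A$ over $k[U_0]$, so that the cotangent complex is computed by $\Omega^1$ and the generators in degree one literally exist as elements with boundary in $k[U_0]$. The paper makes this explicit: it constructs a full semi-free presentation $k[U_\bullet;d_A]\xrightarrow{\sim}A$ and then takes the sub-CDGA $k[U_0,W[1];d_A]$ for a well-chosen $W\subset U_1$, which is exactly the cofibrant model your argument implicitly relies on.
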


\begin{proof}
First choose some $G$-equivariant semi-free presentation $A \sim k[U_\bullet;d_A]$, where $U_\bullet = \bigoplus_{i\geq 0} U_i[i]$ -- in other words $U_i$ is placed in homological degree $i$. The construction of such a resolution begins by choosing a $G$-equivariant surjection $k[U_0] \to \pi_0(A)$, and one can arrange that $U_0$ maps to $\mathfrak{m}_x$. The fiber of the cotangent complex at $x$ has the form
\[
\bL_A|_{x} \simeq (\cdots \to U_2 \xrightarrow{d_2} U_1 \xrightarrow{d_1} U_0)
\]
Choosing a subspace $W \subset U_1$ that maps isomorphically onto the quotient $U_1 / \image(d_2)$, the inclusion of the subcomplex $(W \to U_0) \subset (\cdots \to U_2 \to U_1 \to U_0)$ is a quasi-isomorphism. It follows from the fact that $\Spec(A)$ is quasi-smooth that the inclusion of the semi-free subalgebra $A' := k[U_0,W[1];d_A] \subset k[U_\bullet;d_A]$ induces an isomorphism of cotangent complexes at $x$. Thus the closed immersion
\[
\Spec(A) / G \hookrightarrow \Spec(A') / G
\]
is \'etale at $x \in \Spec(A)$ and hence a Zariski open immersion in a neighborhood of $x$. It follows from the orbit structure of reductive group actions on affine schemes that any $G$-equivariant open neighborhood of a point with a closed orbit is contained in an open affine of the form $A_f$ for some $f \in \pi_0(A)^G$.

\end{proof}

\begin{proof}[Proof of \Cref{thm:local_model}]

Using \Cref{T:GMS}, \Cref{R:GMS_stronger}, and \Cref{lem:local_model}, we may reduce to the case where $\cX = \Spec(A) / G$ for a linearly reductive group $G$ and $G$-equivariant connective CDGA $A$. The quasi-isomorphism $\bL_\cX \simeq (\bL_\cX)^\vee$ implies that $\cX$ is quasi-smooth, so \Cref{lem:quasi_smooth} implies that we can further reduce to the case where $A = k[U_0,U_1; d]$ is a $G$-equivariant complete intersection, and $x \in \Spec(A)$ is defined by the ideal generated by $U_0$.

The cotangent complex admits an explicit presentation of the form
\[
\bL_{\Spec(A)/G} \simeq A \otimes \left(\delta U_1[1] \oplus \delta U_0 \oplus  \fg^\ast[-1] \right), 
\]
where $\delta U_i$ is the vector space of formal differentials of elements of $U_i$, and is isomorphic to $U_i$ as a $G$-representation. The differential on $\bL_{\Spec(A)/G}$ is a deformation of the differential on the free dg-$A$-module generated by $\delta U_1,\delta U_0,$ and $\fg^\ast$ in homological degree $1,0,$ and $-1$ respectively. Because $\Aut(x) = G$, the quasi-isomorphism $\psi : \bL_\cX \simeq (\bL_{\cX})^\dual$ provides an isomorphism
\[
\fg \simeq H_1(\bL_{\cX}|_{x}) \subset \delta U_1.
\]
We shall fix a $G$-equivariant splitting of this inclusion, resulting in a decomposition $\delta U_1 \simeq U_1 \simeq \fg \oplus W$ as a $G$-representation.

The $G$-equivariant complete intersection CDGA $A' := k[U_0,W; d_A] \subset A$ is smooth at $x \in \Spec(\pi_0(A))$, and hence $A'$ will be smooth and classical after inverting an element $f \in \pi_0(A)^G$. We regard $A$ as obtained from $A'$ by adjoining relations corresponding to a $k$-linear map $\mu : \fg \to k[U_0]$, and hence after inverting $f$ we will have a quasi-isomorphism
\[
A_f \simeq R[\fg[1];d\xi = \mu(\xi)],
\]
where $R := \pi_0(A'_f)$ is a smooth ring and $\mu : \fg \to k[U_0] \to R$ the induced $G$-equivariant map. This map must be a weak co-moment map by \Cref{lem:comoment} below.

\end{proof}

\begin{lem}\label{lem:comoment}
Let $G$ be a reductive group, let $R$ be a smooth $k$-algebra with a $G$-equivariant map $\mu : \fg \to R$, and let $\cX := \Spec(R[\fg[1];d\xi = \mu(\xi)]) / G$. Fix a closed $k$-point $x \in \Spec(R/R \cdot \mu(\fg))^G$ at which $\Aut(x) = G$. Then any quasi-isomorphism $\psi : \bL_\cX \simeq (\bL_\cX)^\dual$ induces, after inverting an element $f \in R^G$ with $f(x) \neq 0$, isomorphisms $\phi_1 : R\otimes \fg \to R\otimes \fg$ and $\phi_0 : \Omega_R^1 \to T_R$ giving $\mu$ the structure of a weak co-moment map.
\end{lem}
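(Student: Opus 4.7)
The strategy is to realize $\phi_0$ and $\phi_1$ as components of a chain-level representative of $\psi$, verify that the chain-map condition gives the co-moment diagram, and then show the maps are isomorphisms at $x$ by a self-duality computation on fibers and extend by Nakayama. Pulling back $\bL_\cX$ along the classical atlas $\Spec(R_1)\to\cX^{\mathrm{cl}}$ with $R_1 := R/R\cdot \mu(\fg)$, the cotangent fiber sequence $\bL_{BG}|_\cX\to \bL_\cX\to\bL_{\cX/BG}$ realizes $\bL_\cX|_{\Spec R_1}$ as the three-term chain complex of free $R_1$-modules
\[
E \;=\; \bigl[\,R_1\otimes \fg \xrightarrow{1\otimes d\mu} R_1\otimes_R \Omega^1_R \xrightarrow{1\otimes a^\vee} R_1\otimes \fg^\vee\,\bigr]
\]
in homological degrees $1,0,-1$; the composition $a^\vee\circ d\mu$ sends $\xi\mapsto \mu([-,\xi])\in R\cdot\mu(\fg)$ by $G$-equivariance of $\mu$, so vanishes on $R_1$ and $E$ really is a chain complex. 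Its $R_1$-linear dual $E^\vee$ has the same terms with differentials $1\otimes a$ and $1\otimes (d\mu)^\vee$.

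Because $E$ is a bounded complex of projective $R_1$-modules and $G$ is reductive, the restriction $\psi|_{\Spec R_1}$ lifts to a $G$-equivariant chain map $E\to E^\vee$, with $R_1$-linear degree-$0$ components $\bar\phi_1\colon R_1\otimes\fg\to R_1\otimes \fg$, $\bar\phi_0\colon R_1\otimes_R\Omega^1_R\to R_1\otimes_R T_R$, and $\bar\phi_{-1}\colon R_1\otimes\fg^\vee\to R_1\otimes \fg^\vee$. The chain-map relation between degrees $1$ and $0$ reads $\bar\phi_0\circ d\mu = a\circ\bar\phi_1$ as $R_1$-linear maps $\fg\to R_1\otimes_R T_R$, which is exactly the compatibility of \Cref{D:comoment_map}. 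Since $\Omega^1_R$ is projective (as $R$ is smooth) and the reduction $T_R\twoheadrightarrow R_1\otimes_R T_R$ is surjective, $\bar\phi_0$ lifts to a $G$-equivariant $R$-linear map $\phi_0\colon \Omega^1_R\to T_R$, reductivity of $G$ allowing the lift to be $G$-equivariant; similarly $\bar\phi_1$ lifts to $\phi_1\in \End_R(R\otimes\fg)^G$.

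It remains to show that $\phi_0$ and $\phi_1$ become isomorphisms on a $G$-invariant open neighborhood of $x$. Because $\Aut_\cX(x)=G$, the point $x$ is $G$-fixed, so $a|_x = 0$; restricting $\psi$ to the fiber at $x$ produces a quasi-isomorphism between the complexes $[\fg\xrightarrow{d\mu|_x}\Omega^1_{R,x}\xrightarrow{0}\fg^\vee]$ and $[\fg\xrightarrow{0}T_{R,x}\xrightarrow{(d\mu|_x)^\vee}\fg^\vee]$, and comparing $H_1$ forces $\ker(d\mu|_x)\simeq \fg$, hence $d\mu|_x=0$. Both fiber complexes therefore have vanishing differentials and $\psi|_x$ reduces to a degreewise $k$-linear isomorphism; in particular $\phi_0|_x = \bar\phi_0|_x\colon \Omega^1_{R,x}\to T_{R,x}$ and $\phi_1|_x = \bar\phi_1|_x\colon \fg\to\fg$ are isomorphisms. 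By Nakayama's lemma, the loci where $\phi_0$ or $\phi_1$ fails to be an isomorphism form a closed $G$-stable subset $Z\subset\Spec(R)$ not containing $x$, and linear reductivity of $G$ (equivalently, the good-quotient property of $\Spec(R)\to\Spec(R^G)$) provides $f\in R^G$ with $f(x)\neq 0$ and $f|_Z=0$. On $\Spec(R_f)$, both $\phi_0$ and $\phi_1$ are isomorphisms and exhibit $\mu$ as a weak co-moment map. The main subtlety is bookkeeping of $G$-equivariance when lifting from $R_1$-linear to $R$-linear maps, handled uniformly by reductivity of $G$.
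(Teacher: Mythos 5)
Your proof is correct and follows the same essential strategy as the paper's: realize the derived-category self-duality as an honest $G$-equivariant chain map, read the weak-co-moment square off the chain-map identity, deduce isomorphism at the $G$-fixed point $x$ from the self-duality of fibers, and spread out via Nakayama and a $G$-invariant function. The one real organizational difference is how you produce the $R$-linear maps $\phi_0,\phi_1$: you first pull the cotangent complex back to $\Spec(R_1)$ with $R_1 = R/R\cdot\mu(\fg)$, extract $R_1$-linear maps from a chain representative there (using the degree $1\to 0$ chain-map identity), and then lift to $G$-equivariant $R$-linear maps using projectivity of $\Omega^1_R$ and reductivity. The paper instead keeps the full semi-free dg-$A$-module structure (with $A = R[\fg[1];d\xi=\mu(\xi)]$) and observes that the $A$-linearity of $\psi$ forces the relevant components $\psi_{-1}$ and $\psi_0$ to already be (restrictions of) $R$-linear maps $R\otimes\fg^\ast\to R\otimes\fg^\ast$ and $\Omega^1_R\to T_R$, so no lifting is needed; it then uses the degree $0\to -1$ chain-map identity and dualizes. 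Your route costs you a lifting step but buys a more self-contained three-term complex to work with; the paper's route avoids the lift at the price of tracking the $A$-module structure. Both are sound, and your extra explicit step showing $d\mu|_x = 0$ (by comparing $H_1$) is a worthwhile clarification of a point the paper leaves somewhat implicit.
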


\begin{proof}
The cotangent complex, which is a dg-module over $A:= R[\fg[1];d]$, has the form
\[
\bL_{\Spec(A)/G} = A \otimes \fg[1] \oplus A \otimes_R \Omega_R^1 \oplus A \otimes \fg^\ast[-1]
\]
where the differential is determined uniquely by the Leibniz rule, i.e., the fact that it is a dg-$A$-module, and the fact that: 1) it acts on the subspace $1 \otimes \fg$ by the $k$-linear map $d\mu : \fg \to \Omega_R^1$, and 2) it acts on $1 \otimes_R \Omega_R^1$ by the base change to $A$ of the infinitesimal coaction map $a : \Omega_R^1 \to R \otimes \fg^\ast$. Concretely, if $A = \bigoplus_{n \geq 0} A_n$ is the homological grading, then because we are using a semi-free presentation of $\bL_{\Spec(A)/G}$, the quasi-isomorphism $\psi : \bL_\cX \simeq (\bL_\cX)^\dual$ corresponds to an actual homomorphism dg-$A$-modules

\[
\xymatrix{\bL_{\Spec(A)/G} \simeq \ar[d]^\psi & \cdots \ar[r] & \Omega_R^1 \oplus A_1 \otimes \fg^\ast \ar[rr]^-{a \oplus (d_A \otimes \fg^\ast)} \ar[d]^{\psi_0} & & A_0 \otimes \fg^\ast \ar[d]^{\psi_{-1}} \\
(\bL_{\Spec(A)/G})^\dual\simeq  & \cdots \ar[r] & T_R \oplus A_1 \otimes \fg^\ast \ar[rr]^-{(d\mu)^\dual \oplus (d_A \otimes \fg^\ast)} & & A_0 \otimes \fg^\ast}
\]
that induces an isomorphism on homology. Bear in mind that $A_0 = R$, $A_1 = R\otimes \fg$, $A_2 = R\otimes \bigwedge^2 \fg$, etc.

Now at the point $x \in \Spec(R / \mu(\fg)) \subset \Spec(R)$ the last differential in this complex vanishes, so $\psi_{-1}$ induces an isomorphism $R\otimes \fg^\ast \to R\otimes \fg^\ast$ in a neighborhood of $x$ in $\Spec(R)$. Likewise, as $\psi_\bullet$ is a map of dg-$A$-modules, the map $\psi_0$ maps $A_1 \otimes \fg^\ast$ to $A_1 \otimes \fg^\ast$ and is induced by $\psi_{-1}$. It thus descends to a map $\psi_0 : \Omega_R^1 \to T_R$ of $R$-modules. The resulting diagram of maps of $R$-modules
\[
\xymatrix{ \Omega_R^1 \ar[r]^{a} \ar[d]^{\psi_0} & R \otimes \fg^\ast \ar[d]^{\psi_{-1}} \\ T_R \ar[r]^{(d\mu)^\dual} & R\otimes \fg^\ast }
\]
commutes after restricting to $R/R\cdot \mu(\fg)$, and the vertical arrows are isomorphisms in a $G$-equivariant Zariski-open neighborhood of $x \in \Spec(R)$. After inverting an $f \in R^G$, we can then invert the vertical arrows and dualize this diagram, giving $\mu$ the structure of a weak co-moment map.
\end{proof}

\subsection{The magic windows theorem}
\label{S:magic_windows}

\begin{notn}
Let $\cX$ be a stack for which $\cX^{\rm cl}$ admits a good moduli space. For any representative $x \in \cX(k')$ of a closed point in $|\cX|$, let $G_x$ denote the automorphism group of $x$, which is linearly reductive. Let $M_x$ and $W_x$ denote the weight lattice and Weyl group of $G_x$ respectively. Let $\overline{\Sigma}_x \subset (M_x)_\bR$ denote the convex hull of the weights appearing in the self-dual $G_x$-representation $\bigwedge^\ast (H_0(\bL_{\cX,x}))$. Note that $M_x$, $W_x$, and $\overline{\Sigma}_x$ are independent of the choice of representative of the closed point in $|\cX|$.
\end{notn}

\begin{defn} \label{defn:generic}
We say that $\ell \in \NS(\cX)_\bR$ is \emph{generic} if for any closed finite type point $x \in \cX(k')$, the character $\ell_x$ of $G_x$ is parallel to $\overline{\Sigma}_x$ but not parallel to any face of $\overline{\Sigma}_x$. We say $\delta \in \NS(\cX)_\bR$ is \emph{lattice generic} if for any closed point $x \in \cX(k')$, $\delta_x$ is parallel to $\overline{\Sigma}_x$ but
\[
\partial(\delta_x + \frac{1}{2}\overline{\Sigma}_x) \cap M_x = \emptyset.
\]
\end{defn}

\begin{lem} \label{L:hyperplane_arrangement}
Let $\cX$ be an a.f.p. algebraic derived $k$-stack such that $\bL_\cX \cong (\bL_\cX)^\dual$, and $\cX^{\rm cl}$ admits a good moduli space. If the polytopes $\overline{\Sigma}_x$ span $(M_x)_\bR$ for any closed point $x \in |\cX|$ and there exists a generic class in $\NS(\cX)_\bR$, then there is a finite rational affine hyperplane arrangement $\{H_i \subset \NS(\cX)_\bQ\}_{i \in I}$ such that
\begin{enumerate}
\item $\delta \in \NS(\cX)_\bR$ is lattice generic if $\delta \notin \bigcup_{i \in I, \chi \in \NS(\cX)} (\chi + H_i)$, and
\item $\ell \in \NS(\cX)_\bR$ is generic if and only if $\ell$ is not parallel to any of the $H_i$.
\end{enumerate}
This hyperplane arrangement is unique up to translation by $\NS(\cX)$. If $\overline{\Sigma}_x$ does not span $(M_x)_\bR$ at every closed point, then the same holds after replacing $\NS(\cX)$ with a sublattice $\Lambda \subset \NS(\cX)$ and replacing $\NS(\cX)_\bR$ with $\Lambda_\bR$.
\end{lem}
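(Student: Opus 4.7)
My plan is to extract the arrangement $\{H_i\}$ by inspecting the local structure of $\cX$ around each closed point, then invoking a finiteness argument to reduce to a finite list of polytopes. By \Cref{thm:local_model}, each closed point $x \in \cX(k')$ admits a strongly \'etale neighborhood of the form $\cU = \Spec(R[\fg_x[1]; d\xi = \mu(\xi)])/G_x$ with $R$ smooth and $G_x = \Aut_\cX(x)$ linearly reductive. In such a chart, the polytope $\overline{\Sigma}_{x'}$ at any closed point $x'$ in the image depends only on the $\Aut_\cX(x')$-representation $H_0(\bL_{\cX,x'})$, which by the self-duality hypothesis is computed explicitly from the tangent/cotangent data of the chart. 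I would first cover $\cX$ by finitely many such strongly \'etale charts — invoking quasi-compactness of the good moduli space, which is implicit in asking for a finite arrangement — and work one chart at a time.

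Inside a single chart $\Spec(A)/G$, the closed points correspond to closed $G$-orbits in $\Spec(\pi_0(A))$, and the Luna stratification by stabilizer type partitions the good moduli space into finitely many locally closed pieces, each with constant stabilizer type $H \subset G$ up to conjugation and constant weight decomposition of $H_0(\bL_\cX)$ restricted to $H$. Thus within the chart there are only finitely many $W_x$-orbits of polytopes $\overline{\Sigma}_x \subset (M_x)_\bR$ that arise, once one accounts for the natural restriction maps $X^\ast(G) \to X^\ast(H)$. For each such polytope, the codimension-one faces span finitely many rational affine hyperplanes in $X^\ast(H)_\bR$; pulling these back along the canonical map $\NS(\cX) \to X^\ast(H)$ (or more carefully, through $\NS(\cU) \to X^\ast(G) \to X^\ast(H)$, using that $\NS(\cU)$ is a quotient of $\NS(\cX)$ modulo restriction-trivial classes) produces a finite set of rational affine hyperplanes in $\NS(\cX)_\bR$. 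Taking the union over all charts and all stabilizer strata yields the finite arrangement $\{H_i\}$.

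To verify the two characterizations, the genericity condition (2) is manifest from the construction: $\ell$ is generic exactly when, at every closed point $x$, $\ell_x$ is parallel to $\overline{\Sigma}_x$ but avoids the linear spans of all proper faces, which is precisely the condition of not being parallel to any $H_i$. The lattice-generic condition (1) requires a slightly finer analysis: the condition $\partial(\delta_x + \tfrac{1}{2}\overline{\Sigma}_x) \cap M_x = \emptyset$ amounts to asking that $\delta_x$ avoid the countably many affine hyperplanes obtained as lattice translates of the linear spans of faces of $\tfrac12 \overline{\Sigma}_x$, which, after passing to $\NS(\cX)$, becomes the union $\bigcup_{i, \chi \in \NS(\cX)}(\chi + H_i)$ for an appropriate choice of origin for each $H_i$. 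For the final sentence of the lemma, when the polytopes $\overline{\Sigma}_x$ do not span $(M_x)_\bR$, one restricts attention to the sublattice $\Lambda \subset \NS(\cX)$ whose image in every $X^\ast(G_x)_\bR$ lies in the affine span of $\overline{\Sigma}_x$; this is well-defined by the existence of a generic class.

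The main obstacle will be the finiteness of polytope types, i.e., showing that the Luna-type stratification of the good moduli space has only finitely many strata and that the induced map on character lattices is controlled. Finiteness of the stratification follows from noetherianity of the good moduli space, which is automatic when $\cX$ is of finite type, but care is needed to match polytopes at different strata consistently, because a less-special point deforms into a more-special one with a larger stabilizer and correspondingly larger polytope. The compatibility of the hyperplanes under these specializations — ensuring that a hyperplane extracted from a deep stratum is also a hyperplane associated to a polytope from a shallower stratum containing it in closure — is what guarantees that taking the union over charts and strata gives the honest wall-and-chamber structure for semistability, and that uniqueness up to $\NS(\cX)$-translation holds.
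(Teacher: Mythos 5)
Your high-level strategy matches the paper's: reduce to étale-local quotient descriptions, show there are finitely many stabilizer/weight-decomposition types occurring, extract from each polytope the hyperplanes parallel to its codimension-one faces, then package these into an arrangement in $\NS(\cX)_\bR$. However the paper executes the finiteness argument differently, and its route is cleaner at exactly the points you flag as obstacles. Rather than covering $\cX$ by several charts $\Spec(R[\fg_x[1]])/G_x$ and appealing to a Luna-type stratification of each, the paper produces a \emph{single} strongly \'etale cover $\Spec(A)/G$ with $G$ split, passes to a maximal torus $T \subset G$, and observes that any graded point of $\cX$ lifts to $\Spec(A)/T$. Finiteness then comes from two facts: there are finitely many sub-tori $T_j \subset T$ arising as identity components of stabilizers, and the character of $H_0(p^\ast \bL_\cX)$ is a constructible function on each fixed locus $\Spec(A)^{T_j}$. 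Working with a single torus dissolves the ``matching polytopes across strata'' issue you raise in your last paragraph: all the character lattices $M_x$ are compared through the fixed weight lattice of $T$ via the finitely many restriction maps $X^\ast(T) \to X^\ast(T_j)$, so there is no gluing problem.

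Two specific gaps in your writeup. First, you assert that the Luna stratification ``by stabilizer type'' has constant weight decomposition of $H_0(\bL_\cX)$ along each stratum. The Luna stratification controls the stabilizer subgroup up to conjugacy but not automatically the full character of the (derived) tangent/obstruction space; you would need to refine further using constructibility of that character, which is precisely what the paper's argument supplies directly on $\Spec(A)^{T_j}$. Second, your gloss of the lattice-genericity condition (``$\partial(\delta_x + \tfrac12\overline\Sigma_x) \cap M_x = \emptyset$ amounts to asking that $\delta_x$ avoid \ldots hyperplanes'') should be a one-way implication, not an equivalence: the bad locus is a union of lattice translates of codimension-one \emph{polytopes} (the faces of $-\tfrac12\overline\Sigma_x$), and the hyperplanes only contain it, which is why the lemma states (1) as an ``if.'' The paper makes this precise by fixing a fundamental domain $F$ for $\NS(\cX)$ acting on $\NS(\cX)_\bR$ and collecting, for each $\delta \in F$, the hyperplane through $\delta$ parallel to any codimension-one face of $\delta + \tfrac12\overline\Sigma_j$ that meets the lattice. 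Also, the restriction map goes $\NS(\cX) \to \NS(\cU)$, not the reverse, so ``$\NS(\cU)$ is a quotient of $\NS(\cX)$'' should be discarded; what you need is simply the composite $\NS(\cX) \to \NS(BT_j)$ and its pullback on hyperplanes, as in the paper.
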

\begin{proof}
By \Cref{thm:local_model}, we can find a strongly \'etale cover $\Spec(A) / G \to \cX$ for some split reductive group $G$. Any map $(B\Gm^n)_{k'} \to \cX$ lifts along the map $p : \Spec(A)/T \to \cX$, where $T$ is a fixed maximal torus of $G$, after passing to a suitable \'etale extension of $k'$. It follows that lattice genericity of $\delta$ is equivalent to the condition that for any finite type point $x \in \Spec(A)$ such that $p(x)$ is a closed point and the identity component ${\rm Stab}_T(x)^\circ$ maps to a maximal torus in $G_{p(x)}$, one has
\[
\partial \left( p^\ast(\delta)_x + \frac{1}{2} {\rm Hull}(\wedge^\ast H_0(p^\ast(\bL_\cX)_x)) \right) \cap \NS(B{\rm Stab}_T(x)^\circ) = \emptyset.
\]
Genericity of a class $\ell \in \NS(\cX)_\bR$ can likewise be expressed as the condition that $p^\ast(\ell)_x$ is not parallel to any face of this polytope at any such finite type point of $\Spec(A)$.

There are finitely many sub-tori $T_1,T_2,\ldots \subset T$ that arise as the identity component of stabilizers of points in $\Spec(A)$, and for any $i$ the character of $H_0(p^\ast(\bL_\cX)_x)$ is a constructible function on $\Spec(A)^{T_i}$. It follows that there are finitely many homomorphisms $\NS(\cX)_\bR \to \NS(BT_j)_\bR$ and lattice polytopes $\overline{\Sigma}_j \subset \NS(BT_j)_\bR$ such that genericity or lattice genericity of a class in $\NS(\cX)_\bR$ is equivalent to the corresponding genericity condition on the image of this class in each $\NS(BT_j)_\bR$.

Let $F \subset \NS(\cX)_\bR$ be a fundamental domain for the action of $\NS(\cX)$ on $\NS(\cX)_\bR$. We consider the hyperplane arrangement in $\NS(BT_j)_\bQ$ consisting of every hyperplane $H \subset \NS(BT_j)_\bQ$ that contains $p^\ast(\delta)$ and is parallel to a codimension $1$ face of $p^\ast(\delta) + \frac{1}{2} \overline{\Sigma}_j$ that contains a lattice point, for some $\delta \in F$. For each $j$, only finitely many hyperplanes arise in this way, and the hyperplane arrangement in $\NS(\cX)_\bQ$ in the statement of the lemma is the union of the preimage of each of these hyperplane arrangements under the maps $\NS(\cX)_\bR \to \NS(BT_j)_\bR$.

If each $\overline{\Sigma}_j$ is not full dimensional, then the genericity condition also includes the constraint that $\ell$ and $\delta$ must lie in the intersection over all $j$ of the preimage of the subspace spanned by $\overline{\Sigma}_j$. The $\Lambda \subset \NS(\cX)$ in the statement of the lemma is the lattice of integral classes in this subspace.
\end{proof}

\begin{rem}
If $\cX = \mu^{-1}(0)/G$ is the Hamiltonian reduction for a symplectic representation $V$ of $G$, then the notion of generic in \Cref{defn:generic} coincides with the notion of genericity for the action of $G$ on $V \times \fg$ introduced in \cite{SVdB}, and the condition that $\delta \in \NS(\cX)_\bR$ is lattice generic coincides with the condition on $\delta$ in \cite{halpern2016combinatorial}*{Thm.~3.2}. Using this one can construct examples of $\cX$ that do not admit generic or lattice generic classes in $\NS(\cX)_\bR$ but still have generically finite stabilizers, such as $T^\ast(\Sym^{2n}(\bC^2) / \GL_2)$.
\end{rem}

Now recall from \Cref{S:alternative_grr_condition} that for any quasi-smooth $\cX$ and any $x \in \cX(k')$, one can associate a morphism
\[
x_{\rm reg} : \Spec(k'[H_1(\bL_{\cX,x})[1]]) / G_x \to \cX.
\]
to any $G_x$-equivariant splitting of the map $H_1(\bL_{\cX,x})[1] \to \bL_{\cX,x}$, which exists if $G_x$ is linearly reductive. The morphism $x_{\rm reg}$ extends the canonical map $BG_x \to \cX$, and it is determined up to isomorphism by the condition that the fiber at $x$ of the induced map on cotangent complexes, $\bL_{\cX,x} \to H_1(\bL_{\cX,x})[1]$, is the identity on $H_1$. Note that if $\bL_\cX \cong (\bL_\cX)^\dual$, then $H_1(\bL_{\cX,x}) \cong \fg_x$, where $\fg_x$ is the lie algebra of $G_x$.

The morphism $x_{\rm reg}$ has finite Tor-amplitude, and thus $x_{\rm reg}^\ast$ preserves $\DCoh$, i.e.,
\[
x_{\rm reg}^\ast : \DCoh(\cX) \to \DCoh^{G_x}(k'[H_1(\bL_{\cX,x})[1]]).
\]

\begin{defn} \label{D:magic_windows}
Let $\cX$ be a locally a.f.p. algebraic derived $k$-stack such that $\bL_\cX \cong (\bL_\cX)^\dual$ and $\cX^{\rm cl}$ admits a good moduli space. Then for any $\delta \in \NS(\cX)_\bR$ we define $\fW_\cX(\delta) \subset \DCoh(\cX)$ to be the full subcategory of complexes $F$ such that for any $x \in \cX(k')$ representing a closed point of $|\cX|$, the character of the $G_x$-representation $H^\ast(x_{\rm reg}^\ast(F))$ lies in the polytope $\delta_x + \frac{1}{2} \overline{\Sigma}_x \subset (M_x)_\bR$.
\end{defn}

\begin{thm}[Magic windows] \label{T:magic_windows}
Let $\cX$ be a locally a.f.p. algebraic derived $k$-stack such that $\bL_\cX \simeq \bL_{\cX}^\dual$ and $\cX^{\cl}$ admits a good moduli space, and let $\cX$ be equipped with a rational quadratic norm on graded points. Suppose $\overline{\Sigma}_x \subset (M_x)_\bR$ spans for every closed point $x \in |\cX|$, and that there exists a generic class in $\NS(\cX)_\bR$.

If $\ell \in \NS(\cX)_\bR$ is generic and $\delta \in \NS(\cX)_\bR$ is lattice generic, then $\cX^{\rm{ss}}(\ell)$ is a smooth (hence classical) algebraic stack with finite automorphism groups, and the restriction functor defines an equivalence
\[
\fW_\cX(\delta) \simeq \DCoh(\cX^{\rm{ss}}(\ell)).
\]
In particular, $\DCoh(\cX^{\rm ss}(\ell)) \cong \DCoh(\cX^{\rm ss}(\ell'))$ for any two generic classes $\ell,\ell' \in \NS(\cX)_\bR$.
\end{thm}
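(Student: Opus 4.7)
The plan is to first establish the geometric properties of $\cX^{\rm ss}(\ell)$, then use \Cref{T:intrinsic_GIT} together with \Cref{thm:derived_Kirwan_surjectivity_quasi-smooth} to identify $\fW_\cX(\delta)$ as a ``window'' in a semi-orthogonal decomposition of $\DCoh(\cX)$, and finally reduce the window equivalence to a local statement on a strongly \'etale chart via \Cref{thm:local_model}, where the known magic windows theorem of \cite{halpern2016combinatorial}*{Thm.~1.2} applies.

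For the first step, I would use that $\cX$ is quasi-smooth and each $\Theta$-stratum automatically satisfies \ref{eqn:obstruction_weights} by \Cref{L:verify_quasi-smooth_hypotheses}, and that self-duality gives $H_1(\bL_{\cX,x}) \cong \fg_x^\dual$ at every closed point. To see $\fg_x = 0$ at every closed semistable point, suppose to the contrary that the identity component $(G_x)^\circ$ contains a nontrivial cocharacter $\lambda$: since $\ell_x$ is parallel to $\overline{\Sigma}_x$ (which spans) but not to any face, either $\lambda$ or its inverse yields a filtration $f:\Theta_{k'} \to \cX$ with $q^{-1} f^\ast c_1(\ell) < 0$, contradicting semistability in the sense of \Cref{D:theta_semistable}. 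Hence $H_1(\bL_\cX)$ vanishes on $\cX^{\rm ss}(\ell)$, so $\bL_\cX$ restricts to a locally free sheaf in degree zero there and $\cX^{\rm ss}(\ell)$ is a smooth classical Deligne--Mumford stack.

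Next, I would apply \Cref{T:intrinsic_GIT} to $\cX^{\cl}$ to obtain a $\Theta$-stratification $\cX = \cX^{\rm ss}(\ell) \cup \bigcup_\alpha \cS_\alpha$, which lifts canonically to the derived $\cX$ by \Cref{L:classical_stratum}. Combining \Cref{thm:derived_Kirwan_surjectivity_quasi-smooth} with \Cref{T:derived_Kirwan_surjectivity_full}, this yields an infinite semi-orthogonal decomposition of $\DCoh(\cX)$ whose window pieces have the form $\cG^{[u,w)}$, cut out by grade-restriction rules at every stratum. The central observation is that $\fW_\cX(\delta)$ as defined in \Cref{D:magic_windows} coincides with one such window: at any closed point $x$ and any cocharacter $\lambda$ of $G_x$ coming from a stratum, pairing the magic-windows condition with $\lambda$ produces an interval of allowed $\lambda$-weights for $H_\ast(x_{\rm reg}^\ast F)$ whose width equals $\wt_\lambda \det(\tau_{\leq 0}(f^\ast \bL_\cX)^{>0})$ (using that self-duality makes the weights of $H_0(\bL_\cX)$ come in $\pm$-pairs, so that $\tfrac12 \overline{\Sigma}_x$ paired with $\lambda$ yields an interval of exactly this width symmetric about the origin), which matches precisely the grade restriction of \Cref{P:alternate_characterization}. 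Lattice genericity of $\delta$ ensures that no integer weights lie on the boundary of these intervals, so the window is unambiguously defined.

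To prove the equivalence $\fW_\cX(\delta) \simeq \DCoh(\cX^{\rm ss}(\ell))$, I would use \'etale descent: both sides are sheaves of $\infty$-categories on the smooth site of $\cX$, so it suffices to check the equivalence after base change along a strongly \'etale cover, and \Cref{T:intrinsic_GIT} guarantees that the $\Theta$-stratification is compatible with such base change. By \Cref{thm:local_model} we may work on a chart $\cU = \Spec(R[\fg[1];d\xi = \mu(\xi)])/G$, which is the derived Hamiltonian reduction of the smooth $G$-scheme $\Spec(R)$ by the weak co-moment map $\mu$. After further Zariski localization presenting $\cU$ as a derived complete intersection via \Cref{lem:quasi_smooth}, the local situation is of precisely the form handled by \cite{halpern2016combinatorial}*{Thm.~1.2} for quasi-symmetric representations, yielding the local window equivalence. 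The main obstacle, I expect, will be the careful matching of combinatorial data: one must verify that the polytope $\overline{\Sigma}_x$ in \Cref{D:magic_windows}, defined intrinsically via $H_0(\bL_{\cX,x})$, coincides with the weight polytope used in \cite{halpern2016combinatorial} for the symplectic $G_x$-representation appearing in the local chart, and that the notion of ``lattice generic'' in \Cref{defn:generic} aligns with the avoidance of the hyperplane arrangement of loc.\ cit. Once the local case is established, descent gives the global equivalence $\fW_\cX(\delta) \simeq \DCoh(\cX^{\rm ss}(\ell))$, and the derived equivalence $\DCoh(\cX^{\rm ss}(\ell)) \simeq \DCoh(\cX^{\rm ss}(\ell'))$ for distinct generic $\ell,\ell'$ follows by choosing a common lattice generic $\delta$, which exists by \Cref{L:hyperplane_arrangement}.
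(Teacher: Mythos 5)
Your overall outline (reduce to a local chart and appeal to the magic windows result for quasi-symmetric representations, using the local structure theorem and the $\Theta$-stratification theory) is in the right spirit, and the smoothness argument for $\cX^{\rm ss}(\ell)$ is correct. However, there is a genuine gap at the heart of the argument.

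\textbf{The hard direction is not addressed.} You assert that $\fW_\cX(\delta)$ ``coincides'' with one of the windows $\cG^{[u,w)}$ from \Cref{thm:derived_Kirwan_surjectivity_quasi-smooth} by matching weight intervals along cocharacters. But the window categories $\cG_\cX^\ell(\delta)$ are cut out only by grade-restriction rules along the finitely many one-parameter subgroups that arise from $\Theta$-strata, whereas $\fW_\cX(\delta)$ imposes that the \emph{entire character} of $H_\ast(x_{\rm reg}^\ast F)$ lies in the polytope $\delta_x + \tfrac{1}{2}\overline{\Sigma}_x$. Projecting the polytope onto the line spanned by each stratum cocharacter shows $\fW_\cX(\delta) \subset \cG_\cX^\ell(\delta)$ (that is \Cref{L:magic_containment}); it says nothing about the converse. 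Knowing a multiset of weights lies in the integer points of a few distinguished intervals does \emph{not} imply it lies in a convex body, and this reverse inclusion is exactly what the theorem is claiming. Your argument therefore proves only the easy containment.

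\textbf{The local input is misapplied.} You propose to invoke \cite{halpern2016combinatorial} directly on the chart $\cU = \Spec(R[\fg[1];d\xi=\mu(\xi)])/G$. That reference concerns quasi-symmetric \emph{linear} representations $V/G$ (or their smooth affine analogues via Luna slices), not derived Hamiltonian reductions. The \Cref{lem:quasi_smooth} presentation as a derived complete intersection does not put you in their setting. The paper bridges this gap via the Landau--Ginzburg/Calabi--Yau correspondence: it passes through linear Koszul duality $\Psi^G_\mu : \DSing(\cY,W)^{\rm op} \cong \DCoh(\cX')$, replacing the derived chart by a graded singularity category of a \emph{smooth} quotient $Y/(G\times\Gm)$. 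It is only there, after further Luna localization to the tangent space at a closed point, that \cite{halpern2016combinatorial}*{Thm.~3.2} applies (the key claim being that the window is generated by $\cO_V \otimes U$ for characters $U$ in $\overline{\Sigma}$). Verifying that $\Psi_\mu^G$ carries the two grade-restriction subcategories into each other and preserves the fiberwise weight data (\Cref{L:koszul_forget}, \Cref{L:koszul_preserve_weights}, \Cref{L:identify_grr_categories}) is where most of the technical work lives, and your proposal omits this machinery entirely.

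\textbf{The globalization step is also not quite right.} The window subcategory $\fW_\cX(\delta)$ is a full subcategory of $\DCoh(\cX)$ cut out by pointwise conditions; it is not an obvious sheaf of $\infty$-categories over $\cX$, and ``$\fW_\cX(\delta) \simeq \DCoh(\cX^{\rm ss})$ by \'etale descent of the equivalence'' is not the mechanism used. Instead, \Cref{P:grr_category} proves $\cG_\cX^\ell(\delta) \cong \DCoh(\cX^{\rm ss})$ globally as a consequence of the structure theorems, and then the problem becomes the purely pointwise assertion that the two subcategories $\fW_\cX(\delta)$ and $\cG_\cX^\ell(\delta)$ coincide. \emph{That} equality of subcategories can be checked on a strongly \'etale cover because both are defined by fiberwise weight conditions; this is the reduction carried out in \Cref{P:magic_reduction}, and it is weaker and more elementary than descent for $\infty$-categorical equivalences.
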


\begin{rem} \label{R:generic_perturbation}
If $\cX$ is quasi-compact, then \Cref{T:magic_windows} holds as stated for any $\ell$ such that $\cX^{\rm ss}(\ell)$ has finite automorphism groups, because by \Cref{L:hyperplane_arrangement} $\ell$ can be perturbed so that $\ell$ becomes generic in the sense of \Cref{defn:generic} without changing $\cX^{\rm ss}(\ell)$. 

\end{rem}

The remainder of this section is dedicated to the proof of \Cref{T:magic_windows}.

\subsubsection{Reduction to a local statement}

Let $\cX$ be a locally a.f.p. algebraic derived $k$-stack such that $\bL_\cX \cong (\bL_\cX)^\dual$. We consider a $\Theta$-stratification of $\cX$, which we denote $\bS = \{\cS_\alpha \subset \Filt(\cX)\}_{\alpha \in I}$ for a totally ordered index set $I$. We will restrict our attention to maps $f : (B \Gm)_{k'} \to \cX$ classified by $k'$ points of the centers $\cZ_\alpha^{\rm{ss}}$ of the strata $\cS_\alpha$, and we call such a map $\bS$-canonical.
\begin{defn} \label{D:grr}
For any $\delta \in \NS(\cX)_\bR$, we define the full subcategory $\cG_{\bS}(\delta) \subset \DCoh(\cX)$ to consist of those complexes $F$ such that for all $\bS$-canonical maps $f : (B\Gm)_{k'} \to \cX$, $F$ satisfies the grade restriction rule,
\begin{equation} \label{E:grr_window}
\left\{ \begin{array}{rl} \minwt(f^\ast(F)) &\geq  \wt \left( \frac{1}{2}\det( (f^\ast \bL_\cX)^{<0} ) + f^\ast \delta \right), \text{ and} \\ \minwt(f^\ast (\bD_\cX(F))) &> \wt \left( \frac{1}{2} \det( (f^\ast \bL_\cX)^{<0} ) - f^\ast \delta \right) \end{array} \right.,
\end{equation}
where $(-)^{<0}$ denotes the summand on which $\Gm$ acts with negative weights, $\wt(-) : \NS(B\Gm)_\bR \to \bR$ is the isomorphism which takes a character to its weight, and $\minwt : \APerf((B\Gm)_{k'}) \to \bZ \cup \{-\infty\}$ is the function that assigns a complex $E$ to the lowest $w$ such that $E^w \ncong 0$.
\end{defn}

\begin{prop} \label{P:grr_category}
Let $\cX$ be a locally a.f.p. algebraic derived $k$-stack such that $\bL_\cX \cong (\bL_\cX)^\dual$, and let $\bS$ be a $\Theta$-stratification on $\cX$. Then for any $\delta \in \NS(\cX)_\bR$, the restriction functor induces an equivalence $\cG_\bS(\delta) \simeq \DCoh(\cX^{\rm{ss}})$.
\end{prop}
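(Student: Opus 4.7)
The plan is to identify the category $\cG_\bS(\delta)$ with a ``window category'' $\cG^w_{\bS,\mathrm{coh}}$ defined by suitably chosen integer weight parameters $w=(w_\alpha)_{\alpha \in I}$, and then invoke the multi-stratum version of the quasi-smooth structure theorem (the analog of Theorem~\ref{thm:derived_Kirwan_surjectivity_quasi-smooth} obtained from Theorem~\ref{T:derived_Kirwan_surjectivity_full} by transfinite induction) to produce the equivalence $\cG^w_{\bS,\mathrm{coh}} \xrightarrow{\simeq} \DCoh(\cX^{\mathrm{ss}})$ via restriction.

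First, the self-duality $\bL_\cX \simeq (\bL_\cX)^\dual$, together with Lemma~\ref{L:verify_quasi-smooth_hypotheses}, guarantees that $\cX$ is quasi-smooth and that every $\Theta$-stratum of $\cX$ satisfies hypothesis~\ref{eqn:obstruction_weights}. Proposition~\ref{P:alternate_characterization} then characterizes membership in the stratum-wise categories: for each $\alpha$ and each $\bS$-canonical map $f_\alpha : (B\Gm)_{k'} \to \cZ_\alpha \hookrightarrow \cX$, a complex $F\in\DCoh(\cX)$ lies in $\APerf(\cX_{\leq\alpha})^{\geq w_\alpha}$ (resp.\ $\APerf(\cX_{\leq\alpha})^{<w_\alpha}$) iff $\minwt(f_\alpha^\ast F)\geq w_\alpha$ (resp.\ $\minwt(f_\alpha^\ast\bD_\cX F) \geq a_\alpha+1-w_\alpha$), where $a_\alpha := \wt(\det\ladj{1}(\bL_\cX|_{\cZ_\alpha}))$. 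Lemma~\ref{L:bounded_weights} permits the use of $f_\alpha^\ast$ in place of the regularized pullback $f_{\alpha,\mathrm{reg}}^\ast$, precisely because hypothesis \ref{eqn:obstruction_weights} holds.

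The self-duality yields an isomorphism of $\Gm$-equivariant perfect complexes $(f_\alpha^\ast \bL_\cX)^{<0} \simeq ((f_\alpha^\ast \bL_\cX)^{>0})^\dual$, while the weight-zero summand is itself self-dual and contributes trivially to the graded determinant. Hence
\[
a_\alpha \;=\; \wt(\det((f_\alpha^\ast\bL_\cX)^{\leq 0})) \;=\; \wt(\det((f_\alpha^\ast\bL_\cX)^{<0})) \;=: \; c_\alpha \in \bZ,
\]
which is locally constant on the connected components of $\cZ_\alpha$. After refining $\bS$ so that $c_\alpha$ and $\wt(f_\alpha^\ast\delta)$ are constant on each stratum, define
\[
w_\alpha \;:=\; \lceil \tfrac{1}{2}c_\alpha + \wt(f_\alpha^\ast\delta) \rceil \in \bZ.
\]
Since $c_\alpha \in \bZ$, the elementary identity $\lfloor c-x\rfloor = c-\lceil x\rceil$ gives $c_\alpha - w_\alpha = \lfloor \tfrac{1}{2}c_\alpha - \wt(f_\alpha^\ast\delta)\rfloor$. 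Because the quantities $\minwt(f_\alpha^\ast F)$ and $\minwt(f_\alpha^\ast\bD_\cX F)$ are integers, the two conditions in \eqref{E:grr_window} become equivalent to $\minwt(f_\alpha^\ast F)\geq w_\alpha$ and $\minwt(f_\alpha^\ast\bD_\cX F) \geq a_\alpha+1-w_\alpha$ respectively, identifying $\cG_\bS(\delta) = \cG^w_{\bS,\mathrm{coh}}$.

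The main obstacle in executing this plan is extending the transfinite-induction step of Theorem~\ref{T:derived_Kirwan_surjectivity_full} to the quasi-smooth setting, so that the equivalence $\cG^w_{\bS,\mathrm{coh}}\xrightarrow{\simeq}\DCoh(\cX^{\mathrm{ss}})$ holds for an arbitrary $\Theta$-stratification rather than a single stratum. The argument of Theorem~\ref{T:derived_Kirwan_surjectivity_full} carries over once one knows that, stratum-by-stratum, the baric truncation and semiorthogonal factors restrict to $\DCoh$; this control is precisely what is furnished by Theorem~\ref{thm:derived_Kirwan_surjectivity_quasi-smooth} together with Lemma~\ref{L:quasi_smooth_baric_decomp}, so the induction goes through verbatim.
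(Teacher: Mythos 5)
Your proposal is correct and takes essentially the same route as the paper: reduce to a single connected $\Theta$-stratum, invoke \Cref{L:verify_quasi-smooth_hypotheses} to verify hypothesis~\ref{eqn:obstruction_weights}, translate the grade restriction rule \eqref{E:grr_window} into the integer window parameter $w_\alpha = \lceil \frac{1}{2}a_\alpha + \wt(f_\alpha^\ast\delta)\rceil$ via \Cref{P:alternate_characterization}, and then cite \Cref{thm:derived_Kirwan_surjectivity_quasi-smooth}. You have spelled out the floor/ceiling arithmetic and the equality $a_\alpha = \wt(\det((f_\alpha^\ast\bL_\cX)^{<0}))$ in more detail than the paper, and correctly flagged that \Cref{L:bounded_weights} (under~\ref{eqn:obstruction_weights}) is what reconciles $f_\alpha^\ast$ with the regularized pullback.
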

\begin{proof}
As in the proof of \Cref{T:derived_Kirwan_surjectivity_full}, one can easily reduce the claim to the case of a single closed connected $\Theta$-stratum. \Cref{L:verify_quasi-smooth_hypotheses} implies that $H_1(f^\ast(\bL_{\cX}))$ has nonnegative weights for any $\bS$-canonical graded point $f$, so \Cref{thm:derived_Kirwan_surjectivity_quasi-smooth} applies to this situation. Nakayama's lemma implies that to check if $F \in \APerf(\cZ)$ has weight $\geq w$, it suffices to verify that the fiber of $F$ over every finite type point of $\cZ$ has weight $\geq w$ with respect to the canonical $\Gm$-action. Therefore, comparing \Cref{D:grr} with \Cref{P:alternate_characterization} gives
\[
\cG_\bS(\delta) = \DCoh(\cX)^{\geq w} \cap \DCoh(\cX)^{<w},
\]
where $w = \lceil \wt(f^\ast \delta) + \frac{1}{2} a_f^\cX \rceil \in \bZ$ for any graded point $f$ of $\cX$ corresponding to a point in $\cZ$. This is equal to the category $\cG_{\cS, \rm coh}^{w}$ of \Cref{thm:derived_Kirwan_surjectivity_quasi-smooth}, and the claim follows.
\end{proof}

We now further assume that $\cX^{\rm cl}$ admits a good moduli space, and fix a rational quadratic norm on graded points of $\cX$, as in \Cref{S:variation}. \Cref{T:intrinsic_GIT} assigns a $\Theta$-stratification to any $\ell \in \NS(\cX)_\bQ$. In this case we will denote $\cG_{\bS}(\delta)$ by $\cG_\cX^\ell(\delta)$, and we also refer to an $\bS$-canonical graded point of $\cX$ as $\ell$-canonical.

Observe that by \Cref{P:grr_category}, to prove \Cref{T:magic_windows} it suffices to show that as subcategories of $\DCoh(\cX)$,
\[
\fW_\cX(\delta) = \cG_\cX^\ell(\delta),
\]
whenever $\ell$ is generic and $\delta$ is lattice generic. The key observation is that this equality can be checked \emph{locally} over the good moduli space of $\cX^{\rm cl}$. This will allow us to reduce \Cref{T:magic_windows} to a much simpler claim.

\begin{lem} \label{L:magic_containment}
If $\delta \in \NS(\cX)_\bR$ is lattice generic and $\ell$ is arbitrary, then $\fW_\cX(\delta) \subset \cG_\cX^\ell(\delta)$ as subcategories of $\DCoh(\cX)$.
\end{lem}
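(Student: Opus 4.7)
The plan is to verify the containment pointwise: given $F \in \fW_\cX(\delta)$ and an $\ell$-canonical graded point $f : (B\Gm)_{k'} \to \cX$, I must check the two inequalities of \eqref{E:grr_window} on $f^\ast F$ and $f^\ast \bD_\cX F$. Since both the magic-window condition (\Cref{D:magic_windows}) and the grade restriction rule (\Cref{D:grr}) are checked fiberwise and are compatible with pullback along strongly \'etale morphisms of the good moduli space, I would first reduce to a local computation. Concretely, let $x \in \cX(k'')$ be the closed point in the orbit closure of $f(0)$ under the good moduli space map, extend $\lambda$ to a cocharacter of $G_x$ through the inclusion $G_{f(0)} \hookrightarrow G_x$, and apply \Cref{thm:local_model} to obtain a strongly \'etale neighborhood $\pi : \cX_{\mathrm{loc}} := \Spec(A)/G_x \to \cX$ with $A = R[\fg_x[1]; d\xi = \mu(\xi)]$ and $R$ a smooth $G_x$-equivariant algebra carrying a weak co-moment map.

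In this local model, the conditions of interest become conditions on $\pi^\ast F$ at the graded point $y \in \Spec(A)^\lambda$ lifting $f$, and on $x_{\mathrm{reg}}^\ast \pi^\ast F$ at the $G_x$-fixed closed point $x$. I would then use \Cref{L:bounded_weights} to rewrite
\[
\minwt(f^\ast F) = \minwt(f_{\mathrm{reg}}^\ast F), \qquad \minwt(f^\ast \bD_\cX F) = \wt(\det(\tau_{\leq 0}(f^\ast\bL_\cX))) - \maxwt(f_{\mathrm{reg}}^\ast F),
\]
and identify $f_{\mathrm{reg}}^\ast F$ with the restriction of $x_{\mathrm{reg}}^\ast F$ along $\lambda : (\Gm)_{k''} \to G_x$, using the compatibility of regularizations under the Bia\l{}ynicki-Birula contraction $\Theta_{k''} \to \Spec(A)$ from $y$ to $x$. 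The magic window condition at $x$ then gives
\[
\wt(f^\ast \delta) + \tfrac{1}{2}\min_\lambda(\overline{\Sigma}_x) \leq \minwt(f_{\mathrm{reg}}^\ast F), \qquad \maxwt(f_{\mathrm{reg}}^\ast F) \leq \wt(f^\ast\delta) + \tfrac{1}{2}\max_\lambda(\overline{\Sigma}_x).
\]

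To conclude, I would identify the polytope quantities $\tfrac{1}{2}\min_\lambda(\overline{\Sigma}_x)$ and $\tfrac{1}{2}\max_\lambda(\overline{\Sigma}_x)$ with the determinantal expressions appearing in the grr bounds. The key input is the explicit presentation $\bL_\cX|_y \cong [\fg_x \xrightarrow{d\mu^\vee|_y} \bL_R|_y \xrightarrow{a^\vee|_y} \fg_x^\vee]$ from the local model: combined with the identification $H_0(\bL_{\cX,x}) \cong \bL_R|_x$ at the closed point, the self-duality $\fg_x \cong \fg_x^\vee$, and the fact that $\lambda$ is central in $G_y$ (so that $\fg_y = \fg_x^{\lambda = 0}$), a direct virtual-representation computation shows that the contributions of $\fg_x$ to $\det((f^\ast\bL_\cX)^{<0})$ and to $\det(\tau_{\leq 0}(f^\ast\bL_\cX))$ are precisely absorbed into the polytope $\overline{\Sigma}_x$ via the extra $(\fg_x/\fg_y) \oplus (\fg_x/\fg_y)^\vee$ weights appearing in $H_0(\bL_{\cX,x})$ but not in $H_0(\bL_{\cX, y})$. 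Lattice genericity of $\delta$ promotes the non-strict polytope containment to the strict inequality demanded by the second part of \eqref{E:grr_window}.

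The main obstacle is this final polytope comparison, which is subtle precisely because $\lambda$ need not be central in $G_x$ even though it is central in $G_y$: the extra negative $\lambda$-weights picked up by $H_0(\bL_{\cX,x})$ as one passes from $y$ to $x$ look at first glance to weaken the magic window bound relative to the grr bound, and only a careful accounting using the local presentation above together with the self-duality $\bL_\cX \simeq \bL_\cX^\vee$ shows that these contributions match exactly. In essence, this step is where the definition of $\overline{\Sigma}_x$ in terms of $H_0(\bL_{\cX,x})$ (rather than some naive polytope of $R$-weights) is forced on us, and where the analogy with \cite{halpern2016combinatorial} for symplectic linear representations is invoked.
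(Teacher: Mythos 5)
Your setup (reduce to the local model $\Spec(R[\fg_x[1];d=\mu])/G_x$ at the closed point $x$ in the orbit closure of $f(0)$, and apply \Cref{L:bounded_weights}) agrees with the paper, but the middle step contains a genuine gap. You propose to ``identify $f_{\rm reg}^\ast F$ with the restriction of $x_{\rm reg}^\ast F$ along $\lambda$,'' but no such identification exists: $f_{\rm reg}^\ast F$ lives over $k'[\fg_y[1]]/\Gm$ where $\fg_y = H_1(\bL_{\cX, f(0)})$ is the Lie algebra of the \emph{smaller} group $G_{f(0)} \subset G_x$, while $x_{\rm reg}^\ast F$ lives over $k''[\fg_x[1]]/G_x$, and the underlying algebras do not match. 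What is actually available is only an \emph{inequality} on weights. To get it, one works with the underlying $R$-module $E \in \DCoh^{G_x}(R)$ of the pullback $F'$, so that $x_{\rm reg}^\ast F \simeq E \otimes_R k(x)$ as a $G_x$-complex, and \Cref{L:minweight} converts $\minwt_\lambda(f^\ast F')$ into $\minwt_\lambda(E\otimes_R k(y)) - \wt(\det\fg_x^{\lambda<0})$; one then compares $\minwt_\lambda(E\otimes_R k(y))$ and $\minwt_\lambda(E\otimes_R k(x))$ by semicontinuity of fiber homology along a $\lambda$-equivariant degeneration of $y$ to $x$. This degeneration is not automatic: its existence rests on Kempf's theorem producing a second one-parameter subgroup $\lambda'$ of $G_x$, commuting with $\lambda$, with $\lim_{t\to 0}\lambda'(t)\cdot y = x$. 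Your proposal omits this ingredient entirely, and without it the weights of $E$ at $y$ (equivalently $f_{\rm reg}^\ast F$) are not controlled by those at $x$.

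A secondary issue is that, because the step above only yields inequalities, your displayed ``magic window condition at $x$ then gives'' bounds on $\minwt(f_{\rm reg}^\ast F)$ and $\maxwt(f_{\rm reg}^\ast F)$ are asserting too much: the window hypothesis bounds the weights of $E\otimes_R k(x)$, and only the one-sided comparison (lower bound on $\minwt$, upper bound on $\maxwt$) transports to $y$. The surrounding bookkeeping — the determinantal weight identity $\eta^{\cY}_{f'} + 2\wt_\lambda(\det\fg_x^{\lambda<0}) = -\wt(\det(\bL_{\cX,x})^{\lambda<0})$ via self-duality, and the role of lattice genericity in upgrading non-strict to strict — is correctly anticipated in your final paragraph, and is indeed where self-duality enters. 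But the ``obstacle'' you flag as the hard part (the polytope accounting) is in fact the routine part once the Kempf/semicontinuity step is in place; the essential content of the paper's proof is that step, and it is the one your argument skips.
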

\begin{proof}
Consider a complex $F \in \fW_\cX(\delta)$, let $f : (B\Gm)_{k'} \to \cX$ be an $\ell$-canonical graded point, and let $x \in \cX(k')$ be the unique closed point in the closure of the image of $f$. Using \Cref{thm:local_model} one can find an \'etale neighborhood of $x$ of the form $\cX' = \Spec(R[\fg_x[1];d=\mu]) / G_x$ for a smooth $G_x$-algebra $R$ and weak co-moment map $\mu : \fg_x \to R$. $f$ lifts, after replacing $k'$ with a suitable \'etale extension, to a graded point of $\cX'$, which corresponds to a one parameter subgroup $\lambda : (\Gm)_{k'} \to G_x$ and a fixed point $y \in \mu^{-1}(0)^G(k')$. Let us change base to $k'$, so that $x$ is a closed point of $\Spec(R)$, and replace $k'$ with $k$ to simplify notation.

Let $F' \in \DCoh(\cX')$ denote the pullback of $F$, and let $E \in \DCoh^{G_x}(R)$ denote the underlying complex obtained by regarding  $F'$ as an $R$-module via the inclusion $R \subset R[\fg[1],d=\mu]$. The regularization $x_{\rm reg}$ is modeled by the surjection $R[\fg_x[1];d=\mu] \to k(x)[\fg_x[1]]$, so the $G_x$-equivariant complex of $k$-modules underlying $x_{\rm reg}^\ast(F) \in \DCoh^{G_x} (k[\fg_x[1]])$ is isomorphic to $E \otimes_R k(x)$. By hypothesis the character of $H_\ast(E \otimes_R k(x))$ lies in the polytope $\delta_x + \frac{1}{2} \overline{\Sigma}_x$.

Our goal is to verify the condition \eqref{E:grr_window}. Because $f$ lifts to $\cX'$, we can verify this weight condition for $F'$. 
\Cref{L:minweight} below then implies that
\[
\minwt(f^\ast(F')) = \minwt(E \otimes_{R} k(y)) - \wt(\det(\fg_x^{\lambda < 0})).
\]
By a theorem of Kempf \cite{kempf}, one can find a second one-parameter subgroup $\lambda' :\Gm \to G_x$ that commutes with $\lambda$, and such that $\lim_{t\to 0} \lambda'(t) \cdot y = x$. Using the semicontinuity of the fiber homology of $E$, we have
\[
\begin{array}{rl}
\minwt(f^\ast(F')) &\geq \minwt_\lambda(E \otimes_{R} k(x)) - \wt(\det(\fg_x^{\lambda<0})). \\
&> \wt(\delta_x) + \frac{1}{2} \wt(\det(H_0(\bL_{\cX,x})^{\lambda<0})) - \wt(\det(\fg_x^{\lambda<0})) \\
&= \wt(\delta_x) + \frac{1}{2} \wt(\det(\bL_{\cX,x}^{\lambda<0})).
\end{array}
\]
For the inequality on the second line above, we use the hypothesis that the character of $H_\ast(E\otimes_R k(x))$ lies in $\delta_x + \frac{1}{2} \overline{\Sigma}_x$ along with the definition of that polytope, and for the third line we have used the self-duality of $\bL_\cX$ and the fact that $\fg_x^{\lambda<0} \cong (\fg_x^\dual)^{\lambda<0}$. The second inequality holds strictly because $\delta$ is lattice generic, so it is not possible to have weights on the boundary of $\delta_x + \frac{1}{2}\overline{\Sigma}_x$.

Finally, we observe that $\wt(\det((\bL_{\cX,z})^{\lambda<0}))$ is a locally constant functor for $z \in \Spec(R[\fg_x[1];d=\mu])^{\lambda}$, so in fact we may replace this term in the final inequality with $\wt(\det((f^\ast \bL_\cX)^{<0}))$, which gives the first inequality in \eqref{E:grr_window}.

Now we consider the second inequality in \eqref{E:grr_window}. Serre duality commutes with \'etale base change, so again it suffices to bound the weights of $\bD_{\cX'}(F')$, and for this we let $E' \in \DCoh^{G_x}(R)$ denote the complex underlying $\bD_{\cX'}(F')$. The same argument as above shows that
\[
\minwt(f^\ast(\bD_{\cX'}(F'))) \geq \minwt_\lambda(E' \otimes_{R} k(x)) - \wt(\det(\fg_x^{\lambda<0})).
\]
The map $x_{\rm reg}$ is a regular closed immersion with normal bundle $T_x X \cong H_0(\bL_{\cX,x})^\ast$. It follows from Grothendieck's formula that
\[
E' \otimes_{R} k(x) \cong \bD_{k'[\fg_x[1]]}(E \otimes_R k(x)) \otimes \det(H_0(\bL_{\cX,x})^\ast)[-\dim H_0(\bL_{\cX,x})^\ast]
\]
The character $\det(H_0(\bL_{\cX,x})^\ast)$ has $\lambda$-weight $0$ by self-duality of $\bL_\cX$, so \Cref{L:weight_duality} gives
\[
\minwt(f^\ast(\bD_{\cX'}(F'))) \geq -\maxwt_\lambda(E \otimes_{R} k(x)) - \wt(\det(\fg_x^{\lambda<0})).
\]
Using the hypothesis on $E \otimes_R k(x)$ and self-duality of $\bL_\cX$, we compute
\[
\begin{array}{rl}
\maxwt_\lambda(E\otimes_R k(x)) &< \wt(\delta_x) + \frac{1}{2} \wt(\det(H_0((\bL_{\cX,x})^{\lambda>0}))) \\
&= \wt(\delta_x) - \frac{1}{2} \wt(\det(H_0((\bL_{\cX,x})^{\lambda<0}))).
\end{array}
\]
From this point, the second inequality in \eqref{E:grr_window} follows from the same argument that we used for the first inequality.
\end{proof}

The previous proof used the following, which we state in general terms for use below as well.

\begin{lem} \label{L:minweight}
Let $A = R[U[1];d]$ be a $\Gm$-equivariant \CDGA, where $R$ is a smooth $\Gm$-equivariant $k$-algebra and $U$ is a $\Gm$-representation in homological degree $1$. Let $x \in \Spec(A)$ be a $\Gm$-fixed point with residue field $k(x)$. For any $F \in \APerf^{\Gm}(\Spec(A))$, if either $U^{<0} = 0$ or $\minwt(F \otimes_A k(x)) > -\infty$, then
\[
\minwt(F \otimes_A k(x)) = \minwt(F \otimes_{R} k(x)) - \wt(\det(U^{<0})).
\]
\end{lem}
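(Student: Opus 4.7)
The plan is to prove the identity directly by producing an explicit non-trivial cohomology class realizing the minimum weight. I would start by choosing a minimal semi-free $A$-resolution $F \simeq (A \otimes_k V, d_F)$ with $d_F(V) \subset \mathfrak{m}_x(A) \otimes V$, which exists since $F$ is almost perfect and $A$ is a connective $\Gm$-equivariant CDGA. Minimality implies $F \otimes_A k(x) \simeq V$ with zero induced differential, so $\minwt(F \otimes_A k(x)) = \minwt(V) =: m$. Since $A = R \otimes_k \wedge^\bullet U$ as a graded $R$-module (hence flat over $R$), one computes $F \otimes_R k(x) \simeq (k(x) \otimes \wedge^\bullet U \otimes V, \bar d)$, and a short check using $d(U) \subset \mathfrak{m}_{\bar x}$ (which holds since $x$ is a fixed point of $\Spec(H_0(A))$) shows that the induced differential satisfies $\bar d(v) \in k(x) \otimes \wedge^{\geq 1} U \otimes V$ for $v \in V$, extended by $\bar d(\eta \otimes v) = (-1)^{|\eta|} \eta \wedge \bar d(v)$ for $\eta \in \wedge^\bullet U$.

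The lower bound $\minwt(F \otimes_R k(x)) \geq m + \wt(\det U^{<0})$ is immediate from this description, since the minimum weight in $\wedge^\bullet U$ equals $\wt(\det U^{<0})$ (realized precisely on $\det(U^{<0}) \otimes \wedge^\bullet U^0$). For the matching upper bound I would construct an explicit cohomology class. Fix a weight $\nu$ of $V$: take $\nu = m$ when $\minwt(F \otimes_A k(x)) > -\infty$, and when $U^{<0} = 0$ run the argument one weight at a time (using $\wt(\det U^{<0}) = 0$). Let $d_0$ be the minimum homological degree for which $V^\nu_{d_0} \neq 0$ and pick $0 \neq w \in V^\nu_{d_0}$. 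The candidate cycle is $\det(U^{<0}) \otimes w$, of weight $\nu + \wt(\det U^{<0})$. Expanding $\bar d(w) = \sum_\alpha c_\alpha\, \eta_\alpha \otimes v_\alpha$ with $\eta_\alpha \in \wedge^{\geq 1} U$ and $v_\alpha \in V$ of degree $\leq d_0 - 2$, the weight identity $\wt(\eta_\alpha) + \wt(v_\alpha) = \nu$ together with $\wt(v_\alpha) \geq \nu$ forces either $v_\alpha \in V^\nu_{<d_0} = 0$, or $\wt(\eta_\alpha) < 0$ so that $\eta_\alpha$ contains at least one factor from $U^{<0}$. In either case $\det(U^{<0}) \wedge \eta_\alpha = 0$, which gives $\bar d(\det(U^{<0}) \otimes w) = 0$.

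The main technical step, which I expect to be the heart of the argument, is to show that $\det(U^{<0}) \otimes w$ is not a boundary. If $\bar d(x) = \det(U^{<0}) \otimes w$ with $x = \sum_k \eta_k \otimes w'_k$, then decomposing $\bar d(w'_k) = \sum_r \partial_r(w'_k)$ with $\partial_r(w'_k) \in k(x) \otimes \wedge^r U \otimes V_{d_{w'_k} - r - 1}$, any contributing term must satisfy $|\eta_k| + r = \dim U^{<0}$ (to land in $\wedge^{\dim U^{<0}} U$) and have its $V$-component hit $w \in V_{d_0}$. Minimality forces $\partial_0 = 0$ after tensoring with $k(x)$ over $R$, so $r \geq 1$ and $|\eta_k| < \dim U^{<0}$; matching weights then imposes $\wt(\eta_k) \leq \wt(\det U^{<0})$. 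The key combinatorial input, which concludes the argument, is that every nonzero weight of $\wedge^s U$ with $s < \dim U^{<0}$ strictly exceeds $\wt(\det U^{<0})$: omitting any one of the strictly negative basis weights from $\det U^{<0}$ strictly increases the total, and mixing in weight-zero or weight-positive factors can only raise it further. This contradiction shows that no such $x$ exists, and the lemma follows.
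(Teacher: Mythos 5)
Your approach is correct in its essentials and genuinely different from the paper's. The paper first passes to $A' := A \otimes_R k(x) \cong k(x)[U[1]]$ (by derived base change, $F\otimes_R k(x)$ becomes $F' := F\otimes_A A'$ and $F\otimes_A k(x)$ becomes $F'\otimes_{A'} k(x)$), then filters $F'$ by powers of the ideal generated by $U^{\neq 0}$, identifies the lowest-weight summand of $\Sym(U^{\neq 0}[1])$ with $\det(U^{<0})[\dim U^{<0}]$, and closes with graded Nakayama relating $F'\otimes_{A'} k[U^0[1]]$ to $F'\otimes_{A'} k(x)$. You instead work with a minimal $\Gm$-equivariant semi-free resolution of $F$ over $A$ itself, identify both tensor products explicitly, and exhibit $\det(U^{<0})\otimes w$ as a nonzero cocycle in the lowest possible weight, using the combinatorial observation that any nonzero weight of $\wedge^s U$ with $s < \dim U^{<0}$ strictly exceeds $\wt(\det U^{<0})$. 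Your argument is more hands-on and avoids the adic filtration entirely, at the price of a cocycle chase; the paper's version is shorter, avoids explicit cycles, and is arguably easier to keep homotopy-coherent.

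Two points deserve tightening. First, a minimal resolution satisfying $d_F(V) \subset \mathfrak{m}_x(A)\otimes V$ requires $\pi_0(A)$ to be local with $\mathfrak{m}_x$ a $\Gm$-stable maximal ideal, which the hypotheses do not give globally: you should Zariski-localize $A$ in a $\Gm$-stable affine neighborhood of the fixed point $x$ first (this changes neither side of the identity), or else reduce to $A'$ as the paper does, where $\pi_0(A')=k(x)$ is already local. Second, the phrase ``run the argument one weight at a time'' when $U^{<0}=0$ does not quite close the case: the step $\bar d(w)=0$ uses $\wt(v_\alpha)\geq\nu$, which holds only for $\nu = \minwt(V)$, so when $\minwt(V) = -\infty$ (possible for $F$ merely almost perfect) the cycle argument for a non-minimal $\nu$ breaks. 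One needs a separate observation here, e.g.\ that resolving $k(x)$ over $k(x)[U[1]]$ by $\Sym(U[2])$ gives $\minwt(F\otimes_A k(x)) \geq \minwt(F\otimes_R k(x))$ whenever $U$ has nonnegative weights, so both sides are $-\infty$ together and the identity holds vacuously. (The paper's own treatment of this edge case is similarly terse, so this is a gap to be aware of rather than a flaw unique to your argument.)
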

\begin{proof}
Let $A' := A \otimes_R k(x) \cong k(x)[U[1]]$, which is free on a $\Gm$-representation in homological degree $1$, and let $F' := F \otimes_A (A \otimes_R k(x))$ with its $A'$-module structure. Consider the maps of \CDGA's
\[
R \to A \to A \otimes_R k(x) \to k(x).
\]
The derived base change formula states that $F \otimes_R k(x) \cong F'$ as a graded $k(x)$-module. On the other hand, we have $F' \otimes_{A'} k(x) \cong F \otimes_A k(x)$. So it suffices to prove the claim for $A'$ itself, and for clarity we replace the residue field $k(x)$ with $k$ in our notation.

Consider the decomposition $U = U^0 \oplus U^{\neq 0}$ into summands with $0$ and non-zero $\Gm$-weights. The augmentation map factors as
\[
A' = k[U[1]] \to k[U^0[1]] \to k.
\]
The $U^{\neq 0}$-adic filtration gives a finite filtration of $A'$ as an $A'$-module whose associated graded is the $A'$-module $k[U^0[1]] \otimes_k \Sym(U^{\neq 0}[1])$, i.e., a direct sum of shifts in degree and weight of the module $k[U^0[1]]$. It follows that any $A'$-module $F \cong F \otimes_{A'} A'$ has a finite filtration whose associated graded is
\[
(F \otimes_{A'} k[U^0[1]]) \otimes_k \Sym(U^{\neq 0}[1]).
\]

Note that because $U^{\neq 0}$ has only non-zero weights, the lowest weight piece of $\Sym(U^{\neq 0}[1])$ is one dimensional, and is isomorphic to $\det(U^{<0})[\dim(U^{<0})]$. It follows that if $\minwt(F \otimes_{A'} k[U^0[1]]) = w > -\infty$, then there is a unique step in the filtration of $F$ whose graded piece has minimum weight $w+\wt(\det(U^{<0}))$ and all other graded pieces have higher minumum weight, which implies that
\[
\minwt(F) = \minwt(F \otimes_{A'} k[U^0[1]]) + \wt(\det(U^{<0})).
\]

Because $k[U^0[1]]$ has weight $0$ with respect to $\Gm$, $F \otimes_{A'} k[U^0[1]]$ already decomposes as a direct sum of constant-weight $k[U^0[1]]$-modules. Nakayama's lemma then implies that $F \otimes_{A'} k[U^0[1]]$ and $F \otimes_{A'} k$ are non-vanishing in precisely the same weights. This establishes the lemma in the case where the non-vanishing weights of $F \otimes_{A'} k[U^0[1]]$ are bounded below.

The remaining case is when $U^{<0}=0$, so $U$ has strictly nonnegative weights, and $\minwt(F)$ is finite. Using the Koszul resolution of $k[U^0[1]]$ as a $k[U]$-module, one observes that $F \otimes_{A'} k[U^0[1]]$ has bounded below weights as well, so we are in the previous case.

\end{proof}

\Cref{L:magic_containment} shows that to prove \Cref{T:magic_windows}, it suffices to show that any $F \in \cG_\cX^\ell(\delta)$ satisfies the weight condition of \Cref{D:magic_windows}. We now formulate our main reduction.

\begin{hyp}\label{H:magic_hypotheses} We consider the following hypotheses, which are more restrictive than those of \Cref{T:magic_windows}:
\begin{enumerate}
\item $G$ is a split linearly reductive $k$-group, $X := \Spec(R)$ is a smooth affine $G$-equivariant $k$-scheme, and $\mu : \fg \to R$ is a weak co-moment map (see \Cref{D:comoment_map}), which we regard as a map $X \to \fg^\ast$;
\item $\cX$ is the hamiltonian reduction $\cX = X_0/G$, where $X_0 = \mu^{-1}(0) = \Spec(R[\fg[1];d\xi = \mu(\xi)])$ is the derived zero fiber of the weak co-moment map;
\item $\ell, \delta \in \NS(\cX)_\bQ = \NS^G(X_0)_\bQ$ are represented by the trivial bundle tensored with a rational character of $G$, with $\ell$ generic and $\delta$ lattice generic;
\item $\cX$ is equipped with the rational quadratic norm on graded points coming from a fixed choice of Weyl-invariant inner product on the coweight space of $G$; and
\item $x \in X_0^G(k)$ is a closed fixed point in $X_0$.
\end{enumerate}
\end{hyp}

\begin{prop}[Local statement] \label{P:magic_reduction}
To prove \Cref{T:magic_windows}, it suffices to show that under \Cref{H:magic_hypotheses}, for any
\[
F \in \cG_{\cX}^\ell(\delta) \subset \DCoh^G(X_0) = \DCoh^G(R[\fg[1];d=\mu]),
\]
the character of the $G$-representation $H_\ast(F \otimes_R k(x))$ lies in the polytope $\delta_x + \frac{1}{2} \overline{\Sigma}_x \subset (M_x)_\bR$, where $k(x) \cong k$ denotes the residue field of $x \in X_0^G$.
\end{prop}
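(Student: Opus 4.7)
The plan is to establish the missing inclusion $\cG_\cX^\ell(\delta) \subset \fW_\cX(\delta)$: combined with \Cref{L:magic_containment} this gives the equality $\cG_\cX^\ell(\delta) = \fW_\cX(\delta)$, whereupon \Cref{P:grr_category} delivers the restriction equivalence $\fW_\cX(\delta) \simeq \DCoh(\cX^{\rm ss}(\ell))$ asserted in \Cref{T:magic_windows}. Fix $F \in \cG_\cX^\ell(\delta)$ and a representative $x \in \cX(k')$ of a closed point of $|\cX|$. The condition defining $\fW_\cX(\delta)$ at $x$ is a weight condition on the $G_x$-representation $H^*(x_{\rm reg}^*(F))$, and it therefore suffices to verify it after pulling back along any strongly étale neighborhood of $x$.

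I would then apply \Cref{thm:local_model}, after base change to $k'$, to obtain a strongly étale morphism
\[
\pi : \cX' := \Spec(R[\fg_x[1]; d\xi=\mu(\xi)])/G_x \to \cX_{k'}
\]
through which $x$ lifts to a $G_x$-fixed $k'$-point $\tilde{x}$. By the base change property in \Cref{T:intrinsic_GIT}, the $\Theta$-stratification on $\cX_{k'}$ pulls back to the one on $\cX'$ cut out by $\ell' := \pi^*\ell$ and the pullback of the rational quadratic norm on graded points; since $\pi$ is étale we have $\pi^*\bL_\cX \simeq \bL_{\cX'}$, and the grade restriction rules \eqref{E:grr_window} are stable under $\pi^*$. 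Hence $F' := \pi^*F \in \cG_{\cX'}^{\ell'}(\delta')$ for $\delta' := \pi^*\delta$. After shrinking $\Spec(R)$ by a $G_x$-invariant element of $R$ non-vanishing at $\tilde{x}$ if necessary, the classes $\ell'$ and $\delta'$ are represented by rational characters of $G_x$, placing us in the setting of \Cref{H:magic_hypotheses}; the required genericity and lattice genericity at $\tilde{x}$ hold by hypothesis on $\ell$ and $\delta$, and a small perturbation—permitted by \Cref{R:generic_perturbation} together with the local description of the relevant hyperplane arrangement in \Cref{L:hyperplane_arrangement}—makes them globally generic and lattice generic on $\cX'$ without altering the semistable locus or the window polytope at $\tilde{x}$.

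Next I would identify $x_{\rm reg}^*(F)$ in this local picture. The semi-free presentation of $\bL_{\cX'}$ from the proof of \Cref{thm:local_model} gives $H_1(\bL_{\cX',\tilde{x}}) \cong \fg_x$, and the canonical surjection
\[
R[\fg_x[1]; d\xi=\mu(\xi)] \twoheadrightarrow k'[\fg_x[1]]
\]
obtained by killing $\mathfrak{m}_{\tilde{x}} \subset R$—along which the weak co-moment map becomes zero because $\mu(\fg_x) \subset \mathfrak{m}_{\tilde{x}}$—classifies a morphism $\Spec(k'[\fg_x[1]])/G_x \to \cX'$ whose derivative induces the identity on $H_1$ of the cotangent complex at $\tilde{x}$. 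This is therefore a regularization of the graded point $BG_x \hookrightarrow \cX'$ at $\tilde{x}$; composing with $\pi$ gives a regularization of $x$ in $\cX_{k'}$, so $x_{\rm reg}^*(F)$ is canonically isomorphic to $F' \otimes_R k'$ as a $G_x$-equivariant graded module over $k'[\fg_x[1]]$.

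Under this identification the condition $F \in \fW_\cX(\delta)$ at $x$ becomes precisely the conclusion of \Cref{P:magic_reduction} applied to the data $(G_x, \Spec(R), \mu, \ell', \delta', \tilde{x})$, completing the reduction. The main technical obstacle is the bookkeeping required to pull back the NS-classes, the norm on graded points, and the $\Theta$-stratification through $\pi$ so that \Cref{H:magic_hypotheses} is genuinely satisfied; this is handled by combining the strongly étale base change property of \Cref{T:intrinsic_GIT} with \Cref{L:hyperplane_arrangement}, which shows that genericity is an open condition in NS-class and can be arranged by perturbation without disturbing the semistable locus or the window polytope at $\tilde{x}$.
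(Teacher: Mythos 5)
Your reduction follows the same skeleton as the paper's: use \Cref{L:magic_containment} and \Cref{P:grr_category} to convert \Cref{T:magic_windows} into the inclusion $\cG_\cX^\ell(\delta) \subset \fW_\cX(\delta)$, localize around each closed point via \Cref{thm:local_model}, and identify $x_{\rm reg}^\ast(F)$ with $F' \otimes_R k'$ in the local model. The organization is a little different --- you build a chart centered at each closed point, whereas the paper fixes one chart and iterates for the non-central closed points --- but this is cosmetic. There is, however, a genuine gap in the argument, and also an unnecessary step that would actually cause trouble if it were needed.

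The gap concerns condition~(4) of \Cref{H:magic_hypotheses}: the norm on graded points of $\cX'$ must come from a Weyl-invariant inner product on the coweight space of $G_x$. You pull back the ambient norm from $\cX$, which is merely an arbitrary rational quadratic norm on graded points and need not arise in the required way. Since $\cG_{\cX'}^{\ell'}(\delta')$ is defined via the $\Theta$-stratification and that stratification depends on the norm, your $F' \in \cG_{\cX'}^{\ell'}(\delta')$ is known for the pulled-back norm, not for a norm satisfying~(4); and the local statement to which you appeal is only available under~(4). You cannot simply swap the norm, because different norms give different categories $\cG$. The paper's proof closes this gap with a structural observation you omit: by \Cref{P:grr_category}, showing $\fW_\cX(\delta) = \cG_\cX^\ell(\delta)$ is the same as showing that $\fW_\cX(\delta) \to \DCoh(\cX^{\rm ss}(\ell))$ is essentially surjective, and this functor is manifestly independent of the norm (both source and target are norm-independent); so one is free to \emph{replace} the norm by one of the form required in~(4). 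Without that observation the reduction does not go through.

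Separately, the perturbation via \Cref{R:generic_perturbation} and \Cref{L:hyperplane_arrangement} is both unnecessary and dangerous. Because $\pi$ is strongly \'etale, every closed point of $\cX'$ maps to a closed point of $\cX$ with the same automorphism group and isomorphic cotangent complex, so $\overline{\Sigma}_{z'}$ is unchanged under pullback and the genericity and lattice genericity of $\ell'$ and $\delta'$ hold \emph{everywhere} on $\cX'$ automatically --- not just at $\tilde{x}$. If a perturbation of $\ell'$ were truly needed, it would change the $\Theta$-stratification on $\cX'$, hence change $\cG_{\cX'}^{\ell'}(\delta')$, and you would lose the hypothesis $F' \in \cG_{\cX'}^{\ell'}(\delta')$. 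So this step should be removed and replaced by the pullback argument.
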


\begin{proof}
By \Cref{L:magic_containment} and \Cref{P:grr_category}, the claim of \Cref{T:magic_windows} is equivalent to the claim that $\fW_\cX(\delta) \subset \cG_\cX^\ell(\delta)$ is an equality.

For a strongly \'etale cover $\cX' \to \cX$, any graded point $f : (B\Gm)_{k'} \to \cX$ lifts to $\cX'$ after a suitable \'etale extension of $k'$, and $f$ is $\ell$-canonical if and only its lift to $\cX'$ is $\ell$-canonical by \Cref{T:intrinsic_GIT}. It follows that the restriction of $F$ to $\cX'$ lies in $\cG_{\cX'}^\ell(\delta)$. Also, the regularization of any closed point $x_{\rm reg} : \Spec(k'[\fg_x[1]]) / G_x \to \cX$ lifts to $\cX'$, so $F$ lies in $\fW_\cX(\delta)$ if and only if its restriction to $F'$ lies in $\fW_{\cX'}(\delta)$. After restricting to $\cX'$, $\ell$ and $\delta$ are still generic and lattice generic respectively, so it suffices to prove $\fW_{\cX'}(\delta) = \cG_{\cX'}^\ell(\delta)$.

It therefore suffices to show that any closed point $x \in \cX(k')$ admits a strongly \'etale neighborhood satisfying the (1)-(3) of \Cref{H:magic_hypotheses}, assuming that \Cref{T:magic_windows} is known under those hypotheses. After passing to an extension of $k'$ as necessary to ensure that $G = G_x$ is split,\Cref{thm:local_model} provides a strongly \'etale neighborhood $\cX'$ satisfying (1) and (2), so we will find a strongly \'etale open substack $\cU \subset \cX'$ containing $x$ that satisfies (3).

Rescale $\ell$ so that it is integral, let $L \in \Pic(\cX')$ be a representative of the restriction of $\ell$ to $\NS(\cX')$, and let $\chi$ denote the character corresponding to the restriction of $\ell$ to $(BG)_x$. Semicontinuity of the fiber dimension of a separated group scheme implies that the kernel of the homomorphism $I_{\cX'} \to (\Gm)_{\cX'}$ induced by $L \otimes \chi^{-1}$ is finite in an open neighborhood of $x \in \cX'$, and we can assume this open neighborhood is saturated in the sense of \cite{alper2013good}*{Defn.~6.1}. For sufficiently divisible $n$, all isotropy groups act trivially on the fibers of $L' = (L \otimes \chi^{-1})^n$ over this open substack. We can therefore find an open neighborhood of $x$ of the form $\cU = \Spec(R_a[\fg[1];d])/G \subset \cX'$ for some $a \in R^G$ such that $L'|_{\cU^{\rm cl}}$ is trivializable, i.e., it admits a nowhere vanishing section. The fact that $\cU^{\rm cl}$ is cohomologically affine implies that $H^0(\cU,L') \to H^0(\cU^{\rm cl},L'|_{\cU^{\rm cl}})$ is surjective, and thus $L'$ admits a nowhere vanishing global section as well. We may repeat this procedure for $\delta$ to shrink $\cU$ so that the resulting strongly \'etale map $\cU \to \cX$ satisfies (1)-(3).

Showing that $\fW_\cX(\delta) \subset \cG_\cX^\ell(\delta)$ is an equality is equivalent, by \Cref{P:grr_category}, to showing that $\fW_{\cX}(\delta) \to \DCoh(\cX^{\rm ss}(\ell))$ is essentially surjective. This only depends on $\ell$ and does not depend on the choice of rational quadratic norm on graded points of $\cX$, so we are free to choose \emph{any} such norm (even though a priori the category $\cG_\cX^\ell(\delta)$ depends on the norm).

Therefore we have shown that it suffices to prove $\fW_\cX(\delta) = \cG_\cX^\ell(\delta)$ for stacks $\cX$ satisfying (1)-(4) of \Cref{H:magic_hypotheses}. To complete the proof, we must show that this equality can be reduced to checking the weight condition in the statement of the proposition at an individual closed point $x \in \mu^{-1}(0)$ whose stabilizer is $G$.  

Fix an $F \in \cG_\cX^\ell(\delta) \subset \DCoh^G(R[\fg[1];d=\mu])$. At any closed point $x \in X_0^G$, we have observed in the proof of \Cref{L:magic_containment} that the underlying $G$-equivariant $k'$-module of $x_{\rm reg}^\ast(F)$ is $F \otimes_R k(x)$. So the window condition of \Cref{D:magic_windows} at $x$ amounts to the character of $H_\ast(F\otimes_R k(x))$ lying in $\delta_x + \frac{1}{2} \overline{\Sigma}_x$ as claimed.

To verify that $F$ satisfies the window condition of \Cref{D:magic_windows} at a closed point $x' \in \cX$ with smaller automorphism group $G_{x'} \subset G$, we use \Cref{thm:local_model} once more to construct an \'etale neighborhood of $x'$ of the form
\[
\Spec(R'[\fg_{x'};d]) / G_{x'} \to \Spec(R[\fg;d])/G,
\]
induced by a map $\Spec(R'[\fg_{x'};d]) \to \Spec(R[\fg;d])$ that is equivariant with respect to the inclusion of groups $G_{x'} \subset G$. It suffices to verify the window condition of \Cref{D:magic_windows} for the pullback of $F$ to $\Spec(R'[\fg_{x'};d])/G_{x'}$ at the point $x'$. We observe that the restriction of $\ell$, $\delta$, and the quadratic norm on graded points continue to satisfy conditions (3) and (4), so replacing $\mu : \fg \to R$ with $\mu' : \fg_x' \to R'$, we have reduced once again to checking the condition in the proposition under \Cref{H:magic_hypotheses}.

\end{proof}

For the remainder of the proof, we will therefore assume \Cref{H:magic_hypotheses}.

\subsubsection{The Landau-Ginzburg/Calabi-Yau correspondence}
\label{S:linear_koszul}

Our proof of \Cref{T:magic_windows} in the local case will make use of the ``Landau-Ginzburg/Calabi-Yau" correspondence, so we first recall this theorem and establish some notation.

\begin{notn} \label{N:lg}
We consider the scheme $Y = X \times \fg$ along with the map $W : Y \to \bA^1$ given by $W(x,\xi) = \langle \mu(x), \xi \rangle$. $W$ is equivariant with respect to the $\Gm$-action on $Y$ that scales $\fg$ with weight $-1$ and the $\Gm$-action on $\bA^1$ with weight $-1$, i.e. $W \in \Gamma(Y,\cO_Y\langle1\rangle)^{G\times \Gm}$, where $\cO_Y\langle1\rangle$ denotes the trivial line bundle twisted by the character of $\Gm$ of weight $-1$. This defines a map of stacks
\[
W : \cY = Y/(G\times \Gm) \to \bA^1/\Gm,
\]
And we refer to the pair $(\cY,W)$ as a \emph{graded LG model}.
\end{notn}

Regarding $W$ as an element of $R \otimes \fg^\ast \subset R[\fg^\ast]$, we consider the $G$-equivariant graded CDGA over $R$,
\[
\cB = R[\fg^\ast,\epsilon[1] ; d\epsilon = -W],
\]
where $\epsilon$ has homological degree $1$ and is fixed by $G$, $\fg^\ast$ has homological degree $0$, and both have internal degree $1$ with respect to the auxiliary $\Gm$-action defining the grading. We will denote the derived zero fiber $Y_0 := W^{-1}(0) = \Spec(\cB)$, and we consider the stack $\cY_0 := Y_0 / G\times \Gm$.

The Landau-Ginzburg/Calabi-Yau (LG/CY) correspondence \cites{shipman2012geometric, isik2013equivalence} identifies the $\infty$-category $\DCoh(\cX) \cong \DCoh^{G}(X_0)$ with the graded singularity category \cite{orlov}
\[
\DSing(\cY,W) \cong \DSing^{G\times \Gm}(Y,W).
\]
The category $\DSing(\cY,W)$ is obtained from $\DCoh(\cY_0)$ by a localization that inverts a certain natural transformation
\begin{equation} \label{E:beta_trans}
\beta : F\langle1\rangle [-2] \to F \text{ for } F \in \DCoh(\cY_0).
\end{equation}
The localization annihilates perfect complexes, and it induces an equivalence with the Verdier quotient
\[
\DSing(\cY,W) \simeq \DCoh(\cY_0) / \Perf(\cY_0) \simeq \DCoh^{G\times\Gm}(Y_0) / \Perf^{G\times \Gm}(Y_0).
\]
See, for instance, \cite{halpern2015equivariant}*{Sect.~3.3} for details on the construction. Note that $\DSing(\cY,W)$ is generated by the image of $\DCoh(\cY_0)$ under shifts, cones, and direct summands.

Let us next recall the precise statement of the LG/CY correspondence from \cite{isik2013equivalence}, which uses \emph{linear Koszul duality}. We define the Koszul dual \CDGA to $\cB$ to be
\[
\cA = R[\fg[-1],\beta[-2];d\xi = \mu(\xi) \cdot \beta, d\beta = 0].
\]
where $\fg$ is in cohomological degree $1$ and $\beta$ in cohomological degree $2$, and both are in internal degree $-1$. Linear Koszul duality \cite{MR}*{Thm.~3.7.1,3.6.1} provides an $R$-linear equivalence of categories of quasi-coherent graded $dg$-$\cO_X$-modules
\[
\Psi_\mu : \DCoh^{\Gm}_X(\cB)^{\rm op} \simeq \DCoh^{\Gm}_X(\cA),
\]
where the latter denotes the full subcategory of quasi-coherent $dg$-$\cA$-modules whose homology is coherent over the graded algebra $H^\ast(\cA)$.

Any complex in $\DCoh^{\Gm}_X(\cB)$ is quasi-isomorphic to a complex $E$ of $R$-flat $\cB$ modules, and $\Psi_\mu$ is defined by
\begin{equation} \label{E:linear_koszul}
\Psi_\mu(E) = \cA \tilde{\otimes}_{R} E^\dual,
\end{equation}
where $E^\dual$ denotes the graded dual $\bigoplus_i \Hom^{\Gm}_{\cO_X}(E,\cO_X \langle i \rangle)\langle i \rangle$, the $\cA$-module structure comes from left multiplication on the left factor, and we have used the notation $\tilde{\otimes}_R$ to indicate that the differential is a deformation of the differential on the tensor product of $dg$-$R$-modules by a Koszul type differential. Let $\alpha_1,\ldots,\alpha_N \in \fg^\ast$ be a basis, and let $\alpha_i^\dual$ be the dual basis of $\fg$. Then using the natural left $\cB$-module structure on $E^\dual$ and right $\cA$-module structure on $\cA$, the differential on $\cA \tilde{\otimes}_R E^\dual$ is
\[
d_{\Psi_\mu(E)} = d_\cA \otimes 1 + 1 \otimes d_{E^\dual} + (-\cdot \beta) \otimes (\epsilon\cdot -) + \sum_i (- \cdot \alpha_i^\dual) \otimes (\alpha_i \cdot -),
\]
where we use the usual convention from graded-commutative algebra that swapping the order of two odd elements in an expression introduces a sign change. The inverse functor $\Psi_\mu^{-1}$ has the same expression, with the role of $\cA$ and $\cB$ reversed, i.e., $\Psi_\mu^{-1}(M) = \cB \tilde{\otimes}_R M^\dual$ with differential
\begin{equation} \label{E:inverse_koszul_d}
 d_\cB \otimes 1 + 1 \otimes d_{M^\dual} + (-\cdot \epsilon) \otimes (\beta\cdot -) + \sum_i (- \cdot \alpha_i) \otimes (\alpha_i^\dual \cdot -).
\end{equation}

\begin{thm}[\cite{isik2013equivalence}] \label{T:koszul}
The equivalence $\Psi_\mu$ of \eqref{E:linear_koszul} identifies the subcategory of perfect dg-$\cB$-modules with the full subcategory of dg-$\cA$-modules which are $\beta$-torsion (i.e. annihilated by $\beta^n$ for some $n$), and $\Psi_\mu$ identifies the natural transformation \eqref{E:beta_trans} on the left-hand-side with multiplication by $\beta$ on the right-hand-side. It therefore descends to an equivalence of $\beta$-localizations
\[
\xymatrix{\DSing(\cY,W)^{\rm op} \cong \DCoh^{\Gm}(\cB)^{\rm op} / \Perf^{\Gm}(\cB)^{\rm op} \ar[r]^-{\Psi_\mu}_-{\cong} & \DCoh^{\Gm}_X(\cA[\beta^{-1}]) }.
\]
Furthermore, there is a natural equivalence of $dg$-categories
\[
\DCoh^{\Gm}(\cA[\beta^{-1}]) \cong \DCoh(R[\fg[1];d\xi = \mu(\xi)])
\]
that mixes the internal grading with the homological grading of an $\cA[\beta^{-1}]$-module.
\end{thm}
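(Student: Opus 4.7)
The equivalence $\Psi_\mu$ itself is the content of linear Koszul duality \cite{MR}*{Thm.~3.7.1, 3.6.1}, which I take as given. The work amounts to verifying three additional claims: (i) under $\Psi_\mu$, the full subcategory $\Perf^\Gm(\cB) \subset \DCoh^\Gm_X(\cB)$ corresponds precisely to the full subcategory of objects of $\DCoh^\Gm_X(\cA)$ that are $\beta$-torsion; (ii) the intrinsic natural transformation $\beta \colon F\langle 1\rangle[-2] \to F$ of \eqref{E:beta_trans} corresponds under $\Psi_\mu$ to the operation of multiplication by the element $\beta \in \cA$; and (iii) after $\beta$-localization, there is a canonical equivalence $\DCoh^\Gm(\cA[\beta^{-1}]) \simeq \DCoh(R[\fg[1]; d\xi = \mu(\xi)])$ mixing the internal and homological gradings.

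I would begin with (iii), which is essentially a direct computation. The element $\beta \in \cA$ is homogeneous of cohomological degree $2$ and internal weight $-1$, so it provides a unit of non-zero weight in $\cA[\beta^{-1}]$; in particular the $\Gm$-action on $\Spec(\cA[\beta^{-1}])$ is free, and the projection onto the weight-$0$ subalgebra $(\cA[\beta^{-1}])_0$ induces an equivalence $\DCoh((\cA[\beta^{-1}])_0) \xrightarrow{\sim} \DCoh^\Gm(\cA[\beta^{-1}])$. Inspection of the weight-$0$ subalgebra, using the substitution $\xi' := \beta^{-1}\xi$ which moves each basis element of $\fg$ from cohomological degree $+1$, weight $-1$ into cohomological degree $-1$, weight $0$, yields a CDGA isomorphism $(\cA[\beta^{-1}])_0 \cong R[\fg[1]; d\xi' = \mu(\xi')]$ in homological grading; the differential formula follows from $d(\beta^{-1}\xi) = \beta^{-1}(\mu(\xi)\beta) = \mu(\xi)$.

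For (i), the strategy is to compute $\Psi_\mu$ on the compact generators $\cB\langle n\rangle$ of $\Perf^\Gm(\cB)$. A direct analysis of $\Psi_\mu(\cB) = \cA \,\tilde\otimes_R\, \cB^\dual$ with its twisted differential shows that $\Psi_\mu(\cB)$ admits a finite filtration whose associated graded is annihilated by $\beta$, so $\beta^n$ kills $\Psi_\mu(\cB)$ for $n$ at least the length of the filtration. Since $\Perf^\Gm(\cB)$ is generated under shifts, cones, and retracts by the $\cB\langle n\rangle$, and since the $\beta$-torsion subcategory of $\DCoh^\Gm_X(\cA)$ is closed under these operations, this yields $\Psi_\mu(\Perf^\Gm(\cB)) \subset \DCoh^\Gm_X(\cA)_{\beta\text{-tors}}$. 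For the reverse inclusion, I would argue by induction on the minimal $n$ with $\beta^n = 0$: using the fiber sequence $\ker(\beta^{n-1}) \to M \to M/\ker(\beta^{n-1})$ one reduces to the case $n=1$, i.e.\ to $\cA/\beta$-modules with coherent cohomology, and these pull back under $\Psi_\mu^{-1}$ to perfect $\cB$-modules by a direct computation using the inverse Koszul formula \eqref{E:inverse_koszul_d}.

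The main obstacle is (ii). The natural transformation $\beta$ on the LG side is constructed intrinsically from the derived structure of the closed immersion $i \colon \cY_0 \hookrightarrow \cY$: the two-term Koszul resolution $\cO_\cY\langle 1\rangle \xrightarrow{W} \cO_\cY \to \cO_{\cY_0}$ produces a canonical splitting $i^\ast i_\ast \cO_{\cY_0} \simeq \cO_{\cY_0} \oplus \cO_{\cY_0}\langle 1\rangle[1]$, and the resulting degree-$2$ internal-weight-$(-1)$ class in the endomorphisms of the identity functor of $\DCoh(\cY_0)$ is the natural transformation $\beta$. On the $\cA$-side, multiplication by $\beta \in \cA$ is manifestly a natural transformation of the identity functor of $\DCoh^\Gm_X(\cA)$ of the same bidegree. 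To identify these two transformations, I would evaluate both on the generator $\cB$ and trace the computation through \eqref{E:linear_koszul} and \eqref{E:inverse_koszul_d}; the key point is that the summand ``$(-\cdot\beta)\otimes(\epsilon\cdot-)$'' in the twisted differential is dual, via linear Koszul duality, to the Koszul differential $d\epsilon = -W$ whose presence is exactly what produces the splitting of $i^\ast i_\ast \cO_{\cY_0}$ above. Naturality in $F$ then extends the identification to all of $\DCoh^\Gm_X(\cB)$, and passing to the $\beta$-localization on both sides yields the final equivalence of the theorem.
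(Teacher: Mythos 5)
The paper does not prove \Cref{T:koszul} — it is stated with a citation to \cite{isik2013equivalence} (and the underlying linear Koszul duality is attributed to \cite{MR}), so there is no proof in the paper to compare against. Your sketch is therefore a re-derivation of a result the paper takes as a black box from the literature. The overall route — take linear Koszul duality as given, then verify (iii) by passing to the weight-zero subalgebra of $\cA[\beta^{-1}]$, (i) by tracking perfect generators, and (ii) by matching the intrinsic $\beta$-transformation with multiplication by $\beta\in\cA$ — is indeed the route followed by Isik and Shipman, and part (iii) is essentially complete as you wrote it.

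There are two genuine soft spots. First, in (i), your descending induction on "the minimal $n$ with $\beta^n = 0$" treats $\beta$-torsion as a chain-level condition and uses objects like $\ker(\beta^{n-1})$, which do not make sense in the derived/stable setting. What you actually have available is that $\beta$ acts nilpotently on the coherent cohomology $H^\ast(M)$; one should either argue on the Postnikov filtration, or observe directly that the kernel of the localization $\DCoh^\Gm_X(\cA)\to\DCoh^\Gm_X(\cA[\beta^{-1}])$ is the thick subcategory generated by $\cA/\beta$-modules with coherent cohomology (this is where the boundedness in $\DCoh$ is used, and is not automatic). Second, and more seriously, in (ii) the claim that "naturality in $F$ extends the identification" from a single generator to the whole category is not valid: for a stable $\infty$-category compactly generated by one object $G$, the evaluation map from natural transformations of the identity to $\pi_\ast\mathrm{End}(G)$ — i.e.\ the map $HH^\ast \to \pi_\ast A$ — is in general neither injective nor surjective, so two natural transformations can agree on $G$ without being homotopic. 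The right framing is that both $\beta$-transformations arise from a central action of the graded polynomial algebra $k[\beta]$ on the respective categories (on the $\cA$-side because $k[\beta]\subset\cA$ is central; on the $\cB$-side via the construction from the two-term Koszul resolution of $\cO_{\cY_0}$), and what must be verified is that $\Psi_\mu$ is $k[\beta]$-linear in this sense — e.g.\ by exhibiting the functor as a module functor over $k[\beta]\Mod^\Gm$, or by directly checking the compatibility of the $k[\beta]$-module structures on Hom-complexes, not just on a single generator. That compatibility is the real content of Isik's theorem and is where your sketch would need to be filled in.
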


Both $\Psi_\mu$ and $\Psi_\mu^{-1}$, as well as the natural transformations $\id \to \Psi_\mu \circ \Psi_\mu^{-1}$ and $\Psi_\mu^{-1} \circ \Psi_\mu \to \id$, lift naturally to functors and natural transformations on categories of $G$-equivariant modules, and the same is true for the second isomorphism of \Cref{T:koszul}. We denote the resulting isomorphism
\[
\Psi_\mu : \DCoh^{G \times \Gm}(\cB)^{\rm op} \xrightarrow{\cong} \DCoh^{G \times \Gm}(\cA),
\]
which is still given by the formula \eqref{E:linear_koszul}. We also use $\Psi_\mu^G$ to denote the resulting isomorphism $\DSing^{G \times \Gm}(\cB)^{\rm op} \cong \DCoh^G(R)$ obtained by inverting $\beta$. Composing with Serre duality gives the equivalence mentioned above,
\[
\Psi_\mu^G \bD_{\cY_0} : \DSing(\cY,W) = \DCoh^{G \times \Gm}(\cB) / \Perf^{G \times \Gm}(\cB) \xrightarrow{\cong} \DCoh(\cX).
\]

The key property of $\Psi^G_\mu$ we use is that it commutes with two natural forgetful functors from $\DSing(\cY,W)$ and $\DCoh(\cX)$ to $\DCoh(X/G) = \DCoh^G(R)$. More explicitly, consider the canonical map of \CDGA's $\cB \to R[\epsilon[1]]$ that annihilates $\fg^\ast$, which has finite Tor-amplitude, and consider the inclusion of \CDGA's $R[\beta[-2]] \subset \cA$. We can regard $R[\epsilon[1]]$ and $R[\beta[-2]]$ as arising from the same construction as $\cB$ and $\cA$, but applied to the $0$ map $0 \to R$ instead of $\mu : \fg \to R$. Therefore, linear Koszul duality gives an equivalence
\[
\Psi^G_0 : \DCoh^{G \times \Gm}(R[\epsilon[1]]) \xrightarrow{\cong} \DCoh^{G \times \Gm}(R[\beta[-2]]).
\]

\begin{lem} \label{L:koszul_forget}
The following diagram is commutative
\[
\xymatrix{
\DCoh^{G\times \Gm}(\cB)^{\rm op} \ar[r]^{\Psi_\mu^G} \ar[d]^{R[\epsilon[1]] \otimes_{\cB} - } & \DCoh^{G\times \Gm}(\cA) \ar[d]^{\text{restrict } R[\beta[-2]] \subset \cA} \\
\DCoh^{G\times\Gm}(R[\epsilon[1]])^{\rm op} \ar[r]^{\Psi_0^G} & \DCoh^{G\times \Gm}(R[\beta[-2]]),
}
\]
and it induces a commutative diagram after inverting $\beta$,
\[
\xymatrix{ \DSing(\cY,W)^{\rm op} \ar[r]^-{\Psi_\mu^G} \ar[dr]_-{\Psi_0^G(R[\epsilon[1]] \otimes_\cB -)} & \DCoh(\cX) \ar[d]^{\text{restrict }R \subset R[\fg[1];d=\mu]} \\ 
& \DCoh(X/G) }.
\]
\end{lem}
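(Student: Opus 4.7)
The plan is to verify the first diagram by direct computation with an explicit flat resolution, and then deduce the second diagram by inverting $\beta$ and invoking the regrading equivalence from \Cref{T:koszul}.

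For the first square, fix $E \in \DCoh^{G \times \Gm}(\cB)$ represented by a dg-$\cB$-module that is flat as an $R$-module; such a resolution exists since $\cB$ is a polynomial extension of $R$ followed by a free odd generator. Unwinding the formula \eqref{E:linear_koszul}, the object $\Psi_\mu^G(E) = \cA \tilde{\otimes}_R E^\dual$, viewed as a dg-$R[\beta[-2]]$-module via restriction of scalars along $R[\beta[-2]] \hookrightarrow \cA$, has underlying graded $R$-module $R[\beta[-2]] \otimes_R \bigwedge^\bullet \fg \otimes_R E^\dual$, with differential assembled from $1 \otimes d_{E^\dual}$, the Koszul coupling $\sum_i (-\cdot \alpha_i^\dual) \otimes (\alpha_i \cdot -)$, the coupling $(-\cdot \beta)\otimes(\epsilon\cdot -)$, and the term $(-\cdot \alpha_i^\dual)\otimes \mu(\alpha_i^\dual)\beta$ coming from $d_\cA$ on the $\fg$-generators.

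On the other composition, resolve $R[\epsilon[1]] \cong \cB/(\fg^\ast)$ over $\cB$ by the Koszul complex $\cB \otimes_R \bigwedge^\bullet \fg$, with Koszul differential $\alpha_i^\dual \mapsto \alpha_i \in \cB$. Tensoring over $\cB$ with $E$ produces the dg-$R[\epsilon[1]]$-module $E \otimes_R \bigwedge^\bullet \fg$ with differential $d_E + \sum (\alpha_i\cdot -) \otimes \iota_{\alpha_i^\dual}$; this computes $R[\epsilon[1]]\otimes^L_\cB E$. Applying the formula \eqref{E:linear_koszul} for $\Psi_0^G$ and using the canonical pairing $\bigwedge^\bullet \fg \cong (\bigwedge^\bullet \fg^\ast)^\dual$ identifies $\Psi_0^G(E \otimes_R \bigwedge^\bullet \fg)$ with the same underlying graded $R$-module $R[\beta[-2]] \otimes_R \bigwedge^\bullet \fg \otimes_R E^\dual$. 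Comparing the contributions to the differential term-by-term, one sees the internal differential on $E^\dual$, the $\epsilon\beta$-coupling, and the Koszul couplings induced by the resolution and by $d_\cA$ all assemble into the same differential. The identification is manifestly functorial in $E$ and $G$-equivariant, providing the natural isomorphism witnessing commutativity of the first square.

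For the second diagram, invert $\beta$ throughout the first square. On the right column, by the final statement of \Cref{T:koszul} this regrading identifies $\DCoh^{G\times\Gm}(\cA)[\beta^{-1}] \simeq \DCoh^G(R[\fg[1]; d=\mu])$ and $\DCoh^{G\times\Gm}(R[\beta[-2]])[\beta^{-1}] \simeq \DCoh^G(R)$; under these identifications the restriction along $R[\beta[-2]] \hookrightarrow \cA$ becomes the restriction along $R \subset R[\fg[1]; d=\mu]$, since both reduce, after regrading, to forgetting the $\fg$-generators. Composing with the Serre duality equivalence $\bD_{\cY_0}$ passes from $\DSing(\cY,W)^{\rm op}$ to $\DSing(\cY,W)$ on the left, giving the asserted second diagram. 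The main obstacle is bookkeeping: keeping track of signs, of the homological vs.\ internal gradings, and of the precise form of the Koszul differentials under duality; however no conceptual difficulty arises, because naturality of every construction in $E$ forces the differentials to match once the underlying graded modules have been matched up.
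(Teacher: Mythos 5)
Your proposal runs the computation in the ``forward'' direction -- resolve $R[\epsilon[1]]$ over $\cB$ by a Koszul complex, tensor with $E$, apply $\Psi_0^G$, and compare against the restriction of $\Psi_\mu^G(E)$. The paper's own proof inverts the horizontal arrows and works with $\Psi_\mu^{-1}$ and $\Psi_0^{-1}$ instead: for an $R$-flat $M\in\DCoh^{G\times\Gm}(\cA)$ the object $(\Psi_\mu^G)^{-1}(M)=\cB\tilde{\otimes}_R M^\dual$ is \emph{semi-free as a $\cB$-module by construction}, so $R[\epsilon[1]]\otimes_\cB^L(-)$ can be computed underived by simply killing the $\fg^\ast$-generators in the differential \eqref{E:inverse_koszul_d}, and what remains is literally the formula for $(\Psi_0^G)^{-1}(M)$. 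That one observation is the entire proof. Your approach, by contrast, has to manufacture the semi-freeness by hand (the Koszul resolution) and then dualize the resulting complex, which is where the real work is hiding.

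The step you dismiss as ``bookkeeping'' is precisely the step that is not obviously true and not carried out. After dualizing $E\otimes_R\bigwedge^\bullet\fg$ you obtain an exterior-algebra factor $(\bigwedge^\bullet\fg)^\ast\cong\bigwedge^\bullet\fg^\ast$, whereas the restriction of $\cA\tilde{\otimes}_R E^\dual$ carries a factor $\bigwedge^\bullet\fg$ (since $\cA$ has generators in $\fg$). These are not canonically isomorphic as graded $G$-representations; matching them requires either choosing an equivariant isomorphism $\fg\cong\fg^\ast$, or choosing the Koszul resolution to be $\cB\otimes_R\bigwedge^\bullet\fg^\ast$ so that the canonical pairing applies -- and then one must still check that the four pieces of the differential (the internal $d_{E^\dual}$, the $\epsilon\beta$-coupling, the Koszul contraction coming from the resolution, and the $\mu$-term in $d_\cA$) line up sign-for-sign under that identification. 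Your proposal asserts the conclusion of this comparison without producing it, so as written there is a genuine gap. The second diagram, given the first, is handled the same way in both arguments, so that part is fine.
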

\begin{proof}

The claim is equivalent to showing that the square commutes after inverting the horizontal arrows. Given an $R$-flat complex $M \in \DCoh^{G\times \Gm}(\cA)$, we have $(\Psi^G_{\mu})^{-1}(M) = \cB \tilde{\otimes}_R M^\dual$. This is semi-free as a $\cB$-module, so we have:
\[
R[\epsilon[1]] \otimes_\cB (\Psi_\mu^G)^{-1}(M) \cong R[\epsilon] \tilde{\otimes}_R M^\dual
\]
with differential given by $1 \otimes d_{M^\dual} + (-\cdot \epsilon) \otimes (\beta \cdot -)$, which arises from the formula \eqref{E:inverse_koszul_d} after identifying all generators in $\fg^\ast$ with $0$. On the other hand, this is exactly the complex $(\Psi_0^G)^{-1}(M)$, where $M$ is regarded as a $G \times \Gm$-equivariant $R[\beta[-2]]$-module.

The identification $\DCoh^{G \times \Gm}(\cA[\beta^{-1}]) \cong \DCoh^G(R[\fg[1];d=\mu])$ is induced by the isomorphism of $G \times \Gm$-equivariant \CDGA's
\[
R[\fg[1];d=\mu] [\beta^{\pm 1}] \xrightarrow{\cong} \cA[\beta^{-1}]
\]
that acts on generators as $\beta^{\pm 1} \mapsto \beta^{\pm 1}$ and $\alpha_i^\dual \mapsto \beta^{-1} \alpha_i^\dual$, where the generators $\alpha_i^\dual$ in the former have $\Gm$-weight $0$ and homological degree $1$, followed by the evident identification
\[
\DCoh^{G \times \Gm}(R[\fg[1]][\beta^{\pm 1}]) \cong \DCoh^G(R[\fg[1]]).
\]
From this one can identify the functor $\DCoh(\cX) \to \DCoh^G(R)$ as the $\beta$-localization of the functor
\[
\DCoh^{G\times \Gm}(\cA) \to \DCoh^{G\times\Gm}(R[\beta[-2]])
\]
that restricts an $\cA$-module to $R[\beta[-2]] \subset \cA$. This establishes the second claim.
\end{proof}

\subsubsection{Proof of \Cref{T:magic_windows} in the local case}

We now return to the setting of \Cref{H:magic_hypotheses}, and use \Cref{T:koszul} to convert the statement of \Cref{P:magic_reduction} to an equivalent statement about $\DSing^{G\times\Gm}(Y,W)$.

The conditions (3) and (4) of \Cref{H:magic_hypotheses} guarantee that the linearization $\ell = [\cO_{X_0} \otimes \chi]$ and the rational quadratic norm on graded points naturally extends to $X / G$. Then, they can be pulled back to $\cY$ under the projection $\cY = X \times \fg / (G \times \Gm) \to X/G$. If we consider the projection
\[
\phi : \cY' := X \times \fg / G \to \cY,
\]
the $\Theta$-stratification of $\cY'$ induced by \Cref{T:intrinsic_GIT} is $\Gm$-equivariant, and thus descends to a $\Theta$-stratification of $\cY$, which we call $\bS$. Note that the $\bS$-canonical graded points of $\cY$ lift to $\ell$-canonical graded points of $\cY'$.


We first define two subcategories of $\DSing(\cY,W)$ analogous to \Cref{D:grr_for_mf}. For any graded point $f : (B\Gm)_{k'} \to \cY_0$ we define the integer
\[
\eta_f^\cY := \wt\left( \det( (f^\ast \bL_\cY)^{>0} ) \right),
\]
and for any $F \in \DCoh(\cY_0)$ we consider the grade restriction rule:
\begin{equation} \label{E:grr_mf}
\text{weights of } f^\ast(F) \text{ lie in the interval } \wt(f^\ast \delta) + \frac{1}{2} [-\eta_f^\cY,\eta_f^\cY].
\end{equation}

\begin{defn} \label{D:grr_for_mf}
\begin{enumerate}
\item $\fW_{\cY_0}(\delta) \subset \DCoh(\cY_0)$ denotes the full subcategory of complexes that satisfy \eqref{E:grr_mf} with respect to all graded points $f : (B\Gm)_{k'} \to \cY_0$ whose composition with the projection $\cY_0 = Y_0 / (G \times \Gm) \to B\Gm$ is trivial, and $\fW_{(\cY,W)}(\delta) \subset \DSing(\cY,W)$ denotes the subcategory generated by these objects under shifts, cones, and direct summands.
\item $\cG^\ell_{\cY_0}(\delta) \subset \DCoh(\cY_0)$ and $\cG_{(\cY,W)}^\ell(\delta) \subset \DSing(\cY,W)$ denote the same, but with the half-open interval $[-\eta_f^\cY,\eta_f^\cY)$ instead of the closed interval in \eqref{E:grr_mf}, and with this grade restriction rule holding only for $\bS$-canonical graded points $f$.
\end{enumerate}
\end{defn}

By \Cref{P:magic_reduction}, the proof of \Cref{T:magic_windows} will be complete once we verify the following:

\begin{prop} \label{P:magic_windows_mf}
Under \Cref{H:magic_hypotheses} and \Cref{N:lg}, we have
\begin{enumerate}
\item $\fW_{(\cY,W)}(\delta) = \cG_{(\cY,W)}^\ell(\delta)$,
\item the isomorphism $\Psi_\mu^G : \DSing^{G\times \Gm}(Y,W)^{\rm op} \cong \DCoh^G(X_0)$ of \Cref{T:koszul} restricts to an isomorphism $\cG_{(\cY,W)}^\ell(\delta)^{\rm op} \cong \cG_{\cX}^\ell(\delta),$ and
\item for any $E \in \fW_{(\cY,W)}(\delta)$, the character of the $G$-representation $H_\ast(\Psi_\mu^G(E) \otimes_R k(x))$ lies in the polytope $\delta_x + \frac{1}{2} \overline{\Sigma}_x \subset (M_x)_\bR$.
\end{enumerate}
\end{prop}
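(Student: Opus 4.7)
The plan is to reduce all three parts to an interplay between the linear Koszul duality $\Psi_\mu^G$ and the classical magic windows theorem of \cite{halpern2016combinatorial} applied to the smooth stack $\cY = X \times \fg/(G \times \Gm)$.

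First, for part (2), I would identify the natural bijection between $\bS$-canonical graded points $f : (B\Gm)_{k'} \to \cY_0$ and $\ell$-canonical graded points $f' : (B\Gm)_{k'} \to \cX$: given a cocharacter $\lambda : \Gm \to G$ and a $\lambda$-fixed point $y \in X_0$, the graded point $(y, 0) \in Y_0$ with one-parameter subgroup $(\lambda, 0) \in G \times \Gm_{\rm aux}$ corresponds to the graded point $y \in X_0$ with cocharacter $\lambda$. A direct computation with $\bL_\cY$ at $(y, 0)$, in which the $\fg^\vee$-contributions from $BG$ and from the tangent to $\fg$ appear in both homological degrees $0$ and $1$ and so cancel in $\det(\bL_\cY^{>0})$, identifies $\eta_f^\cY$ with the $\lambda$-weight of $\det(T_X^{\vee,\lambda>0}|_y)$; self-duality of the adjoint representation of $\fg$ then yields a simple relation to $\wt(\det((f')^\ast\bL_\cX)^{<0})$, with a Serre-duality shift coming from $\omega_{\cY_0}$. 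Tracking these shifts through the contravariant $\Psi_\mu^G$, the interval $\wt(f^\ast \delta) + \tfrac{1}{2}[-\eta_f^\cY, \eta_f^\cY)$ maps to the half-open interval appearing in \eqref{E:grr_window}, giving (2).

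For part (1), both $\fW_{(\cY,W)}(\delta)$ and $\cG_{(\cY,W)}^\ell(\delta)$ are window subcategories for the smooth quotient stack $\cY$ imposed via restriction to the closed substack $\cY_0 \subset \cY$. The containment $\fW_{(\cY,W)}(\delta) \subset \cG_{(\cY,W)}^\ell(\delta)$ is immediate: every $\bS$-canonical graded point is a graded point with trivial auxiliary $\Gm$-component, and lattice genericity of $\delta$ guarantees no weight sits on the boundary of the relevant polytope. For the reverse containment, apply the magic windows theorem of \cite{halpern2016combinatorial} to $\cY$ with the linearization pulled back from $X/G$: the grade restriction rule at $\ell$-canonical graded points of $\cY$ forces the rule at all graded points trivial on the auxiliary $\Gm$. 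Restricting this statement from $\DCoh(\cY)$ to $\DCoh(\cY_0)$ and passing to the $\beta$-localization $\DSing(\cY, W)$ yields (1).

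For part (3), use \Cref{L:koszul_forget} to identify the underlying $G$-equivariant $R$-module of $\Psi_\mu^G(E)$ with $\Psi_0^G(R[\epsilon[1]] \otimes_\cB E)$; restricting to the fiber at $x$ then identifies $\Psi_\mu^G(E) \otimes_R k(x)$ with $\Psi_0^G$ applied to the restriction of $E$ to the fiber $\{x\} \times \fg \subset Y$, regarded as a $k(x)[\epsilon[1]]$-module with its $G$-action. Given any cocharacter $\lambda : \Gm \to G = G_x$, apply \Cref{L:minweight} to the resulting $\lambda$-equivariant $R[\epsilon[1]]$-module to bound its lowest $\lambda$-weight in terms of the minimal $\lambda$-weight of $E$ at the $\bS$-canonical graded point of $\cY_0$ corresponding to $(x, \lambda)$. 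The hypothesis $E \in \fW_{(\cY,W)}(\delta)$ controls these weights via \eqref{E:grr_mf}, and unwinding the auxiliary-$\Gm$ regrading in $\Psi_0^G$ (which converts internal weight to homological degree) together with the shift computation from part (2) translates this into the bound: the $\lambda$-weights of $H_\ast(\Psi_\mu^G(E) \otimes_R k(x))$ lie in $\wt(\lambda^\ast \delta_x) + \tfrac{1}{2}[\langle \lambda, \overline{\Sigma}_x^{\min}\rangle, \langle \lambda, \overline{\Sigma}_x^{\max}\rangle]$. Since this holds for every cocharacter $\lambda$ of $G_x$, the character of $H_\ast(\Psi_\mu^G(E) \otimes_R k(x))$ must lie in the polytope $\delta_x + \tfrac{1}{2}\overline{\Sigma}_x$.

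The main obstacle will be the weight bookkeeping in parts (2) and (3): one must consistently reconcile the contravariance of $\Psi_\mu^G$, the Serre-duality shift from $\omega_{\cY_0}$, the exchange of auxiliary $\Gm$-weight with homological degree under $\Psi_0^G$, and the off-by-one issues between the closed and half-open intervals in \eqref{E:grr_window} and \eqref{E:grr_mf}; only after carefully matching all of these does one recover the exact polytope $\delta_x + \tfrac{1}{2}\overline{\Sigma}_x$ rather than a shifted or enlarged variant.
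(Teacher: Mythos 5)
Your approach for part (3) matches the paper's, and your sketch for part (1) (reduce to the smooth stack $\cY$ and apply the magic windows theorem of \cite{halpern2016combinatorial}) captures the right idea, though the paper is more careful: it first lifts from $\cY$ to $\cY' = Y/G$, then uses Luna's slice theorem around each closed point to produce a strongly \'etale chart of the form $T^\ast \Rep(Q)$, and only then invokes the generation statement of \cite{halpern2016combinatorial}*{Thm.~3.2}; without the \'etale slice reduction one cannot apply that theorem, which is stated for linear representations.

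The real gap is in part (2). You claim that by tracking weight shifts "the interval $\wt(f^\ast\delta) + \tfrac12[-\eta_f^\cY,\eta_f^\cY)$ maps to the half-open interval appearing in \eqref{E:grr_window}," in other words that Koszul duality transports the grade restriction intervals exactly. This is not true, and the relevant lemmas in the paper (\Cref{L:koszul_preserve_weights} and \Cref{L:minweight}) deliberately only establish one-sided inequalities: $\minwt(f^\ast(F)) \geq \cdots$ and $\maxwt(F\otimes_R k(x)) \leq \cdots$. The reason is that the forgetful functor from $\DSing(\cY,W)$ to $\DCoh^{G\times\Gm}(R[\epsilon[1]])$ used in \Cref{L:koszul_forget} discards the $\fg^\ast$-module structure, and the $\beta$-localization in the Koszul equivalence can destroy homology in certain weights, so the weights of $\Psi_\mu^G(E)$ at a graded point of $\cX$ are controlled by, but not equal to, the weights of $E$ at the corresponding graded point of $\cY_0$. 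This gives the inclusion $\Psi_\mu^G(\cG_{(\cY,W)}^\ell(\delta)) \subset \cG_\cX^\ell(\delta)$, but the weight bookkeeping alone cannot give the reverse inclusion. The paper closes the gap by a different mechanism: both sides of (2) restrict to equivalent categories over their respective semistable loci (by \Cref{thm:derived_Kirwan_surjectivity_quasi-smooth} applied to $\cY_0$ and $\cX$), these restrictions commute with $\Psi_\mu^G$, and the key input is that $\Crit(W)|_{\cY^{\rm ss}} \subset X^{\rm ss}\times\fg$ (\Cref{L:comoment_critical_locus}), so the restriction $\cG_{(\cY,W)}^\ell(\delta) \to \DSing^{G\times\Gm}((X\times\fg)^{\rm ss},W)$ is essentially surjective. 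Your argument needs to be supplemented by this or an equivalent mechanism; as written it establishes only one containment of the equality in (2).
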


We will prove this at the end of the section, after gathering some preliminary lemmas.

\begin{lem} \label{L:koszul_preserve_weights}
Under \Cref{H:magic_hypotheses}, let $\lambda : \Gm \to G$ be a one parameter subgroup, and $x \in X_0^\lambda$ be a fixed point for which $H_1(\bL_{\cX,x})$ has nonnegative $\lambda$-weights. Then for any $E \in \DCoh^{G \times \Gm}(Y_0)$, if we regard it as an element of $\DSing(\cY,W)$ and let $F = \Psi_\mu^G(E) \in \DCoh^G(X_0)$, then
\begin{align*}
\minwt_\lambda(F_x) &\geq \minwt_\lambda(E_{(x,0)}) - \wt_\lambda(\det(\fg^{\lambda<0})), and\\
\maxwt_\lambda(F \otimes_R k(x)) &\leq \maxwt_\lambda(E_{(x,0)}).
\end{align*}
\end{lem}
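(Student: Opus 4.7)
The plan is to reduce both inequalities to a single weight bound on $F \otimes_R k(x)$ and then verify it by an explicit computation with the formula \eqref{E:linear_koszul}. First, I would apply \Cref{L:minweight} to the CDGA $A = R[\fg[1];d\xi = \mu(\xi)]$ with generating space $U = \fg$ in homological degree $1$. Since $A$ is free of finite rank over $R$ (with basis the monomials in $\Lambda(\fg)$), any $F \in \DCoh^G(A)$ has bounded coherent homology as an $R$-module, whence $\minwt_\lambda(F \otimes_R k(x)) > -\infty$ and the hypothesis of \Cref{L:minweight} holds. This yields the identity
\[
\minwt_\lambda(F_x) \;=\; \minwt_\lambda(F \otimes_R k(x)) \;-\; \wt_\lambda(\det(\fg^{\lambda<0})).
\]
Consequently both inequalities of the lemma reduce to the unified claim that the range of $\lambda$-weights of $F \otimes_R k(x)$ is contained in the range of $\lambda$-weights of $E_{(x,0)}$.

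To establish the unified bound, I would base-change the explicit formula $\Psi_\mu(E) = \cA \tilde\otimes_R E^\dual$ to $x$. Since $x \in X_0^\lambda \subset X_0 = \mu^{-1}(0)$ forces $\mu(x) = 0$, the deformed differentials on both $\cA$ and $\cB$ trivialize: one obtains $\cA(x) := \cA \otimes_R k(x) = k(x)[\fg[-1],\beta[-2]]$ and $\cB(x) := \cB \otimes_R k(x) = k(x)[\fg^*,\epsilon[1]]$ with closed generators. Thus $\Psi_\mu(E) \otimes_R k(x)$ is computed by the trivial Koszul duality $\Psi_{0,k(x)}$ applied to $E \otimes_R k(x) \in \DCoh^{G\times\Gm}(\cB(x))$. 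Composing with the $R$-forgetful functor and invoking \Cref{L:koszul_forget} at $x$, we obtain
\[
F \otimes_R k(x) \;\simeq\; \Psi_{0,k(x)}^G\bigl(k(x)[\epsilon[1]] \otimes^L_{\cB(x)} (E \otimes_R k(x))\bigr),
\]
while $E_{(x,0)} = (E\otimes_R k(x)) \otimes^L_{\cB(x)} k(x)$ differs from the intermediate $k(x)[\epsilon[1]]$-module $M := k(x)[\epsilon[1]] \otimes^L_{\cB(x)} (E \otimes_R k(x))$ by the one additional derived tensor $\otimes^L_{k(x)[\epsilon[1]]} k(x)$.

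It remains to verify two assertions: (a) the trivial Koszul duality $\Psi_{0,k(x)}^G$ followed by $\beta$-inversion (with the identification $\cA(x)[\beta^{-1}] \cong k(x)[\fg[1]][\beta^{\pm 1}]$) preserves the $\lambda$-weight range of its input, and (b) the further derived tensor $\otimes^L_{k(x)[\epsilon[1]]} k(x)$ preserves the $\lambda$-weight range as $\epsilon$ lies in the trivial $\lambda$-representation. Both operations are $k(x)$-linear, so by d\'evissage on the $k(x)[\epsilon[1]]$-module $M$ we reduce to cyclic graded modules $k(x)\langle n \rangle$ concentrated in a single weight, where each claim becomes a direct computation with free Koszul resolutions. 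The main obstacle is the careful bookkeeping of the three gradings (homological degree, the auxiliary $\Gm$-grading that gives the internal grading on $\cA$ and $\cB$, and the $\lambda$-weights from the $G$-action) together with the re-grading induced by $\beta$-inversion; the hypothesis that $H_1(\bL_{\cX,x}) = \Lie(\Stab_G(x))$ has nonnegative $\lambda$-weights is what ensures that the extremal weights appearing in these resolutions are indeed controlled by those of $E_{(x,0)}$ in exactly the form claimed.
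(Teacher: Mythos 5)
Your overall strategy closely follows the paper's: use \Cref{L:koszul_forget} and base change to $x$ to express $F\otimes_R k(x)$ in terms of the ``trivial'' Koszul duality over $k(x)[\epsilon[1]]$ and $k(x)[\beta[-2]]$, observe that the $\lambda$-action commutes with these rings (because $\epsilon$ and $\beta$ are $G$-invariant), use Nakayama at $(x,0)$, and close with \Cref{L:minweight}. However, there are two substantive problems.

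First, your verification of the hypothesis of \Cref{L:minweight} is incorrect. That lemma requires either $U^{<0}=0$ or $\minwt_\lambda(F\otimes_A k(x))>-\infty$, where $A=R[\fg[1];d\xi=\mu(\xi)]$. You instead establish $\minwt_\lambda(F\otimes_R k(x))>-\infty$, which is a genuinely different — and weaker — condition. To see the gap concretely, take $R=k$, $U=\fg=k\cdot\epsilon$ with $\lambda$-weight $-1$, $A=k[\epsilon[1]]$, and $F=k$ the augmentation module: then $F\otimes_R k$ has weight range $\{0\}$, but $F\otimes_A k$ is computed from the infinite semifree resolution of $k$ over the exterior algebra, so its nonvanishing $\lambda$-weights go to $-\infty$ and the asserted equality in \Cref{L:minweight} is false. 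In the lemma at hand, the needed boundedness of $\minwt_\lambda(F_x)$ is precisely where the hypothesis that $H_1(\bL_{\cX,x})$ has nonnegative $\lambda$-weights intervenes (via \Cref{lem:bounded_SOD} applied to the $\Theta$-stratum associated to $\lambda$, or by first extracting $\minwt_\lambda(F\otimes_R k(x))\geq\minwt_\lambda(E_{(x,0)})>-\infty$ from the Koszul comparison and then feeding the $H_1$ hypothesis into the proof of \Cref{L:minweight}). As written, your proof only invokes this hypothesis with the vague closing sentence; the actual argument never uses it, which cannot be right since the statement is false without it.

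Second, ``by d\'evissage on the $k(x)[\epsilon[1]]$-module $M$ we reduce to cyclic graded modules'' is not the right tool: weight ranges are not in general preserved under extensions (homology classes can cancel), so d\'evissage does not reduce a weight-range claim to cyclic pieces. What works — and what the paper's proof actually uses — is the direct sum decomposition $M=\bigoplus_w M_w$ by constant $\lambda$-weight, available because $k(x)[\epsilon[1]]$ (and $k(x)[\beta[-2]]$) is concentrated in $\lambda$-weight $0$, hence the $\lambda$-action commutes with the module structure. The Koszul functor $\Psi^G_0$ and $\beta$-localization act diagonally on this decomposition, so the nonvanishing weights of $F\otimes_R k(x)$ form a subset of those of $M$, and Nakayama at $(x,0)$ identifies the nonvanishing weights of $M$ with those of $E_{(x,0)}$. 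You should replace the d\'evissage language with this direct-sum argument; without it the step is not justified.
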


\begin{proof}
If we let $p : X_0/G \to X/G$ be the map induced by the $G$-equivariant inclusion $R \subset R[\fg[1];d=\mu]$, let $\cB = R[\fg^\ast,\epsilon[1];d\epsilon=-W]$, and let $i : \Spec(R[\epsilon[1]]) \hookrightarrow Y_0 = \Spec(\cB)$ denote the $G$-equivariant closed immersion induced by the surjection $\cB \to R[\epsilon[1]]$. Then \Cref{L:koszul_forget} implies that $p_\ast(F)$ is quasi-isomorphic to the complex obtained by pulling back $i^\ast(E) = E \otimes_\cB R[\epsilon[1]]$ and then applying the $\beta$-localization functor
\[
\DCoh^{G\times \Gm}(R[\epsilon[1]])^{\rm op} \xrightarrow{\Psi_0^G} \DCoh^G(R[\beta[-2]]) \to \DCoh^G(R).
\]
This functor commutes with restricting to the fiber over $x \in \Spec(R)$, so we have
\begin{equation} \label{E:compute_koszul_fiber}
p_\ast(F) \otimes_R k(x) \cong \Psi_0^G(E \otimes_\cB k(x)[\epsilon[1]]) \otimes_{k(x)[\beta[-2]]} k(x)[\beta^{\pm 1}].
\end{equation}
As a $k(x)[\epsilon[1]]$-module, $E \otimes_\cB k(x)[\epsilon[1]]$ decomposes as a direct sum of complexes of constant $\lambda$-weight, because $\epsilon$ has weight $0$ with respect to $\lambda$. It follows that the Koszul dual complex $\Psi_0^G(E \otimes_\cB k(x)[\epsilon[1]]) \in \DCoh(k(x)[\beta[-2]])$ has non-vanishing homology in precisely the same $\lambda$-weights. Inverting $\beta$ can not create homology in new weights, so if $p_\ast(F) \otimes_R k(x)$ has non-vanishing homology in weight $w$, then so does $E \otimes_\cB k(x)[\epsilon[1]]$.

These observations show that $\minwt_\lambda(p_\ast(F) \otimes_R k(x)) \geq \minwt_\lambda(E \otimes_\cB k(x)[\epsilon[1]])$ and the opposite inequality for $\maxwt_\lambda(-)$. \Cref{L:minweight} implies that
\[
\minwt_\lambda(F_x) = \minwt_\lambda(p_\ast(F) \otimes_R k(x)) - \wt(\det(\fg^{\lambda <0})).
\]
The claim follows by Nakayama's lemma which implies that $E \otimes_\cB k(x)[\epsilon[1]]$ and $E_{(x,0)}$ have non-vanishing homology in exactly the same weights.
\end{proof}

\begin{lem}
Under \Cref{H:magic_hypotheses}, the isomorphism $\Psi_\mu^G : \DSing^{G \times \Gm}(Y,W) \cong \DCoh^G(X_0)$ maps $\cG_{(\cY,W)}^\ell(\delta)$ to $\cG_{\cX}^\ell(\delta)$.
\end{lem}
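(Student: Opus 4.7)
Given $E \in \cG^\ell_{(\cY, W)}(\delta) \subseteq \DCoh(\cY_0)$ and $F := \Psi^G_\mu(E) \in \DCoh(\cX)$, my plan is to verify the grade restriction rule \eqref{E:grr_window} at each $\ell$-canonical graded point $f: (B\Gm)_{k'} \to \cX$ by lifting $f$ to an $\bS$-canonical graded point of $\cY_0$ and then translating both weight bounds through \Cref{L:koszul_preserve_weights} and \Cref{L:minweight}.

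First, I would fix such an $f$, corresponding to a pair $(x, \lambda)$ with $x \in X_0^\lambda$, and construct the lift $\tilde f: (B\Gm)_{k'} \to \cY_0$ classifying the point $(x, 0) \in Y_0 = W^{-1}(0)$ with cocharacter $(\lambda, 0) \in G \times \Gm_{\rm aux}$. Its composition with $\cY_0 \to B\Gm_{\rm aux}$ is trivial by construction. To see that $\tilde f$ is $\bS$-canonical, observe that since $0 \in \fg$ has trivial limit under every cocharacter of $G$, the collection of cocharacters under which $(x, 0) \in Y = X \times \fg$ has a limit equals that for $x \in X$, so the HN cocharacter of $(x, 0)$ on the $\cY'$ side agrees with that of $x$ on the $\cX$ side; this then descends to $\cY$ to give the required $\bS$-canonical point, with $\tilde f^* \delta = f^* \delta$ (since $\delta$ is pulled back from $\cX$).

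Next, I would compute $\eta^\cY_{\tilde f} - \eta^\cX_f$ directly from the cotangent complexes. Writing $\bL_\cX|_x \simeq [\fg \to T^*_x X \to \fg^*]$ in homological degrees $[1, 0, -1]$ and $\bL_\cY|_{(x,0)} \simeq [\fg \oplus \bC \to T^*_x X \oplus \fg^*]$ in homological degrees $[1, 0]$, and using that the $\Gm_{\rm aux}$ component of the cocharacter is trivial, both positive-weight parts can be expressed via $N_X^+ := \wt(\det((T^*_x X)^{\lambda > 0}))$ and $N_\fg^{\pm} := \wt(\det(\fg^{\lambda \gtrless 0}))$. An Euler characteristic calculation yields
\[
\eta^\cX_f = N_X^+ - N_\fg^+ + N_\fg^-, \qquad \eta^\cY_{\tilde f} = N_X^+ - N_\fg^+ - N_\fg^-.
\]
Since $G$ is reductive, the $\lambda$-weights of $\fg$ come in $\pm$ pairs (from the root decomposition relative to a maximal torus containing $\lambda(\Gm)$), so $N_\fg^+ + N_\fg^- = 0$. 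Consequently $\eta^\cX_f = \eta^\cY_{\tilde f} + 2 N_\fg^-$.

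The lower weight bound then follows by combining the first inequality of \Cref{L:koszul_preserve_weights} (whose hypothesis holds by \Cref{L:verify_quasi-smooth_hypotheses}) with the window hypothesis on $E$:
\[
\minwt_\lambda(f^* F) \geq \minwt_\lambda(E_{(x, 0)}) - N_\fg^- \geq \wt(f^*\delta) - \tfrac{1}{2}\eta^\cY_{\tilde f} - N_\fg^- = \wt(f^*\delta) - \tfrac{1}{2}\eta^\cX_f.
\]
For the strict upper bound, I would apply \Cref{L:minweight} with the cocharacter $-\lambda$ in place of $\lambda$ (the finiteness hypothesis being automatic since $F_x$ is bounded), which gives $\maxwt_\lambda(f^* F) = \maxwt_\lambda(F \otimes_R k(x)) - N_\fg^+$, and then chain this with the second inequality of \Cref{L:koszul_preserve_weights} and the strict upper bound from the window condition on $E$:
\[
\maxwt_\lambda(f^* F) \leq \maxwt_\lambda(E_{(x,0)}) - N_\fg^+ < \wt(f^*\delta) + \tfrac{1}{2}\eta^\cY_{\tilde f} - N_\fg^+ = \wt(f^*\delta) + \tfrac{1}{2}\eta^\cX_f,
\]
where the final equality again uses $N_\fg^+ + N_\fg^- = 0$. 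Passing $\maxwt_\lambda(f^*F)$ back to $\minwt_\lambda(f^*(\bD_\cX F))$ via Serre duality (using that $\omega_\cX$ has $\lambda$-weight $0$ at $\ell$-canonical graded points, by \Cref{C:canonical_weight} and self-duality of $\bL_\cX$) yields the second half of \eqref{E:grr_window}.

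The main obstacle is the careful bookkeeping: ensuring the identity $N_\fg^+ + N_\fg^- = 0$ (the essential reductivity input), aligning the two definitions of $\eta^{\bullet}_f$ in terms of $\bL_\cY$ versus $\bL_\cX$, and correctly relating $F_x$ (tensor over $R[\fg[1]; d = \mu]$) to $F \otimes_R k(x)$ (tensor over $R$) via \Cref{L:minweight} in both signs of $\lambda$. Once these pieces are in place, the lemma follows by a direct inequality chase.
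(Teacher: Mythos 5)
Your overall strategy — lift the $\ell$-canonical graded point to $\cY_0$, relate $\eta_f^{\cX}$ to $\eta_{\tilde f}^{\cY}$, and chain the inequalities of \Cref{L:koszul_preserve_weights} and \Cref{L:minweight} — is the same as the paper's, and your lower-bound chain is correct. There are, however, two problems to fix.

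\textbf{A sign/degree error in $\bL_\cY|_{(x,0)}$ that happens to wash out.} You write $\bL_\cY|_{(x,0)} \simeq [\fg\oplus\bC\to T^*_xX\oplus\fg^*]$ in homological degrees $[1,0]$, but $\cY=Y/(G\times\Gm)$ is a \emph{smooth} stack, so its cotangent complex lives in degrees $[0,-1]$: it is $[\Omega^1_{X,x}\oplus\fg^*\to\fg^*\oplus k]$, with the (dual of the) Lie algebra in degree $-1$, not the Lie algebra itself in degree $+1$. Computing $\eta^\cY_{\tilde f}$ from the correct complex gives $N_X^+$ outright; your formula $N_X^+-N_\fg^+-N_\fg^-$ only agrees with this after using the reductivity identity $N_\fg^++N_\fg^-=0$, so the error is masked by coincidence.

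\textbf{A genuine gap in the upper bound.} You propose to apply \Cref{L:minweight} at $-\lambda$ to get $\maxwt_\lambda(f^*F)=\maxwt_\lambda(F\otimes_R k(x))-N_\fg^+$, remarking that ``the finiteness hypothesis is automatic since $F_x$ is bounded.'' This is not automatic: $f^*F=F\otimes_{R[\fg[1];d=\mu]}k(x)$ is \emph{not} bounded in general, because $f$ does not have finite Tor-amplitude ($\cX$ is quasi-smooth but not smooth), and its weights need not be bounded above. \Cref{L:bounded_weights} only guarantees $\minwt(f^*F)>-\infty$, not $\maxwt(f^*F)<\infty$. When $\fg^{\lambda>0}\neq 0$, the hypothesis of \Cref{L:minweight} at $-\lambda$ requires exactly the finiteness of $\maxwt(f^*F)$ that you have not established. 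Moreover, the final step — ``passing $\maxwt_\lambda(f^*F)$ back to $\minwt_\lambda(f^*\bD_\cX F)$ via Serre duality'' — needs more than weight-zero-ness of $\omega_\cX$, since $f^*$ and $\bD_\cX$ do not commute on the non-smooth $\cX$. The paper's route avoids both issues by applying \Cref{L:minweight} at $+\lambda$ directly to $\bD_\cX F$, whose finiteness hypothesis \emph{is} covered by \Cref{L:bounded_weights}, and then computing $\bD_\cX F\otimes_R k(x)\cong\omega_{X/G}|_x\otimes(F\otimes_R k(x))^\vee$ by Serre duality over the \emph{smooth} stack $X/G$, where $p_\ast$ and $\bD$ do commute. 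You should rewrite the upper bound following that route rather than trying to compute $\maxwt(f^*F)$ directly.
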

\begin{proof}
By \Cref{D:grr_for_mf}, it suffices to show that for any $E \in \cG_{\cY_0}^\ell(\delta)$ the complex $F:= \Psi_\mu^G(E) \in \DCoh(\cX)$ lies in $\cG_\cX^\ell(\delta)$. Consider a one-parameter-subgroup $\lambda : \Gm \to G$  and $x \in X_0^\lambda$ such that $H_1(\bL_{\cX,x})$ has nonnegative $\lambda$-weights. We let $f' : (B\Gm)_{k'} \to \cY_0$ and $f : (B\Gm)_{k'} \to \cX$ be the graded points corresponding to $\lambda$ and the fixed point $(x,0)$ and $x$ respectively. It suffices to show that for any $E \in \DCoh^{G\times \Gm}(Y_0)$, if the weights of $E_{(x,0)}$ lie in the interval
\begin{equation} \label{E:target_weight_bound}
\wt_\lambda(\delta_x) + \frac{1}{2}[-\eta_{f'}^\cY,\eta_{f'}^\cY),
\end{equation}
then $F:=\Psi_\mu^G(E) \in \DCoh(\cX)$ satisfies the weight bounds of \eqref{E:grr_window} with respect to the graded point $f$. This is what we show in the rest of the proof.

By change of base we may assume $k=k'$, to simplify notation. First let us compute the cotangent complexes:
\begin{gather*}
\bL_{\cY, (x,0)} \simeq [ 0 \to \Omega_{X,x}^1 \oplus \fg^\ast \to \fg^\ast \oplus k] , \text{ whereas}\\
\bL_{\cX,x} \simeq [ \fg \to \Omega_{X,x}^1 \to \fg^\ast].
\end{gather*}
Self duality implies that $(\bL_{\cX,x})^{\lambda<0} \simeq ((\bL_{\cX,x})^{>0})^\dual$, so one can compute
\[
\eta_{f'}^{\cY} + 2 \wt_\lambda(\det(\fg^{\lambda <0})) = - \wt(\det((\bL_{\cX,x})^{\lambda <0})).
\]
Note also that $f^\ast(\omega_{X/G}) \cong \det(\bL_{\cX,x}) \otimes \det(\fg[1])^\dual \cong k$. \Cref{L:koszul_preserve_weights} now implies
\begin{align*}
\minwt(f^\ast(F)) &\geq \minwt_\lambda(E_{(x,0)}) - \wt_\lambda(\det(\fg^{\lambda <0})) \\
&\geq \wt(f^\ast \delta) - \frac{1}{2} \eta_f^\cY - \wt(\det(\fg^{\lambda<0})) \\
&= \wt(f^\ast \delta) + \frac{1}{2} \wt_\lambda(\det(\bL_{\cX,x}^{\lambda<0})).
\end{align*}
Similarly, we observe that $p_\ast(\bD_\cX(F)) \cong \bD_{X/G} (p_\ast(F)) \cong \omega_{X/G} \otimes (p_\ast(F))^\dual$, so \Cref{L:koszul_preserve_weights} and \Cref{L:minweight} imply that
\begin{align*}
\minwt(f^\ast(\bD_{\cX}(F))) &\geq -\maxwt_\lambda (p_\ast(F) \otimes_R k(x)) - \wt_\lambda(\det(\fg^{\lambda <0})) \\
&> -\wt(f^\ast \delta) - \frac{1}{2} \eta_f^\cY - \wt_\lambda(\det(\fg^{\lambda<0})) \\
&= - \wt(f^\ast \delta) + \frac{1}{2} \wt_\lambda(\det(\bL_{\cX,x}^{\lambda<0})).
\end{align*}
These are precisely the weight bounds of \eqref{E:grr_window}.
\end{proof}

\begin{lem} \label{L:magic_windows_mf}
Under \Cref{H:magic_hypotheses}, $\fW_{(\cY,W)}(\delta) = \cG^\ell_{(\cY,W)}(\delta)$.
\end{lem}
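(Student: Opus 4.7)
The plan is to establish the two containments separately, exploiting the fact that both sides restrict to $\DSing(\cY^{\rm ss}(\ell),W)$ via the usual restriction functor.

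For the easy containment $\fW_{(\cY,W)}(\delta) \subseteq \cG^\ell_{(\cY,W)}(\delta)$, I would first note that the $\Theta$-stratification $\bS$ on $\cY$ is pulled back via the projection $\cY \to \cY' = (X\times \fg)/G$ which forgets the auxiliary $\Gm$-grading, so every $\bS$-canonical graded point of $\cY_0$ factors through $(B\Gm)_{k'} \to \cY_0$ with trivial composition to the auxiliary $B\Gm$. Thus these points form a subset of those tested by $\fW_{\cY_0}(\delta)$. Next, lattice genericity of $\delta$ ensures that the boundary value $\wt(f^\ast \delta) + \tfrac{1}{2}\eta^\cY_f$ is never attained as an integer weight at such a graded point, so at each $\bS$-canonical $f$ the closed interval condition of $\fW_{\cY_0}(\delta)$ coincides with the half-open interval condition of $\cG^\ell_{\cY_0}(\delta)$. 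Passing to the quotient onto $\DSing(\cY,W)$ gives $\fW_{(\cY,W)}(\delta) \subseteq \cG^\ell_{(\cY,W)}(\delta)$.

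For the reverse containment, I would apply the magic windows theorem \cite{halpern2016combinatorial}*{Thm.~3.2} to the ambient smooth stack $\cY = (X\times \fg)/(G\times \Gm)$, reducing if necessary to the case of a linear $G$-representation by a Luna slice argument at the closed fixed point $x$. This produces a subcategory $\fW_\cY(\delta) \subset \DCoh(\cY)$ whose objects are defined by the same closed-interval grade restriction rule applied to all graded points trivial on the auxiliary $B\Gm$, and such that restriction yields an equivalence $\fW_\cY(\delta) \xrightarrow{\simeq} \DCoh(\cY^{\rm ss}(\ell))$. The composite
\[
\fW_\cY(\delta) \hookrightarrow \DCoh(\cY) \xrightarrow{(-)|_{\cY_0}} \DCoh(\cY_0) \twoheadrightarrow \DSing(\cY, W)
\]
lands in $\fW_{(\cY,W)}(\delta)$ because $\eta^\cY_f$ is defined via $\bL_\cY$ and graded points of $\cY_0$ trivial on the auxiliary $\Gm$ lift canonically to graded points of $\cY$, so the window condition transports. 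Commutativity with restriction to the semistable locus shows that this composite followed by restriction to $\DSing(\cY^{\rm ss}(\ell),W)$ agrees with the magic windows equivalence followed by the projection onto the singularity category, and is in particular essentially surjective.

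Separately, I would establish that restriction induces an equivalence $\cG^\ell_{(\cY,W)}(\delta) \xrightarrow{\simeq} \DSing(\cY^{\rm ss}(\ell),W)$ as a singularity-category analog of \Cref{P:grr_category}, by applying \Cref{T:derived_Kirwan_surjectivity_full} to the $\Theta$-stratification of $\cY_0$ induced by $\bS$ (noting that $\cY_0$ is quasi-smooth as the zero locus of the section $W$ of a line bundle on smooth $\cY$), and passing the resulting semiorthogonal decomposition of $\DCoh(\cY_0)$ to the Verdier quotient $\DSing(\cY,W) = \DCoh(\cY_0)/\Perf(\cY_0)$. Combined with the already-proved containment $\fW_{(\cY,W)}(\delta) \subseteq \cG^\ell_{(\cY,W)}(\delta)$ and the essential surjectivity of $\fW_{(\cY,W)}(\delta)$ onto $\DSing(\cY^{\rm ss}(\ell),W)$, this forces $\fW_{(\cY,W)}(\delta) = \cG^\ell_{(\cY,W)}(\delta)$: any $F \in \cG^\ell$ is isomorphic in $\cG^\ell$ to some $\tilde F \in \fW$ with the same restriction, so $F \in \fW$. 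The main technical obstacle I anticipate is reconciling the two versions of the grade restriction rule across $\cY$, $\cY_0$, and $\DSing(\cY,W)$, and verifying that \cite{halpern2016combinatorial} applies to $\cY$ (which may require a further Luna-slice reduction to the linear case and a compatibility check for the relevant genericity conditions).
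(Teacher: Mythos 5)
Your proposed proof of the reverse containment $\cG^\ell_{(\cY,W)}(\delta) \subseteq \fW_{(\cY,W)}(\delta)$ has a fatal gap. You form the composite
\[
\fW_\cY(\delta) \hookrightarrow \DCoh(\cY) \xrightarrow{(-)|_{\cY_0}} \DCoh(\cY_0) \twoheadrightarrow \DSing(\cY, W)
\]
and claim it is essentially surjective onto $\DSing(\cY^{\rm ss}(\ell),W)$. But this composite is the zero functor. Since $\cY = (X\times\fg)/(G\times\Gm)$ is a smooth stack, $\DCoh(\cY) = \Perf(\cY)$, and the inclusion $j : \cY_0 \hookrightarrow \cY$ is a regular closed immersion (derived zero locus of the section $W$ of a line bundle), so $j^\ast$ carries $\Perf(\cY)$ into $\Perf(\cY_0)$. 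The Verdier quotient $\DSing(\cY,W) = \DCoh(\cY_0)/\Perf(\cY_0)$ annihilates $\Perf(\cY_0)$, so every object in the image of your composite is zero. The singularity category $\DSing(\cY,W)$ is genuinely not generated by restrictions of complexes on $\cY$ --- that is precisely what makes it nontrivial --- so the essential surjectivity you need cannot be obtained this way.

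The paper avoids this by traveling in the opposite direction across the closed immersion: for $F \in \DCoh(\cY_0)$ one considers $\phi^\ast(i_\ast(F)) \in \Perf(\cY')$, where $i : \cY_0 \hookrightarrow \cY$ is the inclusion and $\phi : \cY' = Y/G \to \cY$ the $\Gm$-quotient map. The pushforward $i_\ast$ lands in $\Perf(\cY)$ because $i$ is an lci closed immersion, and \Cref{L:minweight} shows that the grade restriction rules on $\cY_0$ translate exactly into grade restriction rules on $\cY'$ under this operation. This reduces the equality $\fW_{\cY_0}(\delta) = \cG^\ell_{\cY_0}(\delta)$ to the equality $\fW_{\cY'}(\delta) = \cG^\ell_{\cY'}(\delta)$ of subcategories of $\Perf(Y/G)$, which is the actual content of \cite{halpern2016combinatorial}*{Thm.~3.2} (after a Luna slice reduction to the quasi-symmetric linear model, as in \cite{HLMO}). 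Your instinct to invoke that theorem is right, but the route through $\DCoh(\cY)$ and restriction to $\cY_0$ cannot work.

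A secondary issue: in the easy containment $\fW_{(\cY,W)}(\delta) \subseteq \cG^\ell_{(\cY,W)}(\delta)$, you assert that lattice genericity of $\delta$ forces $\wt(f^\ast\delta) + \tfrac{1}{2}\eta^\cY_f \notin \bZ$ at every $\bS$-canonical graded point $f$. The lattice genericity condition $\partial(\delta_x + \tfrac{1}{2}\overline{\Sigma}_x) \cap M_x = \emptyset$ concerns lattice points on the boundary of a full-dimensional polytope in $(M_x)_\bR$; it does not immediately control the value of the projection $\langle -, \lambda\rangle$ at the edge of the image interval, because an integral character lying on the supporting hyperplane need not lie in (or on the boundary of) the polytope itself. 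The conclusion may well be correct once one unwinds the definitions via \Cref{L:hyperplane_arrangement}, but as written the step is a substantial leap that requires justification. In the paper this containment falls out of the cited magic windows result at the level of $\Perf(Y/G)$ rather than being argued separately.
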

\begin{proof}
It suffices to show that $\fW_{\cY_0}(\delta) = \cG^\ell_{\cY_0}(\delta)$ as subcategories of $\DCoh(\cY_0)$, because these categories generate the corresponding subcategories of $\DSing(\cY,W)$ by definition. Let $i : \cY_0 \hookrightarrow \cY$ denote the inclusion, and as above let $\phi : \cY' := Y/G \to \cY = Y/ (G\times \Gm)$ denote the quotient map. Let $\fW_{\cY'}(\delta)  \subset\Perf(\cY')$ denote the subcategory of complexes that satisfy \eqref{E:grr_mf} for all graded points of $\cY'$, and let $\cG^\ell_{\cY'}(\delta) \subset \Perf(\cY')$ denote the same, except only for $\ell$-canonical graded points, and using the half-open interval $\frac{1}{2} [-\eta_f^\cY,\eta_f^\cY)$ in \eqref{E:grr_mf}. 

A graded point of $\cY$ lifts to $\cY'$ if and only if the composition with the projection $\cY \to B\Gm$ is trivial, so \Cref{L:minweight} and \Cref{D:grr_for_mf} imply that $F \in \DCoh(\cY_0)$ lies in $\fW_{\cY_0}(\delta)$ or $\cG_{\cY_0}^\ell(\delta)$ if and only if $\phi^\ast(i_\ast(F))$ lies in $\fW_{\cY'}(\delta)$ or $\cG_{\cY'}^\ell(\delta)$ respectively. It therefore suffices to show that the inclusion $\fW_{\cY'}(\delta) \subset \cG^\ell_{\cY'}(\delta)$ is an equality. This equality was proved in \cite{halpern2016combinatorial} under the more restrictive hypothesis that $Y = \bA^n$ is a (quasi-symmetric) linear representation of $G$, and it is extended in \cite{HLMO} to the general case of a smooth affine quotient stack via Luna's \'{e}tale slice theorem, but we summarize the argument here:

\medskip

Let $V = T_{(x,0)} Y = T_x X \oplus \fg$. Then $T_x X$ is a self-dual $G$-representation and hence $V$ is a quasi-symmetric representation in the sense of \cite{SVdB}. Luna's slice theorem gives a strongly \'etale $G \times \Gm$-equivariant affine open neighborhood $U \subset Y$ of $(x,0)$ and a strongly \'etale $G \times \Gm$-equivariant affine morphism $\pi: U \to V$. We have used $\Gm$-equivariance to guarantee that if $(y,0)\in U$, then $U$ contains the entire fiber $\{y\} \times \fg$. It follows from \Cref{T:intrinsic_GIT} that both the inclusion $U \subset Y$ and $\pi : U \to V$ are compatible with the $\Theta$-stratifications of induced by $\ell$ (forgetting the $\Gm$-action).

In particular $U^{\rm ss}(\ell) = \pi^{-1}(V^{\rm ss}(\ell))$, so $\DCoh(U^{\rm ss}(\ell)/G)$ is generated under cones, shifts, and direct summands by the essential image of $\DCoh(V^{\rm ss}(\ell)/G)$ under $\pi^\ast$. It follows that $\cG_{U/G}^\ell(\delta)$ is generated by the essential image of $\cG_{V/G}^\ell(\delta)$ under $\pi^\ast$. By \cite{halpern2016combinatorial}*{Thm.~3.2}, the category $\cG_{V/G}^\ell(\delta)$ is generated by the complexes $\cO_V \otimes_k U$ where $U$ ranges over all $G$-representations whose character lies in $\overline{\Sigma}_x$ -- this is where the genericity hypotheses on $\ell$ and $\delta$ are used. It follows that the same is true for $\cG_{U/G}^\ell(\delta)$. In particular, $\fW_{U/G}(\delta) = \cG_{U/G}^\ell(\delta)$.

Because the inclusion $U/G \to Y/G$ is compatible with the $\Theta$-stratifications induced by $\ell$, we have shown that any complex $F \in \cG_{Y/G}^\ell(\delta)$ satisfies the condition \eqref{E:grr_mf} for all graded points in $U/G$. Using Luna's slice theorem at other closed points of $X/G$, we can apply the same argument to cover $Y$ by open subsets in which the restriction of $F$ satisfies \eqref{E:grr_mf}, and the claim follows. 

\end{proof}

\begin{lem} \label{L:comoment_critical_locus}
If $ \mu : X \to \fg^\ast$ is a weak co-moment map, inducing a function $W : X \times \fg \to \bA^1$, then
\[
\Crit(W)|_{(X \times \fg)^{\rm{ss}}} \subset X^{\rm{ss}} \times \fg
\]
\end{lem}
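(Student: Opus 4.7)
\medskip
\noindent \textit{Proof plan.}

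\medskip

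The plan is to characterize $\Crit(W)$ explicitly, then use a Kempf-type theorem to show that an unstable point in the base forces instability of the whole pair. First, I would compute $\Crit(W)$ set-theoretically. The function $W(x,\xi) = \langle \mu(x),\xi\rangle$ has partial derivatives $\partial_\xi W = \mu(x) \in \fg^\ast$ and $\partial_x W = d\mu_x(\xi) \in T^\ast_x X$ (where $d\mu_x : \fg \to T^\ast_xX$ is the map adjoint to the derivative of $\mu : X \to \fg^\ast$). So $(x,\xi) \in \Crit(W)$ iff $x \in \mu^{-1}(0)$ and $d\mu_x(\xi) = 0$. Applying the isomorphism $\phi_0$ and invoking the defining identity of a weak co-moment map (\Cref{D:comoment_map}), which holds on $X_0^{\rm red}$, the second condition becomes $a_x(\phi_1(\xi)) = 0$, i.e., $\phi_1(\xi) \in \ker(a_x) = \Lie(\Stab_G(x))$.

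\medskip

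Now I would argue the contrapositive: if $(x,\xi) \in \Crit(W)$ and $x \in X$ is unstable with respect to $\ell$, I will exhibit a one-parameter subgroup that destabilizes $(x,\xi)$. The Kempf--Hesselink theorem (or its intrinsic $\Theta$-stability version underlying \Cref{T:intrinsic_GIT}) produces an optimal destabilizing one-parameter subgroup $\lambda \colon \Gm \to G$ of $x$ satisfying $\Stab_G(x) \subset P_\lambda$, and hence $\Lie(\Stab_G(x)) \subset \fp_\lambda = \fg^{\Ad\lambda \geq 0}$. Combined with the previous paragraph, this gives $\phi_1(\xi) \in \fp_\lambda$. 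Since $\ell$ on $X \times \fg / G$ is pulled back from $X/G$, and the adjoint $\lambda$-action on $\fg$ has nonpositive weights on the complement of $\fp_\lambda$, it will suffice to show that $\xi \in \fp_\lambda$ as well, for then $\lim_{t\to 0}\lambda(t)\cdot(x,\xi)$ exists and the numerical invariant agrees with that at $x$.

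\medskip

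The key step, which I expect to be the main technical input, is upgrading $\phi_1(\xi) \in \fp_\lambda$ to $\xi \in \fp_\lambda$. For this, I will exploit the $G$-equivariance of $\phi_1$ by pulling it back along the filtration $\bar{x} \colon \Theta \to X/G$ classified by $(\lambda, x)$; this pullback exists because $\lambda$ destabilizes $x$, so that $y := \lim_{t\to 0}\lambda(t)\cdot x$ exists. The pullback $\bar{x}^\ast(\cO_X \otimes \fg)$ is isomorphic to $k[t]\otimes\fg$ as a $\Gm$-equivariant $k[t]$-module, where the $\Gm$ acts by combining the scaling of $t$ with the $\Ad\lambda$-action on $\fg$. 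The pullback $\widetilde{\phi_1}$ of $\phi_1$ is a $\Gm$-equivariant automorphism of this, so it preserves $\Gm$-weight subspaces. Computing these weight subspaces explicitly shows that for $v \in \fg_w$ (the $\Ad\lambda$-weight $w$ summand), $\widetilde{\phi_1}(v) \in \bigoplus_{w' \geq w} t^{w'-w}\fg_{w'}$, and specializing to $t = 1$ gives $(\phi_1)_x(v) \in \bigoplus_{w' \geq w} \fg_{w'}$. Thus $(\phi_1)_x$ is ``block upper-triangular'' with respect to the $\Ad\lambda$-weight filtration on $\fg$, and the same holds for its inverse. Consequently $(\phi_1)_x^{-1}$ preserves $\fp_\lambda$, so $\xi = (\phi_1)_x^{-1}(\phi_1(\xi)) \in \fp_\lambda$, completing the proof.
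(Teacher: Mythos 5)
Your proof is correct and follows the same overall strategy as the paper's (compute $\Crit(W)$ from the defining identity of a weak co-moment map, then destabilize $(x,\xi)$ by Kempf's optimal one-parameter subgroup for $x$), but you are notably more careful about the role of $\phi_1$. The paper's proof asserts that $d\mu_x(\xi)=0$ if and only if $\xi \in \Lie(\Stab_G(x))$, and then concludes that the $\lambda$-limit of $(x,\xi)$ exists because $\xi \in \Lie(\Stab_G(x)) \subset \fp_\lambda$. But the commuting square in \Cref{D:comoment_map} actually gives $\phi_1(\xi) \in \Lie(\Stab_G(x))$, not $\xi$ itself, and since $(\phi_1)_x$ is only $\Stab_G(x)$-equivariant at the fiber it does not automatically preserve $\Lie(\Stab_G(x))$ or $\fp_\lambda$. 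Your additional step — pulling $\phi_1$ back along the filtration $\Theta \to X/G$ classified by $(\lambda, x)$, observing that the resulting $\Gm$-equivariant automorphism of $k[t]\otimes\fg$ preserves weights, and specializing at $t=1$ to conclude $(\phi_1)_x(\fg_w) \subset \fg^{\geq w}$ (hence $(\phi_1)_x^{\pm 1}$ preserve $\fp_\lambda$) — is correct and cleanly bridges this gap. This is a genuine improvement in rigor over the paper's terse argument.
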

\begin{proof}
From the defining formula $W(x,\xi) = \langle \mu(x), \xi \rangle$, we see that $dW_{(x,\xi)} = \langle d\mu(x), \xi \rangle \oplus \mu(x) \in \Omega_{X,x}^1 \oplus \fg^\ast$. Note that for a weak co-moment map, at any point of $x$ the linear map $d\mu : T_{X,x} \to \fg^\ast$ is isomorphic to the action map $a : \Omega_{X,x}^1 \to \fg^\ast$, so $\langle d\mu(x),\xi \rangle = 0$ if and only if $\xi \in \Lie(\Stab_G(x))$. It follows that
\[
dW_{(x,\xi)} = 0 \quad \Leftrightarrow \quad \mu(x)=0 \text{ and } \xi \in \Lie(\Stab_G(x)).
\]
If such a point $(x,\xi) \in X\times \fg$ is $\ell$-semistable, then $x$ must be semistable as well. If not, then one could consider the canonical maximal destabilizing subgroup $\lambda : \Gm \to G$ for $x$, and because $\xi \in \Lie(\Stab_G(x))$ the limit $\lim_{t\to 0} \lambda(t) \cdot (x,\xi)$ would exist. Thus $\lambda$ would be destabilizing for $(x,\lambda)$ according to the Hilbert-Mumford criterion.
\end{proof}

\begin{lem} \label{L:identify_grr_categories}
The restriction of $\Psi_\mu^G$ defines an equivalence $\cG_{(\cY,W)}^\ell(\delta) \cong \cG_{\cX}^\ell(\delta)$.
\end{lem}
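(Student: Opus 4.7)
The previous lemma shows that $\Psi_\mu^G$ maps $\cG^\ell_{(\cY,W)}(\delta)$ into $\cG^\ell_\cX(\delta)$, and combined with the fact that $\Psi_\mu^G : \DSing^{G\times\Gm}(Y,W)^{\rm op} \cong \DCoh(\cX)$ is a global equivalence (\Cref{T:koszul}), this yields a fully faithful contravariant functor between the two window categories. It therefore remains to establish essential surjectivity, and the plan is to reduce this to a compatibility statement on the semistable locus.

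The first step is to show that restriction to the semistable locus induces equivalences
\[
\cG^\ell_\cX(\delta) \xrightarrow{\cong} \DCoh(\cX^{\rm ss}(\ell)) \quad\text{and}\quad \cG^\ell_{(\cY,W)}(\delta) \xrightarrow{\cong} \DSing(\cY^{\rm ss}(\ell), W).
\]
The first is immediate from \Cref{P:grr_category}. For the second, I would apply \Cref{T:derived_Kirwan_surjectivity_full} to the quasi-smooth stack $\cY_0$ equipped with the $\Theta$-stratification $\bS$, after verifying the obstruction hypothesis \ref{eqn:obstruction_weights} using that $\cY_0$ is a derived complete intersection in the smooth stack $\cY$ cut out by the single equation $W$ and that $\bS$ is inherited from $\cY'$ where the relevant weights are nonnegative. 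This yields $\cG^\ell_{\cY_0}(\delta) \cong \DCoh(\cY_0^{\rm ss}(\ell))$, and then one passes to singularity categories by quotienting out perfect complexes on both sides.

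The second step uses Isik's theorem applied locally. Since $\ell$ is generic, $\cX^{\rm ss}(\ell)$ has finite stabilizers and is smooth, and by \Cref{L:comoment_critical_locus} the critical locus of $W|_{\cY^{\rm ss}}$ is contained in the zero section over $X^{\rm ss}$. Running the Koszul duality formula \eqref{E:linear_koszul} defining $\Psi_\mu^G$ over the semistable loci then yields an equivalence $\DSing(\cY^{\rm ss}(\ell), W)^{\rm op} \cong \DCoh(\cX^{\rm ss}(\ell))$, and since \eqref{E:linear_koszul} is manifestly compatible with restriction to open substacks, this local $\Psi_\mu^G$ fits into a commutative square whose three outer sides — the two restriction equivalences from the previous step and the local $\Psi_\mu^G$ on the smooth semistable locus — are all equivalences. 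It follows that the remaining fourth side, namely $\Psi_\mu^G : \cG^\ell_{(\cY,W)}(\delta)^{\rm op} \to \cG^\ell_\cX(\delta)$, is an equivalence as well.

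The main obstacle I anticipate is the $\DSing$-analog of \Cref{P:grr_category} in the first step: one must check that quotienting out perfect complexes is compatible with both the window subcategory and the restriction functor to the semistable locus. Concretely, one must verify that $\cG^\ell_{(\cY,W)}(\delta)$, which is defined as the subcategory of $\DSing(\cY,W)$ generated under shifts, cones, and summands by the image of $\cG^\ell_{\cY_0}(\delta) \subset \DCoh(\cY_0)$, is recovered by pulling back $\DSing(\cY^{\rm ss}(\ell), W)$ along the canonical quotient. This should follow from the fact that restriction of perfect complexes to the semistable locus is essentially surjective and that any perfect complex can be shifted into the window, but the bookkeeping requires care.
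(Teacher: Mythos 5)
Your approach matches the paper's essentially step for step: the same commuting square of restriction and Koszul-duality functors, full faithfulness of $\Psi_\mu^G$ from \Cref{T:koszul}, the $\DCoh$-level equivalence from \Cref{P:grr_category}, the critical-locus observation of \Cref{L:comoment_critical_locus}, and Kirwan surjectivity applied to $\cY_0$ (the theorem you mean here is \Cref{thm:derived_Kirwan_surjectivity_quasi-smooth}, not \Cref{T:derived_Kirwan_surjectivity_full}; the obstruction-weight hypothesis you verify belongs to the former).

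The one thing worth flagging is that the ``obstacle'' you anticipate at the end is self-created. You do not need the full claim that $\cG^\ell_{(\cY,W)}(\delta) \to \DSing(\cY^{\rm ss},W)$ is an equivalence — in particular, you never have to prove that the Verdier quotient by perfect complexes is compatible with passage to the window on both sides. Since $\Psi_\mu^G$ restricted to the windows is already known to be fully faithful, all that is missing is essential surjectivity, and for that it suffices to show that $\cG^\ell_{(\cY,W)}(\delta) \to \DSing^{G\times\Gm}((X\times\fg)^{\rm ss},W)$ is essentially surjective. That follows immediately: $\DSing$ is generated under shifts, cones, and summands by the image of $\DCoh$, and $\cG^\ell_{\cY_0}(\delta) \to \DCoh(\cY_0^{\rm ss})$ is an equivalence by \Cref{thm:derived_Kirwan_surjectivity_quasi-smooth}. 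This is exactly how the paper runs the argument, and it avoids the bookkeeping about whether $\Perf(\cY_0^{\rm ss})$ is hit by $\Perf(\cY_0) \cap \cG^\ell_{\cY_0}(\delta)$, which is not obviously true and is not needed.
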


\begin{proof}
The equivalence of \Cref{T:koszul} clearly commutes with restriction to an open substack, so we have a commutative diagram of functors
\[
\xymatrix{
\cG_{(\cY,W)}^\ell(\delta) \ar[d]^{\Psi_\mu^G} \ar[r]^-{\res} & \DSing^{G \times \Gm}(X^{\rm ss} \times \fg, W) \ar[d]^{\Psi_\mu^G} \\
\cG_{\cX}^\ell(\delta) \ar[r]^-{\res} & \DCoh(\cX^{\rm ss}) }.
\]
The right vertical arrow is an equivalence and the left vertical arrow is fully faithful by \Cref{T:koszul}, and the bottom horizontal arrow is an equivalence by \Cref{P:grr_category}, so it suffices to show that the top horizontal arrow is essentially surjective (in which case it follows that it is an equivalence). Removing closed substacks that are disjoint from $\Crit(W)$ does not affect the singularity category \cite{orlov2}*{Prop.~1.14} \cite{halpern2016combinatorial}*{Lem.~5.5}, so by \Cref{L:comoment_critical_locus} it suffices to show that the restriction functor
\[
\res : \cG_{(\cY,W)}^\ell(\delta) \to \DSing^{G\times \Gm}((X \times \fg)^{\rm ss}, W)
\]
is essentially surjective. Because $\DSing$ is generated under shifts, cones, and direct summands by $\DCoh$, it suffices to show that
\[
\res : \cG_{\cY_0}^\ell(\delta) \to \DCoh(\cY_0^{\rm ss})
\]
is essentially surjective. \Cref{thm:derived_Kirwan_surjectivity_quasi-smooth} implies that this restriction functor is an equivalence, provided that $H_1(f^\ast(\bL_{\cY_0}))$ has nonnegative weights for any $\bS$-canonical graded point $f : (B\Gm)_{k'} \to \cY_0$. In fact, it has weight $0$, because any $\bS$-canonical graded point lifts to $\cY'$, and $Y_0$ is a hypersurface in the smooth scheme $Y$ defined by a $G$-invariant function.
\end{proof}

\begin{proof}[Proof of \Cref{P:magic_windows_mf}]
The first claim is established in \Cref{L:magic_windows_mf}, and the fact that $\Psi_\mu^G$ restricts to an equivalence $\cG_{(\cY,W)}^\ell(\delta)^{\rm op} \cong \cG_{\cX}^\ell(\delta)$ is established in \Cref{L:identify_grr_categories}.

For the claim (3), it suffices to show that for $E \in \fW_{\cY_0}(\delta)$, the image $F = \Psi_\mu^G(E)$ satisfies the weight condition. The argument for this is essentially the same as in the proof of \Cref{L:koszul_preserve_weights}. In particular, \eqref{E:compute_koszul_fiber} implies that $F \otimes_R k(x)$ is the $\beta$-localization of the koszul dual of $E \otimes_{\cB} k(x)[\epsilon[1]]$, which implies that $F \otimes_R k(x)$ satsifies any weight bounds that are satisfied by $E \otimes_{\cB} k(x)[\epsilon[1]]$. Nakayama's lemma implies that the non-zero weights of the latter are precisely the non-zero weights of $E_{(x,0)}$.

We have reduced the claim to showing that the character of the $G$-representation $H_\ast(E_{(x,0)})$ lies in $\delta_x + \frac{1}{2} \overline{\Sigma}_x \subset M_\bR$. Any cocharacter $\lambda$ defines a linear projection $M_\bR \to \bR$ onto the weight space of $\Gm$, and \Cref{D:grr_for_mf} implies that the projection of the character of $H_\ast(E_{(x,0)})$ onto $\bR$ lies in the interval $\wt_\lambda(\delta_x) + \frac{1}{2}[-\eta_\lambda,\eta_\lambda]$, where
\[
\eta_\lambda = \wt_\lambda(\det(\bL_{\cY,(x,0)}^{\lambda > 0})) = \wt_\lambda(\det((\Omega^1_{X,x})^{\lambda > 0})).
\]
This interval is the image of $\delta_x + \frac{1}{2} \overline{\Sigma}_x$. Because this holds for any cocharacter, the character of $H_\ast(E_{(x,0)})$ must lie in $\delta_x+\frac{1}{2} \overline{\Sigma}_x$.

\end{proof}


\subsection{Moduli spaces of Bridgeland semistable complexes}
\label{S:Bridgeland_moduli}

We will apply \Cref{T:intrinsic_GIT} to moduli spaces of Bridgeland semistable complexes of (twisted) coherent sheaves on a $K3$ surface, so let us recall this construction. We follow the notation of \cite{bayer2014mmp}*{Sect.~2}, and we refer the reader there for a more complete discussion.

\subsubsection{Stability conditions and the moduli functor}

Let $S$ be a $K3$ surface and let $\alpha \in {\rm Br}(S)$ be a cohomological Brauer class. We let $\cD = \DCoh(S,\alpha)$ denote the derived category of complexes of $\alpha$-twisted coherent sheaves on $S$ \cite{caldararu}. In \cite{HuybrechtsStellari}, Huybrechts and Stellari define a weight-$2$ Hodge structure on $H^\ast(S;\bZ)$, denoted $H^\ast(S,\alpha,\bZ)$. The \emph{Mukai vector} map
\[
v : K_0(\cD) \to H_{\rm alg}^\ast(S,\alpha,\bZ) := H^{1,1}(S,\alpha,\bC) \cap H^\ast(S;\bZ)
\]
is given by $v(E) = {\rm ch}^B(E) \sqrt{{\rm td}(S)}$ for a certain ``twisted'' Chern character map ${\rm ch}^{B} : K_0(\cD) \to H^\ast(S;\bQ)$. The group $H_{\rm alg}^\ast(S,\alpha,\bZ)$ has a nondegenerate symmetric bilinear form, called the \emph{Mukai pairing}, that satisfies $- (v(E),v(F)) = \chi(E,F)$ for any $E,F \in \DCoh(S,\alpha)$. $v$ identifies $H_{\rm alg}^\ast(S,\alpha,\bZ)$ with the numerical $K$-theory of $\cD$, i.e., the quotient of $K_0(\cD)$ by the kernel of this pairing (surjectivity follows from \cite{HuybrechtsStellari}*{Prop.~1.4}).

A \emph{numerical stability condition} $\sigma$ on $\cD$ consists of the heart of a bounded $t$-structure $\cA_\sigma \subset \cD$ and a group homomorphism $Z_\sigma : H_{\rm alg}^\ast(S,\alpha,\bZ) \to \bC$ called the \emph{central charge} that satisfy certain conditions. It is required that for any non-zero $E \in \cA$, $\Im(Z(E)):=\Im(Z(E)) \geq 0$ and if equality holds then $\Re(Z(E))<0$. We define the \emph{phase} $\phi(E) \in (0,1]$ as the unique value for which $Z(E) = |Z(E)| e^{\phi(E) \pi i}$. An object $E \in \cA$ is \emph{$\sigma$-semistable} if there is no sub-object with larger phase. Furthermore, it is required that every object $E \in \cA$ admits a finite filtration $0 \subsetneq E_p \subsetneq \cdots \subsetneq E_0 = E$, called the Harder-Narasimhan filtration, such that $E_i / E_{i+1}$ is semistable and $\phi(E_i/E_{i+1})$ is strictly increasing in $i$.

The main result of \cite{bridgeland} is that the set of numerical stability conditions that satisfy a ``support condition'' naturally has the structure of a complex manifold, called $\cStab(\cD)$, where the map $\cStab(\cD) \to \Hom(H_{\rm alg}^\ast(S,\alpha,\bZ) ,\bC)$ taking $(\cA,Z) \mapsto Z$ is locally a homeomorphism onto its image. In \cite{bridgelandK3}, Bridgeland describes a particular connected component $\cStab^\dagger (\cD)$ of the space of numerical stability conditions in the case $\alpha=0$, and this was extended to twisted $K3$ surfaces in \cite{HMS}.

\begin{defn}[Moduli of Bridgeland semistable complexes] \label{D:moduli_complexes}
For $\sigma \in \cStab^\dagger(\cD)$ and Mukai vector $v \in H_{\rm alg}^\ast(S,\alpha,\bZ)$, we let $\cM_\sigma(v)$ denote the (classical) moduli stack of $\sigma$-semistable complexes $E \in \cA_\sigma$ with $v(E) = \pm v$,\footnote{Only one of the vectors $\pm v \in H_{\rm alg}^\ast(S,\alpha,\bZ)$ can be the image of an object in $A_\sigma$. It might happen that as the $t$-structure varies $E \in \cA_\sigma$ no longer lies in the heart, but $E[1]$ does. So while the objects in $M_\sigma(v)$ change, it is natural to identify the two moduli functors, but this will change the sign of the Mukai vector.} Regard $E \in \DCoh(T \times S,\alpha)$ as a family of twisted complexes over a finite type $k$-scheme $T$, and let $E_{k(t)} \in \DCoh(S_{k(t)},\alpha)$ denote the derived fiber over $t \in T$. If $t \in T$ is a finite type point, we can regard $E_{k(t)}$ as an object of $\DCoh(S,\alpha)$ via pushforward along the finite map $S_{k(t)} \to S$, and the rational numerical $K$-theory class $\frac{1}{\deg (k(t)/k)} v(E_{k(t)})$ is locally constant \cite{halpern2014structure}*{Lem.~6.0.2.1}. The moduli functor $\cM_\sigma(v)$ assigns
\[
T \mapsto \left\{ E \in \DCoh(T \times S, \alpha) \left| \begin{array}{c} \forall \text{ finite type } t \in T, E_{k(t)} \in \cA_\sigma \text{ and is }\\ \sigma\text{-semistable of class } \pm \deg(k(t)/k) v \end{array} \right. \right\},
\]
It is shown in \cite{toda_moduli} that $\cM_\sigma(v)$ is an algebraic stack of finite type with affine diagonal over $\Spec(k)$ for $\sigma \in \cStab^\dagger(\cD)$.
\end{defn}

If one fixes $v \in H_{\rm alg}^\ast(S,\alpha,\bZ)$ and lets $\sigma \in \cStab^\dagger(\cD)$ vary, then $\cM_\sigma(v)$ is constant for $\sigma$ outside of a collection of real codimension $1$ walls. The connected components of the complement of these walls are called cells with respect to $v$, and we say that $\sigma$ is generic with respect to $v$ if it does not lie on a wall. For any coherent sheaf $E$ on $S$ and any polarization $H$ on $S$ that is generic with respect to $E$, there is a stability condition $\sigma \in \cStab^\dagger(\cD)$ that is generic with respect to $v(E)$ such that $\cM_\sigma(v(E))$ is the stack of Gieseker semistable coherent sheaves on $S$ in the numerical $K$-theory class of $E$ \cite{bridgelandK3}*{Prop.~14.1} \cite{toda_moduli}*{Sect.~6}.

In general, when $v$ is primitive, i.e., not divisible, and $v^2>0$, then for generic $\sigma$ the stack $\cM_\sigma(v)$ is a $\Gm$-gerbe over a smooth projective hyperk\"{a}hler manifold $M_\sigma(v)$ of dimension $v^2 + 2$. In fact, the minimal model program for these varieties is completely controlled by this moduli problem:
\begin{thm} \cite{bayer2014mmp}*{Thm.~1.2} \label{T:bayer_macri}
Let $S$ be a $K3$ surface, and let $\cD = \DCoh(S,\alpha)$ be the derived category of twisted coherent sheaves with respect to a cohomological Brauer class $\alpha \in {\rm Br}(S)$. If $v \in H_{\rm alg}^\ast(S,\alpha,\bZ)$ is a primitive Mukai vector and $\sigma \in \cStab^\dagger(\cD)$ is generic with respect to $v$, then any $K$-trivial birational model of $M_\sigma(v)$ is isomorphic to $M_{\sigma'}(v)$ for some $\sigma' \in \cStab^\dagger(\cD)$ lying in a unique cell of the chamber decomposition with respect to $v$.
\end{thm}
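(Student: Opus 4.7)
The plan is to construct a natural continuous map $\ell : \cStab^\dagger(\cD) \to \NS(M_\sigma(v))_\bR$ and to show that it identifies the chamber decomposition on the stability manifold with the chamber decomposition on the movable cone of $M_\sigma(v)$ coming from birational geometry. First, given any $\sigma' = (Z', \cA') \in \cStab^\dagger(\cD)$, one defines a numerical divisor class $\ell_{\sigma'}$ on $M_\sigma(v)$ by a formula of the shape $\ell_{\sigma'} \cdot C = \Im\bigl(-Z'(v(\Phi_{\cE}(\cO_C)))/Z'(v)\bigr)$ for curves $C \subset M_\sigma(v)$, where $\Phi_{\cE}$ is the Fourier--Mukai transform with kernel the (quasi-)universal family $\cE$ on $M_\sigma(v) \times S$. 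This is well-defined on the coarse space (insensitive to the $\Gm$-gerbe) and depends continuously, indeed real-analytically, on $\sigma'$.

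Next I would show that for $\sigma'$ in the same open cell as $\sigma$, the class $\ell_{\sigma'}$ is ample on $M_\sigma(v)$, by the standard positivity argument for Bridgeland stability conditions: for any non-isotrivial family of $\sigma$-semistable objects of class $v$ over a curve $C$, the imaginary part of $-Z'/Z'(v)$ applied to the Fourier--Mukai image of $\cO_C$ is strictly positive. This matches the interior of a cell in $\cStab^\dagger(\cD)$ with the interior of the ample cone of $M_\sigma(v)$, so it remains to match walls with walls. On a wall of the chamber decomposition, some $\sigma$-stable objects acquire the same phase as a proper sub-object; the classification of such potential destabilizers (Bridgeland's structure theorem for $\cStab^\dagger$ on $K3$s, extended to the twisted setting by Huybrechts--Macr\`i--Stellari) shows that the walls are locally finite and rational, and that crossing a wall produces either an isomorphism, a divisorial contraction, or a flopping contraction to a new $K$-trivial model, realized geometrically as $M_{\sigma'}(v)$.

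For the converse --- that every $K$-trivial birational model of $M_\sigma(v)$ arises from such a wall-crossing --- I would invoke the global Torelli theorem for hyperk\"{a}hler manifolds of $K3^{[n]}$-type due to Markman and Verbitsky, together with Markman's description of the movable cone inside $\NS(M_\sigma(v))_\bR$ in terms of the root systems generated by certain spherical and isotropic classes in the Mukai lattice $H_{\rm alg}^\ast(S,\alpha,\bZ)$. This reduces the problem to a purely lattice-theoretic comparison: the chambers of the movable cone cut out by the Markman walls must coincide with the images under $\ell$ of the chambers of $\cStab^\dagger(\cD)$.

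The hard part, which is the technical heart of the proof, is precisely this matching of wall structures. On one side, one must classify all walls in $\cStab^\dagger(\cD)$ relevant to $v$ in terms of rank-two sublattices of $H_{\rm alg}^\ast(S,\alpha,\bZ)$ containing $v$, and identify each wall with the divisorial or flopping contraction it induces on $M_\sigma(v)$. On the other side, one must verify that the exceptional classes appearing in Markman's description exhaust the same list. Both sides reduce, via the Mukai pairing, to an explicit enumeration of classes $w \in H_{\rm alg}^\ast(S,\alpha,\bZ)$ with $w^2 \geq -2$ and $0 \leq (v,w) \leq v^2/2$, and my expectation is that the main labor is not in the abstract framework but in this concrete numerical classification and in checking that the geometric type of contraction (divisorial versus flopping versus fake wall) agrees on the two sides.
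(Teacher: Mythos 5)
This statement is cited in the paper as \cite{bayer2014mmp}*{Thm.~1.2} with no accompanying proof; the paper treats it as a black-box input (one of the four key ingredients listed in the introduction) rather than proving it. So there is no proof internal to the paper to compare your proposal against.

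That said, your sketch is a reasonably faithful high-level outline of the strategy Bayer and Macr\`i actually use in \cite{bayer2014mmp}: the Mukai morphism $\ell$ from the stability manifold to $\NS(M_\sigma(v))_\bR$, the positivity lemma showing $\ell_{\sigma'}$ is ample for $\sigma'$ in the open cell containing $\sigma$, the analysis of wall-crossing contractions (divisorial, flopping, or fake), the global Torelli theorem of Markman--Verbitsky, and Markman's lattice-theoretic description of the movable cone. You are also right that the technical heart is the wall-by-wall matching of the Bridgeland-theoretic chamber decomposition with the birational-geometric one, carried out via an enumeration of rank-two sublattices of $H^\ast_{\rm alg}(S,\alpha,\bZ)$ containing $v$. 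However, as you yourself acknowledge, what you have written is a proof \emph{plan} rather than a proof: the entire lattice-theoretic classification, the case analysis of wall types, and the verification that each birational contraction of $M_\sigma(v)$ is realized by some wall are deferred to "my expectation is that the main labor is [here]." Since that labor constitutes the bulk of a substantial paper, your proposal cannot be judged complete, though it correctly identifies the architecture of the argument and the places where real work is needed.
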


In order to apply \Cref{T:intrinsic_GIT}, we will also need the following result for arbitrary $v$ and $\sigma$:
\begin{thm} \label{thm:good_moduli} \cite{AHLH}*{Thm.~7.25}
The stack $\cM_{\sigma}(v)$ admits a proper good moduli space, which we denote $M_\sigma(v)$, for any $\sigma \in \cStab^\dagger(\cD)$ and any $v \in H_{\rm alg}^\ast(S,\alpha,\bZ)$.
\end{thm}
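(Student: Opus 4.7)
The plan is to apply the general existence criterion for good moduli spaces established in \cite{AHLH}, which reduces the problem to checking several intrinsic properties of the stack $\cM_\sigma(v)$: it should be of finite type with affine diagonal, have linearly reductive stabilizers at closed points, be $\Theta$-reductive, be S-complete, and satisfy the existence part of the valuative criterion for properness (the last one giving properness of the good moduli space). Finite type and affine diagonal are already established by Toda \cite{toda_moduli}, so the work lies in the remaining four conditions.

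Reductive stabilizers are automatic from the Jordan-H\"{o}lder structure theorem for objects in $\cA_\sigma$: any closed point of $\cM_\sigma(v)$ corresponds to a polystable object $E \cong \bigoplus_i E_i \otimes V_i$ with $E_i$ pairwise non-isomorphic $\sigma$-stable of the same phase, and then $\Aut(E) \cong \prod \GL(V_i)$. For $\Theta$-reductivity, one must show that any map $\Theta_R \setminus \{0\} \to \cM_\sigma(v)$ from a punctured DVR extends to $\Theta_R$: in concrete terms, given a $\sigma$-semistable family over $\Spec(R)$ and a filtration of its generic fiber whose associated gradeds are $\sigma$-semistable of the same phase, one extends this filtration across the closed point. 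The key input here is the noetherian property of the heart $\cA_\sigma$ coming from the support condition; one uses the perverse $t$-structure on $\DCoh(\Theta_R \times S, \alpha)$ obtained by tilting, following Abramovich-Polishchuk, and then takes the image of the extension under the appropriate truncation to close up the filtration. S-completeness is analogous, replacing $\Theta_R$ by the stack $\overline{ST}_R := [\Spec(R[s,t]/(st - \pi))/\Gm]$: given two extensions of a generic $\sigma$-semistable family over the two components of the special fiber, one must produce a common degeneration, and again the noetherian heart plus Abramovich-Polishchuk tilting lets one construct it.

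Boundedness of $\sigma$-semistables of class $v$ is needed both to conclude finite type and to invoke the AHLH criterion; this is proved by Toda \cite{toda_moduli} in the untwisted case using the description of $\cStab^\dagger(\cD)$ via slicings and reducing across walls to the Gieseker chamber, and the proof extends directly to the twisted setting. For the existence part of the valuative criterion—equivalent to universal closedness of $\cM_\sigma(v) \to M_\sigma(v)$ and hence properness of $M_\sigma(v)$ once a separated good moduli space exists—one shows that any $\sigma$-semistable family $E_K$ over the fraction field $K$ of a DVR $R$ admits, after finite base change, a $\sigma$-semistable extension to $\Spec R$. One first extends $E_K$ to an object of $\DCoh(\Spec R \times S, \alpha)$ using standard spreading results, replaces it by a model in the perverse $t$-structure, and then uses the Harder-Narasimhan filtration on the special fiber combined with iterated ``elementary modifications'' along destabilizing sub-objects to decrease a suitable discrepancy invariant until the special fiber becomes $\sigma$-semistable.

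The main obstacle is the last step: producing semistable reductions for families of complexes rather than of sheaves. For sheaves this is classical (Langton), and for complexes in the Gieseker-type chamber it reduces to Langton via the equivalence with Gieseker semistable sheaves; but for general $\sigma$ in $\cStab^\dagger(\cD)$ one must work intrinsically with perverse $t$-structures on $\Spec(R) \times S$, track how the central charge of the special fiber moves under elementary modifications, and show the process terminates. The technical heart of \cite{AHLH}*{Sect.~7} is precisely this termination argument, combined with a careful analysis showing that the perverse tilt is well behaved over mixed-characteristic-like base schemes such as $\Theta_R$ and $\overline{ST}_R$.
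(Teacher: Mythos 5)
This statement is cited in the paper from \cite{AHLH}*{Thm.~7.25} without proof, so there is no proof in this paper to compare against. Your sketch accurately reconstructs the strategy of the cited reference: invoke the AHLH existence criterion (finite type with affine diagonal, linearly reductive stabilizers at closed points, $\Theta$-reductivity, S-completeness, plus the existence valuative criterion for properness), verify the stabilizer condition via Jordan--H\"older, handle $\Theta$-reductivity and S-completeness by extending filtrations across $\Theta_R$ and $\overline{ST}_R$ using the Abramovich--Polishchuk machinery for $t$-structures in families over a noetherian heart, and establish properness by a Langton-type semistable reduction with a terminating discrepancy argument, with boundedness supplied by Toda.
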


For any non-zero family of complexes $E \in \DCoh(T \times S,\alpha)$ over a base $T$, there is a canonical sub-group-scheme $(\Gm)_T \hookrightarrow \Aut_T(E)$ of automorphisms that act by scaling. This defines a sub-group-sheaf
\[
(\Gm)_{\cM_\sigma(v)} \hookrightarrow I_{\cM_\sigma(v)},
\]
where the latter denotes the inertia stack. We can use this to construct a stack $\cM^{\rm rig}_{\sigma}(v) := \cM_\sigma(v) \thickslash \Gm$, called the \emph{rigidification}, such that $\cM_\sigma(v) \to \cM^{\rm rig}_{\sigma}(v)$ is a $\Gm$-gerbe, and the kernel of the homomorphism $I_{\cM_\sigma(v)} \to I_{\cM^{\rm rig}_\sigma(v)}$ is the subgroup of scaling automorphisms (see \cite{abramovich2008tame} for a description of this construction). It is immediate that $M_\sigma(v)$ is also the good moduli space of $\cM_\sigma^{\rm rig}(v)$.

\subsubsection{Variation of stability}

\Cref{T:bayer_macri} implies that if $v$ is primitive and $\sigma$ is generic with respect to $v$, then one can compare $M_\sigma(v)$ to any one of its $K$-trivial birational models $X$ as follows: Choose a generic $\sigma_1$ in the cell of $\cStab^\dagger(\cD)$ corresponding to $X$, then choose a generic path $\sigma_t \in \cStab^\dagger(\cD)$ for $t \in [0,1]$ from $\sigma = \sigma_0$ to $\sigma_1$. As $t$ varies, $M_{\sigma_t}(v)$ will undergo a finite sequence of modifications at critical values $t_1,t_2,\ldots,t_N \in (0,1)$, corresponding to stability conditions for which there exist semistable objects of class $v$ that are not stable.

\begin{rem}
The substacks $\cM_{\sigma_{t_{i}-\epsilon}}(v)$ and $\cM_{\sigma_{t_i+\epsilon}}(v)$ do not necessarily have any common points. This happens when $\cM_{\sigma_{t_i}}(v)$ has multiple irreducible components, one of which contains semistable objects for $t>t_i$ and one of which contains semistable objects for $t<t_i$. This fact complicates the story in \cite{bayer2014mmp} slightly, because for such wall-crossings one must use a canonical derived (anti-)autoequivalence $\Phi: \cD \to \cD$ to show that $M_{\sigma_{t_i-\epsilon}}(v)$ and $M_{\sigma_{t_i+\epsilon}}(v)$ are birationally equivalent. The results and methods of this paper, however, do not actually need $\cM_{\sigma_{t_{i} \pm \epsilon}}(v)$ to have a point in common, so we will not use this technique.
\end{rem}

For any $\sigma \in \cStab^\dagger(\cD)$ and $v \in H^\ast_{\rm alg}(S,\alpha,\bZ)$, let $E_{\rm un} \in \DCoh(\cM_{\sigma}(v) \times S, \alpha)$ denote the universal complex, and let $p_1 : \cM_{\sigma}(v) \times S \to \cM_\sigma(v)$ and $p_2 : \cM_\sigma(v) \times S \to S$ denote the projections. Then the Mukai homomorphism is the map
\[
\Phi(-) := \det \left((p_1)_\ast (p_2^\ast(-)^\dual \otimes E_{\rm un}) \right)^\dual : K_0^{\rm num}(\cD) \to \NS(\cM_\sigma(v)).
\]
The weight of the action of the central $\Gm$ on $\Phi(E)$ is given by the Mukai pairing $-(v(E),v)$, so using $v(-)$ to identify $K_0^{\rm num}(\cD) \cong H_{\rm alg}^\ast(S,\alpha,\bZ)$, $\Phi$ restricts to a homomorphism
\begin{equation} \label{E:reduced_mukai}
\Phi :  v^\perp \subset H_{\rm alg}^\ast(S,\alpha,\bZ)  \to \NS(\cM_{\sigma}^{\rm{rig}}(v)),
\end{equation}
where $v^\perp$ is the subgroup orthogonal to $v$ under the Mukai pairing.

Now let $\Omega_\sigma \in H_{\rm alg}^\ast(S,\alpha,\bZ) \otimes \bC$ be the unique class such that $Z_\sigma(-) = (\Omega_\sigma,-)$. Then we consider the class
\[
\ell_\sigma := \Phi(\Im(\frac{-1}{Z_\sigma(v)} \Omega_\sigma)) \in \NS(\cM^{\rm rig}_\sigma(v))_{\bR}.
\]
In \cite{BMprojective}*{Thm.~4.1, Rem.~4.6} it is shown that if $\sigma$ is generic with respect to $v$, then $\ell_\sigma$ is ample on $M_\sigma(v)$, and the map $\sigma \mapsto \ell_\sigma$ identifies the ample cone of $M_\sigma(v)$ with the cell in $\cStab^\dagger(\cD)$ that contains $v$ \cite{bayer2014mmp}. At the time of this writing, it is not known if $\ell_\sigma$ is ample for $\sigma$ on a wall. We can also define a class $b \in H^4(\cM_\sigma(v);\bQ)$ by the formula
\[
b_\sigma := 2 \Im \left({\rm ch}_2 \left( (p_2)_\ast (E_{\rm un} \otimes p_1^\ast(\frac{i}{Z_\sigma(v)}\Omega_\sigma^\dual)) \right) \right).
\]

Let us define the rank and degree homomorphisms $K_0^{\rm num} \to \bZ$ corresponding to $\sigma'$ and $v$ by the formula 
\[
\frac{Z_{\sigma'}(-)}{Z_{\sigma'}(v)} = \rank_{\sigma',v}(-)+i \deg_{\sigma',v}(-).
\]
Then for any map $f : \Theta_{k'} \to \cM_\sigma(v)$ corresponding to a filtration $0 \neq E_p \subsetneq \cdots \subsetneq E_0$ with weights $w_0 < \cdots < w_p$ in $\bZ$, we have $q^{-1} f^\ast(\ell_{\sigma'}) = - \sum_j w_j \deg_{\sigma',v}(E_j/E_{j+1})$ and $q^{-2} f^\ast(b_{\sigma'}) = \sum_i w_j^2 \rank_{\sigma',v}(E_j/E_{j+1})$, and thus we compute the numerical invariant associated to $\ell_{\sigma'}$ and $b_{\sigma'}$
\begin{equation} \label{E:numerical_invariant_Bridgeland}
\mu_{\sigma'}(f) = \frac{\sum w_j \deg_{\sigma',v}(E_j/E_{j+1})}{\sqrt{\sum w_j^2 \rank_{\sigma',v}(E_j/E_{j+1})}}.
\end{equation}
The main fact we use is the following:

\begin{prop} \label{P:bridgeland_compare_1}
For any $\sigma \in \cStab^\dagger(\cD)$, there is small open neighborhood $U$ of $\sigma$ such that $\forall \sigma' \in U$:
\begin{enumerate}
\item $\cM_{\sigma'}(v) \subset \cM_\sigma(v)$ is the substack of $\Theta$-semistable points with respect to $\ell_{\sigma'}$, in the sense of \Cref{D:theta_semistable}; and
\item if $\Omega_{\sigma'} \in H_{\rm alg}^\ast(S,\alpha,\bZ) \otimes \bC$ is rational, then the HN filtration of any $\sigma'$-unstable point $[E] \in \cM_\sigma(v)$ with respect $\mu_{\sigma'}$ is the $\sigma'$ Harder-Narasimhan filtration $E_p \subsetneq \cdots \subsetneq E_0 = E$ with weights $w_0 < \cdots < w_p$ proportional to the slopes, i.e., for some $c>0$
\[
w_j = c \deg_{\sigma',v}(E_j/E_{j+1}) / \rank_{\sigma',v}(E_j/E_{j+1}).
\]
\end{enumerate}
\end{prop}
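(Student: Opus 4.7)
The plan is to translate the numerical invariant \eqref{E:numerical_invariant_Bridgeland} into the language of Bridgeland stability and then analyze the resulting optimization problem. The first step is to choose the neighborhood $U$ of $\sigma$ small enough that the hearts $\cA_{\sigma'}$ for $\sigma' \in U$ all behave well on the bounded family of objects we care about. Concretely, since $\cM_\sigma(v)$ is of finite type and its points lie in $\cA_\sigma$, there is a bounded family of possible Jordan-H\"older factors of $\sigma$-semistable objects of class $v$, and likewise of candidate subfactors in their $\sigma'$-HN filtrations for $\sigma'$ close to $\sigma$; using the local homeomorphism $\cStab^\dagger(\cD) \to \Hom(H^\ast_{\rm alg}(S,\alpha,\bZ),\bC)$ and the support condition, I would shrink $U$ so that (a) every $\sigma$-semistable object of class $v$ still lies in $\cA_{\sigma'}$, (b) every $\sigma'$-HN subfactor $F$ of such an object satisfies $\rank_{\sigma',v}(F) > 0$, and (c) every such $F$ lies in $\cA_\sigma$ as well, so that HN filtrations in the two stability conditions are represented by filtrations in $\cA_\sigma$, hence by genuine maps $\Theta_{k'} \to \cM_\sigma(v)$.

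For part (1), fix $[E] \in \cM_\sigma(v)(k')$ and a map $f : \Theta_{k'} \to \cM_\sigma(v)$ with $f(1) \cong [E]$, corresponding to a weighted filtration $E_p \subsetneq \cdots \subsetneq E_0 = E$ in $\cA_\sigma$ with weights $w_0 < \cdots < w_p$. From the definition of $\ell_{\sigma'}$ and the Mukai homomorphism, one computes $q^{-1} f^\ast(\ell_{\sigma'}) = -\sum_j w_j \deg_{\sigma',v}(E_j/E_{j+1})$, so $\ell_{\sigma'}$-destabilizing filtrations are exactly those for which $\sum_j w_j \deg_{\sigma',v}(E_j/E_{j+1}) > 0$. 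By shrinking $U$ to ensure that every subobject $F \subset E$ in $\cA_{\sigma'}$ appearing in a destabilization also lies in $\cA_\sigma$, Bridgeland $\sigma'$-semistability reduces to the assertion that $\deg_{\sigma',v}(F) \leq 0$ for every such $F$; testing this against the 2-step filtration $0 \subset F \subset E$ with weights $(0,1)$ shows that $\Theta$-semistability with respect to $\ell_{\sigma'}$ implies $\sigma'$-semistability, and conversely a $\sigma'$-semistable object has $\sum w_j \deg_{\sigma',v}(E_j/E_{j+1}) \leq 0$ for every weighted filtration since each subquotient $E_j \subset E$ satisfies $\deg_{\sigma',v}(E_j) \leq 0$ and one writes $\sum w_j \deg(E_j/E_{j+1})$ as $\sum (w_j - w_{j+1}) \deg(E_{j+1})$ by Abel summation. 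This gives the equality of substacks in (1).

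For part (2), assume $\Omega_{\sigma'}$ is rational, so the degrees $\deg_{\sigma',v}(F_j)$ and ranks $\rank_{\sigma',v}(F_j)$ of the $\sigma'$-HN subfactors $F_j = E_j/E_{j+1}$ of a $\sigma'$-unstable object $E$ are rational. The numerical invariant in \eqref{E:numerical_invariant_Bridgeland} is a ratio of a linear form in $w$ to the square root of a positive-definite quadratic form, so by Cauchy-Schwarz it is maximized over real $w$ when $w_j$ is proportional to $\deg_{\sigma',v}(F_j)/\rank_{\sigma',v}(F_j) = \mu_{\sigma',v}(F_j)$, the $\sigma'$-slope; rationality allows this optimum to be realized by an integral, strictly increasing sequence $w_j$, so the $\sigma'$-HN filtration with these weights defines a bona fide filtration $\Theta_{k'} \to \cM_\sigma(v)$. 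The remaining task is to verify that this filtration is the unique maximizer (up to $n$-fold covers $\Theta \to \Theta$) among all weighted filtrations of $E$: I would show this by a standard HN-style argument, namely that for any other weighted filtration $E'_\bullet$ the values $\deg_{\sigma',v}$ and $\rank_{\sigma',v}$ on its subfactors are constrained by the HN polygon of $E$, and applying the Cauchy-Schwarz bound together with the convexity of the HN polygon forces the optimum to coincide with the HN filtration.

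The main obstacle in carrying this out rigorously is the interplay between the two hearts $\cA_\sigma$ and $\cA_{\sigma'}$: the intrinsic $\Theta$-stratification only sees filtrations whose subquotients arise in families of objects of $\cM_\sigma(v)$, i.e.\ lie in $\cA_\sigma$, while Bridgeland stability measures $\cA_{\sigma'}$-subobjects. Ensuring that for $\sigma' \in U$ the $\sigma'$-HN subfactors of any $[E] \in \cM_\sigma(v)$ lie in $\cA_\sigma \cap \cA_{\sigma'}$ (and conversely that destabilizing $\cA_{\sigma'}$-subobjects refine to destabilizing subobjects in $\cA_\sigma$) is the core technical point, and it is handled by the support condition together with boundedness of the relevant families.
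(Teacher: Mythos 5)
The paper does not reprove this statement: it cites \cite{halpern2014structure}*{Thm.~6.0.2.11} (the untwisted case, with a rescaling so that $Z_\sigma(v) = i$) and observes that the same argument, read off from the formula \eqref{E:numerical_invariant_Bridgeland}, carries over verbatim to the twisted setting after identifying $\ell_{\sigma'}$ and $b_{\sigma'}$ with the corresponding classes in \cite{halpern2014structure}*{Sect.~6.0.2}. Your sketch is therefore not the paper's route but a reconstruction of the argument underlying the cited theorem, and as such it identifies the right ingredients: the translation of $q^{-1}f^\ast(\ell_{\sigma'})$ into $-\sum w_j \deg_{\sigma',v}(E_j/E_{j+1})$, the Abel-summation comparison with subobject degrees, the Cauchy--Schwarz optimization of the numerical invariant over weights, and the compatibility-of-hearts issue as the core technical point. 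A few places deserve tightening if you were to write this out in full. Your Abel summation formula $\sum w_j \deg(E_j/E_{j+1}) = \sum (w_j - w_{j+1})\deg(E_{j+1})$ has the sign reversed; the correct identity is $\sum_{j=0}^p w_j \deg(E_j/E_{j+1}) = w_0 \deg(E_0) + \sum_{j\geq 1}(w_j - w_{j-1})\deg(E_j)$, which, using $\deg_{\sigma',v}(E_0) = \Im(\pm 1) = 0$ and $w_j - w_{j-1} > 0$, gives the desired sign. You also implicitly use the containment $\cM_{\sigma'}(v) \subset \cM_\sigma(v)$ as part of the statement of (1); that every $\sigma'$-semistable object of class $v$ is $\sigma$-semistable for $\sigma'$ close enough to $\sigma$ is a separate openness fact (again supplied by the support condition and boundedness), not a consequence of the $\Theta$-semistability comparison alone. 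Finally, the Cauchy--Schwarz step in part (2) only optimizes the weights for a \emph{fixed} filtration; showing that the global maximizer over \emph{all} weighted filtrations is the $\sigma'$-HN filtration with those weights is the substantive content and requires the HN-polygon convexity argument you allude to but do not carry out — this is exactly what \cite{halpern2014structure}*{Thm.~6.0.2.11} supplies. In short, the ideas are the right ones and the gaps you flag are real; the paper simply outsources them to the cited theorem.
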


\begin{proof}
When $\alpha = 0$, this is \cite{halpern2014structure}*{Thm.~6.0.2.11} applied to the stability condition obtained from $\sigma$ by rescaling so that $Z_\sigma(v)=i$. The argument in the twisted case is identical, and follows solely from the formula \eqref{E:numerical_invariant_Bridgeland}. There is a slight difference in notation, because the class $\omega_Z \in K_0^{\rm num}(\DCoh(S))$ of \cite{halpern2014structure}*{Sect.~6.0.2} is the linear dual of our $\Omega_Z$, but with this change, the classes $\ell_{\sigma'}$ and $b_{\sigma'}$ agree with those of \cite{halpern2014structure}*{Sect.~6.0.2}.
\end{proof}

We now extend our discussion to $\cM_\sigma^{\rm rig}(v)$. Any $\bZ^n$-graded point of $\cM^{\rm rig}_\sigma(v)$ lifts, after passing to a field extension, to a $\bZ^n$-graded point of $\cM_\sigma(v)$, which corresponds to an $E \in \cM_\sigma(v)$ along with a direct sum decomposition $E = \bigoplus_{\chi \in \Lambda} E_\chi$, where
\[
\Lambda = \Hom_{\rm gp}(\Gm^n,\Gm)
\]
denotes the character lattice of $\Gm^n$. Furthermore, two $\bZ^n$-graded points of $\cM_\sigma(v)$, corresponding to $E = \bigoplus E_\chi = \bigoplus E'_\chi$, give the same $\bZ^n$-graded point of $\cM_\sigma^{\rm rig}(v)$ if and only if $E'_\chi = E_{\chi+c}$ for some $c \in \Lambda$. A $\bZ^n$-graded point of $\cM_\sigma(v)$ gives a non-degenerate $\bZ^n$-graded point of $\cM_\sigma^{\rm rig}(v)$ if the composition
\[
(\Gm^n)_{k'} \to \Aut(E) \to \Aut(E) / (\Gm)_{\rm scaling}
\]
has finite kernel. In other words, there is no non-trivial cocharacter of $\Gm^n$ that acts via scaling on $E$. This is equivalent to saying the subset $\{\chi \in \Lambda | E_\chi \neq 0\}$ is not contained in any hyperplane (allowing shifted hyperplanes).

Let $\gamma : (B\Gm^n)_{k'} \to \cM_\sigma(v)$ be a lift of a non-degenerate $\bZ^n$-graded point of $\cM_\sigma^{\rm rig}(v)$. Regarding $\chi \in \Lambda$ as a linear function on $\Lambda_\bQ^\dual$, we have
\[
\gamma^\ast(b_{\sigma'}) = \sum_{\chi \in \Lambda} \chi^2 \rank_{\sigma',v}(E_\chi)
\]
is a positive definite rational quadratic form on $\Lambda_\bQ^\dual$, making explicit the norm on graded points associated to $b_{\sigma'} \in H^4(\cM_\sigma(v);\bQ)$. Note that this quadratic form is \emph{not} preserved by the translation $E_\chi \mapsto E_{\chi+c}$, so this does not give a well-defined norm for graded points of $\cM_\sigma^{\rm rig}(v)$. On the other hand we can define the function on $\bR^n$
\[
\prescript{\rm rig}{}{|\!|x|\!|}^2_\gamma := \min_{c \in \Lambda_\bR} \left( \sum_{\chi \in \Lambda} (\chi(x)+c(x))^2 \rank_{\sigma',v}(E_\chi) \right)
\]
This minimum occurs when $c(x) = - \sum \chi(x) \rank_{\sigma',v}(E_\chi)$, hence one can compute
\begin{equation} \label{E:reduced_norm}
\prescript{\rm rig}{}{|\!|x|\!|}^2_\gamma= \sum_{\chi \in \Lambda} (\chi - \sum_{\eta \in \Lambda} \eta r_\eta)^2 r_\chi,
\end{equation}
where $r_\chi := \rank_{\sigma',v}(E_\chi)$. If the set $\{\chi \in \Lambda | E_\chi \neq 0\}$ is not contained in a shifted hyperplane, then the same is true for the shifted characters $\chi + c$, and $\prescript{\rm rig}{}{|\!|x|\!|}^2_\gamma$ is a positive definite rational quadratic form. Now the construction is invariant under the translation $E_\chi \mapsto E_{\chi + c}$, so by the discussion above, $\prescript{\rm rig}{}{|\!|x|\!|}^2_\gamma$ gives a well-defined norm on graded points of $\cM^{\rm rig}_\sigma(v)$.
\begin{prop} \label{P:bridgeland_compare_2}
In the context of \Cref{P:bridgeland_compare_1}, if $\Omega_{\sigma'}$ is rational, the numerical invariant on $\cM_\sigma^{\rm rig}(v)$ associated to $\ell_{\sigma'}$ and the norm on graded points \eqref{E:reduced_norm},
\[
\mu^{\rm rig}_{\sigma'}(f) = - \frac{q^{-1} f^\ast(\ell_{\sigma'})}{\prescript{\rm rig}{}{|\!|1|\!|}^2_{\ev_0(f)}},
\]
defines a $\Theta$-stratification of $\cM^{\rm rig}_\sigma(v)$. The map
\[
\Filt(\cM_\sigma(v)) \to \Filt(\cM_\sigma^{\rm rig}(v))
\]
maps the $\Theta$-strata of \Cref{P:bridgeland_compare_1} to the $\Theta$-strata on $\cM_\sigma^{\rm rig}(v)$, and it induces a bijection between strata of $\cM_\sigma(v)$ and $\cM_\sigma^{\rm rig}(v)$.
\end{prop}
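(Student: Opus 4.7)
Proof proposal for Proposition \ref{P:bridgeland_compare_2}:

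The plan is to apply Theorem \ref{T:intrinsic_GIT} to $\cM_\sigma^{\rm rig}(v)$ with the class $\ell_{\sigma'}$ and the norm $\prescript{\rm rig}{}{|\!|\cdot|\!|}$, and then to identify the resulting $\Theta$-stratification with the image of the one from Proposition \ref{P:bridgeland_compare_1}. First, $\cM_\sigma^{\rm rig}(v)$ is a finite type algebraic stack with affine diagonal which inherits a proper good moduli space from $\cM_\sigma(v) \to M_\sigma(v)$ via Theorem \ref{thm:good_moduli}. Three hypotheses must be checked before applying Theorem \ref{T:intrinsic_GIT}: (i) that $\ell_{\sigma'}$ descends to $\NS(\cM_\sigma^{\rm rig}(v))_\bQ$, which is built into its construction through \eqref{E:reduced_mukai} since $\Im(-\Omega_\sigma/Z_\sigma(v)) \in v^\perp$ together with the rationality hypothesis on $\Omega_{\sigma'}$; (ii) that the formula \eqref{E:reduced_norm} is a rational quadratic norm on graded points, which means checking non-degeneracy (already noted in the excerpt), rationality and local constancy in families (clear from the polynomial nature of the formula and local constancy of the $r_\chi = \rank_{\sigma',v}(E_\chi)$), compatibility with field extension, and compatibility with restriction to subtori; (iii) positive-definiteness on non-degenerate graded points, which is established in the discussion preceding the proposition.

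Second, I would compare the two numerical invariants using the lifting properties of the $\Gm$-gerbe $\cM_\sigma(v) \to \cM_\sigma^{\rm rig}(v)$. Any graded point $\bar\gamma \colon B(\Gm^n)_{k'} \to \cM_\sigma^{\rm rig}(v)$ lifts, after a finite field extension, to a graded point $\gamma$ of $\cM_\sigma(v)$, and similarly for filtrations; any two lifts of $\bar f$ differ by tensoring with a character of $\Gm$, which changes the weights by a uniform integer shift $w_j \mapsto w_j + c$. Under such a shift the denominator of $\mu_{\sigma'}$ transforms as $\sum_j w_j^2 r_j \mapsto \sum_j (w_j+c)^2 r_j$ and its infimum over $c \in \bR$ is $\prescript{\rm rig}{}{|\!|1|\!|}^2_{\ev_0(\bar f)}$ by definition of \eqref{E:reduced_norm}; the numerator $-q^{-1} f^\ast \ell_{\sigma'}$ is invariant under the shift because $\ell_{\sigma'}$ is pulled back from the rigidification, equivalently $\sum_j \deg_{\sigma',v}(E_j/E_{j+1}) = \deg_{\sigma',v}(E) = \Im(Z_{\sigma'}(v)/Z_{\sigma'}(v)) = 0$. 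Combining these two observations, and allowing ramified covers $(-)^n \colon \Theta \to \Theta$ so that the centering value of $c$ can be realized by a genuine filtration, yields the identity
\[
\mu^{\rm rig}_{\sigma'}(\bar f) = \sup_{c} \mu_{\sigma'}(f_c),
\]
where $f_c$ ranges over lifts of $\bar f$ to $\cM_\sigma(v)$.

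Third, I would deduce the stratification correspondence. By Proposition \ref{P:bridgeland_compare_1}(2), the HN filtration of a $\sigma'$-unstable point $[E] \in \cM_\sigma(v)$ with respect to $\mu_{\sigma'}$ is the $\sigma'$-HN filtration with specific rational weights $w_j = c\,\deg_{\sigma',v}/\rank_{\sigma',v}$. The supremum formula above shows that its image in $\Filt(\cM_\sigma^{\rm rig}(v))$ maximizes $\mu^{\rm rig}_{\sigma'}$ among all $\Theta$-points lying over $[E] \in \cM_\sigma^{\rm rig}(v)$, and uniqueness of this maximum (up to ramified covers) follows from uniqueness of the $\sigma'$-HN filtration combined with uniqueness of the centering shift. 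This identifies the Harder-Narasimhan filtrations in both stratifications and shows that the open substacks $\cM_\sigma^{\rm rig}(v)_{\leq \alpha}$ from Theorem \ref{T:intrinsic_GIT} pull back to $\cM_\sigma(v)_{\leq \alpha}$ from Proposition \ref{P:bridgeland_compare_1}, giving the asserted bijection on strata and the identification $\cM_{\sigma'}^{\rm rig}(v) = \cM_\sigma^{\rm rig}(v)^{\rm ss}(\ell_{\sigma'})$. The main obstacle will be the verification of step (ii), specifically compatibility of \eqref{E:reduced_norm} with restriction to subtori, since the minimization defining the rigidified norm interacts non-trivially with the coarsening of a graded decomposition under a subtorus inclusion; the remainder of the argument is essentially bookkeeping once the identity for $\mu^{\rm rig}_{\sigma'}$ is established.
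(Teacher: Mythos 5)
Your proposal is correct and follows essentially the same strategy as the paper: establish existence of the stratification on $\cM^{\rm rig}_\sigma(v)$ from general theory (the paper cites \cite{halpern2014structure}*{Thm.~5.0.1.7} directly rather than routing through \Cref{T:intrinsic_GIT}, but this is immaterial), then compare the two numerical invariants via the identity $\mu^{\rm rig}_{\sigma'}(\bar f) = \sup_c \mu_{\sigma'}(f_c)$ over lifts, concluding that HN filtrations correspond. Two minor remarks: the subtorus compatibility you flag as the main potential obstacle is in fact immediate, since the minimization over $c \in \Lambda_\bR$ in \eqref{E:reduced_norm} depends on $c$ only through the scalar $c(x)$ and hence commutes with restriction of $x$ to a subtorus; and the paper concludes the bijection between strata by appealing to smoothness of $\Filt(\cM_\sigma(v)) \to \Filt(\cM_\sigma^{\rm rig}(v))$ from \cite{halpern2014structure}, a detail your final sentence glosses over.
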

\begin{proof}
Because \eqref{E:reduced_norm} is a norm that is defined by a positive definite rational quadratic form, the numerical invariant associated to $\ell_{\sigma'}$ and $|\!|-|\!|^2$ satisfies condition (R) of \cite{halpern2014structure}, so it induces a $\Theta$-stratification of $\cM_{\sigma}(v)$ by \cite{halpern2014structure}*{Thm.~5.0.1.7}.

Composition with $\cM_\sigma(v) \to \cM_\sigma^{\rm rig}(v)$ induces a bijection between filtrations in $\cM_\sigma^{\rm rig}(v)$ over an algebraically closed field $k'$ and equivalence classes filtered points $\cdots E_{w+1} \subset E_w \subset \cdots $ of $\cM_\sigma(v)$ over $k'$, where we say that two filtrations are equivalent, $f \sim f'$, if they differ by a shift of weights, $E'_w = E_{w+c}$ for some $c \in \bZ$. The value of $q^{-1} f^\ast(\ell_{\sigma'}) \in \bQ$ does not change under this equivalence relation, because $\ell_{\sigma'}$ descends to $\cM_\sigma^{\rm rig}(v)$, and it follows that a point in $\cM_\sigma(v)$ is semistable if and only if its image in $\cM^{\rm rig}_\sigma(v)$ is.

If $f : \Theta_{k'} \to \cM_\sigma(v)$ is a filtration such that $q^{-1} f^\ast(\ell_{\sigma'}) <0$, i.e., $f$ is destabilizing, then by the definition of $\prescript{\rm rig}{}{|\!|x|\!|}^2_\gamma$, we have
\[
\mu_{\sigma'}(f)  \leq \max \left\{ \mu_{\sigma'}(f') \left| f' \sim f \right. \right\} = \mu^{\rm rig}_{\sigma'}(f)
\]

This implies that any HN filtration in $\cM_\sigma(v)$ maps to an HN filtration in $\cM^{\rm rig}_\sigma(v)$, and the uniqueness of HN filtrations implies that any HN filtration in $\cM_\sigma^{\rm rig}(v)$ is the image of one in $\cM_\sigma(v)$. The bijection between strata is a consequence of the smoothness of $\Filt(\cM_\sigma(v)) \to \Filt(\cM_\sigma^{\rm rig}(v))$ \cite{halpern2014structure}*{Sect.~1.0.2}.
\end{proof}

\subsubsection{Derived structure on the rigidified moduli stacks}
\label{S:derived_moduli}

Note that although \Cref{D:moduli_complexes} only defines $\cM_\sigma(v)$ as a classical stack, the definition can be interpreted verbatim to define a derived stack. For the remainder of this section, we will let $\cM_\sigma(v)$ denote this derived stack - see \cite{toen_vaquie} for a more thorough discussion of this stack in the derived setting.

The fiber of the cotangent complex at a point $[E] \in \cM_\sigma(v)$, corresponding to a complex $E \in \DCoh(S,\alpha)$, is \cite{toen_vaquie}*{Cor.~3.17}
\[
\bL_{\cM_\sigma(v)}|_{[E]} \simeq \RHom(E,E[1])^\dual.
\]
More generally, given a morphism $\xi : T \to \cM_\sigma(v)$, corresponding to a complex $E \in \DCoh(T \times S,\alpha)$, we have
\[
\xi^\ast(\bL_{\cM_\sigma(v)}) \simeq \pi_\ast(\inner{\RHom}^\otimes_{T \times S} (E,E[1]))^\dual,
\]
where $\pi : T \times S \to T$ is the projection. From this we see that Serre duality on $S$ gives an isomorphism $\phi : \bL_{\cM_\sigma(v)} \cong (\bL_{\cM_\sigma(v)})^\dual$.

The action of $\Gm$ on any family by scaling defines a map
\[
\alpha : \bL_{\cM_\sigma(v)} \to \cO_{\cM_\sigma(v)}[-1]
\]
which on the fiber at a point $[E] \in \cM_\sigma(v)$ is dual to the identity map $k[1] \to \RHom_S(E,E[1])$. There is also a homomorphism $\tau : \cO_{\cM_\sigma(v)}[-1] \to \bL_{\cM_\sigma(v)}$ that on fibers is dual to the trace map
\[
\RHom_S(E,E[1]) \cong R\Gamma(S,E\otimes E^\dual [1]) \xrightarrow{\ev : E \otimes E^\dual \to \cO_S} R\Gamma(S,\cO_S[1]) \to k[1],
\]
where the last map is induced by a fixed choice of splitting $R\Gamma(S,\cO_S) \cong k \oplus k[-2]$. The composition $\alpha \circ \tau$ is just multiplication by the rank $r \geq 0$ of $E$, so if $E$ has rank $>0$, $\tau$ is a splitting for $\alpha$. Likewise, Serre duality gives maps
\[
\alpha^\dual : \cO_{\cM_\sigma(v)}[1] \to \bL_{\cM_\sigma(v)}, \quad \tau^\dual : \bL_{\cM_\sigma(v)} \to \cO_{\cM_\sigma(v)}[1]
\]
that define a trivial summand of $\bL_{\cM_\sigma(v)}$ in homological degree $1$ when $r>0$. In particular, when $v$ has positive rank we have a decomposition
\[
\bL_{\cM_\sigma(v)} \simeq \cO_{\cM_\sigma(v)}[1] \oplus (\bL_{\cM_\sigma(v)})^{\rm{red}} \oplus \cO_{\cM_\sigma(v)}[-1],
\]
where $(\bL_{\cM_\sigma(v)})^{\rm{red}}$ is again self-dual. Note that the derived stack $\cM_\sigma(v)$ is not smooth at any point.

\begin{prop} \label{P:derived_reduction}
For any $v \in H^\ast_{\rm}(S,\alpha,\bZ)$ and any $\sigma \in \cStab^\dagger(\cD)$, there are derived algebraic stacks $\cM_\sigma(v) \thickslash \Gm$ and $\cM_\sigma^{\rm{red}}(v)$ with maps
\[
\xymatrix{ \cM_\sigma(v) \ar[rr]^-{\Gm \txt{-gerbe}}_-{q} & & \cM_\sigma(v) \thickslash \Gm & & & \cM_\sigma^{\rm{red}}(v) \ar[lll]^p_-{\txt{surjective\\closed immersion}}}
\]
along with direct sum decompositions
\[
\begin{array}{c}
\bL_{\cM_\sigma(v)} \cong \cO_{\cM_\sigma(v)}[-1] \oplus q^\ast \bL_{\cM_\sigma(v) \thickslash \Gm} \\
p^\ast \bL_{\cM_\sigma(v) \thickslash \Gm} \cong \bL_{\cM^{\rm red}_\sigma(v)} \oplus \cO_{\cM_\sigma(v)}[1]
\end{array}
\]
such that the inclusion of the summand $q^\ast \bL_{\cM_\sigma(v) \thickslash \Gm}$ and the projection onto the summand $\bL_{\cM^{\rm red}_\sigma(v)}$ are the canonical derivative maps of $q$ and $p$ respectively. On underlying classical stacks,
\[
(\cM_\sigma(v) \thickslash \Gm)^{\rm cl} \cong \cM_\sigma^{\rm rig}(v),
\]
and the morphism $q$ induces the canonical map $\cM_\sigma(v)^{\rm cl} \to \cM_\sigma^{\rm rig}(v)$ of the previous section.
\end{prop}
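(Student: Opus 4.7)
The plan is to construct $\cM_\sigma(v) \thickslash \Gm$ as a derived rigidification by the central scaling $\Gm$-subgroup of inertia, and to construct $\cM_\sigma^{\rm red}(v)$ as a derived fiber of a determinant map, then to verify the two cotangent complex decompositions via standard fiber sequences. For the rigidification, I would adapt the Abramovich-Olsson-Vistoli construction to the derived setting, defining $\cM_\sigma(v)\thickslash \Gm$ as the derived stack representing families of $\sigma$-semistable complexes modulo the equivalence $E \sim E \otimes \pi_T^\ast L$ for line bundles $L$ on $T$. Standard arguments show this is algebraic, that $q$ is a $\Gm$-gerbe, and that its classical truncation recovers the classical rigidification map of the previous subsection.

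The first decomposition then follows from the general fact that any $\Gm$-gerbe $q$ has relative cotangent complex canonically isomorphic to $\cO[-1]$, giving a fiber sequence $q^\ast \bL_{\cM_\sigma(v)\thickslash \Gm} \to \bL_{\cM_\sigma(v)} \to \cO[-1]$. The second map in this fiber sequence agrees with $\alpha$, because $\alpha$ is by construction the derivative of the scaling $\Gm$-action and this action is precisely what the $\Gm$-gerbe structure encodes. The fiber sequence splits because $\Gm$-gerbes are smooth-locally trivial, which gives the claimed direct sum decomposition $\bL_{\cM_\sigma(v)} \cong \cO[-1] \oplus q^\ast \bL_{\cM_\sigma(v)\thickslash \Gm}$. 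For the derived reduction, I would define $\cM_\sigma^{\rm red}(v)$ as the derived fiber of a derived determinant map $\cM_\sigma(v)\thickslash \Gm \to \Pic^{\rm der}(S,\alpha)\thickslash \Gm$ over a fixed determinant line bundle. Applying the formula \eqref{E:cotangent_mapping_stack} to $\Pic^{\rm der}(S) \cong \iMap(S,B\Gm)$, together with the canonical splitting $R\Gamma(S,\cO_S) \cong k \oplus k[-2]$ on the $K3$ surface, one computes $\bL_{\Pic^{\rm der}(S)\thickslash \Gm} \cong \cO[1]$, with this summand encoding the obstruction coming from $H^2(S,\cO_S) \cong k$. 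The standard fiber sequence $p^\ast \bL_{\Pic^{\rm der}(S)\thickslash \Gm} \to p^\ast \bL_{\cM_\sigma(v)\thickslash \Gm} \to \bL_{\cM_\sigma^{\rm red}(v)}$ for the derived fiber product then yields the second decomposition, with the splitting coming from the descended Serre duality map $\tau^\dual$. Since $c_1(v)$ determines the classical determinant uniquely for a $K3$ with torsion-free Picard group, the classical fiber of the determinant map over $L$ is all of $\cM_\sigma^{\rm rig}(v)$ as required, and $p$ is a surjective closed immersion.

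The main obstacle lies in rigorously constructing the twisted derived Picard stack $\Pic^{\rm der}(S,\alpha)$ and the derived determinant map in the twisted setting where $\alpha \neq 0$, and in handling rank-zero Mukai vectors where the classical identity $\alpha \circ \tau = r \cdot \id$ degenerates so that $\tau$ no longer splits $\alpha$. A more intrinsic alternative, avoiding the determinant map altogether, is to construct $\cM_\sigma^{\rm red}(v)$ directly as a derived closed substack of $\cM_\sigma(v)\thickslash \Gm$ obtained by coning off a canonical $\cO[1]$-summand of the cotangent complex produced by the map $\tau^\dual$ descended to the rigidification. To carry this out uniformly in $v$ requires producing a canonical splitting of $\tau^\dual$ that is natural in families -- valid even when $r = 0$ -- using the canonical decomposition $R\Gamma(S,\cO_S) \cong k \oplus k[-2]$ for the $K3$ together with the Serre-duality self-duality of the pushforward $\pi_\ast \inner{\RHom}^\otimes_{T\times S}(E,E)$, and then invoking the general theory of derived square-zero extensions to extend this infinitesimal data to an actual derived closed immersion $p$.
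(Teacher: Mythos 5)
Your overall architecture matches the paper's: construct a derived rigidification $\cM_\sigma(v)\thickslash\Gm$, then take a derived fiber of a determinant map. But there are two substantive problems, one of which you flag yourself without resolving.

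First, you claim the fiber sequence $q^\ast\bL_{\cM_\sigma(v)\thickslash\Gm}\to\bL_{\cM_\sigma(v)}\to\cO[-1]$ splits ``because $\Gm$-gerbes are smooth-locally trivial.'' That reasoning is wrong: smooth-local triviality of a gerbe does not produce a global splitting of a fiber sequence of cotangent complexes any more than local triviality of a vector bundle produces a global trivialization. The paper's splitting is global and comes from the trace map $\tau:\cO_{\cM_\sigma(v)}[-1]\to\bL_{\cM_\sigma(v)}$, which satisfies $\alpha\circ\tau=r\cdot\id$, and this splits $\alpha$ only when the rank $r$ of $v$ is nonzero. There is no splitting available for free from formal properties of gerbes.

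Second—and this is the real gap—you correctly single out the rank-zero case as the main obstacle, but the proposal ends without resolving it. Your ``more intrinsic alternative'' asks for a canonical, families-natural splitting of $\tau^\dual$ valid even when $r=0$, which is precisely what you do not have: when $r=0$ the composition $\alpha\circ\tau$ vanishes, so $\tau$ cannot split $\alpha$ and no amount of naturality recovers it. The paper closes this gap with \Cref{L:fourier_transform}: given a rank-zero primitive $v$, there is a Fourier--Mukai equivalence $\Phi:\DCoh(S,\alpha)\to\DCoh(S',\alpha')$ with another twisted $K3$ surface carrying $v$ to a Mukai vector of positive rank. Because $\Phi$ is compatible with tensoring by line bundles pulled back from the base (hence with the weak $B\Gm$-action), the isomorphism $\cM_\sigma(v)\cong\cM_{\Phi(\sigma)}(v')$ is $B\Gm$-equivariant, and one can transfer the entire construction (rigidification, determinant fiber, cotangent splittings) across $\Phi$. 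This use of derived equivalences of the surface itself to normalize the rank is the key idea your proposal is missing. Without it, both decomposition claims fail in rank zero, exactly as you suspected.

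One smaller point: the paper's rigidification (\Cref{L:derived_rigidification}) is built by an inductive deformation-theoretic extension over truncations, starting from the classical rigidification; simply invoking a ``derived Abramovich--Olsson--Vistoli'' construction is not a substitute, because the proof needs the explicit compatibility of the weight-zero decomposition of $\APerf$ with the truncation tower to identify $\bL_{\cM_\sigma(v)/(\cM_\sigma(v)\thickslash\Gm)}\cong\cO[-1]$ with the scaling derivative $\alpha$. Your informal statement of the rigidification does not give you that identification.
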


We prove this proposition after some preliminary results. We will use the following, whose proof was suggested by Arend Bayer, to reduce to the positive rank case:
\begin{lem} \label{L:fourier_transform}
Let $(S,\alpha)$ be a twisted $K3$ surface, and let $v \in H^\ast_{\rm alg}(S,\alpha,\bZ)$ be a Mukai vector of rank $0$. Then there is another twisted $K3$ surface $(S',\alpha')$ and a Fourier-Mukai equivalence $\Phi : \DCoh(S,\alpha) \to \DCoh(S',\alpha')$ such that $\Phi(v)$ has positive rank.
\end{lem}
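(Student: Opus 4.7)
The plan is to construct the desired Fourier-Mukai equivalence directly as the transform associated to a two-dimensional moduli space of stable twisted complexes on $(S,\alpha)$. By the twisted derived Torelli theorem of Huybrechts-Stellari, producing such an equivalence is essentially equivalent to producing a Hodge isometry on Mukai lattices with the required property, but it will be cleaner to exhibit $\Phi$ geometrically.

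Specifically, I would first find a primitive isotropic Mukai vector $w \in H^\ast_{\rm alg}(S,\alpha,\bZ)$ (i.e.\ $w^2=0$) with $(v,w) \neq 0$. Given such a $w$, pick a $w$-generic stability condition $\sigma \in \cStab^\dagger(\cD)$; set $S' := M_\sigma(w)$ and let $\alpha' \in \Br(S')$ be the natural Brauer obstruction to the existence of an untwisted universal family. Because $w^2 = 0$ and $w$ is primitive, $S'$ is a smooth projective $K3$ surface of dimension $w^2+2 = 2$, and the twisted universal family on $S \times S'$ defines a Fourier-Mukai kernel inducing a derived equivalence $\Phi \colon \DCoh(S,\alpha) \xrightarrow{\simeq} \DCoh(S',\alpha')$ (this is the twisted analogue of the classical Mukai/Bridgeland-Maciocia theorem, see \cite{caldararu}).

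To verify that $\Phi(v)$ has positive rank, note that the cohomological transform $\phi_\Phi \colon H^\ast(S,\alpha,\bZ) \to H^\ast(S',\alpha',\bZ)$ is a Mukai-isometry, and by construction a $\sigma$-stable object of class $w$ is sent by $\Phi$ to a skyscraper sheaf on $(S',\alpha')$ (up to shift), so $\phi_\Phi(w) = \pm (0,0,1) \in H^\ast(S',\alpha',\bZ)$. Since the rank of a class $u' \in H^\ast(S',\alpha',\bZ)$ is $\mp (u',(0,0,1))$, we compute
\[
\rank(\Phi(v)) = \mp(\phi_\Phi(v),(0,0,1)) = \mp(\phi_\Phi(v),\phi_\Phi(w)) = \mp(v,w) \neq 0,
\]
and if the sign is wrong we compose $\Phi$ with the shift functor $[1]$, which negates Mukai vectors.

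The main obstacle is the lattice-theoretic step of producing $w$. The algebraic Mukai lattice has signature $(2,\rho)$ and always contains the isotropic point class $(0,0,1)$, but $(v,(0,0,1)) = \pm \rank(v) = 0$, so this choice of $w$ fails. However, $H^\ast_{\rm alg}(S,\alpha,\bZ)$ also contains classes of nonzero rank (for instance, the Mukai vector of any locally free $\alpha$-twisted sheaf, such as one built from an Azumaya algebra representing $\alpha$), so one can pick $u \in H^\ast_{\rm alg}(S,\alpha,\bZ)$ with $r := \rank(u) \neq 0$ and $(v,u) \neq 0$ (the latter is possible by non-degeneracy of the Mukai pairing combined with the fact that $v$ is not a multiple of $(0,0,1)$; for the delicate case $v = s(0,0,1)$ one notes $(v,u) = -s r \neq 0$ whenever $s \neq 0$). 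Setting $w := 2r\, u + u^2 (0,0,1)$ gives an isotropic class with $(v,w) = 2r(v,u) \neq 0$, and extracting its primitive part preserves these properties. Once $w$ is in hand, the rest of the construction is standard.
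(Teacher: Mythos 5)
Your proposal is correct and uses the same essential geometric ingredient as the paper — a Fourier–Mukai transform coming from the moduli space $M_\sigma(w)$ for a primitive isotropic $w$, with $\alpha'$ the obstruction to an untwisted universal family — but the lattice-theoretic step differs from what the paper actually does. The paper takes the very specific choice $w = (R,0,0)$, where $R$ is the index of $\alpha$ (the minimal rank of a locally free $\alpha$-twisted sheaf), and first pre-composes with the autoequivalence $L^m \otimes (-)$ for an ample $L$ and $m \gg 0$ to push the degree-$4$ component of $e^{mH}v$ positive, so that $\rank(\Psi^{-1}_\ast(e^{mH}v)) = R\cdot(\text{degree-}4\text{ component}) > 0$. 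You instead construct an isotropic $w$ directly with $(v,w)\neq 0$, starting from any $u$ of nonzero rank with $(v,u)\neq 0$ and forming $w = 2r\,u + u^2(0,0,1)$, avoiding the line-bundle pre-twist entirely. Your route is slightly more uniform and does not have to worry about the degree-$4$ component of $e^{mH}v$ actually becoming positive (a point the paper leaves a little terse, e.g.\ when the $H^2$-component of $v$ vanishes or pairs to zero with $H$); but the paper's $w = (R,0,0)$ is cleaner to see as primitive in $H^\ast_{\rm alg}(S,\alpha,\bZ)$, whereas you must pass to the primitive part of your $w$ and note the rank remains positive. Both constructions rest on the same existence/equivalence theorem for moduli of twisted semistable objects of isotropic Mukai vector. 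Two minor points worth tightening: you should say explicitly that your $w$ has positive rank $2r^2 > 0$, which is what guarantees $M_\sigma(w)$ is nonempty and that the positive-rank hypotheses of the twisted Mukai/Orlov-type theorem apply; and "an Azumaya algebra representing $\alpha$" is itself an untwisted sheaf — what you want is a locally free $\alpha$-twisted sheaf $\cE$ (with $\cE nd(\cE)$ the Azumaya algebra), which exists of rank equal to the index of $\alpha$, and that is the source of nonzero-rank classes in $H^\ast_{\rm alg}(S,\alpha,\bZ)$.
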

\begin{proof}
Let us use the notation $v = (r,c,s)$ for a Mukai vector, where $r \in H^0(S;\bZ) \cong \bZ$, $c \in H^2(S;\bZ)$, and $s \in H^4(S;\bZ) \cong \bZ$. The Mukai pairing is then given by the formula $((r,c,s),(r',c',s')) = c \cdot c' - rs'-r's$. Let $L$ be an invertible sheaf on $S$ with Chern class $H$. Note that, as observed in the proof of \cite{BZ}*{Thm.~1.2}, $L^m \otimes(-)$ gives an equivalence $\DCoh(S,\alpha) \to \DCoh(S,\alpha)$ that maps objects with Mukai vector $(r,c,s)$ to those with Mukai vector
\[
e^{mH} v = (r,c+rmH,s+m (c \cdot H)+\frac{1}{2} r m^2 H^2).
\]
Therefore, if $L$ is ample and $m \gg 0$, the equivalence $L^m \otimes(-)$ maps the Mukai vector $v$ to a vector whose degree $4$ component is positive.

Let $R$ be the index of the class $\alpha$, and let $w = (R,0,0)$. Then $S' = \cM^{\rm rig}_\sigma(w)$ is a twisted $K3$ surface, with Brauer class $\alpha'$ on $S'$ determined by the $\Gm$-gerbe $\cM_\sigma(w) \to \cM^{\rm rig}_\sigma(w)$. There is a Fourier-Mukai equivalence $\Psi : \DCoh(S',\alpha') \to \DCoh(S,\alpha)$ that maps point sheaves to twisted sheaves with Mukai vector $w$. Any derived equivalence preserves the Mukai pairing, so for any Mukai vector $v = (r,c,s)$ on $S$, we have
\[
\rank(\Psi^{-1}_\ast(v)) = -(\Psi^{-1}_\ast(v),(0,0,1)) = -(v,(R,0,0)) = R s.
\]
In particular, the composition
\[
\DCoh(S,\alpha) \xrightarrow{L^m \otimes(-)} \DCoh(S,\alpha) \xrightarrow{\Psi^{-1}} \DCoh(S',\alpha')
\]
maps complexes of class $v$ to complexes of positive rank for $m \gg 0$.
\end{proof}

For the following construction, let $\cX$ be a derived algebraic stack with a weak action of the monoid $B\Gm$ 
(see \Cref{D:weak_theta_action}), and let $a : B\Gm \times \cX \to \cX$ be the action map. The map $a$ induces a map of group sheaves $(\Gm)_{B\Gm \times \cX} \subset I_{B\Gm \times \cX} \to a^{-1} (I_{\cX})$ over $B\Gm \times \cX$. Restricting this map further along the identity section $\cX \to B\Gm \times \cX$ gives a map of group sheaves $(\Gm)_\cX \to I_{\cX}$ over $\cX$.

\begin{lem} \label{L:derived_rigidification}
Let $\cX$ be a derived algebraic stack with a weak action of the monoid $B\Gm$ such that the induced map of group sheaves $(\Gm)_\cX \to I_{\cX}$ is a closed immersion. Then there exists a derived algebraic stack $\cX \thickslash \Gm$ and a morphism $q : \cX \to \cX \thickslash \Gm$ that is a $\Gm$-gerbe such that the map $\bL_\cX \to \bL_{\cX/ (\cX \thickslash \Gm)}$ is isomorphic to the map $\bL_{\cX} \to \cO_{\cX}[-1]$ induced by the embedding $(\Gm)_\cX \hookrightarrow I_\cX$, and $\cX^{\rm cl} \to (\cX \thickslash \Gm)^{\rm cl}$ is the classical rigidification.
\end{lem}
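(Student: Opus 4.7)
The plan is to construct $\cY := \cX \thickslash \Gm$ inductively over the Postnikov tower of $\cX$, adapting the deformation-theoretic argument from the proof of \Cref{T:local_structure_stratum}. Functoriality of truncation restricts the weak $B\Gm$-action on $\cX$ to a weak $B\Gm$-action on each $\tau_{\leq n}\cX$, and on the underlying classical stack it yields a closed central subgroup $(\Gm)_{\cX^{\rm cl}} \hookrightarrow I_{\cX^{\rm cl}}$. By the classical theory \cite{abramovich2008tame}, one obtains a $\Gm$-gerbe $q^{\rm cl} : \cX^{\rm cl} \to \cY^{\rm cl}$, which I take as the base of the induction.

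Suppose inductively that a derived $\Gm$-gerbe $q_{n-1} : \tau_{\leq n-1}\cX \to \cY_{n-1}$ has been constructed. The closed immersion $\tau_{\leq n-1}\cX \hookrightarrow \tau_{\leq n}\cX$ is a square-zero extension classified by some map $\eta_n : \bL_{\tau_{\leq n-1}\cX} \to M[n+1]$ with $M \in \Coh(\tau_{\leq n-1}\cX)$, and my goal is to produce a compatible square-zero extension $\cY_{n-1} \hookrightarrow \cY_n$ such that $\tau_{\leq n}\cX \simeq \tau_{\leq n-1}\cX \times_{\cY_{n-1}} \cY_n$. Because the central $\Gm$-subgroup acts trivially on $\cX$ itself, the cotangent complex $\bL_{\tau_{\leq n-1}\cX}$ is canonically $\Gm$-equivariant of weight zero; the coefficient sheaf $M$, however, inherits a nontrivial $\Gm$-equivariant structure from the weak $B\Gm$-action on $\tau_{\leq n}\cX$, and $\eta_n$ is itself equivariant. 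This forces $\eta_n$ to factor through the weight-zero summand of $M$, which descends via $(q_{n-1})_\ast$ to a coherent sheaf $\widetilde{M}$ on $\cY_{n-1}$, giving a map $\widetilde{\eta}_n : \bL_{\cY_{n-1}} \to \widetilde{M}[n+1]$ classifying the desired extension $\cY_n$. Setting $\cY := \colim_n \cY_n$ and $q := \colim_n q_n$ yields the required derived $\Gm$-gerbe, and the classical rigidification identification on underlying classical stacks holds by construction of the base case.

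The relative cotangent complex computation then follows from the universal property of $\Gm$-gerbes: since $q$ is smooth-locally the projection $\cY \times B\Gm \to \cY$, we have $\bL_{\cX/\cY} \simeq \cO_\cX \otimes (\Lie \Gm)^\vee[-1] \simeq \cO_\cX[-1]$. The identification of the canonical map $\bL_\cX \to \bL_{\cX/\cY}$ with the one induced by the central embedding $(\Gm)_\cX \hookrightarrow I_\cX$ is an instance of the general equivalence between central $\Gm$-subgroups in $I_\cX$ and maps $\bL_\cX \to \cO_\cX[-1]$, obtained as the differential of the Lie algebra of the embedded $\Gm$ under the relative cotangent sequence of $I_\cX \to \cX$. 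The main obstacle I anticipate is the descent step in the induction: turning the homotopy-theoretic (weak) $B\Gm$-equivariance of the square-zero extension data into honest $\Gm$-equivariance of sheaves on $\tau_{\leq n-1}\cX$ that is strict enough to invoke weight decomposition and pushforward descent. I expect this to follow from the fact, already exploited in \Cref{C:intrinsic_theta_action}, that weak actions of monoids like $B\Gm$ on suitable stacks are essentially discrete, so no higher coherence data is lost in passing to the weight decomposition of $M$, and existence and uniqueness of the lift $\cY_n$ then reduce to a standard obstruction calculation on the relative cotangent complex of $\cY_{n-1}$.
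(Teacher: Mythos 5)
Your approach — inducting up the Postnikov tower with the classical rigidification as the base case, and building each $\cY_n$ from a descended square-zero extension datum — is the same as the paper's, but there is a gap at the central step. You argue that $\Gm$-equivariance of $\eta_n : \bL_{\tau_{\leq n-1}\cX} \to M[n+1]$ forces it to factor through the weight-zero summand $M^0 \subset M$, and then build $\cY_n$ as the extension of $\cY_{n-1}$ by the descent $\widetilde{M}$ of $M^0$. But for the resulting square to be cartesian you need $\tau_{\leq n}\cX \simeq \tau_{\leq n-1}\cX \times_{\cY_{n-1}} \cY_n$, and the right-hand side is the square-zero extension of $\tau_{\leq n-1}\cX$ by $q_{n-1}^\ast(\widetilde{M}) \cong M^0$, not by $M$. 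If $M$ had any nonzero weight pieces, the two extensions would disagree as stacks — the factorization of the classifying class through $M^0$ does not let you replace the coefficient module. So the construction only works once you know $M$ itself is concentrated in weight zero, and this is precisely the step your proposal does not establish.

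The paper supplies this by observing that $a : B\Gm \times \cX \to \cX$ is smooth, hence flat, and that $a^\ast(\cO_\cX) = \cO_{B\Gm \times \cX}$ is manifestly concentrated in weight zero in $\QC(B\Gm \times \cX)$; flatness then forces each homology sheaf $H_\ast(\cO_\cX)$, and hence each $M_{n+1} = H_{n+1}(\cO_\cX)[n]$, to lie in $\QC(\cX_0)^0$. Once you know $M = M^0$, it descends uniquely to $M'$ on $\cY_{n-1}$, and your factorization observation becomes the (now vacuous) statement that $\eta_n$ is equivariant. The secondary concern you raise about upgrading weak $B\Gm$-equivariance of the extension data to honest equivariance is handled by exactly this weight decomposition (\Cref{L:center_grading}): weak $B\Gm$-actions already give a direct-sum splitting of $\QC(-)$, so the equivariant structure on $M$ needed for descent requires no additional coherence data. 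I would suggest you insert the flatness argument and replace the factorization claim with the stronger statement that $M$ is already in $\QC(\cX_0)^0$.
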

\begin{proof}
Recall from \Cref{L:center_grading} that a weak $B\Gm$-action on any stack induces a weight decomposition into a direct sum
\[
\QC(\cX) \cong \bigoplus_{w \in \bZ} \QC(\cX)^w,
\]
where $F \in \QC(\cX)^w$ if and only if $a^\ast(F) \in \QC(B\Gm \times \cX)$ lies in weight $w$ with respect to the $B\Gm$ factor. For any extension field $k'/k$ and $p\in \cX(k')$, the $B\Gm$-action defines a homomorphism of $k'$-groups $(\Gm)_{k'} \to \Aut(p)$, and Nakayama's lemma implies that $F \in \APerf(\cX)$ lies in the weight $w$ summand if and only if $(\Gm)_{k'}$ acts with weight $w$ on the fiber $F_p$ for every $p \in \cX(k')$.

For any $n \geq 0$, let $\cX_n = \tau_{\leq n} \cX \hookrightarrow \cX$ be the $n^{th}$ truncation, e.g., $\cX_0 = \cX^{\rm cl}$. Then $B\Gm \times \cX_n$ is $n$-truncated as well, so the action map restricted to $\cX_n$ factors uniquely through a map $B\Gm \times \cX_n \to \cX_n$ that defines a weak action of $B\Gm$ on $\cX_n$. The inclusion $\cX_n \hookrightarrow \cX$ is equivariant with respect to this $B\Gm$-action, and hence the pullback functor along this map is compatible with the weight decompositions of $\QC(-)$ induced by these actions.

Now consider the classical $\Gm$-gerbe $\cX^{\cl} \to \cX^{\rm cl}\thickslash \Gm$. The pullback functor $\APerf(\cX^{\rm cl}\thickslash \Gm) \to \APerf(\cX^{\cl})$ is $t$-exact and fully faithful. A complex in $\APerf(\cX^{\cl})$ descends to $\APerf(\cX^{\rm cl} \thickslash \Gm)$ if and only if $\Gm$ acts with weight $0$ in every fiber, so pullback defines an equivalence
\[
\APerf(\cX^{\rm cl}\thickslash \Gm) \cong \APerf(\cX^{\rm cl})^0.
\]

We now claim that one can inductively construct a sequence of square-zero extensions of derived algebraic stacks $\cX'_0 \hookrightarrow \cX'_1 \hookrightarrow \cX'_2 \hookrightarrow \cdots$ fitting into a cartesian diagram
\begin{equation} \label{E:gerbe_ext_diagram}
\xymatrix{\cX_n \ar@{^{(}->}[r] \ar[d]^{q_n} & \cX_{n+1} \ar[d]^{q_{n+1}} \\
\cX'_n \ar@{^{(}->}[r] & \cX'_{n+1} },
\end{equation}
starting with $\cX'_0 = \cX^{\rm cl} \thickslash \Gm$ and $q_0$ the canonical map $\cX^{\rm cl} \to \cX^{\rm cl} \thickslash \Gm$.

The map $\cX_n \hookrightarrow \cX_{n+1}$ is an extension by the pushforward of $M_{n+1} := H_{n+1}(\cO_{\cX})[n] \in \QC(\cX_0)$, and is thus classified by a class
\[
\eta_n \in \Hom_{\cX_0} \left(\bL_{\cX_n}|_{\cX_0}, M_{n+1}[1] \right)
\]
One can use the fact that $a : B\Gm \times \cX \to \cX$ is smooth (hence flat) to show that $H_\ast(\cO_{\cX}) \in \QC(\cX_0)^0$, and thus $M_{n+1}$ descends uniquely to an object in $\QC(\cX'_0)$, which we denote $M'_{n+1}$. It follows that the image of $\eta_n$ under the homomorphism
\[
\Hom_{\cX_0} \left(\bL_{\cX_n}|_{\cX_0}, M_{n+1}[1] \right) \to \Hom_{\cX_0} \left(q_n^\ast(\bL_{\cX'_n})|_{\cX_0}, M_{n+1}[1] \right)
\]
is the image under $q_n^\ast$ of a unique class $\eta'_n \in \Hom_{\cX'_0}(\bL_{\cX'_n}|_{\cX'_0},M'_{n+1}[1])$. This class defines an extension $\cX'_n \to \cX'_{n+1}$ that fits into the cartesian diagram \eqref{E:gerbe_ext_diagram}.

Note that $\cX \cong \colim_n \cX_n$. Now that we have constructed the diagram \eqref{E:gerbe_ext_diagram} for all $n$, we define $\cX \thickslash \Gm = \colim_n \cX'_n$.

Using deformation theory, as in \cite{HAGII}*{Lem.~C.0.11}, one can show that given an \'etale cover $U \to \cX'_0$ and an isomorphism $U \times_{\cX'_0} \cX_0 \cong (B\Gm)_U$ over $U$, one can extend this data to an \'etale cover $U' \to \cX'$ and an isomorphism $U' \times_{\cX'} \cX \cong (B\Gm)_{\cX'}$. Also, at every level, the map $\bL_{\cX_n} \to \bL_{\cX_n/\cX'_n}$ can be identified with the map $\bL_{\cX_n} \to \cO_{\cX_n}[-1]$ induced by the embedding $(\Gm)_{\cX_n} \hookrightarrow I_{\cX_n}$, and thus the same holds for the colimit.
\end{proof}

\begin{proof}[Proof of \Cref{P:derived_reduction}]
Consider the map $B\Gm \times \cM_\sigma(v) \to \cM_\sigma(v)$ that acts on $T$-points for any derived test scheme $T$ by $(L,E) \mapsto \pi^\ast(L) \otimes E$, where $L \in \Pic(T)$, $\pi : T \times S \to T$ is the projection, and $E \in \DCoh(T \times S,\alpha)$. This map satisfies the identity and associativity axioms up to homotopy, and thus defines a weak action of the monoid $B\Gm$ on the derived stack $\cM_\sigma(v)$. 

If $v$ does not have positive rank, use \Cref{L:fourier_transform} to construct a twisted $K3$ surface $(S',\alpha')$ and a Fourier-Mukai equivalence $\Phi : \DCoh(S,\alpha) \cong \DCoh(S',\alpha')$ such that $v':=\Phi_\ast(v) \in H^\ast_{\rm alg}(S',\alpha',\bZ)$ has positive rank. For any derived $k$-scheme $T$, $\Phi$ extends to an equivalence $\Phi_T : \DCoh(T \times S,\alpha) \to \DCoh(T\times S',\alpha')$ that maps families of $\sigma$-semistable complexes to families of $\Phi(\sigma)$-semistable complexes. Furthermore, we have a natural isomorphism $\Phi_T(\pi^\ast(L) \otimes E) \cong \pi^\ast(L) \otimes \Phi_T(E)$ for $L \in \Pic(T)$ and $E \in \DCoh(T\times S,\alpha')$. Therefore $\Phi$ defines an isomorphism of stacks $\cM_\sigma(v) \cong \cM_{\Phi(\sigma)}(v')$ that is equivariant for the canonical weak $B\Gm$-action on each. We now replace $(S,\alpha)$ with $(S',\alpha')$, $\sigma$ with $\Phi_\ast(\sigma)$, and $v$ with $v'$, so we will assume for the remainder of the proof that $v$ has positive rank.

We use \Cref{L:derived_rigidification} to construct the algebraic derived stack $\cM_\sigma(v) \thickslash \Gm$ and the morphism $q : \cM_\sigma(v) \to \cM_\sigma(v) \thickslash \Gm$ satisfying the desired properties. In particular, we have an exact triangle
\[
q^\ast \bL_{\cM_\sigma(v) \thickslash \Gm} \to \bL_{\cM_\sigma(v)} \to \cO_{\cM_\sigma(v)}[-1] \to,
\]
that is split by the trace map $\tau$, which we defined in the discussion leading up to \Cref{P:derived_reduction}, because $v$ has positive rank.

To define $\cM_\sigma^{\rm red}(v)$, we will use a trick to remove this extraneous summand in the cotangent complex by considering the determinant map (see \cite{schurg2015derived})
\[
\det : \cM_\sigma(v) \to \Pic(S)_c
\]
where $\Pic(S)_c$ is the derived Picard stack of $S$ of invertible sheaves of numerical class $c \in \NS(S)$. Because $H^1(S,\cO_S) = 0$, we have $\Pic(S)_c \simeq \Spec(k[\epsilon[1]]) / \Gm$, where $\Gm$ acts trivially on $k[\epsilon[1]]$. The induced map on cotangent complexes at a point $E \in \cM_\sigma(v)$ is dual to the evaluation map
\[
\xymatrix{ \RHom_S(\det(E),\det(E)[1])^\ast \ar@{}|-{\rotatebox[origin=c]{90}{$\cong$}}[d] & \RHom_S(E,E[1])^\ast \ar@{}|-{\rotatebox[origin=c]{90}{$\cong$}}[d] \\
R\Gamma(S, \cO_S[1])^\ast \ar[r]^-{\ev: E \otimes E^\dual \to \cO_S} & R\Gamma(S, E \otimes E^\dual[1])^\ast }.
\]
As in the discussion leading up to \Cref{P:derived_reduction}, let $\tau_E \in H_{-1}(\RHom_S(E,E[1])^\ast)$ be the class that is linearly dual to the trace map, and let $\alpha_E^\dual \in H_{1}(\RHom_S(E,E[1])^\ast)$ be the class that is Serre dual to the identity map. Then the differential of the map $\det$ maps $\tau_E \mapsto r \tau_{\det(E)}$ and $\alpha_E^\dual \mapsto r \alpha_{\det(E)}^\dual$.

The determinant map descends to a map
\[
\det : \cM_\sigma(v) \thickslash \Gm \to \Pic(S)_c \thickslash \Gm = \Spec(k[\epsilon[1]]).
\]
The cotangent complex of the latter is free on the generator $\epsilon = \alpha^\dual_{\det(E)}$ in homological degree $1$, for any $E \in \cM_\sigma(v)$. Therefore, if we define
\[
\cM^{\rm{red}}_\sigma(v) := (\cM_\sigma(v) \thickslash \Gm) \times_{\Spec(k[\epsilon[1]])} \Spec(k),
\]
we have an exact triangle
\[
\cO_{\cM_\sigma^{\rm red}(v)}[1] \xrightarrow{p^\ast(\alpha^\dual)} p^\ast \bL_{\cM_\sigma(v) \thickslash \Gm} \to \bL_{\cM_\sigma^{\rm red}(v)} \to.
\]
Because the rank of $v$ is positive, the is extension is split by $p^\ast(\tau^\dual)$.

\end{proof}

\begin{rem}
In \cite{schurg2015derived}, the reduced stack is defined as $\cM_\sigma(v) \times_{\Pic(S)_c} \Spec(k)$, where $\Spec(k) \to \Pic(S)_c$ is the unique point. If $v$ has rank $r>0$, the resulting stack has the reduced cotangent complex $(\bL_{\cM_\sigma(v)})^{\rm red}$, but if $r>1$ the generic point will still have a canonical action of the multiplicative group $\mu_r$, so this does not fully reduce the gerbe.
\end{rem}


\subsection{\texorpdfstring{$D$}{D}-equivalence conjecture for moduli of sheaves on a \texorpdfstring{$K3$}{K3}}

We now combine the results of the previous sections to the $D$-equivalence conjecture. As before, we let $(S,\alpha)$ be a twisted $K3$ surface, and we let $\cD = \DCoh(S,\alpha)$ denote the derived category of complexes of twisted coherent sheaves.

In \Cref{P:derived_reduction} we have constructed a derived stack $\cM_\sigma^{\rm red}(v)$, parameterizing $\sigma$-semistable complexes with Mukai vector $v$, for any $\sigma \in \cStab^\dagger(\cD)$. The underlying classical stack of $\cM^{\rm red}_{\sigma}(v)$ is the rigidification of the usual stack of $\sigma$-semistable complexes \cite{toda_moduli}, and it admits a good moduli space $M_\sigma(v)$. When $\sigma$ is generic with respect to $v$, $M_\sigma(v) = \cM_\sigma^{\rm red}(v)$ is a smooth projective Calabi-Yau variety.

\begin{thm} \label{T:D_equivalence}
 $\sigma \in \cStab^\dagger(\cD)$ be a stability condition that is generic with respect to a primitive class $v \in H^\ast_{\rm alg}(S,\alpha,\bZ)$ with $(v,v) > 0$. Then for any smooth projective variety $X$ that is birationally equivalent to $M_\sigma(v)$ and has $K_X \cong \cO_X$, there is an equivalence of derived categories
\[
\DCoh(X) \cong \DCoh(M_\sigma(v)).
\]
\end{thm}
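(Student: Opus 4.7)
The plan is to reduce to derived equivalences across a single wall in $\cStab^\dagger(\cD)$ and then apply the magic windows theorem \Cref{T:magic_windows} to the reduced derived moduli stack at the wall. By \Cref{T:bayer_macri}, any smooth projective $K$-trivial birational model of $M_\sigma(v)$ is of the form $M_{\sigma_1}(v)$ for some $\sigma_1 \in \cStab^\dagger(\cD)$ generic with respect to $v$. Choose a generic real-analytic path $\sigma_t$ in $\cStab^\dagger(\cD)$ from $\sigma$ to $\sigma_1$; it meets finitely many walls of the chamber decomposition with respect to $v$ at critical values $t_1,\dots,t_N \in (0,1)$, and between consecutive critical values the isomorphism class of $M_{\sigma_t}(v)$ is constant. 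It therefore suffices to construct, for each $i$, a derived equivalence $\DCoh(M_{\sigma_{t_i-\varepsilon}}(v)) \cong \DCoh(M_{\sigma_{t_i+\varepsilon}}(v))$ for small $\varepsilon > 0$.

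Fix such an $i$ and let $\tau := \sigma_{t_i}$ be the non-generic stability condition on the wall; set $\sigma_\pm := \sigma_{t_i\pm\varepsilon}$. I consider the derived algebraic $k$-stack $\cX := \cM^{\mathrm{red}}_\tau(v)$ of \Cref{P:derived_reduction}. By construction it has self-dual cotangent complex (it is obtained from $\cM_\tau(v)$, whose cotangent complex is Serre-dually self-dual, by peeling off the canonical trivial summands in degrees $\pm 1$), and by \Cref{thm:good_moduli} its underlying classical stack admits the proper good moduli space $M_\tau(v)$. If $v$ has rank zero, I first replace $(S,\alpha,v,\tau)$ by its image under a Fourier--Mukai equivalence of positive rank as in \Cref{L:fourier_transform,P:derived_reduction}. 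I equip $\cX$ with the rational quadratic norm on graded points coming from $b_\tau \in H^4(\cX;\bQ)$ as in \eqref{E:reduced_norm}.

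Next I reinterpret the two adjacent chambers as semistable loci for intrinsic $\Theta$-stability on $\cX$. By \Cref{P:bridgeland_compare_1,P:bridgeland_compare_2}, for $\varepsilon$ sufficiently small the open substacks $\cM^{\mathrm{red}}_{\sigma_\pm}(v) \subset \cX$ coincide with the semistable loci $\cX^{\mathrm{ss}}(\ell_{\sigma_\pm})$ for the $\Theta$-stratifications induced by the classes $\ell_\pm := \ell_{\sigma_\pm} \in \NS(\cX)_\bR$ via \Cref{T:intrinsic_GIT}. Since $\sigma_\pm$ are generic with respect to $v$, the automorphism groups of points in $\cX^{\mathrm{ss}}(\ell_\pm)$ are finite. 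After a small generic perturbation of $\ell_\pm$ (which does not change the semistable locus, as in \Cref{R:generic_perturbation}), I may assume both $\ell_\pm$ are generic in the sense of \Cref{defn:generic}, and I choose a common lattice generic $\delta \in \NS(\cX)_\bR$ lying in a small ball around $\tfrac12(\ell_+ + \ell_-)$, using the hyperplane arrangement description of \Cref{L:hyperplane_arrangement}. Applying \Cref{T:magic_windows} twice then yields equivalences
\[
\DCoh(M_{\sigma_-}(v)) \simeq \fW_\cX(\delta) \simeq \DCoh(M_{\sigma_+}(v)),
\]
completing the induction.

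The main obstacle is verifying the hypotheses of \Cref{T:magic_windows} for $\cX$: namely that a generic class in $\NS(\cX)_\bR$ exists, that $\overline{\Sigma}_x$ spans $(M_x)_\bR$ at every closed point $x$ (so that \Cref{L:hyperplane_arrangement} produces a common lattice generic $\delta$), and that $\ell_\pm$ are themselves generic in the sense required. This is essentially a local geometric input on the polystable objects on the wall: a closed point of $M_\tau(v)$ is represented by a polystable object $E = \bigoplus_{j=1}^n E_j^{\oplus m_j}$ with the $E_j$ pairwise non-isomorphic $\tau$-stable of the same phase, so $G_x$ is (the quotient by scalars of) $\prod_j \GL_{m_j}$ acting on the $\mathrm{Ext}^1$'s between summands. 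Using the local model \Cref{thm:local_model} and self-duality of the cotangent complex, the weights of $H_0(\bL_{\cX,x})$ under the torus of $G_x$ are a symmetric set spanning the weight lattice whose convex hull $\overline{\Sigma}_x$ is full-dimensional; the classes $\ell_\pm$ are generic because $\sigma_\pm$ lie strictly off the wall, and an explicit rational representative in $\NS(\cX)$ can be used to invoke \Cref{L:hyperplane_arrangement}. Once these local computations are carried out, the theorem follows.
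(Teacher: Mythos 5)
Your proposal follows the paper's argument step for step: reduce via Theorem~\ref{T:bayer_macri} to a sequence of single wall-crossings, identify $\cM^{\mathrm{red}}_{\sigma_\pm}(v)$ with semistable loci $\cX^{\mathrm{ss}}(\ell_\pm)$ inside $\cX = \cM^{\mathrm{red}}_\tau(v)$ using \Cref{P:bridgeland_compare_1,P:bridgeland_compare_2}, choose a common lattice-generic $\delta$, and apply \Cref{T:magic_windows}. The reduction and the invocation of \Cref{R:generic_perturbation} to make $\ell_\pm$ generic without changing the semistable loci are correct.

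However, there is a genuine gap in how you discharge the hypotheses of \Cref{T:magic_windows}, and in particular your proposal never uses the primitivity of $v$ — a sure sign something is missing. The hypothesis of \Cref{T:magic_windows} (and of \Cref{L:hyperplane_arrangement}, which you need in order to find the lattice-generic $\delta$) has two parts: that $\overline{\Sigma}_x$ spans $(M_x)_\bR$ at every closed point, \emph{and} that a generic class in $\NS(\cX)_\bR$ exists. You assert the first follows from self-duality of the cotangent complex and sketch it as "a symmetric set spanning the weight lattice," but self-duality of $H_0(\bL_{\cX,x})$ alone doesn't guarantee its weights span: that requires analyzing the $\mathrm{Ext}^1$ quiver between the stable summands $E_j$. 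More seriously, you do not address the existence of a generic class at all; you only say $\ell_\pm$ are "generic because $\sigma_\pm$ lie strictly off the wall," but genericity of $\sigma_\pm$ with respect to $v$ in $\cStab^\dagger(\cD)$ is a different condition than genericity in $\NS(\cX)_\bR$ in the sense of \Cref{defn:generic} (perturbing $\ell_\pm$ via \Cref{R:generic_perturbation} presupposes the hyperplane arrangement of \Cref{L:hyperplane_arrangement} exists, which in turn presupposes a generic class exists). The paper fills exactly this gap with \Cref{L:bridgeland_generic}: after reducing the genericity condition to a quiver-theoretic statement via \Cref{thm:local_model}, it shows that for each polystable $E = \bigoplus E_j \otimes V_j$ and each $0 < w < d_E$, the pairing $\zeta_E(a)\cdot w$ is not identically zero on $v^\perp_\bR$ — and this is precisely where primitivity of $v$ enters, since $\zeta_E(a)\cdot w \equiv 0$ would force $v(\bigoplus E_j^{w_j}) = rv$ for some $0 < r < 1$, contradicting primitivity via non-degeneracy of the Mukai pairing. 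Without this argument your proof does not establish the hypotheses of \Cref{T:magic_windows}, so the appeal to the magic windows theorem is unsupported.
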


\begin{ex}
For any $v \in H^\ast(S,\bZ)$ that is the Mukai vector of a coherent sheaf and any ample class $H \in \NS(S)$, there is a $\sigma \in \cStab^\dagger(\cD)$ such that $\cM_\sigma(v)$ is the moduli of Gieseker $H$-semistable sheaves with Mukai vector $v$ \cite{bridgelandK3}*{Prop.~14.2}. In particular, \Cref{T:D_equivalence} establishes the D-equivalence conjecture for any birational equivalence class of Calabi-Yau manifolds that contains a smooth moduli space of coherent sheaves on a K3 surface.
\end{ex}

We will prove \Cref{T:D_equivalence} at the end of the section, after studying genericity for classes in $\NS(\cM^{\rm red}_\sigma(v))_\bR$.

A point in $\cM^{\rm red}_\sigma(v)$, which classifies a complex $E \in \cD_{k'} = \DCoh(S_{k'},\alpha)$ for some extension field $k'$ of $k$, is closed if $E$ is polystable, i.e., decomposes as
\begin{equation} \label{E:polystable}
E = \bigoplus_{i\in I} E_i \otimes V_i,
\end{equation}
where $E_i$ are non-isomorphic simple objects of $\cD_{k'}$, and the $V_i$ are certain multiplicity vector spaces. We will often assume, by extending the base field as necessary, that $E \in \cD$.

Because $\cM^{\rm red}_\sigma(v)^{\rm cl} \cong \cM_\sigma(v)^{\rm cl} \thickslash \Gm$, if $E$ is polystable we have
\[
G_E := \Aut(E) =  \left( \prod_i \GL(V_i)  \right) / (\Gm)_{\rm diag},
\]
where $(\Gm)_{\rm diag}$ acts diagonally by scaling on each $V_i$. From the discussion in \Cref{S:derived_moduli}, we compute
\[
H_0(\bL_{\cM^{\rm{red}}_{\sigma}(v),[E]}) = H_0(\bL_{\cM_{\sigma}(v),[E]}) \simeq \bigoplus_{i,j} \Hom(E_i,E_j[1])^\dual \otimes  \Hom(V_i,V_j),
\]
Serre duality provides an equivalence $\bL_{\cM^{\rm red}_\sigma(v)} \cong (\bL_{\cM^{\rm red}_\sigma(v)})^\dual$, which on the fiber above identifies $\Hom(E_i,E_j[1]) \simeq \Hom(E_j,E_i[1])^\dual$. The identification is symplectic when $i=j$, so $\Hom(E_i,E_i[1])$ has even dimension.

For $E$ as in \eqref{E:polystable}, we define the dimension vector $d_E = (\dim(V_1),\ldots,\dim(V_n)) \in \bZ^n_{>0}$, and for any $a \in v^\perp_\bR$ we define
\[
\zeta_E(a) := \left( (a,v(E_1)),\ldots,(a,v(E_n)) \right) \in \bR^n.
\]
Note that $\zeta_E(a) \cdot d_E = 0$. In the following, we say $(w_1,\ldots,w_n) \leq (v_1,\ldots,v_n)$ if $w_i \leq v_i$ for all $i$, and $w < v$ means that $w \leq v$ and $w \neq v$. 

The reduced Mukai homomorphism \eqref{E:reduced_mukai} extends via the same definition to the derived context, and extending this $\bR$-linearly gives a homomorphism
\[
\Phi : v^\perp_{\bR} \subset H^\ast_{\rm alg}(S,\alpha,\bZ) \otimes \bR \to \NS(\cM_{\sigma}^{\rm red}(v))_\bR.
\]

\begin{lem} \label{L:bridgeland_generic}
For $a \in v^\perp_\bR$, $\Phi(a) \in \NS(\cM^{\rm red}_\sigma(v))_\bR$ is generic in the sense of \Cref{defn:generic} if for any polystable $E = \bigoplus_{i=1}^n E_i \otimes V_i \in \cM^{\rm red}_\sigma(v)$ and any $w \in \bZ^n$ with $0 < w < d_E$ one has $\zeta_E(a) \cdot w \neq 0$. If $v$ is primitive, $\NS(\cM^{\rm red}_\sigma(v))_\bR$ contains a generic class. 
\end{lem}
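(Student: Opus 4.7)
Proof plan. The plan is to apply \Cref{thm:local_model} at each closed point of $\cM^{\rm red}_\sigma(v)$ to translate the genericity condition of \Cref{defn:generic} into an explicit combinatorial condition on characters of the stabilizer, and then to verify the existence of such a generic class when $v$ is primitive by a proper-hyperplane argument.

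At a polystable point $[E] = \bigoplus_i E_i \otimes V_i$ with stabilizer $G_E = (\prod_i \GL(V_i))/\Gm_{\rm diag}$, \Cref{thm:local_model} provides a strongly \'etale chart $\mu^{-1}(0)/G_E \to \cM^{\rm red}_\sigma(v)$, where $\mu$ is a weak co-moment map on a smooth $G_E$-scheme whose tangent space at the preimage of $[E]$ is the Ext-quiver representation space $\bigoplus_{i,j} \Hom(V_i, V_j) \otimes \Hom(E_j, E_i[1])$. Passing to cotangent complexes (and using Serre duality on $S$) identifies $H_0(\bL_{\cM^{\rm red}_\sigma(v),[E]}) \simeq \bigoplus_{i,j} \Hom(E_i, E_j[1])^\vee \otimes \Hom(V_i, V_j)$ as a $G_E$-representation, so $\overline{\Sigma}_{[E]}$ is the weight polytope of its exterior algebra. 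The character of $G_E$ corresponding to $\Phi(a)|_{[E]}$ pulls back along $\prod_i \GL(V_i) \to G_E$ to $\prod_i \det_i^{(a,v(E_i))}$, which descends to $G_E$ precisely because $a \in v^\perp$ gives $\sum_i d_i(a,v(E_i)) = (a,v) = 0$; under the natural identification of central $G_E$-characters with $\{\zeta \in \bR^n : \zeta \cdot d_E = 0\}$, this character is exactly $\zeta_E(a)$.

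For the sufficient condition, every character of $G_E$ is Weyl-invariant, so testing genericity reduces to the Weyl-invariant projection of $\overline{\Sigma}_{[E]}$. By King's stability criterion applied to the Ext quiver, the proper faces of this Weyl-invariant projection are in bijection with subdimension vectors $0 < w < d_E$, with $\zeta_E(a)$ being parallel to the face associated to $w$ iff $\zeta_E(a) \cdot w = 0$. Hence the hypothesis $\zeta_E(a) \cdot w \neq 0$ for every $0 < w < d_E$ is exactly the condition that $\Phi(a)$ is generic at $[E]$ in the sense of \Cref{defn:generic}.

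For existence when $v$ is primitive, the set of bad $a \in v^\perp_\bR$ is a countable union, indexed by polystable types $(E,w)$, of hyperplanes $\{a \in v^\perp : (a, \sum_i w_i v(E_i)) = 0\}$, and each such hyperplane is proper precisely when $\sum_i w_i v(E_i) \notin \bR \cdot v$. Suppose $\sum_i w_i v(E_i) = \lambda v$; then $\lambda v$ lies in the rank-one sublattice $\bZ \cdot v \subset H^\ast_{\rm alg}(S,\alpha,\bZ)$ (using primitivity of $v$), forcing $\lambda \in \bZ$. On the other hand, since all $E_i$ have the common $\sigma$-phase of $v$, the $Z_\sigma(v(E_i))$ lie on a common ray, giving
\[
\lambda \;=\; \frac{\sum_i w_i \,|Z_\sigma(v(E_i))|}{\sum_i d_i \,|Z_\sigma(v(E_i))|} \;\in\; (0,1)
\]
whenever $0 < w < d_E$ componentwise, contradicting $\lambda \in \bZ$. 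Thus every bad hyperplane is proper, so the complement of the countable collection of bad hyperplanes is dense in $v^\perp_\bR$, producing the desired generic class. The main obstacle lies in the dictionary between faces of $\overline{\Sigma}_{[E]}$ and subdimension vectors of the Ext quiver, which requires both the local-model reduction of \Cref{thm:local_model} and a careful Weyl-invariance argument to leverage King's stability criterion.
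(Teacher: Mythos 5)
Your overall architecture matches the paper's: work locally via \Cref{thm:local_model}, identify $H_0(\bL_{\cM^{\rm red}_\sigma(v),[E]})$ with an Ext-quiver representation space $T^*\Rep_{d_E}(Q_E)$, translate $\Phi(a)|_{[E]}$ into the character $\zeta_E(a)$ on $G_E$, and close by a primitivity argument for existence. Your treatment of the $\lambda \in (0,1)$ step (via common phases of the $Z_\sigma(v(E_i))$) is a welcome elaboration of a detail the paper states without proof.

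However, the heart of the sufficiency direction — translating ``$\ell_x$ not parallel to any face of $\overline{\Sigma}_x$'' into ``$\zeta_E(a)\cdot w\neq 0$ for all $0<w<d_E$'' — is not established by what you write. You assert that ``by King's stability criterion applied to the Ext quiver, the proper faces of this Weyl-invariant projection are in bijection with subdimension vectors,'' but King's criterion is a stability criterion for quiver representations, not a structure theorem for faces of weight polytopes. The polytope $\overline{\Sigma}_x$ is a zonotope (the weight polytope of an exterior algebra), whose proper faces and their direction spaces are indexed by combinatorial data far more plentiful than subdimension vectors; and passing to the Weyl-invariant projection can collapse or merge face structure, so ``parallel to a face of $\overline{\Sigma}_x$'' is not \emph{a priori} equivalent to ``parallel to a face of the projection.'' The paper's proof runs through a genuinely different and more delicate chain: it cites \cite{halpern2016combinatorial}*{Prop.~5.6}, which first shows that genericity is equivalent to the maximal torus $T\subset G_E$ acting with finite stabilizers on the $T$-semistable locus of $T^*\Rep_{d_E}(Q_E)\oplus\fg_E$, then realizes this $T$-action as the gauge action for a split quiver $Q'_E$ with dimension vector $1$ at each vertex, and finally invokes \cite{Nakajima}*{Sect.~2} to extract the criterion $\zeta_E(a)\cdot w\neq 0$. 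That equivalence (genericity $\Leftrightarrow$ finite $T$-stabilizers $\Leftrightarrow$ the subdimension-vector condition) is the essential technical content your proposal elides.

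Two smaller remarks. First, the paper observes that there are \emph{finitely many} polystable $E$ up to algebraic equivalence, giving a finite hyperplane arrangement; you only claim a countable union of proper hyperplanes, which suffices for existence of a generic class but discards the finiteness that is used elsewhere (e.g.\ in \Cref{L:hyperplane_arrangement}). Second, the identification of $\NS(BG_E)$ with $\{\zeta\in\bZ^n:\zeta\cdot d_E=0\}$ concerns all characters of $G_E$, not just ``central'' ones; the phrasing is harmless here, but worth fixing.
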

\begin{proof}
To any polystable $E \in \cM^{\rm red}_\sigma(v)$ with decomposition into simple objects \eqref{E:polystable}, we associate a quiver $Q_E$, i.e., a directed graph, with:
\begin{itemize}
\item one vertex for each index $i$,
\item $\dim(\Hom(E_i,E_j[1]))$ arrows from vertex $i$ to vertex $j$ for $i < j$, and
\item $\dim(\Hom(E_i,E_i[1]))/2$ arrows from vertex $i$ to itself.
\end{itemize}
Our discussion above identifies
\[
H_0(\bL_{\cM^{\rm{red}}_{\sigma}(v),[E]}) \cong T^\ast \Rep_{d_E}(Q_E),
\]
where the latter denotes the cotangent space of the space of representations with dimension vector $d_E$. This is equivalent to the space of representations of the ``doubled'' quiver which for each arrow in $Q_E$ adds another arrow with the opposite orientation.

By \Cref{defn:generic}, the class $\Phi(a) \in \NS(\cM^{\rm red}_\sigma(v))_\bR$ is generic if for any finite type polystable point $E \in \cM^{\rm red}_{\sigma}(v)$ the real character $\Phi(\ell)|_{[E]}$ of $G_E$ is generic in the sense of \cite{halpern2016combinatorial}*{Sect.~2} with respect to the representation $T^\ast \Rep_{d_E}(Q_E) \oplus \fg_E$, where $\fg_E$ denotes the adjoint representation of $G_E$. As mentioned above, we replace $k$ with a finite extension as necessary so that we may assume $E$ is a $k$-point.

We identify the character lattice of $G_E$ with the set of vectors $\zeta = (\zeta_1,\ldots,\zeta_n) \in \bZ^n$ that satisfy $\zeta \cdot d_E = 0$, where $\zeta$ corresponds to the character $\det(V_1)^{\zeta_1} \otimes \cdots \otimes \det(V_n)^{\zeta_n}$. In this case, one computes from the definition that
\[
\Phi(a)_{[E]} \cong \det(R\Gamma(S,v^{-1}(a)^\dual \otimes E))^\dual,
\]
where $v^{-1}(a) \in K_0^{\rm num}(\cD)_\bR$ is the class whose mukai vector is $a$. Using \eqref{E:polystable} we compute
\[
\Phi(a)_{[E]} \cong \det(\sum_i \chi(v^{-1}(a),E_i) V_i)^\dual,
\]
which upon identifying $\chi(v^{-1}(a),E_i) = -(a,v(E_i))$ gives $\Phi(a)|_{[E]} = \zeta_E(a)$ as defined above.

The analysis of genericity for representations of the form $T^\ast \Rep_{d_E}(Q_E) \oplus \fg_E$ was carried out in \cite{halpern2016combinatorial}*{Prop.~5.6}. The analysis there applies to the space of framed representations with non-zero framing dimension vector, but the framing is only necessary so that the generic stabilizer with respect to a maximal torus is trivial. In our setting, we are considering the space of representations with framing dimension vector $0$, but because we are working with the reduced stack $\cM_\sigma^{\rm red}$, the automorphism group is quotiented by $(\Gm)_{\rm scaling}$, so the generic stabilizer for the action of a maximal torus in $G_E$ on $T^\ast \Rep_{d_E}(Q_E) \times \fg_E$ is trivial.

The rest of the proof of \cite{halpern2016combinatorial}*{Prop.~5.6} applies verbatim to show that $\zeta_E(a)$ is generic if for every $0 < w < d_E$,
\begin{equation} \label{E:bridgeland_genericity}
\zeta_E(a) \cdot w \neq 0.
\end{equation}
Let us sketch the argument: one fixes a maximal torus $T \subset G_E$ and shows that genericity is equivalent to the condition that $T$ acts with finite stabilizers on the $T$-semistable locus $(T^\ast \Rep_{d_E}(Q_E) \times \fg_E)^{T\rm{-ss}}(\zeta_E(a))$. This torus $T$ can be realized as the gauge group (modulo scaling) for a new quiver $Q_E'$ obtained by splitting each vertex $i$ into $\dim(V_i)$ vertices, and $T^\ast \Rep_{d_E}(Q_E) \oplus \fg_E$ is isomorphic as a $T$-representation to a space of representations of $Q'_E$ with dimension vector $1$ at every vertex. One can then use the results of \cite{Nakajima}*{Sect.~2} to arrive at the criterion \eqref{E:bridgeland_genericity} for when $T$ acts with finite stabilizers on the semistable locus. We refer the reader to \cite{halpern2016combinatorial}*{Prop.~5.6} for the details of this argument.

There are finitely many polystable $E \in \cM_\sigma^{\rm red}(v)$ up to algebraic equivalence, so if each of the conditions \eqref{E:bridgeland_genericity} on $a$ is non-vacuous, this provides a finite hyperplane arrangement in $v^\perp_\bR$ such that $\Phi(a)$ is generic if $a$ lies outside these hyperplanes. Thus to verify that there exists an $a$ such that $\Phi(a)$ is generic, it suffices to show that for any polystable $E$ and any $0 < w < d_E$, there is \emph{some} $a$ such that
\[
\zeta_E(a) \cdot w  = (a,v(E_1^{\oplus w_1} \oplus \cdots \oplus E_n^{\oplus w_n})) \neq 0.
\]
Because the Mukai pairing is non-degenerate, if $\zeta_E(a) \cdot w = 0$ for all $a \in v^\perp_\bR$, then $v(E_1^{w_1} \oplus \cdots \oplus E_n^{w_n}) = r v$ for some $0 < r<1$. But $v(E_1^{w_1} \oplus \cdots \oplus E_n^{w_n}) \in H_{\rm alg}^\ast(S,\alpha,\bZ)$, so this can not happen if $v$ is primitive.
\end{proof}

\begin{proof}[Proof of \Cref{T:D_equivalence}]
By \Cref{T:bayer_macri} there is a stability condition $\sigma' \in \cStab^\dagger(\cD)$ that is generic with respect to $v$ such that $X \cong M_{\sigma'}(v)$. Choose a continuous path $\sigma_t \in \cStab^\dagger(\cD)$ for $t \in [0,1]$ with $\sigma_0 = \sigma$ and $\sigma_1=\sigma'$. Using \Cref{P:bridgeland_compare_1} and \Cref{P:bridgeland_compare_2}, one can deform the path so that $\sigma_t$ is generic with respect to $v$ for all but finitely many values of $t$, $0<t_1 < \cdots < t_N < 1$, and $\cM^{\rm red}_\sigma(v)$ is constant for $t \in (0,1) \setminus \{t_1,\ldots,t_N\}$. It therefore suffices to construct an equivalence
\[
\DCoh(\cM^{\rm red}_{\sigma_{t_i-\epsilon}}(v)) \cong \DCoh(\cM^{\rm red}_{\sigma_{t_i+\epsilon}}(v))
\]
for all $i$. Letting $\cX := \cM_{\sigma_{t_i}}^{\rm red}(v)$, \Cref{P:bridgeland_compare_1} and \Cref{P:bridgeland_compare_2} identify 
\[
\cM_{\sigma_t}^{\rm red}(v) = \cX^{\rm ss}(\ell_{\sigma_t})
\]
for all $t$ close to $t_i$, where the latter denotes the semistable locus in the sense of $\Theta$-stability associated to the class $\ell_{\sigma_t} \in \NS(\cM_{\sigma_{t_i}}^{\rm red}(v))$.

For any $\delta \in \NS(\cX)_\bR$, one can choose a generic $\beta \in \NS(\cX)_\bR$ by \Cref{L:bridgeland_generic}, and \Cref{L:hyperplane_arrangement} implies that $\delta+\epsilon \beta$ will be lattice generic for $\epsilon$ sufficiently small. So fix a choice of lattice generic $\delta \in \NS(\cX)_\bR$. \Cref{T:magic_windows} and \Cref{R:generic_perturbation} imply that restriction defines equivalences
\[
\xymatrix{\DCoh(\cX^{\rm ss}(\ell_{\sigma_{t_i-\epsilon}})) & \fW_{\cX}(\delta) \ar[r]^-{\res}_-{\cong} \ar[l]_-{\res}^-{\cong} & \DCoh(\cX^{\rm ss}(\ell_{\sigma_{t_i+\epsilon}}))}
\]
for all $0<\epsilon\ll 1$, where $\fW_\cX(\delta) \subset \DCoh(\cM^{\rm red}_{\sigma_{t_i}}(v))$ is the subcategory introduced in \Cref{D:magic_windows}. This establishes the desired equivalence.
\end{proof}

\appendix

\section{Derived deformation to the normal cone}

We construct a version of deformation to the normal cone for an almost finitely presented closed immersion of algebraic derived stacks. This was established in \cite{GR2}*{Chap.~II.9} for a much larger class of stacks over a field of characteristic $0$, and morphisms which need not be closed immersions. A version of derived deformation to the normal cone was constructed for algebraic stacks without the characteristic $0$ hypotheses in \cite{khan2019virtual}, but only for regular closed immersions. 

Let $\sAlg_R$ denote the simplicial model category of simplicial commutative $R$-algebras, for a fixed commutative ring $R$, in which fibrations and weak equivalences are defined to be those on underlying simplicial sets. Recall that the formal completion of an algebraic derived stack $\cX$ along a closed substack $\cS \hookrightarrow \cX$ is the full substack $\cX^{\wedge}_\cS \subset \cX$ parameterizing those maps $\Spec(A) \to \cX$ for which $\Spec(\pi_0(A)^{\rm red}) \to \cX$ factors through $\cS$.

\begin{thm} [Derived infinitesimal neighborhoods]  \label{T:deformation_to_normal_cone}
Let $\cX$ be a quasi-compact quasi-separated a algebraic derived stack over a commutative base ring $R$, and let $i : \cS \hookrightarrow \cX$ be an almost finitely presented closed immersion. Then there is an $\bN$-indexed system of closed immersions
\[
\cS = \cS^{(0)} \hookrightarrow \cS^{(1)} \hookrightarrow \cS^{(2)} \hookrightarrow \cdots \hookrightarrow \cX
\]
such that each $\cS^{(n)} \to \cS^{(n+1)}$ is surjective, each $\cO_{\cS^{(n)}} \in \QC(\cX)$ is almost perfect, and there is a canonical isomorphism for all $n>0$
\[
\fib(\cO_{\cS^{(n)}} \to \cO_{\cS^{(n-1)}}) \cong i_\ast(\Sym_{\cS}^n(\bL_{\cS/\cX}[-1])) \in \QC(\cX),
\]
where $\Sym_{\cS}(-)$ denotes the derived symmetric power functor on $\QC(\cS)$. Furthermore:
\begin{enumerate}
\item As a sheaf of $\pi_0(\cO_\cX)$-algebras $\pi_0(\cO_{\cS^{(n)}}) \cong \pi_0(\cO_\cX)/I^n$, where $I = \ker(\pi_0(\cO_\cX) \to \pi_0(\cO_\cS))$ is the ideal of definition of $\cS^{\rm cl} \hookrightarrow \cX^{\rm cl}$. \\
\item The canonical map to the formal completion $\colim_n \cS^{(n)} \to \cX^\wedge_\cS$ is an isomorphism of prestacks on the full $\infty$-subcategory $\sAlg_R^{<\infty} \subset \sAlg_R$ of algebras that are $d$-truncated for some $d$; \\
\item For any $F \in \AAPerf_\cS(\cX)$ and any $d \in \bZ$, the $\bN^{\rm op}$-indexed system $\{\tau_{\leq d}(\cO_{\cS^{(n)}} \otimes F)\}_{n \geq 1}$ in $\AAPerf_\cS(\cX)$ is eventually constant.\\
\item For any $F \in \QC(\cX)$ that is $d$-truncated for some $d\in \bZ$, the canonical map is an isomorphism
\[
\colim_{n \to \infty} \inner{\RHom}^\otimes_\cX(\cO_{\cS^{(n)}}, F) \xrightarrow{\cong} R\Gamma_\cS(F).
\]
In general, one has
\[
\lim_{d \to \infty} \left( \colim_{n \to \infty} \inner{\RHom}^\otimes_\cX(\cO_{\cS^{(n)}},\tau_{\leq d} F) \right) \xrightarrow{\cong} R\Gamma_\cS(F).
\]
\end{enumerate}
\end{thm}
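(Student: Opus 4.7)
The strategy is to construct the tower $\{\cS^{(n)}\}$ inductively as a sequence of derived square-zero extensions, then verify the four properties in order. Starting from $\cS^{(0)} := \cS$, I would build $\cS^{(n+1)}$ as a square-zero extension of $\cS^{(n)}$ by $i_\ast \Sym^{n+1}_\cS(\bL_{\cS/\cX}[-1])$. To make the construction canonical, I would first reduce to the affine case by fpqc descent, writing $\cX = \Spec(A)$ and $\cS = \Spec(B)$ with $A \twoheadrightarrow B$ almost finitely presented. Choose a semi-free resolution of $B$ as an $A$-algebra and define $\cO_{\cS^{(n)}}$ to be the quotient by the $(n+1)$-st power of the augmentation ideal. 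A Rees/Adams-type computation identifies the associated graded canonically with the derived symmetric powers $\Sym^k_B(\bL_{B/A}[-1])$, yielding the stated fiber sequence and, at $\pi_0$, property (1). Independence of the chosen resolution, and hence gluing back to the global case, follows from this canonical associated graded combined with the deformation-theoretic rigidity of derived square-zero extensions.

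Property (2) reduces to showing that for $A \in \sAlg_R^{<\infty}$, any map $\Spec(A) \to \cX$ with reduced image in $\cS^{\rm cl}$ factors through some $\cS^{(n)}$: the pullback of the classical ideal $I$ defining $\cS^{\rm cl}$ is nilpotent on the noetherian $\pi_0(A)$, which combined with the bounded truncation of $A$ supplies the factorization after finitely many derived steps. Property (3) is the key technical point. Using the fiber sequences $i_\ast \Sym^{n+1}_\cS(\bL_{\cS/\cX}[-1]) \otimes F \to \cO_{\cS^{(n+1)}} \otimes F \to \cO_{\cS^{(n)}} \otimes F$, I would show that after $\tau_{\leq d}$ the fibers vanish for all $n$ sufficiently large. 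The mechanism is that each coherent homology sheaf $H_k(F)$ with $k \leq d$ is supported on $\cS^{\rm cl}$, hence annihilated by $I^{N_k}$ for some $N_k$; combined with the connectivity of $\bL_{\cS/\cX}[-1]$ and a filtration-theoretic comparison of the derived symmetric powers with the classical $I$-adic powers, this forces stabilization in each truncation level. Property (4) is then formal: for $F \in \QC(\cX)_{\leq d}$, the directed colimit $\colim_n \inner{\RHom}^\otimes_\cX(\cO_{\cS^{(n)}}, F)$ stabilizes by the dual of (3) and coincides with $R\Gamma_\cS(F)$ via the standard adjunction argument, and the general statement with the outer limit over $d$ follows from left $t$-completeness of $\QC(\cX)$.

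The main obstacle I foresee lies in property (3): reconciling the \emph{derived} filtration of $\cO_{\cS^{(n)}}$ by symmetric powers of the conormal complex with the \emph{classical} $I$-adic nilpotence on the coherent homology of $F$. These filtrations are a priori of different natures, and controlling their interaction requires explicit tracking. This is most cleanly done in the local affine model above, where lifts of elements of $I$ act on the graded pieces in a way that can be computed concretely, and the interplay between the simplicial degree of the generators and the classical degree of the ideal can be made transparent. Once this local claim is established, the canonical, resolution-independent nature of the construction from the first paragraph allows globalization by descent.
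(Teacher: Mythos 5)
Your overall strategy mirrors the paper's: both build the tower from an explicit affine model (the paper uses a cofibrant replacement in a simplicial model category of pairs $(A,I)$ and takes literal $I$-adic quotients; you propose semi-free resolutions and quotients by powers of the augmentation ideal, which is essentially the same construction), both glue by cosimplicial descent, and both reduce the verification of (1)--(4) to the affine case. You also correctly identify (3) as the central technical difficulty.

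However, your proposed mechanism for (3) does not work. The fiber of $\cO_{\cS^{(n)}} \otimes F \to \cO_{\cS^{(n-1)}} \otimes F$ is $i_\ast \Sym^n_\cS(\bL_{\cS/\cX}[-1]) \otimes F$, and since $\bL_{\cS/\cX}[-1]$ is only $0$-connective (for a regular embedding it is a locally free sheaf of Tor-amplitude $[0,0]$), its derived symmetric powers do \emph{not} become more connected as $n \to \infty$: $\Sym^n_\cS(\bL_{\cS/\cX}[-1]) \otimes F$ has homology in a fixed range of degrees for all $n$. So it is not the case that ``after $\tau_{\leq d}$ the fibers vanish for $n \gg 0$,'' and the $I$-adic nilpotence of $H_k(F)$ does not rescue this, because the derived tensor $\cO_{\cS^{(n)}} \otimes F$ acquires $\Tor$-homology that the classical $I$-adic filtration on $H_\ast(F)$ does not control. (Try $\cS = \{0\} \hookrightarrow \bA^1_k$ and $F = \cO_{\{0\}}$: then $\cO_{\cS^{(n)}} \otimes F$ has homology in both degrees $0$ and $1$ for every $n$, with a nontrivial $H_1$ arising from $\Tor_1$.)

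The paper's route to (3) and (4) is quite different and relies on two inputs that your sketch omits. First, it proves a characterization lemma (\Cref{L:characterize_completion}) establishing the equivalence of several conditions on a tower $\{A_n\}$ of $A$-algebras: one phrased in terms of base change to $\pi_0(A_0)$, one in terms of $I$-nilpotent modules (which yields property (3)), one in terms of $R\Gamma_I$ (property (4)), and one in terms of the formal completion as a presheaf on truncated rings (property (2)). Properties (2)--(4) then all follow once the base-change condition is verified. Second, the verification of that base-change condition uses the explicit simplicial model structure on pairs $(A,I)$ (so that $A/I^n$ is genuinely computed level-wise) together with \Cref{L:adic_cocartesian} to reduce to a Quillen-type connectivity theorem: for a simplicial ideal $J$ with $\pi_0(J) = 0$, the power $J^{n+1}$ is $n$-connected, so $B \to B/J^n$ becomes highly connected. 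This is the connectivity bound that actually drives stabilization --- applied not to the symmetric powers of the conormal complex, but to the adic powers after base change to $\pi_0(\cO_\cS)$, where the relevant ideal has vanishing $\pi_0$. You need some version of this reduction and connectivity bound; without it, the stabilization in (3) does not follow from the ingredients you list.
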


The proof of this theorem is essentially a construction, which in the affine case is \Cref{C:normal_cone} below. First we observe a useful consequence:
\begin{cor} \label{C:filtration_push_pull}
Let $i : \cS \to \cX$ be an almost finitely presented closed immersion of quasi-compact quasi-separated algebraic derived stacks. Then for any $E \in \QC(\cS)_{>-\infty}$, there is a functorial tower in $\QC(\cS)$
\[
i^\ast (i_\ast(E)) \to \cdots \to E_{n+1} \to E_n \to \cdots \to E_0,
\]
such that 1) for any $d \in \bZ$, $\tau_{\leq d} (i^\ast (i_\ast(E))) \to \tau_{\leq d}(E_n)$ is an isomorphism for $n\gg 0$, in particular $i^\ast(i_\ast(E)) \cong \varprojlim_n E_n$, and 2) there is a canonical isomorphism for all $n \geq 1$,
\[
\fib(E_n \to E_{n-1}) \cong \Sym^n(\bL_{\cS/\cX}) \otimes E.
\]
\end{cor}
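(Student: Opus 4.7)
The plan is to deduce the corollary by applying Theorem \ref{T:deformation_to_normal_cone} not to $i$ itself but to the diagonal closed immersion $\Delta : \cS \to \cS \times_\cX \cS$, and then pushing forward along one of the projections. The derived self-intersection $\cS \times_\cX \cS$ is qcqs algebraic (as a base change) and $\Delta$ is almost of finite presentation (being a base change of $i$ along $i$), so the hypotheses of the theorem are satisfied. The key calculation is that
\[
\bL_{\cS/\cS\times_\cX\cS} \cong \bL_{\cS/\cX}[1],
\]
which follows from base change $\bL_{p_1}\cong p_2^\ast\bL_i$ and the triangle $\Delta^\ast\bL_{p_1}\to \bL_{\cS/\cS}=0\to \bL_\Delta$. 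This shift exactly cancels the $[-1]$ in the theorem's fiber formula: the substacks $\cS = \cS_\Delta^{(0)} \hookrightarrow \cS_\Delta^{(1)} \hookrightarrow \cdots \hookrightarrow \cS\times_\cX\cS$ provided by the theorem satisfy
\[
\fib\bigl(\cO_{\cS_\Delta^{(n)}} \to \cO_{\cS_\Delta^{(n-1)}}\bigr) \;\cong\; \Delta_\ast \Sym^n(\bL_{\cS/\cX}).
\]

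Next, letting $p_1,p_2 : \cS\times_\cX\cS \to \cS$ denote the two projections, I would define
\[
E_n \;:=\; (p_1)_\ast\bigl(\cO_{\cS_\Delta^{(n)}} \otimes_{\cO_{\cS\times_\cX\cS}} p_2^\ast E\bigr) \in \QC(\cS).
\]
Since $p_1\circ\Delta = p_2\circ\Delta = \id_\cS$, the projection formula gives $E_0 = (p_1)_\ast\Delta_\ast\Delta^\ast p_2^\ast E = E$. The restriction maps $\cO_{\cS\times_\cX\cS} \to \cO_{\cS_\Delta^{(n)}}$ (for $\cS_\Delta^{(n)}$ as a closed substack of $\cS\times_\cX\cS$) yield, after tensoring with $p_2^\ast E$ and applying $(p_1)_\ast$, compatible maps $i^\ast i_\ast E = (p_1)_\ast p_2^\ast E \to E_n$ via base change. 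Tensoring the theorem's fiber sequence with $p_2^\ast E$ and applying the projection formula to $\Delta_\ast$ gives
\[
\fib(E_n \to E_{n-1}) \;=\; (p_1 \Delta)_\ast\bigl(\Sym^n(\bL_{\cS/\cX}) \otimes \Delta^\ast p_2^\ast E\bigr) \;=\; \Sym^n(\bL_{\cS/\cX}) \otimes E,
\]
as required by the corollary.

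For the stabilization claim, I use that a closed immersion has $\bL_{\cS/\cX}$ in homological degrees $\geq 1$, so $\Sym^n(\bL_{\cS/\cX}) \in \QC(\cS)_{\geq n}$. For $E \in \QC(\cS)_{\geq -N}$, the $n$-th fiber lies in degrees $\geq n-N$, so $\tau_{\leq d}(E_n \to E_{n-1})$ is an isomorphism once $n > d + N$. To identify the stable value with $\tau_{\leq d}(i^\ast i_\ast E)$, I would argue that the fiber of $i^\ast i_\ast E \to E_n$ is $(p_1)_\ast(\tilde K_n \otimes p_2^\ast E)$ where $\tilde K_n := \fib(\cO_{\cS\times_\cX\cS} \to \cO_{\cS_\Delta^{(n)}})$, and that $\tilde K_n$ becomes arbitrarily connective as $n\to\infty$. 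This last fact is the consequence of part (2) of the theorem applied to $\Delta$: since both projections $\cS\times_\cX\cS \to \cX$ factor through $\cS \hookrightarrow \cX$, the stack $\cS\times_\cX\cS$ is set-theoretically supported on $\Delta(\cS)$ and therefore coincides with its own formal completion there, so the system $\{\cS_\Delta^{(n)}\}$ exhausts it on truncated test algebras.

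The main obstacle is this last convergence step. The difficulty is that part (3) of Theorem \ref{T:deformation_to_normal_cone} is phrased for $F \in \AAPerf_\cS(\cX)$, whereas here we would like to apply it to $F=\cO_{\cS\times_\cX\cS}$ or to $p_2^\ast E$, neither of which is automatically almost perfect in the relevant sense. I expect to handle this either by bootstrapping from the almost-perfect case (writing an arbitrary $E \in \QC(\cS)_{\geq -N}$ as a filtered colimit of almost perfect complexes with bounded connectivity, using that the construction and the connectivity estimates are compatible with filtered colimits) or by directly verifying the connectivity of $\tilde K_n$ using the tower of cofiber sequences from Step 1 and a Mittag-Leffler–type argument keyed to the increasing connectivity of $\Sym^n(\bL_{\cS/\cX})$.
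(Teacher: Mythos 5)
Your approach is the paper's own: apply \Cref{T:deformation_to_normal_cone} to the diagonal $e : \cS \to \cS \times_\cX \cS$, note $\bL_e \cong \bL_{\cS/\cX}[1]$ (so the shift cancels the $[-1]$ in the theorem's fiber formula), set $E_n := (p_2)_\ast(p_1^\ast E \otimes \cO_{\cS^{(n)}_e})$, and deduce claims (1) and (2) by the projection formula. Your choice of which projection to push forward along and which to pull back along is the opposite of the paper's, but this is immaterial.

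The one substantive remark concerns the ``main obstacle'' you flag at the end: you worry that part~(3) of the theorem does not apply because $\cO_{\cS\times_\cX\cS}$ may fail to be almost perfect. This is a false alarm. When you apply the theorem to the closed immersion $e$, the ambient stack is $\cS\times_\cX\cS$, so the relevant category is $\AAPerf_\cS(\cS\times_\cX\cS)$; the object $\cO_{\cS\times_\cX\cS}$ is the unit of the symmetric monoidal structure and hence trivially almost perfect over itself, and it is set-theoretically supported on $\cS$ because $(\cS\times_\cX\cS)^{\rm cl} \cong \cS^{\rm cl}\times_{\cX^{\rm cl}}\cS^{\rm cl} \cong \cS^{\rm cl}$ (a closed immersion is a monomorphism). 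So part~(3) applies verbatim with $F = \cO_{\cS\times_\cX\cS}$, exactly as the paper does; there is no need to apply it to $p_2^\ast E$, and your two proposed workarounds (bootstrap from almost perfect $E$, or a direct connectivity argument for $\tilde K_n$) are both more than is needed. The only residual subtlety — that part~(3) asserts eventual constancy of $\tau_{\leq d}(\cO_{\cS^{(n)}_e})$ but does not by itself identify the stable value with $\tau_{\leq d}(\cO_{\cS\times_\cX\cS})$ — is handled by the completion statement (the ``furthermore'' of \Cref{L:characterize_completion}, together with the vanishing of the ideal of $\cS$ in $(\cS\times_\cX\cS)^{\rm cl}$), and your own connectivity estimate on the graded pieces $\Sym^n(\bL_{\cS/\cX})\otimes E$ already gives what is needed for the corollary's claim (1).
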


\begin{proof}
Consider the cartesian diagram
\[
\xymatrix{\cS \times_\cS \cS \ar[r]_{p_2} \ar[d]^{p_1} & \cS \ar[d]_i \ar@/_/[l]_{e} \\ \cS \ar[r]_i & \cX},
\]
where $e$ is the identity section. Let $\cS \to \cdots \cdot \cS^{(n)}_e \to \cS^{(n+1)}_e \to \cdots \to \cS \times_\cX \cS$ denote the filtered system of infinitesimal neighborhoods associated by \Cref{T:deformation_to_normal_cone} to the map $e$, which is a surjective, almost finitely presented, closed immersion. The canonical fiber sequence for the cotangent complex gives a canonical isomorphism $\bL_{e} \cong \bL_{\cS/\cX}[1]$.

The derived base change formula functorially identifies $i^\ast(i_\ast(E)) \cong (p_2)_\ast(p_1^\ast(E))$. We then define
\[
E_n = (p_2)_\ast(p_1^\ast(E) \otimes_{\cO_{\cS \times_\cX \cS}} \cO_{\cS^{(n)}_e}),
\]
from which claim (2) of the corollary follows immediately from the fact that by construction $\fib(\cO_{\cS_e^{(n)}} \to \cO_{\cS_e^{(n-1)}}) \cong e_\ast(\Sym^n(\bL_e[-1]))$ and from the projection formula
\[
(p_2)_\ast(p_1^\ast(E) \otimes e_\ast(-)) \cong (p_2)_\ast e_\ast((-) \otimes e^\ast p_1^\ast(E)) \cong (-) \otimes E.
\]
Claim (1) of the corollary follows from the fact that $\cO_{\cS \times_\cX \cS}$ is automatically set-theoretically supported on $\cS$, so part (3) of \Cref{T:deformation_to_normal_cone} implies that $\forall d \in \bZ$,
\[
\tau_{\leq d}(\cO_{\cS \times_\cX \cS}) \cong \tau_{\leq d} (\cO_{\cS^{(n)}_e}) \text{ for all }n\gg 0.
\]
\end{proof}

The key observation behind \Cref{T:deformation_to_normal_cone} is that given a simplicial commutative $R$-algebra $A \in \sAlg_R$, there are several equivalent ways in which an $\bN^{\rm op}$-indexed system of $A$-algebras can represent the derived completion of $A$ along a finitely generated ideal $I \subset \pi_0(A)$, the first of which is easy to check in practice. We will use the fact that for any $A \in \sAlg_R$ and any $d \in \bZ$, the $\infty$-category $A\Mod_{\leq d}$ is compactly generated by the objects that are perfect to order $d+1$ \cite{DAGVIII}*{Defn.~2.6.1}.

\begin{lem}\label{L:characterize_completion}
Let $A \in \sAlg_R$, and let $\cdots A_{n+1} \to A_n \to \cdots \to A_0$ be a tower of $A$-algebras such that: i) each $A_n$ is almost-perfect as an $A$-module, ii) each map $\pi_0(A) \to \pi_0(A_n)$ is surjective, and iii) $\pi_0(A_{n+1}) \to \pi_0(A_n)$ has nilpotent kernel for every $n$. Consider the finitely generated ideal $I := \ker(\pi_0(A) \to \pi_0(A_0))$. Then the following are equivalent:
\begin{enumerate}
\item For any $d \geq 0$, the canonical map $\pi_0(A_0) \to \tau_{\leq d}(A_n \otimes_A \pi_0(A_0))$ is an isomorphism for $n \gg 0$.
\item For any $d \geq 0$ and any $M \in A\Mod_{\leq d}$ that is perfect to order $d+1$ and is $I$-nilpotent \cite{DAGXII}*{Defn.~4.1.3}, the canonical map $M \to \tau_{\leq d}(A_n \otimes_A M)$ is an isomorphism for $n \gg 0$.
\item For any $M \in A\Mod_{<\infty}$, the canonical map $\colim_n \inner{\RHom}^\otimes_A(A_n,M) \to R\Gamma_I(M)$ is an isomorphism.
\item The map $\colim_n \Spec(A_n) \to \Spec(A)^\wedge_I$ is an isomorphism of presheaves on the full $\infty$-subcategory $\sAlg^{<\infty}_R \subset \sAlg_R$ that consists of objects which are eventually truncated.
\end{enumerate}
Furthermore, if these conditions hold, then $\varprojlim_n (A_n \otimes_A M)$ is $I$-complete \cite{DAGXII}*{Defn.~4.2.1} for any $M \in A\Mod_{>-\infty}$, and the canonical morphism from the $I$-completion $M^\wedge_I \to \varprojlim_n (A_n \otimes_A M)$ is an isomorphism.
\end{lem}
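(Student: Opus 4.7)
The plan is to prove the circular chain of implications (1) $\Rightarrow$ (2) $\Rightarrow$ (3) $\Rightarrow$ (4) $\Rightarrow$ (1), after which the concluding statement about $I$-completion falls out from (3) by a short universal-property argument. The key principle throughout is that all relevant functors $\tau_{\leq d}(A_n \otimes_A -)$, $\inner{\RHom}^\otimes_A(A_n,-)$, and base change to the formal completion commute with finite (co)limits and are compatible with truncation, so the whole equivalence can be checked on compact generators.

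For (1) $\Rightarrow$ (2), the main point is that if $M \in A\Mod_{\leq d}$ is $I$-nilpotent and perfect to order $d+1$, then $\tau_{\leq d} M$ admits a finite filtration whose associated gradeds lie in the thick subcategory of $A\Mod_{\leq d}$ generated by $\pi_0(A_0)$. Concretely, the hypothesis (ii) on the tower combined with finite generation of $I$ in $\pi_0(A)$ forces the $I$-adic filtration on each $\pi_i(M)$ ($i \leq d$) to terminate, and the individual graded pieces are finitely generated $\pi_0(A)/I$-modules. Right $t$-exactness of $A_n \otimes_A (-)$ then propagates (1) through each step of the filtration, reducing to the single case of $M = \pi_0(A_0)$. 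For (2) $\Rightarrow$ (3), we use the standard presentation $R\Gamma_I(M) \simeq \colim_N \inner{\RHom}_A^\otimes(N,M)$ where $N$ ranges over compact $I$-nilpotent objects; it suffices to show that for fixed $d$, the sub-system $\{A_n\}$ is cofinal in the system of $N$ that are perfect to order $d+1$ when computing $\tau_{\leq d}\inner{\RHom}^\otimes_A(-,M)$. This cofinality is precisely dual to (2). The extension from $M \in A\Mod_{\leq d}$ to $M \in A\Mod_{<\infty}$ uses almost-perfectness of $A_n$, which ensures $\inner{\RHom}^\otimes_A(A_n,-)$ commutes with the Postnikov limit $M \simeq \varprojlim_d \tau_{\leq d} M$.

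For (3) $\Rightarrow$ (4), fix $S \in \sAlg^{<\infty}_R$. The $S$-points of $\colim_n \Spec(A_n)$ form $\colim_n \Map_{\sAlg_A}(A_n,S)$, and if $\Spec(S) \to \Spec(A)$ factors through $\Spec(A)^\wedge_I$ then $S$ is $I$-nilpotent as an $A$-module, so $R\Gamma_I(S) \simeq S$. By (3), this colimit (as a space of $A$-module maps) agrees with $\Map_{A\Mod}(A,S)$, and the multiplicative structure is preserved because each $A_n \to S$ is an algebra map and colimits of algebra maps agree with the corresponding colimits of module maps in $\sAlg^{<\infty}_R$. The converse direction — that the colimit has support inside the formal neighborhood — is immediate from hypothesis (iii). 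For (4) $\Rightarrow$ (1), evaluate at the test algebra $S = \pi_0(A_0) \in \Spec(A)^\wedge_I$ and chase the identity map: the compatible family $\{A_n \to \pi_0(A_0)\}$ realizes $\pi_0(A_0)$ as a colimit, which after base change and using almost-perfection of $A_n$ yields that $\pi_0(A_0) \to \tau_{\leq d}(A_n \otimes_A \pi_0(A_0))$ is an isomorphism for $n$ sufficiently large (depending on $d$).

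The main obstacle I anticipate is the implication (1) $\Rightarrow$ (2), because handling the $I$-adic filtration on a derived $A$-module requires care: one must argue at the level of $\tau_{\leq d}$ and organize the filtration so that it lives in a suitable thick subcategory of almost perfect $I$-nilpotent modules, without leaving this subcategory at any step. The cleanest way is probably to invoke Lurie's compact generation result for $A\Mod_{\leq d}$ \cite{DAGVIII}*{Defn.~2.6.1} and show that the $I$-nilpotent, perfect-to-order-$d+1$ objects are the thick subcategory generated by $\pi_0(A_0)$ inside $A\Mod_{\leq d}$, at which point (1) extends to all of (2) by a thick-subcategory argument. Finally, to deduce the closing claim about $I$-completions, observe that each $\varprojlim_n(A_n \otimes_A M)$ is $I$-complete (as a limit of $I$-nilpotent objects with compatible structure) and that (3) combined with duality between $R\Gamma_I$ and $(-)^\wedge_I$ implies the universal property of $M^\wedge_I$ for this limit.
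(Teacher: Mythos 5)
Your proposed cycle of implications differs from the paper's, and the divergence is where a genuine gap appears. The paper proves $(1)\Rightarrow(2)$, $(2)\Leftrightarrow(3)$, $(2)\Rightarrow(4)$, $(4)\Rightarrow(1)$, deliberately routing both $(3)$ and $(4)$ through $(2)$; you instead attempt $(3)\Rightarrow(4)$ directly, and this is where your argument breaks.

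The problem in your $(3)\Rightarrow(4)$ step is the passage from module maps to algebra maps. You want to show that $\colim_n \Map_{\sAlg_A}(A_n,S)\to \Map_{\sAlg_A}(A,S)\simeq *$ is an equivalence for $S\in\sAlg_R^{<\infty}$ factoring through the formal completion, and you try to deduce this from the equivalence $\colim_n \inner{\RHom}^\otimes_A(A_n,S)\xrightarrow{\sim}R\Gamma_I(S)\simeq S$ of (3), which governs \emph{module} mapping spaces $\Map_{A\Mod}(A_n,S)$. But the spaces of algebra maps are not the spaces of module maps, and there is no reason a filtered colimit of the former should be controlled by a filtered colimit of the latter; the sentence ``colimits of algebra maps agree with the corresponding colimits of module maps'' is simply false as stated, and this is exactly the issue the paper circumvents. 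The paper instead invokes \cite{DAGXII}*{Lem.~5.1.5} to get a reference tower $\{A'_n\}$ \emph{already known} to compute the formal completion at the level of algebras, and then uses a bi-filtered diagram of $\Spec(A_i\otimes_A A'_j)$ together with (2) to compare the two towers, avoiding any need to transfer from modules to algebras.

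A secondary weakness is your $(2)\Rightarrow(3)$ cofinality argument. The objects $A_n$ are only almost perfect, not perfect, so they are not compact objects of $A\Mod_{\leq d}$ and do not lie in the filtered system of compact $I$-nilpotent objects over which $R\Gamma_I(M)$ is presented as a colimit. Your claim that $\{A_n\}$ is ``cofinal'' in that system therefore does not literally parse, and the needed statement requires the adjunction argument the paper gives: it translates (3) into the statement that for compact $I$-nilpotent $N\in A\Mod_{\leq d}$ and $M\in A\Mod_{\leq d}$, the map $\colim_n \Map_A(\tau_{\leq d}(A_n\otimes_A N),M)\to\Map_A(N,M)$ is an equivalence, which is clearly implied by (2) but also implies (2) by a retract argument. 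Your $(1)\Rightarrow(2)$ outline is essentially correct but you should know that the ``finitely generated $I$-nilpotent implies $I^n$-annihilated'' step uses finite generation of the homotopy groups (from perfection to order $d+1$); in the non-noetherian case one needs the perfect complex $Q$ of \cite{DAGXII}*{Prop.~4.1.15} that generates $A\Mod_{\leq d}$ and is $I^n$-annihilated, rather than a direct appeal to the $I$-adic filtration of $\pi_i(M)$.
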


\begin{rem}
For any $A \in \sAlg_R$ and finitely generated ideal $I \subset \pi_0(A)$, the existence of a tower $\{A_n\}_{n \geq 0}$ as in the lemma is established in \cite{DAGXII}*{Lem.~5.1.5}, which is stated for $E_\infty$-algebras but whose proof applies essentially verbatim for simplicial commutative algebras. In fact, in $\sAlg_R$ one can even arrange that each $A_n$ is perfect as an $A$-module \cite{halpern2014mapping}*{Prop.~2.1.2}. If one starts with an almost finitely presented surjection $A \to A_0$, then one canonical choice is $A_n = \Tot^{\leq n}(\Cech(A \to A_0))$, which results in the theory of Adams completion. Below we will use another canonical choice, resulting in the theory of derived $I$-adic completion. The lemma above is useful for comparing these different versions of derived completion.
\end{rem}

\begin{proof}[Proof of \Cref{L:characterize_completion}]
$(1) \Rightarrow (2):$ $M$ is of bounded homological amplitude, and if the claim holds for two complexes, then it holds for any extension of those complexes. It therefore suffices to prove the claim for $H_i(M)[i]$. We know that $I^n H_i(M)=0$ for some $n$ -- in the noetherian case it follows from the fact that $H_i(M)$ is finitely generated, and in general if follows from the proof of \cite{DAGXII}*{Prop.~4.1.15}, which provides a perfect complex $Q \in A\Mod$ which satisfies this condition and for which $\tau_{\leq d} (Q[k])$ generates $A\Mod_{\leq d}$. It therefore suffices to check the claim for each associated graded piece of the corresponding finite $I$-adic filtration of $H_i(M)[i]$. We may thus assume that $M = N[i]$ for some discrete $\pi_0(A_0)$-module $N$ regarded as an $A$-module via the map $A \to \pi_0(A_0)$. The projection formula then gives
\[
\tau_{\leq d}(A_n \otimes_A M) \cong \tau_{\leq d}\left(\tau_{\leq d-i}(A_n \otimes_A \pi_0(A_0)) \otimes_{\pi_0(A_0)} N[i]\right),
\]
so $(2)$ holds if $\tau_{\leq d-i}(A_n \otimes_A \pi_0(A_0))$ is eventually $\pi_0(A_0)$.

\medskip
$(2)\Leftrightarrow (3):$ If $M \in A\Mod_{\leq d}$, then $\inner{\RHom}^\otimes_{A}(A_n,M)$ and $R\Gamma_I(M)$ lie in $A\Mod_{\leq d}$ as well. The claim $(3)$ is equivalent to the claim that for any $d \geq 0$, any compact object $N \in A\Mod_{\leq d}$ that is $I$-nilpotent, and any $M \in A\Mod_{\leq d}$ the canonical map
\[
\Map_A(N,\colim_n \inner{\RHom}^\otimes_A(A_n,M)) \to \Map_A(N,M)
\]
is an isomorphism. Using compactness of $N$, the definition of the inner Hom, and the fact that $M$ is $d$-truncated, this is equivalent to the canonical homomorphism being an isomorphism
\[
\colim_n \Map_A(\tau_{\leq d}(A_n \otimes_A N),M) \to \Map_A(N,M).
\]
This is clearly true if $(2)$ holds.

On the other hand consider a fixed $N$, and assume this condition holds for all $M \in A\Mod_{\leq d}$. Applying the condition to $M = N$ we find that there exists an $k \geq 0$ and a map $\tau_{\leq d}(A_{k} \otimes_A N) \to N$ such that the composition $N \to \tau_{\leq d} (A_{k} \otimes_A N) \to N$ is homotopic to the identity, and applying the condition to $M=\tau_{\leq d}(A_k \otimes_A N)$, there is a map $\tau_{\leq d}(A_k \otimes_A N) \to N$ such that the composition $\tau_{\leq d}(A_k \otimes_A N) \to N \to \tau_{\leq d}(A_k \otimes_A N)$ is the identity. It follows that the canonical map $N \to \tau_{\leq d}(A_n \otimes_A N)$ is an isomorphism for all $n \geq k$.

\medskip
$(2) \Rightarrow (4):$ We know from \cite{DAGXII}*{Lem.~5.1.5} that there is \emph{some} tower $\cdots \to A'_{n+1} \to A'_n \to \cdots \to A'_1 \to A'_0$ of $A$-algebras with each $A_n$ almost perfrect as an $A$-module, $\pi_0(A'_0) = \pi_0(A_0)$, and $\colim_n \Spec(A_n) \to \Spec(A)^\wedge_I$ is an isomorphism of presheaves of $\sAlg_R$. Consider the filtered diagram of derived schemes
\[
\xymatrix{\Spec(A_0 \otimes_A A_0') \ar[r] \ar[d] & \Spec(A_0 \otimes_A A_1') \ar[r] \ar[d] & \cdots \\
\Spec(A_1 \otimes_A A_0') \ar[d] \ar[r] & \Spec(A_1 \otimes_A A_1') \ar[r] \ar[d] & \cdots \\
\vdots & \vdots & },
\]
regarded as presheaves on the category $\sAlg_R^{<\infty}$. Using the fact that fiber products commute with filtered colimits and the definition of the formal completion, the colimit of the $i^{th}$ row is canonically isomorphic to $\Spec(A_i) \times_{\Spec(A)} \Spec(A)^\wedge_I \cong \Spec(A_i)$, so the colimit of the diagram is canonically isomorphic to $\colim_n \Spec(A_n)$. On the other hand, for any $d$-truncated $A$-algebra $B$,
\[
\colim_i \Map_A(A_i \otimes_A A'_j, B) \cong \colim_i \Map_A(\tau_{\leq d}(A_i \otimes_A A'_j),B) \cong \Map_A(A'_j,B),
\]
where the last equivalence uses $(2)$. Thus the colimit of the $j^{th}$ column is canonically isomorphic to $\Spec(A'_j)$ as a presheaf on $\sAlg_R^{< \infty}$. It follows that the colimit of the diagram is canonically isomorphic to $\Spec(A)^\wedge_I$ by hypothesis.

\medskip
$(4) \Rightarrow (1):$ Using the fact that fiber products of presheaves commute with filtered colimits, and the definition of the formal completion, we have that
\[
\colim_n \Spec(A_n \otimes_A \pi_0(A_0)) \to \Spec(A)^\wedge_I \times_{\Spec(A)} \Spec(\pi_0(A_0)) \cong \Spec(\pi_0(A_0))
\]
as presheaves on $\sAlg_R^{<\infty}$. In other words, for any $d$-truncated $B \in \sAlg_A$, the canonical map
\[
\colim_n \Map_A(\tau_{\leq d}(A_n \otimes_A \pi_0(A_0)),B) \to \Map_A(\pi_0(A_0),B)
\]
is an isomorphism. The same argument as we used in the case of modules to show that $(3) \Rightarrow (2)$ implies that $\pi_0(A_0) \to \tau_{\leq d}(A_n \otimes_A \pi_0(A_0))$ must be an isomorphism of algebras.

\medskip
\textit{Computing the completion:} The functor $A \mapsto A\Mod_{>-\infty}$ is nil-complete, meaning that $A\Mod \to \lim_d (\tau_{\leq d}(A)\Mod)$ is an equivalence. In particular the functor $\QC(-)_{>-\infty}$ on the $\infty$-category of presheaves on $\sAlg_R$ is Kan extended from presheaves on $\sAlg_R^{<\infty}$. The claim (4) then implies that the canonical map of $\infty$-categories obtained by level-wise pullback
\[
\QC(\Spec(A)^\wedge_I)_{>-\infty} \to \lim_n \QC(\Spec(A_n))_{>-\infty} \cong \lim_n (A_n\Mod_{>-\infty})
\]
is an equivalence. If $i : \Spec(A)^\wedge_I \to \Spec(A)$ is the inclusion, then under the isomorphism above, the pullback functor $i^\ast : \QC(\Spec(A))_{>-\infty} \to \QC(\Spec(A)^\wedge_I)_{>-\infty}$ is identified with the functor $A\Mod_{>-\infty} \to \lim_n (A_n\Mod_{>-\infty})$ given level-wise by $(-) \otimes_A A_n$. The right adjoint of this functor takes a compatible system $\{M_n \in A_n\Mod\}_{n\geq 0}$ to $\varprojlim_n M_n \in A\Mod_{>-\infty}$. The formula for the completion functor given in the lemma follows from the fact that $(-)^\wedge_I \cong i_\ast(i^\ast(-))$.
\end{proof}

We now discuss the local case of the construction in \Cref{T:deformation_to_normal_cone}. Let $\cP$ denote the category of pairs $(A,I)$ with $A \in \sAlg_R$ and $I \subset A$ and maps $(A,I) \to (B,J)$ consisting of maps $\phi : A \to B$ in $\sAlg_R$ such that $\phi(I) \subset J$. We will make use of a simplicial model structure on $\cP$ discussed in \cite{derived_derham}*{Cor.~4.14}. A map $(A,I) \to (B,J)$ is a (trivial) fibration if and only if the underlying maps on simplicial sets $I \to J$ and $A \to B$ are so. The cofibrant objects are pairs $(A,I)$ such that $A_n$ is a free $R$-algebra on a subset $X_n \subset A_n$, the ideal $I_n$ is generated by a subset of these generators $Y_n \subset X_n$, and both $X_n$ and $Y_n$ are preserved by the degeneracy maps.

\begin{lem} \label{L:adic_weak_equivalence}
If $(A,I) \to (B,J)$ is a weak-equivalence of cofibrant objects in $\cP$, then $A/I^n \to B/J^n$ is a weak equivalence for every $n \geq 1$.
\end{lem}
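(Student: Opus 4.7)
The plan is a joint induction on $n$ based on the $I$-adic filtration, with the key identification that the associated graded of the filtration is a derived symmetric algebra on the conormal bundle $I/I^2$.

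The base case $n=1$ asserts that $A/I \to B/J$ is a weak equivalence. This follows from the five lemma applied to the long exact sequence of homotopy groups associated to the short exact sequence of simplicial abelian groups $0 \to I \to A \to A/I \to 0$ (and analogously for $(B,J)$), using the given that $I \to J$ and $A \to B$ are weak equivalences. For the induction step, assuming $A/I^n \to B/J^n$ is a weak equivalence, I apply the same five-lemma argument to the short exact sequence $0 \to I^n/I^{n+1} \to A/I^{n+1} \to A/I^n \to 0$; this reduces the claim to showing that $I^n/I^{n+1} \to J^n/J^{n+1}$ is a weak equivalence of simplicial $R$-modules.

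For this key step, I use the explicit description of cofibrant objects. Writing $A = R[X]$, $I = (Y)$ with $Y \subset X$ closed under degeneracies, the tensor decomposition $A \cong R[X \setminus Y] \otimes_R R[Y]$ (as simplicial commutative $R$-algebras) yields a natural isomorphism
\[
I^n/I^{n+1} \cong (A/I) \otimes_R \Sym^n_R(R\langle Y\rangle)
\]
of simplicial $(A/I)$-modules, where $R\langle Y\rangle$ denotes the free simplicial $R$-module on $Y$. Since $I/I^2$ is the free $(A/I)$-module on $Y$, this identifies $I^n/I^{n+1}$ with the derived symmetric power $\Sym^n_{A/I}(I/I^2)$ (derived and classical $\Sym^n$ agree on free modules), and similarly for $J^n/J^{n+1}$. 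Because the derived symmetric power preserves weak equivalences between cofibrant pairs (cofibrant simplicial commutative ring together with a cofibrant module over it), the claim reduces to checking that the induced map $I/I^2 \to J/J^2$ is a weak equivalence.

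I would obtain this remaining step by identifying $I/I^2$ with the shifted relative cotangent complex $\bL_{(A/I)/A}[-1]$. For the cofibrant pair, the presentation $R[Y] \to A$ exhibits $A/I$ as the homotopy pushout $A \otimes^L_{R[Y]} R$, and base change for the cotangent complex computes
\[
\bL_{(A/I)/A} \simeq \bL_{R/R[Y]} \otimes^L_{R[Y]} (A/I) \simeq (A/I) \otimes_R R\langle Y\rangle[1],
\]
recovering $(I/I^2)[1]$. The analogous formula holds for $(B,J)$, and functoriality of the cotangent complex — combined with the weak equivalences $A \to B$ (given) and $A/I \to B/J$ (from the base case) — yields the weak equivalence $I/I^2 \to J/J^2$ via the cofiber sequence $\bL_{A/R} \otimes^L_A (A/I) \to \bL_{(A/I)/R} \to \bL_{(A/I)/A}$. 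The main obstacle is avoiding a circular dependence between showing $I^n/I^{n+1} \to J^n/J^{n+1}$ and showing $I^n \to J^n$ directly; the cotangent complex viewpoint breaks this circularity by exhibiting $I/I^2$ as a homotopy invariant of the closed immersion $A \to A/I$, computable without reference to the higher powers of $I$.
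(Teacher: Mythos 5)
Your proof is correct and follows essentially the same approach as the paper: a five-lemma reduction to the associated graded of the $I$-adic filtration, an identification of $\mathrm{gr}(A/I^n)$ with the symmetric algebra $\Sym_{A/I}(I/I^2)$ using level-wise regularity of the cofibrant presentation, and finally the observation that $I/I^2$ models a shift of the relative cotangent complex $\bL_{(A/I)/A}$, which is invariant under the given weak equivalence. The paper's proof is more terse — it states the associated-graded identification and the cotangent-complex invariance without spelling out the explicit tensor decomposition $A \cong R[X\setminus Y]\otimes_R R[Y]$ or the cofiber sequence $\bL_{A/R}\otimes^L_A(A/I)\to\bL_{(A/I)/R}\to\bL_{(A/I)/A}$, both of which you work out carefully and correctly.
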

\begin{proof}
The induced map $A/I \to B/J$ is a weak equivalence by the $5$-lemma. The $I$-adic filtration of the simplicial commutative $A$-algebra $A/I^n$ is finite, and because $I$ is level-wise regular the canonical map is a weak-equivalence
\[
\Sym_{A/I}^{\leq n-1}(I/I^2) \xrightarrow{\cong} {\rm gr}(A/I^n).
\]
The same holds for the pair $(B,J)$, so it suffices to show that the map $\Sym_{A/I} (I/I^2) \to \Sym_{B/J}(J/J^2)$ is a weak-equivalence. Both $A/I$ and $B/J$ are level-wise polynomial, and $I/I^2$ and $J/J^2$ are level-wise free models for a shift of the relative cotangent complex of $A \to A/I$ and $B \to B/J$ respectively. The claim follows from the fact that the cotangent complex is intrinsic to the maps $A \to A/I$.
\end{proof}

\begin{lem} \label{L:adic_cocartesian}
Let $(A,I) \to (B,J)$ be a morphism in $\cP$ such that $I$ and $J$ are level-wise regular ideals. If the canonical map $B \otimes_A^L (A/I) \to B/J$ is a weak-equivalence, then so is the map $B \otimes^L_A (A/I^n) \to B/J^n$ for all $n$.
\end{lem}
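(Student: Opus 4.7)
I would prove the lemma by induction on $n$, with the case $n=1$ being the hypothesis. For the inductive step, the idea is to compare the two natural cofiber sequences coming from the filtrations by powers of the ideals, and apply a two-out-of-three argument.

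First, because $I$ is level-wise regular, the canonical short exact sequence
\[
0 \to I^n/I^{n+1} \to A/I^{n+1} \to A/I^n \to 0
\]
computes the corresponding derived cofiber sequence in $A\Mod$ (and similarly for $J \subset B$). Tensoring the $A$-side over $A$ with $B$, we obtain a commutative diagram of cofiber sequences
\[
\xymatrix@C=8pt{
B \otimes_A^L (I^n/I^{n+1}) \ar[r] \ar[d] & B \otimes_A^L (A/I^{n+1}) \ar[r] \ar[d] & B \otimes_A^L (A/I^n) \ar[d] \\
J^n/J^{n+1} \ar[r] & B/J^{n+1} \ar[r] & B/J^n.
}
\]
By the inductive hypothesis, the right vertical map is a weak equivalence, so it suffices to show that the left vertical map is a weak equivalence, and then two-out-of-three finishes the argument.

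The main technical point is thus the identification of $B \otimes_A^L (I^n/I^{n+1})$ with $J^n/J^{n+1}$. By level-wise regularity of $I$ (respectively $J$), one has canonical weak equivalences
\[
I^n/I^{n+1} \simeq \Sym^n_{A/I}(I/I^2), \qquad J^n/J^{n+1} \simeq \Sym^n_{B/J}(J/J^2),
\]
and moreover $I/I^2 \simeq I \otimes_A^L (A/I)$, and similarly for $J$. Applying the cofiber sequence $I \to A \to A/I$ and the base case $B \otimes_A^L (A/I) \simeq B/J$, we deduce that $B \otimes_A^L I \simeq J$ as $B$-modules, and hence
\[
B/J \otimes^L_{A/I} (I/I^2) \simeq (B \otimes_A^L I) \otimes_A^L (A/I) \simeq J \otimes_B^L (B/J) \simeq J/J^2.
\]
Taking derived symmetric powers of this identification over $B/J$ and using the base case one more time gives
\[
B \otimes_A^L (I^n/I^{n+1}) \simeq \Sym^n_{B/J}\bigl(B/J \otimes^L_{A/I} (I/I^2)\bigr) \simeq \Sym^n_{B/J}(J/J^2) \simeq J^n/J^{n+1},
\]
which is the required weak equivalence.

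The main obstacle in the argument is the bookkeeping for this last identification: one must be careful that the canonical map $B \otimes_A^L (I^n/I^{n+1}) \to J^n/J^{n+1}$ constructed geometrically from the inclusions $I^n \subset A$ and $J^n \subset B$ actually agrees with the map produced by this chain of equivalences. This can be verified by unwinding all identifications on the level of the explicit cofibrant models in $\cP$, where the derived symmetric power is represented by a Koszul-type resolution that is manifestly compatible with base change along $A \to B$. Once this compatibility is in hand, the inductive step closes and the lemma follows.
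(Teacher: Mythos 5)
Your overall strategy matches the paper's: reduce to the associated graded pieces of the adic filtration, identify them with symmetric powers of the conormal module, and show that these commute with base change along $A\to B$. The inductive packaging versus the paper's "finite filtration" packaging is a cosmetic difference. But the step where you actually establish the base-change compatibility of the graded pieces contains a real error.

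You invoke the identity $I/I^2 \simeq I \otimes_A^{L} (A/I)$ (and later $J \otimes_B^{L}(B/J) \simeq J/J^2$). These are false as soon as $I$ is regular of codimension $\geq 2$. For instance, with $A = k[x,y]$ and $I = (x,y)$ (which is a perfectly good cofibrant pair in $\cP$, taken as constant simplicial objects), the Koszul resolution gives $\Tor_1^A(I, A/I) \cong k \neq 0$, so $I \otimes_A^{L} (A/I)$ has nontrivial homology in degree $1$ and is not equivalent to the discrete module $I/I^2$. Consequently your chain
\[
B/J \otimes^{L}_{A/I} (I/I^2) \;\simeq\; (B \otimes_A^{L} I) \otimes_A^{L} (A/I) \;\simeq\; J \otimes_B^{L} (B/J) \;\simeq\; J/J^2
\]
passes through two incorrect equivalences; the outer two terms are in fact equivalent, but the inner ones are not equal to them (nor to each other). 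The argument reads as proving an identity by routing through two genuinely different objects.

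The identification $B/J \otimes^{L}_{A/I}(I/I^2) \simeq J/J^2$ that you need \emph{is} true, but the correct justification is the one the paper uses: because $I$ (resp.\ $J$) is level-wise regular, $I/I^2$ (resp.\ $J/J^2$) is level-wise free over $A/I$ (resp.\ $B/J$) and models $\bL_{(A/I)/A}[-1]$ (resp.\ $\bL_{(B/J)/B}[-1]$), and relative cotangent complexes of derived cocartesian squares commute with base change. Your final paragraph gestures at "unwinding identifications on cofibrant models," which could be made to work, but that argument is precisely the cotangent-complex one in disguise, and it cannot be reduced to the false formula $I/I^2 \simeq I\otimes_A^{L}(A/I)$. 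Replace that identity with the cotangent-complex identification and the rest of your induction goes through.
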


\begin{proof}
The argument is essentially the same as the previous lemma. The canonical map of $A$-modules $A/I^n \to B/J^n$ preserves the adic filtration, so it suffices to show that the induced map on associated graded pieces
\[
B \otimes_A^L \Sym_{A/I}(I/I^2) \cong (B \otimes^L_A A/I) \otimes^L_{A/I} \Sym_{A/I}(I/I^2) \to \Sym_{B/J}(J/J^2)
\]
is a weak-equivalence. $I/I^2$ and $J/J^2$ are level-wise free, so they model the shifted relative cotangent complex of $A \to A/I$ and $B \to B/J$ respectively. The claim then follows from the fact that the relevant cotangent complex is preserved by base change.
\end{proof}

\begin{const}[Adic completion] \label{C:normal_cone}
Given a surjective map $A \to B$, we let $A \to A' \to B$ be the functorial factorization as an acyclic cofibration followed by a fibration for the usual simplicial model structure on simplicial commutative algebras. Then the fact that $\pi_0(A') \to \pi_0(B)$ is surjective implies that $A' \to B$ is surjective, so $B \cong A' / I'$ for some simplicial ideal $I' \subset A'$. We then let $(A'',I'')$ be a functorial cofibrant replacement for $(A',I') \in \cP$ in the simplicial model structure discussed above. We define
\[
\Cpl^{\rm adic}(A \to B) := \left( A'' \to \cdots \to A''/(I'')^{n+1} \to A''/(I'')^n \to \cdots \to A''/I'' \right),
\]
where the right-hand-side is regarded as an element of the diagram category $\Fun((\bN \cup \{\infty\})^{\rm op},\sAlg_R)$. This functor is well-defined, up to weak-equivalence of the diagram $(A \to B)$, by \Cref{L:adic_weak_equivalence} above.
\end{const}

Note that there is a forgetful functor between diagram categories $F : (A_\infty \to \cdots \to A_{n+1} \to A_n \to \cdots \to A_2 \to A_0) \mapsto (A_\infty \to A_0)$, and $F \circ \Cpl^{\rm adic}(A \to B) = A'' \to A''/I''$ is canonically weakly equivalent to $A \to B$.

\begin{proof}[Proof of \Cref{T:deformation_to_normal_cone}]
$\cX$ is quasi-compact and quasi-separated, so we may present $\cX$ as the geometric realization of a simplicial object $X_\bullet$ in the full $\infty$-subcategory of affine derived $R$-schemes ${\rm Aff}_R$. We have a defining equivalence of $\infty$-categories
\[
{\rm Aff}_R^{\rm op} = N(\sAlg^\circ_R),
\]
where the right-hand-side is underlying $\infty$-category of $\sAlg_R$, i.e., the simplicial nerve of the simplicial category of cofibrant and fibrant objects. We can therefore identify $X_\bullet$ with a cosimplicial object $A^\bullet \in \Fun({\mathbf \Delta}, N(\sAlg^\circ_R))$. By \cite{HTT}*{Prop.~4.2.4.4}, we have an equivalence
\[
N(((\sAlg_R)^{\mathbf \Delta})^\circ) \cong \Fun({\mathbf \Delta}, N(\sAlg^\circ_R)),
\]
where the left side denotes the underlying $\infty$-category of the simplicial model category of cosimplicial simplicial commutative $R$-algebras, with its projective model structure.

So up to equivalence, $A^\bullet$ comes from an actual cosimplicial object in $\sAlg_R$. Likewise, the closed immersion $\cS \hookrightarrow \cX$ comes from an actual map of cosimplicial objects $A^\bullet \to B^\bullet$ in $\sAlg_R$ that is level-wise surjective. $A^\bullet \to B^\bullet$ is also cocartesian in the sense that for any map $A_n \to A_m$ coming from the cosimplicial structure, the canonical map $A_{[m]} \otimes^L_{A_{[n]}} B_{[n]} \to B_{[m]}$ is a weak equivalence.

We now apply \Cref{C:normal_cone} level-wise to obtain a diagram of cosimplicial objects in $\sAlg_R$
\[
\Cpl^{\rm adic}(A^\bullet \to B^\bullet) = \left( (A'')^\bullet \to \cdots \to A_{n+1}^\bullet \to A_n^\bullet \to \cdots \to A_0^\bullet\right),
\]
along with a canonical weak-equivalence if diagrams
\[
(A^\bullet \to B^\bullet) \sim ((A'')^\bullet \to A_0^\bullet).
\]
The latter identifies the geometric realization of the simplicial object $|\Spec((A'')^\bullet)|$ with $\cX$. \Cref{L:adic_cocartesian} implies that each map $(A'')^\bullet \to A_n^\bullet$ is cocartesian in the above sense, and thus the closed immersion of simplicial objects $\Spec(A_n^\bullet) \to \Spec((A'')^\bullet)$ descends to a closed substack of the geometric realization. We define $\cS^{(n)} \hookrightarrow \cX$ to be the corresponding closed substack under the same identification $\cX \cong |\Spec((A'')^\bullet)|$. Thus we have our $\bN$-indexed system of closed derived substacks
\[
\cS \cong \cS^{(0)} \hookrightarrow \cS^{(1)} \hookrightarrow \cdots \hookrightarrow \cX.
\]
Note that because the relative cotangent complex satisfies smooth descent over $\cX$, the canonical isomorphisms $\fib(A_{n}^\bullet \to A_{n-1}^\bullet) \cong \Sym_{A_0^\bullet}(\bL_{A_0/A}[-1])$ descend to the stated isomorphisms in the theorem. Because $\cS_0 \to \cX$ is an almost finitely presented closed immersion, $\bL_{\cS_0/\cX} \in \AAPerf(\cS_0)$ and $i_\ast(-)$ preserves $\AAPerf$, and it follows by induction that $\cO_{\cS^{(n)}} \in \AAPerf(\cX)$ for all $n$.

The claims (1)-(4) of the theorem are smooth-local over $\cX$, so it suffices to verify them in the case where $\cX =\Spec(A)$ is affine. The claim (1) in the affine case is immediate from \Cref{C:normal_cone}. The remaining claims are exactly the conditions (2)-(4) of \Cref{L:characterize_completion} (using the fact that for an almost perfect complex $M$, $\tau_{\leq d}(M)$ is perfect to order $d+1$ for any $d$, and any $d$-truncated complex that is perfect to order $d+1$ is the truncation of some almost perfect complex).

So, to complete the proof, we must show that for a surjective map $A \to B$, the system $\Cpl^{\rm adic}(A\to B)$ of \Cref{C:normal_cone} satisfies the equivalent conditions of \Cref{L:characterize_completion}. So, if $(A,I)$ is a cofibrant object of $\cP$, then we must show that for any $d \geq 0$, the canonical map
\[
\pi_0(A/I) \to \tau_{\leq d}((A/I^n) \otimes^L_A \pi_0(A/I))
\]
is a weak equivalence for all $n\gg 0$. By \Cref{L:adic_cocartesian}, it suffices to replace $A \to A/I$ with the morphism $\pi_0(A/I) \to \pi_0(A/I) \otimes_A^L (A/I)$, and to show that if we denote
\[
\Cpl^{\rm adic}(\pi_0(A/I) \to \pi_0(A/I) \otimes_A^L (A/I)) = \left(B \to \cdots \to B / J^{n+1} \to B/ J^n \to \cdots \to B / J \right),
\]
then $\tau_{\leq d} (B) \to \tau_{\leq d}(B / J^n)$ is a weak-equivalence for $n \gg 0$. Note that $\pi_0(B) \to \pi_0(B/J)$ is an isomorphism, and $\pi_1(B/J)=0$, so $\pi_0(J)=0$. Quillen's theorem, as formulated in \cite{derived_derham}*{Prop.~4.11}, says that $B \cong R\lim_n B / J^n$ in this case. Although the statement refers to the limit, the proof in fact shows that $J^{n+1}$ is $n$-connected for all $n$, which implies that $\tau_{\leq n} (B) \to \tau_{\leq d}(B/J^n)$ is a weak-equivalence for all $n\gg 0$.
\end{proof}


\section{The canonical complex of a quasi-smooth stack}
\label{A:grothendieck}

In this appendix we prove \Cref{P:canonical_complex_classical} after establishing several preliminary results. We first discuss the two (graded) line bundles we wish to compare more carefully.

\subsubsection*{The graded determinant of the cotangent complex}

Let $\cX$ be a derived algebraic stack, and let $\Pic^{\bZ}(\cX) \subset \Perf(\cX)^\otimes$ be the group-like $E_\infty$-monoid of invertible objects and isomorphisms between them, with monoidal structure given by tensor product. $\pi_0(\Pic^{\bZ}(\cX))$ consists of isomorphism classes of perfect complexes that are homological shifts of invertible sheaves, where the shift can differ on different connected components of $\cX$. We recall the construction from \cite{heleodoro2019determinant} of a map of group-like $E_\infty$-monoids
\begin{equation} \label{E:graded_determinant}
\detz : K(\Perf(\cX)) \to \Pic^{\bZ}(\cX),
\end{equation}
where $K(-)$ denotes the connective algebriac $K$-theory of an $\infty$-category \cite{barwick}.

Over a field of characteristic $0$, $B\GL_n$ is the left Kahn extension of its classical counterpart, so the map $\bigwedge^n : B\GL_n \to B\bG_m$ in the classical setting extends to the derived setting as well. For any derived stack $\cX$ we let $({\rm Vect}(\cX)^{\cong},\oplus)$ denote the symmetric monoidal $\infty$-groupoid of locally free sheaves and isomorphisms with symmetric monoidal structure given by direct sum. The formula
\[
\detz(E) := \det(E)[E] = \bigwedge{}^{\rank(E)}(E) [E]
\]
defines a symmetric monoidal functor of symmetric monoidal $\infty$-groupoids $\detz : ({\rm Vect}(\cX)^{\cong},\oplus) \to (\Pic^\bZ(\cX),\otimes)$.

To construct the map $\detz$ of \eqref{E:graded_determinant}, we consider a diagram of symmetric monoidal $\infty$-groupoids
\[
\xymatrix{
K(\Perf(\cX)) &  ({\rm Vect}(\cX),\oplus)^{\rm gp} \ar[r]^-{\detz} \ar[l]_{i} & \Pic^\bZ(\cX)
},
\]
where $(-)^{\rm gp}$ denotes the group-like completion of a symmetric monoidal $\infty$-groupoid. The right map is the extension of $\detz$ from ${\rm Vect}(\cX)^{\cong}$, which exists and is unique up to contractible choices because $\Pic^\bZ(\cX)$ is group-like. The map $i$ is constructed in \cite{heleodoro2019determinant}*{Sect.~1.1.2}.

The maps $i$ and $\detz$ are functorial in $\cX$, and thus may be regarded as maps of presheaves of group-like symmetric monoidal $\infty$-groupoids on $\cX$ (or equivalently, a presheaf of connective spectra). The key fact is that $i$ is an isomorphism for affine derived schemes \cite{heleodoro2019determinant}*{Thm.~1} and thus gives an isomorphism after smooth sheafification, and $\Pic^\bZ(-)$ is already a smooth sheaf, so we define the determinant as the composition
\[
K(\Perf(\cX)) \to \Gamma(\cX, K(\Perf(-))^{\rm sh}) \xrightarrow{\detz \circ (i^{\rm sh})^{-1}} \Gamma(\cX,\Pic^\bZ(-)) \cong \Pic^\bZ(\cX).
\]
\begin{ex}
If $E$ is a locally free sheaf on $\cX$, regarded as a class in $\pi_0(K(\Perf(\cX)))$, then $\detz(E) = \bigwedge^{\rank(E)} (E) [\rank(E)]$.
\end{ex}

The fact that $\detz$ is a map of symmetric monoidal $\infty$-groupoids implies that for any morphism $A \to B$ in $\Perf(\cX)$, one has an isomorphism $\detz(B) \cong \detz(A) \otimes \detz(\cofib(A \to B))$.

More generally, we consider a filtered object of $\Perf(\cX)$, which we regard as a sequence of maps $A_1 \to \cdots \to A_n$, although it is more precisely defined as an $n$-gapped object \cite{HA}*{Defn.~1.2.2.2}. One can consider the cofiber sequence $A_i \to A_n \to \cofib(A_i \to A_n)$ for any $i$, and then iterate this procedure using the induced filtrations on $A_i$ and $\cofib(A_i \to A_n)$ until one has constructed $A_n$ as an iterated sequence of extensions starting with the associated graded objects $B_i := \cofib(A_{i-1} \to A_{i})$, where $A_{-1}:=0$. This results in a sequence of isomorphisms
\begin{align*}
\detz(A_n) &\cong \detz(A_i) \otimes \detz(\cofib(A_i \to A_n)) \\
&\cong \cdots \cong \detz(B_1) \otimes \cdots \otimes \detz(B_n).
\end{align*}
The fact that $\detz$ is a symmetric monoidal map of symmetric monoidal $\infty$-groupoids implies that the homotopy class of the resulting isomorphism $\detz(A_n) \cong \detz(B_1) \otimes \cdots \otimes \detz(B_n)$ is independent of the choice of how to realize $A_n$ as an iterated extension.

For an arbitrary morphism $f : \cX \to \cY$ of quasi-smooth algebraic derived stacks, the relative cotangent complex $\bL_{\cX/\cY}$ is perfect, so we can consider the graded line bundle $\detz(\bL_{\cX/\cY})$. For any composition $\cW \to \cX \to \cY$, the additivity of $\detz$ applied to the cofiber sequence $f^\ast(\bL_{\cX/\cY}) \to \bL_{\cW/\cY} \to \bL_{\cW/\cX}$ induces a canonical isomorphism
\begin{equation} \label{E:shriek_vs_star_pullback2}
\detz(\bL_{\cW/\cY}) \cong \detz(\bL_{\cW/\cX}) \otimes \detz(\bL_{\cX/\cY})|_{\cW}.
\end{equation}
A longer composition of morphisms $\cW \to \cX \to \cY \to \cZ$ induces a filtration $\bL_{\cY/\cZ}|_{\cW} \to \bL_{\cX/\cZ}|_{\cW} \to \bL_{\cW/\cZ}$, and applying \eqref{E:shriek_vs_star_pullback2} in different ways results in a diagram of isomorphisms that commutes up to homotopy
\begin{equation} \label{E:commute_order_1}
\xymatrix{ \detz(\bL_{\cW/\cZ}) \ar[r] \ar[d] & \detz(\bL_{\cW/\cX}) \otimes \detz(\cX/\cZ)|_{\cW} \ar[d] \\
 \detz(\bL_{\cW/\cY}) \otimes \detz(\bL_{\cY/\cZ})|_{\cW} \ar[r] & \detz(\bL_{\cW/\cX}) \otimes \detz(\bL_{\cX/\cY})|_{\cW} \otimes \detz(\bL_{\cY/\cZ})|_{\cW}}.
\end{equation}
Finally, given a cartesian diagram of quasi-smooth algebraic derived stacks
\[
\xymatrix{\cX' \ar[d]^g \ar[r]^f & \cX \ar[d] \\ \cY' \ar[r] & \cY},
\]
the unique vertical arrows that make the following diagram commute
\[
\xymatrix{f^\ast(\bL_{\cX/\cY}) \ar[r] \ar[d] & \bL_{\cX'/\cY} \ar@{=}[d] \ar[r] & \bL_{\cX'/\cX} \\
\bL_{\cX'/\cY'} & \bL_{\cX'/\cY} \ar[l] & g^\ast(\bL_{\cY'/\cY}) \ar[l] \ar[u] }
\]
are isomorphisms. In particular, we both cofiber sequences in the top and bottom row are canonically split, and applying $\detz(-)$ gives an isomorphism
\[
\detz(\bL_{\cX'/\cY}) \cong f^\ast(\detz(\bL_{\cX/\cY})) \otimes g^\ast(\detz(\bL_{\cY'/\cY}))
\]
that is independent up to homotopy of which of the two cofiber sequences was used.

\subsubsection*{The second canonical graded line bundle}
For any eventually coconnective morphism of algebraic derived stacks $f : \cX \to \cY$ pullback of quasi-coherent complexes preserves $\DCoh$. Using this one can define a pullback $f^{\IC,\ast} : \IC(\cY) \to \IC(\cX)$ that is locally the ind-completion of the pullback $\DCoh(\cY) \to \DCoh(\cX)$. Also, $f^{\IC,\ast}$ agrees with the usual pullback $f^\ast : \QC(\cY)_{<\infty} \to \QC(\cX)_{<\infty}$ under the isomorphism $\Psi_{(-)} : \IC(-)_{<\infty} \cong \QC(-)_{<\infty}$ \cite{gaitsgory2013ind}*{Sect.~11.3}.

Any morphism between quasi-smooth algebraic derived stacks is Gorenstein, meaning it is eventually coconnective and the relative canonical complex $f^!(\cO_\cY) \in \IC(\cX)$ is a graded line bundle in $\DCoh(\cX)$. If $f : \cX \to \cY$ is representable and Gorenstein, there is an isomorphism
\[
f^!(-) \cong \cK_{\cX/\cY} \otimes f^{\IC,\ast}(-),
\]
for the canonically defined graded line bundle $\cK_{\cX/\cY} := f^{\QC,!}(\cO_\cY)$, as discussed in the proof of \cite{GR2}*{Prop.~II.9.7.3.4}. As in the case of the cotangent complex, we let $\cK_{\cX}$ denote $\cK_{\cX/\Spec(k)}$. We note that $\cK_{\cX/\cY} = \Psi_\cX^{-1}(\omega_\cX) \otimes f^\ast(\Psi_\cX^{-1}(\omega_\cY))$, where $\omega_\cX = (\cX \to \Spec(k))^!(k) \in \DCoh(\cX) \subset \IC(\cX)$ is the canonical complex.

Given a second representable Gorenstein morphism $g: \cY \to \cZ$, the $\QC(\cY)^\otimes$-linear structure of $f^{\IC,\ast}(-)$ gives a canonical isomorphism of functors
\[
(g\circ f)^!(-) \cong \cK_{\cX/\cY} \otimes f^\ast(\cK_{\cY/\cZ}) \otimes f^{\IC,\ast}(g^{\IC,\ast}(-)),
\]
which induces a canonical isomorphism of invertible sheaves
\begin{equation} \label{E:canonical_composition}
\cK_{\cX/\cZ} \cong \cK_{\cX/\cY} \otimes f^\ast(\cK_{\cY/\cZ}).
\end{equation}

For a composition $\cW \to \cX \to \cY \to \cZ$, the resulting diagram of canonical isomorphisms commutes up to homotopy:
\begin{equation} \label{E:independent_order}
\xymatrix{ \cK_{\cW/\cZ} \ar[r] \ar[d] & \cK_{\cW/\cX} \otimes \cK_{\cX/\cZ}|_{\cW} \ar[d] \\
\cK_{\cW/\cY} \otimes \cK_{\cY/\cZ}|_{\cW} \ar[r] & \cK_{\cW/\cX} \otimes \cK_{\cX/\cY}|_{\cW} \otimes \cK_{\cY/\cZ}|_{\cW} }
\end{equation}

Finally, given a cartesian diagram of quasi-smooth stacks $\cX' \cong \cX \times_{\cY} \cY'$, we have canonical isomorphisms $\cK_{\cX'/\cY'} \cong \cK_{\cX/\cY}|_{\cX'}$ and $\cK_{\cX'/\cX} \cong \cK_{\cY'/\cY}|_{\cX'}$. This gives an isomorphism
\begin{equation}\label{E:product_formula}
\cK_{\cX'/\cY} \cong \cK_{\cX/\cY}|_{\cX'} \otimes \cK_{\cY'/\cY}|_{\cY'},
\end{equation}
that does not depend, up to homotopy, on which of the two compositions $\cX' \to \cX \to \cY$ or $\cX' \to \cY' \to \cY$ is used to define it.

\subsubsection*{Canonical isomorphisms}

\begin{lem} \label{L:grothendieck_formula_1}
For any regular closed immersion of algebraic derived stacks $f : \cX \to \cY$, there is a canonical isomorphism of line bundles $\phi_f : \cK_{\cX/\cY} \cong \detz(\bL_{\cX/\cY})$ such that:
\begin{enumerate}
\item For any other regular closed immersion $g : \cY \to \cZ$, under the isomorphisms \eqref{E:canonical_composition} and \eqref{E:shriek_vs_star_pullback2} there is a homotopy $\phi_{g\circ f} \sim \phi_f \otimes f^\ast(\phi_g)$, i.e., the following diagram commutes up to homotopy
\begin{equation} \label{E:homotopy_commute}
\xymatrix@C=35pt{ \cK_{\cX/\cZ} \ar[r]^{\phi_{g\circ f}} \ar[d]_\cong^{\eqref{E:canonical_composition}} & \detz(\bL_{\cX/\cZ}) \ar[d]_\cong^{\eqref{E:shriek_vs_star_pullback2}} \\
\cK_{\cX/\cY} \otimes f^\ast(\cK_{\cY/\cZ}) \ar[r]^-{\phi_f \otimes f^\ast(\phi_g)} & \detz(\bL_{\cX/\cY}) \otimes f^\ast(\detz(\bL_{\cY/\cZ})) }.
\end{equation}
\item For any cartesian square of the form
\[
\xymatrix{ \cX' \ar[d]^{g'} \ar[r]^{f'} & \cY' \ar[d]^g \\
\cX \ar[r]^f & \cY},
\]
under the canonical isomorphisms $\cK_{\cX'/\cY'} \cong (g')^\ast(\cK_{\cX/\cY})$ and $\detz(\bL_{\cX'/\cY'}) \cong \detz(\bL_{\cX/\cY})$, $\phi_{f'}$ is homotopic to $(g')^\ast(\phi_f)$.
\end{enumerate}

\end{lem}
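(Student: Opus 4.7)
My plan is to construct $\phi_f$ by explicit computation via the Koszul resolution, then verify the compatibilities (1) and (2) by reducing to chain-level identities on a smooth atlas.

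\textbf{Construction.} For a regular closed immersion $f : \cX \hookrightarrow \cY$ with normal bundle $N$ of rank $n$, we have $\bL_{\cX/\cY} \simeq N^\dual[1]$. Applying $\detz$ to the cofiber sequence $\bL_{\cX/\cY}[-1] \to 0 \to \bL_{\cX/\cY}$ gives, by additivity, a canonical identification $\detz(\bL_{\cX/\cY}) \cong \detz(N^\dual)^{-1} \cong \det(N)[-n]$. On the other hand, dualizing the Koszul resolution of $f_\ast \cO_\cX$ and using $f_\ast f^{\QC,!}(\cO_\cY) \cong \inner{\RHom}_\cY^\otimes(f_\ast \cO_\cX, \cO_\cY)$, together with the computation underlying \Cref{L:regular_closed_immersion_general}, identifies $\cK_{\cX/\cY}$ with the same graded line bundle $\det(N)[-n]$. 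I would take $\phi_f$ to be the resulting canonical isomorphism, made concrete at the chain level by the Koszul dualization above.

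\textbf{Compatibility (1).} For $f : \cW \to \cX$ and $g : \cX \to \cY$, a regular sequence $(y_1,\ldots,y_m)$ cutting out $g$ together with lifts to $\cY$ of a regular sequence $(x_1,\ldots,x_n)$ cutting out $f$ assemble into a regular sequence cutting out $g\circ f$ in $\cY$; accordingly, the Koszul resolution of $(gf)_\ast\cO_\cW$ factors as the tensor product of the Koszul resolutions of $f$ and $g$. This tensor factorization is exactly what induces the composition isomorphism \eqref{E:canonical_composition} on the $\cK$ side, via $(g\circ f)^{\QC,!} \cong f^{\QC,!}\circ g^{\QC,!}$, and it is also what induces the additivity isomorphism \eqref{E:shriek_vs_star_pullback2} on the $\detz$ side, via the conormal splitting of the cotangent cofiber sequence $f^\ast\bL_{\cX/\cY} \to \bL_{\cW/\cY} \to \bL_{\cW/\cX}$. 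Matching the two constructions at the chain level gives the commutativity of \eqref{E:homotopy_commute}.

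\textbf{Compatibility (2) and main obstacle.} Both $\cK_{\cX/\cY}$ and $\detz(\bL_{\cX/\cY})$ commute with base change along $\cY' \to \cY$, by \Cref{L:pullback_qcshriek} and by the functoriality of the cotangent complex and $\detz$ respectively, and since the chain-level construction of $\phi$ proceeds via a Koszul resolution that is itself preserved under base change, the compatibility in (2) is essentially automatic. The main difficulty I anticipate is the $\infty$-categorical coherence required in (1): one must match the $E_\infty$-data of $\detz$ coming from the sheafified construction of \cite{heleodoro2019determinant} against the coherent composition law for $f^{\QC,!}$, and this requires tracking both sides back to a single source. I would handle this by first establishing $\phi$ and its compatibilities for the universal case $\{0\}/\GL_n \hookrightarrow \bA^n/\GL_n$, where both sides are computable via the standard Koszul complex and all coherence reduces to Koszul sign conventions, and then transporting the construction and the required 2-cells to an arbitrary regular closed immersion by smooth descent, using the fact from \cite[Prop.~2.3.8]{KhanRydh} invoked in the proof of \Cref{L:regular_closed_immersion_general} that every regular closed immersion locally factors through this universal one.
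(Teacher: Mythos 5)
Your proposal goes through the universal example $\{0\}/\GL_n \hookrightarrow \bA^n/\GL_n$, which is also the paper's base case, but the route from there to a general regular closed immersion is quite different, and this is exactly where your argument has a gap. The paper does not glue local Koszul presentations; instead it uses deformation to the normal cone to reduce directly to the normal-bundle case. Because the deformation-to-the-normal-cone construction is functorial for compositions of regular closed immersions and compatible with base change, and because $\Gamma(\cX \times \Theta, \cO_{\cX \times \Theta}) \cong \Gamma(\cX, \cO_\cX)$ (so an automorphism of a graded line bundle over $\cX \times \Theta$ that is the identity over $\{0\}/\Gm$ is the identity everywhere), one never has to glue anything: the discrepancy between the two isomorphisms in \eqref{E:homotopy_commute} extends over the deformation, and checking it over $0 \in \Theta$ reduces everything to split vector bundles and finally to the universal example. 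This is an honest reduction, not a descent.

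Your route instead constructs $\phi_f$ from a choice of regular sequence (or, in the refined version, a choice of local factorization through $\{0\}/\GL_n \hookrightarrow \bA^n/\GL_n$) and then invokes smooth descent. The missing step, which you flag as "the main obstacle" but do not actually resolve, is showing that the chain-level isomorphism is independent of the choice of Koszul presentation, and that these independence homotopies satisfy the higher coherences needed for the descent datum to assemble. The Koszul resolution itself is not canonical, and the associated isomorphism to $\det(N)[-n]$ is a priori only defined per presentation; without an argument that two presentations give homotopic isomorphisms (compatibly over a double intersection, and so on), you cannot conclude that the local $\phi_f$'s glue to a global one, let alone that parts (1) and (2) hold globally. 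Saying "all coherence reduces to Koszul sign conventions" in the universal case and then "transporting the 2-cells by smooth descent" names the required data but does not supply it. The deformation-to-the-normal-cone argument is precisely the device the paper uses to avoid having to organize this descent, and some replacement for it (or a genuine proof of presentation-independence plus a Čech argument) is needed to complete your approach.

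A minor additional concern: you assert that both $\detz(\bL_{\cX/\cY})$ and $\cK_{\cX/\cY}$ are identified with $\det(N)[-n]$; the sign in the exponent of $N$ on the two sides needs care (one side naturally produces $\det(N^\dual)$, the other $\det(N)$, and whether they agree depends on the normalization conventions for $\detz$ of a shift and for $f^{\QC,!}$). This does not affect the structure of the argument, but it should be pinned down before the chain-level comparison can be carried out.
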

\begin{proof}
The isomorphism $\phi_f$ is constructed in \cite{GR2}*{Cor.~II.9.7.3.2}, so here we verify the compatibility conditions (1) and (2).

\medskip
\noindent \textit{Proof of (1):}
\medskip

Let $\varphi_1$ denote the right vertical arrow in \eqref{E:homotopy_commute}, and $\varphi_2: \detz(\bL_{\cX/\cZ}) \cong \detz(\bL_{\cX/\cY}) \otimes f^\ast(\detz(\bL_{\cY/\cZ}))$ the isomorphism $(\phi_f \otimes f^\ast(\phi_g)) \circ \phi_{g \circ f}^{-1}$. We must show that $\varphi_1 \sim \varphi_2$.

It suffices to replace $\cY$ and $\cZ$ with their formal completions along $\cX$, by the proof of \cite{GR2}*{Cor.~II.9.7.3.2}. The construction of deformation to the normal cone \cite{GR2}*{Sect.~II.2.4} is functorial for maps between formal stacks under $\cX$, so deformation to the normal cone gives a sequence of regular closed immersions of formal stacks under $\cX \times \Theta$ and over $\Theta$
\[
\xymatrix{
\cX \times \Theta \ar@{^{(}->}[r]^-{\tilde{f}} & \tilde{\cY} \ar@{^{(}->}[r]^-{\tilde{g}} & \tilde{\cZ}.
}
\]
Both $\varphi_1$ and $\varphi_2$ extend to isomorphisms over $\cX \times \Theta$
\[
\tilde{\varphi}_1,\tilde{\varphi}_2 : \detz(\bL_{\cX\times \Theta/\tilde{\cZ}}) \cong \detz(\bL_{\cX\times \Theta/\tilde{\cY}}) \otimes \detz(\tilde{f}^\ast \bL_{\tilde{\cY}/\tilde{\cZ}}).
\]
The composition $\tilde{\varphi}_1^{-1} \circ \tilde{\varphi}_2$ is an automorphism of $\detz(\bL_{\cX\times \Theta/\tilde{\cZ}})$, i.e., a non-vanishing section of $\cO_{\cX \times \Theta}$. We observe that $\Gamma(\cX \times \Theta, \cO_{\cX \times \Theta}) \cong \Gamma(\cX,\cO_\cX)$, so if $\tilde{\varphi}_1^{-1} \circ \tilde{\varphi}_2$ is homotopic to the identity when restricted to the fiber over $0 \in \Theta$, then $\varphi_1 \sim \varphi_2$ as well.

We have therefore reduced the claim to the case where $\cY$ and $\cZ$ are the total spaces vector bundles over $\cX$, $f$ is the zero section, and $g$ is the inclusion of $\cY$ as a sub-bundle of $\cZ$. Applying the same argument as above to the deformation to the normal cone of $g$, we may further assume that this inclusion is split, i.e., $\cZ \cong \cY \times_{\cX} \cY'$ for another vector bundle $\cY'$. $\varphi_1$ and $\varphi_2$ are homotopic because they are both pulled back from the canonical isomorphism $\det(\bA^m \oplus \bA^n)[-m-n] \cong \det(\bA^m)[-m] \otimes \det(\bA^n)[-n]$ on the universal example $\cY = (\bA^m / \GL_m) \times B\GL_n$ and $\cZ = \bA^m \oplus \bA^n / \GL_m \times \GL_n$, where $m$ and $n$ are the ranks of $\cY$ and $\cY'$ respectively.

\medskip
\noindent \textit{Proof of (2):}
\medskip

As for the previous claim, we may replace $\cY$ with its formal completion along the closed substack $\cX$. The construction of the deformation to the normal cone is compatible with base change, and using the same argument as in the previous claim, this allows us to reduce to the case where $\cY \to \cX$ is the total space of a vector bundle, $f : \cX \to \cY$ is the zero section, and $\cY'$ is the pullback of the vector bundle $\cY \to \cX$ along the map $g' : \cX' \to \cX$. In this case both $\phi_f$ and $\phi_{f'}$ are pulled back from the universal example of the zero section $B\GL_n \hookrightarrow \bA^n / \GL_n$, so the claim follows from the compatibility of the base change isomorphisms $\cK_{\cX'/\cY'} \cong (g')^\ast(\cK_{\cX/\cY})$ and $\detz(\bL_{\cX'/\cY'}) \cong (g')^\ast(\detz(\bL_{\cX/\cY}))$ with composition of cartesian squares.
\end{proof}

\begin{lem} \label{L:grothendieck_formula_2}
For any separated and smooth representable morphism of algebraic derived stacks $f : \cX \to \cY$, there is a canonical isomorphism of line bundles $\phi_f : \cK_{\cX/\cY} \cong \detz(\bL_{\cX/\cY})$ such that: 1) claim (1) of \Cref{L:grothendieck_formula_1} holds for any composition of representable morphisms $\cX \to \cY \to \cZ$ where $\cX$ and $\cY$ are smooth and separated over $\cZ$, and $\cX \to \cY$ is either a regular closed immersion or smooth and separated; and 2) claim (2) of that lemma holds for any cartesian diagram in which $f$ is smooth, representable, and separated.
\end{lem}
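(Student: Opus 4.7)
The plan is to reduce to the regular closed immersion case of \Cref{L:grothendieck_formula_1} via the diagonal. For $f \colon \cX \to \cY$ smooth, separated, and representable, the diagonal $\Delta_f \colon \cX \to \cX \times_\cY \cX$ is a regular closed immersion, and the identity $p_1 \circ \Delta_f = \id_\cX$ gives $\bL_{\Delta_f} \simeq \bL_{\cX/\cY}[1]$. Applying the additivity of $\detz$ and the composition formula \eqref{E:canonical_composition}---together with the base change isomorphism $\Delta_f^*(\cK_{p_1}) \cong \cK_{\cX/\cY}$ coming from \eqref{E:product_formula} and the section $p_2 \circ \Delta_f = \id_\cX$---yields canonical identifications $\detz(\bL_{\Delta_f})^{-1} \cong \detz(\bL_{\cX/\cY})$ and $\cK_{\Delta_f}^{-1} \cong \cK_{\cX/\cY}$. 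I define $\phi_f := \phi_{\Delta_f}^{-1}$ under these identifications, with $\phi_{\Delta_f}$ provided by \Cref{L:grothendieck_formula_1}.

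The base change compatibility (part 2) follows from \Cref{L:grothendieck_formula_1}(2) applied to $\Delta_f$, combined with the naturality under base change of the identifications above and of the formation of the diagonal. For the composition compatibility (part 1) in Case A (both $f$ and $g$ smooth separated), a direct computation of derived fiber products shows that $\Delta_{gf}$ factors as $\cX \xrightarrow{\Delta_f} \cX \times_\cY \cX \xrightarrow{j} \cX \times_\cZ \cX$, where $j$ is the base change of $\Delta_g$ along $(f, f) \colon \cX \times_\cZ \cX \to \cY \times_\cZ \cY$ and both $\Delta_f$ and $j$ are regular closed immersions. Applying \Cref{L:grothendieck_formula_1}(1), (2) to this factorization and unwinding the diagonal-based definitions of $\phi_f$, $\phi_g$, $\phi_{gf}$ by means of the associativity diagrams \eqref{E:commute_order_1} and \eqref{E:independent_order} yields the required homotopy.

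The main obstacle is Case B, where $f$ is a regular closed immersion (and $\phi_f$ comes directly from \Cref{L:grothendieck_formula_1}, not from a diagonal construction). The factorization $\Delta_{gf} = j \circ \Delta_f$ persists, but $\Delta_f \colon \cX \to \cX \times_\cY \cX$ now maps into the derived self-intersection, whose cotangent complex sits in homological degree $2$, so $\Delta_f$ is no longer a regular closed immersion in the sense of \Cref{L:grothendieck_formula_1}. I would resolve this by considering instead the regular closed immersion $\tilde f := \Delta_g \circ f = (f, f) \circ \Delta_{gf} \colon \cX \to \cY \times_\cZ \cY$ and applying \Cref{L:grothendieck_formula_1}(1) to both factorizations of $\tilde f$. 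Since $(f, f)$ further factors as $(f, \id) \circ (\id, f)$ with each factor a base change of $f$, a further application of \Cref{L:grothendieck_formula_1}(1), (2) expresses $\Delta_{gf}^*(\phi_{(f,f)})$ in terms of pullbacks of $\phi_f$. Solving the resulting relation for $\phi_{\Delta_{gf}}$ in terms of $\phi_f$ and $\phi_{\Delta_g}$, and then transporting through the definitions of $\phi_{gf}$ and $\phi_g$, yields the desired compatibility; the key input is that every $\phi$-symbol appearing in the argument is defined via a regular closed immersion to which \Cref{L:grothendieck_formula_1} applies.
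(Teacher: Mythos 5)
Your proposal is essentially correct and follows the same overall strategy as the paper: define $\phi_f := (\phi_{\Delta_f})^{-1}$ under the canonical identifications coming from $p_i \circ \Delta_f \simeq \id_\cX$, then bootstrap the compatibility claims from \Cref{L:grothendieck_formula_1} applied to diagonals. Your Case A argument (factoring $\Delta_{gf}$ through $\cX \times_\cY \cX \hookrightarrow \cX \times_\cZ \cX$, with the latter a base change of $\Delta_g$ along $(f,f)$) and your base change argument are identical to the paper's. Your diagnosis of the obstruction in Case B is exactly right: when $f$ is a regular closed immersion, $\Delta_f$ lands in the derived self-intersection and has $\bL_{\Delta_f} \simeq \bL_{\cX/\cY}[1]$ concentrated in degree $2$, so it is not a regular closed immersion, and the Case A argument does not apply verbatim.

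Where you diverge from the paper is in resolving Case B. You propose applying \Cref{L:grothendieck_formula_1}(1) to both factorizations $\Delta_g \circ f$ and $(f,f)\circ\Delta_{gf}$ of the regular closed immersion $\tilde f \colon \cX \to \cY\times_\cZ\cY$, then computing $\Delta_{gf}^\ast(\phi_{(f,f)})$ by factoring $(f,f)=(f\times\id)\circ(\id\times f)$ as a composition of two base changes of $f$. Unwinding this gives $\Delta_{gf}^\ast(\phi_{(f,f)}) \sim \phi_f \otimes \phi_f$, and comparing the two expressions for $\phi_{\tilde f}$ yields $\phi_f \otimes f^\ast(\phi_{\Delta_g}) \sim \phi_{\Delta_{gf}} \otimes \phi_f \otimes \phi_f$; after cancelling one $\phi_f$ and dualizing you get the desired identity. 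The paper instead observes that the outer rectangle with horizontal arrows $\mathrm{Graph}(f)$ and $\Delta_g$ and vertical arrows $f$ and $f\times\id$ is cartesian, so \Cref{L:grothendieck_formula_1}(2) gives $\phi_{\mathrm{Graph}(f)} \sim f^\ast(\phi_{\Delta_g})$ in one step; combining with the factorization $\mathrm{Graph}(f)=(\id\times f)\circ\Delta_{gf}$ gives the same identity $f^\ast(\phi_{\Delta_g}) \sim \phi_{\Delta_{gf}}\otimes\phi_f$ without the cancellation step. Both routes work and buy the same thing; the paper's is slightly cleaner because the cartesian-square observation avoids the appearance (and subsequent cancellation) of the extra $\phi_f$ factor, which in your version requires one more careful check that the cancelled factors are identified via the ``same'' canonical isomorphism on both sides (this does follow from \eqref{E:commute_order_1} and \eqref{E:independent_order}, as you note, but it is an extra coherence to verify). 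Either way, the argument goes through.
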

\begin{proof}
The construction is given in \cite{GR2}*{Prop.~II.9.7.3.4} (whose proof appears to require a separatedness hypothesis that is not included in the statement). We recall it here so as to verify the compatibility with base change and composition. Given a smooth morphism $f : \cX \to \cY$, we consider the canonical identification $\cK_{\cX \times_\cY \cX/\cY} \cong p_1^\ast(\cK_{\cX/\cY}) \otimes p_2^\ast(\cK_{\cX/\cY})$, where $p_1,p_2 : \cX \times_\cY \cX$ denote the left and right projection respectively. If $\Delta_f : \cX \to \cX \times_\cY \cX$ is the diagonal
\begin{align*}
\cK_{\cX/\cX \times_\cY \cX} &\cong \cK_{\cX/\cY} \otimes \Delta_f^\ast(\cK_{\cX \times_\cY \cX/\cY})^\dual \\
&\cong \cK_{\cX/\cY} \otimes \Delta_f^\ast(p_1^\ast(\cK_{\cX/\cY}))^\dual \otimes \Delta_f^\ast(p_2^\ast(\cK_{\cX/\cY}))^\dual \\
&\cong \Delta_f^\ast(p_2^\ast(\cK_{\cX/\cY}))^\dual \\
&\cong \cK_{\cX/\cY}^\dual,
\end{align*}
and there is an analogous sequence of isomorphisms with $\detz(\bL_{-/-})$ in place of $\cK_{-/-}$. Under these isomorphisms, we define $\phi_f := (\phi_{\Delta_f}^{-1})^\dual$, where $\phi_{\Delta_f} : \cK_{\cX/\cX \times_\cY \cX} \cong \detz(\bL_{\cX/\cX \times_\cY \cX})$ denotes the isomorphism associated to the regular closed immersion $\Delta_f$ in \Cref{L:grothendieck_formula_1}.

\medskip
\noindent \textit{Proof of (1):}
\medskip

Consider a composition as in (1) in which both $f$ and $g$ are smooth, and consider the following diagram
\[
\xymatrix{
\cX \ar@{^{(}->}[r]^{\Delta_f} \ar[dr]^f \ar@/^15pt/[rr]^{\Delta_{g\circ f}}& \cX \times_\cY \cX \ar@{^{(}->}[r] \ar[d]^\pi & \cX \times_\cZ \cX \ar[d] \\
& \cY \ar@{^{(}->}[r]^{\Delta_g} & \cY \times_\cZ \cY},
\]
where the square is cartesian and the horizontal arrows are regular closed immersions. The claims (1) and (2) of \Cref{L:grothendieck_formula_1} imply that $\phi_{\Delta_{g\circ f}} \sim \phi_{\Delta_f} \otimes f^\ast(\phi_{\Delta_g})$ under the canonical isomorphism
\[
\cK_{\cX/\cX \times_\cZ \cX} \cong \cK_{\cX/\cX \times_\cY \cX} \otimes f^\ast(\cK_{\cY / \cY \times_\cZ \cY})
\]
and the analogous isomorphism for $\detz(\bL_{-/-})$. To conclude, we observe that this isomorphism agrees with the canonical isomorphism $\cK_{\cX/\cZ} \cong \cK_{\cX/\cY} \otimes f^\ast(\cK_{\cY/\cZ})$ under the identifications $\cK_{\cX/\cZ} \cong \cK_{\cX/ \cX \times_\cZ \cX}^\dual$, $\cK_{\cX/\cY} \cong \cK_{\cX/\cX \times_\cY \cX}^\dual$, and $\cK_{\cY/\cZ} \cong \cK_{\cY/\cY\times_\cZ \cY}^\dual$ discussed above, and the same holds for $\detz(\bL_{-/-})$.

If $f$ is a regular closed immersion, $g$ is smooth, and $g \circ f$ is smooth, then instead we consider the following diagram, in which the outer rectangle is cartesian,
\[
\xymatrix{\cX \ar@/^15pt/[rr]^{{\rm Graph}(f)} \ar@{^{(}->}[r]^{\Delta_{g\circ f}} \ar[d]^f & \cX \times_\cZ \cX \ar@{^{(}->}[r]^{\id \times f} & \cX \times_\cZ \cY \ar[d]^{f\times \id} \\
\cY \ar@{^{(}->}[rr]^{\Delta_g} & & \cY \times_\cZ \cY }.
\]
Then \Cref{L:grothendieck_formula_1} implies that $f^\ast(\phi_{\Delta_g}) \sim \phi_{\Delta_{g\circ f}} \otimes \phi_f$ under the canonical isomorphisms
\begin{align*}
f^\ast(\cK_{\cY / \cY \times_\cZ \cY}) &\cong \cK_{\cX / \cX \times_\cZ \cY} \\
&\cong \cK_{\cX / \cX \times_\cZ \cX} \otimes \Delta_{g\circ f}^\ast(\cO_\cX \boxtimes \cK_{\cX/\cY}) \\
&\cong \cK_{\cX / \cX \times_\cZ \cX} \otimes \cK_{\cX/\cY}
\end{align*}
and the analogous isomorphisms for $\detz(\bL_{-/-})$. We conclude by observing that this isomorphism corresponds to the canonical isomorphism $\cK_{\cX/\cZ} \cong \cK_{\cX / \cY} \otimes f^\ast(\cK_{\cY/\cZ})$ under the identifications $\cK_{\cX/\cX \times_\cZ \cX} \cong \cK_{\cX/\cZ}^\dual$ and $\cK_{\cY/\cY\times_\cZ \cY} \cong \cK_{\cY/\cZ}^\dual$ discussed above, and likewise for $\detz(\bL_{-/-})$.

\medskip
\noindent \textit{Proof of (2):}
\medskip

This follows immediately from claim (2) of \Cref{L:grothendieck_formula_1}, because $\Delta_{f'} : \cX' \to \cX' \times_{\cY'} \cX'$ is the base change of $\Delta_f : \cX \to \cX \times_\cY \cX$ along the map $\cY' \to \cY$. The same holds for the canonical isomorphisms $\cK_{\cX'/\cX' \times_{\cY'} \cX'} \cong \cK_{\cX'/\cY'}^\dual$ and $\detz(\bL_{\cX'/\cX' \times_{\cY'} \cX'}) \cong \detz(\bL_{\cX'/\cY'})^\dual$.

\end{proof}

\begin{lem} \label{L:quasi-smooth_embedding}
If $S$ is a quasi-smooth affine derived $k$-scheme, there is a regular closed embedding $S \hookrightarrow \bA^n_k$. Given a second embedding $S \hookrightarrow \bA^{n'}_k$, there are embeddings $\bA^n_k, \bA^{n'}_k \hookrightarrow \bA^m$ such that the following diagram commutes:
\[
\xymatrix{S \ar[r] \ar[d] & \bA^n_k \ar[d] \\
\bA^{n'}_k \ar[r] & \bA^m_k}
\]
\end{lem}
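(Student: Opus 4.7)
The plan is to first exhibit an explicit regular closed embedding $S \hookrightarrow \bA^n$, and then use a graph-embedding trick to fit two such embeddings into a common ambient affine space.

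For the existence of the embedding, since $k$ has characteristic $0$ and $A := \Gamma(S, \cO_S)$ is a connective, almost finitely presented, quasi-smooth CDGA, the strategy is to exhibit a quasi-isomorphism
\[
A \simeq B := k[x_1, \ldots, x_n;\ y_1, \ldots, y_m;\ dy_j = f_j(x)]
\]
with the $x_i$ in homological degree $0$, the $y_j$ in homological degree $1$, and $f_j \in k[x_1, \ldots, x_n]$. Concretely, I would choose $\tilde x_i \in A_0$ whose images generate $\pi_0(A)$ as a $k$-algebra, pick generators $f_1, \ldots, f_m$ of the finitely generated ideal $\ker(k[x] \twoheadrightarrow \pi_0(A))$, lift each $f_j$ to an element $\tilde y_j \in A_1$ with $d\tilde y_j = f_j(\tilde x)$, and add (if necessary) auxiliary degree-$1$ generators with vanishing differential so that the resulting CDGA map $B \to A$ is a quasi-isomorphism. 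This presentation realizes $S$ as the derived fibre at $0$ of $f := (f_1, \ldots, f_m) : \bA^n \to \bA^m$, and by base change the closed immersion $S \hookrightarrow \bA^n$ has $\bL_{S/\bA^n} \simeq \cO_S^m[1]$, hence is regular.

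For the second part, I would take $\bA^m := \bA^n \times \bA^{n'}$. Because $i_1$ is a closed immersion, the induced map $k[x_1, \ldots, x_n] \to A$ is surjective on $\pi_0$, so the $n'$ elements of $\pi_0(A)$ classifying $i_2$ can be expressed as polynomials in the images of $\tilde x_1, \ldots, \tilde x_n$; these polynomials define a morphism $\phi : \bA^n \to \bA^{n'}$ with $\phi \circ i_1 \simeq i_2$, the homotopy equivalence arising because $k[y_1, \ldots, y_{n'}]$ is cofibrant in $\sAlg_k$ and homotopy classes of morphisms to $A$ from a free simplicial commutative $k$-algebra are classified by the effect on $\pi_0$. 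Symmetrically one obtains $\psi : \bA^{n'} \to \bA^n$ with $\psi \circ i_2 \simeq i_1$. The graph embeddings
\[
\iota_1 : \bA^n \hookrightarrow \bA^m,\quad a \mapsto (a, \phi(a)), \qquad \iota_2 : \bA^{n'} \hookrightarrow \bA^m,\quad b \mapsto (\psi(b), b),
\]
are closed immersions of smooth affine schemes, and both compositions $\iota_1 \circ i_1$ and $\iota_2 \circ i_2$ are canonically homotopic to the morphism $(i_1, i_2) : S \to \bA^n \times \bA^{n'}$, giving the required commutative square.

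The main obstacle is the global derived complete intersection presentation used in the first part. The analogous local statement (after inverting an invariant function) is \Cref{lem:quasi_smooth}, but globalizing it to an affine quasi-smooth $S$ amounts to verifying that the Koszul-type CDGA $B$ built from generators of $\pi_0(A)$ and of the defining ideal is actually quasi-isomorphic to $A$. This reduces to the vanishing of $\bL_{A/B}$: the degree-zero part is automatic since $B \to A$ is a $\pi_0$-isomorphism, while the positive-degree part may require enlarging $B$ by further degree-$1$ generators with $d = 0$ to absorb the elements of $\pi_1(A)$ not captured by the Koszul homology of $(f_1, \ldots, f_m)$. Once this technical point is handled, the remainder of the argument, including the symmetric graph construction in the second part, is purely formal.
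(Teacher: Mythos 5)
Your Part 2 (the graph/fiber-product trick for fitting two embeddings into a common ambient affine space) coincides with the paper's argument and is fine. The issue is Part 1.

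The paper does not construct a global derived complete intersection presentation of $A$, and you should not either, because it is both unnecessary and, as you half-notice, not obviously available. The correct observation is much simpler: \emph{any} closed immersion $i : S \hookrightarrow \bA^n$ of a quasi-smooth affine derived $k$-scheme $S$ is automatically regular. Indeed, from the cofiber sequence $i^\ast \bL_{\bA^n} \to \bL_S \to \bL_{S/\bA^n}$, the first term is free of rank $n$ in degree $0$, the middle term is perfect with Tor-amplitude in $[-1,1]$ by quasi-smoothness, so the cofiber is perfect with Tor-amplitude in $[-1,1]$; being a closed immersion also makes $\bL_{S/\bA^n}$ $1$-connective, and a perfect, $1$-connective complex with Tor-amplitude in $[-1,1]$ is $V[1]$ for $V$ finite locally free. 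That is all you need, and existence of some $\pi_0$-surjection $k[x_1,\dots,x_n]\to A$ is immediate from almost finite presentation. Note in particular that $\bL_{S/\bA^n}[-1]$ is locally free, not necessarily \emph{free}, so your conclusion $\bL_{S/\bA^n} \simeq \cO_S^m[1]$ is claiming more than is true in general.

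Your proposed patch for the Koszul CDGA $B_0 = k[x;y;\,dy_j=f_j]$ — add degree-$1$ generators with $d=0$ until $B\to A$ becomes a quasi-isomorphism — does not suffice. Adding such generators can only enlarge $\pi_1(B)$; it cannot repair a failure of injectivity of $\pi_1(B_0)\to\pi_1(A)$. Such a failure is perfectly possible: take $S = \Spec k$, embed via $\text{pt} \hookrightarrow \bA^1$, and (non-minimally) generate the ideal by $(x,x)$. Then $\pi_1(B_0)=H_1(K_\bullet(x,x))\cong k$ surjects onto $\pi_1(A)=0$ with nontrivial kernel, and no amount of extra $d=0$ degree-$1$ generators fixes this. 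To kill the kernel one would need generators in degree $\ge 2$, at which point $B$ is no longer of the form you want. Since the full argument doesn't require the complete-intersection presentation anyway, you should drop this route entirely and use the Tor-amplitude argument for regularity.
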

\begin{proof}
Say $S = \Spec(A)$ for some a.f.p. semi-free \CDGA $A$. The embeddings $S \hookrightarrow \bA^n$ and $S \hookrightarrow \bA^{n'}$ correspond to maps $\psi : k[x_1,\dots,x_n] \to A$ and $\phi : k[y_1,\ldots,y_{n'}] \to A$ inducing surjections onto $H_0(A)$. These are automatically regular closed immersions if $S$ is quasi-smooth. Consider the tensor product map
\[
\psi \otimes \phi : k[x_1,\ldots,x_n,y_1,\ldots,y_{n'}] \to A.
\]
This is also surjective on $H_0$. By hypothesis, the image of every $y_j$ is homotopic to something in the image of $k[x_i]$, i.e. $y_j \sim f_j(x_1,\ldots,x_n)$ for some $f_j \in k[x_i]$. A choice of such $f_j$ defines a surjection $k[x_i,y_j] \to k[x_i]$ such that the composition with $\psi$ is homotopic to $\psi \otimes \phi$. In the same way, we construct a surjection $k[x_i,y_j] \to k[y_j]$ such that the composition with $\phi$ is homotopic to $\psi \otimes \phi$. The claim follows, where $m=n+n'$.
\end{proof}

\begin{lem} \label{L:grothendieck_formula_3}
For any morphism of quasi-smooth affine derived $k$-schemes $f : S' \to S$, there is a canonical isomorphism $\phi_f : \cK_{S'/S} \cong \detz(\bL_{S'/S})$ such that:
\begin{enumerate}
\item For any other morphism $g : S \to T$ to a quasi-smooth affine derived $k$-scheme, the composition claim (1) of \Cref{L:grothendieck_formula_1} holds.\\ 
\item For any quasi-smooth morphism $g : T \to S$ from an affine derived $k$-scheme, the base change claim (2) of \Cref{L:grothendieck_formula_1} holds.\\
\item $\phi_f$ agrees with the isomorphism of \Cref{L:grothendieck_formula_1} when $f$ is a regular closed immersion, and it agrees with the isomorphism of \Cref{L:grothendieck_formula_3} when $f$ is smooth.

\end{enumerate}
\end{lem}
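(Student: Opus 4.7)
The plan is to factor any quasi-smooth morphism $f: S' \to S$ of affine derived $k$-schemes as a regular closed immersion followed by a smooth representable morphism, define $\phi_f$ by combining the isomorphisms from \Cref{L:grothendieck_formula_1,L:grothendieck_formula_2} via the multiplicativity isomorphisms \eqref{E:canonical_composition} and \eqref{E:shriek_vs_star_pullback2}, and then verify independence of the factorization together with the three stated compatibilities. Writing $S = \Spec(A)$ and $S' = \Spec(B)$, quasi-smoothness combined with almost finite presentation implies $\bL_{B/A}$ is perfect with fibers concentrated in homological degrees $[0,1]$, so in characteristic zero I may choose a semi-free model $B \simeq A[y_1,\ldots,y_m,\eta_1,\ldots,\eta_k;\, d\eta_i = r_i]$ with $y_j$ in degree $0$, $\eta_i$ in degree $1$, and $r_i \in A[y_j]$. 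This exhibits $f$ as $S' \xrightarrow{j} P \xrightarrow{p} S$ with $P := \bA^m_S$, $p$ the smooth projection, and $j$ realizing $S'$ as the derived zero locus of $(r_1,\ldots,r_k)$; since $\bL_{S'/P}$ is free on the $d\eta_i$ in degree $1$, $j$ is a regular closed immersion. I define $\phi_f$ to be the composite
\[
\cK_{S'/S}\xrightarrow{\eqref{E:canonical_composition}} \cK_{S'/P}\otimes j^*\cK_{P/S}\xrightarrow{\phi_j\otimes j^*\phi_p} \detz(\bL_{S'/P})\otimes j^*\detz(\bL_{P/S})\xrightarrow{\eqref{E:shriek_vs_star_pullback2}} \detz(\bL_{S'/S}).
\]

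For independence, given a second factorization $S' \xrightarrow{j'} P' \xrightarrow{p'} S$ of the same form, set $Q := P \times_S P'$, smooth over $S$. The map $(j,j'): S' \to Q$ decomposes as $S' \xrightarrow{(\id,j')} S' \times_S P' \xrightarrow{j\times\id} Q$, where the first factor is the base change of $\Delta_{P'}$ (regular since $P'$ is smooth over $S$) along $j'\times\id$, and the second is the base change of $j$; so $(j,j')$ is a regular closed immersion. Invoking claim~(1) of \Cref{L:grothendieck_formula_2} for the smooth-smooth composition $Q \xrightarrow{\pi} P \xrightarrow{p} S$, the comparison between $\phi_f$ defined via $P$ and via $Q$ reduces to the \emph{mixed identity}
\[
\phi_j \;=\; \phi_{(j,j')} \otimes (j,j')^*(\phi_\pi),
\]
expressing compatibility of $\phi$ for a regular closed immersion factored as regular-closed-immersion-followed-by-smooth. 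A symmetric argument compares $\phi_f$ via $P'$ with $\phi_f$ via $Q$, so everything reduces to establishing this single identity.

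This mixed identity is the main obstacle: it is not covered directly by claim~(1) of either earlier lemma, because $S'$ is not smooth over $P$. I would prove it by adapting the deformation-to-the-normal-cone argument from the proof of \Cref{L:grothendieck_formula_1}\,(1). The $\Theta$-family deforming $(j,j') : S' \hookrightarrow Q$ to the zero section of its conormal bundle simultaneously deforms $j$ to the zero section of its conormal bundle and $\pi$ to a linear projection between vector bundles on $S'$, because the derivative of $\pi$ yields a canonical split surjection of the respective conormal bundles (with kernel the relative cotangent bundle of $\pi$). Since $\Gamma(S' \times \Theta, \cO) = \Gamma(S', \cO)$, the ratio of the two sides of the proposed identity is $\Theta$-invariant, so checking equality at $0 \in \Theta$ suffices. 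There the identity is pulled back from the universal example of a split inclusion of vector bundles $B\GL_n \hookrightarrow \bA^n/(\GL_n \times \GL_r)$ and reduces to the canonical isomorphism $\det(V \oplus W) \cong \det V \otimes \det W$ on both the $\cK$ side (via the explicit Koszul calculation used in the proof of \Cref{L:grothendieck_formula_1}) and the $\detz$ side (via additivity of $\detz$ in cofiber sequences).

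Once independence is established, claim~(1) follows by concatenating factorizations of $f$ and $g$ into a factorization of $g \circ f$ and applying the composition formulas of \Cref{L:grothendieck_formula_1,L:grothendieck_formula_2} together with the mixed identity above. Claim~(2) follows because a factorization $S' \xrightarrow{j} P \xrightarrow{p} S$ base-changes along a quasi-smooth $g : T \to S$ to $T \times_S S' \xrightarrow{j\times\id} T \times_S P \xrightarrow{p\times\id} T$ of the same form (base change preserves both regular closed immersions and smooth morphisms), after which claims~(2) of the earlier two lemmas yield the result. For claim~(3), the trivial factorizations $S' \xrightarrow{f} S \xrightarrow{\id} S$ (when $f$ is a regular closed immersion) and $S' \xrightarrow{\id} S' \xrightarrow{f} S$ (when $f$ is smooth) combined with independence show that $\phi_f$ from our construction agrees with the isomorphism from the appropriate earlier lemma.
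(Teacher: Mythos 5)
Your approach is genuinely different from the paper's and you have correctly identified where the difficulty lies, but the argument as sketched has a real gap at the \emph{mixed identity} $\phi_j \sim \phi_{(j,j')} \otimes (j,j')^\ast(\phi_\pi)$. This identity is a composition compatibility for $S' \to Q \to P$ where $S' \to Q$ is a regular closed immersion and $Q \to P$ is smooth but $S'$ is \emph{not} smooth over $P$; this case is not covered by claim~(1) of \Cref{L:grothendieck_formula_1} (which needs both maps to be regular closed immersions) nor by claim~(1) of \Cref{L:grothendieck_formula_2} (which needs $S'$ and $Q$ to both be smooth over $P$), and indeed proving it essentially amounts to proving the independence you want. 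Your proposed deformation-to-the-normal-cone argument for this identity is plausible in outline, but the details do not come for free: $\phi_\pi$ is defined via the diagonal embedding $\Delta_\pi$ rather than by a normal-cone construction, so you must first explain how $\phi_\pi$ assembles into a $\Theta$-family $\phi_{\tilde{\pi}}$ compatible with the normal-cone degeneration of $(j,j')$, then explain why $\phi_{j}$ and $\phi_{(j,j')}$ likewise extend to $\Theta$-families (only then can one invoke $\Gamma(S' \times \Theta, \cO^\times) = \Gamma(S',\cO^\times)$), and finally verify that the fiber at $0 \in \Theta$ reduces to the universal split vector-bundle inclusion. Each of these is a nontrivial check that you only assert.

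The paper sidesteps the mixed identity entirely by using a different factorization. Instead of factoring $f : S' \to S$ directly as a closed immersion into $\bA^m_S$ followed by the projection, the paper embeds \emph{both} $S \hookrightarrow \bA^n$ and $S' \hookrightarrow \bA^m$ (over $\Spec(k)$, using \Cref{L:quasi-smooth_embedding}), and works with the square $j \circ f \cong \pi \circ j'$ where $\pi : \bA^n\times\bA^m \to \bA^n$. This yields a defining equation $\phi_f \otimes f^\ast(\phi_j) \sim \phi_{j'} \otimes (j')^\ast(\phi_\pi)$ in which every $\phi$ on the right-hand side is already defined (by \Cref{L:grothendieck_formula_1} for the regular closed immersions $j,j'$ and by \Cref{L:grothendieck_formula_2} for the smooth projection $\pi$), so that $\phi_f$ is forced. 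Independence of the two embeddings is then proved by dominating any two choices by a common one (the second half of \Cref{L:quasi-smooth_embedding}) and invoking only the composition and base change identities of \Cref{L:grothendieck_formula_1} and \Cref{L:grothendieck_formula_2} -- the "mixed" compositions that arise there always have source and target smooth over the third object, which is exactly the case handled by \Cref{L:grothendieck_formula_2}~(1). If you want to salvage your more natural factorization $S' \hookrightarrow \bA^m_S \to S$, you would need to actually carry out the deformation-to-the-normal-cone argument for the mixed identity, which is more work than the paper's detour through a second affine embedding.
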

\begin{proof}
We use \Cref{L:quasi-smooth_embedding} to choose closed embeddings $S \hookrightarrow \bA_k^n$ and $S' \hookrightarrow \bA^m_k$, and consider the commutative square
\[
\xymatrix{S' \ar@{^{(}->}[r]^{j'} \ar[d]^f & \bA^n \times \bA^m \ar[d]^\pi \\ S \ar@{^{(}->}[r]^j & \bA^n}.
\]
We then define $\phi_f : \cK_{S'/S} \cong \detz(\bL_{S'/S})$ to be the unique homotopy class of isomorphism that makes the following diagram commute
\begin{equation} \label{E:commutativity_1}
\xymatrix{\cK_{S'/S} \otimes f^\ast(\cK_{S/\bA^n}) \ar[r]^-{\cong} \ar[d]^{\phi_f \otimes f^\ast(\phi_j)} & \cK_{S'/\bA^{n+m}} \otimes (j')^\ast(\cK_{\bA^{n+m} / \bA^n}) \ar[d]^{\phi_{j'} \otimes (j')^\ast(\phi_{\pi})} \\ \detz(\bL_{S'/S}) \otimes f^\ast(\detz(\bL_{S/\bA^n})) \ar[r]^-{\cong} & \detz(\bL_{S'/\bA^{n+m}}) \otimes (j')^\ast(\detz(\bL_{\bA^{n+m} / \bA^n}))},
\end{equation}
where $\phi_j$ and $\phi_{j'}$ are the isomorphisms of \Cref{L:grothendieck_formula_1}, and $\phi_\pi$ is the isomorphism of \Cref{L:grothendieck_formula_2}.

To complete the proof, we must show that the homotopy class of $\phi_f$ is independent of the choices in this definition, and verify claims (1)-(3).

\medskip
\noindent \textit{Independence of choice of embedding:}
\medskip

Consider further embeddings $i : \bA^n \hookrightarrow \bA^{n'}$ and $i' : \bA^m \hookrightarrow \bA^{m'}$, and the corresponding commutative diagram:
\begin{equation} \label{E:composition_diagram}
\xymatrix{S' \ar@{^{(}->}[r]^{j'} \ar[d]^f \ar[dr]^g \ar@/^20pt/[rr]^{j''} & \bA^n \times \bA^m \ar[d]^\pi \ar@{^{(}->}[r]^{i \times i'} & \bA^{n'} \times \bA^{m'} \ar[d]^{\pi'}\\
 S \ar@{^{(}->}[r]^j & \bA^n \ar@{^{(}->}[r]^i & \bA^{n'} }.
\end{equation}
We have the following sequence of canonical isomorphisms:
\begin{equation} \label{E:composition_isomorphisms}
\begin{array}{ll}
\cK_{S'/S} \otimes f^\ast(\cK_{S/\bA^{n'}})  & \\
\;\;\; \cong \cK_{S'/S} \otimes f^\ast(\cK_{S/\bA^n}) \otimes g^\ast(\cK_{\bA^n/\bA^{n'}}), & \Leftarrow \eqref{E:canonical_composition} \\
\;\;\; \cong \cK_{S'/\bA^{n+m}} \otimes (j')^\ast(\cK_{\bA^{n+m} / \bA^n}) \otimes g^\ast(\cK_{\bA^n/\bA^{n'}}), & \Leftarrow j \circ f \cong \pi \circ j' \\
\;\;\; \cong \cK_{S'/\bA^{n+m}} \otimes (j')^\ast(\cK_{\bA^{n+m} / \bA^{n'+m'}}) \otimes (j'')^\ast(\cK_{\bA^{n'+m'}/\bA^{n'}}), & \Leftarrow  i\circ \pi \cong \pi' \circ (i \times i') \\
\;\;\; \cong \cK_{S'/\bA^{n'+m'}} \otimes (j'')^\ast(\cK_{\bA^{n'+m'}/\bA^{n'}}), & \Leftarrow \eqref{E:canonical_composition}.
\end{array}
\end{equation}
Using the commutativity of \eqref{E:independent_order}, one can verify that the composition of these isomorphisms is homotopic to the isomorphism $\cK_{S'/S} \otimes f^\ast(\cK_{S/\bA^{n'}}) \cong \cK_{S'/\bA^{n'+m'}} \otimes (j'')^\ast(\cK_{\bA^{n'+m'}/\bA^{n'}})$ induced by the isomorphism $\pi' \circ j'' \cong (i \circ j) \circ f$.

We have another sequence of isomorphisms identical to \eqref{E:composition_isomorphisms}, but with $\detz(\bL_{-/-})$ in place of $\cK_{-/-}$ and using \eqref{E:shriek_vs_star_pullback2} in place of \eqref{E:canonical_composition}. Once again the composition gives an isomorphism
\[
\detz(\bL_{S'/S}) \otimes f^\ast(\detz(\bL_{S/\bA^{n'}})) \cong \detz(\bL_{S'/\bA^{n'+m'}}) \otimes (j'')^\ast(\detz(\bL_{\bA^{n'+m'}/\bA^{n'}})).
\]
that is homotopic to that induced by the isomorphism $\pi' \circ j'' \cong (i \circ j) \circ f$, by the commutativity of \eqref{E:commute_order_1}.

Assume for the moment that either $i = \id : \bA^n \to \bA^n$ or $i' = \id : \bA^m \to \bA^m$. Under the sequence of isomorphisms \eqref{E:composition_isomorphisms}, and the corresponding sequence for $\detz(\bL_{-/-})$, we have homotopies
\[
\begin{array}{rll}
\phi_f \otimes f^\ast(\phi_{i \circ j}) &\sim \phi_f \otimes f^\ast(\phi_j) \otimes g^\ast(\phi_i) &\Leftarrow \Cref{L:grothendieck_formula_1}\\
&\sim \phi_{j'} \otimes (j')^\ast(\phi_{\pi}) \otimes g^\ast(\phi_i) &\Leftarrow \eqref{E:commutativity_1} \\
&\sim \phi_{j'} \otimes (j')^\ast(\phi_{i \times i'}) \otimes (j'')^\ast(\phi_{\pi'}) & \Leftarrow \Cref{L:grothendieck_formula_2} / \Cref{L:grothendieck_formula_1} \\
&\sim \phi_{j''} \otimes (j'')^\ast(\phi_{\pi'}) &\Leftarrow \Cref{L:grothendieck_formula_1}
\end{array}
\]
For the third homotopy, if $i=\id$ we have applied claim (1) of \Cref{L:grothendieck_formula_2}, and if $i'=\id$ we have used the fact that the right square is cartesian and \Cref{L:grothendieck_formula_1}. This shows that the isomorphism $\phi_f$ defined by the left commuting square in \eqref{E:composition_diagram} is homotopic to the isomorphism defined by the outer commuting rectangle in \eqref{E:composition_diagram}. It now follows from \Cref{L:quasi-smooth_embedding} that the homotopy class of $\phi_f$ is independent of both of the choices of embeddings $j$ and $j'$.

\medskip
\noindent \textit{Proof of (1):}
\medskip

Given a second morphism $g : S \to T$, we choose an embedding $T \hookrightarrow \bA^{p}$, and consider the commutative diagram
\[
\xymatrix{ S' \ar[r]^f \ar[d]^{j'} & S \ar[r]^g \ar[d]^{j} & T \ar[d]^{i} \\
\bA^{n+m+p} \ar[r]^{\pi'} & \bA^{n+p} \ar[r]^{\pi} & \bA^{p} }
\]
We wish to show that under the isomorphisms $\cK_{S'/T} \cong \cK_{S'/S} \otimes f^\ast(\cK_{S/T})$ and $\detz(\bL_{S'/T}) \cong \detz(\bL_{S'/S}) \otimes f^\ast( \detz(\bL_{S/T}) )$ the isomorphism $\phi_{g \circ f}$ is homotopic to $\phi_f \otimes f^\ast(\phi_g)$. As before we have a sequence of homotopies of isomorphisms $\cK_{S'/\bA^p} \cong \detz(\bL_{S'/\bA^p})$ (leaving the evident identification of the source and target between one line and the next implicit for brevity):
\[
\begin{array}{ll}
\phi_f \otimes f^\ast(\phi_g) \otimes (g \circ f)^\ast(\phi_i) & \\
\;\;\; \sim \phi_f \otimes g^\ast(\phi_g \otimes g^\ast(\phi_i)) & \\
\;\;\; \sim \phi_f \otimes f^\ast(\phi_{j} \otimes j^\ast(\phi_\pi))  & \Leftarrow \text{definition of }\phi_g \\
\;\;\; \sim \phi_{j'} \otimes (j')^\ast(\phi_{\pi'}) \otimes (j \circ f)^\ast(\phi_\pi) & \Leftarrow \text{definition of }\phi_f\\
\;\;\; \sim \phi_{j'} \otimes (j')^\ast(\phi_{\pi'} \otimes (\pi')^\ast(\phi_\pi)) & \\
\;\;\; \sim \phi_{j'} \otimes (j')^\ast(\phi_{\pi \circ \pi'}) & \Leftarrow \Cref{L:grothendieck_formula_2}
\end{array}
\]
Therefore, $\phi_g \otimes g^\ast(\phi_f)$ satisfies the defining property of $\phi_{f \circ g}$, so the two isomorphisms are homotopic.

\medskip
\noindent \textit{Proof of (2):}
\medskip

We first observe that given any cartesian diagram
\begin{equation} \label{E:grothendieck_formula_base_change}
\xymatrix{ T' \ar[r]^{g'} \ar[d]^{f'} & S' \ar[d]^f \\ T\ar[r]^g & S},
\end{equation}
the claim that $\phi_{f'} \sim (g')^\ast(\phi_{f})$ under the canonical isomorphisms $\cK_{T'/T} \cong (g')^\ast(\cK_{S'/S})$ and $\detz(\bL_{T'/T}) \cong (g')^\ast(\detz(\bL_{S'/S}))$ is equivalent to the claim that $\phi_{g'} \sim (f')^\ast(\phi_g)$ under the analogous isomorphisms. To see this, note that the composition of canonical of isomorphisms
\[
\begin{array}{rl}
(g')^\ast(\cK_{S'/S}) \otimes (f')^\ast(\cK_{T/S}) &\cong \cK_{T' / T} \otimes (f')^\ast(\cK_{T/S}) \\
&\cong \cK_{T' / S} \\
&\cong \cK_{T'/S'} \otimes (g')^\ast(\cK_{S'/S}) \\
&\cong (f')^\ast(\cK_{T/S}) \otimes (g')^\ast(\cK_{S'/S})
\end{array}
\]
is homotopic to the isomorphism that switches the two tensor factors, and the same is true for the analogous isomorphisms with $\detz(\bL_{-/-})$ instead of $\cK_{-/-}$. The claim that $\phi_{f'}$ is homotopic to $(g')^\ast(\phi_f)$ is equivalent to the claim that $(g')^\ast(\phi_f) \otimes (f')^\ast(\phi_g) \sim \phi_{f'} \otimes (f')^\ast(\phi_g)$. By claim (1) of this lemma, the latter is homotopic to $\phi_{g'} \otimes (g')^\ast(\phi_f)$. Hence $\phi_{f'} \sim (g')^\ast(\phi_f)$ if and only if $\phi_{g'} \otimes (g')^\ast(\phi_f) \sim (f')^\ast(\phi_{g}) \otimes (g')^\ast(\phi_f)$, which is in turn equivalent to $\phi_{g'} \sim (f')^\ast(\phi_g)$.

We can factor $g$ as a composition $T \hookrightarrow \bA^n \times Z \to Z$, and the hypothesis that $g$ is quasi-smooth implies that the first morphism is a regular closed immersion. Thus we can factor our cartesian square as a composition of cartesian squares
\begin{equation} \label{E:factor_base_change}
\xymatrix{ T' \ar@{^{(}->}[r] \ar[d]^{f'} & \bA^n \times S' \ar[r]^{\pi'} \ar[d]^h & S' \ar[d]^f \\ T\ar@{^{(}->}[r] & \bA^n \times S \ar[r]^\pi & S}.
\end{equation}
By the compatibility of the canonical isomorphisms $\cK_{T'/T} \cong (g')^\ast(\cK_{S'/S})$ and $\detz(\bL_{T'/T}) \cong (g')^\ast (\detz(\bL_{S'/S}))$ with composition, it suffices to prove the claim for each of these cartesian squares. For the left square, vertical base change follows from \Cref{L:grothendieck_formula_1}, and so horizontal base change follows as well.

For the right cartesian square, it again suffices to show vertical base change. Both maps $\pi$ and $\pi'$ in \eqref{E:factor_base_change} are the base change of the projection map $\bA^n \to \Spec(k)$. It therefore suffices to show that for any quasi-smooth affine $k$-scheme $S$, if $\pi_{\bA^n} :\bA^n \times S \to \bA^n$ and $\pi_S : \bA^n \times S \to S$ are the projections, then $\phi_{\pi_S} \sim \pi_{\bA^n}^\ast(\phi_{\bA^n \to \Spec(k)})$ under the isomorphisms $\cK_{\bA^n \times S/S} \cong \pi_{\bA^n}^\ast(\cK_{\bA^n/\Spec(k)})$ and $\detz(\bL_{\bA^n \times S / S}) \cong \pi_{\bA^n}^\ast(\detz(\bL_{\bA^n/\Spec(k)})$.

We choose an embedding $j : S \hookrightarrow \bA^m$. By definition $\phi_{\pi_S}$ is defined via the left square of the following commutative diagram
\[
\xymatrix{\bA^n \times S \ar[d]^{\pi_S} \ar@{^{(}->}[r]^{j'=\id \times j} & \bA^n \times \bA^m \ar[r] \ar[d]^{\pi_{\bA^m}} & \bA^n \ar[d] \\
S \ar@{^{(}->}[r]^j & \bA^m \ar[r] & \Spec(k) },
\]
in the sense that $\phi_{\pi_S} \otimes \pi_S^\ast(\phi_j) \sim \phi_{j'} \otimes (j')^\ast(\phi_{\pi_{\bA^m}})$, where $\phi_{\pi_{\bA^m}}$ is the isomorphism of \Cref{L:grothendieck_formula_2}, where both squares are cartesian. \Cref{L:grothendieck_formula_1} implies vertical base change for the left square, and hence horizontal base change holds as well, and \Cref{L:grothendieck_formula_2} implies horizontal base change for the right square. Hence the outer rectangle satisfies horizontal base change.

\medskip
\noindent \textit{Proof of (3):}
\medskip

If $f : S' \to S$ is a regular closed immersion, we can consider a closed embedding $j : S \hookrightarrow \bA^n$ and consider the commutative square
\[
\xymatrix{S' \ar[r]^{j \circ f} \ar[d]^f & \bA^n \ar@{=}[d] \\ S \ar[r]^j & \bA^n }.
\]
$\phi_f$ is defined in this lemma as the unique homotopy class of isomorphism for which $\phi_f \otimes f^\ast(\phi_j) \sim \phi_{j \circ f}$, but by claim (1) of \Cref{L:grothendieck_formula_1}, the previous definition of $\phi_f$ for the regular closed immersion $f$ already satisfies this condition.

If $f : S' \to S$ is smooth, then we consider the following diagram, in which the square is cartesian:
\[
\xymatrix{S' \ar[dr]^{\id} \ar@{^{(}->}[r]^{\Delta_f} & S' \times_S S' \ar[r]^{p_2} \ar[d]^{p_1} & S' \ar[d]^f \\
& S' \ar[r] & S}
\]
We have already established that with $\phi_f$ as defined in this lemma, $\id \sim \phi_{\Delta_f} \otimes \Delta_f^\ast(\phi_{p_1}) \sim \phi_{\Delta_f} \otimes \Delta_f^\ast(p_2^\ast(\phi_{f})) \sim \phi_{\Delta_f} \otimes \phi_f$, with $\phi_{\Delta_f}$ as defined in \Cref{L:grothendieck_formula_1}. However, this characterizes the $\phi_f$ for the smooth morphism $f$ as defined in \Cref{L:grothendieck_formula_2}, so the two definitions agree.
\end{proof}

\begin{proof}[Proof of \Cref{P:canonical_complex_classical}]

Using the canonical isomorphisms \eqref{E:shriek_vs_star_pullback2} and \eqref{E:canonical_composition}, it suffices to establish an isomorphism in the absolute setting, i.e., when $\cY = \Spec(k)$. We know that $\cK_\cX^\dual \otimes \detz(\bL_\cX)$ is the image of a graded line bundle, which we denote $\cL_\cX \in \Perf(\cX)$, under the embedding $\Perf(\cX) \subset \DCoh(\cX) \subset \IC(\cX)$. Our goal is to construct, for any quasi-smooth locally a.f.p. algebraic derived $k$-stack $\cX$, a canonical isomorphism $\cO_{\cX^{\rm cl}} \cong \cL_\cX|_{\cX^{\rm cl}}$. We will use smooth descent. 

Any smooth morphism from an affine derived scheme $f : S \to \cX$ is separated and representable, so one can combine the canonical isomorphisms \eqref{E:shriek_vs_star_pullback2} and \eqref{E:canonical_composition} to obtain a canonical isomorphism
\begin{equation} \label{E:ratio_line_bundle}
f^\ast(\cL_\cX) \cong \cK_S^\dual \otimes \detz(\bL_S) \otimes \cK_{S/\cX} \otimes \detz(\bL_{S/\cX})^\dual \cong \cL_S \otimes \cL_{S/\cX}^\dual.
\end{equation}
The isomorphism $(\phi_f^{-1})^\dual$ of \Cref{L:grothendieck_formula_2} provides a trivialization $\cO_S \cong \cL_{S/\cX}^\dual$, and the isomorphism $\phi_{S \to \Spec(k)}$ of \Cref{L:grothendieck_formula_3} provides a trivialization $\cO_S \cong \cL_S$. So $f^\ast(\cL_\cX)$ has no homological shift, and under the above isomorphism we can regard $\phi_{S \to \Spec(k)} \otimes (\phi_{f}^{-1})^\dual$ as a nowhere vanishing section of $f^\ast(\cL_\cX)$.

Consider a second smooth morphism from an affine derived scheme $g : S' \to S$. Then as above we have canonical isomorphisms
\begin{align*}
g^\ast(\cL_S) &\cong \cL_{S'} \otimes \cL_{S'/S}^\dual, \text{ and}\\
g^\ast(\cL_{S/\cX}) &\cong \cL_{S'/\cX} \otimes \cL_{S'/S}^\dual.
\end{align*}
Combining these with the isomorphism \eqref{E:ratio_line_bundle} gives an isomorphism
\[
g^\ast(f^\ast(\cL_\cX)) \cong  \cL_{S'} \otimes \cL_{S'/S}^\dual \otimes \cL_{S'/S} \otimes \cL_{S'/\cX}^\dual \cong \cL_{S'} \otimes \cL_{S'/\cX}^\dual
\]
that is homotopic to the canonical isomorphism $g^\ast(f^\ast(\cL_\cX)) \cong (f\circ g)^\ast(\cL_\cX) \cong \cL_{S'} \otimes \cL_{S'/\cX}$ of \eqref{E:ratio_line_bundle} (see the proof of independence of embedding in \Cref{L:grothendieck_formula_3} for an explanation). It follows from \Cref{L:grothendieck_formula_2} and \Cref{L:grothendieck_formula_3} that under these isomorphisms
\[
g^\ast(\phi_{S \to \Spec(k)} \otimes (\phi_f^{-1})^\dual) \sim \phi_{S' \to \Spec(k)} \otimes (\phi_{f\circ g}^{-1})^\dual
\]
as sections of $(f \circ g)^\ast(\cL_\cX) \cong g^\ast(f^\ast(\cL_\cX))$.

We have thus constructed a nowhere vanishing section $\sigma_f := \phi_{S \to \Spec(k)} \otimes (\phi_{f}^{-1})^\dual \in \Gamma(S,f^\ast(\cL_\cX))$ for any smooth morphism from an affine derived scheme $f : S \to \cX$ such that for any smooth map $g : S' \to S$, $g^\ast(\sigma_f)$ is \emph{homotopic} to $\sigma_{f\circ g}$ in $\Gamma(S,(f\circ g)^\ast(\cL_\cX))$. This does not descend to a section of $\cL_\cX$ over $\cX$ without specifying higher coherence data. However, under the canonical map $\cL_\cX \to H_0(\cL_\cX)$, we can regard $\sigma_f$ as a section of $f^\ast(H_0(\cL_\cX))$. The spaces $\Gamma(S,f^\ast(H_0(\cL_\cX)))$ are discrete because $f^\ast(H_0(\cL_\cX))$ is truncated, so it makes sense to say that $g^\ast(\sigma_f)$ is \emph{equal} to $\sigma_{f \circ g}$ in $\Gamma(S,f^\ast(H_0(\cL_\cX)))$, and hence the $\sigma_f$ descend to a nowhere vanishing section of $H_0(\cL_\cX) \cong i_\ast(i^\ast(\cL_\cX))$, where $i : \cX^{\rm cl} \hookrightarrow \cX$ is the inclusion. This is equivalent to giving an isomorphism $\cO_{\cX^{\rm cl}} \cong i^\ast(\cL_\cX)$.
\end{proof}

\bibliographystyle{plain}
\bibliography{theta_stratifications}{}


\end{document}